\newtheorem{theorem}{Theorem}[section]
\newtheorem{thm}[theorem]{Theorem}
\newtheorem*{theorem*}{Theorem}
\newtheorem*{informal_theorem*}{Informal Theorem}
\newtheorem{lem}[theorem]{Lemma}
\newtheorem{claim}[theorem]{Claim}
\newtheorem{corollary}[theorem]{Corollary}
\newtheorem{prop}[theorem]{Proposition}
\theoremstyle{definition}
\newtheorem{definition}[theorem]{Definition}
\newtheorem{rem}[theorem]{Remark}
\numberwithin{equation}{section}
\newcommand{\algfp}{\overline{\field_p}}
\newcommand{\hayesrelation}[1]{R_{d_1,x^{d_2+1},\pi^{#1}}}
\newcommand{\shortintervalcharacters}{\widehat{R}_{d,1,\pi}}
\newcommand{\mon}{\mathrm{mon}}
\newcommand{\sym}{\mathrm{sym}}
\newcommand{\TR}{\mathrm{TR}}
\newcommand{\pal}{\mathrm{pal}}
\newcommand{\UTR}{\mathrm{UTR}}
\newcommand{\F}{\mathbb{F}}
\newcommand{\Z}{\mathbb{Z}}
\newcommand{\g}{\mathfrak{g}}
\newcommand{\tr}{\mathrm{tr}}
\newcommand{\img}{\mathrm{Im}}
\newcommand{\skpal}{\mathrm{skpal}}
\newcommand{\condprob}[2]{\Pr\left[
{#1} \Bigg| {#2}
\right]}
\newcommand{\rs}{\mathrm{r}}
\newcommand{\case}[1]{\vspace{0.3cm}\noindent\framebox{#1.}}
\newcommand{\maincase}[1]{{\setlength{\fboxrule}{1.5pt}\vspace{0.3cm}\noindent\framebox{#1.}}}
\newcommand{\Ol}{\mathcal{O}}
\newcommand{\field}{\mathbb{F}}
\newcommand{\resring}[1]{\Ol/\pi^{#1}\Ol}
\newcommand{\resringr}[1]{\mathcal{R}/\pi^{#1}\mathcal{R}}
\newcommand{\C}{\mathbb{C}}
\newcommand{\charc}{\mathrm{char}}
\newcommand{\Adj}{\mathrm{Adj}}
\newcommand{\rad}{\mathrm{rad}}
\newcommand{\Pee}{\mathcal{P}}
\begin{document}
\title{Traces of powers of random matrices over local fields}
\author{Noam Pirani\footnote{Email: \href{mailto:noampir9@gmail.com}{noampir9@gmail.com}.}}
\maketitle

\begin{abstract}

Let $M$ be chosen uniformly at random w.r.t. the Haar measure on the unitary group $U_n$, the unitary symplectic group $USp_{2n}$ or the orthogonal group $O_n$. Diaconis and Shashahani proved that the traces $\tr(M),\tr(M^2),\ldots,\tr(M^k)$ converge in distribution to independent normal random variables as $k$ is fixed and $n\to\infty$. Recently, Gorodetsky and Rodgers proved analogs for these results for matrices chosen from certain finite matrix groups. For example, let $M$ be chosen uniformly at random from $U_n(\field_q)$. They show that $\{\tr(M^i)\}_{i=1,p\nmid i}^{k}$ converge in distribution to independent uniform random variables in $\field_{q^2}$ as $k$ is fixed and $n\to\infty$. They also consider the case where we let $k\to\infty$ with $n$. 
   
    We prove analogs for these results over local fields. Let $\mathcal{F}$ be a local field with a ring of integers $\Ol$, a uniformizer $\pi$, and a residue field of odd characteristic. Let $\mathcal{K}/\mathcal{F}$ be an unramified extension of degree $2$ with a ring of integers $\mathcal{R}$. Let $M$ be chosen uniformly at random w.r.t. the Haar measure on the unitary group $U_n(\Ol)$, and fix $k$. We prove that the traces of powers $\{\tr(M^i)\}_{i=1,p\nmid i}^k$ converge to independent uniform random variables on $\mathcal{R}$, as $n\to\infty$. We also consider the case where $k$ may tend to infinity with $n$. We show that for some constant $c$ (coming from the mod $\pi$ distribution), the total variation distance from independent uniform random variables on $\mathcal{R}$ is $o(1)$ as $n\to\infty$, as long as $k<c\cdot n$. We also consider other matrix groups over local fields, namely $G=GL_n(\Ol), SL_n(\Ol), Sp_{2n}(\Ol), SO_n(\Ol)$, and prove similar results for them. Moreover, we consider traces of powers $M^{pi}$ and traces of negative powers, 
    and show that apart from certain necessary modular restrictions, they also equidistribute in the limit.

    The proofs rely on considering the lifting of a matrix from $G(\resring{k-1})$ to $G(\resring{k})$, and analyzing how the characteristic polynomial behaves under such lifts. We also rely on results on the distribution of conjugacy classes in finite matrix groups due to Fulman and the finite fields results of Gorodetsky-Rodgers.
\end{abstract}

\newpage

\tableofcontents

\newpage

\section{Introduction}
Diaconis and Shahshahani \cite{DS94} studied the joint distribution of traces of powers of a matrix over classical matrix groups, initiating a line of work in random matrix theory. Concretely, consider $U_n=\{g\in GL_n(\mathbb{C}):g\overline{g}^t=I_n\}$,  the unitary group, and let $Z=X+iY$ be a standard complex normal random variable, with $X$, $Y$ independent and $X,Y\sim N(0,\frac{1}{2})$ are real Gaussians. Let $M$ be a random matrix chosen uniformly w.r.t. the Haar measure on $U_n$. Diaconis and Shahshahani proved that for a fixed $k$ the vector 
$
(\tr(M), \tr(M^2),\ldots, \tr(M^k))
$
converges in distribution, as $n\to\infty$, to the vector
$
(\sqrt{1}Z_1,\sqrt{2}Z_2,\ldots,\sqrt{k}Z_k)
$ 
where $Z_j\sim Z$ are i.i.d random variables. They also prove similar results for the orthogonal and unitary symplectic groups. Their results were extended by various methods, ranging from representation theory (See e.g \cite{DE01, BR01, Sto05, BG06, Rai97}) to analytic techniques (See e.g \cite{PV04}) or combinatorial methods (See e.g \cite{HR03}).

Confirming a conjecture of Diaconis, Johansson \cite{Joh97} proved that the rate of convergence is super-exponential in $n$, that is 

$$
    \Pr[\mathrm{Re}(\tr(M^k))\le x,\mathrm{Im}(\tr(M^k))\le y]-\Pr[\mathrm{Re}(\sqrt{k}Z_k)\le x,\mathrm{Im}(\sqrt{k}Z_k)\le y]=O_k(e^{-c_k n\log n}).
$$ More recently, Johansson and Lambert \cite{JL21} proved that the convergence rate is also super-exponential for the joint moments. More precisely, set $\mathbf{T}=(\tr(M^i))_{i=1}^k$, $\mathbf{Z}=(Z_i)_{i=1}^k$. For a random variable $X$ on $\C^k$, define a probability measure by setting $\mu_X(A)=\Pr[X\in A]$ for every Borel measurable $A\subset \C^k$. For $Y$ another random variable on $\C^k$, define the total variation distance $d_{TV}(X,Y)=\sup_A |\mu_X(A)-\mu_Y(A)|$, where $A$ ranges over all Borel measurable sets. Johansson and Lambert proved that $d_{TV}(\mathbf T,\mathbf K)$ converges super-exponentially (in $n$) to zero as we take $n,k\to\infty$ (with a complicated but explicit error term), provided that $k<\frac{\sqrt{n}}{6.45(\log n)^{1/4}}$. 

Recently, Gorodetsky and Rodgers \cite{GR21} considered similar questions over finite fields. For example, for the unitary case, they define $U_n(\field_q)=\{g\in GL_n(\field_{q^2}):g\overline{g}^t=I_n\}$, where $\overline{g}$ is defined via the action of Frobenius on the entries of $g$. In this case, $\tr(M),\tr(M^2),\ldots,\tr(M^k)$ do not converge to independent random variables since we have 
$
\tr(M^p)=\tr(M)^p
$. Instead, they consider traces of powers that are coprime with $p$. They prove that as $n\to\infty$, the random variables $(\tr(M^{b_i}))_{i=1}^k$ for $p\nmid b_i$ converge in total variation distance to $k$ independent, uniform random variables taking values in $\field_q$, provided that $\max(b_i)/n$ is bounded by some constant depending on $q$. They also consider the distribution of traces of powers for the groups $GL_n(\field_q)$,  $SL_n(\field_q)$, $O^\pm_n(\field_q)$, $Sp_{2n}(\field_q)$. Gorodetsky and Kovaleva \cite{GK23} considered a single trace of power (or a single linear combination of traces of powers) for $GL_n(\field_q)$ and improved the range of equidistribution much further, showing that $\tr(g^k)$ converges (in total variation distance) to a uniform random variable when $n\to\infty$, as long as $\log(k)=o(n^2)$. They also prove that this range is sharp.

Random $p$-adic matrices were previously considered mainly in the context of the average size of their cokernels, motivated by applications in number theory. See for example 
\cite{EJV11, PR12, Wood17, VP21}. The work \cite{EJV11} is particularly related to our paper since it considers a piece of the characteristic polynomial of a Haar random matrix $M\in GL_n(\Z_p)$ (corresponding to the $(x-1)$-primary part modulo $p$) and computes its limiting distribution.

The present paper considers $p$-adic analogs of the above results. For the rest of the paper, let $p$ be an odd prime, $q$ a power of $p$. Let $\mathcal{F}/\mathbb{Q}_p$ be an unramified finite extension with a ring of integers $\Ol$, such that the residue field of $\Ol$ is  $\field_q$ and the uniformizer is $\pi$. Let $G$ be one of the compact $p$-adic matrix groups $U_n(\Ol)$, $GL_n(\Ol)$, $SL_n(\Ol)$, $SO_n(\Ol)$, $Sp_{2n}(\Ol)$. Let $A$ be a matrix from $G$, chosen uniformly at random w.r.t. the Haar measure. We examine the joint distribution of the traces of powers of $A$, $(\tr(A^i))_{i=1\atop{p\nmid i}}^r$, when $r<c_G\cdot n$ for some constant $c_G$ depending on $G$ and $q$. Under these conditions, we prove that the total variation distance from a uniform distribution is $o(1)$ as $n\to \infty$. We also examine the distribution of a single trace of a power $\tr(M^r)$, $p\nmid r$, for a matrix $M\in GL_n(\Ol)$. We obtain a range that exceeds the range obtained as a corollary from the joint distribution result, building on the results of \cite{GK23}.

Similarly to the finite field case, we note that when $p|r$, there are dependencies between $\tr(A^r)$ and the previous traces of powers. Let $\sigma\in \mathrm{Gal}(\mathcal{F}/\mathbb{Q}_p)$ be the Frobenius element $\left(
\frac{\pi}{p}
\right)$. Then, if $p^i||r$, $i\ge 1$, we have 
$$\tr(A^r)\equiv \sigma(\tr(A^{r/p}))\pmod{\pi^i}.$$
This is proved in Lemma \ref{lem_trace_dependencies} below, which extends a phenomenon first observed by Arnold \cite{Arn04} and later extended in \cite{MP10}. However, there are no further restrictions on the traces of powers of $p$-adic matrices. This motivates the following

\begin{definition}
A trace datum of length $d$ over $\Ol$ is a sequence of the form $(\frac{1}{|i|_p}a_i)_{i=1}^d$, for $a_i\in\Ol$. For a matrix $M\in GL_n(\Ol)$ we attach a trace datum of length $d$ by setting the sequence $(\tr(M^i)-1_{p|i}\sigma(\tr(A^{i/p})))_{i=1}^d$.
\end{definition}

To handle the distribution of traces of powers which can be both positive and negative, we define 

\begin{definition}
Let $d_1,d_2>0$ be two integers such that $d=d_1+d_2$. We define a $(d_1,d_2)$-trace datum to be a sequence of the form $(\frac{1}{|i|_p}a_i)_{0\neq i=-d_1}^{d_2}$. The length of a $(d_1,d_2)$-trace datum is $d$. For a matrix $M$ we attach a $(d_1,d_2)$ trace datum by concatenating the $d_1$ trace datum for $M^{-1}$ with the $d_2$ trace datum for $M$.
\end{definition}

Finally, we define the uniform trace data, $\UTR_d$, which is simply a sequence $(\frac{1}{|i|_p}a_i)_{i=1}^d$, for $a_i\in\Ol$ chosen uniformly at random w.r.t. the Haar measure. We define similarly a uniform $(d_1,d_2)$-trace data $\UTR_{d_1,d_2}$.

For a random variable $X$ taking values in $\Ol^l$, we define the probability measure $\mu_X$ on $\Ol^l$ by $\mu_X(A)=\Pr[X\in A]$, for every Borel measurable $A$. We define the total variation distance between two measures $\mu_1,\mu_2$ on $\Ol^l$ by setting $d_{TV}(\mu_1,\mu_2)=\sup_A |\mu_1(A)-\mu_2(A)|$ where $A$ ranges over all Borel measurable sets $A\subset \Ol^l$. Our results for the joint distribution of the traces of powers are as follows (in all cases, the implicit constant depends only on the group $G$ and on $q$):

\begin{theorem}\label{main_theorem_gl}
Let $d<n$, and let $\TR_{d_1,d_2}^{GL}$ be the random variable attaching to a matrix $M\in GL_n(\Ol)$ its $(d_1,d_2)$-trace datum of length $d$. Then, 

$$
d_{TV}(\mu_{\TR_{d_1,d_2}^{GL}},\mu_{\UTR_{d_1,d_2}})=O\left(
q^{d-\frac{n^2}{2d}}\left(1+\frac{1}{q-1}\right)^n\binom{n+d}{n}+q^{d-\frac{n^2}{d}+o(n)}
\right).
$$
In particular, there is some constant $c_q^{GL}$, such that for $d<c_q^{GL}\cdot n$, this total variation distance is $o(1)$ as $n\to\infty$. 
\end{theorem}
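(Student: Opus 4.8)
The plan is to study the trace datum of $M$ through the coefficients $c_1,\dots,c_n$ of its characteristic polynomial $f_M(x)=x^n-c_1x^{n-1}+\cdots\pm c_n$, analysed level by level along the tower $\Ol\twoheadrightarrow\resring{1}\twoheadleftarrow\resring{2}\twoheadleftarrow\cdots$. First I would reduce to finite levels: since $\Ol^d=\varprojlim_k(\resring{k})^d$ and the total variation distance between the $\pi^k$-reductions of two $\Ol^d$-valued random variables is nondecreasing in $k$, it suffices to bound $d_{TV}$ at each finite level, uniformly in $k$. Second, I would pass from trace data to characteristic-polynomial data. Newton's identities give $\tr(M^i)=\pm i\,c_i+Q_i(c_1,\dots,c_{i-1})$ with $Q_i$ an integral polynomial, and $\tr(M^{-i})$ is an integral polynomial in $c_{n-1}/c_n,\dots,c_{n-i}/c_n$ (where $c_n=\pm\det M\in\Ol^\ast$). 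Combining these with Lemma~\ref{lem_trace_dependencies} shows that the $\tfrac1{|i|_p}$ rescaling and the subtraction of $\sigma(\tr(M^{i/p}))$ in the definition of the trace datum clear precisely the $p$-adic denominators, so that each trace-datum entry has the shape $(\text{unit})\cdot c_i+R_i(c_{<i})$ for the positive powers, and $(\text{unit})\cdot(c_{n-i}/c_n)+(\text{function of }c_{n-1}/c_n,\dots,c_{n-i+1}/c_n)$ for the negative ones. Hence the assignment $(c_1,\dots,c_{d_2},c_{n-d_1},\dots,c_{n-1},c_n)\mapsto\TR_{d_1,d_2}^{GL}$ is a measure-preserving bijection of $\Ol^{d_2}\times\Ol^{d_1}\times\Ol^\ast$ followed by the coordinate projection forgetting $c_n$, so it suffices to prove that the tuple $(c_1,\dots,c_{d_2},c_{n-d_1},\dots,c_{n-1},c_n)$ is close to uniform on $\Ol^{d_2}\times\Ol^{d_1}\times\Ol^\ast$.

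The heart of the matter is the lifting step. Fix $k\ge2$ and condition on $M\bmod\pi^{k-1}$; since $GL_n$ is smooth, the fibre of $GL_n(\resring{k})\to GL_n(\resring{k-1})$ over this point is a coset of the congruence subgroup $\{I+\pi^{k-1}E:E\in M_n(\field_q)\}$, on which Haar measure makes $E$ uniform. Because $c_i(\cdot)$ is a polynomial in the matrix entries and $2(k-1)\ge k$, one gets the exact congruence
$$
c_i\big(M(I+\pi^{k-1}E)\big)\equiv c_i(M)+\pi^{k-1}\tr(C_iE)\pmod{\pi^k},
$$
where the $C_i$ are the matrix coefficients of $\mathrm{adj}(xI-M)\cdot M$; writing $\mathrm{adj}(xI-M)=f_M(x)(xI-M)^{-1}$ and invoking Cayley--Hamilton shows that, modulo $\pi$, the family $\{C_1,\dots,C_{d_2},C_{n-d_1},\dots,C_{n-1},C_n\}$ spans the same $\field_q$-subspace of $M_n(\field_q)$ as $\{\overline M^{\,j}:-d_1\le j\le d_2\}$. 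Therefore, on the level-$1$-measurable event
$$
\mathcal G:=\{M:\ \deg(\text{minimal polynomial of }\overline M)>d\},
$$
these $d+1$ matrices are linearly independent --- multiplying by $\overline M^{\,d_1}$ turns them into a subfamily of the independent set $I,\overline M,\dots,\overline M^{\,d}$ --- so $E\mapsto(\tr(C_iE))_i$ is onto $\field_q^{d+1}$. Hence, on $\mathcal G$ each lift from level $k-1$ to level $k$ ($k\ge2$) adjoins a fresh uniform $\field_q^{d+1}$-digit to the tuple, and conditionally on $\mathcal G$ and on the mod-$\pi$ reduction the tuple $(c_1,\dots,c_{d_2},c_{n-d_1},\dots,c_{n-1},c_n)$ is \emph{exactly} uniform on $\Ol^{d_2}\times\Ol^{d_1}\times\Ol^\ast$.

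It remains to treat level $1$. Bounding the total error by (essentially) $\Pr[\overline M\notin\mathcal G]$ plus the level-$1$ total variation distance between the law of $(\overline c_1,\dots,\overline c_{d_2},\overline c_{n-d_1},\dots,\overline c_{n-1},\overline c_n)$ for uniform $\overline M\in GL_n(\field_q)$ and the uniform law on $\field_q^{d_2}\times\field_q^{d_1}\times\field_q^\ast$, I would bound the second quantity using the finite-field analysis of Gorodetsky--Rodgers \cite{GR21} together with Fulman's cycle-index formula for the distribution of the rational canonical form of a uniform element of $GL_n(\field_q)$; its error is governed by the likelihood that $f_{\overline M}$ has a repeated irreducible factor of small degree, producing the term $q^{d-n^2/(2d)}\big(1+\tfrac1{q-1}\big)^n\binom{n+d}{n}$. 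For the first quantity, the same cycle index shows that $\Pr[\overline M\notin\mathcal G]$ is dominated by the case where $\overline M$ has a single isotypic component $\phi$ with $\deg\phi\le d$ (up to conjugacy, $\overline M$ then lies in an embedded $GL_{n/\deg\phi}(\field_{q^{\deg\phi}})$, of density $\asymp q^{\deg\phi-n^2/\deg\phi}$), and summing over $\deg\phi\le d$ gives $q^{d-n^2/d+o(n)}$. Adding the two proves the bound; and since for $d=cn$ the two exponents equal $n\big(c-\tfrac1{2c}+O_q(1)\big)$ and $n\big(c-\tfrac1c+o(1)\big)$, both negative for all sufficiently small $c>0$, there is a constant $c_q^{GL}>0$ for which $d_{TV}(\mu_{\TR_{d_1,d_2}^{GL}},\mu_{\UTR_{d_1,d_2}})=o(1)$ as $n\to\infty$ whenever $d<c_q^{GL}n$.

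The main obstacle will be the finite-field input --- extracting from Fulman's cycle index both the sharp Gorodetsky--Rodgers-type estimate for the mod-$\pi$ distribution and the sharp bound on $\Pr[\overline M\notin\mathcal G]$, and verifying that the admissible range is dictated by the $q^{d-n^2/d+o(n)}$ term. The $p$-adic lifting is by comparison soft, once one has the polynomiality of the $c_i$ and the congruence-subgroup structure of the reduction maps. A secondary technical point is the bookkeeping in the first step: checking that the trace-datum normalization removes exactly the dependencies of Lemma~\ref{lem_trace_dependencies} and no genuine randomness, so that the reduction to characteristic-polynomial coefficients is literally measure preserving; and noting that the hypothesis $d<n$ is precisely what makes the event $\mathcal G$ (which requires $\deg(\text{min.\ poly.})>d$) attainable.
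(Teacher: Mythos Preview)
Your proposal is correct and follows essentially the same strategy as the paper: reduce to finite levels $\resring{k}$, analyze the lifting $\pi^{k-1}\to\pi^k$ via the differential of the characteristic polynomial, observe that the lift is exactly uniform on the event $\mathcal G=\{\deg\min(\overline M)>d\}$, and feed in the finite-field inputs (Gorodetsky--Rodgers for the level-$1$ distribution, Fulman's conjugacy-class formulae for $\Pr[\mathcal G^c]$). The paper packages the same argument through Hayes residue classes and a telescoping discrepancy $\Delta_{B,k}^g\to\Delta_{\overline f,k}^g\to\Delta_g$, and computes $\img(\partial\charc_{A_0})=\tfrac{\charc\overline{A_0}}{\min\overline{A_0}}\cdot\field_q[x]_{<\deg\min\overline{A_0}}$ rather than arguing directly via linear independence of $\{\overline M^{\,j}\}_{-d_1\le j\le d_2}$; but these are equivalent formulations of the same linear-algebraic fact.

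Two small points. First, the mod-$\pi$ input you need is not literally in \cite{GR21}: their short-interval theorem covers only positive traces, and for the $(d_1,d_2)$-trace datum one must extend it to Hayes classes $R_{d_1,x^{d_2+1}}$ (the paper does this in its Section~\ref{section_neg_traces_finite_groups}). Second, your description of the mechanism behind the Gorodetsky--Rodgers bound --- ``repeated irreducible factor of small degree'' --- is off; that term comes from Weil-type $L$-function estimates on Hayes characters, not from a rare-event probability. This does not affect your argument, since you only quote the bound, but it is worth noting that the two displayed error terms arise from genuinely different sources: the first is analytic (character sums over $\field_q[x]$), the second combinatorial (the Fulman/small-radical count you describe).
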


\begin{theorem}\label{main_theorem_sl}
Let $d<n$, and let $\TR_{d_1,d_2}^{SL}$ be the random variable attaching to a matrix $M\in SL_n(\Ol)$ its $(d_1,d_2)$-trace datum of length $d$. Then, 

$$
d_{TV}(\mu_{\TR_{d_1,d_2}^{SL}}, \mu_{\UTR_{d_1,d_2}})=O\left(
q^{d-\frac{n^2}{2d}}\left(1+\frac{1}{q-1}\right)^n\binom{n+d}{n}+q^{d-\frac{n^2}{d}+o(n)}
\right).
$$
In particular, there is some constant $c_q^{SL}$, such that for $d<c_q^{SL}\cdot n$, this total variation distance is $o(1)$ as $n\to\infty$.\end{theorem}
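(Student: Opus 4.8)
The plan is to follow the proof of Theorem~\ref{main_theorem_gl} closely, carried out inside $SL_n(\resring K)$ at every precision level. The key observation is that a $(d_1,d_2)$-trace datum of length $d=d_1+d_2<n$ is blind to the determinant. Writing $\chi_M(x)=x^n+c_1(M)x^{n-1}+\dots+c_n(M)$, we have $\det M=(-1)^nc_n(M)$ and $c_j(M^{-1})=(-1)^nc_{n-j}(M)/\det M$, and by Newton's identities together with the modular corrections of Lemma~\ref{lem_trace_dependencies} the $(d_1,d_2)$-trace datum of $M$ is an explicit invertible function of the coefficient block $\big(c_1(M),\dots,c_{d_2}(M);\,c_{n-1}(M)/\det M,\dots,c_{n-d_1}(M)/\det M\big)$. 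For $M\in SL_n(\Ol)$ this is just $\big(c_1(M),\dots,c_{d_2}(M);\,c_{n-1}(M),\dots,c_{n-d_1}(M)\big)$, and since $d<n$ the index $n$ never occurs, so the constraint $\det M=1$ touches only the one coefficient $c_n(M)$ that the trace datum cannot see. Equivalently, the $SL_n$ statement is the $GL_n$ statement conditioned on $\det M=1$, and $\det M$ --- which is exactly Haar on $\Ol^\times$ --- should turn out to be asymptotically independent of that block.

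As in the $GL_n$ case, I would reduce to bounding, uniformly in $K$, the total variation distance between the law of the coefficient block above for a uniform $M\in SL_n(\resring K)$ and the uniform law, and induct on $K$ via the lift $SL_n(\resring{K-1})\to SL_n(\resring K)$. The only real departure from the $GL_n$ argument is at this lifting step. Fix $\bar M\in SL_n(\resring{K-1})$ and write $\det\bar M=1+\pi^{K-1}\delta_{\bar M}$: the lifts of $\bar M$ inside $GL_n(\resring K)$ are $\{\bar M+\pi^{K-1}X:X\in M_n(\field_q)\}$, and such a lift lands in $SL_n(\resring K)$ exactly when $\tr(\bar M^{-1}X)=-\delta_{\bar M}$, so the $SL_n$-lifts of $\bar M$ form an affine hyperplane of $M_n(\field_q)$ of dimension $n^2-1$ rather than the whole kernel. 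Because the fresh randomness that the coefficient $c_j(\bar M+\pi^{K-1}X)$ picks up at level $K$ is $\pi^{K-1}\tr\!\big(P_j(\bar M)X\big)$ with $P_j$ a polynomial of degree $j-1$ and leading coefficient $1$ (the adjugate/Newton formula for the derivative of $c_j$), the fresh randomness in the whole block is $X\mapsto\big(\tr(P_j(\bar M)X)\big)_j$ over the relevant $j$; restricted to the hyperplane $\{\tr(\bar M^{-1}X)=-\delta_{\bar M}\}$ this is onto $\field_q^d$ --- hence equidistributed, all fibres having equal size --- precisely when $\bar M^{-1}$ together with those $P_j(\bar M)$ form a linearly independent family in $M_n(\field_q)$. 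When $\bar M$ is cyclic (minimal polynomial of degree $n$) this holds: the degrees involved are $0,\dots,d_2-1$, then $n-d_1-1,\dots,n-2$, together with $n-1$ for $\bar M^{-1}$ by Cayley--Hamilton, and these $d+1\le n$ values are pairwise distinct exactly because $d<n$. The remaining (non-cyclic, or otherwise degenerate) $\bar M$ form the same exceptional set already accounted for in the proof of Theorem~\ref{main_theorem_gl} through Fulman's conjugacy-class estimates, which hold for $SL_n(\field_q)$; and the base level $K=1$ is the $SL_n(\field_q)$ case of the Gorodetsky--Rodgers equidistribution theorem. Summing the per-level contributions reproduces the two summands in the bound of Theorem~\ref{main_theorem_gl} up to an absolute constant, and taking $c_q^{SL}:=c_q^{GL}$ gives the $o(1)$ conclusion for $d<c_q^{SL}\cdot n$.

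The main obstacle is exactly this linear-algebra core: one has to verify that cutting the kernel $M_n(\field_q)$ by the determinant hyperplane does not destroy surjectivity of $X\mapsto(\tr(P_j(\bar M)X))_j$, and that the ``bad'' $\bar M$ where it does are no more numerous than in the $GL_n$ analysis --- this is where the hypothesis $d<n$ is genuinely used (it is what keeps the $d+1$ relevant degrees distinct and below $n$), and it is why $\bar M^{-1}$ must be tracked alongside the powers of $\bar M$. Everything else --- the reduction to finite precision, the bookkeeping through Lemma~\ref{lem_trace_dependencies} for indices divisible by $p$, and the quantitative conjugacy-class counts --- transcribes the proof of Theorem~\ref{main_theorem_gl} without essential change, since each external input is available for $SL_n$ as well as for $GL_n$.
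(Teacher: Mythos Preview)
Your overall strategy matches the paper's: reduce to finite precision via Lemma~\ref{lem_total_variation_dist_limit}, lift level by level, and separate ``good'' matrices (where the fresh randomness equidistributes) from an exceptional set controlled by Fulman's estimates. The base case over $\field_q$ is handled by Corollary~\ref{cor_neg_traces_mod_q_sln}.

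There is, however, a genuine gap in your lifting step. Your linear-independence argument---that the matrices $P_j(\bar M)$ together with $\bar M^{-1}$ are independent because they are polynomials in $\bar M$ of distinct degrees $0,\ldots,d_2-1,n-d_1-1,\ldots,n-1$---only works when $\bar M$ is cyclic, since otherwise $I,\bar M,\ldots,\bar M^{n-1}$ are already dependent. You then assert that non-cyclic $\bar M$ form ``the same exceptional set already accounted for'' in the $GL_n$ proof. This is wrong: the exceptional set in the $GL_n$ argument is $\{\deg\min\bar M\le d\}$, which has probability $q^{-n^2/d+o(n)}$ by Claim~\ref{claim_bounding_probability_small_minpoly}, whereas the set of non-cyclic matrices in $GL_n(\field_q)$ has probability bounded away from $0$ as $n\to\infty$. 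So as written, your exceptional set is too large to be absorbed into the error term.

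The paper closes this gap by computing the image of $\partial\charc_{A_0}$ on $\mathfrak{sl}_n$ exactly (Theorem~\ref{theorem_image_char_derivative_sln}): it is $\{\frac{\charc A_0}{\min A_0}\,b:\deg b<\deg\min A_0,\ b(0)=0\}$, of dimension $\deg\min A_0-1$. The one-step equidistribution then follows via Hayes characters whenever $\deg\min\bar M>d$, not just for cyclic $\bar M$. In fact your linear-independence claim \emph{is} true under the weaker hypothesis $\deg\min\bar M>d$: writing $g=\frac{\charc\bar M}{\min\bar M}$, the relevant $d+1$ coordinates of $g\cdot b$ (the top $d_2$, the bottom $d_1$ above the constant term, and the constant term itself) are invertible linear functions of the coefficients of $b$ at positions $m-1,\ldots,m-d_2$, at $1,\ldots,d_1$, and at $0$ respectively, and these $d+1$ positions are distinct precisely because $m>d$. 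So the fix is to replace your ``cyclic'' argument by this computation (or to quote Theorem~\ref{theorem_image_char_derivative_sln}) and then proceed exactly as you outlined.
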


\begin{theorem}\label{main_theorem_sp}
Let $d<n$, and let $\TR_d^{Sp}$ be the random variable attaching to a matrix $M\in Sp_{2n}(\Ol)$ its trace datum of length $d$. Then, 

$$
d_{TV}(\mu_{\TR_d^{Sp}}, \mu_{\UTR_d})=O\left(
q^{d-\frac{n^2}{2d}+\frac{n}{2}}\left(1+\frac{1}{q^2-1}\right)^n\binom{n+2d-1}{n}+q^{2d-\frac{n^2}{2d}+o(n)}
\right).
$$
In particular, there is some constant $c_q^{Sp}$, such that for $d<c_q^{Sp}\cdot n$, this total variation distance is $o(1)$ as $n\to\infty$.
\end{theorem}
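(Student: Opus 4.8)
The plan is to reduce the statement, one $\pi$-adic level at a time, to facts about the finite symplectic group $Sp_{2n}(\field_q)$. Since the Borel $\sigma$-algebra on $\Ol^d$ is generated by cylinder sets, the total variation distance between two $\Ol^d$-valued random variables equals the supremum over $N$ of the distances between their reductions modulo $\pi^N$; as the trace datum modulo $\pi^N$ is determined by $M$ modulo a fixed power of $\pi$ (depending on $N$ and $d$), it suffices to bound, uniformly in $N$, the distance from uniform of the trace datum of a Haar-random $M\in Sp_{2n}(\resring N)$. I would first transfer the problem to the top coefficients $(e_1(M),\dots,e_d(M))$ of $\det(xI-M)=x^{2n}-e_1x^{2n-1}+\cdots$. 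Newton's identities give $b_m:=\tr(M^m)-1_{p\mid m}\,\sigma(\tr(M^{m/p}))=\pm m\,e_m+R_m(e_1,\dots,e_{m-1})$ for some explicit $R_m$, so the map $(e_1,\dots,e_d)\mapsto(b_1,\dots,b_d)$ — using Lemma~\ref{lem_trace_dependencies} for the requisite divisibilities — is a measure-preserving bijection $\Ol^d\to\prod_{i\le d}\pi^{v_p(i)}\Ol$ carrying $\mathrm{Haar}^d$ to the law of $\UTR_d$ and the law of $(e_i(M))_i$ to the law of the trace datum of $M$. As $d<n$ there is no relation forced among $e_1,\dots,e_d$, so the theorem reduces to bounding $d_{TV}\big(\mu_{(e_1(M),\dots,e_d(M))},\mathrm{Haar}^d\big)$ by the right-hand side of the asserted estimate.

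Next I would realize Haar measure on $Sp_{2n}(\Ol)$ as a tower: $M\bmod\pi$ is uniform in $Sp_{2n}(\field_q)$, and for $k\ge2$ the fibre of $Sp_{2n}(\resring k)\to Sp_{2n}(\resring{k-1})$ over any point is a torsor under $\mathfrak{sp}_{2n}(\field_q)$. Writing a lift as $M'(I+\pi^{k-1}X)$ one computes
$$e_m\big(M'(I+\pi^{k-1}X)\big)\ \equiv\ e_m(M')-\pi^{k-1}\,c_m(X)\pmod{\pi^k},$$
where $c_m(X)$ is the coefficient of $x^{2n-m}$ in $\tr\!\big(\mathrm{adj}(xI-M')\,M'X\big)$; thus the $k$-th digit of $(e_1,\dots,e_d)$ is shifted by $\Lambda(X)$ for a $\field_q$-linear map $\Lambda\colon\mathfrak{sp}_{2n}(\field_q)\to\field_q^d$ whose coordinate functionals span the same space as $X\mapsto\tr(M_1^lX)$, $l=1,\dots,d$, where $M_1=M'\bmod\pi$. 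Writing $\mathfrak{gl}_{2n}=\mathfrak{sp}_{2n}\oplus\mathfrak p$ for the trace-form-orthogonal decomposition into the $(\pm1)$-eigenspaces of the symplectic involution $\theta$, and using $\theta(M_1^l)=-M_1^{-l}$, one finds that $\Lambda$ is onto exactly when $M_1-M_1^{-1},\dots,M_1^d-M_1^{-d}$ are linearly independent over $\field_q$; let $\mathcal G\subseteq Sp_{2n}(\field_q)$ be the set where this holds. On $\mathcal G$, an induction on $k$ shows that conditionally on $M_1$ the digits $2,\dots,N$ of $(e_i(M)\bmod\pi^N)_i$ are i.i.d.\ uniform on $\field_q^d$ and independent of $M_1$, whence
$$d_{TV}\big(\mu_{(e_i(M))_i},\mathrm{Haar}^d\big)\ \le\ d_{TV}\big(\mu_{(e_i(M_1))_i},\mathrm{unif}_{\field_q^d}\big)\ +\ 2\,\Pr[\,M_1\notin\mathcal G\,].$$

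It remains to estimate the two finite-field quantities. The first is the equidistribution of the leading characteristic-polynomial coefficients (equivalently, of the coprime-to-$p$ traces of powers) of a uniform element of $Sp_{2n}(\field_q)$, for which the results of Gorodetsky--Rodgers supply a bound of the form $O(q^{2d-n^2/(2d)+o(n)})$. For the second, multiplying a linear dependence among the $M_1^l-M_1^{-l}$ by $M_1^d$ yields a nonzero polynomial of degree $\le 2d$ annihilating $M_1$, so $\{M_1\notin\mathcal G\}\subseteq\{\,\deg(\text{minimal polynomial of }M_1)\le 2d\,\}$, and I would bound the Haar measure of the latter set from Fulman's cycle-index / generating-function description of conjugacy classes in $Sp_{2n}(\field_q)$, summing the class-size contributions over the comparatively few class types whose minimal polynomial has degree at most $2d$. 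This is where the factor $q^{d-n^2/(2d)+n/2}\big(1+\tfrac1{q^2-1}\big)^n\binom{n+2d-1}{n}$ is produced, the $q^2$ and the binomial coefficient reflecting the bookkeeping of irreducible factors that occur in reciprocal pairs $\{h,h^\ast\}$ versus self-reciprocal ones. Adding the two estimates gives the displayed bound, and since $q^{-n^2/(2d)}$ dominates the $2^{O(n)}$ and constant-base-exponential factors once $d<c_q^{Sp}n$, the distance is $o(1)$.

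I expect the estimate of $\Pr[M_1\notin\mathcal G]$ through Fulman's machinery to be the main obstacle: the class-type bookkeeping for $Sp_{2n}(\field_q)$ is delicate, and the constant $1/(2d)$ in the exponent — together with the precise, group-dependent shape of the secondary factor, which differs for $GL$, $SL$, $Sp$ and $SO$ — must be tracked carefully to reach the asserted range $d<c_q^{Sp}n$. A secondary point is ensuring that the invoked Gorodetsky--Rodgers equidistribution covers the whole vector of top coefficients, including the $e_m$ with $p\mid m$ (which are not recoverable from the coprime-to-$p$ traces of powers alone), and not merely the traces of powers.
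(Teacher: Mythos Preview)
Your overall architecture matches the paper's: pass to $\resring{N}$, lift one $\pi$-adic level at a time, and reduce to (i) the mod-$\pi$ equidistribution of the top $d$ coefficients of $\charc(M_1)$ (Gorodetsky--Rodgers) plus (ii) a bound on the probability that $\deg\min(M_1)\le 2d$ (Fulman). Your single good/bad split on the surjectivity of $\Lambda$ is in fact slightly cleaner than the paper's route, which tracks per-interval and per-matrix discrepancies $\Delta_{B,k}^g$, $\Delta_{\overline f,k}^g$ before summing; and your criterion via the linear independence of $M_1^l-M_1^{-l}\in\mathfrak{sp}_{2n}$ is a nice shortcut to what the paper extracts from its explicit computation of $\img(\partial\charc_{A_0})$ as $\frac{\charc A_0}{\min A_0}\cdot\field_q[x]^{\pal}_{<\deg\min A_0}$ (Theorem~\ref{theorem_image_char_derivative_sp2n}).

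You have, however, \emph{reversed} the two error terms. The structured factor
\[
q^{\,d-\frac{n^2}{2d}+\frac{n}{2}}\Bigl(1+\tfrac{1}{q^2-1}\Bigr)^{n}\binom{n+2d-1}{n}
\]
is precisely the Gorodetsky--Rodgers short-interval bound for $Sp_{2n}(\field_q)$ (their Theorem~6.21) summed over $q^d$ intervals; since $(1+\tfrac{1}{q^2-1})^n$ and $\binom{n+2d-1}{n}$ are each $q^{O(n)}$ rather than $q^{o(n)}$, you cannot package the GR contribution as $q^{2d-n^2/(2d)+o(n)}$. Conversely, the term $q^{\,2d-n^2/(2d)+o(n)}$ is what the Fulman route gives for $\Pr[\deg\min(M_1)\le 2d]$: the paper shows $\Pr[\deg\min(M_1)\le 2d,\ \charc(M_1)=f]\ll q^{-n^2/(2d)+o(n)}$ uniformly in $f$ (Theorem~\ref{theorem_probability_small_min_poly_sp2n}), and only the $q^{2d+o(n)}$ monic degree-$2n$ polynomials with $\deg\rad(f)\le 2d$ contribute (Lemma~\ref{lem_small_radical_bound}). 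The reciprocal-pair bookkeeping you anticipate lives in the GR $L$-function analysis, not in the Fulman class-size sum. This would sort itself out on execution, but your expectation that the Fulman step is the delicate, group-dependent bottleneck is inverted. As for your secondary worry: the GR input used here is the interval version for $\charc(M_1)$, which controls all top coefficients $e_1,\dots,e_d$ at once and not merely the coprime-to-$p$ traces.
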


\begin{theorem}\label{main_theorem_so}
Let $d<\frac{n}{2}$, and let $\TR_d^{SO}$ be the random variable attaching to a matrix $M\in SO_{n}(\Ol)$ its trace datum of length $d$. Then, 

$$
d_{TV}(\mu_{\TR_d^{SO}}, \mu_{\UTR_d})=O\left(
q^{d-\frac{n^2}{8d}+\frac{n}{4}
}\left(1+\frac{1}{q^2-1}\right)^{n/2}\binom{n/2+2d-1}{n/2}+q^{2d+\frac{n}{2}-\frac{n^2}{4d}+o(n)}
\right).
$$
In particular, there is some constant $c_q^{SO}$, such that for $d<c_q^{SO}\cdot n$, this total variation distance is $o(1)$ as $n\to\infty$.
\end{theorem}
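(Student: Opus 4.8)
First I would translate the statement into one about characteristic polynomials and then control those one $\pi$-adic digit at a time. By Newton's identities each $\tr(M^i)$ is a $\Z$-polynomial in the coefficients $e_1,\dots,e_d$ of $\mathrm{charpoly}_M$ (specifically in $e_1,\dots,e_i$); combining this with Lemma~\ref{lem_trace_dependencies} one checks that the map sending $(e_1,\dots,e_d)$ to the length-$d$ trace datum of $M$ (after dividing the $i$-th entry by $\pi^{v_p(i)}$) is a measure-preserving automorphism of $\Ol^d$ — its Jacobian is lower-triangular in the index $i$ with diagonal entries $\pm i/\pi^{v_p(i)}\in\Ol^\times$, the Newton factors $i$ being exactly cancelled by the $1/|i|_p$ normalisation. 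Hence $d_{TV}(\mu_{\TR_d^{SO}},\mu_{\UTR_d})$ equals the total variation distance between the law of $(e_1,\dots,e_d)$ for Haar-random $M\in SO_n(\Ol)$ and the Haar measure on $\Ol^d$. Since $\mathrm{charpoly}_M$ is self-reciprocal of degree $n$, this is the same as equidistribution of the first $d$ coefficients of the degree-$(n/2)$ polynomial $P_M$ in the variable $y=x+x^{-1}$ attached to $M$, and because $d<n/2$ these are $d$ genuinely free coefficients. Finally, total variation distance on the compact group $\Ol^d$ is the supremum over $L$ of that of the reductions modulo $\pi^L$, so it suffices to bound this uniformly in $L$.

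Now condition on $\bar M:=M\bmod\pi\in SO_n(\F_q)$, which is Haar-uniform. The crux is that there is a Zariski-dense ``$2d$-regular'' locus of $\bar M$ — consisting, up to negligible exceptions at the eigenvalues $\pm1$, of those $\bar M$ for which the squarefree part of $\mathrm{charpoly}_{\bar M}$ has degree exceeding $2d$ — on which further conditioning fixes $(e_1,\dots,e_d)\bmod\pi$ and leaves every higher $\pi$-adic digit \emph{exactly uniform}. Indeed, for the fibration $SO_n(\resring L)\to SO_n(\resring{L-1})$ with $L\ge2$ and $p$ odd, each fibre over a lift $M_0$ is a torsor under $\mathfrak{so}_n(\F_q)$ via $Y\mapsto(I+\pi^{L-1}Y)M_0$, and Jacobi's formula for the variation of a determinant together with the Faddeev--LeVerrier expansion of the adjugate shows that the induced change in $(e_1,\dots,e_d)$ is, after a unit-triangular base change, the $\F_q$-linear map $Y\mapsto(\tr(M_0^{\,j}Y))_{j=1}^{d}$. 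Pairing against $\mathfrak{so}_n$ annihilates the $J$-symmetric part of each $M_0^{\,j}$, so this map is onto $\F_q^d$ iff $M_0-M_0^{-1},\dots,M_0^{\,d}-M_0^{-d}$ are linearly independent; writing $M_0^{\,j}-M_0^{-j}=(M_0-M_0^{-1})\,U_{j-1}(M_0+M_0^{-1})$ with Chebyshev-type $U_{j-1}$, this is equivalent to $M_0+M_0^{-1}$ having minimal polynomial of degree $\ge d$ (plus non-vanishing of $M_0-M_0^{-1}$), which is precisely $2d$-regularity expressed in the variable $y$. On the regular locus, induction on $L$ then yields the exact-uniformity claim, so the conditional law given the regular event is a mixture of fibrewise-uniform laws indexed only by the mod-$\pi$ value of $(e_1,\dots,e_d)$.

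It follows that $d_{TV}(\mu_{\TR_d^{SO}},\mu_{\UTR_d})\ll\Pr[\bar M\text{ is not }2d\text{-regular}]+d_{TV}\big((e_1,\dots,e_d)\bmod\pi,\ \text{Haar on }(\resring1)^d\big)$. The second summand is exactly the finite-field equidistribution statement for the orthogonal group, for which the bound of Gorodetsky--Rodgers \cite{GR21} supplies the term $q^{2d+n/2-n^2/(4d)+o(n)}$. For the first summand, the non-regular $\bar M$ are those whose characteristic polynomial has squarefree part of degree $\le 2d$; a generating-function count shows that the number of self-reciprocal degree-$n$ polynomials of this kind is at most $q^{d}\binom{n/2+2d-1}{n/2}$ — choose the squarefree part (a polynomial of $y$-degree $<d$), then an inflation pattern with at most $\sim 2d$ distinct factors, counted by the binomial — while Fulman's mass formula for characteristic polynomials in $SO_n(\F_q)$ bounds the probability weight of any such polynomial by $q^{n/4-n^2/(8d)}\big(1+\tfrac1{q^2-1}\big)^{n/2}$ once one optimises over the size and shape of the repeated part; the product of these two is the first term of the theorem. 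The concluding assertion is then immediate: for $d<c_q^{SO}\cdot n$ the factors $q^{-n^2/(8d)}$ and $q^{-n^2/(4d)}$ dominate the $q^{O(n)}$ growth of everything else.

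The step I expect to be hardest is the regularity analysis of the tangent map \emph{inside} the constrained Lie algebra $\mathfrak{so}_n$: one must pin down a notion of ``$2d$-regular'' that simultaneously forces the surjectivity above — with a correct treatment of the eigenvalues $\pm1$, where $M_0-M_0^{-1}$ degenerates and extra conditions are needed — and is rare enough that Fulman's estimates deliver the claimed $-n^2/(8d)$ saving; the orthogonal case is genuinely more delicate here than $GL_n$ or $SL_n$, where the analogous map lands in all of $\mathfrak{gl}_n$ and no $J$-symmetry obstruction arises. Secondary technicalities are the $\pi$-adic precision bookkeeping in Newton's identities when $p\le 2d$ — where one must verify that the $1/|i|_p$-normalised trace datum is precisely the object for which no precision is lost — and checking that the $o(n)$ inherited from \cite{GR21} remains uniform in the regime $d\asymp n$.
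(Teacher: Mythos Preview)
Your overall architecture matches the paper's: reduce to mod~$\pi^k$ via Lemma~\ref{lem_total_variation_dist_limit}, study the fibrewise linear map on $\mathfrak{so}_n(\field_q)$, identify a regularity threshold under which each $\pi$-step is exactly uniform, and combine the mod-$\pi$ discrepancy from \cite{GR21} with a bound on the non-regular locus coming from Fulman's conjugacy-class formulae. Your trace-pairing formulation of the tangent map, together with the Chebyshev identity $M_0^{\,j}-M_0^{-j}=(M_0-M_0^{-1})U_{j-1}(M_0+M_0^{-1})$, is a pleasant alternative to the paper's route, which instead computes $\img(\partial\charc_{A_0})=\tfrac{\charc A_0}{\min A_0}\cdot\field_q[x]^{\pal}_{<\deg\min A_0}$ by writing down explicit representatives for primary conjugacy classes (Section~\ref{section_explicit_rep_so}, Theorem~\ref{theorem_derivative_image_so}) and then invokes an orthogonality relation for Hayes characters over palindromic polynomials (Lemma~\ref{lem_hayes_character_sum_over_palindromic_polys}). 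Both land on the same condition $\delta(\deg\min\overline{A})\ge d$, and the paper's formulation has the advantage of treating the eigenvalues $\pm1$ uniformly, which you correctly flag as the delicate point in your approach.

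However, you have \emph{reversed the origin of the two error terms}, and your sketched derivations of each would not go through as written. The structured term
\[
q^{d+\frac{n}{4}-\frac{n^2}{8d}}\Bigl(1+\tfrac{1}{q^2-1}\Bigr)^{n/2}\binom{n/2+2d-1}{n/2}
\]
is exactly what \cite[Theorem~7.9]{GR21} gives for the mod-$\pi$ discrepancy of $\charc(\overline{A})$ in a single short interval, summed over the $q^d$ intervals; the binomial and the factor $(1+\tfrac{1}{q^2-1})^{n/2}$ come from the $L$-function machinery there, not from Fulman. Conversely, the crude term $q^{2d+n/2-n^2/(4d)+o(n)}$ is what the paper obtains for the non-regular locus: Theorem~\ref{theorem_probability_small_min_poly_son} bounds $\Pr[\deg\min(B)<2d\cap\charc(B)=\overline{f}]$ by $q^{n/2-n^2/(4d)+o(n)}$ via Fulman's formulae (which yield only $q^{o(n)}$-quality constants, never structured binomials), and Lemma~\ref{lem_small_radical_bound} supplies at most $q^{2d+o(n)}$ polynomials $\overline{f}$ with small radical. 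Your assertion that ``Fulman's mass formula \ldots bounds the probability weight of any such polynomial by $q^{n/4-n^2/(8d)}(1+\tfrac{1}{q^2-1})^{n/2}$'' is not something those formulae produce, and the Gorodetsky--Rodgers bound for $O_n$ has exponent $-n^2/(8d)$, not $-n^2/(4d)$. Once you swap the attributions, your argument coincides with the paper's proof in Section~\ref{section_so_main}.
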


\begin{theorem}\label{main_theorem_un}
Let $\mathcal{K}/\mathcal{F}$ be an unramified quadratic extension, and let $\mathcal{R}$ be its ring of integers. Let $d<\frac{n}{2}$, and let $\TR_d^{U}$ be the random variable attaching to a matrix $M\in U_{n}(\Ol)$ its trace datum of length $d$. Let $\UTR_d$ be a uniform trace datum over $\mathcal{R}$. Then, 

$$
d_{TV}(\mu_{\TR_d^{U}}, \mu_{\UTR_d})=O\left(
q^{2d+\frac{n}{2}-\frac{n^2}{4d}}\left(1+\frac{1}{q-1}\right)^{n}\binom{n+2d-3}{n}
\right).
$$
In particular, there is some constant $c_q^{U}$, such that for $d<c_q^{U}\cdot n$, this total variation distance is $o(1)$ as $n\to\infty$.
\end{theorem}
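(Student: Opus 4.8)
The plan is to reduce the statement to the equidistribution of the leading $d$ coefficients of the characteristic polynomial, and then to establish that by propagating the Gorodetsky--Rodgers theorem for $U_n(\F_q)$ up the tower $U_n(\resring 1)\leftarrow U_n(\resring 2)\leftarrow\cdots$, controlling the loss at each stage by Fulman's conjugacy-class estimates. Write $f_M(x)=\det(xI-M)=x^n+c_1(M)x^{n-1}+\cdots+c_n(M)$, so $c_i(M)\in\mathcal R$. By the Newton recursion, used in the direction expressing the power sums in terms of the elementary symmetric functions (which involves no division), each $\tr(M^i)$ is a fixed $\Z$-coefficient polynomial in $c_1(M),\dots,c_i(M)$; hence $\TR_d^U$ is the image of $(c_1(M),\dots,c_d(M))$ under a fixed map $\Phi\colon\mathcal R^d\to\mathcal R^d$ independent of $M$ and $n$. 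A direct check — essentially Lemma~\ref{lem_trace_dependencies} read as an identity of universal polynomials, together with the fact that $-i$ is a unit of $\mathcal R$ when $p\nmid i$ and has valuation $v_p(i)$ otherwise — shows that $\Phi$ pushes the Haar measure of $\mathcal R^d$ forward to $\mu_{\UTR_d}$. Since total variation distance cannot increase under a pushforward, it would suffice to prove
\[
d_{TV}\bigl(\mu_{(c_1(M),\dots,c_d(M))},\ \mathrm{Haar}_{\mathcal R^d}\bigr)=O\!\left(q^{\,2d+\frac n2-\frac{n^2}{4d}}\Bigl(1+\tfrac1{q-1}\Bigr)^{n}\binom{n+2d-3}{n}\right).
\]
The hypothesis $d<n/2$ enters exactly here: the conjugate-self-reciprocal relation on characteristic polynomials of matrices in $U_n$ ties $c_i$ to $c_{n-i}$ and so places no constraint on $c_1,\dots,c_d$.

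\emph{The tower and the increment.} I would identify $\mathcal R^d=\varprojlim_N(\resringr N)^d$, so that the left-hand side above is $\sup_N d_{TV}\!\bigl(\mu_{(c_i(M))\bmod\pi^N},\mathrm{Haar}_{(\resringr N)^d}\bigr)$, noting that $M\bmod\pi^N$ is uniform on $U_n(\resring N)$. For $N=1$ this is the Gorodetsky--Rodgers estimate for the characteristic polynomial of a uniform element of $U_n(\F_q)$, which supplies the displayed bound. For $N\ge2$, condition on $\bar M:=M\bmod\pi^{N-1}$; the lift to $U_n(\resring N)$ is uniform over the torsor $\bar M\bigl(I+\pi^{N-1}\mathfrak u_n(\F_q)\bigr)$, where $\mathfrak u_n(\F_q)$ is the Lie algebra, an $\F_q$-form of $M_n(\F_{q^2})$ whose $(\mathrm{Tr}_{\F_{q^2}/\F_q}\!\circ\tr)$-orthogonal complement is the space of Hermitian matrices (using $p$ odd). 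From the Jacobi formula $\det(xI-\bar M-\pi^{N-1}\bar ME)\equiv\det(xI-\bar M)\bigl(1-\pi^{N-1}\tr((xI-\bar M)^{-1}\bar ME)\bigr)\pmod{\pi^N}$ and $\Adj(xI-\bar M)\bar M=\sum_{i=1}^n x^{n-i}\bigl(\bar M^{\,i}+c_1\bar M^{\,i-1}+\cdots+c_{i-1}\bar M\bigr)$ (with $c_j=c_j(\bar M)$) one gets
\[
c_i\bigl(\bar M(I+\pi^{N-1}E)\bigr)\ \equiv\ c_i(\bar M)-\pi^{N-1}\,\tr\!\bigl(B_iE\bigr)\pmod{\pi^{N}},\qquad B_i:=\bar M\bigl(\bar M^{\,i-1}+c_1\bar M^{\,i-2}+\cdots+c_{i-1}I\bigr),
\]
so the level-$N$ increment of $(c_1,\dots,c_d)$ is $E\mapsto(\tr(B_iE))_{i=1}^d$ evaluated at a uniform $E\in\mathfrak u_n(\F_q)$, and the matrix of this $\F_q$-linear map $L_{\bar M}\colon\mathfrak u_n(\F_q)\to(\resringr 1)^d$ depends only on $\bar M\bmod\pi$.

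\emph{Surjectivity of the increment and conclusion.} The linear-algebraic heart is the claim that if the minimal polynomial of $\bar M\bmod\pi$ has degree exceeding $2d$, then $L_{\bar M}$ is surjective. Indeed $\mathrm{span}_{\F_{q^2}}\{B_1,\dots,B_d\}=\bar M\cdot\mathrm{span}\{I,\bar M,\dots,\bar M^{\,d-1}\}$ is then $d$-dimensional over $\F_{q^2}$, and by the description of $\mathfrak u_n^\perp$, surjectivity fails only if some nonzero element $\bar M p(\bar M)$ of this span is Hermitian ($p\ne0$ of degree $<d$); using that $\bar M$ is unitary, so $\bar M^{-1}$ equals its conjugate transpose, this rearranges to an identity equating $\bar M^{\,2}p(\bar M)$ with a polynomial in $\bar M^{-1}$ of degree $<d$, and since $\bar M^{-1}$ is itself a polynomial in $\bar M$ of degree below that of the minimal polynomial, the hypothesis $\deg(\text{min.\ poly.})>2d$ leaves room only for $p=0$. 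Granting this, a short conditioning argument shows that whenever the reduction of $\bar M$ modulo $\pi$ is ``good'' the distribution of $(c_i(M))\bmod\pi^N$ given $\bar M$ is uniform on the fibre of $(\resringr N)^d\to(\resringr 1)^d$ over $(c_i)(M\bmod\pi)$ — because $L_{\bar M}$ is onto at every level — whence, for every $N$,
\[
d_{TV}\bigl(\mu_{(c_i(M))\bmod\pi^{N}},\ \mathrm{Haar}_{(\resringr N)^d}\bigr)\ \le\ d_{TV}\bigl(\mu_{(c_i(M))\bmod\pi},\ \mathrm{Haar}_{\F_{q^2}^d}\bigr)+2\varepsilon,
\]
with $\varepsilon=\Pr_{M'\in U_n(\F_q)}[\deg(\text{min.\ poly.\ of }M')\le 2d]$, a bound independent of $N$ since the exceptional event is already visible modulo $\pi$. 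Finally I would estimate $\varepsilon$ by summing, over the $\lesssim q^{2d}$ monic polynomials $g$ of degree $\le 2d$, the proportion of $M'\in U_n(\F_q)$ with $g(M')=0$; the locus $\{g(M')=0\}$ is a union of conjugacy classes of codimension $\gtrsim n^2/\deg g$ (bounding the centraliser dimension by convexity applied to the sizes of the generalised eigenspaces), and Fulman's cycle-index generating functions for the finite unitary group make this count precise, producing for $2\varepsilon$ a quantity of the same order as the $N=1$ term. Combining the three displays would give the theorem.

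\emph{Main obstacle.} I expect the crux to be the interaction of the Galois-twisted form $\mathfrak u_n(\F_q)$ with the error bookkeeping: one must (i) isolate a goodness hypothesis on $\bar M\bmod\pi$ that genuinely forces $L_{\bar M}$ onto — the subtle point being that $\mathrm{span}\{I,\bar M,\dots,\bar M^{\,d-1}\}$ must avoid the Hermitian matrices, a condition on the twist rather than merely on dimensions — and (ii) ensure that the Haar measure of its failure, computed via Fulman, does not exceed the Gorodetsky--Rodgers error, so that the final bound is genuinely uniform in $N$. The remaining ingredients — the Newton-identity reduction, the Jacobi-formula computation of the increment, and the identification $\Phi_*\mathrm{Haar}=\mu_{\UTR_d}$ — should be routine, but must be carried out with care over the quadratic extension $\mathcal R$.
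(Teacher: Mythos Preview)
Your proposal is correct and follows the same global architecture as the paper: reduce from trace data to the leading $d$ coefficients of $\charc(M)$, lift up the tower $U_n(\resring{N})$ one level at a time via the linearised increment, separate out the event that $\deg\min(M\bmod\pi)\le 2d$, and bound that event using Fulman's conjugacy-class formulae together with the Gorodetsky--Rodgers estimate at level $N=1$.

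The substantive difference lies in how you establish that the increment map $L_{\bar M}$ hits all of $\F_{q^2}^d$. The paper (Theorem~\ref{theorem_image_char_derivative_un}) computes the full image of $\partial\charc_{A_0}$ on $\mathfrak u_n(\F_q)$, obtaining $\F_q\cdot J_{A_0}\oplus\frac{\charc A_0}{\min A_0}\cdot\F_{q^2}[x]^{\skpal}_{<\deg\min A_0}$; this requires the skew-palindromic machinery, the reduction to primary pieces via Lemma~\ref{lem_image_of_direct_sum_un}, and ultimately the explicit adjoint computations of Section~\ref{subsection_explicit_representatives}. Your argument sidesteps all of that: you observe that $\mathfrak u_n(\F_q)^\perp$ (under the $\F_q$-pairing $\mathrm{Tr}_{\F_{q^2}/\F_q}\circ\tr$) is the space of Hermitian matrices, so $L_{\bar M}$ fails to surject only if some nonzero $\bar M p(\bar M)$ with $\deg p<d$ is Hermitian, and then the unitarity relation $\bar M^*=\bar M^{-1}$ turns this into a polynomial identity of degree $\le 2d$ in $\bar M$, which is impossible once $\deg\min(\bar M)>2d$. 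This is genuinely more elementary and avoids the explicit-representative computations entirely. What the paper's exact image buys is the finer bookkeeping in Propositions~\ref{prop_deltabk_small_minpoly_un}--\ref{prop_summing_deltabbk_over_all_g_un} (the ``bad'' fibres are themselves equidistributed in a smaller space), but since both routes feed into the same Fulman-based bound on $\Pr[\deg\min\le 2d]$, which is already dominated by the Gorodetsky--Rodgers term for $d<n/2$, this extra precision does not improve the final statement. Your pushforward inequality for the Newton map $\Phi$ is also a clean replacement for the paper's Lemma~\ref{lem_connection_traces_charpoly}; note that the verification that $\Phi_*\mathrm{Haar}_{\mathcal R^d}=\mu_{\UTR_d}$ does need the universal congruence of Lemma~\ref{lem_trace_dependencies} to see that the ``stuff depending on $c_{<i}$'' already lies in $\pi^{v_p(i)}\mathcal R$, exactly as you indicate.
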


\begin{rem}
    The sizes of the constants $c_q^G$ for the various $G$ come from the finite field results of Gorodetsky-Rodgers. 
    \begin{enumerate}
        \item For $GL_n$ and $SL_n$, one can see that the range of $d$ for which the bound is useful is growing with $q$, and can reach $d<\frac{1}{\sqrt{2}}n$ for $q$ large.
        \item For $Sp_{2n}$, we must have $d<n$. This stems from the fact that if $M\in Sp_{2n}$ has a characteristic polynomial $\charc(M)=x^{2n}+a_{2n-1}x^{2n-1}+\cdots+a_1x+1$, then $a_{2n-i}=a_i$. Using Newton's formula, this shows that $\TR_d^{Sp}$ is not close to $\UTR_d$ when $d>n$. Our results are meaningful in a range $d<(\frac{1}{2}-o_q(1))n$.
        \item For $SO_n$, for reasons similar to those in the symplectic group, $\TR_d^{SO}$ is not close to $\UTR_d$ when $d>n/2$. The actual range we get is $d<(\frac{1}{4}-o_q(1))n$.
        \item For $U_n$ the situation is again similar to the symplectic case and thus $\TR_d^{U}$ is not close to $\UTR_d$ when $d>n/2$. The actual range we get is $d<(\frac{1}{4}-o_q(1))n$. 
    \end{enumerate} 
\end{rem}

As an immediate consequence of the last Theorem we obtain a direct analog for one of the results of Diaconis and Shahshahani \cite[Theorem 1]{DS94}:

\begin{theorem}\label{thm_diaconis_shahshahani_analog}
Fix $d>0$, and let $B_1,\ldots,B_d$ be fixed balls in $\mathcal{R}$. Let $Z_i\sim \frac{1}{|i|_p}U_i$, where $U_i$ are independent uniform random variables on $\mathcal{R}$. Then, 

$$
\lim_{n\to\infty}\Pr_{M\in U_n(\Ol)}
\left[
\tr(M^i)-1_{p|i}\sigma(\tr(M^{i/p}))\in B_i,i=1,\ldots,d
\right]
=\prod_{i=1}^d \Pr[Z_i\in B_i].
$$
\end{theorem}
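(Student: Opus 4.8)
The plan is to obtain Theorem~\ref{thm_diaconis_shahshahani_analog} as a direct corollary of Theorem~\ref{main_theorem_un}: specialize that total variation bound to a \emph{fixed} $d$, and then unwind the definitions to turn a statement about total variation distance into the asserted statement about boxes of balls. First I would check that for fixed $d$ the error term in Theorem~\ref{main_theorem_un} is $o(1)$ as $n\to\infty$. Taking $\log_q$ of
\[
q^{2d+\frac n2-\frac{n^2}{4d}}\left(1+\frac1{q-1}\right)^n\binom{n+2d-3}{n}
\]
gives $2d+\tfrac n2-\tfrac{n^2}{4d}+n\log_q\!\left(1+\tfrac1{q-1}\right)+O_d(\log n)$, and since $d$ is constant the quadratic term $-\tfrac{n^2}{4d}$ dominates, so the expression tends to $-\infty$; in particular the hypotheses $d<\tfrac n2$ and $d<c_q^{U}\cdot n$ hold for all large $n$, and $d_{TV}(\mu_{\TR_d^{U}},\mu_{\UTR_d})\to 0$.

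Next I would translate this into convergence of box probabilities. Each ball $B_i\subseteq\mathcal R$ is clopen, hence Borel measurable, so $A:=B_1\times\cdots\times B_d$ is a Borel subset of $\mathcal R^d$, and by the definition of $d_{TV}$,
\[
\left|\Pr\nolimits_{M\in U_n(\Ol)}\!\left[\TR_d^{U}(M)\in A\right]-\Pr\!\left[\UTR_d\in A\right]\right|\le d_{TV}(\mu_{\TR_d^{U}},\mu_{\UTR_d})\xrightarrow[n\to\infty]{}0 .
\]
By definition $\TR_d^{U}(M)=\bigl(\tr(M^i)-1_{p\mid i}\,\sigma(\tr(M^{i/p}))\bigr)_{i=1}^d$, which indeed lies in $\prod_{i=1}^d\frac1{|i|_p}\mathcal R\subseteq\mathcal R^d$ by Lemma~\ref{lem_trace_dependencies}, so the first probability above is exactly the left-hand side appearing in Theorem~\ref{thm_diaconis_shahshahani_analog}. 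It remains only to identify $\Pr[\UTR_d\in A]$ with the claimed product.

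Finally, since $\UTR_d=\bigl(\tfrac1{|i|_p}a_i\bigr)_{i=1}^d$ with $a_1,\dots,a_d$ independent and Haar-uniform on $\mathcal R$, its coordinates are independent and the $i$-th coordinate has the law of $Z_i=\tfrac1{|i|_p}U_i$; hence $\Pr[\UTR_d\in A]=\prod_{i=1}^d\Pr[Z_i\in B_i]$, and combining with the previous display yields the theorem. Essentially all of the content lives in Theorem~\ref{main_theorem_un}, so there is no substantial obstacle here; the only points that call for (minor) care are verifying that the error term genuinely vanishes once $d$ is frozen, and invoking the independence of the coordinates of $\UTR_d$ when factoring the limit.
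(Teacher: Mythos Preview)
Your proposal is correct and matches the paper's approach: the paper states Theorem~\ref{thm_diaconis_shahshahani_analog} as an immediate consequence of Theorem~\ref{main_theorem_un}, and your argument is precisely the unwinding of that consequence (vanishing of the error term for fixed $d$, application of the total variation bound to the product set $A=B_1\times\cdots\times B_d$, and independence of the coordinates of $\UTR_d$).
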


We have analogous results for each one of the groups we consider, appearing in the corresponding section of the paper. We also get an extended range for a single trace in $GL_n(\Ol)$.

\begin{theorem}\label{main_theorem_single_trace_gl}
Let $U$ be the uniform distribution on $\Ol$, and let $p\nmid r$ be an integer, which may depend on $n$, such that $\log r=o(n^2)$. Then,
$$
d_{TV}(\mu_{\tr(A^r)},\mu_U)\to_{n\to\infty} 0.
$$ 
\end{theorem}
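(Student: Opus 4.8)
The plan is to reduce the statement to the corresponding finite-field result of Gorodetsky--Kovaleva \cite{GK23} via the lifting analysis that drives the rest of the paper. Write $A \in GL_n(\Ol)$ as a compatible tower of reductions $A_k \in GL_n(\resring{k})$, so that $\tr(A^r) \in \Ol$ is determined by the sequence $(\tr(A_k^r) \bmod \pi^k)_{k\ge 1}$. The first reduction $\tr(A_1^r) \in \F_q$ is exactly the trace of the $r$-th power of a Haar-random (equivalently, uniform) matrix in $GL_n(\F_q)$, so by \cite{GK23} its distribution is $o(1)$-close in total variation to uniform on $\F_q$ as long as $\log r = o(n^2)$. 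The remaining task is to show that, \emph{conditioned} on $A_k$, the next digit of $\tr(A^r)$ — the quantity $\tr(A_{k+1}^r) \bmod \pi^{k+1}$ regarded as an element of $\Ol/\pi\Ol \cong \F_q$ after subtracting the contribution of $A_k$ — is exactly uniform, or uniform up to the modular dependency $\tr(A^r)\equiv \sigma(\tr(A^{r/p}))\pmod{\pi^i}$ of Lemma \ref{lem_trace_dependencies}; since here $p \nmid r$ that dependency is vacuous, so we expect exact uniformity of each higher digit given the previous ones.

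The key steps, in order: (1) Recall the structure of the fibers of the reduction map $GL_n(\resring{k+1}) \to GL_n(\resring{k})$: each fiber is a coset of $\mathrm{Mat}_n(\F_q)$ (identified with $I + \pi^k \mathrm{Mat}_n(\resring{1})$), and Haar measure on $GL_n(\Ol)$ pushes down to the uniform measure on each finite quotient and to the uniform measure on each fiber. (2) Fix a lift $A_k$ and write $A_{k+1} = A_k(I + \pi^k B)$ with $B$ uniform in $\mathrm{Mat}_n(\F_q)$; compute $A_{k+1}^r = A_k^r + \pi^k \sum_{j=0}^{r-1} A_k^{j+1} B A_k^{r-1-j} + O(\pi^{k+1})$, hence $\tr(A_{k+1}^r) \equiv \tr(A_k^r) + \pi^k \, \tr\!\bigl(B \cdot \sum_{j=0}^{r-1} A_k^{r-j} \cdot \text{(reindex)} \bigr) \pmod{\pi^{k+1}}$; more cleanly, $\tr(A_{k+1}^r) - \tr(A_k^r) \equiv \pi^k\, r\,\tr(A_k^{r-1} B)\pmod{\pi^{k+1}}$ using cyclicity of the trace (here one must be a bit careful that $A_k$ and $B$ need not commute, but cyclicity still collapses the sum to $r\,\tr(A_k^{r-1}B)$). (3) Since $A_k$ is invertible, $A_k^{r-1}$ is a nonzero matrix mod $\pi$, and $p \nmid r$ means $r$ is a unit mod $\pi$; therefore $B \mapsto r\,\tr(A_k^{r-1}B) \bmod \pi$ is a nonzero $\F_q$-linear functional on $\mathrm{Mat}_n(\F_q)$, so it is equidistributed on $\F_q$ when $B$ is uniform. (4) Conclude that, conditionally on all of $A_1,\ldots,A_k$, the digit $(\tr(A^r) - \tr(A_k^r))/\pi^k \bmod \pi$ is exactly uniform on $\F_q$ and independent of the previous digits; combined with step via \cite{GK23} for the bottom digit, the full $\pi$-adic expansion of $\tr(A^r)$ is distributed with total variation distance $o(1)$ from uniform on $\Ol$. (5) Pass from the finite truncations to $\Ol$ itself: the total variation distance on $\Ol$ is the supremum (a limit) of the distances on $\resring{k}$, and each of those is bounded by the bottom-digit discrepancy from \cite{GK23}, which is $o(1)$ uniformly in $k$.

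The main obstacle is step (3) together with its interaction with step (2): one needs the linear functional $B\mapsto r\,\tr(A_k^{r-1}B)$ to be genuinely nonzero mod $\pi$, which requires $A_k^{r-1}\not\equiv 0$ — true since $A_k$ is a unit — and $r\not\equiv 0$, i.e. $p\nmid r$, which is exactly the hypothesis. The more delicate point is bookkeeping the non-commutativity in the binomial-type expansion of $(A_k(I+\pi^k B))^r$: the cross terms $A_k^{j}(\pi^k B)A_k^{r-j}$ all contribute at order $\pi^k$, and one must verify that under the trace they sum to $\pi^k r\,\tr(A_k^{r-1}B)$ rather than something that could degenerate; this is a clean consequence of $\tr(A_k^{j}BA_k^{r-j}) = \tr(BA_k^{r}) \cdot$(no, rather $=\tr(A_k^{r} \cdot$ conjugate$)$) — precisely, $\tr(A_k^{j} B A_k^{r-j})=\tr(A_k^{r-j}A_k^{j}B)=\tr(A_k^{r}B)$ is \emph{false} in general; the correct identity is $\tr(A_k^{j}BA_k^{r-j})=\tr(BA_k^{r-j}A_k^{j})=\tr(BA_k^{r})$, which \emph{is} valid by cyclicity, so the sum is indeed $r\,\tr(BA_k^{r})$ — wait, that has exponent $r$, not $r-1$; the resolution is that one expands $(I+\pi^kB)^r = I + \pi^k r B + O(\pi^{2k})$ \emph{first} and then multiplies by $A_k^r$, giving $A_k^r + \pi^k r A_k^r B + O(\pi^{2k})$ after absorbing $A_k$-powers, and $\tr(A_k^r B)$ is the relevant functional; since $A_k^r$ is a unit matrix mod $\pi$ this functional is nonzero, closing the argument. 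I would present step (2) in this cleaner order to avoid the non-commutativity pitfall entirely.
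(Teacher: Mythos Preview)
Your proposal is correct and follows exactly the paper's approach: reduce to finite levels via Lemma~\ref{lem_total_variation_dist_limit}, use the fiber description $A=A_0(I+\pi^{k-1}A_1)$ with $A_1$ uniform in $\mathfrak{gl}_n(\field_q)$, invoke the trace formula $\tr(A^r)=\tr(A_0^r)+r\pi^{k-1}\tr(A_0^r A_1)$ (Lemma~\ref{lem_trace_derivative}), observe that $A_1\mapsto\tr(\overline{A_0}^{\,r}A_1)$ is a nonzero linear functional on $M_n(\field_q)$ since $\overline{A_0}^{\,r}$ is invertible and $p\nmid r$, and quote \cite{GK23} for the bottom digit.

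One minor correction: your cyclicity computation $\tr(A_k^{j}BA_k^{r-j})=\tr(BA_k^{r})$ was already correct and yields $r\,\tr(A_k^{r}B)$; your proposed ``cleaner'' alternative of expanding $(I+\pi^kB)^r$ first and then multiplying by $A_k^r$ is \emph{not} valid at the matrix level (since $(A_k(I+\pi^kB))^r\neq A_k^r(I+\pi^kB)^r$), it only happens to give the right \emph{trace} because cyclicity erases the non-commutativity --- so present the cyclicity argument, as the paper does in Lemma~\ref{lem_trace_derivative}.
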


We prove Theorems \ref{main_theorem_gl}, \ref{main_theorem_sl}, \ref{main_theorem_sp},
\ref{main_theorem_so} and \ref{main_theorem_un} using similar ideas. Let $G$ be one of the groups $GL_n$, $SL_n$, $USp_{2n}$, $U_n$, $SO_n$. The basic idea is to understand, given $A_0\in G(\resring{k-1})$, what is the structure of the characteristic polynomial $\charc(A)$ for $A\in G(\Ol/\pi^k\Ol)$ a lift of $A_0$. As it turns out, the structure is regulated by the image of a certain linear map $\partial\charc_{A_0}$. We identify $\resring{}$ with $\field_q$, and we let $\g(\field_q)$ be the Lie algebra associated with the matrix group $G(\field_q)$. We also denote by $\field_q[x]_{<n}$ the set of polynomials of degree $<n$ with coefficients in $\field_q$. Abusing notation slightly, the lifts $A$ of $A_0$ to $G(\resring{k})$ are of the form $A=A_0(I+\pi^{k-1}A_1)$, where $A_1\in \g(\field_q)$ and $I$ is the identity matrix. The function $\partial\charc_{A_0}$ is a linear map from $\g(\field_q)$ with an image in $\field_q[x]_{<n}$, such that 

\begin{equation}\label{eqn_charpoly_lift_intro}
\charc(A)=\charc(A_0)+\pi^{k-1}\partial\charc_{A_0}(A_1).    
\end{equation}

In all cases, we compute the image of $\partial\charc_{A_0}$ explicitly. This involves finding explicit representatives for conjugacy classes of elements in $G(\field_{q^l})$, for an extension $\field_{q^l}/\field_q$ in which $\charc(A_0)\pmod{\pi}$ splits completely. It turns out that the dimension of the image is equal to $\deg(\min(A_0)\pmod{\pi})$ in the $GL_n$ case (and is determined explicitly by the degree of the minimal polynomial in the other cases).

After studying this map, we estimate the probability that $\charc(A)$ lands in some short interval (or some Hayes residue class). Using Lemma \ref{lem_connection_traces_charpoly} below (essentially Newton's formulae), this is equivalent to understanding the joint distribution of the first $k$ traces of powers of $M$, that is the distribution of the vector $(\tr(M^i))_{i=1}^k$. This is done by an induction argument, estimating the conditional probability that $\charc(A)$ lands in some short interval modulo $\pi^k$ given that it landed in the interval modulo $\pi^{k-1}$. By equation \eqref{eqn_charpoly_lift_intro} and since the dimension of $\img(\partial\charc_{A_0})$ is big when $\deg(\min A_0\pmod{\pi})$ is, when the degree of the minimal polynomial is large the conditional probability is $q^{-d}$ as expected heuristically.

In Section \ref{section_min_poly_deg} we show that with high probability, the degree of the minimal polynomial is large (in a precise sense described there). Using this, we get good control on the discrepancy of the probability that $\charc(M)$ lands in a certain interval or Hayes residue class, which allows us to compute the total variation distance of the trace data from the uniform measure on trace data. We remark that we can not get a useful discrepancy bound for every short interval, unlike the strong result in \cite{GR21}. Instead, our bound is, in a sense, an average discrepancy bound. The proof of Theorem \ref{main_theorem_single_trace_gl} is somewhat similar, based on the structure of $\tr(A^r)$ for lifts of $A_0$. For every one of the results, we rely on equidistribution results in the residue field $\field_q$, due to Gorodetsky-Rodgers \cite{GR21} or Gorodetsky-Kovaleva \cite{GK23}.

\begin{rem}
    The assumption that the extension $\mathcal{F}/\mathbb{Q}_p$ is unramified can probably be removed, however, we decided to leave it in this form to ease the presentation. Similarly, one can consider the same problem over an extension of $\field_q((t))$, the field of formal Laurent series over $\field_q$. Our methods work in this setting as well, giving identical results, but we decided to leave this out of the presentation to avoid unnecessary complications.
\end{rem}

The paper is structured as follows. In Section \ref{section_preliminary_background} we provide the main preliminary results we need, mainly about the distribution of conjugacy classes in matrix groups due to Fulman \cite{Ful00} and ideas from Gorodetsky-Rodgers \cite{GR21}.  In Section \ref{section_preliminary_results} we prove some general results on polynomials and traces of powers of matrices needed later. In Section \ref{section_explicit_representatives} we compute explicit representatives for certain conjugacy classes in $G(\field_q)$ for every group $G$ we consider. In Section \ref{section_min_poly_deg} we discuss the typical degree of a minimal polynomial in $G(\field_q)$. In the rest of the sections, we prove results on the joint distribution of traces of powers for $G(\Ol)$:
\begin{enumerate}
    \item In Section \ref{section_gln_main} we discuss results for $GL_n$ and prove Theorem \ref{main_theorem_gl} and Theorem \ref{main_theorem_single_trace_gl}.
    \item  In Section \ref{section_sln_main} we discuss results for $SL_n$ and prove Theorem \ref{main_theorem_sl}.
    \item  In Section \ref{section_sp_main} we discuss results for $Sp_{2n}$ and prove Theorem \ref{main_theorem_sp}.
    \item In Section \ref{section_so_main} we discuss results for $SO_n$ and prove Theorem \ref{main_theorem_so}.
    \item In the last section, Section \ref{section_un_main}, we discuss results for $U_n$ and prove Theorem \ref{main_theorem_un} and Theorem \ref{thm_diaconis_shahshahani_analog}.
\end{enumerate}

\textbf{Acknowledgments.} 
The author would like to thank Alexei Entin for suggesting the problems studied in the present paper, for many helpful discussions about them, and for comments on earlier versions of the paper which significantly improved the presentation. The author would also like to thank Itamar Hadas for many useful discussions regarding the problems and Noam Ta-Shma for discussions leading to the proof of Lemma \ref{lem_small_radical_bound}, and also Ze\'ev Rudnick, Ofir Gorodetsky and Brad Rodgers for helpful comments on earlier versions of the paper.

\section{Preliminary background}\label{section_preliminary_background}

The following section describes some preliminary background needed later in the paper. First, we want to fix some notation used throughout the work. 

Everywhere in the paper, $p$ is an odd prime and $q$ is a power of $p$. We denote by $\field_q[x]_{=n}$ the set of polynomials of degree $n$ in $\field_q[x]$. The subset $\field_q[x]_{=n}^\mon$ consists of the monic polynomials of degree $n$. We similarly denote by $\field_q[x]_{\le n},\field_q[x]_{< n}$ the polynomials of degree at most $n$ and strictly smaller than $n$ (respectively). A polynomial $f\in\field_q[x]$ is irreducible if it cannot decompose non-trivially. A polynomial is called prime if it is irreducible and monic. We denote the set of primes in $\field_q[x]$ by $\mathcal{P}$. 

We now fix a finite, unramified field extension $\mathcal{F}/\mathbb{Q}_p$ with a ring of integers $\Ol$. We also fix $\pi$ to be a uniformizer in $\Ol$, that is, an element of $\Ol$ with $p$-adic valuation equal to $1$. We assume that the residue field of $\mathcal{F}$, that is the field $\resring{}$, is isomorphic to $\field_q$. We let $\sigma\in \mathrm{Gal}(\mathcal{F}/\mathbb{Q}_p)$ be the Frobenius element $\left(
\frac{\pi}{p}
\right)$, that is the unique element such that $\sigma(x)\equiv x^p\pmod{\pi}$ for every $x\in\Ol$. Finally, we let $\mathcal{K} / \mathcal{F}$ be an unramified quadratic extension, and denote the generator of $\mathrm{Gal}(\mathcal{K}/\mathcal{F})$ by $\tau$.  

For a matrix $M$ (which can be in $M_n(\Ol)$, $M_n(\field_q)$, $M_n(\resring{k})$, etc.) we denote by $\charc(M)$ the characteristic polynomial of $M$ and by $\min(M)$ the minimal polynomial of $M$. Also, for us $G$ will always be one of the groups $GL$, $SL$, $Sp$, $SO$, $U$. We will typically use the parenthesis to declare the ring of definition (e.g., write $GL_n(\Ol)$, $GL_n(\resring{k})$ or $GL_n(\field_q)$) unless it should be clear from the context. When we want to emphasize the dependence on $n$, we will sometimes write $G_n$ (for the case of $Sp$, $G_n=Sp_{2n}$).

\subsection{Distribution of conjugacy classes for matrices over finite fields}

We recall some facts about the distribution of conjugacy classes for matrices over a finite field $\field_q$. For a much more extensive review, describing also some generating functions techniques and discussing the details for other groups, see \cite{Ful00}.

We first recall that conjugacy classes for $GL_n(\field_q)$ may be parameterized using rational canonical forms. For a monic  irreducible polynomial $\phi\in\field_q[x]$, associate a partition $\lambda_\phi$ of some non-negative integer $|\lambda_\phi|$. We impose the restrictions $|\lambda_x|=0$ and $\sum_{\phi\in\mathcal{P}} |\lambda_\phi|\deg(\phi)=n$. 

Now let $\phi=x^{\deg\phi}+a_{\deg\phi -1}x^{\deg\phi -1}+\cdots+a_1 x+a_0$. We recall that the companion matrix $C(\phi)$ is the matrix
$$
  \left(\begin{array}{ccccc}
    0 & 1 & 0 & \cdots & 0 \\
    0 & 0 & 1 & \cdots & 0 \\
    \vdots & \vdots & \ddots & \cdots & \vdots \\
    0 & 0 & 0 & 0 & 1 \\
    -a_0 & -a_1 & -a_2 & \cdots & -a_{\deg(\phi)-1} 
  \end{array}\right).
$$

Let $\{\phi_i\}_{i=1}^k$ be a collection of monic, irreducible polynomials, together with partitions $|\lambda_{\phi_i}|>0$, $\lambda_{\phi_i}=(\lambda_{\phi_i,j})_{j=1}^{r_\phi}$. We order the parts of $\lambda_{\phi_i}$ so that $\lambda_{\phi_i,1}\ge\lambda_{\phi_i,2}\ge\ldots$. A collection of partitions $(\lambda_{\phi_i})_{i=1}^k$ is called a \emph{conjugacy class datum} and we attach to it a matrix

$$
  M=\left(\begin{array}{ccccc}
    R_1 & 0 & 0 & \cdots & 0 \\
    0 & R_2 & 0 & \cdots & 0 \\
    \vdots & \vdots & \ddots & \cdots & \vdots \\
    0 & 0 & 0 & R_{k-1} & 0 \\
    0 & 0 & 0 & \cdots & R_k 
  \end{array}\right),
$$

where $R_i$ is the matrix

$$
  \left(\begin{array}{ccccc}
    C(\phi_i^{\lambda_{\phi_i,1}}) & 0 & 0 & \cdots & 0 \\
    0 & C(\phi_i^{\lambda_{\phi_i,2}}) & 0 & \cdots & 0 \\
    \vdots & \vdots & \ddots & \cdots & \vdots \\
  \end{array}\right).
$$

The conjugacy class of the matrix $M$ is 
a matrix attached to the conjugacy class datum. Every conjugacy class of $GL_n(\field_q)$ is obtained from a unique conjugacy class datum. Many algebraic properties of $M$ may be derived from its conjugacy class datum alone. For example, the characteristic polynomial is $\charc(M)=\prod_{\phi\in\Pee} \phi^{|\lambda_\phi(M)|}$, and the minimal polynomial is $\min(M)=\prod_{\phi\in\Pee} \phi^{\lambda_{\phi,1}(M)}$. Moreover, if one draws a matrix uniformly at random from $GL_n(\field_q)$, the probability of getting a matrix from the conjugacy class of $M$ is (see for example \cite[Section 2.1]{Ful01})

$$\label{eqn_fulman_conj_classes_prob}
\frac{1}{\prod_\phi q^{\deg\phi\sum_i (\lambda_{\phi,i}')^2}\prod_{i\ge 1}\left(\frac{1}{q^{\deg(\phi)}}\right)_{m_i(\lambda_\phi)}}.
$$
Here, $m_i(\lambda_\phi)$ is the number of parts of $\lambda_\phi$ of size $i$, and $\lambda_{\phi,i}'$ is the partition dual to $\lambda_\phi$, that is, the partition 
$|\lambda_{\phi}'|=\sum_{i=1}^{|\lambda_\phi|}m_i(\lambda_{\phi})$. Finally, 

$$\left(\frac{1}{q^{\deg(\phi)}}\right)_{m_i(\lambda_\phi)}:=\left(1-\frac{1}{q^{\deg(\phi)}}\right)\cdots \left(1-\frac{1}{q^{m_i(\lambda_\phi)\deg(\phi)}}\right).$$

Fulman \cite{Ful00, Ful99} derive similar formulas for other classical finite matrix groups, which we use in the paper.

\subsection{Hayes characters}\label{section_hayes_characters}

Here we recall briefly basic facts about Hayes characters, introduced by Hayes \cite{Hay65}. For a more extensive review, see \cite{Gor20}, on which we rely in this short introduction. We also remark that this is a special case of Hecke characters over function fields (see \cite[\S 9]{Ros02}).

\subsubsection{Hayes' equivalence relation}
Let $\mathcal{M}_q \subset\field_q[x]$ be the set of monic polynomials. We note that it is a monoid under multiplication, and define an equivalence relation respecting this structure. Let $l\ge 0$ be an integer, and let $0\ne H\in\field_q[x]$ be a monic polynomial. We define an equivalence relation $R_{l,H}$ on $\mathcal{M}_q$, by saying that $A\equiv B$
mod $R_{l,H}$
if and only if $A,B$
have the same $l$-next-to-leading coefficients, and also $A\equiv B$ mod $H$. We remark that we adopt the convention that if $\deg(A)<j$, the $j$-th next-to-leading coefficient of $A$ is $0$. Note that we do not demand that $\deg(A)=\deg(B)$. For example, we have
$$
x^5+x^3+1\equiv x^3+x+1 \pmod{R_{2,x}},
$$
for any $q$. One can show that $R_{l,H}$ respects the multiplication on $\mathcal{M}_q$, and that the quotient $\mathcal{M}_q/R_{l,H}$ is finite. Further, the unit group $(\mathcal{M}_q/R_{l,H})^\times$ of the quotient $\mathcal{M}_q/R_{l,H}$ (which is composed precisely of the conjugacy classes of polynomials $A$ which are coprime with $H$) is of size $q^l\varphi(H)$, where $\varphi$ is Euler's totient function (for an introduction to the Euler's totient function over $\field_q[x]$, see \cite[\S 2]{Ros02}).

When we look at the subset $\field_q[x]_{=n}^{\mon}\subset\mathcal{M}_q$ of degree $n$ monic polynomials, the equivalence relation $R_{l,1}$ detects short intervals. That is, the equivalence classes restricted to $\field_q[x]_{=n}^{\mon}$ are of the form 

$$I(f,l):=\{g\in\field_q[x]_{=n}^{\mon}:\text{ The $l$ next-to-leading coeficcients of } g \text{ are the same as those of }f\}.$$
These sets are analogous to intervals in $\mathbb{Z}$, hence the name.

\subsubsection{Characters}

Let $G_{l,H}$ be the finite abelian group $(\mathcal{M}_q/R_{l,H})^\times$. For a character $\chi$ of $G_{l,H}$, we define a function (which we also denote by $\chi$) over the full monoid $\mathcal{M}_q$. The definition is analogous to that of Dirichlet characters, 

$$
\chi(A)=\begin{cases}
\chi(A \text{ mod }{R_{l,H}}), &(A,H)=1,\\
0,& \text{otherwise.}
\end{cases}
$$
These are called Hayes characters. We denote the group of Hayes characters by $\hat{R}_{l,H}$. As usual, we will denote by $\chi_0$ the Hayes character corresponding to the trivial character of $(\mathcal{M}_q/R_{l,H})^\times$. As in the case of Dirichlet characters, orthogonality of characters now implies that for $n\ge l+\deg(H)$,

\begin{equation}\label{eqn_hayes_first_orthogonality_of_characters}
\frac{1}{q^{n-\deg(H)}\varphi(H)}\sum_{f\in \field_q[x]_{=n}^\mon}\chi(f)=\begin{cases}
    1,&\chi=\chi_0,\\
    0,& \text{otherwise.}
\end{cases}
\end{equation}
and for any $f,g\in \mathcal{M}_q$, coprime with $H$,

\begin{equation}\label{eqn_hayes_ortho_relation}
\frac{1}{q^l \varphi(H)}\sum_{\chi\in \hat{R}_{l,H}}\chi(f)\overline{\chi}(g)=\begin{cases}
    1,&f\equiv g\pmod{R_{l,H}},\\
    0,& \text{otherwise.}
\end{cases}
\end{equation}
The first orthogonality relation follows from the fact that for $n\ge l+\deg(H)$, $\field_q[x]_{=n}^\mon$ is a disjoint union of $q^{n-l-\deg(H)}$ systems of representatives for the Hayes equivalence relation $R_{l,H}$. 

Similarly to the way that Dirichlet characters are useful to detect modular relations, using the orthogonality relation \eqref{eqn_hayes_ortho_relation} short interval characters may be used to detect if an element belongs to a short interval in $\field_q[x]$. 

\subsubsection{Extension to $\resring{k}$}

We extend the Hayes relation for polynomials in $\left(\resring{k}\right)[x]$ in the following way. Let $d\ge 0$ be an integer and let $H\in(\resring{k})[x]$ be a polynomial. We define the Hayes relation modulo $\pi^k$, $R_{d,H,\pi^k}$ by saying that two monic polynomials $A,B\in(\resring{k})[x]$ are equivalent (denoted $A\equiv B\text{ mod }R_{d,H,\pi^k}$) if $A-B=0\pmod{H}$ and $A,B$ have the same $d$ next-to-leading coefficients. We again adopt the convention that if $\deg(A)<j$, the $j$-th next-to-leading coefficient of $A$ is $0$. 

The special case $k=1$ is precisely the case of the ``regular" Hayes relation defined above. Finally, we will sometimes denote the Hayes characters (defined only for $d=1$) by $\widehat{R}_{d,H,\pi}$. 

\subsection{Distribution over finite fields}
The distribution of the traces of powers and characteristic polynomials of a matrix in $GL_n(\field_q)$ (and other groups) is studied in \cite{GR21}. They start from the pointwise probability for a given characteristic polynomial. Let $f\in\field_q[x]$ be monic such that $f(0)\ne 0$ (this condition is necessary for a polynomial to be a characteristic polynomial of a matrix in $GL_n(\field_q)$). Factor $f$ to irreducible polynomials: $f=\prod_{i=1}^r P_i^{e_i}$. Denote $\deg(P_i)=m_i$. Then, a classical result proved by Reiner \cite[Theorem 2]{Rei61} and Gerstenhaber \cite[Section 2]{Ger61} (independently) shows that

\begin{equation}\label{eqn_pointwise_probability_gorodetsky_rodgers}
P_{GL}(f):=\Pr_{A\in GL_n(\field_q)}[\charc(A)=f]=\prod_{i=1}^r \frac{q^{m_i e_i(e_i-1)}}{|GL_{e_i}(\field_{q^{m_i}})|}.    
\end{equation}

Fixing a Hayes character $\chi$, Gorodetsky-Rodgers define an $L$-function

$$L_{GL}(u,\chi)=\sum_{f\in \mathcal{M}_{q}^{gl}}\chi(f)u^{\deg f}, \quad |u|<1/q.$$
(Here $\mathcal{M}_{q}^{\mathrm{gl}}$ is the set of all monic polynomials $f$ with $f(0)\ne 0$). Using the formula \eqref{eqn_pointwise_probability_gorodetsky_rodgers} and its structure, Gorodetsky-Rodgers show that the generating function 

$$
\sum_{f\in\mathcal{M}_q^{\mathrm{gl}}}P_{GL}(f)\chi(f)u^{\deg f}
$$
factors as a product $\prod_{i=1}^\infty L_{GL}(\frac{u}{q^i},\chi)$. Now using character orthogonality and a variant of the Riemann hypothesis for the $L$-function $L_{GL}(u,\chi)$, they prove the following 

\begin{theorem}[\cite{GR21}, Theorem 1.3]\label{thm_gorodetsky_rodgers_for_charpoly}
    Let $A\in GL_n(\field_q)$ be a random matrix chosen according to the Haar measure. Then for $0\le d<n$ and for $f\in\mathcal{M}_{n,q}$,

    $$
    \left| \Pr[\charc(A)\in I(f,n-d)]-q^{-d}\right|<q^{-\frac{n^2}{2d}}\left(1+\frac{1}{q-1}\right)^n\binom{n+d-1}{n}.
    $$
\end{theorem}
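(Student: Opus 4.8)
The plan is to follow the standard template for equidistribution of characteristic polynomials in short intervals: detect the interval with Hayes characters, and evaluate the resulting character sums using the pointwise probabilities $P_{GL}$ together with the Riemann Hypothesis for the relevant $L$-functions.

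First I would apply the orthogonality relation \eqref{eqn_hayes_ortho_relation} for the Hayes characters modulo the relation $R_{d,1}$ detecting these short intervals (there are $q^d$ of them, as $H=1$). Expanding the indicator of the short interval as an average of $\chi(g)\overline\chi(f)$, and using that $P_{GL}$ is supported on characteristic polynomials of $GL_n(\field_q)$ --- exactly the monic $g$ of degree $n$ with $g(0)\ne 0$ --- one gets
\[
\Pr[\charc(A)\in I(f,n-d)] \;=\; \frac{1}{q^d}\sum_{\chi\bmod R_{d,1}} \overline\chi(f)\sum_{g\in\field_q[x]_{=n}^{\mon}} P_{GL}(g)\,\chi(g).
\]
The trivial character contributes $q^{-d}\sum_g P_{GL}(g)=q^{-d}$ (total probability), which is the claimed main term; so the problem reduces to bounding $\bigl|\sum_g P_{GL}(g)\chi(g)\bigr|$ for each $\chi\ne\chi_0$ (the prefactor $(q^d-1)/q^d$ is $<1$).

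Next I would turn this inner sum into a product. By the Reiner--Gerstenhaber formula \eqref{eqn_pointwise_probability_gorodetsky_rodgers} and complete multiplicativity of $\chi$, the series $\sum_g P_{GL}(g)\chi(g)u^{\deg g}$ (over monic $g$ with $g(0)\ne 0$) has an Euler product over primes $P\ne x$; substituting $|GL_e(\field_{q^m})|=q^{me^2}\prod_{j=1}^e(1-q^{-mj})$ and applying Euler's identity $\sum_{e\ge 0}z^e\big/\prod_{j=1}^e(1-t^j)=\prod_{i\ge 0}(1-zt^i)^{-1}$ on each local factor shows the series factors as $\prod_{i\ge 1}L_{GL}(u/q^i,\chi)$, where $L_{GL}(v,\chi)=(1-\chi(x)v)\,L(v,\chi)$ and $L(v,\chi)=\sum_{g\in\mathcal{M}_q}\chi(g)v^{\deg g}$ is the Hayes $L$-function. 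Thus $\sum_{g\in\field_q[x]_{=n}^{\mon}}P_{GL}(g)\chi(g)=[u^n]\prod_{i\ge 1}L_{GL}(u/q^i,\chi)$. For $\chi=\chi_0$ this product telescopes to $(1-u)^{-1}$, re-deriving $\sum_g P_{GL}(g)=1$; for $\chi\ne\chi_0$ the first orthogonality relation \eqref{eqn_hayes_first_orthogonality_of_characters} makes $L(v,\chi)$ a polynomial of degree $\le d-1$, and by Weil's Riemann Hypothesis for these $L$-functions (Hecke $L$-functions over $\field_q(x)$ with conductor supported at the place $\infty$) all of its inverse roots satisfy $|z|\le\sqrt q$. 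Hence $L_{GL}(v,\chi)=\prod_{j=0}^{D-1}(1-\beta_j v)$ with $D\le d$ and $|\beta_j|\le\sqrt q$.

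Finally I would extract the $u^n$-coefficient of $\prod_{i\ge 1}\prod_j(1-\beta_j u q^{-i})$. Grouping the $n$ chosen factors by the inverse root $\beta_j$ involved, with multiplicities $(m_j)$, $\sum_j m_j=n$, the sum over the values of $i$ contributes the elementary symmetric function $e_{m_j}(q^{-1},q^{-2},\dots)=q^{-\binom{m_j+1}{2}}\big/\prod_{l=1}^{m_j}(1-q^{-l})$. The crucial observation is $|\beta_j|^{m_j}q^{-\binom{m_j+1}{2}}\le q^{m_j/2-\binom{m_j+1}{2}}=q^{-m_j^2/2}$: the Riemann Hypothesis bound exactly absorbs the leading term of the symmetric function. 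Then Cauchy--Schwarz gives $\sum_j m_j^2\ge n^2/d$ (the source of $q^{-n^2/(2d)}$), the number of admissible profiles $(m_j)$ is $\binom{n+d-1}{n}$, and a careful bound on the residual $q$-Pochhammer denominators $\prod_{l\le m_j}(1-q^{-l})^{-1}$ yields the factor $(1+\tfrac{1}{q-1})^n$. Assembling these pieces gives $\bigl|\sum_g P_{GL}(g)\chi(g)\bigr|<q^{-n^2/(2d)}(1+\tfrac{1}{q-1})^n\binom{n+d-1}{n}$ for every $\chi\ne\chi_0$, which is the theorem. I expect the Riemann Hypothesis input to be the essential difficulty: it is the only non-elementary ingredient and the sole reason the character sums are superexponentially small rather than of trivial size; the delicate technical point is carrying out the coefficient extraction tightly enough to produce exactly $(1+\tfrac{1}{q-1})^n$ and not a cruder exponential constant.
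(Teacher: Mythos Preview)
Your proposal is correct and follows exactly the approach that the paper sketches (and attributes to \cite{GR21}): detect the short interval with Hayes characters modulo $R_{d,1}$, use the Reiner--Gerstenhaber pointwise formula to factor the probability generating function as $\prod_{i\ge 1}L_{GL}(u/q^i,\chi)$, invoke the Riemann Hypothesis for the Hayes $L$-function to bound the inverse roots, and extract the $u^n$-coefficient via Cauchy--Schwarz on the multiplicity profile. The paper does not give a self-contained proof of this statement but only the outline you have fleshed out, so there is nothing further to compare.
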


Using similar ideas Gorodetsky-Rodgers also prove a theorem regarding the equidistribution of traces of powers:

\begin{theorem}[\cite{GR21}, Theorem 1.1]\label{thm_gorodetsky_rodgers_for_traces}
Let $A\in GL_n(\field_q)$ be a random matrix chosen according to the Haar measure. Fix a strictly increasing sequence $b_1,\ldots,b_k$ of positive integers coprime with $p=\charc(\field_q)$. Then for any sequence $a_1,\ldots,a_k$ of elements from $\field_q$, we have
    $$
    \left| \Pr[\forall 1\le i\le k:\tr(A^{b_i})=a_i]-q^{-k}\right|<q^{-\frac{n^2}{2b_k}}\left(1+\frac{1}{q-1}\right)^n\binom{n-1+b_k}{n}.
    $$
\end{theorem}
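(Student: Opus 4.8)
The plan is to detect the event $\{\tr(A^{b_i})=a_i:1\le i\le k\}$ with additive characters of $\field_q$, to recognise the resulting character sum over $GL_n(\field_q)$ as a twisted moment of the characteristic polynomial against a Hayes character, and then to reuse the $L$-function input behind Theorem~\ref{thm_gorodetsky_rodgers_for_charpoly}. Write $\mathbb{E}[\,\cdot\,]$ for the average over $A\in GL_n(\field_q)$ under the Haar (uniform) measure, and fix a nontrivial additive character $\psi\colon\field_q\to\C^\times$. By orthogonality of additive characters,
\begin{equation}\label{eq:pp-detect}
\Pr_{A\in GL_n(\field_q)}\!\left[\tr(A^{b_i})=a_i\ \ \forall i\right]=\frac{1}{q^{k}}\sum_{\vec c\in\field_q^{k}}\psi\!\left(-\sum_{i=1}^{k}c_i a_i\right)\,\mathbb{E}\!\left[\psi\!\left(\sum_{i=1}^{k}c_i\tr(A^{b_i})\right)\right].
\end{equation}
The term $\vec c=0$ contributes exactly $q^{-k}$, so the problem reduces to bounding $\bigl|\mathbb{E}[\psi(\sum_i c_i\tr(A^{b_i}))]\bigr|$ uniformly over $\vec c\neq0$.

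Next I would pass to the characteristic polynomial. For a monic $f$ of degree $n$ and $b\ge1$ let $p_b(f)$ denote the $b$-th power sum of the roots of $f$, so that $p_b(\charc(A))=\tr(A^{b})$. Newton's identities — used in the direction that expresses power sums through elementary symmetric functions, which needs no division and so is valid over $\field_q$ — show that $p_b(f)$ is a fixed polynomial in the top $b$ next-to-leading coefficients of $f$, and that $p_b$ is additive: $p_b(fg)=p_b(f)+p_b(g)$. Hence, for each $\vec c\in\field_q^{k}$, the function
$$
\Lambda_{\vec c}(f):=\psi\!\left(\sum_{i=1}^{k}c_i\,p_{b_i}(f)\right)
$$
is completely multiplicative on $\mathcal M_q$ and factors through the Hayes quotient $\mathcal M_q/R_{b_k,1}$, i.e.\ $\Lambda_{\vec c}\in\widehat R_{b_k,1}$. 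Moreover $\vec c\mapsto\Lambda_{\vec c}$ is injective: because $b_1,\dots,b_k$ are distinct and coprime to $p$, a short computation with Newton's identities shows that $f\mapsto(p_{b_i}(f))_{i=1}^{k}$ is a homomorphism from the group $(\mathcal M_q/R_{b_k,1})^\times$ onto $(\field_q^{k},+)$, so $\Lambda_{\vec c}=\chi_0$ forces $\vec c=0$. Thus for $\vec c\neq0$ the quantity $\mathbb{E}[\psi(\sum_i c_i\tr(A^{b_i}))]=\sum_{f\in\mathcal M_{n,q}}P_{GL}(f)\,\Lambda_{\vec c}(f)$ is a twisted moment of $\charc(A)$ against a \emph{nontrivial} Hayes character in $\widehat R_{b_k,1}$.

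Finally I would apply the estimate already established for characteristic polynomials: the factorisation $\sum_{f\in\mathcal M_q^{\mathrm{gl}}}P_{GL}(f)\chi(f)u^{\deg f}=\prod_{i\ge1}L_{GL}(u/q^{i},\chi)$, combined with the Riemann Hypothesis for $L_{GL}(u,\chi)$ and a contour shift, yields for every nontrivial $\chi\in\widehat R_{b_k,1}$ and every $n>b_k$ the per-character bound
$$
\left|\sum_{f\in\mathcal M_{n,q}}P_{GL}(f)\,\chi(f)\right|<q^{-\frac{n^{2}}{2b_k}}\left(1+\frac{1}{q-1}\right)^{n}\binom{n-1+b_k}{n},
$$
which is exactly the content proved inside Theorem~\ref{thm_gorodetsky_rodgers_for_charpoly} before summing over characters. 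Substituting into \eqref{eq:pp-detect} and estimating the $q^{k}-1$ terms with $\vec c\neq0$ (each carrying the prefactor $q^{-k}$) bounds their total contribution by $\tfrac{q^{k}-1}{q^{k}}$ times the right-hand side above, hence strictly by the right-hand side itself; this is precisely the asserted bound.

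I expect the only genuinely nontrivial ingredient to be this last $L$-function input — showing that $L_{GL}(u,\chi)$ is a polynomial of controlled degree obeying the appropriate Riemann Hypothesis, so that a contour shift controls its $u^{n}$-coefficient — but it is identical to what is needed for Theorem~\ref{thm_gorodetsky_rodgers_for_charpoly}, so no new analytic work arises. Relative to that theorem the new content is purely bookkeeping: recognising $\Lambda_{\vec c}$ as a Hayes character and verifying injectivity of $\vec c\mapsto\Lambda_{\vec c}$. This is the point at which the hypothesis $p\nmid b_i$ is used, and it cannot be dropped, since $\tr(A^{pb})=\tr(A^{b})^{p}$ already imposes a deterministic relation among traces whose exponents differ by a factor of $p$.
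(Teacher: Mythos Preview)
The paper does not give its own proof of this statement: it is quoted verbatim as \cite[Theorem~1.1]{GR21}, and the surrounding text only sketches the method (``Using similar ideas Gorodetsky--Rodgers also prove\dots'', together with the $L$-function factorisation described just before Theorem~\ref{thm_gorodetsky_rodgers_for_charpoly}). Your proposal is correct and is precisely the argument of \cite{GR21} that the paper is alluding to; in fact the paper itself packages the key step---that $\Lambda_{\vec c}$ is a nontrivial short-interval Hayes character when $\vec c\neq0$ and the $b_i$ are coprime to $p$---as \cite[Lemma~2.2]{GR21}, and reuses exactly this mechanism in Section~\ref{section_neg_traces_finite_groups} (see Lemma~\ref{lem_chi_lambda_is_hayes} and Corollary~\ref{corollary_neg_traces_gln}) when extending to negative powers. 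So there is nothing to compare: your route and the cited proof coincide.
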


They have analogous results for other classical finite matrix groups, which we use throughout the present paper. In Section \ref{section_neg_traces_finite_groups} we discuss the results and methods of \cite{GR21} more extensively and we also prove a slight strengthening of the results for $GL_n$ and $SL_n$, to include traces of negative powers.

\subsection{$p$-adic matrix groups and measures}

We recall that throughout the paper, $p$ is an odd prime. We fix some finite, unramified field extension $\mathcal{F}/\mathbb{Q}_p$ with a ring of integers $\Ol$. We also denote by $\mathcal{K}/\mathcal{F}$ a quadratic, unramified extension with a ring of integers $\mathcal{R}$. We recall the definitions of the various $p$-adic groups we consider and set some notation for the rest of the paper.

\begin{definition}
The group $GL_n(\mathcal{O})$ consists of all invertible matrices $M\in M_n(\mathcal{O})$ with $M^{-1}\in M_n(\mathcal{O})$. The group $SL_n(\mathcal{O})$ is the subgroup of $GL_n(\Ol)$ of matrices with determinant $1$. 
\end{definition}

\begin{definition}
Let $\Omega_n=\left(
\begin{array}{cc}
     & I_n \\
    -I_n & 
\end{array}
\right)$ be the standard symplectic form. The group $Sp_{2n}(\Ol)$ is the subgroup of $GL_{2n}(\Ol)$ of matrices preserving the form defined by $\Omega_n$, that is of all matrices $M$ with $M\Omega_n M^t=\Omega_n$. 
\end{definition}

\begin{definition}
The group $O_n(\Ol)$ is the subgroup of $GL_n(\Ol)$ of matrices $M$ satisfying $MM^t=I$. The group $SO_n(\Ol)$ is the subgroup of $O_n(\Ol)$, consisting of matrices with determinant $1$.
\end{definition}

\begin{definition}
Let $\tau$ be the non-trivial Galois automorphism of $\mathcal{K}/\mathcal{F}$. 
For a matrix $M\in GL_n(\mathcal{R})$, define $M^*$ to be the matrix $\tau(M^t)$. The group $U_n(\Ol)$ is the subgroup of $GL_n(\mathcal{R})$ of matrices $M$ satisfying $MM^*=I$.
\end{definition}
We remark that these are not all the possible compact $p$-adic algebraic groups, and modifications of our methods could be used to treat some other groups.

\subsubsection{Lifting from the mod $\pi$ group}

Throughout the paper $G$ is one of the groups $GL_n$, $SL_n$, $Sp_{2n}$, $O_n$ or $U_n$. We note that the $p$-adic groups project onto their mod $p$ analogs. For an element $x\in\Ol$, define its mod $\pi^k$ reduction $\rho_k(x)=x\pmod{\pi^k}$. For a matrix $M\in GL_n(\Ol)$, we define its mod $\pi^k$ reduction $\rho_k(M)$ by applying $\rho_k$ on every entry of $M$. This defines a map $\rho_k$ for every one of the groups $G$ considered (we remark that for $U_n$ the image is inside $GL_n(\mathcal{R}/\pi^k)$ and not $GL_n(\resring{k})$). As a special case, when $k=1$, we denote $\overline{x}:=\rho_1(x)$ and for a matrix $M\in GL_n(\Ol)$ we denote $\overline{M}:=\rho_1(M)$.

We note that the $p$-adic groups project onto their $\resring{k}$ analogs. This allows us to carry out some of our main ideas in Section \ref{section_lifting} below. 

\begin{lem}\label{lem_PRR_reductive_surjective}
    Let $G$ be one of the groups considered. Then the map $\rho_k:G(\Ol)\to G(\resring{k})$ is surjective.
\end{lem}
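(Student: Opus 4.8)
The plan is to prove surjectivity of $\rho_k : G(\Ol) \to G(\resring{k})$ group by group, reducing everything to a Hensel-type lifting argument along the tower $\resring{k} \twoheadleftarrow \resring{k+1}$. The key observation is that $\rho_k$ factors as a composition $\rho_k = \rho_{k,k-1} \circ \rho_{k+1,k} \circ \cdots$ of the one-step reductions $\rho_{m+1,m}: G(\resring{m+1}) \to G(\resring{m})$, so it suffices to show each one-step map is surjective, and then that $G(\Ol) \to G(\resring{m})$ is surjective by a completeness/inverse-limit argument (since $\Ol = \varprojlim_m \resring{m}$ and each $G$ is defined by polynomial equations, $G(\Ol) = \varprojlim_m G(\resring{m})$, so surjectivity of each $\rho_{m+1,m}$ plus a diagonal/compactness argument gives surjectivity onto each finite level). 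Concretely, I would fix $\overline{A} \in G(\resring{m})$, choose any lift $\widetilde A \in M_n(\resring{m+1})$ (resp.\ $M_n(\mathcal{R}/\pi^{m+1})$ for $U_n$) of the entries, and correct it by an element of the form $\widetilde A(I + \pi^m X)$ with $X$ in the appropriate Lie algebra $\g(\field_q)$ to land back in $G(\resring{m+1})$; this is exactly the infinitesimal-lift setup already described in the introduction around equation \eqref{eqn_charpoly_lift_intro}.

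For $GL_n$ this is immediate: any lift of an invertible matrix mod $\pi^m$ to $M_n(\resring{m+1})$ is automatically invertible (its determinant is a lift of a unit, hence a unit), so $\rho_{m+1,m}$ is surjective with no correction needed. For $SL_n$, start from a lift $\widetilde A \in GL_n(\resring{m+1})$; then $\det(\widetilde A) = 1 + \pi^m u$ for some $u \in \field_q$, and replacing $\widetilde A$ by $\widetilde A \cdot \mathrm{diag}(1-\pi^m u, 1, \ldots, 1)$ fixes the determinant while not changing the reduction mod $\pi^m$. For $Sp_{2n}$, $O_n$ (hence $SO_n$), and $U_n$, the defining condition is $\Phi(A) := A J A^\dagger = J$ for the relevant form $J$ and involution $\dagger$; given a lift $\widetilde A$ with $\widetilde A J \widetilde A^\dagger = J + \pi^m S$, where $S$ lies in the space of "symmetric/alternating/Hermitian" matrices cut out by the constraint, one seeks a correction $\widetilde A \mapsto (I + \pi^m Y)\widetilde A$ so that the error is killed mod $\pi^{m+1}$; this reduces to solving $Y J + J Y^\dagger = -\pi^{-m}\cdot(\text{something}) \pmod{\pi}$, i.e.\ a \emph{linear} equation over $\field_q$ whose solvability is precisely the statement that the Lie algebra $\g(\field_q)$ surjects onto the relevant space of symmetric/Hermitian matrices under $Y \mapsto YJ + JY^\dagger$ — a standard fact that holds because $p$ is odd (the characteristic-$2$ obstructions for orthogonal groups are what the odd-residue-characteristic hypothesis rules out), and because smoothness of these group schemes over $\Ol$ is classical. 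For the orthogonal case one must also check the lift can be taken in $SO_n$ rather than just $O_n$: the determinant of any lift is $\pm 1$ plus higher-order terms and is forced to reduce to $+1$ when $\overline{A} \in SO_n(\resring{m})$, so there is nothing extra to do.

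The main obstacle — really the only non-formal point — is verifying the surjectivity of the infinitesimal map $Y \mapsto YJ + JY^\dagger$ (equivalently, smoothness of the group scheme) for the symplectic, orthogonal, and unitary groups. The cleanest route is to cite smoothness of these Chevalley/classical group schemes over $\mathbb{Z}[1/2]$ (hence over $\Ol$, as $p$ is odd), from which formal smoothness gives the lifting along $\resring{m+1}\twoheadrightarrow\resring{m}$ directly; alternatively one can do the linear algebra by hand, splitting $M_n$ into $\dagger$-symmetric and $\dagger$-antisymmetric parts using $\tfrac12$, and checking the map $Y\mapsto YJ+JY^\dagger$ restricted to antisymmetric $Y$ hits all symmetric targets. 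I would present the $GL_n$ and $SL_n$ cases explicitly and elementarily, then handle $Sp_{2n}, SO_n, U_n$ uniformly via the smoothness/formal-smoothness statement, flagging the oddness of $p$ as exactly what is needed.
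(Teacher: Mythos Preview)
Your proposal is correct but takes a substantially more hands-on route than the paper. The paper's proof is a single sentence: it invokes the fact that each $G$ under consideration is reductive and cites \cite[\S 3.3]{PR93} for the general statement that the reduction map $G(\Ol)\to G(\resring{k})$ is surjective for reductive group schemes. Your argument instead unpacks this by proving the one-step surjectivity $G(\resring{m+1})\twoheadrightarrow G(\resring{m})$ explicitly via Hensel-type corrections in the Lie algebra, treating $GL_n$ and $SL_n$ by direct determinant manipulation and the form-preserving groups via the linear map $Y\mapsto YJ+JY^\dagger$ (equivalently, smoothness over $\Z[1/2]$), then passing to $\Ol$ by the inverse-limit description $G(\Ol)=\varprojlim_m G(\resring{m})$.

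What each approach buys: the paper's citation is maximally concise and places the lemma in its natural generality (any smooth affine group scheme over $\Ol$, in particular reductive ones). Your approach is self-contained, makes the role of the hypothesis $p$ odd completely transparent (it is needed to invert $2$ when splitting into $\dagger$-symmetric and $\dagger$-antisymmetric parts, and to force $\det=1$ for lifts in the $SO_n$ case), and dovetails nicely with the paper's later use of exactly the same infinitesimal parametrization $A=A_0(I+\pi^{k-1}A_1)$ with $A_1\in\g(\field_q)$. Either is perfectly acceptable here; the paper simply opts for brevity.
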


\begin{proof}
The claim follows from \cite[\S 3.3]{PR93} since every $G$ we consider is reductive.
\end{proof}

\subsubsection{Measures}

Each of the groups we consider is equipped with a Haar probability measure, which is simply the limit of the uniform measures on the groups $G(\resring{k})$. We will translate our main theorems to claims about the uniform measures on the finite groups $G(\resring{k})$ and prove the claims about the $\resring{k}$ groups. 

Let $(\Omega,\Sigma)$ be a measure space, and let $\mu_1,\mu_2$ be two probability measures on $(\Omega,\Sigma)$. Then we recall that one defines the total variation distance of $\mu_1,\mu_2$ by

\begin{equation}
    d_{TV}(\mu_1,\mu_2)=\sup_{A\in\Sigma}|\mu_1(A)-\mu_2(A)|.
\end{equation}

\begin{definition}
Let $\mu$ be a Borel probability measure on $\Ol^d$. We define a probability measure $\rho_k(\mu)$ on $(\resring{k})^d$ by setting $\rho_k(\mu)(x)=\mu(x+\pi^k\Ol^d)$ for every $x\in (\resring{k})^d$. 
\end{definition}

We remark that $\rho_k(\mu)=(\rho_k)_*\mu$ is the push-forward measure of $\mu$ through the measurable map $\rho_k$. 

\begin{prop}\label{prop_measure_of_limit_of_sets}
    Let $\mu$ be a Borel probability measure on $\Ol^d$. Let $A\subset \Ol^d$ be a measurable set, and denote $A_k\subset (\resring{k})^d$ the set $\rho_k(A)$. Then, $\lim_{k\to\infty} \rho_k(\mu)(A_k)=\mu(A)$.
\end{prop}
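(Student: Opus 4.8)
The plan is to express $\rho_k(\mu)(A_k)$ as $\mu$ of an explicit subset of $\Ol^d$ and then to pass to the limit by continuity of $\mu$ along a decreasing sequence of sets. Since $(\resring{k})^d$ is finite and the cosets $x+\pi^k\Ol^d$, $x\in(\resring{k})^d$, are pairwise disjoint, the definition of the pushforward $\rho_k(\mu)$ gives
\[
\rho_k(\mu)(A_k)=\sum_{x\in A_k}\mu\bigl(x+\pi^k\Ol^d\bigr)=\mu\Bigl(\bigsqcup_{x\in A_k}\bigl(x+\pi^k\Ol^d\bigr)\Bigr)=\mu\bigl(\rho_k^{-1}(A_k)\bigr).
\]
Writing $B_k:=\rho_k^{-1}\bigl(\rho_k(A)\bigr)=A+\pi^k\Ol^d$, the assertion becomes $\mu(B_k)\to\mu(A)$.

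First I would record the relevant properties of the $B_k$: each $B_k$ is a finite disjoint union of cosets $x+\pi^k\Ol^d$ (as $\rho_k(A)$ is a finite set), hence clopen, hence Borel; one has $A\subseteq B_k$ for all $k$, since $\rho_k(a)\in\rho_k(A)$ for $a\in A$; the sequence is nested, $B_1\supseteq B_2\supseteq\cdots$, because $\rho_{k+1}(y)\in\rho_{k+1}(A)$ forces $\rho_k(y)\in\rho_k(A)$; and $\bigcap_{k\ge1}B_k=\overline A$, the $\pi$-adic closure of $A$ in $\Ol^d$, since $y\in\bigcap_k B_k$ says exactly that $y$ lies within $\pi$-adic distance $|\pi|^k$ of $A$ for every $k$, i.e.\ that $y$ is a limit of points of $A$. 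Because $\mu$ is a finite measure, continuity from above along the decreasing sequence $(B_k)$ now yields $\rho_k(\mu)(A_k)=\mu(B_k)\to\mu\bigl(\bigcap_k B_k\bigr)=\mu(\overline A)$.

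The remaining step is to identify $\mu(\overline A)$ with $\mu(A)$, and this is the only place needing attention: the identity holds whenever $A$ is closed, while for a general Borel set one only obtains convergence to $\mu(\overline A)$, which may strictly exceed $\mu(A)$. This is harmless here because every set to which the proposition is applied is a finite union of cosets $x+\pi^m\Ol^d$ for some fixed $m$ (a product of balls in $\Ol^d$, or a short interval / Hayes residue class lifted from $\resring{m}$), hence clopen; for such $A$ one in fact has $B_k=A$ for all $k\ge m$, so $\rho_k(\mu)(A_k)$ is eventually exactly $\mu(A)$. Thus there is no genuine obstacle beyond continuity of $\mu$ from above; I would either phrase the proposition for closed $A$, or simply remark that, since total variation distances on $\Ol^d$ are suprema over closed sets by inner regularity of Borel probability measures, restricting to closed $A$ costs nothing in the applications.
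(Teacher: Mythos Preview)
Your approach is essentially identical to the paper's: both compute $\rho_k(\mu)(A_k)=\mu(A+\pi^k\Ol^d)$ and invoke continuity of the finite measure $\mu$ from above along the decreasing sequence $A+\pi^k\Ol^d$. However, you have been more careful than the paper at the one delicate point. The paper defines $B_k:=(A+\pi^k\Ol^d)\setminus A$ and asserts $\bigcap_k B_k=\emptyset$; as you correctly observe, this intersection is actually $\overline{A}\setminus A$, which is empty only when $A$ is closed. Your remedy---restricting to closed $A$ and noting that this costs nothing in the applications (the sets arising are clopen, and in any case total variation distance is attained on closed sets by inner regularity)---is exactly the right fix, and in fact patches a small gap in the paper's own argument.
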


\begin{proof}
Indeed, we have $\rho_k(\mu)(A_k)=\mu(A+\pi^k\Ol^d)$. Since $A\subset A+\pi^k
\Ol^d$, we have $
\rho_k(\mu)(A_k)-\mu(A)=\mu((A+\pi^k\Ol^d)-A)
$. Define 
$
B_k:=(A+\pi^k\Ol^d)-A.
$
We note that 
$$
\ldots\subset B_k\subset\ldots\subset B_2\subset B_1
$$
is a decreasing chain of sets and that $\cap_{k=1}^\infty B_k=\emptyset$. Hence, $\lim_{k\to\infty} \mu(B_k)=0$, and the claim is proved.
\end{proof}

\begin{lem}\label{lem_total_variation_dist_limit}
Let $\mu_1,\mu_2$ be two Borel probability measures on $\mathcal{O}^d$. Then, $$d_{TV}(\mu_1,\mu_2)=\lim_{k\to\infty}d_{TV}(\rho_k(\mu_1),\rho_k(\mu_2)).$$  
\end{lem}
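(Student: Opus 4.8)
The plan is to prove the two inequalities $d_{TV}(\mu_1,\mu_2)\ge \limsup_k d_{TV}(\rho_k(\mu_1),\rho_k(\mu_2))$ and $d_{TV}(\mu_1,\mu_2)\le \liminf_k d_{TV}(\rho_k(\mu_1),\rho_k(\mu_2))$ separately, which together force the limit to exist and equal $d_{TV}(\mu_1,\mu_2)$.

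For the first (easy) direction, I would observe that any set $A_k\subset(\resring{k})^d$ pulls back to the measurable cylinder set $\widetilde A_k:=\rho_k^{-1}(A_k)\subset\Ol^d$, and by definition of the push-forward, $\rho_k(\mu_i)(A_k)=\mu_i(\widetilde A_k)$ for $i=1,2$. Hence $|\rho_k(\mu_1)(A_k)-\rho_k(\mu_2)(A_k)|=|\mu_1(\widetilde A_k)-\mu_2(\widetilde A_k)|\le d_{TV}(\mu_1,\mu_2)$. Taking the supremum over all $A_k$ gives $d_{TV}(\rho_k(\mu_1),\rho_k(\mu_2))\le d_{TV}(\mu_1,\mu_2)$ for every $k$, so in particular $\limsup_k d_{TV}(\rho_k(\mu_1),\rho_k(\mu_2))\le d_{TV}(\mu_1,\mu_2)$.

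For the second direction, fix $\varepsilon>0$ and choose a Borel set $A\subset\Ol^d$ with $|\mu_1(A)-\mu_2(A)|\ge d_{TV}(\mu_1,\mu_2)-\varepsilon$. Now apply Proposition \ref{prop_measure_of_limit_of_sets}: with $A_k=\rho_k(A)$ we have $\rho_k(\mu_i)(A_k)=\mu_i(A+\pi^k\Ol^d)\to\mu_i(A)$ as $k\to\infty$, for $i=1,2$. Therefore $|\rho_k(\mu_1)(A_k)-\rho_k(\mu_2)(A_k)|\to|\mu_1(A)-\mu_2(A)|\ge d_{TV}(\mu_1,\mu_2)-\varepsilon$. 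Since $d_{TV}(\rho_k(\mu_1),\rho_k(\mu_2))\ge|\rho_k(\mu_1)(A_k)-\rho_k(\mu_2)(A_k)|$, we get $\liminf_k d_{TV}(\rho_k(\mu_1),\rho_k(\mu_2))\ge d_{TV}(\mu_1,\mu_2)-\varepsilon$, and letting $\varepsilon\to 0$ finishes the proof.

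The only genuinely nontrivial input is Proposition \ref{prop_measure_of_limit_of_sets}, which is already established in the excerpt; everything else is bookkeeping about push-forward measures and suprema. The mild subtlety I would be careful about is that the set $A_k=\rho_k(A)$ appearing in Proposition \ref{prop_measure_of_limit_of_sets} satisfies $\rho_k^{-1}(A_k)=A+\pi^k\Ol^d\supseteq A$ rather than equalling $A$, which is exactly why one needs the monotone-limit argument of that proposition rather than a bare equality; since the chain $B_k=(A+\pi^k\Ol^d)\setminus A$ decreases to $\emptyset$, continuity from above of the finite measures $\mu_i$ gives the required convergence, so no further obstacle arises.
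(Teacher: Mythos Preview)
Your proposal is correct and follows essentially the same approach as the paper: the easy direction is identical, and for the hard direction both you and the paper fix $\varepsilon>0$, choose a near-optimal set $A$, set $A_k=\rho_k(A)$, and invoke Proposition~\ref{prop_measure_of_limit_of_sets} to pass to the limit. The only cosmetic difference is that the paper writes out the triangle inequality explicitly whereas you pass directly to the limit of $|\rho_k(\mu_1)(A_k)-\rho_k(\mu_2)(A_k)|$; these are the same argument.
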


\begin{proof}

We note that for every choice of $k$, $d_{TV}(\mu_1,\mu_2)\ge d_{TV}(\rho_k(\mu_1),\rho_k(\mu_2))$. Indeed, every subset $A$ of $(\resring{k})^d$ is measurable, and if we let $\tilde{A}$ be some lift of $A$ to $\Ol^d$, then $\rho_k(\mu_i)(A)=\mu_i(\tilde{A}+\pi^k\Ol^d)$ (note that $\tilde{A}+\pi^k\Ol^d$ is always measurable). Hence 

$$|\rho_k(\mu_1)(A)-\rho_k(\mu_2)(A)|=|\mu_1(\tilde{A}+\pi^k\Ol^d)-\mu_2(\tilde{A}+\pi^k\Ol^d)|\le d_{TV}(\mu_1,\mu_2),$$
and by taking the supremum over all choices of $A$ we get $d_{TV}(\rho_k(\mu_1),\rho_k(\mu_2))\le d_{TV}(\mu_1,\mu_2)$. 

In the other direction, fix some $\epsilon>0$ and take a measurable set $A\subset \Ol^d$ such that $|\mu_1(A)-\mu_2(A)|\ge d_{TV}(\mu_1,\mu_2)-\epsilon$. Define $A_k=\rho_k(A)$. We have, by the triangle inequality,

$$
|\mu_1(A)-\mu_2(A)|\le |\mu_1(A)-\rho_k(\mu_1)(A_k)|+|\rho_k(\mu_1)(A_k)-\rho_k(\mu_2)(A_k)|+|\mu_2(A)-\rho_k(\mu_2)(A_k)|.
$$
We get that 

$$
|\rho_k(\mu_1)(A_k)-\rho_k(\mu_2)(A_k)|\ge d_{TV}(\mu_1,\mu_2)-\epsilon-|\mu_1(A)-\rho_k(\mu_1)(A_k)|-|\mu_2(A)-\rho_k(\mu_2)(A_k)|.
$$
Taking the limit $k\to\infty$ and using Proposition \ref{prop_measure_of_limit_of_sets} we get that $d_{TV}(\rho_k(\mu_1),\rho_k(\mu_2))\ge d_{TV}(\mu_1,\mu_2)-\epsilon$. This is true for every $\epsilon>0$, hence we get $d_{TV}(\rho_k(\mu_1),\rho_k(\mu_2))\ge d_{TV}(\mu_1,\mu_2)$ and thus both sides are equal.
\end{proof}

\section{Preliminary results}\label{section_preliminary_results}

\subsection{Polynomials with small radical}

In what follows, we would need some results on the number of polynomials in $\field_q[x]$ with a small radical. We recall that for a polynomial $f\in\F_q[x]$, one can factor $f$ into irreducibles as $P_1^{e_1}\cdots P_k^{e_k}$. The radical of $f$ is the polynomial $\rad(f)=P_1\cdots P_g$.

\begin{definition}
Let $n,d$ be natural numbers such that $d<n$. Let $g\in\field_q[x]_{\le d}^\mon$ be a fixed, squarefree polynomial. We denote

$$
F_{g}(n)=|\{
f\in\field_q[x]_{=n}^\mon:\rad(f)|g
\}|.
$$
\end{definition}

We order the prime polynomials in $\field_q[x]$ by the degree, breaking ties arbitrarily, 

\begin{equation}\label{eqn_primes_order}
\varphi_1,\varphi_2,\ldots,\varphi_k,\ldots
\end{equation}

\begin{prop}\label{prop_number_of_polynomials_with_given_radical}
Let $n,d$ be natural numbers such that $d<n$. Let $g\in\field_q[x]_{\le d}^\mon$ be a fixed, squarefree polynomial. Let $g=P_1\cdots P_r$ be the prime factorization of $g$.
\begin{enumerate}
    \item $F_g(n)$ is the number of solutions to the equation $e_1\deg(P_1)+\cdots+e_r\deg(P_r)=n$ (in non-negative integers $e_1,\ldots,e_r$).
    \item Assume that $P_1,\ldots,P_r$ are ordered w.r.t. the order defined in \eqref{eqn_primes_order}. Assume further that $(P_1,\ldots,P_r)\neq (\varphi_1,\ldots,\varphi_r)$. Then, there exist a squarefree polynomial $h\in\field_q[x]^\mon_{\le d}$, with prime factorization $h=P_1\cdots P_{r-1}Q_r$, where $Q_r$ is different from $P_1,\ldots,P_r$ and $Q_r$ is smaller than $P_r$ w.r.t. the order defined in \eqref{eqn_primes_order}, and such that $F_h(n)\ge F_g(n)$. 
\end{enumerate}
\end{prop}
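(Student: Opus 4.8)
For part (1), the plan is immediate: by the definition of $F_g(n)$, a monic polynomial $f$ of degree $n$ with $\rad(f)\mid g$ must be of the form $f=P_1^{e_1}\cdots P_r^{e_r}$ with $e_i\ge 0$ (here $e_i=0$ is allowed, corresponding to primes not actually dividing $f$), and conversely every such product is monic of degree $\sum_i e_i\deg(P_i)$ with radical dividing $g$. Since the factorization of $f$ into primes is unique, the map $f\mapsto (e_1,\dots,e_r)$ is a bijection between the set counted by $F_g(n)$ and the set of non-negative integer solutions of $e_1\deg(P_1)+\cdots+e_r\deg(P_r)=n$. Hence $F_g(n)$ equals the number of such solutions.

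For part (2), I would argue by a swapping/monotonicity comparison using the description from part (1). Write $d_i=\deg(P_i)$; since $(P_1,\dots,P_r)\neq(\varphi_1,\dots,\varphi_r)$ and the $P_i$ are listed in increasing order, there is a smallest prime $\varphi_j$ (in the order \eqref{eqn_primes_order}) not among $P_1,\dots,P_r$, and $\varphi_j$ is strictly smaller than $P_r$ (because the primes $\varphi_1,\dots,\varphi_{j-1}$ all occur among $P_1,\dots,P_{r-1}$, so $P_r\notin\{\varphi_1,\dots,\varphi_{j-1}\}$, whence $P_r$ comes after $\varphi_j$). Set $Q_r=\varphi_j$, so $\deg(Q_r)=e\le d_r=\deg(P_r)$, $Q_r$ differs from all of $P_1,\dots,P_r$, and $Q_r<P_r$. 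Let $h=P_1\cdots P_{r-1}Q_r$; it is squarefree, monic, and $\deg(h)=\deg(g)-d_r+e\le \deg(g)\le d$, so $h\in\field_q[x]^\mon_{\le d}$, as required. It remains to show $F_h(n)\ge F_g(n)$, i.e. that replacing a part of size $d_r$ by a part of size $e\le d_r$ does not decrease the number of non-negative solutions of the corresponding linear equation. To see this, define an injection from the solution set for $g$ to that for $h$: given $(e_1,\dots,e_{r-1},e_r)$ with $\sum_{i<r} e_i d_i + e_r d_r=n$, map it to $(e_1,\dots,e_{r-1},\tfrac{d_r}{e}\cdot e_r\,?)$—no; instead use that $e\mid d_r$ need not hold, so I would argue more carefully: the number of solutions of $\sum_{i<r}e_i d_i + e_r d_r = n$ is $\sum_{m\equiv n,\ 0\le m\le n}(\text{number of solutions of }\sum_{i<r}e_i d_i=n-e_r d_r)$, summed over $e_r\ge 0$ with $e_r d_r\le n$; replacing $d_r$ by the smaller $e$ only enlarges the set of residues $n-e_r e$ hit (each value $n-e_r d_r$ hit with part size $d_r$ is also of the form $n-e_r' e$ for a suitable $e_r'$ since $d_r\ge e$ means the arithmetic progression $\{n-e_r e\}$ refines, while $\{n-e_rd_r\}\subseteq\{n - e_r' e : e_r'\ge 0\}$ whenever $e\mid d_r$—which fails in general). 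Because of this gap, I would instead reduce to the case $e\mid d_r$ by choosing $Q_r$ more cleverly, or—cleaner—prove the monotonicity directly by the substitution argument below.

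The cleanest route, and the one I would actually write: it suffices to prove the single-step inequality that for fixed degrees $d_1,\dots,d_{r-1}$ and any integers $b\ge a\ge 1$, the number $N(a)$ of non-negative solutions of $e_1d_1+\cdots+e_{r-1}d_{r-1}+e_r a = n$ satisfies $N(a)\ge N(b)$. This follows because the solution set for $b$ injects into that for $a$: send $(e_1,\dots,e_{r-1},e_r)$ with last part-contribution $e_r b$ to $(e_1,\dots,e_{r-1}, \lceil e_r b/a\rceil\text{?})$—again an integrality issue. The genuinely robust argument is via generating functions: $N(a) = [x^n]\ \frac{1}{(1-x^a)}\prod_{i<r}\frac{1}{1-x^{d_i}}$, and $\frac{1}{1-x^a} = \sum_{k\ge 0}x^{ka}$ has all coefficients equal to $1$ on the sublattice $a\Z$ and $0$ off it, while $\frac{1}{1-x^b}$ is supported on $b\Z\subseteq$ a sparser set when $b\ge a$ — but the supports are only nested when $a\mid b$. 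Therefore the \textbf{main obstacle} is precisely this: the naive "smaller part $\Rightarrow$ more solutions" is false without a divisibility relation (e.g. one part of size $2$ versus one part of size $3$ for target $n$ odd). The fix is to not insist $Q_r$ be the globally smallest missing prime, but to run the argument I gave above only when it produces $e\mid d_r$, and otherwise observe that we never actually need a single step that shrinks a part to a non-divisor: the point of part (2) in the paper is to iterate toward $(\varphi_1,\dots,\varphi_r)$, and at each stage one can choose the \emph{largest} $P_i$ that is not yet $\varphi_i$ and swap it for a missing prime of the same or dividing degree, or absorb it; I would formalize this by an explicit measure-decreasing potential (e.g. $\sum_i \deg(P_i)$ together with lexicographic position in \eqref{eqn_primes_order}) and the one guaranteed inequality $F_{P_1\cdots P_{r-1}Q_r}(n)\ge F_{P_1\cdots P_{r-1}P_r}(n)$ when $\deg Q_r\mid \deg P_r$ and $Q_r<P_r$, which \emph{does} follow from the nested-support generating-function computation. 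Checking that such a $Q_r$ always exists under the hypothesis $(P_1,\dots,P_r)\neq(\varphi_1,\dots,\varphi_r)$ — using that $\field_q[x]$ has primes of every degree and in sufficient supply in low degrees — is then a short counting verification, and is where I would spend the remaining care.
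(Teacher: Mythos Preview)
Your argument for part (1) is correct and identical to the paper's.

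For part (2) there is a genuine gap. You correctly identify the central obstacle: replacing a part of size $d_r$ by a smaller part of size $e$ need not increase the solution count unless $e\mid d_r$. Your proposed fix is to insist on finding a $Q_r$ with $\deg Q_r\mid \deg P_r$, and you claim this is ``a short counting verification.'' But such a $Q_r$ need not exist. For a concrete obstruction, take $q=3$, let $P_1,P_2,P_3$ be the three linear primes, and let $P_4$ be the smallest prime of degree $3$. Then $(P_1,\dots,P_4)\neq(\varphi_1,\dots,\varphi_4)$ (since $\varphi_4$ has degree $2$), yet every prime of degree dividing $3$ that is smaller than $P_4$ is already one of $P_1,P_2,P_3$. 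Your divisibility-restricted swap has no legal move.

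The paper avoids this entirely with a two-case split you did not find. If some linear prime $x-\alpha_0$ is missing from $\{P_1,\dots,P_r\}$, take $Q_r=x-\alpha_0$; then $\deg Q_r=1$ divides everything and the injection $(e_1,\dots,e_r)\mapsto(e_1,\dots,e_{r-1},\deg(P_r)\cdot e_r)$ works. If \emph{all} linear primes are present, then in particular $\deg P_1=1$, and now one takes $Q_r$ to be the smallest missing prime (of whatever degree) and uses the linear slot as a dumping ground: the injection $(e_1,\dots,e_r)\mapsto(e_1+e_r(\deg P_r-\deg Q_r),e_2,\dots,e_{r-1},e_r)$ absorbs the degree discrepancy into $e_1$. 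The key idea you are missing is that the injection is allowed to modify a coordinate other than $e_r$, and the presence of a degree-$1$ part makes this painless.
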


\begin{proof}
    \begin{enumerate}
        \item A monic polynomial $f\in\field_q[x]_{=n}$ such that $\rad(f)|g$ has prime factorization $P_1^{e_1}\cdots P_r^{e_r}$, where $e_i$ are non-negative integers. Conversely, every prime factorization of this form gives a unique polynomial counted by $F_g(n)$, hence the result.
        \item We divide the proof into two cases.

\maincase{For all $\alpha\in\field_q$, $x-\alpha$ is one of $(P_i)_{i=1}^r$} Choose $Q_r$ to be the smallest polynomial among $\varphi_1,\ldots,\varphi_r$ which is not one of $P_1,\ldots,P_r$. We construct an injection from the set of solutions of the equation 

$$
e_1\deg(P_1)+\cdots+e_r\deg(P_r)=n,
$$

into the set of solutions of the equation 

$$
e_1'\deg(P_1)+\cdots+e_{r-1}'\deg(P_{r-1})+e_r'\deg(Q_r)=n.
$$

Without loss of generality, we assume that $\deg(P_1)=1$. We construct the map by sending 

$$
(e_1,\ldots,e_r)\mapsto (e_1+e_r(\deg P_r-\deg Q_r),e_2,\ldots,e_{r-1},e_r).
$$

This is a bijection since $e_2,\ldots,e_r$ are determined from the image, and in particular since $e_r$ is determined also $e_1$ can be reconstructed (note that by our choice, $\deg Q_r\le \deg P_r$ so all numbers are non-negative).

\maincase{There is some $\alpha\in\field_q$ such that $x-\alpha$ is not one of $(P_i)_{i=1}^r$} Denote this particular $\alpha$ by $\alpha_0$. Take $Q_r=x-\alpha_0$. We construct an injection from the set of solutions of the equation 

$$
e_1\deg(P_1)+\cdots+e_r\deg(P_r)=n,
$$

into the set of solutions of 

$$
e_1'\deg(P_1)+\cdots+e_{r-1}'\deg(P_{r-1})+e_r'\deg(x-\alpha_0)=n.
$$

This is done by sending the solution $(e_1,\cdots,e_r)\mapsto (e_1,\cdots,e_{r-1},\deg(P_r)\cdot e_r)$. This is an injection hence we get the bound.

    \end{enumerate}
\end{proof}

\begin{lem}\label{lem_small_radical_bound}
Let $n,d$ natural numbers such that $d<n$. Then
$$
|\{f\in\field_q[x]_{=n}^\mon :\deg\rad(f)\le d\}|=q^{d+o(n)}.
$$
Here, the rate of decay in $n$ is uniform in $d$.
\end{lem}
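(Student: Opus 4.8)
The plan is to estimate $F(n,d):=|\{f\in\field_q[x]_{=n}^\mon :\deg\rad(f)\le d\}|$ by organizing the count according to the radical. First I would note the lower bound is immediate: taking $g=x^d$ (or any fixed squarefree polynomial of degree roughly $d$, e.g. a product of distinct linear factors if $q$ is large enough, otherwise just $\varphi_1^{a_1}\cdots$ with total degree $\le d$), the polynomials $f$ with $\rad(f)\mid g$ and $\deg f=n$ already number $F_g(n)$, which by Proposition \ref{prop_number_of_polynomials_with_given_radical}(1) is the number of ways to write $n=\sum e_i\deg P_i$; choosing $g=\varphi_1$ if $\varphi_1$ is linear gives $F_g(n)=1$, but choosing $g=\varphi_1\cdots\varphi_r$ with $\sum\deg\varphi_i\le d$ maximal we get $F_g(n)\ge \binom{\lfloor n/?\rfloor + \dots}{\dots}$, which is already $\ge q^{d-o(n)}$ — indeed with $r\asymp d/\log_q d$ primes one gets a partition count that is $q^{d-O(d\log\log_q d/\log_q d)} = q^{d-o(d)}$. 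So the content is the upper bound $F(n,d)\le q^{d+o(n)}$.

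For the upper bound I would write $F(n,d)=\sum_{g}F_g(n)$ where $g$ ranges over squarefree monic polynomials of degree $\le d$, except one must be careful that a given $f$ is counted once for $g=\rad(f)$; so actually $F(n,d)=\sum_{g \text{ sqfree}, \deg g\le d} |\{f: \rad(f)=g\}| \le \sum_{g}F_g(n)$. The number of squarefree $g$ of degree $\le d$ is $O(q^d)$, so it suffices to show $F_g(n)\le q^{o(n)}$ uniformly over all such $g$, i.e. that the number of solutions in nonnegative integers to $e_1\deg(P_1)+\cdots+e_r\deg(P_r)=n$ is $q^{o(n)}$. This is where Proposition \ref{prop_number_of_polynomials_with_given_radical}(2) enters: by iterating the replacement operation, $F_g(n)$ is maximized (among squarefree $g$ of degree $\le d$) when $g=\varphi_1\varphi_2\cdots\varphi_r$ is a product of the $r$ smallest primes with $\sum_{i=1}^r\deg\varphi_i\le d$. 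For this extremal $g$, the number of primes is $r\le d$, and more precisely, since there are $\asymp q^m/m$ primes of degree $m$, one has $r$ on the order of the number of primes of degree up to about $\log_q d$, so $r = O(q^{\log_q d}/\log_q d) = O(d/\log_q d)$, and in any case $r\le d$. The solution count to $\sum e_i \deg\varphi_i = n$ is then at most the number of solutions to $e_1+\dots+e_r \le n$ (ignoring degrees, since each $\deg \varphi_i\ge 1$), which is $\binom{n+r}{r}\le (n+1)^r$.

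The remaining step is to bound $(n+1)^r = q^{o(n)}$, i.e. $r\log_q(n+1) = o(n)$. If $d$ is bounded this is immediate since $r\le d$ is bounded and $\log(n+1)=o(n)$. If $d$ grows with $n$, I need $r\log n = o(n)$; since $r\le d < n$ this gives only $r\log n < n\log n$, not good enough — so here I must use the sharper bound $r = O(d/\log_q d)$, giving $r\log_q n = O(d\log n/\log d)$. This is $o(n)$ provided $d = o(n\log d/\log n)$, which holds whenever $d\le n$ and... hmm, actually when $d\asymp n$ this fails. The resolution: the claim in the lemma as used only needs $d<n$, but the genuinely uniform statement requires splitting cases on the size of $d$ relative to $n$ — for $d \le n/\log n$ say, $r\log n \le d\log n \le n = o(n)$ is false too. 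I think the honest route is: $r\le$ (number of primes of degree $\le L$) where $L$ is the largest integer with (sum of $\deg\varphi_i$ over the $r$ smallest $\le d$), so $L \le \log_q d + O(1)$ and $r \le \sum_{m\le L} q^m \le 2q^L \le 2q\cdot d = O(d)$; meanwhile $\deg\varphi_i \ge 1$ always but also most have degree close to $L$, so actually $r\cdot 1 \le d$ forces nothing better, BUT $F_g(n) \le \prod_i (\lfloor n/\deg\varphi_i\rfloor+1)$, and grouping the $r$ primes by degree $m$ (with $n_m \approx q^m/m$ of them) gives $F_g(n)\le \prod_{m\le L}(n/m+1)^{n_m} = q^{\sum_m n_m \log_q(n/m+1)}$. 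Since $\sum_m n_m \cdot m \le d$ and $\log_q(n/m+1) \le (1/m)(\text{something}) + \dots$ — one shows $\sum_{m\le L} n_m\log_q(n+1) = \log_q(n+1)\cdot r$ and more carefully $n_m \log_q n \le (q^m/m)\log_q n$; summing over $m\le L=\log_q d$ gives $\le (q^L/L)\log_q n \cdot O(1) = O((d/\log_q d)\log_q n)$.

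The main obstacle, then, is making the uniformity in $d$ genuinely work: showing $r\log_q n = o(n)$ with a rate independent of $d$. I expect the clean statement is $F(n,d) \le q^d \cdot \max_g F_g(n) \le q^{d + C d \log_q(n)/\log_q(d)} $ when $d$ is large, and $\le q^{d+C d\log_q n}$ (harmless) when $d=O(1)$; in either regime, writing $o(n)$ requires the observation that $d/\log d \cdot \log n = o(n)$ holds uniformly for $1\le d\le n$ — which it does not at $d\asymp n$! So the final fix must be: for $d$ close to $n$ we use the trivial bound $F(n,d)\le q^n$ and $q^n = q^{d+o(n)}$ is false... I will instead prove the bound in the form $F(n,d) \le q^{d}\cdot (n+1)^{r(d)}$ with $r(d) = O(\sqrt{d})$ or better — actually the right bound for the number of primes whose degrees sum to $\le d$, taking the smallest ones, is $r = \Theta(d/\log_q d)$ and that is the best possible, so the honest conclusion is $F(n,d) = q^{d + O(d\log n/\log d)}$, and the lemma's ``$q^{d+o(n)}$ uniformly in $d$'' must implicitly restrict to $d$ with $d\log n = o(n\log d)$, equivalently $d = o(n)$ in a suitable sense; rereading, the lemma only claims uniformity of the decay rate, and since it is applied with $d < n$ (in fact $d\ll n$), I would state and prove: for every $\varepsilon>0$ there is $N$ such that for all $n\ge N$ and all $1\le d<n$, $F(n,d)\le q^{d+\varepsilon n}$, via $\log_q F(n,d) \le d + \log_q(O(q^d)) $ wait that's circular — via $\log_q F(n,d) \le d + r(d)\log_q(n+1) + O(1) \le d + O(d/\log_q d)\cdot\log_q(n+1)$, and then split: if $d \le \varepsilon n / \log_q n$ the error is $\le \varepsilon n$ trivially (bounding $d/\log_q d\le d$); if $d > \varepsilon n/\log_q n$ then $\log_q d \gg \log_q n$ so $d/\log_q d \cdot \log_q n \ll d \le n$, and a more careful constant chase closes it. I would present this case split as the technical heart, citing the prime polynomial count $\pi_q(m) \le q^m/m + O(q^{m/2})$ for the bound $r(d) = O(d/\log_q d)$.

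\begin{rem}
The lower bound $F(n,d)\ge q^{d-o(n)}$ (needed for the asserted equality) follows by taking $g$ to be the product of all primes of degree $\le \log_q d$, so $\deg g \le d$ and the partition count $F_g(n)$ is at least $\binom{\lfloor n/\log_q d\rfloor + r -1}{r-1}$ with $r = \Theta(d/\log_q d)$, which is $q^{d - o(d)} = q^{d-o(n)}$ since $d<n$.
\end{rem}
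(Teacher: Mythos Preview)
Your reduction to bounding $\max_g F_g(n)$ by $q^{o(n)}$ uniformly is correct and matches the paper's setup. The genuine gap is in your estimate of $F_g(n)$: the bound $(n+1)^r$ with $r = O(d/\log_q d)$ gives $\log_q F_g(n) = O(d\log_q n/\log_q d)$, and this is \emph{not} $o(n)$ uniformly for $d < n$ --- at $d = cn$ for fixed $c\in(0,1)$ it is $\Theta(n)$. Your case split does not rescue this: in the regime $d > \varepsilon n/\log_q n$ you correctly observe $\log_q d \asymp \log_q n$, but that only yields $r\log_q n \ll d < n$, i.e.\ $O(n)$ with an implied constant that does not tend to zero. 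The refined product bound $\prod_i(n/\deg\varphi_i + 1)$ you mention has the same defect: it ignores the constraint $\sum e_i\deg\varphi_i = n$, and for the extremal $g$ with $d\asymp n$ the product is still $q^{\Theta(n)}$.

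The idea you are missing, and which the paper supplies, is to make the problem \emph{independent of $d$} before estimating. After Proposition~\ref{prop_number_of_polynomials_with_given_radical}(2) replaces $g$ by a product of the smallest primes, enlarge further (only adding variables, hence only increasing the count) to include all primes of degree $\le l$, where $l$ is the least integer with $\sum_{m\le l} m\,\pi_q(m) > n$; this forces $l < \log_q n + 1$, and crucially $l$ no longer depends on $d$. Now group solutions by the composition $\sigma$ of $n$ with $\sigma(i) = i\cdot(e_1^{(i)}+\cdots+e_{\pi_q(i)}^{(i)})$: the number of such $\sigma$ is $q^{o(n)}$ by Hardy--Ramanujan, and for each fixed $\sigma$ the count is $\prod_{i\le l}\binom{\sigma(i)/i + \pi_q(i)-1}{\pi_q(i)-1}$. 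The logarithm of this product is bounded by $\sum_i (q^i/i)\ln(1+\sigma(i)/q^i) + O(q^l/l)$; the second term is $o(n)$ since $q^l/l \le n/l$ and $l\to\infty$, while the first is handled by splitting at a threshold $q^i = A$: for $q^i > A$ use $\ln(1+x)\le x$ together with $\sum_i\sigma(i)=n$ to get a contribution $\le n/\log_q A$, and for $q^i\le A$ use $\ln(1+\sigma(i)/q^i)\le\ln(n+1)$ to get $O(A\ln n)$. Letting $A\to\infty$ after $n\to\infty$ yields $o(n)$. It is precisely the global constraint $\sum_i\sigma(i)=n$ that your product bound throws away and that this argument exploits.
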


\begin{proof}

We have 

$$
|\{f\in\field_q[x]_{=n}^\mon:\deg\rad(f)\le d\}|\le \sum_{h\in\field_q[x]_{\le d}^\mon\atop{h\text{ squarefree}}}F_h(n).
$$
We will prove that $F_h(n)=q^{o(n)}$ for every squarefree $h$ of degree $\le d$ (uniformly in $d$), and this will prove the claim since there are at most $q^d$ such polynomials $h$. From now on fix some particular squarefree $h\in\field_q[x]_{\le d}^\mon$. Write $h=P_1\cdots P_r$ its prime factorization. Then by Proposition \ref{prop_number_of_polynomials_with_given_radical}, $F_h(n)$ is the number of non-negative integer solutions to the equation

$$
n=\deg(f)=e_1\deg(P_1)+\cdots+e_r\deg(P_r).
$$

Using Proposition \ref{prop_number_of_polynomials_with_given_radical} again, we see that we can replace $P_r$ by some smaller polynomial (w.r.t. the order defined in \eqref{eqn_primes_order}), without decreasing the number of possible solutions. Now Set $\pi_q(k)$ to be the number of monic prime polynomials (from the prime polynomial theorem, $\pi_q(k)=\frac{q^k}{k}+O(q^{k/2})$). By the above, and possibly adding some new variables $e_i$, we may bound $F_h(n)$ by the number of non-negative integer solutions to the equation

\begin{equation}\label{eqn_number_of_polys_with_given_radical}
(e_1^{(1)}+\cdots+e_{\pi_q(1)}^{(1)})+2(e_1^{(2)}+\cdots+e_{\pi_k(2)}^{(2)})+\cdots+l(e_1^{(l)}+\cdots+e_{\pi_q(l)}^{(l)})=n,   
\end{equation}

where $l< \log_q(n)+1$ is the least integer such that $\sum_{k=1}^l k\pi_q(k)>n$. Now let $\sigma\vdash n$ be a partition of $n$, such that $i|\sigma(i)$. If we let $e_1^{(i)}+\cdots+e_{\pi_q(i)}^{(i)}=\frac{\sigma(i)}{i}$ for $i=1,\ldots, l$ be our new equivalent system of equations, we get that the number of possible solutions for Equation \eqref{eqn_number_of_polys_with_given_radical} is 

\begin{equation}\label{eqn_number_of_radicals_partition_viewpoint}
\sum_{\sigma\vdash n}\prod_{i=1}^l \binom{\frac{\sigma(i)}{i}+\pi_q(i)-1}{\pi_q(i)-1}.    
\end{equation}

By the Hardy-Ramanujan formula for the partition function \cite{HR18}, the number of summands in Equation \ref{eqn_number_of_radicals_partition_viewpoint} is $q^{o(n)}$. Hence it is enough to show that for every partition $\sigma\vdash n$, 

$$
\prod_{i=1}^l \binom{\frac{\sigma(i)}{i}+\pi_q(i)-1}{\pi_q(i)-1}=q^{o(n)}.
$$

We note that since for any $l,k\ge 0$ integers, $\binom{l}{k}<\frac{l^k}{k!}$. Using Striling's formula and the prime polynomials theorem we get

$$
\ln\left[\binom{\frac{\sigma(i)}{i}+\pi_q(i)-1}{\pi_q(i)-1}\right]=\ln \left[\binom{\frac{\sigma(i)}{i}+\frac{q^i}{i}+O(q^{i/2})}{\frac{q^{i}}{i}+O(q^{i/2})}\right] < \left(\frac{q^i}{i}+O(q^{i/2})\right)\ln\left(\frac{\sigma(i)+q^i+O(q^{i/2})}{q^i+O(q^{i/2})}\right)+\frac{q^i}{i}+O(q^{i/2}).
$$

We note that

$$
\sum_{i=1}^l \left(
\frac{q^i}{i}+O(q^{i/2})
\right)\ll \frac{q^l}{l}<n/l=o(n).
$$

since $l\to\infty$ as $n\to\infty$ (to be precise, $l$ grows logarithmically in $n$). Regarding the second term, it sums to
$
\ll \sum_{i=1}^l \frac{q^i}{i}\ln\left(1+\frac{\sigma(i)}{q^i}\right)=:S
$. We fix some $A>0$, and divide the sum $S$ to the terms with $q^i>A$ and those with $q^i\le A$:

$$
\ll S=\sum_{i=1}^l \frac{q^i}{i}\ln\left(1+\frac{\sigma(i)}{q^i}\right)=\sum_{q^i\le A}\frac{q^i}{i}\ln\left(1+\frac{\sigma(i)}{q^i}\right)+\sum_{q^i> A}\frac{\sigma(i)}{i}\frac{q^i}{\sigma(i)} \ln\left(1+\frac{\sigma(i)}{q^i}\right).
$$

The function $x\mapsto \frac{\ln(1+x)}{x}$ is bounded above by $1$, hence 

$$
\sum_{q^i> A}\frac{\sigma(i)}{i}\frac{q^i}{\sigma(i)} \ln\left(1+\frac{\sigma(i)}{q^i}\right)\le \sum_{q^i>A}\frac{\sigma(i)}{i}<\frac{1}{\log_q(A)}\sum_{q^i>A}\sigma(i)<\frac{n}{\log_q(A)}.
$$

If $q^i\le A$ we have $\ln\left(1+\frac{\sigma(i)}{q^i}\right)<\ln(n+1)$ hence 

$$
\sum_{q^i\le A}\frac{q^i}{i}\ln\left(1+\frac{\sigma(i)}{q^i}\right)<\ln(n+1)\sum_{q^i\le A}\frac{q^i}{i}\ll \ln(n+1)\cdot A.
$$

Overall we get that $S<\ln(n+1)\cdot A+\frac{n}{\log_q(A)}$. In particular, 
$
\frac{S}{n}<\frac{A\ln(n+1)}{n}+\frac{1}{\log_q(A)}
$, so that $\limsup \frac{S}{n}<\frac{1}{\log_q(A)}$. Since $A>0$ was arbitrary, letting $A\to\infty$ we get that $S=o(n)$.
\end{proof}

\subsection{Characteristic polynomial coefficients and traces of powers}\label{section_trace_data_poly_intervals}

A basic concept we use is that of an interval in $(\resring{k})[x]$. This is an extension of an interval in $\field_q[x]$, which we define as follows:

\begin{definition}
 Let $g\in(\resring{k})[x]$ be a monic polynomial of degree $n$. We define 
 $$
 I(g,n-d)=\left\{
h\in(\resring{k})[x]:\text{ monic },\deg(h)=n,\deg(g-h)<n-d
 \right\}.
 $$
 $I(g,n-d)$ is called the interval of length $n-d$ around $g$.
\end{definition}

To consider also traces of negative powers, we also define

\begin{definition}
Let $h\in (\resring{k})[x]$ be a monic polynomial, such that $h(0)$ is invertible in $\resring{k}$. We define $h^\rs=\frac{x^n h(1/x)}{h(0)}$. We define 

$$
I^\rs(g,n-d)=\{h\in(\resring{k})[x]\text{ monic },h(0)\in(\resring{k})^\times,h^\rs\in I(g,n-d)\}.
$$

\end{definition}

For later use, we will establish a formula relating the probability

$$
\Pr_{M\in G(\resring{k})}[\tr(M^i)=a_i,i=1,\ldots,d], a_i\in\resring{k},
$$
to the probability 
$$
\Pr_{M\in G(\resring{k})}[\charc(M)\in I(g,n-d)], g\in (\resring{k})[x]\text{ monic}.
$$
This will be done in Lemma \ref{lem_connection_traces_charpoly}, which connects the distribution of traces of positive powers and the distribution of $\charc(M)$ in short intervals. In Lemma \ref{lem_connection_traces_hayes_classes} we will generalize this connection, to include traces of negative powers. This time, the distribution of traces of powers will be related to the distribution of $\charc(M)$ in Hayes residue classes. Both Lemmas are analogs of \cite[Lemma 2.1]{GR21} and we will follow its proof closely. For our setting however more data need to be specified to determine the traces, thus we define the concept of \textit{trace data}. 

\begin{definition}
Let $k,n\ge 1$ be integers, and let $1\le d\le n$. 
\begin{enumerate}
    \item A trace data modulo $\pi^k$ of length $d$ is a sequence $(a_i)_{1\le i\le\atop p^k\nmid i}$ of elements $a_i$ in $\resring{k}$, such that for every $i$ with $p^k\nmid i$, there is some $b_i\in\resring{k}$ so that $a_i=p^r b_i$, where $p^r||i$. 
    \item A $p$-adic trace data of length $d$ is a sequence $(\pi^{v_p(i)}a_i)_{i=1}^d$, where $a_i\in\Ol$ is some $p$-adic integer. Note that since $p$ is unramified, we get the same object by picking a sequence $(\frac{a_i}{|i|_p})_{i=1}^d$. 
\end{enumerate}
\end{definition}

We remark that if $(a_i)_{i=1}^d$ is a $p$-adic trace datum of length $d$, then the sequence $(a_i\text{ mod }\pi^k)_{i=1\atop{p^k\nmid i}}^d$ is a trace datum modulo $\pi^k$ of length $d$. This could be an alternative definition for the trace data modulo $\pi^k$.

\begin{definition}
    Define $S=S(d,k)=\left[\frac{d}{p}\right]+\left[\frac{d}{p^2}\right]+\cdots+\left[\frac{d}{p^k}\right]$. The number of different possible values for a $d$-trace data modulo $\pi^k$ is $q^{kd-S}$.
\end{definition}

We note the significance of the trace data. Since we are working in $\resring{k}$ or $\Ol$, there are dependencies between the traces of different powers. For example, $\tr(M^p)=\tr(M)^p \pmod{\pi}$. This is generalized in the next lemma. The lemma generalizes a phenomenon first observed by Arnold \cite{Arn04} for $\Z_p$, though he proved it only for the ring $\Z/p^3\Z$. Later the result was extended for the $p$-adics in \cite{MP10}. The lemma generalizes it to $\Ol$.

We recall that we denote by $\sigma$ the Frobenius automorphism $\left(
\frac{\pi}{p}
\right):\Ol\to\Ol$, and also its induced mod $\pi^k$ action. We extend $\sigma$ to act coordinate-wise on matrices and polynomials.

\begin{lem}\label{lem_trace_dependencies}
Let $n,i,k\ge 1$ be natural numbers, and let $M\in M_n(\Ol)$ be a matrix. Let $p^{j'}||i$, and assume that $j'\ge 1$. Then 
$$
\tr(M^i)\equiv 
\sigma(\tr(M^{i/p}))
\pmod{\pi^{j'}}.
$$
In particular, if we now let $M\in M_n(\resring{k})$, and $j=\min(j',k)$, then 

$$
\tr(M^i)\equiv\sigma(\tr(M^{i/p}))\pmod{\pi^{j'}}.
$$
\end{lem}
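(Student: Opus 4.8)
The plan is to prove the statement by a direct computation with power sums, reducing everything to the case where $M$ is replaced by its eigenvalues in an algebraic closure and exploiting the fact that raising to the $p$-th power is, modulo $\pi$, the Frobenius $\sigma$. The key identity I would isolate first is the following: for any commutative ring $R$ in which $p$ is not a unit, and any $x_1,\dots,x_n \in R$, one has
$$
\left(\sum_{t} x_t\right)^{p} \equiv \sum_{t} x_t^{p} \pmod{p},
$$
and more generally, working in $\Ol$ (or in $\resring{k}$), $\left(\sum_t x_t^{i/p}\right)^p \equiv \sum_t x_t^{i} \pmod{\pi}$ because $\pi \mid p$. The subtlety is that I need the congruence modulo $\pi^{j'}$ (where $p^{j'}\|i$), not just modulo $\pi$, so a single application of the Frobenius identity is not enough; I need to iterate and track the $p$-adic valuation of the binomial coefficients that appear.

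First I would reduce to the universal case. Let $\lambda_1,\dots,\lambda_n$ be the eigenvalues of $M$ in $\fqb$-analog, i.e. in an algebraic closure of $\mathcal F$ (or, to stay integral, work with the universal matrix whose characteristic polynomial has indeterminate coefficients and specialize). Then $\tr(M^i) = \sum_t \lambda_t^i = p_i(\lambda)$, the $i$-th power sum. So the claim becomes: for $p^{j'}\| i$ with $j'\ge 1$,
$$
p_i(\lambda) \equiv \sigma\big(p_{i/p}(\lambda)\big) \pmod{\pi^{j'}}.
$$
Since $\sigma$ is a ring automorphism of $\Ol$ fixing $\Z_p$ and reducing to $x\mapsto x^p$ mod $\pi$, and since the $\lambda_t$ are algebraic integers over $\Ol$, it suffices to prove $p_i(y) \equiv p_{i/p}(y)^{\sigma\text{-twisted}} \pmod{\pi^{j'}}$ for the "generic" situation, which by multiplicativity of $\sigma$ and additivity of power sums reduces to a one-variable statement about a single algebraic integer.

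The heart of the matter is then the following lemma, which I would prove by induction on $j'$: if $y$ is an algebraic integer and $m\ge 0$, then
$$
y^{p^{m+1}} \equiv \sigma\big(y^{p^m}\big) \pmod{\pi^{m+1}}
$$
(where $\sigma$ here is really the lift of Frobenius applied appropriately, using that $y^p \equiv \sigma(y) \pmod \pi$ together with a lifting-the-exponent type estimate). The base case $m=0$ is exactly $y^p \equiv \sigma(y)\pmod\pi$. For the inductive step, write $y^{p^m} = \sigma(y^{p^{m-1}}) + \pi^m z$ for some algebraic integer $z$, raise both sides to the $p$-th power, expand by the binomial theorem, and observe that every cross term is divisible by $p\cdot \pi^m = \pi^{m+1}$ (using that $p \in \pi\Ol$, as $\mathcal F/\mathbb Q_p$ is unramified) while the leading term $\sigma(y^{p^{m-1}})^p = \sigma(y^{p^{m-1}\cdot p}) = \sigma(y^{p^m})$ up to a further $\pi$-twist handled by the base case applied to $\sigma(y^{p^{m-1}})$. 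Once this lemma is in hand, I sum over the eigenvalues $\lambda_t$ and over the "coprime-to-$p$ part" of the exponent: writing $i = p^{j'} i_0$ with $p\nmid i_0$, I apply the lemma with $y = \lambda_t^{i_0}$ and $m = j'-1$ to get $\lambda_t^{i} \equiv \sigma(\lambda_t^{i/p}) \pmod{\pi^{j'}}$, and summing over $t$ yields $\tr(M^i)\equiv \sigma(\tr(M^{i/p}))\pmod{\pi^{j'}}$. The "in particular" clause for $M\in M_n(\resring k)$ is then immediate by reducing mod $\pi^k$ and noting $\min(j',k)$ controls the surviving modulus.

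The main obstacle I expect is the bookkeeping in the inductive lemma: making sure the Frobenius twist composes correctly through repeated $p$-th powering (i.e. that $\sigma$ applied to $y^{p^{m}}$ really is compatible with $\sigma$ applied to $y$ raised to $p^m$, which uses $\sigma(y)^p \equiv \sigma(y^p) \equiv \sigma(\sigma(y)) \pmod \pi$ and iteration), and carefully verifying that the binomial coefficients $\binom{p}{k}$ for $1\le k\le p-1$ contribute the extra factor of $p$ needed to push the error term from $\pi^m$ to $\pi^{m+1}$ — this is where the unramifiedness hypothesis $p\in\pi\Ol$ is used, and it is also why the statement would need modification in the ramified case. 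Everything else is routine manipulation of power sums and the elementary theory of the Frobenius automorphism of an unramified extension.
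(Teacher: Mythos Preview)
Your reduction to a one-variable lemma has a genuine gap: the congruence $y^{p} \equiv \sigma(y) \pmod{\pi}$ (equivalently $\pmod{p}$, since $\mathcal{F}/\mathbb{Q}_p$ is unramified) only holds for $y\in\Ol$, not for an arbitrary algebraic integer over $\Ol$. The eigenvalues $\lambda_t$ of $M\in M_n(\Ol)$ can lie in a \emph{ramified} extension $L/\mathcal{F}$, and there is no extension $\tilde{\sigma}$ of $\sigma$ to $\Ol_L$ satisfying $\tilde{\sigma}(y)\equiv y^p\pmod{p\Ol_L}$ in general. Concretely, take $\Ol=\mathbb{Z}_p$ (so $\sigma=\mathrm{id}$), and consider a matrix whose characteristic polynomial is the $p$-th cyclotomic polynomial; its eigenvalues are the primitive $p$-th roots of unity $\zeta_p^t$. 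For any extension of $\sigma=\mathrm{id}$ to $\mathbb{Z}_p[\zeta_p]$ one would need $\zeta_p^p\equiv\zeta_p\pmod{p}$, i.e.\ $1\equiv\zeta_p\pmod{p}$, but $v_p(\zeta_p-1)=\tfrac{1}{p-1}<1$ for $p$ odd. So the base case of your one-variable lemma fails, and the term-by-term argument collapses --- even though the \emph{summed} statement $\sum_t\lambda_t^{p}\equiv\sigma(\sum_t\lambda_t)\pmod{p}$ does hold (both sides equal $-1$ here), you cannot deduce it this way.

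The paper circumvents this entirely by never leaving $\Ol$: it works with the \emph{coefficients} of $\charc(M)$ rather than its roots. Using the polynomials $T_{m,i}\in\mathbb{Z}[x_1,\dots,x_n]$ defined by $e_i(x_1^m,\dots,x_n^m)=T_{m,i}(e_1,\dots,e_n)$ and the operator $T_m$ on monic polynomials (so that $T_m(\charc(M))=\charc(M^m)$), the two key inputs from \cite{MP10} are that $T_{p,i}\equiv x_i^p\pmod{p}$ and that $f\equiv g\pmod{\pi^k}$ forces $T_p(f)\equiv T_p(g)\pmod{\pi^{k+1}}$. From these one gets by a clean induction that $\charc(M^{p^k})\equiv\sigma(\charc(M^{p^{k-1}}))\pmod{\pi^k}$, and reading off the next-to-leading coefficient gives the trace congruence. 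Your own parenthetical suggestion to ``work with the universal matrix whose characteristic polynomial has indeterminate coefficients'' is exactly the right instinct and is essentially this approach; pursuing that instead of the eigenvalue route would close the gap.
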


\begin{proof}
Our proof is based on the main result of \cite{MP10} which essentially proves the lemma for the case $\Ol=\Z_p$. Let $M\in M_n(\Ol)$. We will prove that for any integer $k\ge 1$, 
\begin{equation}\label{eq_claim_traces_dependencies}
\charc(M^{p^k})\equiv \charc(\sigma(M)^{p^{k-1}})\equiv\sigma(\charc(M^{p^{k-1}}))\pmod{\pi^k}.
\end{equation}
This will prove the lemma by looking at the matrix $M^{i/p^{j}}$ and looking at the next-to-leading coefficient of the polynomials in \eqref{eq_claim_traces_dependencies}.

Let $e_i(x_1,\ldots,x_n),i=1,\ldots,n$ be the $i$-th symmetric polynomial (so $e_i$ is homogenous of degree $i$, $e_1=x_1+\cdots+x_n$, etc.). Define as in \cite{MP10} the polynomials $T_{m,i}$ by the formula 
$$
e_i(x_1^m,\ldots,x_n^m)=T_{m,i}(e_1(x_1,\ldots,x_n),\ldots,e_n(x_1,\ldots,x_n)).
$$
One can always write such an identity with $T_{m,i}\in \Z[x_1,\ldots,x_n]$ by the main theorem on symmetric polynomials (see \cite[Theorem 3.1.1]{Pra04}). Now for a monic polynomial $f=x^n+\sum_{i=1}^{n-1} (-1)^i a_ix^{n-i}$, define $T_m(f)$ (again, following \cite{MP10}) by setting 
$$
T_m(f)=x^n+\sum_{i=0}^{n-1}(-1)^iT_{m,i}(a_1,\ldots,a_n)x^{n-i}.
$$
We remark that if $f=\charc(M)$ for some matrix $M\in M_n(\Ol)$, then $T_m(\charc(M))=\charc(M^m)$ (See in \cite{MP10}, in the discussion after lemma 2.3). 
By \cite[Corollary 2.2]{MP10}, 
$$T_{p,i}(x_1,\ldots,x_n)\equiv x_i^p\pmod {p},$$
that is, $T_{p,i}(x_1,\ldots,x_n)=x_i^p+ph_i$, for some $h_i\in\Z[x_1,\ldots,x_n]$. Since $\pi$ divides $p$, we get that if we let $\charc(M)=x^n+\sum_{i=0}^{n-1}(-1)^ia_ix^{n-i}$, then

$$
T_p(\charc(M))\equiv x^n+\sum_{i=0}^{n-1}(-1)^i a_i^px^{n-i}\pmod{\pi},
$$
so that $\charc(M^p)=T_p(\charc(M))\equiv 
\sigma(\charc(M))\pmod{\pi}$. In particular, since we have the equality $\sigma(\charc(M))=\charc(\sigma(M))$,  we proved \eqref{eq_claim_traces_dependencies} for $k=1$. 

We will now prove \eqref{eq_claim_traces_dependencies} by induction on $k$. The base case for the induction was already proved, so assume we know the claim for $k\ge 1$. By \cite[Theorem 2.5]{MP10}, if $f,g\in\Ol[x]$ are such that $f\equiv g\pmod{\pi^k}$ then $T_p(f)\equiv T_p(g)\pmod{\pi^{k+1}}$. Thus using the induction hypothesis,
$$
\charc(M^{p^{k+1}})=T_p(\charc(M^{p^k}))\equiv T_p(\charc(\sigma(M)^{p^{k-1}}))=\charc(\sigma(M)^{p^k})\pmod{\pi^k},
$$
and we proved the result. 
\end{proof}

\begin{definition}
Let $n,d\ge 1$ be two integers such that $d<n$.

\begin{enumerate}
    \item For a matrix $M\in GL_n(\resring{k})$, we define its trace data of length $d$, $\TR^d(M)$, to be the sequence $(\tr(M^i)-1_{p|i}\sigma(\tr(M^{i/p})))_{i=1\atop{p^k\nmid i}}^d$. 
    \item For a matrix $M\in GL_n(\Ol)$, we define its trace data of length $d$, $\TR^d(M)$, to be the following sequence  $(\tr(M^i)-1_{p|i}\sigma(\tr(M^{i/p})))_{i=1}^d$. 
\end{enumerate}
\end{definition}

By Lemma \ref{lem_trace_dependencies}, the trace data of a matrix in $GL_n(\resring{k})$ is a trace data modulo $\pi^k$. The trace data of a matrix in $GL_n(\Ol)$ is a $p$-adic trace data, again by the same lemma. Recall that we denote by $G_n$ the $n\times n$ group (e.g, if $G=GL$, $G_n=GL_n$, etc.).

\begin{lem}\label{lem_connection_traces_charpoly}
Let $d,n\ge 1$ be integers, and assume that $d\le n$. Let 
$(a_i)_{1\le i\le d,p^k\nmid i}$ be a trace datum. Let $S=S(d,k)=\left[\frac{d}{p}\right]+\left[\frac{d}{p^2}\right]+\cdots+\left[\frac{d}{p^k}\right]$. Finally, let $M\in GL_n(\resring{k})$. There are monic polynomials of degree $n$, $(f_i)_{1\le i\le q^{S}\subset (\resring{k})[x]}$, depending on $d,n,k$, such that the following two conditions are equivalent:

\begin{enumerate}
    \item For all $1\le i\le d$ with $p^k\nmid i$, we have $\tr(M^i)-1_{p|i}\sigma(a_{i/p})=a_i$ (i.e., $\TR^d(M)=(a_i)_{1\le i\le d,p^k\nmid i}$).
    \item For some $1\le j\le q^S$ we have $\charc(M)\in I(f_j,n-d-1)$.
\end{enumerate}Moreover, the sets $\{I(f_j,n-d-1)\}_{1\le j\le q^S}$ are disjoint, so if $G$ is one of the groups we consider then

$$
\Pr_{M\in G_n(\resring{k})}[\forall 1\le i\le d:\tr(M^i)-1_{p|i}\sigma(a_{i/p})=a_i]=\sum_{j=1}^{q^S}
\Pr_{M\in G_n(\resring{k})}[\charc(M)\in I(f_j,n-d-1)].$$
\end{lem}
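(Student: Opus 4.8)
The plan is to mimic the proof of \cite[Lemma 2.1]{GR21}, using Newton's identities to pass between power sums $p_i = \tr(M^i)$ and the elementary symmetric functions $e_i$ (up to sign, the coefficients of $\charc(M)$). First I would recall that $\charc(M) = x^n - e_1 x^{n-1} + e_2 x^{n-2} - \cdots \pm e_n$, where $e_i = e_i(\alpha_1,\dots,\alpha_n)$ for the (formal) eigenvalues $\alpha_j$, and that $\charc(M) \in I(f, n-d-1)$ means precisely that $e_1, \dots, e_{d+1}$ are prescribed. So the content of the lemma is that knowing $\tr(M^i)$ for $1 \le i \le d$ with $p^k \nmid i$ is "almost equivalent" to knowing $e_1,\dots,e_{d+1}$, the obstruction being exactly the powers divisible by $p^k$ (which carry no information mod $\pi^k$ beyond what the earlier traces already force, by Lemma~\ref{lem_trace_dependencies}) together with the finitely many "free" choices of the divided traces $a_i$ for $p \mid i$, $p^k \nmid i$.

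The key steps, in order, are: (1) Work with Newton's identity $p_i - e_1 p_{i-1} + e_2 p_{i-2} - \cdots + (-1)^{i-1} e_{i-1} p_1 + (-1)^i i\, e_i = 0$. When $p \nmid i$, the coefficient $i$ of $e_i$ is a unit in $\resring{k}$, so $e_i$ is determined by $p_1,\dots,p_i$ and $e_1,\dots,e_{i-1}$, and conversely $p_i$ is determined by $e_1,\dots,e_i$; hence for the range $1 \le i \le d+1$ with $p \nmid i$ the data $\{p_i\}$ and $\{e_i\}$ determine each other inductively. (2) For indices $i$ with $p \mid i$ but $p^k \nmid i$: here $i$ is not a unit, so Newton's identity does not directly solve for $e_i$; instead one uses that $p_i \equiv \sigma(p_{i/p}) \pmod{\pi^{v_p(i)}}$ (Lemma~\ref{lem_trace_dependencies}), so the "new" information in $\tr(M^i)$ is only the divided trace $a_i = \tr(M^i) - \sigma(a_{i/p})$, which ranges over $p^{v_p(i)}\resring{k} \cong \resring{k - v_p(i)}$, i.e.\ over $q^{k - v_p(i)}$ values — but after fixing these, Newton's identity (now with the unit replaced by a non-unit) still pins down $e_i$ modulo $\pi^{k}$ only after we also choose the "lost" $v_p(i)$ digits; these choices, as $i$ ranges over $p\mid i$, $i\le d+1$... one collects exactly $\sum_{r\ge 1}\lfloor d/p^r\rfloor = S$ extra $\field_q$-digits of freedom. (3) For $i$ with $p^k \mid i$: these carry no information about $\charc(M) \bmod \pi^k$ at all, consistent with the trace datum having no entry there. (4) Conclude: as the $S$ free digits vary, we get $q^S$ distinct polynomials $f_1,\dots,f_{q^S}$, each defining an interval $I(f_j, n-d-1)$, and the two conditions are equivalent. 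The intervals are disjoint because distinct choices of $(e_1,\dots,e_{d+1})$ give disjoint intervals by definition of $I(\cdot,\cdot)$, so the probability identity follows by additivity of measure.

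The main obstacle — and the only genuinely delicate bookkeeping — is step (2): correctly tracking, for each $i$ with $p\mid i$, $p^k\nmid i$, how the non-invertibility of $i$ in $\resring{k}$ interacts with Newton's recursion, and verifying that the total count of free $\field_q$-digits is exactly $S = \sum_{r=1}^{k}\lfloor d/p^r\rfloor$ (not $S(d+1,k)$ — one must be careful about whether the index $d+1$ contributes, and here $d+1$ contributes to $e_{d+1}$ via $p_{d+1}$, which is not part of the trace datum, so one checks the indexing carefully). One must also confirm that once the divided traces and free digits are fixed, Newton's identities applied for $i = 1, \dots, d+1$ determine $e_1, \dots, e_{d+1}$ uniquely in $\resring{k}$, which requires that at each step the relevant coefficient is handled correctly (unit when $p\nmid i$, and otherwise absorbed into the free-digit count). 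Everything else is a routine translation between the power-sum and elementary-symmetric descriptions, following \cite{GR21} verbatim in spirit.
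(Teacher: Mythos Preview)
Your approach via Newton's identities is exactly the paper's. Two small points of bookkeeping: (i) in your step (3), for indices $i\le d$ with $p^k\mid i$ it is not that ``nothing happens'' --- rather, since $i\equiv 0$ in $\resring{k}$, Newton's relation $\tr(M^i)=-i\,e_i+P_i(e_1,\ldots,e_{i-1})$ places no constraint on $e_i$, so $e_i$ is entirely free ($q^k$ choices each); this is precisely why $S=\sum_{r=1}^{k}\lfloor d/p^r\rfloor$ is truncated at $r\le k$ rather than your $\sum_{r\ge 1}$. (ii) The paper's proof does not route through Lemma~\ref{lem_trace_dependencies} at all: it simply observes that, with $j(i)=\min(v_p(i),k)$, Newton's identity determines $e_i$ only modulo $\pi^{\,k-j(i)}$, giving $q^{j(i)}$ lifts for each $i$ and hence $\prod_{i=1}^d q^{j(i)}=q^S$ intervals; your detour through the divided-trace interpretation is correct but unnecessary for the count.
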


\begin{proof}
For $M\in G_n(\resring{k})$, write its characteristic polynomial as 

$$
\charc(M)=x^n+e_1(M)x^{n-1}+e_2(M)x^{n-2}+\cdots+e_n(M),
$$

where $(-1)^i e_i(M)$ is the $i$-th symmetric polynomial in the eigenvalues of $M$. Using Newton's identities (See \cite[\S 3.1.1]{Pra04}), we can write
\begin{equation}\label{eqn_newton_formula}
\tr(M^i)=-ie_i(M)+P_i(e_1(M),\ldots,e_{i-1}(M)),    
\end{equation}

for some explicit $P_i\in\Z[x_1,\ldots,x_n]$. Let $p^{j'(i)}||i$, $j(i)=\min(k,j'(i))$. Then equation \eqref{eqn_newton_formula} determines $e_i(M)\pmod{\pi^{k-j(i)}}$, if $\tr(M^i)$ is given and the previous coefficients $e_1(M),\ldots,e_{i-1}(M)$ were already determined. In particular, $e_i(M)$ has $q^{j(i)}$ options. Thus iteratively we get that $\charc(M)$ is in a set of intervals $\{I(f_j,n-d-1)\}_{1\le j\le q^S}$, determined completely by the traces of powers, and of size $q^S$.

This works in the other direction as well: let $f_r$ be one of the polynomials constructed above, $f_r=x^n+a_{1}^{(r)}+x^{n-1}+\cdots+a_n^{(r)}$ (only the $d$ next-to-leading coefficients are relevant, the others may be picked arbitrarily). If $e_i(M)=a_i^{(r)}$, for 
$i=1,\ldots,d$, then equation \eqref{eqn_newton_formula} determines all the traces of powers $\tr(M^i)$, for $i=1,\ldots,d$. Moreover, the resulting traces of powers do not depend on the choice of $r$, since $a_i^{(r)}\equiv a_i^{(r')}\pmod{\pi^{k-j(i)}}$, and only this joint value is relevant for equation \eqref{eqn_newton_formula}.
\end{proof}

Finally, we define random variables $\UTR_d$ capturing the distribution of ``uniformly random" trace data. We also define the random variable $\TR_d^G$, assigning to a matrix $M\in G(\Ol)$ its $d$-trace data.

\begin{definition}
Let $d,n$ be integers with $1\le d<n$, and let $G$ be one of the groups considered in the paper.

\begin{enumerate}
    \item Define the random variable $\UTR_d$ to be the sequence $(u_i)_{i=1}^d$ of $d$ $p$-adic integers, such that $u_i\sim \frac{1}{|i|_p} U(\Ol)$ are independent random variables, and $U(\Ol)$ is the uniform random variable (w.r.t. the Haar probability measure).
    \item Define the associated $\resring{k}$ random variable $\UTR_d^k$ via the equation $\UTR_d \pmod{\pi^k}$.
    \item Define $\TR_d^G$ to be the random variable $\TR^d(M)$ where $M$ is distributed uniformly at random w.r.t. the Haar probability measure on $G(\Ol)$.
    \item Define the associated $\resring{k}$ random variable $\TR_d^{G,k}=\TR_d(M)$ where $M\in G(\resring{k})$ is chosen uniformly at random from $G(\resring{k})$ (equivalently, it is $\TR_d^G\pmod{\pi^k}$).
\end{enumerate}
\end{definition}

When $G\neq GL_n, SL_n$, the traces of negative powers for a matrix $M\in G$ may be directly obtained from the traces of positive powers. For example, if $M\in O_n(\Ol)$, then since $M^{-1}=M^t$ we get that for a positive integer $k$, $\tr(M^{-k})=\tr(M^k)$. We thus consider the traces of negative powers only for $GL_n, SL_n$, and for the rest of the subsection, we assume $G$ is one of them.

\begin{definition}
Let $0\le d_1,d_2<n$ be two integers such that $d_1+d_2=d<n-1$. A $p$-adic $(d_1,d_2)$-trace data is obtained by a taking a sequence $(\frac{ a_i}{|i|_p})_{0\neq i=-d_1}^{d_2}$, where $a_i\in\Ol$ are some $p$-adic integers.  A $(d_1,d_2)$-trace data for $\resring{k}$ is obtained by reducing the $p$-adic trace data modulo $\pi^k$. The length of the trace data is $d=d_1+d_2$.
\end{definition}

\begin{definition}
For a matrix $M\in GL_n(\Ol)$, denote the $d_1$-trace data for $M^{-1}$ by $(a_{-i})_{i=1}^{d_1}$ and the $d_2$-trace data for $M$ by $(a_i)_{i=1}^{d_2}$. Define the $(d_1,d_2)$-trace data of $M$, denoted by $\TR^{d_1,d_2}(M)$, by the sequence $(a_i)_{0\neq i=-d_1}^{d_2}$. We define similarly the $(d_1,d_2)$-trace data for a matrix $M\in GL_n(\resring{k})$. 
\end{definition}

As a corollary of the above, we get 

\begin{lem}\label{lem_connection_traces_hayes_classes}
Fix $d_1,d_2\in [n]$ such that $d_1+d_2=d<n-1$, and let 
$\mathfrak{a}$ be a sequence of $(d_1,d_2)$-trace data. Let $S=S(d,k)=\left[\frac{d}{p}\right]+\left[\frac{d}{p^2}\right]+\cdots+\left[\frac{d}{p^k}\right]$. There are monic polynomials of degree $n$, 

$$\{f_i\}_{1\le i\le q^{S(d_1,k)+S(d_2,k)+k}\subset (\resring{k})[x]},$$
such that the following two conditions are equivalent:

\begin{enumerate}
    \item The $(d_1,d_2)$-trace data of $M$ is equal to $\mathfrak{a}$.
    \item For some $1\le i\le q^{S(d_1,k)+S(d_2,k)+k}$ we have $\charc(M)\equiv f_i\pmod{\hayesrelation{k}}$.
\end{enumerate}

Moreover, the residue classes of $\hayesrelation{k}$ are disjoint, so if $G\subset GL_n(\Ol)$ is a set of matrices with a fixed determinant then

\begin{multline*}
\Pr_{M\in G(\resring{k})}
\left[M \text{ has }(d_1,d_2)\text{-trace data equal to }\mathfrak{a}
\right]=\\
=\sum_{i=1}^{q^{S(d_1,k)+S(d_2,k)+k}}
\Pr_{M\in G(\resring{k})}\left[\charc(M)\equiv f_i\pmod{\hayesrelation{k}}\right].    
\end{multline*}

\end{lem}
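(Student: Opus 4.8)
This is the ``two-sided'' refinement of Lemma~\ref{lem_connection_traces_charpoly}, and I would prove it by running that lemma's argument (equivalently \cite[Lemma 2.1]{GR21}) separately on the two ends of $\charc(M)$; the only genuinely new ingredient is the interaction with the $\rs$-operation. First I would record the identity $\charc(M)^\rs=\charc(M^{-1})$ for $M\in GL_n(\resring{k})$: since $\charc(M)(0)=(-1)^n\det(M)$ is a unit, $\charc(M)^\rs=x^n\charc(M)(1/x)/\charc(M)(0)$ is a well-defined monic polynomial of degree $n$, and factoring $\det(\tfrac1xI-M)$ gives $\charc(M)^\rs=\charc(M^{-1})$. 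Writing $\charc(M)=\sum_{i=0}^n b_i x^i$ with $b_n=1$, the $j$-th next-to-leading coefficient of $\charc(M^{-1})$ is $b_j/b_0$; hence, once the constant term $b_0=\charc(M)(0)$ is fixed (at most $q^k$ possible values in $\resring{k}$), knowing a prescribed number of leading coefficients of $\charc(M^{-1})$ is the same as knowing the corresponding low-order coefficients $b_1,b_2,\dots$ of $\charc(M)$, i.e.\ the residue of $\charc(M)$ modulo the appropriate power of $x$.

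Next I would apply the mechanism of Lemma~\ref{lem_connection_traces_charpoly} at each end. Applied to $M$ itself via Newton's identities for $\charc(M)$, it shows that the ``$M$'' part of the prescribed trace datum is attained iff the relevant top next-to-leading coefficients of $\charc(M)$ lie in one of $q^{S(d_2,k)}$ explicit tuples over $\resring{k}$. Applied to $M^{-1}$ via Newton's identities for $\charc(M^{-1})$ and the identity of the previous paragraph, it shows that --- once $\charc(M)(0)$ is also fixed --- the ``$M^{-1}$'' part is attained iff $\charc(M)$ modulo the appropriate power of $x$ is one of $q^{S(d_1,k)}$ explicit residues. The ambiguity count at each end is exactly $q^{S(\cdot,k)}$ for the reason used in Lemma~\ref{lem_connection_traces_charpoly}: if $p^{j'(i)}\|i$ and $j(i)=\min(k,j'(i))$, then since $p$ is unramified the Newton relation $\tr(N^i)=-i\,e_i(N)+P_i(e_1(N),\dots,e_{i-1}(N))$ (applied with $N=M$ and with $N=M^{-1}$) pins $e_i(N)$ down only modulo $\pi^{k-j(i)}$, and $\sum_{1\le i\le d}\min(k,j'(i))=\lfloor d/p\rfloor+\cdots+\lfloor d/p^k\rfloor=S(d,k)$; over $\resring{k}$ the $\rs$-normalization by the unit $b_0$ transfers the ambiguity on $e_i(M^{-1})=b_i/b_0$ to the same ambiguity on $b_i$.

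Combining the two ends: $M$ has $(d_1,d_2)$-trace datum $\mathfrak{a}$ iff both parts are attained, which by the above happens iff $\charc(M)$ falls into one of at most $q^{S(d_1,k)}\cdot q^{S(d_2,k)}\cdot q^k$ residue classes of $\hayesrelation{k}$, obtained by choosing independently a resolution of the Newton ambiguity at each end and a value of the constant term; I would let $\{f_i\}$ be monic degree-$n$ representatives of these classes, giving the index range $1\le i\le q^{S(d_1,k)+S(d_2,k)+k}$. Both implications are handled exactly as in Lemma~\ref{lem_connection_traces_charpoly}: the trace datum depends on $\charc(M)$ only through the coefficient data fixed by $\hayesrelation{k}$, and conversely that data (up to the listed ambiguities) is recovered from the trace datum by running Newton's identities in reverse on $\charc(M)$ and on $\charc(M)^\rs$. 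Finally, the residue classes of $\hayesrelation{k}$ are pairwise disjoint because $\hayesrelation{k}$ is an equivalence relation (Section~\ref{section_hayes_characters}), so the event $\{\TR^{d_1,d_2}(M)=\mathfrak{a}\}$ is a disjoint union of the events $\{\charc(M)\equiv f_i\ (\mathrm{mod}\ \hayesrelation{k})\}$ and the probabilities add; for $G$ a fixed-determinant coset inside $GL_n$, any $f_i$ whose constant term is not $(-1)^n$ times that determinant simply contributes $0$, so ranging over all $q^k$ constant terms keeps the identity exact.

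The whole argument is bookkeeping with no analytic content. The step requiring the most care is the negative-power end: pinning down precisely which coefficients of $\charc(M)$ are controlled by which traces, verifying $\charc(M)^\rs=\charc(M^{-1})$ and that normalizing the reverse polynomial by the unit $\charc(M)(0)$ behaves correctly over the non-field ring $\resring{k}$, and checking that the resulting family of fixed-coefficient patterns is exactly a union of $\hayesrelation{k}$-classes of the stated cardinality.
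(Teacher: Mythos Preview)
Your proposal is correct and follows essentially the same approach as the paper: apply Lemma~\ref{lem_connection_traces_charpoly} separately to $M$ and to $M^{-1}$ via the identity $\charc(M)^{\rs}=\charc(M^{-1})$, obtaining $q^{S(d_2,k)}$ short-interval constraints on the top of $\charc(M)$ and $q^{S(d_1,k)}$ constraints on the top of $\charc(M)^{\rs}$, and then observe that each such pair of constraints determines $q^k$ residue classes of $\hayesrelation{k}$ because the constant term $\charc(M)(0)$ remains free. Your write-up is considerably more explicit than the paper's two-sentence proof (in particular about the $\rs$-normalization and how the Newton ambiguity transfers through division by the unit $b_0$), but the underlying argument is identical.
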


\begin{proof}
By Lemma \ref{lem_connection_traces_charpoly}, having a $(d_1,d_2)$-trace data equal to $\mathfrak{a}$ translates to having $\charc(M)$ in $I(f_i,n-d_1)$ for one of $q^{S(d_1,k)}$ polynomials and simultaneously $\charc(M)\in I^{\rs}(g_j,n-d_2)$ for one of $q^{S(d_2,k)}$ polynomials. Note however that being in $I(f_i,n-d_1)\cap I^{\rs}(g_j,n-d_2)$ determines $q^k$ Hayes residue classes in $\hayesrelation{k}$, since the free coefficient is not determined.
\end{proof}

\begin{definition}
Let $0\le d_1,d_2<n$ be two integers such that $d_1+d_2=d<n-1$, and let $G$ be one of the matrix groups considered in the paper.

\begin{enumerate}
    \item Define the random variable $\UTR_{d_1,d_2}$ to be the sequence $(u_i)_{0\ne i=-d_1}^{d_2}$ of $d$ $p$-adic integers, such that $u_i\sim \frac{1}{|i|_p} U(\Ol)$ are independent random variables, and $U(\Ol)$ is the uniform (w.r.t. the Haar probability measure) random variable.
    \item Define the associated $\resring{k}$ random variable $\UTR_{d_1,d_2}^k$ by $\UTR_{d_1,d_2} \pmod{\pi^k}$.
    \item Define $\TR_{d_1,d_2}^G$ to be $\TR^{d_1,d_2}(M)$ for a matrix $M$ which is distributed uniformly at random w.r.t. the Haar probability measure on $G(\Ol)$.
    \item The associated $\resring{k}$ random variable is denoted by $\TR_{d_1,d_2}^{G,k}$, and is defined by $\TR_{d_1,d_2}^G\pmod{\pi^k}$.
\end{enumerate}
\end{definition}

As a final remark, we note that the number of different possible values for a $(d_1,d_2)$-trace data is $q^{d-S(d_1,k)-S(d_2,k)}$.

\subsection{Hayes characters of palindromic polynomials}

\begin{definition}
Let $n\ge 1$ be an integer.
\begin{enumerate}
    \item A polynomial $f\in\field_q[x]_{\le n}$ is called $n$-palindromic (or just palindromic, if the value of $n$ is clear from the context) if $x^nf(1/x)=f(x)$.
    \item A monic polynomial $f\in\field_q[x]$ with $f(0)\neq 0$ is called self-reciprocal if $x^{\deg f}\frac{f(1/x)}{f(0)}=f(x)$.
\end{enumerate}
\end{definition}

We remark that the two notions are related: e.g, if $f\in\field_q[x]_{=n}$ is self-reciprocal such that $f(0)=1$, then $f$ is $\deg(f)$-palindromic. Conversely, if $\deg(f)=n$, $f(0)=1$, and $f$ is $n$-palindromic, then $f$ is self-reciprocal. Also, if $f$ is a polynomial of degree $<n$ and $f$ is $n$-palindromic, then $x^n+f+1$ is self-reciprocal (note that the free coefficient of $f$ must be $0$). Every monic, $n$-palindromic polynomial can be obtained this way.

We recall that in this paper, we denote the set of monic polynomials in $\field_q[x]$ by $\mathcal{M}_q$. We denote the set of monic, self-reciprocal polynomials in $\field_q[x]$ by $\mathcal{M}_q^{\mathrm{sr}}$. Finally, we denote the set of $n$-palindromic polynomials of degree $\le n$ (or $<n$, or $=n$) by $\field_q[x]^{\pal}_{\le n}$ (or $\field_q[x]^{\pal}_{< n}$, or $\field_q[x]^{\pal}_{=n}$). Self-reciprocal polynomials and their relations with Hayes characters were studied in \cite{GR21}, and the key construction there is that of the map 

$$
\Psi:\mathcal{M}_q\to\mathcal{M}_q^{sr}, g\mapsto g\left(x+\frac{1}{x}\right)x^{\deg(g)},
$$
and its inverse $\Phi$. We adapt the results in \cite[Lemma 6.12, Proposition 6.13]{GR21} to a slightly different setting which we will use later.

\begin{prop}\label{prop_psi_even}
Define $\Psi_{2n}:\field_q[x]_{=n}^{\mon}\to \field_q[x]_{=2n}^{\mon}$ by $\Psi_{2n}(g)=g\left(
x+\frac{1}{x}
\right)x^n$. Then,

\begin{enumerate}
    \item $\Psi_{2n}$ is injective.
    \item The image of $\Psi_{2n}$ is precisely $\field_q[x]^{\mathrm{pal}}_{=2n}\cap \field_q[x]_{=2n}^\mon$.
    \item Denote the inverse of $\Psi_{2n}$, defined on $\field_q[x]^{\mathrm{pal}}_{=2n}\cap \field_q[x]_{=2n}^\mon$, by $\Phi_{2n}$. Let $\chi\in \hat{R}_{l,1}$. There is a unique $\chi'\in \hat{R}_{l,1}$ such that $\chi(f)=\chi'(\Phi_{2n}(f))$ for every $f\in \field_q[x]^{\mathrm{pal}}_{=2n}\cap \field_q[x]_{=2n}^\mon$. Moreover, $\chi$ is non-trivial iff $\chi'$ is non-trivial. 
\end{enumerate}
\end{prop}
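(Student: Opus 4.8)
The plan is to dispatch parts 1 and 2 by elementary means and then reduce part 3 to the framework of \cite{GR21}.

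\emph{Parts 1 and 2.} For injectivity, suppose $\Psi_{2n}(g_1)=\Psi_{2n}(g_2)$; dividing by $x^n$ gives $g_1(x+x^{-1})=g_2(x+x^{-1})$ in $\field_q(x)$. Since $y:=x+x^{-1}$ is transcendental over $\field_q$ (it is non-constant, and $x$ is a root of $T^2-yT+1$ over $\field_q(y)$), the substitution map $\field_q[y]\hookrightarrow\field_q(x)$ is injective, so $g_1=g_2$. For the image, writing $g=\sum_{i=0}^{n}c_ix^{n-i}$ with $c_0=1$ one computes $\Psi_{2n}(g)=\sum_{i=0}^{n}c_i x^{i}(x^{2}+1)^{n-i}$, which is manifestly monic of degree $2n$ and satisfies $x^{2n}\Psi_{2n}(g)(1/x)=x^{n}g(x+x^{-1})=\Psi_{2n}(g)(x)$, hence is palindromic. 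Conversely, if $f=\sum_{i=0}^{2n}a_ix^{i}$ is monic and $2n$-palindromic ($a_{2n}=a_{0}=1$, $a_i=a_{2n-i}$), let $P_j$ be the monic degree-$j$ integer polynomial with $x^{j}+x^{-j}=P_j(x+x^{-1})$ (so $P_0=2$, $P_1=y$, $P_{j+1}=yP_j-P_{j-1}$); then $f(x)/x^{n}=\tfrac{a_n}{2}P_0(y)+\sum_{j=1}^{n}a_{n+j}P_j(y)=:g(y)$ with $g\in\field_q[x]^{\mon}_{=n}$ (using $p$ odd and $a_{2n}=1$), so $f=\Psi_{2n}(g)$. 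Together with part 1 this identifies the image as $\field_q[x]^{\pal}_{=2n}\cap\field_q[x]^{\mon}_{=2n}$ and makes $\Phi_{2n}$ well defined.

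\emph{Part 3.} Identify $(\mathcal{M}_q/R_{l,1})^{\times}$ with the group $\Gamma_l$ of units $1+c_1t+\cdots+c_lt^l$ of $\field_q[t]/(t^{l+1})$ via $h\mapsto t^{\deg h}h(1/t)\bmod t^{l+1}$, under which the $l$ next-to-leading coefficients of a monic polynomial are exactly its coordinates in $\Gamma_l$ and a Hayes character $\chi\in\hat{R}_{l,1}$ becomes a character of $\Gamma_l$. From $\Psi_{2n}(g)=\sum_{i=0}^{n}c_ix^{i}(x^{2}+1)^{n-i}$, the $j$-th next-to-leading coefficient of $\Psi_{2n}(g)$ equals $\sum_{m}c_{j-2m}\binom{n-j+2m}{m}$, so the induced map $(c_1,\dots,c_l)\mapsto(a_1,\dots,a_l)$ is polynomial and is unipotent upper-triangular up to an additive shift depending only on $n$; in particular it is a bijection of $\field_q^{l}$. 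Following \cite[Lemma 6.12, Proposition 6.13]{GR21}, adapted to fixed degree, one checks that this coefficient transformation is compatible with the group law of $\Gamma_l$ transported through $\Psi_{2n}$, so that $g\mapsto\chi(\Psi_{2n}(g))$ is again a character of $\Gamma_l$, i.e.\ comes from a unique $\chi'\in\hat{R}_{l,1}$; by construction $\chi(f)=\chi'(\Phi_{2n}(f))$ on the image. Uniqueness is clear since two Hayes characters agreeing on all of $\field_q[x]^{\mon}_{=n}$ are equal, and the triviality equivalence follows because the coefficient bijection carries the trivial character to the trivial one and conversely.

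\emph{Main obstacle.} The substantive step is in part 3: verifying that precomposing a Hayes character with the triangular coefficient transformation induced by $\Psi_{2n}$ yields a genuine Hayes character, i.e.\ that this transformation respects the multiplicative structure underlying $\hat{R}_{l,1}$ and in particular that the degree-dependence of $\Psi_{2n}$ does not leave an uncontrolled scalar. The way to see this is to pass to the unrestricted multiplicative map $g\mapsto g(x+x^{-1})x^{\deg g}$ on $\mathcal{M}_q$ — which genuinely commutes with multiplication — and then track next-to-leading coefficients as in \cite{GR21}; transcribing that bookkeeping to the degree-doubling map $\Psi_{2n}$ is the part requiring care.
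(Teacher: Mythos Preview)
Your argument is correct, and at its core it is the same as the paper's: all three parts ultimately lean on \cite[Lemma 6.12, Proposition 6.13]{GR21}. The differences are in presentation rather than substance.

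For parts 1--2, the paper simply cites \cite{GR21} for injectivity and then counts: the monic $2n$-palindromic polynomials of degree $2n$ form an affine space of $\field_q$-dimension $n$, so an injective map from $\field_q[x]^{\mon}_{=n}$ hitting palindromic polynomials is automatically onto. Your explicit Chebyshev-type inverse is correct (and uses $p$ odd in exactly the right place) but is more than is needed.

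For part 3, your write-up is circuitous. You first try to argue at the level of the coefficient map $(c_1,\dots,c_l)\mapsto(a_1,\dots,a_l)$, observing it is an affine unipotent bijection, and then assert it is ``compatible with the group law of $\Gamma_l$''. That assertion is the whole point and is not justified by the coefficient formula alone --- an affine bijection need not pull characters back to characters. You then identify, in your ``main obstacle'' paragraph, the actual mechanism: the unrestricted map $\Psi:g\mapsto g(x+x^{-1})x^{\deg g}$ on $\mathcal{M}_q$ is genuinely multiplicative, so $\chi':=\chi\circ\Psi$ is automatically multiplicative, and \cite[Proposition 6.13]{GR21} says it depends only on the first $l$ next-to-leading coefficients, hence lies in $\hat R_{l,1}$. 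That is exactly what the paper does in one line: it defines $\chi'(g)=\chi(g(x+1/x)x^{\deg g})$ directly on $\mathcal{M}_q$, cites \cite[Proposition 6.13]{GR21}, and observes that restricting to degree $n$ gives $\chi(f)=\chi'(\Phi_{2n}(f))$. Your detour through explicit coefficients can be dropped; the content of your obstacle paragraph \emph{is} the proof.
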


\begin{proof}
The fact that $\Psi_{2n}$ is injective follows from the fact that $\Psi$ is injective, proved in \cite[Lemma 6.12]{GR21}. We note further that by direct computation, for every $g$, $\Psi_{2n}(g)$ is palindromic. The set of monic $2n$-palindromic polynomials of degree $2n$ is a vector space of dimension $n$, hence $\Psi_{2n}$ cover all monic $2n$ palindromic polynomials. 

For the last assertion, define 
$
\chi'(g)=\chi(g\left(x+\frac{1}{x}\right)x^{\deg(g)})
$. By \cite[Proposition 6.13]{GR21}, this function is a short interval character of $l$ coefficients and is non-trivial iff $\chi$ is non-trivial. Hence, denoting $g=\Phi_{2n}(f)$, $\chi(f)=\chi(g\left(x+\frac{1}{x}\right)x^n)=\chi'(g)$ as claimed. 
\end{proof}

We adapt this also to palindromic polynomials of odd degrees. We note that if $f \in \field_q[x]_{=2n-1}^{\pal}$, then $f(-1)=0$. Hence $f=(x+1)g$, where $g$ is $(2n-2)$-palindromic. We thus define $\Psi_{2n-1}:\field_q[x]_{=n-1}^{\mon}\to\field_q[x]_{=2n-1}^{\pal}$ by $\Psi_{2n-1}(g)=(x+1)g\left(
x+\frac{1}{x}
\right)x^{n-1}$. 

\begin{prop}\label{prop_psi_odd}
Define $\Psi_{2n-1}:\field_q[x]_{=n-1}^{\mon}\to\field_q[x]_{=2n-1}^{\pal}$ by $\Psi_{2n-1}(g)=(x+1)g\left(
x+\frac{1}{x}
\right)x^{n-1}$ by $\Psi_{2n-1}(g)=(x+1)g\left(
x+\frac{1}{x}
\right)x^{n-1}$. Then,

\begin{enumerate}
    \item $\Psi_{2n-1}$ is injective.
    \item The image of $\Psi_{2n-1}$ is precisely $\field_q[x]^{\mathrm{pal}}_{=2n-1}\cap\field_q[x]_{=2n-1}^{\mon}$.
    \item Denote the inverse of $\Psi_{2n-1}$, defined on $\field_q[x]^{\mathrm{pal}}_{=2n-1}\cap\field_q[x]_{=2n-1}^{\mon}$, by $\Phi_{2n-1}$. Let $\chi\in\hat{R}_{l,1}$. There is a unique $\chi'\in\hat{R}_{l,1}$ and a constant $\alpha(\chi)\in\field_q$ such that $\chi(f)=\alpha(\chi)\chi'(\Phi_{2n-1}(f))$ for every $f\in\field_q[x]^{\mathrm{pal}}_{=2n-1}\cap\field_q[x]_{=2n-1}^{\mon}$. Moreover, $\chi$ is non-trivial iff $\chi'$ is non-trivial, and in the case that $\chi$ is trivial, $\alpha(\chi)=1$.
\end{enumerate}
\end{prop}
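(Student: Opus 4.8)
The plan is to imitate the proof of Proposition~\ref{prop_psi_even}, reducing the odd-degree case to the even-degree one by peeling off the factor $x+1$. First I would record the key arithmetic fact: if $f\in\field_q[x]^{\pal}_{=2n-1}$, then applying the relation $x^{2n-1}f(1/x)=f(x)$ at $x=-1$ gives $-f(-1)=f(-1)$, hence $2f(-1)=0$; since $q$ is a power of the odd prime $p$ this forces $f(-1)=0$, so $f=(x+1)h$ with $h$ monic of degree $2n-2$. Feeding $f=(x+1)h$ back into the palindromy relation and cancelling the nonzero factor $x+1$ in the domain $\field_q[x]$ yields $x^{2n-2}h(1/x)=h(x)$, i.e. $h\in\field_q[x]^{\pal}_{=2n-2}$. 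Conversely, the same manipulation shows that $(x+1)$ times any monic $(2n-2)$-palindromic polynomial is monic $(2n-1)$-palindromic, so $h\mapsto(x+1)h$ is a bijection from $\field_q[x]^{\pal}_{=2n-2}\cap\field_q[x]^{\mon}_{=2n-2}$ onto $\field_q[x]^{\pal}_{=2n-1}\cap\field_q[x]^{\mon}_{=2n-1}$.

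For parts (1) and (2) I would simply note the identity $\Psi_{2n-1}(g)=(x+1)\,\Psi_{2n-2}(g)$, where $\Psi_{2n-2}$ is the map of Proposition~\ref{prop_psi_even} with $n$ replaced by $n-1$. By that proposition $\Psi_{2n-2}$ is injective with image $\field_q[x]^{\pal}_{=2n-2}\cap\field_q[x]^{\mon}_{=2n-2}$; composing with the bijection $h\mapsto(x+1)h$ from the previous paragraph shows $\Psi_{2n-1}$ is injective with image exactly $\field_q[x]^{\pal}_{=2n-1}\cap\field_q[x]^{\mon}_{=2n-1}$, and that its inverse $\Phi_{2n-1}$ is ``divide by $x+1$, then apply $\Phi_{2n-2}$''.

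For part (3), fix $\chi\in\hat{R}_{l,1}$ and $f\in\field_q[x]^{\pal}_{=2n-1}\cap\field_q[x]^{\mon}_{=2n-1}$, and set $g=\Phi_{2n-1}(f)$, so $f=(x+1)\,\Psi_{2n-2}(g)$. Since Hayes characters in $\hat{R}_{l,1}$ are completely multiplicative on $\mathcal{M}_q$ (the modulus $H=1$ is a unit), $\chi(f)=\chi(x+1)\,\chi(\Psi_{2n-2}(g))$. Now part (3) of Proposition~\ref{prop_psi_even}, applied with $n-1$ in place of $n$, produces a unique $\chi'\in\hat{R}_{l,1}$ with $\chi(\Psi_{2n-2}(g))=\chi'(g)$ for all monic $g$ of degree $n-1$, and with $\chi$ non-trivial if and only if $\chi'$ non-trivial. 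Putting $\alpha(\chi):=\chi(x+1)$ we obtain $\chi(f)=\alpha(\chi)\,\chi'(\Phi_{2n-1}(f))$; the uniqueness of $\chi'$ and the triviality equivalence are inherited verbatim from the even case, and if $\chi$ is trivial then $\chi(x+1)=1$, so $\alpha(\chi)=1$.

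All of the computations here are routine. The one place that genuinely uses a hypothesis is the step $f(-1)=0$, which requires $\mathrm{char}(\field_q)\ne 2$: in characteristic $2$ an odd-degree palindromic polynomial need not be divisible by $x+1$ and the reduction fails. A minor bookkeeping point is the nature of $\alpha(\chi)$: because $x+1$ is coprime to the modulus $H=1$, $\chi(x+1)$ is a nonzero constant (indeed a root of unity), which is exactly the form in which the proposition is used when summing short-interval characters against palindromic characteristic polynomials in the symplectic, orthogonal and unitary cases.
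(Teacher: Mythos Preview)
Your proof is correct and follows essentially the same approach as the paper: both reduce to the even-degree case of Proposition~\ref{prop_psi_even} via the factorization $\Psi_{2n-1}(g)=(x+1)\Psi_{2n-2}(g)$ and set $\alpha(\chi)=\chi(x+1)$. Your treatment is in fact more careful than the paper's---you establish surjectivity via the explicit bijection $h\mapsto(x+1)h$ (justifying $f(-1)=0$ using $p\neq 2$), whereas the paper just counts dimensions---but the underlying argument is the same.
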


\begin{proof}
The fact that $\Psi_{2n-1}$ is injective follows from the claim on $\Psi_{2n}$. Regarding the image of $\Psi_{2n-1}$, we note that $\Psi_{2n-1}(g)$ is always monic and palindromic, and that the dimension of the linear space $\field_q[x]_{=n-1}$ is $n-1$ which is equal to the dimension of $\field_q[x]^{\mathrm{pal}}_{=2n-1}\cap\field_q[x]_{=2n-1}^{\mon}$. 

Finally, let $\chi$ be a short interval character of length $l$. Now $f=(x+1)f_0$ where $f_0$ is $(2n-2)$-palindromic. Here, $\Phi_{2n-1}(f)=\Psi_{2n-2}(f_0)$. Denote their joint value by $g$. Then by the last proposition, there is a unique $\chi'$ such that $\chi(f_0)=\chi'(g)$. Letting $\alpha(\chi)=\chi(x+1)$ we get the result. 
\end{proof}

We now prove an orthogonality relation for Hayes characters evaluated at palindromic polynomials. 

\begin{lem}\label{lem_hayes_character_sum_over_palindromic_polys}
Let $n$ be an integer and let  $\delta=\lceil \frac{n-1}{2}\rceil$. Let $\chi$ be a short interval character of $l$ coefficients, with $l\le\delta$. Then, 

$$
q^{-\delta}\sum_{f\in\field_q[x]_{<n}^{\pal}}\chi(x^n+f)=1_{\chi=\chi_0}.
$$
\end{lem}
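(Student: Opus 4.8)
The plan is to reduce the sum to the standard orthogonality relation for the characters of the finite abelian group $G_{l,1}=\mathcal{M}_q/R_{l,1}$. (With $H=1$ there is no coprimality restriction, so every element of $\mathcal{M}_q/R_{l,1}$ is a unit and the group has order $q^l\varphi(1)=q^l$.) First I would analyze the parametrization of $\field_q[x]_{<n}^{\pal}$. Writing $f=\sum_{i=1}^{n-1}c_ix^i$ (the constant term of an $n$-palindromic polynomial of degree $<n$ necessarily vanishes), the palindromy condition $x^nf(1/x)=f(x)$ is exactly $c_i=c_{n-i}$ for all $i$. Counting independent parameters and using $\delta=\lceil\frac{n-1}{2}\rceil=\lfloor n/2\rfloor$ shows $|\field_q[x]_{<n}^{\pal}|=q^{\delta}$, which is precisely the normalizing factor. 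More precisely, one checks, splitting into the cases $n$ even (where the middle coefficient $c_{n/2}$ is self-paired) and $n$ odd, that the map $f\mapsto(c_{n-1},c_{n-2},\ldots,c_{n-\delta})$ is a bijection from $\field_q[x]_{<n}^{\pal}$ onto $\field_q^{\delta}$: the top $\delta$ coefficients may be prescribed arbitrarily and independently, and they determine $f$.

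Second, since $l\le\delta<n$, the $l$ next-to-leading coefficients of the monic degree-$n$ polynomial $x^n+f$ are exactly $c_{n-1},\ldots,c_{n-l}$, which lie among these free top coefficients. As $\chi\in\hat{R}_{l,1}$ depends only on the $l$ next-to-leading coefficients, I can sum out the remaining $\delta-l$ free coefficients of $f$, contributing a factor $q^{\delta-l}$:
$$
\sum_{f\in\field_q[x]_{<n}^{\pal}}\chi(x^n+f)=q^{\delta-l}\sum_{(a_1,\ldots,a_l)\in\field_q^l}\chi\bigl(x^n+a_1x^{n-1}+\cdots+a_lx^{n-l}\bigr).
$$

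Finally, the map $(a_1,\ldots,a_l)\mapsto[\,x^n+a_1x^{n-1}+\cdots+a_lx^{n-l}\,]$ is a bijection from $\field_q^l$ onto $G_{l,1}$, since (using $n>l$) it realizes each tuple of $l$ next-to-leading coefficients exactly once. Hence the inner sum is $\sum_{g\in G_{l,1}}\chi(g)$, which by character orthogonality (equivalently, by \eqref{eqn_hayes_first_orthogonality_of_characters} applied with $H=1$) equals $q^l$ if $\chi=\chi_0$ and $0$ otherwise. Combining, the full sum is $q^{\delta-l}\cdot q^l\cdot 1_{\chi=\chi_0}=q^{\delta}1_{\chi=\chi_0}$, and dividing by $q^{\delta}$ gives the claim.

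The one step requiring genuine care is the first: confirming that $f\mapsto(c_{n-1},\ldots,c_{n-\delta})$ is a bijection onto $\field_q^{\delta}$, i.e.\ that the palindromy relations leave exactly the top $\delta$ coefficients free. This is a short index computation, but it must be carried out separately for $n$ even and $n$ odd, keeping track of the self-paired middle coefficient; once $\delta=\lceil\frac{n-1}{2}\rceil=\lfloor n/2\rfloor$ is noted it is routine, and everything else is a direct application of character orthogonality.
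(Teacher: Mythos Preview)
Your proof is correct and takes a genuinely different, more elementary route than the paper. The paper first shifts $f\mapsto f+1$ (permissible since $l<n$) so that $x^n+f+1$ is a monic $n$-palindromic polynomial of degree $n$, and then invokes the bijection $\Psi_n:\field_q[x]_{=\delta}^{\mon}\to\field_q[x]_{=n}^{\pal}\cap\field_q[x]_{=n}^{\mon}$ of Propositions~\ref{prop_psi_even}/\ref{prop_psi_odd} together with the induced character $\chi'$ to rewrite the sum as $\alpha(\chi)\sum_{g\in\field_q[x]_{=\delta}^{\mon}}\chi'(g)$, whereupon orthogonality \eqref{eqn_hayes_first_orthogonality_of_characters} applies. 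Your argument bypasses this multiplicative machinery entirely: you parametrize $\field_q[x]_{<n}^{\pal}$ directly by its top $\delta$ coefficients, observe that $\chi$ sees only the top $l\le\delta$ of them, and sum out. This is shorter and needs no auxiliary results beyond orthogonality itself. The paper's approach, by contrast, is more structural---it ties palindromic sums to ordinary monic sums via a multiplicative bijection---which is the same mechanism later reused for the skew-palindromic analogue in Lemma~\ref{lem_hayes_character_sum_over_skpalindromic_polys}; your direct coefficient-counting would also adapt to that setting, but the paper's uniform framework makes the parallel more visible.
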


\begin{proof}
Since $\delta\ge l$, we have in particular that $n>l$ and we may change the free coefficient of the polynomial without changing the value of the character. Hence, 

$$
q^{-\delta}\sum_{f\in\field_q[x]_{<n}^{\pal}}\chi(x^n+f)=
q^{-\delta}\sum_{f\in\field_q[x]_{<n}^{\pal}}\chi(x^n+f+1).
$$

We use the map $\Psi_n:\field_q[x]_{=\delta}^\mon\to\field_q[x]_{=n}^{\pal}\cap\field_q[x]_{=n}^\mon$. By Propositions \ref{prop_psi_even}, \ref{prop_psi_odd} we have a unique character $\chi'$ and a constant $\alpha(\chi)$ (which is $1$ if $\chi=\chi_0$) such that 
$
\chi(x^n+f+1)=\alpha(\chi)\chi'(\Phi_n(x^n+f+1))
$. In particular, by the orthogonal relations for Hayes characters \eqref{eqn_hayes_first_orthogonality_of_characters},

$$
q^{-\delta}\sum_{f\in\field_q[x]_{<n}^{\pal}}\chi(x^n+f+1)=q^{-\delta}\alpha(\chi)\sum_{g\in\field_q[x]_{=\delta}^{\mon}}\chi'(g)=1_{\chi=\chi_0}.
$$
\end{proof}

We will also consider the closely related notions of $*$-symmetric polynomials and skew-palindromic polynomials. Let $\field_{q^2}/\field_q$ be a quadratic extension, and let $\tau$ denote the action of $q$-Frobenius on $\field_{q^2}$, that is $\tau(\alpha)=\alpha^q$ for $\alpha\in\field_{q^2}$. 
Let $f\in \field_{q^2}[x]_{<n}$ be a polynomial, and let $\alpha\in\field_{q^2}$ be such that $\alpha\tau(\alpha)=1$. Also for a polynomial $f\in\field_{q^2}(x)$, define $f^*(x)=x^n \tau(f)(1/x)$. We say that $f$ is $\alpha$-skew palindromic polynomial (or simply skew-palindromic polynomial, if $\alpha$ is clear from the context) if $\frac{f^*(x)}{\tau(\alpha)}=f$. If we want to emphasize the dependence on $n$, we say that $f$ is skew-palindromic w.r.t. $n$.  

We note that due to the assumption on $\alpha$, $f\mapsto \frac{f^*(x)}{\tau(\alpha)}$ is an involution on $\field_{q^2}[x]_{<n}$. We denote the set of $\alpha$-skew palindromic polynomials of degree $<n$ (or $\le n$, $=n$) by $\field_{q^2}[x]_{<n}^{\alpha\text{-skpal}}$ (or $\field_{q^2}[x]_{\le n}^{\alpha-\skpal}$, $\field_{q^2}[x]_{=n}^{\alpha-\skpal}$ respectively), or simply $\field_{q^2}[x]_{<n}^{\text{skpal}}$ (or $\field_{q^2}[x]_{\le n}^{\skpal}$, $\field_{q^2}[x]_{=n}^{\skpal}$ respectively) if $\alpha$ is clear from the context. When $\alpha=1$, we call the polynomial $*$-symmetric, and we denote the set of $*$-symmetric polynomials of degree $<n$ (or $\le n$, $=n$) by $\field_{q^2}[x]_{<n}^{*-\mathrm{sym}}$ (or $\field_{q^2}[x]_{\le n}^{*-\mathrm{sym}}$, $\field_{q^2}[x]_{=n}^{*-\mathrm{sym}}$ respectively). For the rest of the subsection, skew-palindromic or $*$-symmetric polynomials are assumed to be skew-palindromic w.r.t. $n$. 

\begin{claim}\label{claim_structure_of_skpal}
Let $f\in \field_{q^2}[x]_{<n}^{\alpha-\skpal}$. Then there exist a $\beta=\beta(\alpha)\in\field_{q^2}$ and $g\in \field_{q^2}[x]_{<n}^{*-\sym}$ such that $\beta f=g$. 
\end{claim}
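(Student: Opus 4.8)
The plan is to reduce the claim to an elementary statement about $(q-1)$-th powers in $\field_{q^2}^\times$. Writing the $\alpha$-skew-palindromic condition on $f$ as $f^*(x) = \tau(\alpha)\,f(x)$, I would first record how the involution $\ast$ interacts with scalar multiplication: for any $\beta\in\field_{q^2}$ we have $(\beta f)^*(x) = x^n\,\tau(\beta f)(1/x) = \tau(\beta)\,f^*(x) = \tau(\beta)\tau(\alpha)\,f(x)$. Since $\tau$ has order two on $\field_{q^2}$ and $\alpha\tau(\alpha)=1$ forces $\tau(\alpha)=\alpha^{-1}$, the polynomial $g:=\beta f$ is $\ast$-symmetric (that is, $g^*=g$) exactly when $\tau(\beta)=\alpha\beta$, equivalently, for $\beta\ne 0$, when $\beta^{q-1}=\alpha$. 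So it suffices to exhibit a nonzero $\beta$, depending only on $\alpha$ and not on $f$, with $\beta^{q-1}=\alpha$.

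Second, I would check that such a $\beta$ exists, i.e. that $\alpha$ is a $(q-1)$-th power in $\field_{q^2}^\times$. The group $\field_{q^2}^\times$ is cyclic of order $q^2-1=(q-1)(q+1)$. On one hand, the hypothesis $\alpha\tau(\alpha)=\alpha^{1+q}=1$ places $\alpha$ in the kernel of the norm map $N_{\field_{q^2}/\field_q}$, which has order $\gcd(q+1,q^2-1)=q+1$, hence is the unique subgroup of $\field_{q^2}^\times$ of order $q+1$. On the other hand, the image of the endomorphism $x\mapsto x^{q-1}$ of $\field_{q^2}^\times$ has order $(q^2-1)/\gcd(q-1,q^2-1)=q+1$, so it is also the unique subgroup of order $q+1$. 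As $\field_{q^2}^\times$ is cyclic these two subgroups coincide, so $\alpha=\beta^{q-1}$ for some $\beta\in\field_{q^2}^\times$; fix any such $\beta$ and set $\beta=\beta(\alpha)$.

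Finally, with this $\beta$ in hand put $g=\beta f$. By the first step $g$ is $\ast$-symmetric with respect to $n$, and $\deg g=\deg f<n$, so $g\in\field_{q^2}[x]_{<n}^{*-\sym}$ and $\beta f=g$, which is the assertion (the case $f=0$ being trivial with any $\beta$). The only mildly delicate point is the group-theoretic identification in the second step — that the norm-one elements are precisely the $(q-1)$-th powers — but this is immediate from the cyclicity of $\field_{q^2}^\times$; the rest is a direct computation.
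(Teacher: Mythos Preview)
Your argument is correct and is essentially the paper's proof: both find a nonzero $\beta$ with $\tau(\beta)/\beta=\alpha$ (equivalently $\beta^{q-1}=\alpha$) and check that $\beta f$ is then $*$-symmetric. The only difference is cosmetic---the paper invokes Hilbert's Theorem~90 for the existence of $\beta$, whereas you spell out that special case via the cyclicity of $\field_{q^2}^\times$.
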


\begin{proof}
By Hilbert's theorem 90, one can write $\alpha=\frac{\tau(\beta)}{\beta}$. Hence, 
$$
f^*(x)=\tau(\alpha)f\iff (\beta f)^*=\tau(\beta) f^*(x)=\beta f.
$$
\end{proof}

We see that to study the structure of $\alpha$ skew-palindromic polynomials it is enough to study the structure of $*$-symmetric polynomials. 

\begin{claim}\label{claim_structure_of_start_sym}
Let $f\in\field_{q^2}[x]_{<n}^{*-\sym}$. Then,
\begin{enumerate}
    \item If $n$ is odd, there exist a unique $h\in\field_{q^2}[x]_{\le\frac{n-1}{2}}$ with $h(0)=0$ such that $f(x)=h(x)+h^*(x)$. 
    \item If $n$ is even, there exist a unique $h\in\field_{q^2}[x]_{\le\frac{n-1}{2}}$ with $h(0)=0$ and $\alpha\in\field_q$ such that $f(x)=h(x)+\alpha x^d+h^*(x)$.
\end{enumerate}
Moreover, any choice of $h$ gives a $*$-symmetric polynomial. In each case, the $l$ next-to-leading coefficients of $x^n+f$, for $l\le \frac{n-1}{2}$, are simply the $l$ next-to-leading coefficients of $x^n+h^*$.  
\end{claim}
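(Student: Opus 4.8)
The plan is to prove this structural description of $*$-symmetric polynomials by working entirely at the level of coefficients. Write $f(x)=\sum_{i=0}^{n-1}c_ix^i$ with $c_i\in\field_{q^2}$; a direct computation gives $f^*(x)=x^n\tau(f)(1/x)=\sum_{i=0}^{n-1}\tau(c_i)x^{n-i}$. Comparing coefficients of $x^k$ for $0\le k\le n$ in the equation $f^*=f$, the coefficient of $x^n$ forces $\tau(c_0)=0$, i.e. $c_0=0$, and for $1\le k\le n-1$ one gets the reflection relation $c_k=\tau(c_{n-k})$. So a $*$-symmetric polynomial of degree $<n$ is exactly one with vanishing constant term whose coefficient sequence is invariant under $k\mapsto n-k$ followed by $\tau$.

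Next I would use this symmetry to peel off the ``lower half'' of $f$. Put $h(x)=\sum_{i=1}^{\lfloor(n-1)/2\rfloor}c_ix^i$, so that $h(0)=0$ and $\deg h\le\frac{n-1}{2}$; then $h^*(x)=\sum_{i=1}^{\lfloor(n-1)/2\rfloor}\tau(c_i)x^{n-i}$, and by $c_{n-i}=\tau(c_i)$ this is precisely the part of $f$ supported on degrees $n-\lfloor(n-1)/2\rfloor,\dots,n-1$. When $n$ is odd the index sets $\{1,\dots,\lfloor(n-1)/2\rfloor\}$ and $\{n-\lfloor(n-1)/2\rfloor,\dots,n-1\}$ partition $\{1,\dots,n-1\}$, so $f=h+h^*$. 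When $n$ is even the only uncovered index is the middle one $k=n/2$, and the relation $c_{n/2}=\tau(c_{n/2})$ places it in $\field_q$; hence $f=h+\alpha x^{n/2}+h^*$ with $\alpha:=c_{n/2}\in\field_q$ (this is the $\alpha x^d$ term of the statement, $d=n/2$). Uniqueness is immediate: $h$ must be the truncation of $f$ to degrees $\le\lfloor(n-1)/2\rfloor$ and $\alpha$ its coefficient at $x^{n/2}$.

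For the converse I would verify that any admissible $h$ (and, in the even case, any $\alpha\in\field_q$) yields a $*$-symmetric polynomial. This rests on two elementary facts: first, $g\mapsto g^*$ is an involution on $\field_{q^2}[x]_{<n}$ --- writing out $(g^*)^*(x)=x^n\tau(g^*)(1/x)$ and using $\tau(g^*)(y)=y^ng(1/y)$ (valid since $\tau^2=\mathrm{id}$) gives $(g^*)^*=g$ --- and the condition $h(0)=0$ is exactly what keeps $h^*$ of degree $<n$; second, $(\alpha x^{n/2})^*=\alpha x^{n/2}$ because $\tau$ fixes $\alpha\in\field_q$. Therefore $(h+h^*)^*=h^*+h$, and in the even case $(h+\alpha x^{n/2}+h^*)^*=h^*+\alpha x^{n/2}+h$, as required.

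Finally, the ``moreover'' clause is a degree count: the $l$ next-to-leading coefficients of $x^n+f$ are its coefficients at $x^{n-1},\dots,x^{n-l}$, and since $l\le\frac{n-1}{2}$ we have $n-l\ge\frac{n+1}{2}>\frac{n-1}{2}\ge\deg h$ and also $n-l>n/2$, so neither $h$ nor the middle term $\alpha x^{n/2}$ affects any of those coefficients; hence $x^n+f$ and $x^n+h^*$ share their $l$ next-to-leading coefficients. I expect no serious obstacle in any of this --- the argument is routine linear algebra over $\field_{q^2}$; the only thing to be careful about is keeping the two parity cases and the index bookkeeping consistent, and invoking $h(0)=0$ precisely where it is needed to rule out a spurious degree-$n$ term in $h^*$.
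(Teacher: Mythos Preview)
Your proof is correct and follows essentially the same approach as the paper: write out the coefficients, translate the $*$-symmetry condition into the reflection relation $c_k=\tau(c_{n-k})$ (forcing $c_0=0$), and then split off the lower half as $h$, treating the middle coefficient separately when $n$ is even. You are in fact more thorough than the paper, which leaves the converse direction and the ``moreover'' clause implicit; your explicit verification of the involution property of $f\mapsto f^*$ and the degree count for the next-to-leading coefficients are welcome additions.
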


\begin{proof}
Note that for $f$ to be in $\field_{q^2}[x]_{<n}^{*\text{-sym}}$, $f(0)$ must be $0$. We split into cases depending on the parity of $n$.

\case{$n=2d+1$ is odd} 
Write $f(x)=a_1x+a_2x^2+\cdots+a_{2d}x^{2d}$, the condition that $f$ is $*$-symmetric is equivalent to requiring $a_{2d+1-i}=\tau(a_i)$. Setting $h(x)=a_1x+\cdots+a_dx^d$ we get the result. 

\case{$n=2d$ is even}
Write $f(x)=a_1x+a_2x^2+\cdots+a_{2d-1}x^{2d-1}$, the condition that $f$ is $*$-symmetric is equivalent to requiring $a_{2d-i}=\tau(a_i)$. In particular, $\tau(a_d)=a_d$ and thus $a_d\in\field_q$. Thus, setting $h=a_1x+\cdots+a_{d-1}x^{d-1}$ we get the result.
\end{proof}

\begin{lem}\label{lem_hayes_character_sum_over_skpalindromic_polys}
Let $n$ be an integer and set $\delta=\lceil\frac{n-1}{2}\rceil$.
Let $\chi$ be a short interval character of $l$ coefficients, with $l\le\delta$. Then,

$$
q^{-\delta}\sum_{f\in\field_{q^2}[x]_{<n}^{\alpha-\skpal}}\chi(x^n+f)=1_{\chi=\chi_0}.
$$
\end{lem}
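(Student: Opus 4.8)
The plan is to follow the proof of Lemma~\ref{lem_hayes_character_sum_over_palindromic_polys}, substituting the $*$-symmetric/skew-palindromic structure of Claims~\ref{claim_structure_of_skpal} and~\ref{claim_structure_of_start_sym} for the palindromic parametrization used there. First I would reduce the $\alpha$-skew-palindromic case to the $*$-symmetric case: by Claim~\ref{claim_structure_of_skpal} there is $\beta=\beta(\alpha)$ with $\alpha=\tau(\beta)/\beta$ and $f\mapsto\beta f$ a bijection $\field_{q^2}[x]_{<n}^{\alpha\text{-}\skpal}\to\field_{q^2}[x]_{<n}^{*-\sym}$. Since this rescaling does not fix the leading coefficient of $x^n+f$, I would track its effect on the relevant coefficients by hand: as $\chi$ is a short interval character of $l\le\delta$ coefficients, $\chi(x^n+f)$ depends only on the top $l$ coefficients of $x^n+f$, and the relation $f^{*}=\tau(\alpha)f$ pins the coefficient of $x^{n-i}$ in $f$ to a fixed bijective image of the coefficient of $x^{i}$, for $1\le i\le l$. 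Equivalently one may run everything on the $*$-symmetric side and carry a single constant factor of the $\alpha(\chi)$-type appearing in Proposition~\ref{prop_psi_odd}.

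Next, using Claim~\ref{claim_structure_of_start_sym}, I would parametrize: for $n$ odd write $f=h+h^{*}$ with $h\in\field_{q^2}[x]_{\le\delta}$, $h(0)=0$, and for $n$ even write $f=h+\alpha'x^{n/2}+h^{*}$ with $h\in\field_{q^2}[x]_{\le\delta-1}$, $h(0)=0$, $\alpha'\in\field_q$; in both cases, for $l\le\delta$, the top $l$ coefficients of $x^n+f$ are read off from $h^{*}$ alone (and, when $l=\delta$ in the even case, also from $\alpha'$). Substituting into the sum, the ``inner'' parameters (those not seen by $\chi$) factor out as an explicit power of $q$, leaving a character sum $\sum_{(b_1,\dots,b_l)}\chi\!\left(x^n+b_1x^{n-1}+\dots+b_lx^{n-l}\right)$ over the set of attainable top-coefficient vectors. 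For $n$ odd this set is all of $\field_{q^2}^{l}$, the sum is a complete character sum over $(\mathcal{M}_{q^2}/R_{l,1})^{\times}$, and the Hayes orthogonality relation~\eqref{eqn_hayes_first_orthogonality_of_characters} over $\field_{q^2}$ collapses it to $1_{\chi=\chi_0}$ times the appropriate power of $q$.

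The hard part will be the even-degree case with $l=\delta=n/2$, where the middle coefficient of $x^n+f$ is constrained to an $\field_q$-line inside $\field_{q^2}$, so the attainable top-$\delta$ coefficient vectors form $\field_{q^2}^{\delta-1}\times(\field_q\text{-line})$, which is \emph{not} a subgroup of the Hayes group $(\mathcal{M}_{q^2}/R_{\delta,1})^{\times}$ and so does not yield to orthogonality directly. I would factor each element $(1,b_1,\dots,b_{\delta-1},c)$ as $(1,b_1,\dots,b_{\delta-1},0)\ast(1,0,\dots,0,c)$ in the abelian Hayes group, splitting the sum into a sum over a transversal of the central ``last-coordinate'' subgroup against a sum over the $\field_q$-line in that subgroup; then I would case on whether $\chi$ is trivial on that central subgroup and on the line, using orthogonality for transversals in abelian groups and, in the remaining subcase, a Gauss-sum-type evaluation of the resulting twisted additive character sum over $\field_{q^2}$. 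Getting the powers of $q$ to cancel so that the whole expression reduces to $1_{\chi=\chi_0}$ is the one genuinely fiddly bookkeeping step; the odd case and the skew-to-$*$-symmetric reduction are routine.
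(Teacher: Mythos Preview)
Your overall plan is the paper's plan. The paper reduces $\alpha$-skew-palindromic to $*$-symmetric via Claim~\ref{claim_structure_of_skpal}, parametrizes by Claim~\ref{claim_structure_of_start_sym}, splits on the parity of $n$, and then invokes the Hayes orthogonality relation~\eqref{eqn_hayes_first_orthogonality_of_characters} in a single line for each parity, with no further case analysis on $l$. So for $n$ odd, and for $n$ even with $l\le\delta-1$, your sketch and the paper's proof coincide: the top $l$ coefficients of $x^n+\beta f$ are read off from $\beta h^{*}$ alone and range over all of $\field_{q^2}^{\,l}$ as $h$ varies, and the middle coefficient (in the even case) is invisible to $\chi$.

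Where your proposal diverges is the even case at $l=\delta$, which you flag as the ``hard part'' and propose to handle by a transversal/Gauss-sum argument. That plan cannot succeed, because the identity as stated is actually \emph{false} at this boundary. Take $n=2$, so $\delta=1$ and $l=1$: the $\alpha$-skew-palindromic polynomials of degree $<2$ are exactly $\{cx:c\in L\}$ for a one-dimensional $\field_q$-line $L\subset\field_{q^2}$, and a short-interval character of one coefficient is just an additive character $\psi$ of $\field_{q^2}$ via $\chi(x^2+bx)=\psi(b)$. Then the sum is $\sum_{c\in L}\psi(c)$, which equals $q$ whenever $\psi|_L$ is trivial; there are $q-1$ nontrivial such $\psi$. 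No amount of bookkeeping will force this to $0$. The paper's one-line ``again using the Hayes orthogonality relation'' does not address this boundary either; but in the only place the lemma is applied (Lemma~\ref{lem_one_step_un}) the standing hypothesis is $\delta(\deg\min\overline{A})>d$, which gives $l<\delta$ when the present lemma is invoked with $n=\deg\min\overline{A_0}$, so the boundary case never occurs in the application.

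Practical upshot: drop the Gauss-sum detour. For $n$ even with $l<\delta$ the middle coefficient $\beta\alpha' x^{\delta}$ lies below the window seen by $\chi$, so the sum over $\alpha'\in\field_q$ contributes a harmless factor and the remaining sum over $h$ is a full Hayes character sum over $\field_{q^2}$, handled by~\eqref{eqn_hayes_first_orthogonality_of_characters} exactly as in the odd case. That is all the paper does, and all it needs.
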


\begin{proof}
We first note by Claim \ref{claim_structure_of_skpal} there is a $\beta\in\field_{q^2}$ such that $\field_{q^2}[x]_{<n}^{\alpha\text{-skpal}}=\beta \field_{q^2}[x]_{<n}^{*\text{-sym}}$. In particular, 
$$
q^{-\delta}\sum_{f\in\field_{q^2}[x]_{<n}^{\alpha-\skpal}}\chi(x^n+f)=
q^{-\delta}\sum_{f\in\field_{q^2}[x]_{<n}^{*-\sym}}\chi(x^n+\beta f).
$$

We now split the proof for the case of $n$ even and $n$ odd, and use Claim \ref{claim_structure_of_start_sym}.

\case{$n$ odd}
In this case
$$
q^{-\delta}\sum_{f\in\field_{q^2}[x]_{<n}^{*-\sym}}\chi(x^n+\beta f)=q^{-\delta}\sum_{h\in\field_{q^2}[x]_{<\delta}}\chi(x^n+\beta h+\beta h^*).
$$
Here $\chi(x^n+\beta f)=\chi(\beta h^*)$, and since $h$ is arbitrary, the last sum is equal to $1_{\chi=\chi_0}$ by the Hayes orthogonality relation \eqref{eqn_hayes_first_orthogonality_of_characters}.

\case{$n$ even}
In this case
$$
q^{-\delta}\sum_{f\in\field_{q^2}[x]_{<n}^{*-\sym}}\chi(x^n+\beta f)=q^{-\delta}\sum_{\alpha\in\field_q}\sum_{h\in\field_{q^2}[x]_{<\delta-1}}\chi(x^n+\beta h+\beta \alpha x^\delta+\beta h^*)=1_{\chi=\chi_0},
$$
again using the Hayes orthogonality relation.
\end{proof}

\subsection{Traces of negative powers for the finite groups}\label{section_neg_traces_finite_groups}

We show that slight modifications in the proofs of \cite{GR21} lead to results on the traces of negative powers for $GL_n, SL_n$. For the other groups in consideration, traces of negative powers depend simply on the traces of positive powers:

\begin{enumerate}
    \item For $M\in Sp_{2n}(\Ol)$ we have $M\Omega_n M^t=\Omega_n$. In particular $M^{-1}=\Omega_n M^t\Omega_n^{-1}$ so that $\tr(M^{-1})=\tr(M^t)=\tr(M)$. This generalizes to all negative powers of $M$.
    \item For $SO_n(\Ol)$ we have $MM^t=I$ so that $M^{-1}=M^t$, and in particular $\tr(M^{-1})=\tr(M^t)=\tr(M)$. Similarly to the case of $Sp_{2n}$, this generalizes to all negative powers of $M$. 
    \item For $U_n(\Ol)$ we have $MM^*=I$, so that $M^{-1}=M^*$. In particular, $\tr(M^{-1})=\tr(M^*)=\tr(\sigma(M^t))=\sigma(\tr(M))$. This generalizes to all negative powers of $M$.
\end{enumerate}

Fix a non-trivial additive character $\psi$ of $\field_q$. For a vector $\overrightarrow{\lambda}=(\lambda_i)_{i=1\atop{p\nmid i}}^k$ defined over $\field_q$ and $f\in\mathcal{M}_q$, Gorodetsky-Rodgers define in \cite[\S 2.2]{GR21} the function $\chi_{\overrightarrow{\lambda}}(f)=\psi(\sum_{i=1\atop{p\nmid i}}^k \lambda_i(\alpha_1^i+\cdots+\alpha_n^i))$, where $\alpha_i$ are the roots of $f$ in $\algfp$, listed with multiplicities. We extend this notation to include cases where the powers $i$ can be negative (but are always non-zero).

\begin{claim}
Let $k$ be a positive integer coprime with $p$, and let $\overrightarrow{\lambda}=(\lambda_{-i})_{i=1\atop{p\nmid i}}^{k}\subset \field_q$ with $\lambda_k\neq 0$. Let 
$\overrightarrow{\eta}=(\eta_i)_{i=1\atop{p\nmid i}}^k=(\lambda_{-i})_{i=1\atop{p\nmid i}}^k$. Then, $\chi_{\overrightarrow{\lambda}}(f)=\chi_{\overrightarrow{\eta}}(f^\rs)$. In particular, the function $\chi_{\overrightarrow{\lambda}}(f)$ is a Dirichlet character with modulus $x^{k+1}$. It is non-trivial when $k>0$.
\end{claim}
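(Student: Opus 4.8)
The plan is to deduce everything from the (essentially known) structure of the positive-power characters $\chi_{\overrightarrow{\eta}}$, using the single observation that $f\mapsto f^\rs$ inverts the roots of $f$. Concretely: let $\beta_1,\dots,\beta_n\in\algfp$ be the roots of $f$ with multiplicity. Since $f(0)\neq 0$ no $\beta_j$ vanishes, so $\chi_{\overrightarrow{\lambda}}(f)=\psi\bigl(\sum_{p\nmid i}\lambda_{-i}(\beta_1^{-i}+\dots+\beta_n^{-i})\bigr)$ is well defined. By construction $f^\rs=x^n f(1/x)/f(0)$ is monic of degree $n$ with roots exactly $\beta_1^{-1},\dots,\beta_n^{-1}$, so its positive power sums are $p_i(f^\rs)=\beta_1^{-i}+\dots+\beta_n^{-i}$, and since $\eta_i=\lambda_{-i}$ we get $\chi_{\overrightarrow{\eta}}(f^\rs)=\chi_{\overrightarrow{\lambda}}(f)$ immediately. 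From here on I adopt the convention $\chi_{\overrightarrow{\lambda}}(f)=0$ whenever $x\mid f$, matching the convention for a character modulo $x^{k+1}$.

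Next I would transport the structure through $(\,\cdot\,)^\rs$. This map is a degree-preserving, multiplicative involution on the set of monic polynomials with nonzero constant term, so $(fg)^\rs=f^\rs g^\rs$; combined with the multiplicativity of $\chi_{\overrightarrow{\eta}}$ (power sums add over a product of polynomials), this shows $\chi_{\overrightarrow{\lambda}}=\chi_{\overrightarrow{\eta}}\circ(\,\cdot\,)^\rs$ is multiplicative and of absolute value $1$ on monic polynomials coprime to $x$. To pin down the modulus, I would note that by Newton's identities each $p_i(g)$ with $1\le i\le k$ is an integer-coefficient polynomial in the $k$ leading coefficients of $g$; and writing $f=x^n+c_{n-1}x^{n-1}+\dots+c_0$ with $c_0\neq 0$ one has $f^\rs=x^n+(c_1/c_0)x^{n-1}+\dots+(1/c_0)$, whose $k$ leading coefficients $c_1/c_0,\dots,c_k/c_0$ depend only on $c_0,\dots,c_k$, i.e.\ on $f\bmod x^{k+1}$. (Equivalently, $\sum_{i\ge 1}p_i(f^\rs)t^i=-t f'(t)/f(t)$, which modulo $t^{k+1}$ visibly depends only on $f\bmod t^{k+1}$.) Hence $\chi_{\overrightarrow{\lambda}}$ factors through $(\field_q[x]/x^{k+1})^\times$, i.e.\ it is a Dirichlet character with modulus $x^{k+1}$.

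For non-triviality it suffices to produce one monic $f$ coprime to $x$ with $\chi_{\overrightarrow{\lambda}}(f)\neq 1$. I would take $f=x^{k+1}+bx^k+1$ for a parameter $b\in\field_q$; then $f^\rs=x^{k+1}+bx+1$, and a short computation (from the generating-function identity above, or directly from Newton's identities) gives $p_i(f^\rs)=0$ for $1\le i<k$ and $p_k(f^\rs)=-kb$. Only the index $i=k$ survives (recall $p\nmid k$), so $\chi_{\overrightarrow{\lambda}}(f)=\psi(-k\lambda_{-k}b)$; since $-k\lambda_{-k}\neq 0$ in $\field_q$, letting $b$ range over $\field_q$ makes $b\mapsto\psi(-k\lambda_{-k}b)$ a non-trivial additive character, so some value differs from $1$. (Alternatively, $(\,\cdot\,)^\rs$ being a bijection of the monic polynomials coprime to $x$, triviality of $\chi_{\overrightarrow{\lambda}}$ is equivalent to triviality of $\chi_{\overrightarrow{\eta}}$, which is excluded by the corresponding statement of \cite{GR21} since $\overrightarrow{\eta}\neq 0$.) There is essentially no real obstacle here: the whole statement collapses once one notices the identity $\chi_{\overrightarrow{\lambda}}(f)=\chi_{\overrightarrow{\eta}}(f^\rs)$. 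The only step requiring a moment's care is the power-sum evaluation in the non-triviality argument — equivalently, checking that the ``top layer'' $b\mapsto\chi_{\overrightarrow{\lambda}}$ of $1+bx^k$ is already non-trivial — and one must keep track of the hypotheses $f(0)\neq 0$ (needed for $f^\rs$, and dictating the convention on multiples of $x$) and $p\nmid k$ (so that $\lambda_{-k}$ is an entry of $\overrightarrow{\lambda}$ and $k\neq 0$ in $\field_q$).
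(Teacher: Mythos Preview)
Your proposal is correct and follows essentially the same route as the paper: both arguments transport the known structure of $\chi_{\overrightarrow{\eta}}$ through the involution $f\mapsto f^\rs$, observing that the $k$ next-to-leading coefficients of $f^\rs$ depend only on $f\bmod x^{k+1}$. The only difference is cosmetic: for non-triviality the paper simply invokes the bijection $f\mapsto f^\rs$ together with the non-triviality of $\chi_{\overrightarrow{\eta}}$ from \cite{GR21}, whereas you additionally supply the explicit witness $f=x^{k+1}+bx^k+1$ (and then also record the bijection argument as an alternative).
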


\begin{proof}
By \cite[Lemma 2.2]{GR21}, $\chi_{\overrightarrow{\eta}}(f^\rs)$ is a short interval character of $k$ coefficients evaluated at the polynomial $f^{\rs}$. That means that it is a multiplicative function depending only on the $k$ next-to-leading coefficients of $f^\rs$, which in turn, depend only on the coefficients of $1,x,x^2,\ldots,x^k$ in $f$. Since clearly $\chi_{\overrightarrow{\lambda}}(f)$ is multiplicative we get that it is a Dirichlet character modulo $x^{k+1}$.

To see that $\chi_{\overrightarrow{\lambda}}$ is non-trivial, notice that $f\mapsto f^\rs$ is an involution on the monic polynomials and use the fact that $\chi_{\overrightarrow{\lambda}}(f)$ is non-trivial (proved in \cite[Lemma 2.2]{GR21}).
\end{proof}

\begin{lem}\label{lem_chi_lambda_is_hayes}
Let $l,k$ be two non-negative integers, and let $\overrightarrow{\lambda}=(\lambda_i)_{p\nmid i=-l}^k\subset\field_q$. The function $\chi_{\overrightarrow{\lambda}}(f)$ is a Hayes character modulo $R_{k,x^{l+1}}$.
\end{lem}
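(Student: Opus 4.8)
The plan is to combine the two previous results --- the Claim just proved (that $\chi_{\overrightarrow{\lambda}}$ restricted to the negative-power part is a Dirichlet character mod $x^{l+1}$) with \cite[Lemma 2.2]{GR21} (that $\chi_{\overrightarrow{\lambda}}$ restricted to the positive-power part is a short interval character of $k$ coefficients, i.e.\ a Hayes character modulo $R_{k,1}$) --- using multiplicativity to splice them into a single Hayes character modulo $R_{k,x^{l+1}}$. First I would split $\overrightarrow{\lambda}$ as $\overrightarrow{\lambda}^- = (\lambda_i)_{p\nmid i=-l}^{-1}$ and $\overrightarrow{\lambda}^+ = (\lambda_i)_{p\nmid i=1}^{k}$, and observe that since the exponential sum defining $\chi_{\overrightarrow{\lambda}}$ splits additively over the index $i$, we have the factorization $\chi_{\overrightarrow{\lambda}}(f) = \chi_{\overrightarrow{\lambda}^-}(f)\cdot \chi_{\overrightarrow{\lambda}^+}(f)$ for every $f$ (with both factors vanishing exactly when $f(0)=0$, which is compatible).

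Next I would record that each factor is already known to be a Hayes character: $\chi_{\overrightarrow{\lambda}^+}$ is a Hayes character modulo $R_{k,1}$ by \cite[Lemma 2.2]{GR21}, and $\chi_{\overrightarrow{\lambda}^-}$ is a Dirichlet character modulo $x^{l+1}$ by the preceding Claim --- equivalently a Hayes character modulo $R_{0,x^{l+1}}$. The key structural point I would then invoke is that the group of Hayes characters modulo $R_{k,x^{l+1}}$ contains, via the natural surjections $\mathcal{M}_q/R_{k,x^{l+1}} \twoheadrightarrow \mathcal{M}_q/R_{k,1}$ and $\mathcal{M}_q/R_{k,x^{l+1}} \twoheadrightarrow \mathcal{M}_q/R_{0,x^{l+1}}$, both $\widehat{R}_{k,1}$ and $\widehat{R}_{0,x^{l+1}}$ as subgroups; in fact since $\gcd(x^{l+1},1)$ is trivial and the "leading-coefficient" data and the "mod $x^{l+1}$" data are independent coordinates of $\mathcal{M}_q/R_{k,x^{l+1}}$, one has $\widehat{R}_{k,x^{l+1}} \cong \widehat{R}_{k,1}\times\widehat{R}_{0,x^{l+1}}$. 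Therefore the pointwise product of a character from each factor is again a character of $\mathcal{M}_q/R_{k,x^{l+1}}$, which is exactly what $\chi_{\overrightarrow{\lambda}}$ is.

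The one genuinely checkable (though routine) point, which I expect to be the main --- and only mild --- obstacle, is verifying that $\chi_{\overrightarrow{\lambda}^-}$ genuinely depends only on the residue of $f$ modulo $x^{l+1}$ and not on its degree or next-to-leading coefficients: this is where one uses that the $l$ next-to-leading coefficients of $f^{\rs}$ are determined by the low-order coefficients (those of $1,x,\dots,x^l$) of $f$, together with $f(0)$, as established in the proof of the Claim. Concretely I would argue: by the Claim, $\chi_{\overrightarrow{\lambda}^-}(f)=\chi_{\overrightarrow{\eta}}(f^{\rs})$ with $\overrightarrow{\eta}=(\lambda_{-i})$, and $\chi_{\overrightarrow{\eta}}$ is a short interval character of $l$ coefficients, so it depends only on the top $l$ coefficients of $f^{\rs}$; unwinding the definition of $f^{\rs}=x^{\deg f}f(1/x)/f(0)$ shows these are $(f$'s coefficient of $x^j)/f(0)$ for $j=1,\dots,l$, which depend only on $f \bmod x^{l+1}$ (and on whether $x\mid f$, handled by the vanishing convention). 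Assembling these observations --- multiplicativity, the factorization, the identification of each factor as a Hayes character of the appropriate modulus, and the product decomposition of the Hayes character group --- gives that $\chi_{\overrightarrow{\lambda}}$ is a Hayes character modulo $R_{k,x^{l+1}}$, completing the proof.
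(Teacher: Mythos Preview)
Your proposal is correct and follows essentially the same approach as the paper: factor $\chi_{\overrightarrow{\lambda}}$ as the product of a short interval character of $k$ coefficients (the positive-power part, via \cite[Lemma 2.2]{GR21}) and a Dirichlet character modulo $x^{l+1}$ (the negative-power part, via the preceding Claim), and observe that such a product is a Hayes character modulo $R_{k,x^{l+1}}$. The paper's own proof is a single sentence stating exactly this; your version simply spells out the details more fully.
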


\begin{proof}
Indeed, $\chi_{\overrightarrow{\lambda}}(f)$ is the product of a short interval character of $k$ coefficients with a Dirichlet character of modulus $x^{l+1}$.
\end{proof}

We can get as a result an extension of the results of Gorodetsky-Rodgers for negative traces as well, and also results on equidistribution in Hayes residue classes. 

\begin{corollary}\label{corollary_neg_traces_gln}
Let $M\in GL_n(\field_q)$ be a matrix chosen uniformly at random w.r.t. the Haar measure. Let $(b_i)_{i=1}^k$ be an increasing sequence of integers coprime with $p$. Let $d=\max(0,b_k)-\min(0,b_1)+1$. 

Then, for any sequence $(a_i)_{i=1}^k$ of elements in $\field_q$, we have

$$
\left|
\Pr_{M\in GL_n(\field_q)}\left[
(\tr (M^{b_i}))_{i=1}^k =(a_i)_{i=1}^k
\right]-q^{-k}
\right|
\le q^{-\frac{n^2}{2d}}\left(1+\frac{1}{q-1}\right)^n\binom{n+d}{n}.
$$

Now let $f\in\field_q[x]$ be a monic polynomial. Let $0\le d_1,d_2<n$ be integers such that $d_1+d_2=d<n-1$. Then, 

$$
\left|
\Pr_{M\in GL_n(\field_q)}\left[
\charc(M)\equiv f\pmod{R_{d_1,x^{d_2}}}
\right]-q^{-d}
\right|
< q^{-\frac{n^2}{2d}}\left(1+\frac{1}{q-1}\right)^n\binom{n+d}{n}.
$$

\end{corollary}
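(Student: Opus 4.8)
The plan is to reduce both bounds, by orthogonality, to estimates for $\mathbb{E}_{M\in GL_n(\field_q)}[\chi(\charc(M))]$ with $\chi$ a nontrivial Hayes character, and then to run the argument behind Theorems \ref{thm_gorodetsky_rodgers_for_charpoly} and \ref{thm_gorodetsky_rodgers_for_traces}. The sole analytic input required is: if $\chi$ is a nontrivial Hayes character whose modulus has conductor degree $d<n$, then
$$\Bigl|\,\mathbb{E}_{M\in GL_n(\field_q)}\bigl[\chi(\charc(M))\bigr]\,\Bigr|\le q^{-\frac{n^2}{2d}}\Bigl(1+\tfrac{1}{q-1}\Bigr)^{n}\binom{n+d}{n}$$
(in fact with $\binom{n+d-1}{n}$, which we relax to $\binom{n+d}{n}$). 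This is obtained exactly as in \cite{GR21}: start from the pointwise formula \eqref{eqn_pointwise_probability_gorodetsky_rodgers}, use that $\sum_{f\in\mathcal{M}_q^{\mathrm{gl}}}P_{GL}(f)\chi(f)u^{\deg f}$ factors as $\prod_{j\ge1}L_{GL}(u/q^j,\chi)$, invoke the Riemann Hypothesis for $L_{GL}(u,\chi)$ — a polynomial whose degree is controlled by the conductor degree and whose inverse roots have absolute value at most $\sqrt q$ — and extract the coefficient of $u^{n}$; nothing in this changes when $\chi$ is a general Hayes character instead of a short-interval or Dirichlet one.

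For the first assertion, set $k'=\max(0,b_k)$ and $l=\max(0,-b_1)$, so that $d=k'+l+1$. Orthogonality of the additive characters of $\field_q$ gives
$$\Pr_{M}\Bigl[(\tr(M^{b_i}))_{i=1}^{k}=(a_i)_{i=1}^{k}\Bigr]=\frac{1}{q^{k}}\sum_{\overrightarrow{c}\in\field_q^{k}}\psi\Bigl(-\sum_{i=1}^{k}c_ia_i\Bigr)\,\mathbb{E}_{M}\Bigl[\chi_{\overrightarrow{\lambda}(\overrightarrow{c})}(\charc(M))\Bigr],$$
where $\overrightarrow{\lambda}(\overrightarrow{c})$ is the frequency vector supported on $\{b_i\}$ with value $c_i$ at $b_i$. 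The term $\overrightarrow{c}=0$ contributes exactly $q^{-k}$ because $\chi_{\overrightarrow{0}}\equiv1$. For every $\overrightarrow{c}\ne0$, Lemma \ref{lem_chi_lambda_is_hayes} identifies $\chi_{\overrightarrow{\lambda}(\overrightarrow{c})}$ as a Hayes character modulo $R_{k',x^{l+1}}$, of conductor degree $d$, and it is nontrivial since $\overrightarrow{\lambda}(\overrightarrow{c})\ne0$ — the positive- and negative-exponent contributions are read off disjoint blocks of the coefficients of $\charc(M)$, cf.\ the claim preceding Lemma \ref{lem_chi_lambda_is_hayes} (and \cite[Lemma 2.2]{GR21}). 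Bounding each of the $q^{k}-1$ error terms by $q^{-k}$ times the displayed estimate yields the claim.

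For the second assertion I would expand the indicator directly in the Hayes characters modulo $R_{d_1,x^{d_2}}$, via \eqref{eqn_hayes_ortho_relation}:
$$\Pr_{M}\Bigl[\charc(M)\equiv f\pmod{R_{d_1,x^{d_2}}}\Bigr]=\frac{1}{q^{d_1}\varphi(x^{d_2})}\sum_{\chi}\overline{\chi}(f)\,\mathbb{E}_{M}\bigl[\chi(\charc(M))\bigr].$$
The trivial character contributes $\frac{1}{q^{d_1}\varphi(x^{d_2})}\,\mathbb{E}_{M}[\chi_0(\charc(M))]=\frac{1}{q^{d_1}\varphi(x^{d_2})}$ since $\charc(M)(0)\ne0$ for every $M\in GL_n$, which is the true main term and equals $q^{-d}$ up to the factor $\tfrac{q}{q-1}$. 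Each nontrivial $\chi$ modulo $R_{d_1,x^{d_2}}$ has conductor degree at most $d=d_1+d_2$, hence obeys the displayed estimate, and since $\frac{1}{q^{d_1}\varphi(x^{d_2})}$ times the number of characters is at most $1$, the bound follows. The only genuinely new ingredient in all of this is Lemma \ref{lem_chi_lambda_is_hayes} — that the mixed-exponent exponential sums $\chi_{\overrightarrow{\lambda}}$ are honest nontrivial Hayes characters of the right conductor — so the expected obstacle is only bookkeeping: aligning the conductor degree of $\chi$ with the degree of $L_{GL}(u,\chi)$ so that the combinatorial factor comes out as $\binom{n+d}{n}$, and tracking the $O_q\!\bigl(q^{-d}/(q-1)\bigr)$ discrepancy between the clean value $q^{-d}$ and the true uniform mass $\frac{1}{q^{d_1}\varphi(x^{d_2})}$.
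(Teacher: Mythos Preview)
Your proposal is correct and follows essentially the same approach as the paper, whose proof is the one-line remark that it is ``essentially the same as the proof of \cite[Theorem 1.1]{GR21} and \cite[Theorem 1.3]{GR21}, using our Lemma \ref{lem_chi_lambda_is_hayes}.'' You have simply unpacked that remark: reduce by orthogonality to bounding $\mathbb{E}_M[\chi(\charc(M))]$ for nontrivial Hayes characters of conductor degree $\le d$, invoke Lemma \ref{lem_chi_lambda_is_hayes} to certify that the mixed-exponent $\chi_{\overrightarrow{\lambda}}$ are such characters, and then feed this into the Gorodetsky--Rodgers machinery (factorization of the generating function into $\prod_{j\ge1}L_{GL}(u/q^j,\chi)$ and the Riemann Hypothesis bound for $L_{GL}$). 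Your observation about the $\tfrac{q}{q-1}$ mismatch between $q^{-d}$ and $1/(q^{d_1}\varphi(x^{d_2}))$ is a fair bookkeeping point about the stated main term rather than a flaw in the method.
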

\begin{proof}
    The proof is essentially the same as the proof of \cite[Theorem 1.1]{GR21} and \cite[Theorem 1.3]{GR21}, using our Lemma \ref{lem_chi_lambda_is_hayes}.
\end{proof}

\begin{corollary}\label{cor_neg_traces_mod_q_sln}
Let $M\in SL_n(\field_q)$ be a matrix chosen uniformly at random w.r.t. the Haar measure. Let $(b_i)_{i=1}^k$ be an increasing sequence of integers coprime with $p$. Let $d=\max(0,b_k)-\min(0,b_1)+1$. 

Then, for any sequence $(a_i)_{i=1}^k$ of elements in $\field_q$, we have

$$
\left|
\Pr_{M\in SL_n(\field_q)}\left[
(\tr (M^{b_i}))_{i=1}^k =(a_i)_{i=1}^k\right]
-q^{-k}
\right|
\le q^{-\frac{n^2}{2d}}(q-1)\left(1+\frac{1}{q-1}\right)^n\binom{n+d}{n}.
$$

Now let $f\in\field_q[x]$ be a monic polynomial with $f(0)=(-1)^n$. Let $0\le d_1,d_2<n$ be integers such that $d_1+d_2=d<n-1$. Then, 

$$
\left|
\Pr_{M\in GL_n(\field_q)}\left[
\charc(M)\equiv f\pmod{R_{d_1,x^{d_2}}}
\right]-q^{-d}
\right|
< q^{-\frac{n^2}{2d}}(q-1)\left(1+\frac{1}{q-1}\right)^n\binom{n+d}{n}.
$$

\end{corollary}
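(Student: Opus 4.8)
The plan is to follow the same strategy used for Corollary \ref{corollary_neg_traces_gln}, transplanting the argument of \cite[Theorems 1.1 and 1.3]{GR21} to the $SL_n$ setting and invoking Lemma \ref{lem_chi_lambda_is_hayes}. First I would note that it suffices to prove the second statement (equidistribution of $\charc(M)$ in Hayes residue classes modulo $R_{d_1,x^{d_2}}$): indeed, by Lemma \ref{lem_chi_lambda_is_hayes} the function $\chi_{\overrightarrow{\lambda}}$ attached to a sequence of positive and negative powers $(b_i)_{i=1}^k$ with $d=\max(0,b_k)-\min(0,b_1)+1$ is a Hayes character modulo some $R_{d_2',x^{d_1'+1}}$ with $d_1'+d_2'\le d$, so the trace-tuple statement follows from the residue-class statement by the same orthogonality expansion over Hayes characters that \cite{GR21} use to pass between traces of powers and short intervals (combined here with Dirichlet characters modulo a power of $x$ to capture the negative powers, exactly as in Lemma \ref{lem_chi_lambda_is_hayes}).

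For the residue-class statement, I would write the indicator of $\charc(M)\equiv f\pmod{R_{d_1,x^{d_2}}}$ via orthogonality \eqref{eqn_hayes_ortho_relation} as an average over Hayes characters $\chi\in\widehat{R}_{d_1,x^{d_2}}$ of $\chi(\charc(M))\overline{\chi}(f)$, reducing the problem to bounding $\mathbb{E}_{M\in SL_n(\field_q)}[\chi(\charc(M))]$ for each nontrivial $\chi$. The key input is Gorodetsky--Rodgers' computation of the relevant generating function: for $GL_n$ one has the Euler-product factorization $\sum_f P_{GL}(f)\chi(f)u^{\deg f}=\prod_{i\ge 1}L_{GL}(u/q^i,\chi)$, and the $SL_n$ version is obtained by restricting to polynomials with a fixed value $f(0)=(-1)^n$, which by another round of orthogonality (this time over Dirichlet characters detecting the constant term, i.e. over characters of $\field_q^\times$) introduces the extra factor $q-1$ appearing in the stated bound. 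I would then cite the variant of the Riemann hypothesis for $L_{GL}(u,\chi)$ used in \cite{GR21} to bound the coefficients of these $L$-functions and assemble the estimate; the binomial factor $\binom{n+d}{n}$ and the $\left(1+\tfrac{1}{q-1}\right)^n$ come out of the same combinatorial bookkeeping on the number of monic polynomials of degree $n$ with factorization type controlling $P_{GL}$, now with $d$ in place of the $d-1$ of Theorem \ref{thm_gorodetsky_rodgers_for_charpoly} because the Hayes modulus $R_{d_1,x^{d_2}}$ has ``total length'' $d=d_1+d_2$ rather than $d-1$.

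The main obstacle, and the only place where genuine care beyond bookkeeping is needed, is verifying that passing from $GL_n(\field_q)$ to $SL_n(\field_q)$ costs exactly a factor of $q-1$ and does not spoil the Riemann-hypothesis estimate: one must check that the $L$-function attached to $\chi$ twisted by a character of $\field_q^\times$ (detecting $\det$) is still of the form handled in \cite{GR21} — i.e. that it factors as a product of $L$-functions over $\field_q[x]$ with the same analytic properties — and that summing the $q-1$ resulting contributions, one of which (the trivial twist) reproduces the $GL_n$ main term while the nontrivial ones contribute only to the error, yields precisely the claimed bound. This is routine given the machinery of \cite{GR21} but is where I would be most careful about constants. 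Once that is in hand, the rest is a mechanical repetition of the $GL_n$ argument, which is why I would simply write, as the authors do, that ``the proof is essentially the same as in \cite{GR21}, using Lemma \ref{lem_chi_lambda_is_hayes},'' after spelling out the $\det$-orthogonality step that produces the $q-1$.
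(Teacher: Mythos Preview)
Your proposal is correct and takes essentially the same approach as the paper. The only cosmetic difference is that the paper cites \cite[Theorems 4.5 and 4.6]{GR21} (the $SL_n$ results already established there) rather than re-deriving the $SL_n$ adaptation from \cite[Theorems 1.1 and 1.3]{GR21}; your account of the $\det$-orthogonality step producing the factor $q-1$ is precisely what is carried out in \cite[\S 4]{GR21}, so you are simply unpacking what the paper's citation points to.
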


\begin{proof}
The proof is essentially the same as the proof of \cite[Theorem 4.6]{GR21} and \cite[Theorem 4.5]{GR21}, using our Lemma \ref{lem_chi_lambda_is_hayes}.
\end{proof}

\section{Finite groups of Lie type and their Lie algebras}\label{section_explicit_representatives}

\subsection{Finite matrix groups}

For every group considered in this paper, we recall some basic facts which will be used later, regarding the Jordan type decomposition of the group and some related constructions.

\subsubsection{$GL_n$ and $SL_n$}

Recall that $GL_n(\field_q)$ is the set of invertible $n\times n$ matrices over $\field_q$, $SL_n(\field_q)\subset GL_n(\field_q)$ the set of matrices with determinant $=1$. It is well known that the associated Lie algebras $\mathfrak{gl}_n(\field_q)$, $\mathfrak{sl}_n(\field_q)$ are $M_n(\field_q)$, $\{M\in M_n(\field_q)|\tr(M)=0\}$ (respectively). We note that $\mathfrak{sl}_n(\field_q)$ is invariant under conjugation from $GL_n(\field_q)$, a property that will be used later.

Let $M$ be a matrix in either group. The vector space $V=\field_q^n$ becomes a $\field_q[x]$-module with the action $f(x)v=f(M)v$, for every $f\in\field_q[x]$, $v\in V$. A polynomial $f\in\field_q[x]$ is called prime if it is irreducible and monic. Let $\mathcal{P}$ be the set of primes in $\field_q[x]$. By the structure theorem for finitely generated modules over a PID, one can decompose

$$
V\cong \bigoplus_{f\in \mathcal{P}}\bigoplus_{i=1}^{k_f} \field_q[x]/(f)^{m_i}\cong \bigoplus_{f\in\mathcal{P}} V_f,V_f\cong\bigoplus_{i=1}^{k_f}\field_q[x]/(f)^{m_i}.
$$
Here $V_f$ is called the $f$-primary part. The primary components of $M$ in $GL_n(\field_q)$ (or in $SL_n(\field_q)$) determine its conjugacy class in $GL_n(\field_q)$ (or in $SL_n(\field_q)$). 

\begin{definition}
For a matrix $M\in GL_n(\field_q)$ we define the conjugacy class data of $M$ to be the multiset $(f^{m_i})_{f\in\mathcal{P}\atop{i=1,\cdots,k_f}}$.
\end{definition}

Finally, we want to understand how to combine matrices with different conjugacy class data. The idea is to take the direct sum of the spaces, and in the case of $GL_n, SL_n$ this is straightforward.

\begin{definition}
    Let $A,B\in GL_n(\field_q),GL_m(\field_q)$. Their direct sum is the matrix $A\oplus B\in GL_{n+m}(\field_q)$, which in block form looks as 

    $$
    A\oplus B=\left(
\begin{array}{cc}
     A & 0 \\
     0 & B
\end{array}
    \right).
    $$
\end{definition}

Let $(f^{m_i})_{f\in\mathcal{P}\atop{i=1,\ldots,k_f}}$, $(f^{e_j})_{f\in\mathcal{P}\atop{j=1,\ldots,l_f}}$ be two conjugacy class data. We define their join to be the union of conjugacy class data (counted multiplicities), $(f^{m_i})_{f\in\mathcal{P}\atop{i=1,\ldots,k_f}}\cup (f^{e_j})_{f\in\mathcal{P}\atop{j=1,\ldots,l_f}}$. A direct computation shows that the conjugacy class data of $A\oplus B$ is the join of the conjugacy class data for $A$, $B$. 

\subsubsection{The symplectic group}\label{subsubsection_symplectic_group_intro}
Throughout the paper $\field_q$ is a finite field of characteristic $p\neq 2$. Define the standard alternating form via the matrix $\Omega_n=\left(
\begin{array}{cc}
0 & I_n \\
-I_n & 0
\end{array}
\right)$. We define the symplectic group 
$$
Sp_{2n}(\field_q)=\left\{
M\in GL_{2n}(\field_q)|M^t\Omega_n M=\Omega_n
\right\}.
$$
The associated Lie algebra is the algebra
$$\mathfrak{sp}_{2n}(\field_q)=\left \{ X\in M_{2n\times 2n}(\field_q): \Omega_n X+X^{t}\Omega_n=0 \right\}.$$ 
More explicitly, writing an element $X$ in $\mathfrak{sp}_{2n}(\field_q)$ as a $2\times 2$ block matrix by
$$
X=\left(
\begin{array}{cc}
     A & B \\
     C & D
\end{array}
\right),
$$
we get that $X\in\mathfrak{sp}_{2n}(\field_q)$ is equivalent to $B,C$ being symmetric and $A^t=-D$. We note that the characteristic polynomial of an element in the symplectic group admits symmetry.
\begin{definition}
Let $f\in\field_q[x]$ be a polynomial such that $f(0)\neq 0$. We define its reciprocal by the formula $f^\rs(x)=\frac{x^{\deg f}f(1/x)}{f(0)}$.
\end{definition}

For $g\in Sp_{2n}(\field_q)$, $\charc(g)$ is self-reciprocal (i.e, $\charc(g)^\rs=\charc(g)$). We call a polynomial $f\in \field_q[x]$ reciprocally irreducible if it cannot be decomposed (non-trivially) to self-reciprocal polynomials, and we denote the set of reciprocally irreducible polynomials by $\mathcal{P}^\rs$. As an example, over $\field_5$, $x^2-4$ is in $\mathcal{P}^\rs$ even though it is decomposable in $\field_5[x]$. The minimal polynomial of $g$, $\min(g)$, is also self-reciprocal. 

Similarly to the $GL_n$ case, the Jordan decomposition is obtained from examining the action of the group $Sp_{2n}(\field_q)$ on $V=\field_q^{2n}$. For $g\in Sp_{2n}(\field_q)$, the action of $g$ on this space induces a structure of $\field_q[x]$-module. As explained in \cite{Tay21a}, one can decompose this finitely generated module $V$ as a direct sum 
$\bigoplus_{f\in \Pee^\rs,i}\field_q[x]/(f)^{m_i}$. Moreover, the action of $g$ restricted to each space $\field_q[x]/(f)^{m_i}$ is given by a symplectic matrix (\cite[\S 4]{Tay21a}).

For a given $f\in\Pee^\rs$, we define its $f$-primary component
$$
V_{f}\cong\bigoplus_{i\ge 1}(\field_q[x]/(f^{m_i})),
$$
where here the direct sum is finite. Let $g\in Sp_{2n}(\field_q)$ be a matrix such that one of the primary components 
$$V_{x\pm 1}\cong\bigoplus(\field_q[x]/(x\pm 1)^{i})^{m_i}$$
is non-trivial. Wall \cite[p. 36]{Wal63} proved that for odd $i$, $2|m_i$. Moreover, for even $i$, he attached a certain bilinear form (one of two forms, $\Psi_{-}$ and $\Psi_{+}$) to the space $(\field_q[x]/(x\pm 1)^i)^{m_i}$. The sign of the form is determined by the induced action of $g$ on $(\field_q[x]/(x\pm 1)^i)^{m_i}$. He then proved the following classification of conjugacy classes in $Sp_{2n}(\field_q)$.
\begin{thm}[\cite{Wal63}, p. 36, rephrased]
Two matrices $g,h\in Sp_{2n}(\field_q)$ are conjugates if and only if 
\begin{enumerate}
    \item Their conjugacy class data is the same and in addition,
    \item For every even $i$, the sign of the form corresponding with $(\field_q[x]/(x\pm 1)^i)^{m_i}$ is the same between $g,h$. 
\end{enumerate}
\end{thm}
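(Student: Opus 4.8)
The plan is to deduce the classification in two stages: a global orthogonal-decomposition step that reduces everything to one local problem per reciprocally irreducible polynomial, followed by a case analysis recognising each local problem as a classical form-classification over a finite field. First I would set up the orthogonal primary decomposition. View $V=\field_q^{2n}$ as a $\field_q[x]$-module via $g$, with the non-degenerate alternating form $B$ given by $\Omega_n$. From $B(gv,gw)=B(v,w)$ one gets, for any $P\in\field_q[x]$ with $P(0)\neq 0$, the adjunction $B(P(g)v,w)=c\cdot B(v,P^{\rs}(g)w)$ with $c\in\field_q^\times$, and a B\'ezout argument then forces $V_{f_1}\perp V_{f_2}$ for distinct $f_1,f_2\in\Pee^{\rs}$. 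Hence $V=\bigoplus_{f\in\Pee^{\rs}}V_f$ is an orthogonal direct sum of non-degenerate, $g$-stable symplectic subspaces, and $g,h$ are $Sp_{2n}(\field_q)$-conjugate if and only if, after matching $f$-primary parts (this matching being exactly what equality of conjugacy class data provides), the restrictions $g|_{V_f}$ and $h|_{V_f}$ are $Sp(V_f)$-conjugate for every $f$.

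Next I would treat the three local cases. If $f=p\,p^{\rs}$ with $p\in\Pee$ and $p\neq p^{\rs}$, then $V_f=V_p\oplus V_{p^{\rs}}$ with both summands totally isotropic (again by the adjunction, since $p^{\rs}(g)$ is invertible on $V_p$), so $B$ realises $V_{p^{\rs}}$ as the $\field_q[x]$-module dual of $V_p$; the $Sp(V_f)$-class of $g|_{V_f}$ is therefore determined by the isomorphism type of $V_p$ as a $\field_q[x]/(p^\infty)$-module, i.e. by a single partition, with no extra invariant. If $f\in\Pee^{\rs}$ is self-reciprocal of degree $>1$, then $E=\field_q[x]/(f)$ is a finite field with the involution $x\mapsto x^{-1}$, and $B$ together with the module structure makes $V_f$ a non-degenerate Hermitian form-module over $E$ relative to this involution; since Hermitian forms over a finite field are classified by rank alone, again only the partition recording the module structure matters. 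If $f=x\pm1$, write $g|_{V_f}=\varepsilon(1+N)$ with $\varepsilon=\pm1$ and $N$ nilpotent satisfying $B(Nv,w)+B(v,Nw)+B(Nv,Nw)=0$; the Jordan type $\lambda$ of $N$ is then constrained so that each odd part occurs with even multiplicity (the subquotient supporting the odd-size blocks inherits an alternating, hence even-dimensional, form), and for each even part $j$ with multiplicity $m_j$ the pairing $(v,w)\mapsto B(v,N^{\,j-1}w)$ descends to a non-degenerate \emph{symmetric} bilinear form on an $m_j$-dimensional subquotient, whose isometry type over $\field_q$ is one of two possibilities --- this is Wall's sign.

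The remaining content, and the main obstacle, is the converse in the $f=x\pm1$ case: that the partition together with all the signs already determines the $Sp(V_f)$-conjugacy class. This requires two things I expect to be delicate. First, one must verify that the symmetric form on the size-$j$ subquotient is a genuine isometry invariant; this is subtle because that form receives contributions from the larger Jordan blocks, so the descent has to be normalised with care. Second, one needs a Witt-type extension lemma for these form-modules: an isometry between sub-form-modules extends to the ambient form-module, proved by an induction that peels off a largest Jordan block together with a complementary space chosen hyperbolic or anisotropic according to the sign data. Both points are exactly the technical heart of \cite{Wal63}, and I would either reproduce that induction or invoke a modern treatment of Jordan decomposition in classical groups (e.g. \cite{Tay21a} and the references therein). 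By contrast, the cases $f=p\,p^{\rs}$ and $f$ self-reciprocal of degree $>1$ are comparatively routine once the orthogonal reduction of Step~1 is in place, since they collapse to the rank-only classification of alternating and Hermitian forms over a finite field.
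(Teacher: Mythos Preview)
The paper does not prove this theorem at all: it is stated as background and attributed to Wall \cite{Wal63}, with no argument given. Your proposal is therefore not being compared to a proof in the paper but to the original source, and as a sketch of Wall's argument it is accurate --- the orthogonal $f$-primary reduction, the three local cases, and the identification of the delicate point (invariance and completeness of the sign data at $x\pm 1$, requiring a Witt-type extension lemma) are exactly the structure of the proof in \cite{Wal63} and its modern expositions.
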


This motivates the following
\begin{definition}\label{def_symplectic_conj_class_data}
For a matrix $g\in Sp_{2n}(\field_q)$, we define the symplectic conjugacy class data by the multiset 
$$((f^{i},m_i))_{f\in\mathcal{P}^\rs\atop{f\neq x\pm 1\atop{i=1,\ldots}}}\cup ((f^i,{m_i}))_{2\nmid i\atop{f=x\pm 1\atop{i=1,\ldots}}}\cup ((f^i,{m_i},\mu(i)))_{2|i\atop{f=x\pm 1\atop{i=1,\ldots}}},$$
where $\mu(i)=\pm$ is the sign of the invariant attached to the space $(\field_q[x]/(x\pm 1)^i)^{m_i}$. 
\end{definition}
If it is clear from the context that we consider the conjugacy class of $g$ in $Sp_{2n}(\field_q)$, we will sometimes call the symplectic conjugacy class data of $g$ just the conjugacy class data of $g$. The previous theorem of Wall states that the symplectic conjugacy class data parametrizes the conjugacy classes of $Sp_{2n}(\field_q)$. We also define a symplectic signed partition, which provides a purely combinatorial parametrization of $Sp_{2n}(\field_q)$-conjugacy classes.

\begin{definition}
A symplectic signed partition $\lambda$ is a collection of partitions $(\lambda_f)_{f\in\Pee}$ (some of them are tagged partitions), at most finitely many of them are non-empty, such that 
\begin{enumerate}
    \item For $\alpha=\pm 1$,
    \begin{enumerate}
        \item Each odd part of $\lambda_{x-\alpha}$ has even multiplicity.
        \item Let $i>0$ be even and assume that there is at least one part in $\lambda_{x-\alpha}$ with size $i$. To the collection of all parts of size $i$ we attach a sign $\mu(i)=\pm$ (corresponding to $\mu(i)$ in definition \ref{def_symplectic_conj_class_data}).
    \end{enumerate}
\item For $f\in\Pee$ with $f\neq x\pm 1$, $\lambda_f=\lambda_{f^\rs}$.
\end{enumerate}
\end{definition}
Let $\lambda$ be a symplectic signed partition. We will denote $|\lambda|=\sum_{f\in\Pee}|\lambda_f|$. The set of symplectic signed partitions with $|\lambda|=2n$ gives a combinatorial parametrization of $Sp_{2n}(\field_q)$-conjugacy classes, since a polynomial $f\in\Pee^\rs$ is either in $\Pee$ or factors as $f=hh^\rs$ where $h,h^\rs\in\Pee$.

To complement this view, we would like to understand how to join matrices with different primary components (meanwhile preserving the induced action). This will be done using the \textit{triangular join}. Given two matrices $A\in Sp_{2n}(\field_q)$, $B\in Sp_{2m}(\field_q)$, where $A$ is a lower triangular block matrix
$$
\left(
\begin{array}{cc}
    A_1 & 0 \\
    A_3 & A_2
\end{array}
\right),
$$
one may define their \textit{triangular join}, a matrix in $Sp_{2n+2m}(\field_q)$ whose conjugacy class datum corresponds with the join of the conjugacy class data for $A$ and $B$ together, and such that the induced action is preserved (see \cite[\S 5]{Tay21a}).   

\begin{definition}\label{def_triangular_join}
Let 
$$
A = \left(
\begin{array}{cc}
    A_1 & 0 \\
    A_3 & A_2
\end{array}
\right)\in Sp_{2n}(\field_q), B\in Sp_{2m}(\field_q).
$$
The triangular join of $A,B$, also denoted by $A\triangle B$, is a matrix in $Sp_{2n+2m}(\field_q)$ given by
$$
\left(
\begin{array}{ccc}
    A_1 & 0 & 0 \\
    0 & B & 0 \\
    A_3 & 0 & A_2
\end{array}
\right).
$$
\end{definition}

As a special case, we mention the case where $A,B$ are both block diagonal,
$$
A = \left(
\begin{array}{cc}
    A_1 & 0 \\
    0 & A_2
\end{array}
\right), 
B = \left(
\begin{array}{cc}
    B_1 & 0 \\
    0 & B_2
\end{array}
\right), 
$$
in which case the triangular join is called the \textit{diagonal join} and is given by 
$$
A\triangle B = \left(
\begin{array}{cccc}
    A_1 & 0 & 0 & 0 \\
    0 & B_1 & 0 & 0 \\
    0 & 0 & B_2 & 0 \\
    0 & 0 & 0 & A_2
\end{array}
\right).
$$
Later, we will be able to construct explicit representatives to conjugacy classes of $Sp_{2n}(\field_q)$ by patching together matrices with different primary components, using the triangular join.

\subsubsection{The special orthogonal group}\label{section_so_generalities}
We remind the reader again that for the current paper, $\field_q$ is a finite field of odd characteristic $p$. Let $K\in GL_n(\field_q)$ be a symmetric matrix. The orthogonal group preserving the quadratic form with matrix $K$ is 
$$
O_n(\field_q,K)=\left\{g\in GL_n(\field_q):g^t Kg=K\right\}.
$$
When the forms defined by the matrices $K_1, K_2$ are congruent, the corresponding orthogonal groups are conjugate in $GL_n(\field_q)$. Let $\chi_2$ be the quadratic character of $\field_q^\times$. For each $n$, there are two congruence classes for invertible symmetric matrices of size $n$: one for symmetric matrices with $\chi_2(\det(K)(-1)^{[n/2]})=1$ and one for those with $\chi_2(\det(K)(-1)^{[n/2]})=-1$. 

For concreteness, following \cite{Tay16}, we may pick the following representatives for the congruence classes. From now on, these representatives will be called the standard symmetric forms (or just standard forms when the context is clear). 

\maincase{$n=2m+1$ is odd}
Let 
$$\Lambda_m=\left(
\begin{array}{ccccc}
     0 & 0 & \cdots & 0 & 1 \\
     0 & 0 & \cdots & 1 & 0 \\
       &   & \ddots &   &  \\
     0 & 1 & \cdots & 0 & 0 \\
     1 & 0 & \cdots & 0 & 0 \\
\end{array}
\right),$$
and pick $\delta\in\field_q$ which is not a square. Then the matrices are 
$$
K_1:=\left(
\begin{array}{ccc}
     0 & 0 & \Lambda_m \\
     0 & 1 & 0 \\
     \Lambda_m & 0 & 0 \\
\end{array}
\right),
K_\delta:=\left(
\begin{array}{ccc}
     0 & 0 & \Lambda_m \\
     0 & \delta & 0 \\
     \Lambda_m & 0 & 0 \\
\end{array}
\right).
$$
We denote the corresponding orthogonal groups by $O_n^+(\field_q),O_n^{-}(\field_q)$ (respectively). We remark (for later use) that the two groups become conjugate in $GL_n(\field_{q^2})$. 

\maincase{$n=2m$ is even}
Then the matrices are
$$
K_0=\left(
\begin{array}{cc}
     0 & \Lambda_m \\
     \Lambda_m & 0
\end{array}
\right),
K_{1,-\delta}=\left(
\begin{array}{ccc}
     0 & 0 & \Lambda_{m-1} \\
     0 & J_2 & 0 \\
     \Lambda_{m-1} & 0 & 0 \\
\end{array}
\right),
$$
where here $J_2=\left(
\begin{array}{cc}
     1 & 0 \\
     0 & -\delta \\
\end{array}
\right)$. Again, We denote the corresponding orthogonal groups by $O_n^+(\field_q), O_n^{-}(\field_q)$ (respectively), and note that they become conjugate in the quadratic extension $\field_{q^2}/\field_q$.

Fix $\epsilon=\pm 1$. We define the special orthogonal group 

$$
SO_n^{\epsilon}(\field_q)=\{
g\in O_n^{\epsilon}(\field_q):\det(g)=1
\}.
$$ Let $K$ be the form preserved by $O_n^\epsilon(\field_q)$, $SO_n^\epsilon(\field_q)$. The associated Lie algebra of $O_n^\epsilon(\field_q)$ is
\begin{equation}\label{eqn_defining_go_lie_algebra}
\mathfrak{o}_n^\epsilon(\field_q)=\mathfrak{o}_n(\field_q,K)=\left\{
A\in M_n(\field_q):KA=-A^tK
\right\},    
\end{equation}
and that of $SO_n^\epsilon(\field_q)$ is 
\begin{equation*}
\mathfrak{so}_n^\epsilon(\field_q)=\mathfrak{so}_n(\field_q,K)=\left\{
A\in M_n(\field_q):\tr(A)=0,KA=-A^tK
\right\}.
\end{equation*}
Note that in characteristic $\neq 2$, $\mathfrak{so}^\epsilon_n(\field_q)=\mathfrak{o}^\epsilon_n(\field_q)$, since for $A\in \mathfrak{o}_n^\epsilon(\field_q)$ we have $\tr(A)=\tr(-K^{-1}A^t K)=-\tr(A)$.

We pay special attention to the two forms $K_0=\left(
\begin{array}{cc}
     0 & \Lambda_m \\
     \Lambda_m & 0
\end{array}
\right)=\Lambda_{2m},K_1=\left(
\begin{array}{ccc}
     0 & 0 & \Lambda_m \\
     0 & 1 & 0 \\
     \Lambda_m & 0 & 0 \\
\end{array}
\right)=\Lambda_{2m+1}$. We define the anti-diagonal transpose of a matrix $A$ by reflecting it along the main anti-diagonal (analogously to the regular transpose). Denote this anti-diagonal transpose of $A$ by $A^\mathfrak{t}$. One can see that the condition $\Lambda_n A=-A^t \Lambda_n$ simply means that $A$ is anti-symmetric w.r.t. $A^\mathfrak{t}$, that is, $A^\mathfrak{t}=-A$. Thus the Lie algebra in those cases has a nice interpretation. 

As in the symplectic case, the characteristic polynomial of a matrix $M\in SO_n^{\epsilon}(\field_q)$ must be self-reciprocal. Moreover, also in this case one can derive the Jordan-type decomposition by observing the action of $M$ on $V=\field_q^n$, which gives it a structure of an $\field_q[x]$-module. As explained in \cite{Tay16}, one can decompose the space to a direct sum $\bigoplus_{f\in\Pee^\rs, i}(\field_q[x]/(f)^{n_i})$, such that $n=\sum_{f\in\Pee^\rs,i}n_i\deg(f)$. For a given $f\in\Pee^\rs$, we define the $f$-primary component 

$$
V_{f}\cong\bigoplus_{i\ge 1} (\field_q[x]/f^i)^{m_i}.
$$

Let $g\in O_n^\epsilon(\field_q)$. Consider now the primary components of $g$ associated with the polynomials $x\pm 1$,
$$
V_{x\pm 1}\cong \bigoplus (\field_q[x]/(x\pm 1)^i)^{m_i}.
$$
Wall \cite[p. 38]{Wal63} proved that for even $i$ the multiplicity $m_i$ must be even. For odd $i$, similarly to the symplectic case, he attached a certain invariant form (one of two forms $\Psi_{-}, \Psi_{+}$) to the space $(\field_q[x]/(x\pm 1)^i)^{m_i}$. As in the symplectic case, the sign of the form is determined by the induced action of $g$ on $(\field_q[x]/(x\pm 1)^i)^{m_i}$. 
\begin{theorem}[\cite{Wal63}, p. 38-9, rephrased]
Fix $\epsilon=\pm$. Two matrices $g,h\in O_n^{\epsilon}$ are conjugates if and only if 
\begin{enumerate}
    \item Their conjugacy class data is the same and in addition,
    \item For every odd $i$, the sign of the form corresponding with $(\field_q[x]/(x\pm 1)^i)^{m_i}$ is the same between $g,h$. 
\end{enumerate}
\end{theorem}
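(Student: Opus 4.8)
The plan is to follow Wall's method: classify pairs $(g,B)$ where $g$ is an isometry of a nondegenerate symmetric bilinear space $(V,B)$ over $\field_q$, and read off the statement for $O_n^\epsilon(\field_q)$-conjugacy. The ``only if'' direction I would dispatch first. If $t\in O_n^\epsilon(\field_q)$ conjugates $g$ to $h$, then $t$ conjugates them inside $GL_n(\field_q)$, so they share a rational canonical form and hence the same conjugacy class data. Since $t$ is moreover an isometry of $K$, it carries the $g$-primary component $V_f(g)\subset\field_q^n$ isometrically onto the $h$-primary component $V_f(h)$, compatibly with the filtration by powers of $f$; thus it identifies the subquotients $(\field_q[x]/(x\pm1)^i)^{m_i}$ attached to $g$ and to $h$ as spaces carrying Wall's invariant form, which forces the signs to agree.

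For the converse I would equip $V=\field_q^n$ with the $g$-action and the form $K$, and first reduce to a single primary component. One checks that $B$ pairs $V_f$ nontrivially only with $V_{f^\rs}$: indeed $B$ is $g$-invariant, the $B$-adjoint of $g$ is $g^{-1}$, and the characteristic polynomials of $g$ on $V_f$ and of $g^{-1}$ on $V_{f'}$ are coprime unless $f'=f^\rs$. Hence $V$ splits $B$-orthogonally into the self-reciprocal primary pieces $V_f$ (with $f=f^\rs$ irreducible) together with hyperbolic-type pieces $V_f\oplus V_{f^\rs}$ (with $f\ne f^\rs$), where in the latter each of $V_f,V_{f^\rs}$ is totally isotropic and the two are in perfect duality, so the restricted pair $(g,B)$ is determined up to isometry-conjugacy by the module data alone. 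By Witt's extension theorem it then suffices to treat each self-reciprocal primary component separately. When $f=f^\rs$ is irreducible of degree $>1$, the reciprocal involution $\alpha\mapsto\alpha^{-1}$ is a nontrivial order-two automorphism of $\field_q[x]/(f)$, and $B$ endows $V_f$ with a nondegenerate Hermitian structure relative to it over the relevant truncated rings; since Hermitian forms over finite local rings are classified by their reductions, and those over a finite field by rank alone, the module structure pins down the isometry-conjugacy class, matching condition~(1).

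The remaining and hardest case is $f=x-\alpha$ with $\alpha=\pm1$. Replacing $g$ by $\alpha g$ we may assume $g$ is unipotent, so $N:=g-1$ is nilpotent and, since the $B$-adjoint of $g$ is $g^{-1}$, the $B$-adjoint of $N$ equals $-(1+N)^{-1}N$; thus $N$ is self-adjoint up to a unit of $\field_q[N]$. I would classify pairs $(N,B)$ with $N$ nilpotent and $B$ symmetric and $(1+N)$-invariant by induction on the largest Jordan block size, peeling off at each step a maximal $N$-cyclic $B$-nondegenerate summand. The outcome is that Jordan blocks of even size must occur with even multiplicity (carrying no further invariant), while for each odd size $i$ the corresponding multiplicity-$m_i$ chunk supports a well-defined symmetric $\field_q$-form of rank $m_i$ whose discriminant class in $\field_q^\times/(\field_q^\times)^2$ is exactly Wall's sign $\mu(i)$; the block data together with these signs then form a complete set of invariants. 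Reassembling the orthogonal summands by Witt extension produces the desired element of $O_n^\epsilon(\field_q)$ conjugating $g$ to $h$.

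I expect the main obstacle to be precisely this last case: the interlocking of the quadratic-form bookkeeping with the Jordan structure on the $x\pm1$ primary component --- establishing the even-multiplicity constraint for even blocks, isolating the discriminant invariant for odd blocks, and verifying completeness and independence of these invariants. One must also confirm that the construction respects the fixed global discriminant, i.e. that it stays within $O_n^+$ versus $O_n^-$, but this is automatic since $\det K$ is the product of the discriminants of the orthogonal summands and is already recorded by the data. All of this is classical; in the paper one simply cites Wall \cite{Wal63}, and the sketch above indicates how the argument runs.
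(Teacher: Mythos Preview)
The paper does not prove this theorem at all; it simply cites Wall \cite{Wal63}, exactly as you anticipated in your final sentence. Your sketch is a faithful outline of Wall's classical argument, so there is nothing to compare.
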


\begin{definition}\label{def_ortho_conjugacy_class_data}
Fix $\epsilon=\pm$. For a matrix $g\in O_{n}^\epsilon(\field_q)$, we define the orthogonal conjugacy class data by the multiset 
$$((f^{i},m_i))_{f\in\mathcal{P}^\rs\atop{f\neq x\pm 1\atop{i=1,\ldots}}}\cup ((f^i,{m_i}))_{2| i\atop{f=x\pm 1\atop{i=1,\ldots}}}\cup ((f^i,{m_i},\mu(i)))_{2\nmid i\atop{f=x\pm 1\atop{i=1,\ldots}}},$$
where $\mu(i)=\pm$ is the sign of the invariant attached to the space $(\field_q[x]/(x\pm 1)^{i})^{m_i}$.
\end{definition}
The last theorem by Wall shows that the orthogonal conjugacy class data parametrize conjugacy classes in $O_n^\epsilon(\field_q)$. When it is clear from the context that we consider the conjugacy class of $g$ in $O_n^\epsilon(\field_q)$, we will sometimes call the orthogonal conjugacy class data just the conjugacy class data of $g$. We also define an orthogonal signed partition, which provides a purely combinatorial parametrization of $O^\epsilon_n(\field_q)$-conjugacy classes. 

\begin{definition}
    An orthogonal signed partition is a collection of partitions $(\lambda_f)_{f\in\Pee}$ (some of them are tagged partitions), at most finitely many of them are non-empty, such that 
    
\begin{enumerate}
    \item For $\alpha=\pm 1$,
    \begin{enumerate}
        \item Each even part of $\lambda_{x-\alpha}$ has even multiplicity.
        \item Let $i>0$ be odd and assume that there is at least one part in $\lambda_{x-\alpha}$ with size $i$. To the collection of all parts of size $i$ we attach a sign $\mu(i)=\pm$ (corresponding to $\mu(i)$ in definition \ref{def_ortho_conjugacy_class_data}).
    \end{enumerate}
\item For $f\in\Pee$ with $f\neq x\pm 1$, $\lambda_f=\lambda_{f^\rs}$.
\end{enumerate}
\end{definition}

Let $\lambda$ be an orthogonal signed partition. We will denote $|\lambda|=\sum_{f\in\Pee}|\lambda_f|$. The set of orthogonal signed partitions with $|\lambda|=n$ gives a combinatorial parametrization of $O^\epsilon_n(\field_q)$-conjugacy classes, since a polynomial $f\in\Pee^\rs$ is either in $\Pee$ or factors as $f=hh^\rs$ for $h,h^\rs\in\Pee$. 

To complement these results, we would like to have (similarly to the symplectic case) a way to join matrices with different primary components, such that the resulting matrix will be in the correct orthogonal group. The problem is this: if we have matrices $A, B$ preserving the forms $J_A, J_B$, the diagonal join will preserve the matrix $\mathrm{diag}(J_A, J_B)$, which is not of the standard types we defined before. Thus we need first to understand how to compute the type of $\mathrm{diag}(J_A, J_B)$, and later to work out a linear transformation to the corresponding standard form.

This will be done using the orthogonal join, which in the case of the orthogonal group is more complicated, and must treat the Witt types of the associated quadratic forms. We will now briefly discuss the Witt ring $W(\field_q)$. See \cite[\S 2]{Lam05} for a detailed exposition of Witt rings in this setting and in general.

\begin{definition}
Let $V$ be a finite-dimensional vector space over $\field_q$, and let $Q: V\times V\to \field_q$ be a quadratic form. The pair $(V, Q)$ is called a quadratic space.
\end{definition}
 We sometimes abuse notation slightly and call the quadratic space $(V, Q)$ simply $V$. As an example for a quadratic space, we define $\mathbb{H}=(\field_q^2,(x,y)\mapsto x^2-y^2)$. It is not hard to see that this is a quadratic space. $\mathbb{H}$ is called the hyperbolic space.

For $(V_1,Q_1),(V_2,Q_2)$ one may define their direct sum $(V_1\oplus V_2,Q_1\oplus Q_2)$, where $Q_1\oplus Q_2$ is the quadratic form defined via the equation
$$
(Q_1\oplus Q_2)(v_1\oplus v_2,v_1'\oplus v_2')=Q_1(v_1,v_1')+Q_2(v_2,v_2').
$$
We denote by $V_1^{m}$ the direct sum $\underbrace{V_1\oplus V_1\cdots\oplus V_1}_{m\text{ times}}$.

\begin{definition}
Let $(V_1,Q_1), (V_2,Q_2)$ be two quadratic spaces over $\field_q$. A linear map $L: V_1 \to V_2$ is called an isomorphism of quadratic spaces if it is an isomorphism of vector spaces such that $Q_1=Q_2\circ L$. If such a map exists, we say that the quadratic spaces $V_1, V_2$ are isomorphic, and write $V_1\cong V_2$.
\end{definition}

\begin{definition}
Let $(V_1,Q_1),(V_2,Q_2)$ be two quadratic spaces. $V_1,V_2$ are called Witt equivalent if $V_1\cong V_2\oplus\mathbb{H}^m$ for some non-negative integer $m$. We denote $V_1\sim V_2$. 
\end{definition}
One can show that if $V_1\sim V_1'$, $V_2\sim V_2'$, then $V_1\oplus V_2\sim V_1'\oplus V_2'$, and $V_1\otimes V_2\sim V_1'\otimes V_2'$ (for an appropriate definition of the tensor product of quadratic spaces see \cite[Chapter 1, \S 6]{Lam05}). This induces a ring structure on the set of quadratic spaces over $\field_q$ modulo Witt equivalence. The resulting ring $W(\field_q)$ is called the Witt ring. For a quadratic form $(x_1,\ldots,x_n)\mapsto a_1x_1^2+\cdots+a_nx^n$, we denote its equivalence class in the Witt ring by $\langle
a_1,\ldots,a_n
\rangle$. 

\begin{theorem}
Let $\delta\in\field_q$ be a non-square. The Witt ring is composed of the elements $\langle
0
\rangle,\langle
1
\rangle,\langle
\delta
\rangle,\langle
1-\delta
\rangle
$ (corresponding with the equivalence classes of the hyperbolic form, $x^2$, $\delta x^2$ and $x^2-\delta y^2$, respectively). Moreover,
\begin{enumerate}
    \item If $q\equiv 1\pmod{4}$, $W(F)\cong \field_2[\epsilon|\epsilon^2=0]$.
    \item If $q\equiv 3\pmod{4}$, $W(F)\cong \Z/4\Z$.
\end{enumerate}
\end{theorem}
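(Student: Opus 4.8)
The plan is to reduce the statement to two standard facts about quadratic forms over a finite field of odd characteristic: every form diagonalises, and every form in at least three variables is isotropic. First I would record the normal form: since $\charc(\field_q)\neq 2$, every quadratic space over $\field_q$ is isometric to a diagonal form $\langle a_1,\ldots,a_n\rangle$, rescaling an $a_i$ by a square does not change the isometry class, and $|\field_q^\times/(\field_q^\times)^2|=2$; hence every quadratic space is isometric to some $\langle 1\rangle^{a}\oplus\langle\delta\rangle^{b}$, so every class in $W(\field_q)$ has the form $a\cdot\langle 1\rangle+b\cdot\langle\delta\rangle$ with $a,b\geq 0$. To tell classes apart I would use the two invariants that descend to the Witt ring, namely the dimension modulo $2$ and the (signed) discriminant in $\field_q^\times/(\field_q^\times)^2$. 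The only input particular to finite fields is that any quadratic form in $\geq 3$ variables over $\field_q$ is isotropic (by Chevalley--Warning, or a short counting argument); equivalently an anisotropic form has dimension $\leq 2$, and $\langle a,b\rangle\cong\mathbb{H}$ precisely when $-ab$ is a square.

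Next I would split according to whether $-1$ is a square. If $q\equiv 1\pmod 4$ then $-1$ is a square, so $\langle 1,1\rangle$ and $\langle\delta,\delta\rangle=\delta\langle 1,1\rangle$ are isotropic, giving $2\langle 1\rangle=2\langle\delta\rangle=0$ in $W(\field_q)$; since $-\delta$ is then a non-square, $\langle 1,\delta\rangle\cong\langle 1,-\delta\rangle$ is anisotropic and hence non-zero. Thus $W(\field_q)=\{\langle 0\rangle,\langle 1\rangle,\langle\delta\rangle,\langle 1,-\delta\rangle\}$, and these four classes are pairwise distinct by comparing dimension mod $2$ with discriminant. For the ring structure, $\langle\delta\rangle^2=\langle\delta^2\rangle=\langle 1\rangle$, so $\epsilon:=\langle 1\rangle+\langle\delta\rangle$ satisfies $\epsilon^2=\langle 1\rangle+2\langle\delta\rangle+\langle 1\rangle=0$ while $\epsilon\neq 0$; sending $\langle 1\rangle\mapsto 1$ and $\epsilon\mapsto\epsilon$ exhibits $W(\field_q)\cong\field_2[\epsilon]/(\epsilon^2)$.

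If instead $q\equiv 3\pmod 4$ then $-1$ is a non-square, so we may take $\delta=-1$. The ternary form $\langle 1,1,1\rangle$ is isotropic, and comparing discriminants it splits as $\langle 1,1,1\rangle\cong\mathbb{H}\oplus\langle -1\rangle$; hence $3\langle 1\rangle=\langle -1\rangle=-\langle 1\rangle$, i.e.\ $4\langle 1\rangle=0$. Moreover $\langle 1\rangle$, $2\langle 1\rangle=\langle 1,1\rangle$ and $3\langle 1\rangle=\langle -1\rangle$ are each non-zero ($\langle 1,1\rangle$ is anisotropic because $-1$ is a non-square, and $\langle\pm 1\rangle$ are anisotropic of odd dimension), so $\langle 1\rangle$ has additive order exactly $4$ and generates $W(\field_q)$. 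As $\langle 1\rangle$ is the multiplicative identity, this forces $W(\field_q)\cong\Z/4\Z$ as a ring, with the four classes being $\langle 0\rangle$, $\langle 1\rangle$, $\langle 1,1\rangle=\langle 1,-\delta\rangle$ and $\langle 1,1,1\rangle\sim\langle -1\rangle=\langle\delta\rangle$.

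The diagonalisation and the discriminant bookkeeping used to separate the four classes are routine; the only genuine content is the isotropy of ternary forms, and the step I expect to require the most care is keeping track of which elements of $\field_q^\times$ are squares in each of the two congruence cases, so that the relations ($2\langle 1\rangle=0$ versus $4\langle 1\rangle=0$) and the resulting ring isomorphisms come out correctly.
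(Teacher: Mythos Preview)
Your argument is correct and is the standard route to this result. The paper does not supply a proof at all; it simply cites \cite[Corollary 3.6]{Lam05}. Your sketch is essentially the argument one finds in Lam (diagonalise, use that $\field_q^\times/(\field_q^\times)^2$ has order $2$, and invoke the isotropy of ternary forms via Chevalley--Warning), so there is nothing to compare: you have filled in precisely the proof the paper outsources.
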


\begin{proof}
See \cite[Corollary 3.6]{Lam05} for a proof.
\end{proof}
Let $M$ be a symmetric matrix over $\field_q$. $M$ defines a quadratic form, which by the previous theorem must be equivalent to one of the elements $\langle
0
\rangle,\langle
1
\rangle,\langle
\delta
\rangle,\langle
1-\delta
\rangle
$. The Witt type of $M$ will be its Witt equivalence class. 

Now that we considered the necessary background regarding the Witt ring, we turn back to our problem of forming the orthogonal join. Let $A\in O_n(\field_q,J_A)$, $B\in O_m(\field_q,J_B)$. The block matrix 
$$
A\oplus B:=\left(
\begin{array}{cc}
    A & 0 \\
    0 & B
\end{array}
\right)
$$
is in $O_{n+m}(\field_q,J_A\oplus J_B)$, where 
$$
J_A\oplus J_B:=
\left(
\begin{array}{cc}
    J_A & 0 \\
    0 & J_B
\end{array}
\right).
$$
Even when $J_A, J_B$ are in one of the standard forms defined above, $J_A\oplus J_B$ need not be. However, it must be congruent to one of the standard forms $K_0, K_1, K_\delta, K_{1,-\delta}$ (of Witt types $\langle
0
\rangle,\langle
1
\rangle,\langle
\delta
\rangle,\langle
1-\delta
\rangle$, respectively). We remark that as a consequence when $n$ is odd, the only possible Witt types for a symmetric $M\in M_n(\field_q)$ are $\langle 1\rangle,\langle \delta\rangle$. Similarly, for $n$ even, the only possible Witt types are $\langle 0\rangle,\langle 1-\delta\rangle$

To understand which one of the forms is appropriate, we must compute the Witt type of $J_{A}\oplus J_B$. This corresponds to forming the addition $\langle
J_A
\rangle + \langle
J_B\rangle
$ in the Witt ring. Suppose that the standard form $K$ satisfies $\langle K\rangle =\langle
J_A
\rangle + \langle
J_B\rangle$. By the definition of the Witt type, there is always a matrix $X$ such that 
\begin{equation}\label{eqn_K_def}
X^t (J_A\oplus J_B) X=K.   
\end{equation}
When $J_A,J_B$ are standard forms, we denote $K$ by $J_{A\oplus_{\mathrm{orth}} B}$. We are now ready to define the orthogonal sum $A\oplus_{\mathrm{orth}}B$.

\begin{claim}\label{claim_orthogonal_join_def}
Let $A\in O_n(\field_q,J_A)$, $B\in O_m(\field_q,J_B)$. Let $K, X$ be as above. Then, the matrix $A\oplus_{\mathrm{orth}}B:=X^{-1}(A\oplus B)X$ preserves the form $K$. $A\oplus_{\mathrm{orth}} B$ is called the orthogonal direct sum of $A$ and $B$.
\end{claim}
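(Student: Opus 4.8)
The plan is to reduce the statement to a direct matrix identity. Write $C=A\oplus B$ and $J=J_A\oplus J_B$. The first step is to record that $C$ preserves $J$: since $A^tJ_AA=J_A$ and $B^tJ_BB=J_B$, the block-diagonal structure gives
$$
C^tJC=\begin{pmatrix}A^t&0\\0&B^t\end{pmatrix}\begin{pmatrix}J_A&0\\0&J_B\end{pmatrix}\begin{pmatrix}A&0\\0&B\end{pmatrix}=\begin{pmatrix}A^tJ_AA&0\\0&B^tJ_BB\end{pmatrix}=J,
$$
so $C\in O_{n+m}(\field_q,J)$. The second step is to note that $X$ is invertible: by \eqref{eqn_K_def} we have $X^tJX=K$, and since both $J$ and $K$ are nondegenerate symmetric matrices, taking determinants shows $\det(X)\neq 0$, so $X^{-1}$ exists and $X^{-1}(A\oplus B)X$ is well-defined.

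The third and final step is the computation of the form preserved by $D:=X^{-1}CX$. Using $X^tJX=K$, hence $(X^t)^{-1}KX^{-1}=J$, I would compute
$$
D^tKD=X^tC^t(X^{-1})^tK X^{-1}CX=X^tC^t\bigl((X^t)^{-1}KX^{-1}\bigr)CX=X^tC^tJCX=X^tJX=K,
$$
where the penultimate equality is the first step. Thus $D=A\oplus_{\mathrm{orth}}B$ preserves $K$, as claimed.

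I do not expect any genuine obstacle here; the only points needing a word of care are the invertibility of $X$ (handled above via nondegeneracy of $J$ and $K$) and keeping the transpose/inverse bookkeeping straight, in particular the identity $(X^{-1})^t=(X^t)^{-1}$. Everything else is the routine substitution displayed above, and the congruence $\langle K\rangle=\langle J_A\rangle+\langle J_B\rangle$ in $W(\field_q)$ guaranteeing the existence of such $K$ and $X$ has already been arranged in the discussion preceding the claim.
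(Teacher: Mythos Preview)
Your proof is correct and follows essentially the same approach as the paper: both reduce to the direct matrix identity $D^tKD=X^tC^tJCX=X^tJX=K$ using $X^tJX=K$ and $C^tJC=J$. The only difference is cosmetic—you explicitly verify invertibility of $X$ and the block computation $C^tJC=J$, which the paper leaves implicit.
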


\begin{proof}
Indeed, 

$$
(A\oplus_{\mathrm{orth}} B)^t K (A\oplus_{\mathrm{orth}} B)=X^{t}(A^t\oplus B^t)X^{-t}\cdot X^t (J_A\oplus J_B) X\cdot X^{-1}(A\oplus B)X=K.
$$
\end{proof}
In particular, when $J_A,J_B$ are standard forms, $J_{A\oplus_{\mathrm{orth}}B}$ is also standard and $A\oplus_{\mathrm{orth}}B\in O_n(\field_q,J_{A\oplus_{\mathrm{orth}} B})$. We now consider what happens to the associated Lie algebras.

\begin{claim}\label{claim_lie_algebra_of_sum}
Let $A\in O_n(\field_q,J_A)$, $B\in O_m(\field_q,J_B)$. Let $K, X$ be as above. Then, 
$$\mathfrak{o}(\field_q,K)=X^{-1}\mathfrak{o}(\field_q,J_A\oplus J_B)X.$$
\end{claim}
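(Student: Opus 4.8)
The plan is to unwind the definition of the orthogonal Lie algebra and observe that conjugation by $X$ intertwines the two algebras, precisely because $K$ is obtained from $J_A\oplus J_B$ by the congruence $X^t(J_A\oplus J_B)X = K$. Write $J = J_A\oplus J_B$ for brevity, so that by \eqref{eqn_K_def} we have $K = X^t J X$. Recall from \eqref{eqn_defining_go_lie_algebra} that $\mathfrak{o}(\field_q,K) = \{Y\in M_n(\field_q): KY = -Y^t K\}$, and similarly $\mathfrak{o}(\field_q,J) = \{Z\in M_n(\field_q): JZ = -Z^t J\}$.

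First I would fix an arbitrary $Y\in M_n(\field_q)$ and prove the equivalence $Y\in\mathfrak{o}(\field_q,K) \iff XYX^{-1}\in\mathfrak{o}(\field_q,J)$. Starting from $KY = -Y^t K$ and substituting $K = X^t J X$ gives $X^t J X Y = -Y^t X^t J X$; multiplying on the left by $X^{-t}$ and on the right by $X^{-1}$ yields $J(XYX^{-1}) = -X^{-t}Y^t X^t J = -(XYX^{-1})^t J$, using $(XYX^{-1})^t = X^{-t}Y^t X^t$, and this is exactly the defining condition for $XYX^{-1}\in\mathfrak{o}(\field_q,J)$. Every step in this chain is an equivalence (all the matrices $X$, $X^t$, $X^{-1}$, $X^{-t}$ are invertible), so the equivalence holds; taking $Y$ over all of $\mathfrak{o}(\field_q,K)$ gives $X^{-1}\mathfrak{o}(\field_q,J)X = \mathfrak{o}(\field_q,K)$, which is the claim.

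This essentially completes the proof; there is no genuine obstacle here beyond careful bookkeeping of transposes and inverses. As a concluding remark I would note that since $\charc(\field_q) = p \neq 2$ we have $\mathfrak{so} = \mathfrak{o}$ in all the relevant cases (cf.\ the discussion following \eqref{eqn_defining_go_lie_algebra}), so the identity holds verbatim for the special orthogonal Lie algebras as well; alternatively one observes directly that conjugation by $X$ preserves the trace, so the same map restricts to an isomorphism $\mathfrak{so}(\field_q,K)\cong \mathfrak{so}(\field_q,J_A\oplus J_B)$.
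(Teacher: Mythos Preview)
Your proof is correct and follows essentially the same approach as the paper: both unwind the defining condition of the orthogonal Lie algebra and use the congruence relation $K = X^t(J_A\oplus J_B)X$ to transport it through conjugation by $X$. The only difference is presentational—you track the chain as a biconditional, while the paper verifies one inclusion and then invokes symmetry; your added remark on $\mathfrak{so}$ is a harmless bonus.
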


\begin{proof}
Indeed, let $Y\in \mathfrak{o}(\field_q,J_A \oplus J_B)$. Using equation \eqref{eqn_K_def} and the definition of the Lie algebra,
\begin{multline*}
K(X^{-1}YX)=X^t (J_A\oplus J_B)YX=-X^tY^t (J_A\oplus J_B)X=-X^tY^tX^{-t}X^t (J_A\oplus J_B)X
=(X^{-1}YX)^tK.
\end{multline*}
Hence we have the inclusion 
$$\mathfrak{o}(\field_q,K)\subset X^{-1}\mathfrak{o}(\field_q,J_A\oplus J_B)X,$$
Thus the LHS is contained in the RHS, and by symmetry we get equality.
\end{proof}

When $J_A,J_B$ are standard that means that $\mathfrak{o}(\field_q,J_{A\oplus_{\mathrm{orth}}B})=X^{-1}\mathfrak{o}(\field_q,J_A\oplus J_B)X$.

\subsubsection{The unitary group}\label{sec_unitary_generalities}

Let $\tau:\field_{q^2}\to\field_{q^2}$ be the $q$-Frobenius map, i.e. $\tau(x)=x^q$. We extend the action of Frobenius to matrices in $M_n(\field_{q^2})$ by setting $\tau(M)$ to be the matrix obtained from $M$ by raising each entry to the $q$-th power. Now define $M^*=\tau(M^t)$. The unitary group over $\field_q$ is defined by 
$$U_n(\field_q)=\{M\in GL_n(\field_{q^2}):MM^*=I\}.$$ The associated Lie algebra is 
$$\mathfrak{u}_n(\field_q)=\{M\in M_n(\field_{q^2}):M^*=-M\}.$$

Similarly to the symplectic and orthogonal case, the characteristic polynomial of a unitary matrix admits symmetries. For $f\in \field_{q^2}[x]$ monic, let $\tau(f)$ be the action of Frobenius on each coefficient separately. Then, define $\tilde{f}(t)=\frac{t^n\tau(f)(1/t)}{\tau(f)(0)}$. For a unitary matrix $M$, if $\charc(M)=f$, then $\tilde{f}=f$. Such a polynomial is called self-skew-reciprocal. A polynomial is called $\sim$-irreducible if it cannot be decomposed as the product of two self-skew-reciprocal polynomials. Denote by $\tilde{\Pee}$ the set of monic $\sim$-irreducible polynomials. Finally, we note that $f$ is $\tau(f(0))$-skew palindromic.

As in the $GL_n$ case, conjugacy classes are determined by the (usual) Jordan form over $GL_n(\field_{q^2})$. However, there are some restrictions on the conjugacy class data. Let $g\in U_n(\field_q)$ be a unitary matrix. View $\field_{q^2}^n$ as an $\field_{q^2}[x]$-module by the action $f(x)v=f(g)v$. It turns out (see \cite[p. 34]{Wal63}) that one can always write

$$
V\cong \bigoplus_{f\in \tilde{\mathcal{P}}}\bigoplus_{i=1}^{k_f} \field_q[x]/(f)^{m_i}\cong \bigoplus_{f\in\tilde{\mathcal{P}}} V_f,V_f\cong\bigoplus_{i=1}^{k_f}\field_q[x]/(f)^{m_i}.
$$
That is, the primary parts of $g$ must belong to $\sim$-irreducible polynomials. To join two unitary matrices, we simply write them on the diagonal of a block matrix. We end the section with the definitions of a unitary conjugacy class data and a unitary partition.

\begin{definition}
For a matrix $g\in U_n(\field_q)$, we define the unitary conjugacy class data by the multiset 
$
((f^i,m_i))_{f\in\tilde{\Pee}\atop{i=1,\ldots}}.
$
\end{definition}

\begin{definition}
A unitary partition $\lambda$ is a collection of partitions $(\lambda_f)_{f\in\Pee}$, at most finitely many of them are non-empty, such that $\lambda_f=\lambda_{\tilde{f}}$. 
\end{definition}
Let $\lambda$ be a unitary partition. We will denote $|\lambda|=\sum_{f\in\Pee} |\lambda_f|$. The set of unitary partitions with $|\lambda|=n$ gives a combinatorial parametrization of $U_n(\field_q)$-conjugacy classes, since a polynomial $f\in\tilde{\Pee}$ is either in $\Pee$ or it factors as $f=h\tilde{h}$ where $h,\tilde{h}\in\Pee$. 

\subsection{The effect of lifting on traces and the characteristic polynomial}\label{section_lifting}

For the following subsection, let $G\subset GL_n$ be a finite matrix group from the following list: $G=GL_n$, $SL_n$, $SO_n$, $Sp_{2n}$, $U_n$. The mod $\pi^k$ reduction map $G(\Ol)\to G(\resring{k})$ is surjective for all $k$ by Lemma \ref{lem_PRR_reductive_surjective}, so we can fix once and for all for every $k$ a system $H_{G,k}\subset G(\resring{k})$ of representatives for $G(\resring{k-1})$. Also, let $\mathfrak{g}$ be the associated Lie algebra. For the rest of section \ref{section_lifting}, we let $k\ge 2$ unless explicitly stated otherwise.

\begin{lem}
Any $A\in G(\resring{k})$ can be written uniquely as the product
$$
A = A_0(1+\pi^{k-1}A_1)
$$
where $A_0\in H_{G,k}$ and $A_1\in \mathfrak{g}(\mathbb{F}_q)$.     
\end{lem}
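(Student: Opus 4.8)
The content of the lemma is that the kernel of the reduction $\rho_{k-1}\colon G(\resring{k})\to G(\resring{k-1})$ is exactly the ``congruence subgroup'' $\{\,1+\pi^{k-1}B : B\in\g(\field_q)\,\}$, and that $H_{G,k}$ is a transversal for this kernel. So the plan is as follows. First, Lemma \ref{lem_PRR_reductive_surjective} (applied at level $k$ and level $k-1$) shows that $\rho_{k-1}\colon G(\resring{k})\to G(\resring{k-1})$ is surjective; since $H_{G,k}$ is by definition a system of representatives for $G(\resring{k-1})$, the map $\rho_{k-1}$ restricts to a bijection $H_{G,k}\to G(\resring{k-1})$. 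Given $A\in G(\resring{k})$, I would let $A_0\in H_{G,k}$ be the unique element with $\rho_{k-1}(A_0)=\rho_{k-1}(A)$. Then $A_0^{-1}A\in G(\resring{k})$ reduces to the identity modulo $\pi^{k-1}$, so I can write $A_0^{-1}A=1+\pi^{k-1}B$ with $B$ a matrix over $\resring{k}$; multiplication by $\pi^{k-1}$ on $\resring{k}$ has image $\pi^{k-1}\resring{k}$ and kernel $\pi\resring{k}$, so $\pi^{k-1}B$ depends only on $B\bmod\pi$ and I may take $B\in M_n(\field_q)$ (for $G=U_n$ one works over $\resringr{k}$ and gets $B\in M_n(\field_{q^2})$; everything below is identical). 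The claim will follow once I check $B\in\g(\field_q)$, and uniqueness is then formal.

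The heart of the matter is the linearisation. Since $k\ge 2$ we have $\pi^{2(k-1)}=0$ in $\resring{k}$, which kills every term of order $\ge 2$ in $\pi^{k-1}$. Writing $G$ as the common zero locus of the polynomial equations $f$ defining it over $\Ol$, I would expand $f\bigl(A_0(1+\pi^{k-1}B)\bigr)=f(A_0)+\pi^{k-1}\,df_{A_0}(A_0B)$ with no further terms; since $f(A_0)=0$ and $f(A)=0$, this gives $\pi^{k-1}df_{A_0}(A_0B)=0$, i.e.\ $df_{\overline{A_0}}(\overline{A_0}B)=0$ in the residue field for every $f$. This says $\overline{A_0}B$ lies in the tangent space to $G$ at $\overline{A_0}\in G(\field_q)$, which by smoothness of $G$ (it is reductive --- the same input used for Lemma \ref{lem_PRR_reductive_surjective}) and left-translation invariance equals $\overline{A_0}\cdot\g(\field_q)$, whence $B\in\g(\field_q)$. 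Rather than invoking smoothness one can verify this by hand for each group: for $GL_n$ there is nothing to prove; for $SL_n$ one computes $\det(1+\pi^{k-1}B)=1+\pi^{k-1}\tr(B)$, so the determinant condition forces $\tr(B)=0$; for $Sp_{2n}$ and $SO_n$ the form equation $M^t\Omega_nM=\Omega_n$, resp.\ $M^tKM=K$, linearises to $B^t\Omega_n+\Omega_nB=0$, resp.\ $B^tK+KB=0$; and for $U_n$ the equation $MM^*=I$ linearises to $B+B^*=0$. In each case the linearised relation is precisely the defining relation of $\g(\field_q)$. Running the computation backwards also confirms that $1+\pi^{k-1}B\in G(\resring{k})$ for every $B\in\g(\field_q)$, so the decomposition indeed lands in the group.

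For uniqueness, suppose $A=A_0(1+\pi^{k-1}A_1)=A_0'(1+\pi^{k-1}A_1')$ with $A_0,A_0'\in H_{G,k}$ and $A_1,A_1'\in\g(\field_q)$. Reducing modulo $\pi^{k-1}$ gives $\rho_{k-1}(A_0)=\rho_{k-1}(A_0')$, hence $A_0=A_0'$ because $\rho_{k-1}$ is injective on $H_{G,k}$; cancelling $A_0$ then gives $\pi^{k-1}A_1=\pi^{k-1}A_1'$ over $\resring{k}$, hence $A_1=A_1'$ over $\field_q$ by the kernel computation above. The one genuinely non-formal step is the linearisation: one must match the reduced defining equations of $G$ with those of $\g$, either uniformly through smoothness or via the short group-by-group check above, and the observation $\pi^{2(k-1)}=0$ --- which is exactly where the hypothesis $k\ge 2$ enters --- is what turns those checks into one-line computations.
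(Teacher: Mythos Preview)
Your proof is correct and follows essentially the same route as the paper's own argument: a group-by-group linearisation of the defining equation using $\pi^{2(k-1)}=0$, arriving at the Lie-algebra condition for $B$. You add two things the paper leaves implicit---an explicit uniqueness argument and the alternative uniform justification via smoothness of $G$ and tangent spaces---both of which are welcome but not strictly necessary, since the case-by-case check already suffices.
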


\begin{proof}
We prove the lemma for every matrix group separately.

\maincase{$G=GL_n,SL_n$}
Let $M\in GL_n(\resring{k})$, and write $M=M_0+\pi^{k-1}M_1$, where $M_0\in H_{G,k}$. Denote the $i$'th row of a matrix $X$ by $X_i$. The determinant is multilinear in the rows, and we write this explicitly by

\begin{multline*}
\det(M_0+\pi^{k-1}M_1)=\sum_{(s_1,\ldots,s_n)\in \{1,2\}^n} \det((M_{s_1})_1,\ldots,(M_{s_n})_n)=\\
=\det(M_0)+\pi^{k-1}\sum_{i=1}^n\det((M_0)_1,\ldots,(M_1)_i, \ldots, (M_0)_n)=\det(M_0)+\pi^{k-1}\tr(\Adj(M_0)M_1).
\end{multline*}

No matter what $M_1$ is, the determinant is thus invertible. Hence, we get that $M=M_0+\pi^{k-1}M_1$ where $M_1$ is any matrix $\pmod{\pi}$, and since $M_0$ is invertible, we get the original claim. For $SL_n$, we have the additional constraint $\det(M)=1$. Since $M_0\in G$, we conclude that we must have $\tr(\Adj(M_0)M_1)\equiv 0\pmod{\pi}$. If we write $M_1=M_0M_1'$, we get that 
$$\det(M_0)\tr(M_1')=\tr(\Adj(M_0)M_0M_1')\equiv 0\pmod{\pi}\iff \tr(M_1')\equiv 0\pmod {\pi},$$
and thus $M_1'\in\mathfrak{g}(\field_q)$, and the claim follows. 

\maincase{$G=Sp_{2n}$}
Let $M\in Sp_{2n}(\resring{k})$, writing $M=M_0+\pi^{k-1}M_1$, where $M_0\in H_{G,k}$. Let $\Omega_n$ be the matrix representing the standard symplectic form. Then for $M$ to be symplectic, we need to have

$$
\Omega_n=M\Omega_n M^t=(M_0+\pi^{k-1}M_1)\Omega_n (M_0^t+\pi^{k-1}M_1^t)=M_0\Omega_n M_0^t+\pi^{k-1}(M_0\Omega_n M_1^t+M_1\Omega_n M_0^t).
$$
Since we assumed $M_0$ is symplectic, we are left with the condition $M_0\Omega_n M_1^t   +M_1\Omega_n M_0^t=0\pmod {\pi}$. Write $M_1=M_0M_2$. Then we have
$$
M_0\Omega_n M_2^tM_0^t+M_0M_2\Omega_n M_0^t=0\iff \Omega_n M_2^t+M_2\Omega_n=0,
$$
which means that $M_2$ is in $\mathfrak{sp}_{2n}(\field_q)$.

\maincase{$G=SO_n$}
Let $M\in O_n(\resring{k})$, and write as usual $M=M_0+\pi^{k-1}M_1$ with $M_0\in H_{G,k}$. Let $K$ be the matrix representing the symmetric form preserved by the orthogonal group. Then 

$$
K=M^t KM=(M_0^t+\pi^{k-1}M_1^t)K(M_0+\pi^{k-1}M_1)=M_0^tKM_0+\pi^{k-1}(M_0^tKM_1+M_1^tKM_0).
$$
This implies that $M_0^tKM_1+M_1^tKM_0=0$, and the proof ends as in the symplectic case. The proof for the special orthogonal group goes the same, noting that $\mathfrak{so}=\mathfrak{go}$. We remark that the reduction of $O_n, SO_n$ modulo $\pi^k$ might have different signs when we change $n$, and hence we discussed both options.

\maincase{$G=U_n$}
Let $M\in U_n(\resring{k})$. Write as usual $M=M_0+\pi^{k-1} M_1$ with $M_0\in H_{G,k}$. Then,

$$
I=M^*M=(M_0^*+\pi^{k-1}M_1^*)(M_0+\pi^{k-1}M_1)=M_0^*M_0+\pi^{k-1}(M_0^*M_1+M_1^*M_0).
$$
This implies $M_1^*M_0=-M_0^*M_1$. Writing $M_1=M_0M_1'$, we get that $M_1'\in\mathfrak{g}(\field_q)$. 

\end{proof}

Using this decomposition, one can study the distribution of traces of powers and of the next-to-leading coefficients characteristic polynomial.

\begin{lem}\label{lem_trace_derivative}
    Let $A\in G(\resring{k})$ be written as $A=A_0(1+\pi^{k-1}A_1)$, where $A_0\in H_{G,k}$ and $A_1\in \mathfrak{g}(\field_q)$. Then,
    $$
    \tr(A^r)=\tr(A_0^r)+r\pi^{k-1}\tr(A_0^rA_1).
    $$
\end{lem}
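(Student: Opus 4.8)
The plan is to compute directly in $\resring{k}$, exploiting that $k\ge 2$ forces $2(k-1)\ge k$, so that $\pi^{2(k-1)}=0$ in $\resring{k}$. Writing $B=\pi^{k-1}A_1$, this means that in any product of matrices over $\resring{k}$, every monomial containing two or more factors of $B$ vanishes; only the terms constant or linear in $B$ survive.

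First I would expand, using $A=A_0(1+B)=A_0+A_0B$, the $r$-fold product
\begin{equation*}
A^r=(A_0+A_0B)^r=\sum_{(\epsilon_1,\dots,\epsilon_r)\in\{0,1\}^r}\ \prod_{j=1}^r A_0 B^{\epsilon_j},
\end{equation*}
where $B^0=I$. Discarding the monomials with $\sum_j\epsilon_j\ge 2$, which are $0$ modulo $\pi^k$, leaves only the all-zero term and the $r$ terms with a single $\epsilon_i=1$:
\begin{equation*}
A^r=A_0^r+\sum_{i=1}^r A_0^{i-1}(A_0B)A_0^{r-i}=A_0^r+\sum_{i=1}^r A_0^{i}\,B\,A_0^{r-i}\pmod{\pi^k}.
\end{equation*}
Taking traces and using cyclicity of the trace, $\tr(A_0^{i}BA_0^{r-i})=\tr(A_0^{r-i}A_0^{i}B)=\tr(A_0^{r}B)$ for every $i$, so the sum collapses to $r\tr(A_0^rB)=r\pi^{k-1}\tr(A_0^rA_1)$, which is exactly the claimed formula.

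The same argument also covers negative exponents: in $\resring{k}$ one has $A^{-1}=(1+B)^{-1}A_0^{-1}=(1-B)A_0^{-1}=A_0^{-1}\bigl(1+\pi^{k-1}(-A_0A_1A_0^{-1})\bigr)$, and $-A_0A_1A_0^{-1}\in\mathfrak{g}(\field_q)$ since the Lie algebra of each $G$ under consideration is stable under $\mathrm{Ad}$ of $G(\field_q)$; applying the positive-exponent case to $A^{-1}$ and simplifying the resulting trace with cyclicity yields the identity with $r<0$ as well. I do not expect any genuine obstacle here: the only point requiring care is the bookkeeping that justifies dropping every monomial with at least two factors of $B$, which is precisely where the standing hypothesis $k\ge 2$ is used.
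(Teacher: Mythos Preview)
Your proof is correct and follows the same approach as the paper: expand $A^r$ keeping only terms linear in $\pi^{k-1}A_1$, then use cyclicity of the trace. Your write-up is in fact more explicit than the paper's (which simply states $A^r=A_0^r+\pi^{k-1}\sum_{i=1}^{r} A_0^i A_1 A_0^{r-i}$ and invokes cyclicity), and your treatment of negative exponents is a harmless addition beyond what the lemma requires.
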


\begin{proof}
    Write 
    $
    A^r=A_0^r+\pi^{k-1}\sum_{i=1}^{r} A_0^i A_1 A_0^{r-i}.
    $
    The claim now follows since the trace map is cyclic.
\end{proof}

In particular, if one understands the distribution of traces of powers for $G(\resring{k-1})$, the distribution of traces of powers for $G(\resring{k})$ reduces to the value distribution of the linear functional

\begin{equation}\label{eqn_def_trace_linear_functional}
 \partial\tr_{A_0}:\mathfrak{g}(\field_q)\to \field_q, A_1\mapsto r\tr(\overline{A_0}^r A_1).
\end{equation}
One can get similar results for the characteristic polynomial.

\begin{lem}\label{lem_charpoly_step_in_p_direction}
Let $G$ be one of the groups considered above.
Let $A\in G(\resring{k})$ be written as $A=A_0(1+\pi^{k-1}A_1)$, where $A_0\in H_{G,k}$ and $A_1\in \mathfrak{g}(\field_q)$. Then,
    $$
    \charc(A)=\charc(A_0)-\pi^{k-1}\tr(\Adj(x-A_0) A_0A_1).
    $$
Further, if $G\neq GL_n,U_n$,

    $$
    \charc(A)=\charc(A_0)-\pi^{k-1}x\cdot\tr(\Adj(x-A_0) A_1).
    $$
\end{lem}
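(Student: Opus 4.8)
The plan is to compute the characteristic polynomial of the lift $A = A_0(1+\pi^{k-1}A_1)$ directly, working over the ring $\resring{k}$ and expanding to first order in $\pi^{k-1}$. First I would write $\charc(A) = \det(xI - A) = \det\bigl(xI - A_0 - \pi^{k-1}A_0A_1\bigr)$. Since we are in $\resring{k}$ and the perturbation is a multiple of $\pi^{k-1}$, all products of two or more copies of $\pi^{k-1}$ vanish, so the only surviving terms in the multilinear (column or row) expansion of the determinant are the ``zeroth order'' term $\det(xI - A_0) = \charc(A_0)$ and the $n$ ``first order'' terms obtained by replacing exactly one column (or row) of $xI - A_0$ by the corresponding column of $-\pi^{k-1}A_0A_1$. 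Exactly as in the $\det(M_0+\pi^{k-1}M_1)$ computation already carried out in the proof of the preceding lemma, this sum of first-order terms equals $-\pi^{k-1}\,\tr\bigl(\Adj(xI - A_0)\, A_0 A_1\bigr)$, where $\Adj$ denotes the adjugate matrix; here one uses the standard identity that $\sum_{i} \det$ (matrix with $i$-th column replaced) $= \tr(\Adj(\cdot)\cdot(\text{perturbation}))$. This is legitimate because the entries of $\Adj(xI-A_0)$ are polynomials in $x$ over $\resring{k}$ and the identity $\Adj(B)B = \det(B) I$ holds over any commutative ring. This gives the first displayed formula.

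For the second formula (the case $G \neq GL_n, U_n$, i.e. $G \in \{SL_n, Sp_{2n}, SO_n\}$), the point is to move the extra factor of $A_0$ outside and replace it by a factor of $x$. Here I would use that for these groups $A_1 \in \mathfrak{g}(\field_q)$ with $\mathfrak{g}$ one of $\mathfrak{sl}_n, \mathfrak{sp}_{2n}, \mathfrak{so}_n$, and — crucially — that conjugating $A_1$ by $A_0^{-1}$ (more precisely, using $A_0 A_1 = A_1' A_0$ for a suitable $A_1'$ lying in the same Lie algebra, which is how the decomposition lemma was proved) keeps us inside $\mathfrak g$. Concretely: write $\Adj(xI-A_0)A_0A_1$. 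I want to show $\tr\bigl(\Adj(xI-A_0)A_0A_1\bigr) = -x\,\tr\bigl(\Adj(xI-A_0)A_1\bigr)$, or equivalently $\tr\bigl(\Adj(xI-A_0)(A_0 + xI)A_1\bigr) = 0$. Since $\Adj(xI-A_0)(xI-A_0) = \charc(A_0)\cdot I$, we have $\Adj(xI-A_0)(A_0+xI) = \Adj(xI-A_0)\bigl(2xI - (xI-A_0)\bigr) = 2x\,\Adj(xI-A_0) - \charc(A_0) I$; this does not obviously vanish, so the honest route is different. The right approach is to instead commute: replace $A_0A_1$ by $A_1' A_0$ where $A_1' \in \mathfrak g(\field_q)$ (this is exactly the substitution $M_1 = M_0 M_2$, resp. $M_1 = M_0 M_1'$, etc., appearing in the decomposition lemma), getting $\charc(A) = \charc(A_0) - \pi^{k-1}\tr(\Adj(xI-A_0)A_1'A_0)$, and now use cyclicity of trace together with $A_0\,\Adj(xI-A_0) = \Adj(xI-A_0)A_0 = x\Adj(xI-A_0) - \charc(A_0)I$, which gives $\tr(\Adj(xI-A_0)A_1'A_0) = x\tr(\Adj(xI-A_0)A_1') - \charc(A_0)\tr(A_1')$. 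Since $A_1' \in \mathfrak g$ has trace zero for $G = SL_n, Sp_{2n}, SO_n$ (for $\mathfrak{sp}$ and $\mathfrak{so}$ tracelessness is automatic in odd characteristic, as noted in the excerpt), the second term drops, yielding $\charc(A) = \charc(A_0) - \pi^{k-1} x\,\tr(\Adj(xI-A_0)A_1')$; finally relabel $A_1'$ back to $A_1$ (the lemma is stated for an arbitrary such element, so this is cosmetic, or one notes that $A_0A_1$ and $A_1$ range over the same set as $A_1$ does).

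The main obstacle is keeping the bookkeeping of the two competing presentations $A_0A_1$ versus $A_1A_0$ straight and making sure the trace-zero condition is applied to the element that actually appears; the determinant expansion itself is routine once one invokes the adjugate identity over $\resring{k}$, exactly mirroring the computation already done for $\det(M_0 + \pi^{k-1}M_1)$ in the proof of the previous lemma. One should also remark why $GL_n$ and $U_n$ are excluded: for $\mathfrak{gl}_n$ and $\mathfrak{u}_n$ the Lie algebra element need not be traceless, so the term $\charc(A_0)\tr(A_1')$ does not vanish and the factorization of $x$ fails.
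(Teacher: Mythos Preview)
Your first part (the determinant expansion via multilinearity and the adjugate identity) is exactly what the paper does, and is correct.

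For the second part you took an unnecessary detour because of a sign slip. You wrote that you want $\tr(\Adj(xI-A_0)A_0A_1) = -x\,\tr(\Adj(xI-A_0)A_1)$, but comparing the two displayed formulas in the statement shows the target is $\tr(\Adj(xI-A_0)A_0A_1) = +x\,\tr(\Adj(xI-A_0)A_1)$. Equivalently you need $\tr(\Adj(xI-A_0)(A_0-x)A_1)=0$, not $(A_0+x)$. With the correct sign this is immediate: $\Adj(xI-A_0)(A_0-x) = -\charc(A_0)I$, so the expression equals $-\charc(A_0)\tr(A_1)=0$ since $A_1\in\mathfrak g$ is traceless for $G\in\{SL_n,Sp_{2n},SO_n\}$. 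That is precisely the paper's one-line argument.

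Your alternative route via $A_1'=A_0A_1A_0^{-1}$ does eventually work, but the closing ``relabel $A_1'$ back to $A_1$'' is not legitimate as stated: the lemma asserts an identity for the \emph{specific} $A_1$ determined by $A$, not a statement about the range of a map. What actually saves your computation is that $A_0$ commutes with $\Adj(xI-A_0)$ (the adjugate is a polynomial in $xI-A_0$), so $\tr(\Adj(xI-A_0)A_1')=\tr(A_0^{-1}\Adj(xI-A_0)A_0A_1)=\tr(\Adj(xI-A_0)A_1)$. But once you observe this commutativity you might as well bypass $A_1'$ entirely and write $\tr(\Adj(xI-A_0)A_0A_1)=\tr(A_0\Adj(xI-A_0)A_1)$ directly, which collapses to the paper's computation.
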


\begin{proof}
Write $\charc(A)=\det(xI-(A_0+\pi^{k-1}A_0A_1))=\det((xI-A_0)-\pi^{k-1}A_0A_1)$. Denote the $i$'th row of a matrix $X$ by $X_i$, and denote $M_1=xI-A_0, M_2=-\pi^{k-1}A_0A_1$. The determinant is multilinear in the rows, and we write this explicitly by
$$
\det((xI-A_0)-\pi^{k-1}A_1)=\det((M_1)_1+(M_2)_1,\ldots,(M_1)_n+(M_2)_n)=\sum_{(s_1,\ldots,s_n)\in \{1,2\}^n} \det((M_{s_1})_1,\ldots,(M_{s_n})_n).
$$
Now note that since $M_2=-\pi^{k-1}A_0A_1$, and $\pi^{2k-2}=0$ in $\resring{k}$, we have
$$
\det((xI-A_0)+\pi^{k-1}A_0A_1)=\det(xI-A_0)-\pi^{k-1}\sum_{i=1}^n\det((xI-A_0)_1,\ldots,(A_0A_1)_i, \ldots, (xI-A_0)_n)
$$
Developing the determinant in the summand w.r.t. the $i$'th row, we get that

$$
\charc(A)=\charc(A_0)-\pi^{k-1}\tr(\Adj(x-A_0)A_0 A_1).
$$

This finishes the first part of the claim. For $G\neq GL_n, U_n$, we note that the trace map vanishes on the Lie algebra $\mathfrak{g}(\field_q)$. Thus,

\begin{multline*}
\tr(\Adj(x-A_0)A_0 A_1)=\tr(\Adj(x-A_0)(A_0-x)A_1)+\tr(\Adj(x-A_0)xA_1)=\\ =-\charc(A_0)\tr(A_1)+x\cdot\tr(\Adj(x-A_0)A_1)=x\cdot\tr(\Adj(x-A_0)A_1).
\end{multline*}

\end{proof}

Thus, given results on the distribution of the next-to-leading coefficients of the characteristic polynomial for $G(\resring{k-1})$, one can lift them to $G(\resring{k})$ by studying linear maps on the Lie algebra $\mathfrak{g}(\field_q)$. To do this, we define the $\field_q$-linear maps

\begin{equation}\label{eqn_def_charpoly_linear_maps}
\partial\charc_{A_0}:\mathfrak{g}(\field_q)\to\field_q[x]_{<n}, A_1\mapsto \tr(\Adj(x-\overline{A_0}) A_0A_1).
\end{equation}
We remark that the image of $\partial\charc_{A_0}$ is indeed inside $\field_q[x]_{<n}$ due to the fact that $\charc(A),\charc(A_0)$ are monic of degree $n$. We also remark that for $G\neq GL_n,U_n$ we have 
$$
\partial\charc_{A_0}(A_1)=x\cdot\tr(\Adj(x-\overline{A_0})A_1).
$$

\subsection{Explicit representatives for primary conjugacy classes} \label{subsection_explicit_representatives}

In sections \ref{section_gln_main}-\ref{section_un_main} we will compute the image of the linear maps
$\partial\charc_{A_0}$ defined in \eqref{eqn_def_charpoly_linear_maps} for the finite matrix groups $G=GL_n(\field_q), SL_n(\field_q), Sp_{2n}(\field_q), SO_n(\field_q)$ and $U_n(\field_q)$. For any of the groups $G$ considered, and for all $a\in G$, the Lie algebra $\mathfrak{g}$ is preserved by conjugation by $a$: $a\mathfrak{g}a^{-1}=\mathfrak{g}$. Now note that for any $a\in G$, $A_0\in G(\field_q)$, $A_1\in\g(\field_q)$,

$$
\tr(\Adj(x-A_0)\cdot aA_1a^{-1})=\tr(a^{-1}\Adj(x-A_0) aA_1)=\tr(\Adj(x-a^{-1}A_0a) A_1).
$$
We conclude that $\img(\partial\charc_{A_0})$ depends only on the conjugacy class of $\overline{A_0}$ in $G(\field_q)$ (at least in the cases of $G\neq GL_n, U_n$. For $G=GL_n, U_n$ we will use slightly modified ideas described in the corresponding sections). Hence it is useful to compute the matrix $\Adj(x-A_0)$ explicitly (at least up to conjugation). 

Therefore, we would like to construct explicit representatives for the various conjugacy classes in each group. We will study the image $\img(\partial\charc_{A_0})$ over an algebraic extension $\field_{q^l}$ in which $\charc(A_0)$ splits completely, and use this data to understand the image over $\field_q$. Thus we will be interested in conjugacy classes of $G(\field_{q^l})$ for matrices whose characteristic polynomial splits completely into linear factors. We will also compute the matrix $\Adj(x-A_0)$ for the explicit representatives $A_0$. In fact, we may focus on cases where $\charc(M)=\min(M)=(x-\alpha)^e$ for some $\alpha\in\field_{q^l}$, $e>0$. We will later use a ``join" operator to join different primary parts, creating more complex conjugacy classes.

\subsubsection{For $GL_n$ and $SL_n$}

As is well known any matrix $M\in GL_n(\field_q)$ whose characteristic polynomial splits completely admits a Jordan normal form. The Jordan form characterizes such conjugacy classes. As mentioned above, it is enough to treat the case of $\charc(M)=\min(M)=(x-\alpha)^e$, in which $M$ is a conjugate of a Jordan block $J_e(\alpha)$, where here (and for the rest of the paper)

$$
J_e(\alpha):=\left(
\begin{array}{ccccc}
    \alpha & 1 & 0 & \cdots & 0 \\
    0 & \alpha & 1 & \ddots & 0 \\
     & & \ddots & \ddots &  \vdots\\
     &  &  & \alpha & 1 \\
     &  &  &  0 & \alpha \\
\end{array}
\right),\alpha\in\field_q,e\ge 1\text{ integer}.
$$

\begin{lem}\label{lem_gl_jordan_block_adj}
Let $J_e(\alpha)$ be a Jordan block. Then, 

$$
 X_\alpha^e:=\Adj(x-J_e(\alpha))=(x-\alpha)^e\left(\begin{array}{ccccc}
    (x-\alpha)^{-1} & (x-\alpha)^{-2} & (x-\alpha)^{-3} & \cdots & (x-\alpha)^{-e} \\
    0 & (x-\alpha)^{-1} & (x-\alpha)^{-2} & \cdots & (x-\alpha)^{-(e-1)} \\
    \vdots & \vdots & \ddots & \cdots & \vdots \\
    0 & \cdots & 0 & (x-\alpha)^{-1} & (x-\alpha)^{-2} \\
    0 & 0 & \cdots & 0 & (x-\alpha)^{-1} \end{array}\right).
$$
\end{lem}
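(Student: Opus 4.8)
The claim is a purely computational identity about the adjugate of $xI - J_e(\alpha)$, so the plan is to compute $\Adj(xI - J_e(\alpha))$ directly from the formula $\Adj(M) \cdot M = \det(M) \cdot I$, or equivalently by exhibiting the inverse of $xI - J_e(\alpha)$ over the field $\field_q(x)$ and multiplying by $\det(xI - J_e(\alpha)) = (x-\alpha)^e$. First I would note that $xI - J_e(\alpha) = (x-\alpha)I - N$, where $N$ is the nilpotent Jordan block with $1$'s on the superdiagonal, satisfying $N^e = 0$. Then over $\field_q(x)$ the matrix is invertible with geometric series inverse
\[
\left((x-\alpha)I - N\right)^{-1} = \sum_{j=0}^{e-1} \frac{N^j}{(x-\alpha)^{j+1}},
\]
which terminates because $N^e = 0$. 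Since $N^j$ is the matrix with $1$'s on the $j$-th superdiagonal (and zeros elsewhere), this sum is exactly the upper-triangular matrix whose $(a,b)$ entry is $(x-\alpha)^{-(b-a+1)}$ for $b \ge a$ and $0$ otherwise.

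Next I would multiply through by $\det(xI - J_e(\alpha)) = (x-\alpha)^e$ to obtain
\[
\Adj(xI - J_e(\alpha)) = (x-\alpha)^e \left((x-\alpha)I - N\right)^{-1} = (x-\alpha)^e \sum_{j=0}^{e-1} \frac{N^j}{(x-\alpha)^{j+1}},
\]
which is precisely the matrix $X_\alpha^e$ displayed in the statement (entrywise: the $(a,b)$ entry is $(x-\alpha)^{e - (b-a+1)}$, a genuine polynomial since $b - a + 1 \le e$). To make this rigorous without invoking field-of-fractions manipulations, I would instead verify the identity $X_\alpha^e \cdot (xI - J_e(\alpha)) = (x-\alpha)^e I$ directly as an identity of matrices over the polynomial ring $\field_q[x]$: writing $X_\alpha^e = \sum_{j=0}^{e-1} (x-\alpha)^{e-1-j} N^j$ and $xI - J_e(\alpha) = (x-\alpha)I - N$, the product telescopes to $(x-\alpha)^e I - (x-\alpha)^{e-1} \cdot 0 \cdot \text{(from } N^e\text{)} = (x-\alpha)^e I$. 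Since $\Adj$ is characterized by $\Adj(M) M = \det(M) I$ when... no, more carefully: $\Adj(M) M = \det(M) I$ always holds, and when $\det(M) \ne 0$ in a domain this determines $\Adj(M)$ uniquely; here $\det(xI - J_e(\alpha)) = (x-\alpha)^e \ne 0$ in the domain $\field_q[x]$, so the identity $X_\alpha^e \cdot (xI - J_e(\alpha)) = (x-\alpha)^e I$ forces $X_\alpha^e = \Adj(xI - J_e(\alpha))$.

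**Main obstacle.** There is essentially no obstacle — this is a routine linear-algebra verification — but the one point requiring minor care is the telescoping computation with the nilpotent $N$ and confirming the convention for the entries of $N^j$ (which superdiagonal, and that $N^e = 0$ kills the boundary term), together with checking that all exponents $e - (b-a+1)$ are nonnegative so that every entry of $X_\alpha^e$ is a polynomial, as required for it to be the adjugate. I would present the proof in the telescoping form to keep it self-contained over $\field_q[x]$ rather than passing to $\field_q(x)$.
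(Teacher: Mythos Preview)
Your proposal is correct and follows essentially the same approach as the paper: compute $\Adj(x-J_e(\alpha)) = (x-\alpha)^e (x-J_e(\alpha))^{-1}$ and evaluate the inverse using the nilpotent structure of the Jordan block. The paper's proof is a two-line sketch (``note that $x-J_e(\alpha)=-J_e(\alpha-x)$, and using the formula for the inverse of a Jordan block we finish''), whereas you spell out the geometric-series computation and add a telescoping verification over $\field_q[x]$; this extra care is fine but not required.
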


\begin{proof}
We have $\Adj(x-J_e(\alpha))=(x-\alpha)^e (x-J_e(\alpha))^{-1}$. Note that $x-J_e(\alpha)=-J_e(\alpha-x)$, and using the formula for the inverse of a Jordan block we finish.
\end{proof}

For $SL_n$, we note that $\mathfrak{sl}_n=\{a\in M_n:\tr(a)=0\}$. This Lie algebra admits more symmetries than conjugation by $SL_n$: indeed, for any $g\in GL_n$, $a\in \mathfrak{sl}_n$, we have $\tr(gag^{-1})=\tr(a)=0$. Thus, $gag^{-1}\in\mathfrak{sl}_n$ as well. We get that to compute the image of $\partial\charc_{A_0}$ for $A_0\in SL_n$, one can replace $A_0$ by an element from its conjugacy class in $GL_n$ (in particular, a matrix given in a Jordan form). 

\subsubsection{For $Sp_{2n}$}\label{subsubsection_explicit_representatives_sp2n}

We discussed $Sp_{2n}(\field_q)$-conjugacy classes in section \ref{subsubsection_symplectic_group_intro}. To compute $\Adj(x-A_0)$ for representatives of $Sp_{2n}(\field_q)$-conjugacy classes, we would like to deal with simple building blocks like the Jordan block. The analog of a single Jordan block is a matrix conjugate in $GL_n(\field_q)$ to one of the following types I, II, and III. We also provide conjugacy class representatives (from $Sp_{2n}(\field_q)$) to each type, following \cite{Tay21a}. 

\begin{enumerate}
    \item \textbf{Type I:} The class with Jordan form $
\left(
\begin{array}{cc}
    J_m(\alpha) & 0 \\
    0 & J_m(\alpha^{-1})
\end{array}
\right) \text{ for } \alpha\neq \pm 1$. The representative for this class is $A_{\alpha,\alpha^{-1}}:=
\left(
\begin{array}{cc}
    J_m(\alpha)^{-1} & 0 \\
    0 & J_m(\alpha)^t
\end{array}
\right).
$

\item \textbf{Type II:} The class with Jordan form $\left(
\begin{array}{cc}
    J_m(\alpha) & 0 \\
    0 & J_m(\alpha)
\end{array}
\right)$ for $\alpha=\pm 1$,  $m$ odd. The representative for this class is $
A_{\alpha}^{m}:=\left(
\begin{array}{cc}
    J_m(\alpha)^{-1} & 0 \\
    0 & J_m(\alpha)^t
\end{array}
\right)$.

\item \textbf{Type III:} The class with Jordan form $J_{2m}(\alpha)$, for $\alpha=\pm 1$. Let $\delta\in \field_q$ be a non-square, and define 
$$a=
\begin{cases}
    1, & \text{ for } + \text{ type,}\\
    (-1)^m 2\delta, & \text{ for } - \text{ type.}\\
\end{cases}
$$

Then, a representative for such matrices with $\pm$ sign is

$$
A_{\alpha,\pm}^{2m}:=\left(
\begin{array}{cc}
    \alpha J_m(1)^{-1}  & 0 \\
    aS & \alpha J_m(1)^{t}
\end{array}
\right),
$$

where we note that $a$ depends on the sign, and we denote 

$$
S = 
\left(
\begin{array}{ccccc}
    1 & -1 & \cdots & (-1)^{m-2} & (-1)^{m-1} \\
    0 & 0 & \cdots & 0 & 0 \\
     &  & \ddots&  &  \\
    0 & 0 & \cdots & 0 & 0
\end{array}
\right).
$$
\end{enumerate}

We may construct every conjugacy class from these building blocks using the triangular join as in Definition \ref{def_triangular_join}. It now remains to compute the adjoint matrix $\Adj(x-M)$ for $M$ a representative for one of the types. Before we do this, we need the following

\begin{prop}\label{prop_matrix_comp_sp2n}
Let $e>0$ be an integer and let $\alpha\in\field_q$. Let $Y_\alpha^e=\Adj(x-J_e(\alpha)^{-1})$. Then,
$$
Y_\alpha^e = (x-\alpha^{-1})^e \left(
\begin{array}{ccccc}
    \frac{-\alpha}{1-x\alpha} & \frac{-1}{(1-x\alpha)^2} & \cdots & \frac{-x^{e-3}}{(1-x\alpha)^{e-1}} & \frac{-x^{e-2}}{(1-x\alpha)^e} \\
     & \frac{-\alpha}{1-x\alpha} & & & \frac{-x^{e-3}}{(1-x\alpha)^{e-1}} \\
     & & \ddots & & \vdots \\
     & &  & \frac{-\alpha}{1-x\alpha} & \\
     & & & & \frac{-\alpha}{1-x\alpha}
\end{array}
\right).
$$
Now Let $\alpha=\pm 1$. Let $S$ be as defined above in the description of Type III. Define a sequence of rational functions
$f_1=\frac{1}{x-\alpha}$, $f_2=\frac{1}{(x-\alpha)^2}+\frac{1}{(x-\alpha)}$,\ldots, $f_e=\frac{x^{e-2}}{(x-\alpha)^e}+\frac{x^{e-3}}{(x-\alpha)^{e-1}}+\cdots
+\frac{1}{(x-\alpha)^2}+\frac{1}{(x-\alpha)}$. Then, 

\begin{multline*}
Z_\alpha^{2e}:=(x-\alpha)^{2e}(x-\alpha J_e(1))^{-t}S (x-\alpha J_e(1)^{-1})^{-1}=\\
=(x-\alpha)^{2e}\left(\begin{array}{ccccc}
   \frac{ f_1}{(x-\alpha)} &  \frac{- f_2}{(x-\alpha)} &  \frac{ f_3}{(x-\alpha)} & \cdots &  \frac{(-1)^{e-1} f_e}{(x-\alpha)} \\
    \frac{\alpha f_1}{(x-\alpha)^2} &  \frac{-\alpha f_2}{(x-\alpha)^2} &  \frac{\alpha f_3}{(x-\alpha)^2} & \cdots &  \frac{(-1)^{e-1}\alpha f_e}{(x-\alpha)^2} \\
    & & \ddots & & \\
    \frac{\alpha^{e-2} f_1}{(x-\alpha)^{e-1}} &  \frac{-\alpha^{e-2} f_2}{(x-\alpha)^{e-1}} &  \frac{\alpha^{e-2} f_3}{(x-\alpha)^{e-1}} & \cdots &  \frac{(-1)^{e-2}\alpha^{e-2} f_e}{(x-\alpha)^{e-1}} \\
    \frac{\alpha^{e-1} f_1}{(x-\alpha)^{e}} &  \frac{-\alpha^{e-1} f_2}{(x-\alpha)^{e}} &  \frac{\alpha^{e-1} f_3}{(x-\alpha)^{e}} & \cdots &  \frac{(-1)^{e-1}\alpha^{e-1} f_e}{(x-\alpha)^{e}} \end{array}\right).  
\end{multline*}

\end{prop}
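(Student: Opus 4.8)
The plan is to compute each of the three displayed matrices by the same mechanism used in Lemma~\ref{lem_gl_jordan_block_adj}: namely, write $\Adj(x - N) = \charc(N)\cdot(x - N)^{-1}$ whenever $N$ is a single Jordan block (so that $\charc(N) = (x-\alpha)^e$ or $(x-\alpha^{-1})^e$ as appropriate), and then reduce the problem of inverting $x - N$ to inverting an explicit bidiagonal or triangular matrix. For the first formula, I would start from $J_e(\alpha)^{-1}$. Since $J_e(\alpha) = \alpha(I + \alpha^{-1}\mathcal{N})$ with $\mathcal{N}$ the nilpotent shift, we have $J_e(\alpha)^{-1} = \alpha^{-1}\sum_{j\ge 0}(-\alpha^{-1}\mathcal{N})^j$, an explicit upper-triangular Toeplitz matrix with entries $(-1)^j\alpha^{-j-1}$ on the $j$-th superdiagonal. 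Its characteristic polynomial is $(x-\alpha^{-1})^e$, so $Y_\alpha^e = (x-\alpha^{-1})^e (xI - J_e(\alpha)^{-1})^{-1}$, and it remains to invert $xI - J_e(\alpha)^{-1}$. Writing $xI - J_e(\alpha)^{-1} = (x - \alpha^{-1})(I - M)$ where $M$ is strictly upper triangular (hence nilpotent), one expands $(I - M)^{-1} = \sum_{j\ge 0} M^j$ and collects terms; the geometric-series bookkeeping with the parameter $x\alpha$ produces exactly the entries $-x^{j-1}/(1-x\alpha)^{j}$ claimed, up to the overall factor $(x-\alpha^{-1})^e/(x-\alpha^{-1}) \cdot$ (powers). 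I would verify one or two entries by hand and then assert the general pattern follows by an identical computation on each superdiagonal.

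For the second (Type III) formula, the object $Z_\alpha^{2e} = (x-\alpha)^{2e}(xI - \alpha J_e(1))^{-t} S (xI - \alpha J_e(1)^{-1})^{-1}$ is a product of three explicit factors, each of which I would handle separately. The outer factors $(xI - \alpha J_e(1))^{-t}$ and $(xI - \alpha J_e(1)^{-1})^{-1}$ are, by the same reasoning as above, explicit triangular Toeplitz matrices whose entries are the rational functions $1/(x-\alpha)^{j}$ (times powers of $\alpha$), and the matrix $S$ has a single nonzero row $(1, -1, 1, \dots, (-1)^{e-1})$. So the product is a rank-one-in-the-middle sandwich: only the first row of the right factor and the full left factor interact through the nonzero row of $S$. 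Carrying out this multiplication, the entries of $Z_\alpha^{2e}$ become products of a column-dependent sign $(-1)^{c-1}$, a row-dependent geometric factor $\alpha^{r-1}/(x-\alpha)^{r}$ coming from the left factor, and a partial sum coming from convolving the $S$-row against the first row of the right factor — and that partial sum is precisely the telescoping series that defines $f_c = \sum_{j=1}^{c} x^{j-1}/(x-\alpha)^{j}$ (note $f_c$ as written sums $x^{c-2}/(x-\alpha)^c + \cdots + 1/(x-\alpha)$, matching after reindexing). Again I would check the $(1,1)$ and $(2,1)$ entries explicitly and observe that every other entry follows by the same pattern.

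The main obstacle I anticipate is purely organizational rather than conceptual: keeping the three layers of indices straight in the Type III product (row index from the left triangular factor, the single privileged index from $S$, and the column index feeding into the right factor), and confirming that the signs $(-1)^{c-1}$ and the powers of $\alpha$ combine exactly as displayed without an off-by-one error in the exponents or in the definition of the $f_i$. Since $\alpha = \pm 1$ in the Type III case one could simplify $\alpha^{r-1}$ to $(\pm 1)^{r-1}$, but I would keep it as $\alpha^{r-1}$ for uniformity with how the matrix is written. A secondary point worth a sentence is that all these identities are between rational functions in $x$, and the factor $(x-\alpha)^{2e}$ (resp.\ $(x-\alpha^{-1})^e$) clears all denominators so that the final entries are genuine polynomials — consistent with $\Adj$ of a polynomial matrix being a polynomial matrix — but I would not belabor this. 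No deep input is needed; everything reduces to the inverse-of-a-Jordan-block computation already invoked in Lemma~\ref{lem_gl_jordan_block_adj} applied three times and multiplied together.
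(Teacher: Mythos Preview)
Your approach is essentially the same as the paper's: compute $(x-J_e(\alpha)^{-1})^{-1}$ via a geometric series in the nilpotent part, and for $Z_\alpha^{2e}$ multiply the three explicit factors together using the rank-one structure of $S$. The paper's only organizational difference is that it reduces the Type~III factors to $X_1^e(x/\alpha)$ and $Y_1^e(x/\alpha)$ (exploiting $\alpha^2=1$) rather than reinverting from scratch; note also that in the sandwich it is the first \emph{column} of the left factor $(x-\alpha J_e(1))^{-t}$ that survives (not the first row of the right factor), and your claimed formula $f_c=\sum_{j=1}^c x^{j-1}/(x-\alpha)^j$ is off by one in the numerators compared to the statement---both slips would correct themselves once you actually carry out the multiplication.
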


\begin{proof}
We start with the claim on $Y_\alpha^e$. It is enough to compute $(x-J_e(\alpha)^{-1})^{-1}$. Write $J_e(\alpha)=\alpha+J_e(0)$, where $J_e(0)^e=0$ is nilpotent. We have:

$$
(x-J_e(\alpha)^{-1})^{-1}=\frac{1}{x-(\alpha+J_e(0))^{-1}}=\frac{-(\alpha+J_e(0))}{1-(\alpha+J_e(0))x}=\frac{-(\alpha+J_e(0))}{(1-x\alpha)-xJ_e(0)}=\frac{\frac{-(\alpha+J_e(0))}{1-x\alpha}}{1-\frac{x}{1-x\alpha}J_e(0)}.
$$
Since $J_e(0)$ is nilpotent, we get

$$
(x-J_e(\alpha)^{-1})^{-1}=-\frac{\alpha+J_e(0)}{1-x\alpha}\left(1+\frac{x}{1-x\alpha}J_e(0)+\cdots+\frac{x^{e-1}}{(1-x\alpha)^{e-1}}J_e(0)^{e-1}\right).
$$
Rewriting this we get

$$
(x-J_e(\alpha)^{-1})^{-1}=\frac{-1}{1-
x\alpha}\left(
\alpha+\left(1+\frac{\alpha x}{1-x\alpha}\right)J_e(0)+\cdots+\left(
\frac{x^{e-2}}{(1-x\alpha)^{e-2}}
+\frac{\alpha x^{e-1}}{(1-x\alpha)^{e-1}}\right)J_{e-1}(0)
\right).$$
For $i\ge 0$,

$$
\frac{x^i}{(1-x\alpha)^i}+\frac{\alpha x^{i+1}}{(1-x\alpha)^{i+1}}=\frac{x^i}{(1-x\alpha)^{i+1}},
$$
which finishes the proof of the first claim. Regarding the second claim, we first note that 
$
(x-\alpha J_e(1))^{-1}=\frac{1}{\alpha}\left(\frac{x}{\alpha}-J_e(1)\right)^{-1}=\frac{1}{\alpha(x/\alpha-1)^e}X_1^e\left(\frac{x}{\alpha}\right)
$, 
$
(x-\alpha J_e(1)^{-1})^{-1}=\frac{1}{\alpha}(\frac{x}{\alpha}-J_e(1)^{-1})^{-1}=\frac{1}{\alpha(x/\alpha-1)^e}Y_1^e(\frac{x}{\alpha})
$. Noting that $\alpha^2=1$ (and hence also $\alpha^{2e}=1$),  we get that

$$
(x-\alpha)^{2e}(x-\alpha J_e(1))^{-t}S(x-\alpha J_e(1)^{-1})^{-1}=(X_1^e)^t\left(\frac{x}{\alpha}\right) SY_1^e\left(\frac{x}{\alpha}\right).
$$
Again using the fact that $\alpha^2=1$, we see that

\begin{multline*}
Y_1^e\left(\frac{x}{\alpha}\right)=(x/\alpha-1)^e \left(
\begin{array}{ccccc}
    \frac{-1}{1-x/\alpha} & \frac{-1}{(1-x/\alpha)^2} & \cdots & \frac{-(x/\alpha)^{e-3}}{(1-x/\alpha)^{e-1}} & \frac{-(x/\alpha)^{e-2}}{(1-x/\alpha)^e} \\
     & \frac{-1}{1-x/\alpha} & & & \frac{-(x/\alpha)^{e-3}}{(1-x/\alpha)^{e-1}} \\
     & & \ddots & & \vdots \\
     & &  & \frac{-1}{1-x/\alpha} & \\
     & & & & \frac{-1}{1-x/\alpha}
\end{array}
\right)=\\
=\alpha^{e+1}(x-\alpha)^e \left(
\begin{array}{ccccc}
    \frac{1}{x-\alpha} & \frac{-1}{(x-\alpha)^2} & \cdots & \frac{(-1)^{e}x^{e-3}}{(x-\alpha)^{e-1}} & \frac{(-1)^{e+1}x^{e-2}}{(x-\alpha)^e} \\
     & \frac{1}{x-\alpha} & & & \frac{(-1)^{e}x^{e-3}}{(x-\alpha)^{e-1}} \\
     & & \ddots & & \vdots \\
     & &  & \frac{1}{x-\alpha} & \\
     & & & & \frac{1}{x-\alpha}
\end{array}
\right).
\end{multline*}
Hence,

\begin{equation*}
\left(X_1^e\left(\frac{x}{\alpha}\right)\right)^t SY_1^e\left(\frac{x}{\alpha}\right)=\alpha^e(x-\alpha)^e\left(X_1^e\left(\frac{x}{\alpha}\right)\right)^t\left(\begin{array}{ccccc}
   f_1 & -f_2 & f_3 & \cdots & (-1)^{e-1}f_e \\
    0 & 0 & 0 & \cdots & 0 \\
    \vdots & \vdots & \ddots & \cdots & \vdots \\
    0 & \cdots & 0 & 0 & 0 \\
    0 & 0 & \cdots & 0 & 0     
\end{array}\right).\end{equation*}
Now, compute

\begin{multline*}
X_1^e\left(\frac{x}{\alpha}\right)=(x/\alpha-1)^e\left(\begin{array}{ccccc}
    (x/\alpha-1)^{-1} & (x/\alpha-1)^{-2} & (x/\alpha-1)^{-3} & \cdots & (x/\alpha-1)^{-e} \\
    0 & (x/\alpha-1)^{-1} & (x/\alpha-1)^{-2} & \cdots & (x/\alpha-1)^{-(e-1)} \\
    \vdots & \vdots & \ddots & \cdots & \vdots \\
    0 & \cdots & 0 & (x/\alpha-1)^{-1} & (x/\alpha-1)^{-2} \\
    0 & 0 & \cdots & 0 & (x/\alpha-1)^{-1} \end{array}\right)=\\
    =\alpha^{-e}(x-\alpha)^e\left(\begin{array}{ccccc}
    \frac{\alpha}{x-\alpha} & \frac{\alpha^2}{(x-\alpha)^2} & \frac{\alpha^3}{(x-\alpha)^3} & \cdots & \frac{\alpha^e}{(x-\alpha)^e} \\
    0 & \frac{\alpha}{x-\alpha} & \frac{\alpha^2}{(x-\alpha)^2} & \cdots & \frac{\alpha^{e-1}}{(x-\alpha)^{e-1}} \\
    \vdots & \vdots & \ddots & \cdots & \vdots \\
    0 & \cdots & 0 & \frac{\alpha}{x-\alpha} & \frac{\alpha^2}{(x-\alpha)^2} \\
    0 & 0 & \cdots & 0 & \frac{\alpha}{x-\alpha} \end{array}\right)=\\
    =\alpha^{-e+1}(x-\alpha)^e\left(\begin{array}{ccccc}
    \frac{1}{x-\alpha} & \frac{\alpha}{(x-\alpha)^2} & \frac{\alpha^2}{(x-\alpha)^3} & \cdots & \frac{\alpha^{e-1}}{(x-\alpha)^e} \\
    0 & \frac{1}{x-\alpha} & \frac{\alpha}{(x-\alpha)^2} & \cdots & \frac{\alpha^{e-2}}{(x-\alpha)^{e-1}} \\
    \vdots & \vdots & \ddots & \cdots & \vdots \\
    0 & \cdots & 0 & \frac{1}{x-\alpha} & \frac{\alpha}{(x-\alpha)^2} \\
    0 & 0 & \cdots & 0 & \frac{1}{x-\alpha} \end{array}\right).
\end{multline*}    
Plugging this back into the last equation we get the result.
\end{proof}

\begin{lem}\label{lem_sp_jordan_block_adj}
For $e>0$ integer and $\alpha\in\field_q$, denote by $X_\alpha^e$ the matrix $\Adj(x-J_e(\alpha))$ as computed in Lemma \ref{lem_gl_jordan_block_adj}, and let $Y_\alpha^e,Z_\alpha^{2e}$ be as defined in Proposition \ref{prop_matrix_comp_sp2n}. Finally, let $a$ be as defined in the definition of Type III matrices. Then

\begin{enumerate}
    \item $\Adj(x-A_{\alpha,\alpha^{-1}}^m)=\left(
    \begin{array}{cc}
        (x-\alpha)^m Y_{\alpha}^m &  \\
         & (x-\alpha^{-1})^m (X_{\alpha}^m)^t
    \end{array}
    \right).$

    \item $\Adj(x-A_{\alpha}^{2m+1})=\left(
    \begin{array}{cc}
        (x-\alpha)^{2m+1} Y_{\alpha}^{2m+1} &  \\
         & (x-\alpha)^{2m+1}(X_{\alpha}^{2m+1})^t
    \end{array}
    \right).$

    \item $\Adj(x-A_{\alpha,\pm}^{2m})=\left(
    \begin{array}{cc}
        (x-\alpha)^mY_\alpha^m & 0 \\
         aZ_\alpha^{2m} & (x-\alpha)^m(X_\alpha^m)^t
    \end{array}
    \right)$.
    
    %Write $R = \left(
%\begin{array}{cc}
%    x-\alpha J_m(1)^{-1}  & 0 \\
%    0 & x-\alpha J_m(1)^{t}
%\end{array}
%\right), \tilde{S}=\left(
%\begin{array}{cc}
%    0 & 0 \\
%    -S & 0
%\end{array}
%\right)$ so that $A_{\alpha,\pm}^{2m}=R-a\tilde{S}$. Then $\Adj(x-A_{\alpha,\pm}^{2m})=(x-\alpha)^{2m}(1+aR^{-1}\tilde{S})R^{-1}$.
\end{enumerate}
\end{lem}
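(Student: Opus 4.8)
My plan is to reduce everything to the classical identity $\Adj(x-M)=\charc(M)\,(x-M)^{-1}$, which holds over the function field $\field_q(x)$ because $\det(x-M)=\charc(M)$ is a nonzero element of $\field_q(x)$. So for each of the three types it suffices to invert $x-M$ over $\field_q(x)$ for the chosen representative $M$ and multiply by its characteristic polynomial, then read off the blocks from the explicit formulae in Lemma \ref{lem_gl_jordan_block_adj} and Proposition \ref{prop_matrix_comp_sp2n}. Two elementary facts are used throughout: $\Adj(N^t)=(\Adj N)^t$, and the block formula
$$
\begin{pmatrix} A & 0 \\ C & B\end{pmatrix}^{-1}=\begin{pmatrix} A^{-1} & 0 \\ -B^{-1}CA^{-1} & B^{-1}\end{pmatrix}
$$
for invertible block lower-triangular matrices.

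For Types I and II the representative is block diagonal, $M=\mathrm{diag}(P,R)$ with $P=J_m(\alpha)^{-1}$ and $R=J_m(\alpha)^t$ (and $\alpha^{-1}=\alpha$ in the Type II case). Then $(x-M)^{-1}=\mathrm{diag}\big((x-P)^{-1},(x-R)^{-1}\big)$ and $\charc(M)=\charc(P)\charc(R)=(x-\alpha^{-1})^m(x-\alpha)^m$, so $\Adj(x-M)$ is block diagonal with blocks $\charc(R)\,\Adj(x-P)$ and $\charc(P)\,\Adj(x-R)$. Here $\Adj(x-P)=\Adj(x-J_m(\alpha)^{-1})=Y_\alpha^m$ by Proposition \ref{prop_matrix_comp_sp2n}, and $\Adj(x-R)=\Adj\big((x-J_m(\alpha))^t\big)=\big(\Adj(x-J_m(\alpha))\big)^t=(X_\alpha^m)^t$ by Lemma \ref{lem_gl_jordan_block_adj}; substituting yields parts (1) and (2). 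This part is entirely routine.

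The real work is Type III, where $M=A_{\alpha,\pm}^{2m}$ has the block form $\left(\begin{smallmatrix} P & 0 \\ Q & R\end{smallmatrix}\right)$ with $P=\alpha J_m(1)^{-1}$, $Q=aS$, $R=\alpha J_m(1)^t$, which is only block lower triangular, and $\alpha=\pm1$. Since $M$ is conjugate in $GL_{2m}(\field_q)$ to $J_{2m}(\alpha)$ we have $\charc(M)=(x-\alpha)^{2m}$. Applying the block inverse formula, the $(1,1)$, $(2,1)$ and $(2,2)$ blocks of $\Adj(x-M)$ become, respectively, $(x-\alpha)^{2m}(x-\alpha J_m(1)^{-1})^{-1}$, the triple product $a(x-\alpha)^{2m}(x-\alpha J_m(1))^{-t}S(x-\alpha J_m(1)^{-1})^{-1}$, and $(x-\alpha)^{2m}\big((x-\alpha J_m(1))^{-1}\big)^t$. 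The middle block is, by the very definition in Proposition \ref{prop_matrix_comp_sp2n}, exactly $a\,Z_\alpha^{2m}$. For the two diagonal blocks I would rescale $x\mapsto x/\alpha$, use $(x-\alpha J_m(1)^{\pm1})^{-1}=\frac{1}{\alpha}\big(\frac{x}{\alpha}-J_m(1)^{\pm1}\big)^{-1}$ together with the explicit shapes of $X_1^m$ and $Y_1^m$, and collapse the resulting powers of $\alpha$ via $\alpha^2=1$; this is precisely the computation already carried out inside the proof of Proposition \ref{prop_matrix_comp_sp2n}, and it identifies the $(1,1)$ block with $(x-\alpha)^m Y_\alpha^m$ and the $(2,2)$ block with $(x-\alpha)^m(X_\alpha^m)^t$.

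I expect the main obstacle to be bookkeeping rather than anything conceptual: keeping the powers of $(x-\alpha)$, $(1-x\alpha)$ and $\alpha$ straight in the Type III diagonal blocks, and verifying that the triple product equals $Z_\alpha^{2m}$ with the signs as stated. Since Proposition \ref{prop_matrix_comp_sp2n} already performs exactly this rational-function manipulation, the only genuinely new ingredients here are the block-triangular inversion and the transpose identity $\Adj(N^t)=(\Adj N)^t$, after which all three parts drop out by direct substitution.
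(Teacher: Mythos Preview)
Your proposal is correct and follows essentially the same route as the paper. The paper also reduces to $\Adj(x-M)=\charc(M)(x-M)^{-1}$ and, for Type III, computes the inverse of the block lower-triangular matrix; the only cosmetic difference is that the paper obtains the inverse via the nilpotent expansion $(R-a\tilde S)^{-1}=(I+aR^{-1}\tilde S)R^{-1}$ (valid since $(R^{-1}\tilde S)^2=0$), which is exactly your block lower-triangular inverse formula unpacked, and then likewise defers to Proposition~\ref{prop_matrix_comp_sp2n} to identify the blocks.
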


\begin{proof}

Using Proposition \ref{prop_matrix_comp_sp2n} assertions 1,2 are simply an application of the formula $\Adj(M)=\det(M)M^{-1}$. As for the last assertion, it is enough to compute $(x-A_{\alpha,\pm}^{2m})^{-1}$. Let $S$ be as in the definition of Type III matrices. Let $\tilde{S}$ be the block matrix $\left(\begin{array}{cc}
    0 & 0 \\
    S & 0
\end{array}\right)$. Write $x-A_{\alpha,\pm}^{2m}=R-a\tilde{S}$ ($R$ is defined implicitly via this equation). We note that

$$
R^{-1}a\tilde{S}=\left(
\begin{array}{cc}
    *  & 0 \\
    0 & *
\end{array}
\right)\cdot 
\left(
\begin{array}{cc}
    0  & 0 \\
    * & 0
\end{array}
\right)
=\left(
\begin{array}{cc}
    0  & 0 \\
    * & 0
\end{array}
\right)
$$
as a block matrix (here $*$ denotes some block matrices we do not write explicitly, to avoid unnecessary complications). Further, by the structure of $R^{-1}a\tilde{S}$, we see that $(R^{-1}a\tilde{S})^2=0$. Hence,

$$
(1+R^{-1}a\tilde{S})R^{-1}\cdot (R-a\tilde{S})=(1+R^{-1}a\tilde{S})(1-R^{-1}a\tilde{S})=1-R^{-1}a\tilde{S}+R^{-1}a\tilde{S}-(R^{-1}a\tilde{S})^2=1.
$$
We get that $(x-A_{\alpha,\pm}^{2m})^{-1}=(1+aR^{-1}\tilde{S})R^{-1}$. Using Proposition \ref{prop_matrix_comp_sp2n} we finish the proof.
\end{proof}

\subsubsection{For $SO_{n}^{\pm}$}\label{section_explicit_rep_so}

We once again remind the reader that for us, $\field_q$ is a finite field of odd characteristic $p$. For the rest of section \ref{section_explicit_rep_so}, fix some $\epsilon=\pm$. We discussed $O^\pm_n(\field_q)$-conjugacy classes in section \ref{section_so_generalities}. As in the previous subsection, we now focus on conjugacy classes of matrices $M\in O_n^\epsilon(\field_q)$, where $\charc(M)$ splits in $\field_q[x]$. 

The analog of a single Jordan block for $O^\epsilon_n(\field_q)$ is a matrix that is conjugate in $GL_n(\field_q)$ to one of the following types I, II, and III. We also provide conjugacy class representatives from $O^\epsilon_{n}(\field_q)$, following \cite{Tay21a}. 

\begin{enumerate}
    \item \textbf{Type I:} The class with Jordan block $
\left(
\begin{array}{cc}
    J_m(\alpha) &  \\
     & J_m(\alpha^{-1})
\end{array}
\right)
$ for $\alpha\neq \pm 1$. For Witt type $\langle 0 \rangle$, a representative for this class is the matrix $A_{\alpha,\alpha^{-1}}^m:=\left(
\begin{array}{cc}
    J_m(\alpha) &  \\
     & J_m(\alpha)^{-\mathfrak{t}}
\end{array}\right)$. We do not construct an explicit representative for Witt type $\langle 1,\delta \rangle$. Instead in Section \ref{sec_chapoly_dist_ortho} we will use the fact that in an extension field, $\langle 1,\delta\rangle$ becomes congruent to $\langle 0\rangle$. This, together with Lemma \ref{lem_congruent_forms_same_image} below, will enable us to reduce the proof to the Witt type $\langle 0\rangle$ case.

    \item \textbf{Type II:} The class with Jordan block $
    \left(
\begin{array}{cc}
    J_m(\alpha) &  \\
     & J_m(\alpha)
\end{array}
\right)
    $, for $\alpha=\pm 1$, $m$ even. As in Type I, we only consider Witt type $\langle 0\rangle$, for which a representative is $A_\alpha:=\left(
\begin{array}{cc}
    J_m(\alpha) &  \\
     & J_m(\alpha)^{-\mathfrak{t}}
     
\end{array}
\right)$.

    \item \textbf{Type III:} The class with Jordan block
    $J_{2m+1}(\alpha)$, for $\alpha=\pm 1$. Similarly to Type I and Type II, we may treat only the case of Witt type $\langle 1\rangle$ (In Section \ref{sec_chapoly_dist_ortho}, we will reduce the case of Witt type $\langle \delta\rangle$ to the case of Witt type $\langle 1\rangle$ using Lemma \ref{lem_congruent_forms_same_image} below). Define $S$ to be the $m\times m$ matrix with all entries $0$, apart from the last row, whose entries alternate between $-1$ and $1$ (starting from $1$). Also define $u=(0,\ldots,0,1)^t$, $v=(-1,1,-1,\cdots,(-1)^m)$. Then the block matrix
$
A_{\alpha} := \left(
\begin{array}{ccc}
    J_m(\alpha) & u & \frac{(-1)^m}{2}\alpha S \\
    0 & \alpha & v \\
    0 & 0 & J_m(\alpha)^{-\mathfrak{t}}
\end{array}
\right)
$ is a representative of this conjugacy class. The case of Witt type $\langle 1\rangle$ translates to choosing the $+$ sign in the orthogonal signed partition, and the $\langle \delta\rangle$ Witt type case translates to choosing the $-$ sign (see Section \ref{section_so_generalities} for a reminder on the significance of signs). One can check that this matrix is symplectic and that its minimal and characteristic polynomial are equal to $(x-\alpha)^{2m+1}$.
    
\end{enumerate}

Now we want to compute $\Adj(x-M)$ for each type, where $M$ is a matrix of that type. To do that, we need the following

\begin{prop}\label{prop_matrix_comp_son}
Let $e>0$ be an integer and $\alpha=\pm 1$. Using the notation from the description of the Type III case, write $S_0=\left(
\begin{array}{c}
    \frac{(-1)^e}{2}\alpha S \\
     v
\end{array}
\right)$. Define a sequence of rational functions $g_1=\frac{\alpha}{1-x\alpha}$, $g_2=\frac{\alpha}{1-x\alpha}-\frac{1}{(1-x\alpha)^2}$,\ldots,$g_e=\frac{\alpha}{1-x\alpha}-\frac{1}{(1-x\alpha)^2}+\cdots+\frac{(-1)^{e-1}x^{e-2}}{(1-x\alpha)^e}$. Then, 

\begin{multline*}
W_\alpha^{2e+1}:=(x-\alpha)^{2e+1}(x-J_{e+1}(\alpha))^{-1} S_0 (x-J_e(\alpha)^{-1})^{-\mathfrak{t}}=\\
=(x-\alpha)^{2e+1}
\left(
\begin{array}{ccccc}
    f_1(x-\alpha)^{-e}\left( \frac{(-1)^{e}}{2}\alpha-\frac{1}{x-\alpha}\right) &  & \cdots &  & f_{e}(x-\alpha)^{-e}(-1)^{e-1}\left(\frac{(-1)^{e}}{2}\alpha-\frac{1}{x-\alpha}\right) \\
     &  & \ddots &  & \\
    \vdots &  & \cdots  & &  \\
    f_1(x-\alpha)^{-1}\left( \frac{(-1)^{e}}{2}\alpha-\frac{1}{x-\alpha}\right) &  &  & & f_{e}(x-\alpha)^{-1}(-1)^{e-1}\left(\frac{(-1)^{e}}{2}\alpha-\frac{1}{x-\alpha}\right) \\
\frac{f_1}{x-\alpha} &  & \cdots &  & \frac{(-1)^{e+1} f_{e}}{x-\alpha}
    \end{array}
\right).
\end{multline*}
\end{prop}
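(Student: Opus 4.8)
The plan is to compute the triple product defining $W_\alpha^{2e+1}$ directly, after writing each of the three factors in explicit closed form and exploiting that the middle factor $S_0$ has rank one.

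First, I would record the two Jordan-type inverses. Since $\charc(J_{e+1}(\alpha)) = (x-\alpha)^{e+1}$, Lemma \ref{lem_gl_jordan_block_adj} gives that $(x-J_{e+1}(\alpha))^{-1} = (x-\alpha)^{-(e+1)}X_\alpha^{e+1}$ is the upper-triangular Toeplitz matrix whose $(i,j)$ entry is $(x-\alpha)^{-(j-i+1)}$ for $j\ge i$ and $0$ otherwise. Since $\alpha^{-1}=\alpha$ and $\charc(J_e(\alpha)^{-1}) = (x-\alpha)^e$, Proposition \ref{prop_matrix_comp_sp2n} gives $(x-J_e(\alpha)^{-1})^{-1} = (x-\alpha)^{-e}Y_\alpha^e$, again upper-triangular Toeplitz, with diagonal entry $\tfrac{1}{x-\alpha}$ and $(i,i+k)$ entry $\tfrac{-x^{k-1}}{(1-x\alpha)^{k+1}}$ for $k\ge 1$. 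An upper-triangular Toeplitz matrix is persymmetric, hence fixed by the anti-transpose, so $(x-J_e(\alpha)^{-\mathfrak{t}})^{-1} = (x-J_e(\alpha)^{-1})^{-1}$ and the anti-transpose in the statement can be dropped.

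Next, I would observe that $S_0$ has rank one: the top block $\tfrac{(-1)^e}{2}\alpha S$ has only its last row nonzero, equal to $\tfrac{(-1)^e}{2}\alpha\,w$ where $w = (1,-1,\dots,(-1)^{e-1})$ is viewed as a row vector of length $e$, and the bottom row of $S_0$ is $v = -w$. Hence $S_0 = s\,w$, where $s\in\field_q^{e+1}$ is the column vector with $s_e = \tfrac{(-1)^e}{2}\alpha$, $s_{e+1}=-1$ and all other entries $0$, so that
$$
W_\alpha^{2e+1} = (x-\alpha)^{2e+1}\,\bigl((x-J_{e+1}(\alpha))^{-1}s\bigr)\bigl(w\,(x-J_e(\alpha)^{-1})^{-1}\bigr)
$$
is an outer product of a column and a row. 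The column $(x-J_{e+1}(\alpha))^{-1}s$ is read off from two entries of the first Jordan inverse: its $i$-th coordinate equals $(x-\alpha)^{-(e-i+1)}\bigl(\tfrac{(-1)^e}{2}\alpha-\tfrac{1}{x-\alpha}\bigr)$ for $i\le e$ and $-\tfrac{1}{x-\alpha}$ for $i=e+1$, which accounts for the row-dependent prefactors of the claimed matrix. The row $w\,(x-J_e(\alpha)^{-1})^{-1}$ has $j$-th coordinate the signed column sum $\sum_{i\le j}(-1)^{i-1}[(x-J_e(\alpha)^{-1})^{-1}]_{ij}$; reindexing this sum and using $\alpha^2=1$ to rewrite $\tfrac{1}{1-x\alpha} = -\tfrac{\alpha}{x-\alpha}$ collects it into a unit multiple of $f_j$ — and the auxiliary functions $g_j$ in the statement are precisely the value of this sum recorded before the substitution $\alpha^{-1}=\alpha$, which is why both families are introduced. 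Forming the outer product and distributing the global factor $(x-\alpha)^{2e+1}$ gives the asserted matrix.

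The argument uses no new idea; the one place that requires care — and the main obstacle — is the final step: keeping track of the alternating signs coming from $w$, the three competing sources of powers of $(x-\alpha)$ (the global factor and the two Jordan inverses), and the exact identity rewriting the signed column sum in terms of $f_j$ and $g_j$, reconciling the cases $\alpha=1$ and $\alpha=-1$ where $\tfrac{1}{1-x\alpha}$ and $\tfrac{1}{x-\alpha}$ differ in sign. A convenient consistency check is that $W_\alpha^{2e+1}$ should be the off-diagonal block of $(x-\alpha)^{2e+1}(x-A_\alpha)^{-1}$ for the Type III orthogonal representative $A_\alpha$ of Section \ref{section_explicit_rep_so}: there $x-A_\alpha$ is block upper triangular with diagonal blocks $x-J_{e+1}(\alpha)$ and $x-J_e(\alpha)^{-\mathfrak{t}}$ and upper-right block $-S_0$, and the block-triangular inversion formula then reproduces the stated formula.
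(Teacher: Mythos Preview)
Your approach is correct and essentially identical to the paper's: both drop the anti-transpose via the persymmetry of the Toeplitz inverse, then compute the triple product directly, with the paper first forming $S_0Y_\alpha^e$ (whose only two nonzero rows are your row $w\,(x-J_e(\alpha)^{-1})^{-1}$ scaled by the two entries of your vector $s$) and then multiplying by $X_\alpha^{e+1}$. Your explicit rank-one factorization $S_0=sw$ is a clean way to organize exactly the same computation, and your identification of the signed column sums with $(-1)^j g_j$ matches the paper's intermediate step verbatim.
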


\begin{proof}
Note that $J_e(\alpha)^{-\mathfrak{t}}=J_e(\alpha)^{-1}$. Hence,
$
W_{\alpha}^{2e+1}=X_\alpha^{e+1}S_0Y_\alpha^e
$. We compute directly using the definition of $Y_\alpha^e$,

$$
S_0Y_\alpha^e=(x-\alpha)^e\left(
\begin{array}{ccccc}
    0 &  & \cdots &  & 0 \\
     &  & \ddots &  & \\
    0 &  & \cdots  & & 0 \\
    \frac{(-1)^{e+1}}{2}\alpha g_1 &  \frac{(-1)^{e+2}}{2}\alpha g_2 &  \cdots &  & \frac{(-1)^{2e}}{2}\alpha g_{e} \\
    g_1 & -g_2 & \cdots & (-1)^{e-2}g_{e-1} & 
    (-1)^{e-1} g_{e} \end{array}
    \right).
$$
Thus, using the last equation and the definition of $X_\alpha^{e+1}$,

\begin{multline*}
W_\alpha^{2e+1}
=\left(
\begin{array}{ccccc}
    g_1(x)(x-\alpha)^{-e}\left( \frac{(-1)^{e+1}}{2}\alpha+\frac{1}{x-\alpha}\right) &  & \cdots &  & g_{e}(x)(x-\alpha)^{-e}(-1)^{e-1}\left(\frac{(-1)^{e+1}}{2}\alpha+\frac{1}{x-\alpha}\right) \\
     &  & \ddots &  & \\
    \vdots &  & \cdots  & &  \\
    g_1(x)(x-\alpha)^{-1}\left( \frac{(-1)^{e+1}}{2}\alpha+\frac{1}{x-\alpha}\right) &  &  & & g_{e}(x)(x-\alpha)^{-1}(-1)^{e-1}\left(\frac{(-1)^{e+1}}{2}\alpha+\frac{1}{x-\alpha}\right) \\
\frac{g_1(x)}{x-\alpha} &  & \cdots &  & \frac{(-1)^{e-1} g_{m}(x)}{x-\alpha}
    \end{array}
\right).
\end{multline*}
\end{proof}

\begin{lem}\label{lem_so_jordan_block_adj}
Using the notation of Lemmas \ref{lem_sp_jordan_block_adj} and \ref{lem_gl_jordan_block_adj},
\begin{enumerate}
    \item $\Adj(x-A_{\alpha,\alpha^{-1}}^m)=\left(
    \begin{array}{cc}
       (x-\alpha^{-1})^mX_\alpha^m  &  \\
         & (x-\alpha)^m (Y_\alpha^m)^\mathfrak{t}
    \end{array}
    \right).$

    \item $\Adj(x-A_\alpha^{2m})=\left(
    \begin{array}{cc}
       (x-\alpha^{-1})^mX_\alpha^m  &  \\
         & (x-\alpha)^m (Y_\alpha^m)^\mathfrak{t}
    \end{array}
    \right)$.

    \item  $\Adj(x-A_\alpha^{2m+1})=\left(
    \begin{array}{cc}
        (x-\alpha^{-1})^{m+1} X_\alpha^{m+1} & -W_\alpha^{2m+1} \\
        0 & (x-\alpha)^m (Y_\alpha^m)^\mathfrak{t}
    \end{array}
    \right)$.
\end{enumerate}
\end{lem}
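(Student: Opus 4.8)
The plan is to reduce everything to two elementary facts about adjugates of block-triangular matrices together with the single-Jordan-block computations already available. For $P$ of size $a$, $Q$ of size $b$ and $R$ of size $a\times b$, the cofactor expansion (equivalently, $\Adj(M)=\det(M)M^{-1}$ applied to the block inversion formula) gives
\[
\Adj\!\begin{pmatrix} P & R \\ 0 & Q\end{pmatrix}=\begin{pmatrix}\det(Q)\,\Adj(P) & -\Adj(P)\,R\,\Adj(Q)\\ 0 & \det(P)\,\Adj(Q)\end{pmatrix},
\]
whose diagonal is the block-diagonal case $R=0$. I also record that anti-transposition is a ring anti-automorphism commuting with inversion, hence with $\Adj$, so $\Adj(A^{\mathfrak{t}})=\Adj(A)^{\mathfrak{t}}$, and it preserves $\det$; since the anti-transpose of a single Jordan block is again a single Jordan block with the same eigenvalue, this yields $\det(x-J_m(\alpha)^{-\mathfrak{t}})=(x-\alpha^{-1})^m$ and $\Adj(x-J_m(\alpha)^{-\mathfrak{t}})=\Adj(x-J_m(\alpha)^{-1})^{\mathfrak{t}}=(Y_\alpha^m)^{\mathfrak{t}}$, in the notation of Proposition \ref{prop_matrix_comp_sp2n}.

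For assertions (1) and (2) the representatives are block-diagonal with diagonal blocks $J_m(\alpha)$ and $J_m(\alpha)^{-\mathfrak{t}}$, so $x$ minus the representative is block-diagonal with blocks $x-J_m(\alpha)$ and $x-J_m(\alpha)^{-\mathfrak{t}}$. Feeding $\det(x-J_m(\alpha))=(x-\alpha)^m$, $\det(x-J_m(\alpha)^{-\mathfrak{t}})=(x-\alpha^{-1})^m$, $\Adj(x-J_m(\alpha))=X_\alpha^m$ (Lemma \ref{lem_gl_jordan_block_adj}) and $\Adj(x-J_m(\alpha)^{-\mathfrak{t}})=(Y_\alpha^m)^{\mathfrak{t}}$ into the block-diagonal formula gives the stated answers at once; the two assertions are literally the same computation, differing only in whether $\alpha\neq\pm1$ or $\alpha=\pm1$ with $m$ even.

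For assertion (3) the point is that $\begin{pmatrix} J_m(\alpha) & u\\ 0 & \alpha\end{pmatrix}=J_{m+1}(\alpha)$ because $u=(0,\dots,0,1)^t$, and the remaining off-diagonal pieces $\tfrac{(-1)^m}{2}\alpha S$ and $v$ assemble into the matrix $S_0$ of Proposition \ref{prop_matrix_comp_son}; thus $A_\alpha^{2m+1}$ is block upper-triangular with blocks $J_{m+1}(\alpha)$, $S_0$, $J_m(\alpha)^{-\mathfrak{t}}$, and $x-A_\alpha^{2m+1}$ has off-diagonal block $-S_0$. Applying the displayed formula with $P=x-J_{m+1}(\alpha)$, $R=-S_0$, $Q=x-J_m(\alpha)^{-\mathfrak{t}}$: the $(1,1)$ and $(2,2)$ blocks come directly from $\det(P)$, $\det(Q)$, $\Adj(P)=X_\alpha^{m+1}$ and $\Adj(Q)=(Y_\alpha^m)^{\mathfrak{t}}$, while the $(1,2)$ block $-\Adj(P)\,R\,\Adj(Q)=\Adj(P)\,S_0\,\Adj(Q)$ matches $W_\alpha^{2m+1}$ of Proposition \ref{prop_matrix_comp_son} once we substitute $\Adj(P)=(x-\alpha)^{m+1}(x-J_{m+1}(\alpha))^{-1}$ and $\Adj(Q)=(x-\alpha^{-1})^m(x-J_m(\alpha)^{-1})^{-\mathfrak{t}}$. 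The main obstacle is purely the bookkeeping of the scalar factors $(x-\alpha)^{\bullet}$ and the overall sign through this identification — the two diagonal blocks have unequal sizes $m+1$ and $m$ — but the identity $\alpha^2=1$ collapses $(x-\alpha)$ against $(x-\alpha^{-1})$ and keeps the computation routine rather than delicate.
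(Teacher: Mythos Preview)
Your proposal is correct and takes essentially the same approach as the paper. For assertion (3) the paper writes $x-A_\alpha^{2m+1}=B+C$ with $B$ block-diagonal and $C$ strictly block-upper-triangular, then inverts via the terminating Neumann series $(1-B^{-1}C)B^{-1}$; your closed block-triangular adjugate formula is precisely this computation stated once and for all, and the crucial identification of the top-left $(m{+}1)\times(m{+}1)$ corner as $J_{m+1}(\alpha)$ and of the off-diagonal piece as $S_0$ is the same in both arguments.
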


\begin{proof}
Assertions 1 and 2 follow similarly to the analogous assertions in Lemma \ref{lem_sp_jordan_block_adj}, recalling the definition of the anti-diagonal transpose $M\mapsto M^\mathfrak{t}$ from Section \ref{section_so_generalities}. 

For assertion 3, it is enough to compute $(x-A_\alpha^{2m+1})^{-1}$. Write 
$
x-A_\alpha^{2m+1} = B+C$, where
$$B=\left(
\begin{array}{cc}
    x-J_{m+1}(\alpha) &  \\
     & x-J_m(\alpha)^{-1}
\end{array}
\right), C=\left(
\begin{array}{cc}
    0 & -S_0 \\
    0 & 0
\end{array}
\right),
$$
and $S_0$ is defined as in Proposition \ref{prop_matrix_comp_son}. We note that $B^{-1}C$ is of the form $\left(
\begin{array}{cc}
    0 & * \\
    0 & 0
\end{array}
\right)$ and so $(B^{-1}C)^2=0$. Thus

$$
(1-B^{-1}C)B^{-1}(B+C)=(1-B^{-1}C)(1+B^{-1}C)=1-(B^{-1}C)^2=1,
$$
so that $(x-A_\alpha)^{-1}=(1-B^{-1}C)B^{-1}$. The claim now follows using Proposition \ref{prop_matrix_comp_son}.
\end{proof}

\section{The typical degree of the minimal polynomial}\label{section_min_poly_deg}

For the various groups we consider, we prove below equidistribution for the traces of powers or the next-to-leading coefficients of the characteristic polynomial, whenever the minimal polynomial modulo $\pi$, $\min(\overline{A_0})$, is of sufficiently high degree. We shall prove in this section that this is the typical case, in a precise sense. Our methods rely on the formulae for the distribution of matrices in conjugacy classes for the finite classical matrix groups, due to Fulman \cite{Ful00}.

\subsection{For $GL_n, SL_n$}

We examine the probability that a matrix $M$ has an exceptionally small minimal polynomial. 

\begin{claim}\label{claim_bounding_probability_small_minpoly}
Let $M\in GL_n(\field_q)$ be chosen uniformly at random, and let $h\in\field_q[x]_{=n}^\mon$. Then, for $0\le\delta<n$ integer,
    $$\Pr[\deg\min(M)=n-\delta\cap\charc(M)=h]<q^{-\frac{n^2}{n-\delta}+o(n)}.$$
The rate of decay is independent of $\delta$ (but may depend on $q$).
\end{claim}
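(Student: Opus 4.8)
The plan is to evaluate the probability by summing Fulman's pointwise formula \eqref{eqn_fulman_conj_classes_prob} over the relevant conjugacy classes. Write the fixed factorization $h=\prod_{\phi\in\Pee}\phi^{e_\phi}$. Parametrizing conjugacy classes of $GL_n(\field_q)$ by conjugacy class data $(\lambda_\phi)_{\phi\in\Pee}$, such a class contributes to the event $\{\charc(M)=h,\ \deg\min(M)=n-\delta\}$ exactly when $|\lambda_\phi|=e_\phi$ for every $\phi$ (giving $\charc(M)=h$) and $\sum_\phi\lambda_{\phi,1}\deg\phi=n-\delta$ (giving $\deg\min(M)=n-\delta$, since $\min(M)=\prod_\phi\phi^{\lambda_{\phi,1}}$). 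So the probability in question is the sum of $\bigl(\prod_\phi q^{\deg\phi\sum_i(\lambda_{\phi,i}')^2}\prod_{i\ge1}(q^{-\deg\phi})_{m_i(\lambda_\phi)}\bigr)^{-1}$ over all such data, and it remains to bound each summand and the number of summands.

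For the leading exponent I would use Cauchy--Schwarz twice. First, $\lambda_\phi'$ is a partition of $e_\phi$ into $\lambda_{\phi,1}$ positive parts, so $\sum_i(\lambda_{\phi,i}')^2\ge e_\phi^2/\lambda_{\phi,1}$; hence $\sum_\phi\deg\phi\sum_i(\lambda_{\phi,i}')^2\ge\sum_\phi\deg\phi\,e_\phi^2/\lambda_{\phi,1}$. Second, Cauchy--Schwarz with the weights $\deg\phi$, together with $\sum_\phi\deg\phi\,e_\phi=n$ and $\sum_\phi\deg\phi\,\lambda_{\phi,1}=n-\delta$, gives $\sum_\phi\deg\phi\,e_\phi^2/\lambda_{\phi,1}\ge n^2/(n-\delta)$. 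Thus each summand is at most $q^{-n^2/(n-\delta)}\bigl(\prod_\phi\prod_{i\ge1}(q^{-\deg\phi})_{m_i(\lambda_\phi)}\bigr)^{-1}$, and the correction factor is $q^{o(n)}$ uniformly: since $(q^{-d})_m\ge\prod_{j\ge1}(1-q^{-j})=:c_q\in(0,1)$ for all $d,m\ge1$, the factor is at most $c_q^{-T}$ with $T=\sum_\phi\#\{i:m_i(\lambda_\phi)>0\}$ the total number of distinct part sizes appearing; a partition of $e$ has at most $\sqrt{2e}$ distinct part sizes, there are $O(n/\log_q n)$ indices $\phi$ with $\lambda_\phi\ne\emptyset$ (by the prime polynomial theorem, as each contributes at least $\deg\phi\ge1$ to $\sum_\phi|\lambda_\phi|\deg\phi=n$), and $\sum_\phi|\lambda_\phi|\le n$, so Cauchy--Schwarz gives $T=O(n/\sqrt{\log_q n})=o(n)$ with a rate depending only on $q$. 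Hence every summand is $\le q^{-n^2/(n-\delta)+o(n)}$.

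Finally I would bound the number of contributing classes by $\prod_{\phi\mid h}p(e_\phi)$, where $p$ is the partition function; by the Hardy--Ramanujan bound $p(e)\le e^{O(\sqrt e)}$ and the same prime-counting plus Cauchy--Schwarz estimate, $\sum_{\phi\mid h}\sqrt{e_\phi}=o(n)$ uniformly in $h$, so this count is $q^{o(n)}$. Combining the two estimates yields $\Pr[\deg\min(M)=n-\delta\cap\charc(M)=h]\le q^{o(n)}\cdot q^{-n^2/(n-\delta)+o(n)}=q^{-n^2/(n-\delta)+o(n)}$, with all implied rates uniform in $\delta$ (they depend only on $n$ and $q$). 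The Cauchy--Schwarz step in the middle is short and is the conceptual core; the real work — and the main obstacle — is the two $q^{o(n)}$ estimates, i.e. controlling the products $(q^{-\deg\phi})_{m_i(\lambda_\phi)}$ and the count $\prod_{\phi\mid h}p(e_\phi)$, both of which rest on combining the prime polynomial theorem with the Hardy--Ramanujan asymptotics for the partition function.
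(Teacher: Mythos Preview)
Your proposal is correct and follows essentially the same route as the paper: sum Fulman's pointwise formula over the relevant conjugacy classes, bound the exponent $\sum_\phi\deg\phi\sum_i(\lambda'_{\phi,i})^2\ge n^2/(n-\delta)$ via two applications of Cauchy--Schwarz (the paper phrases the second as Jensen for $x\mapsto 1/x$, which is the same inequality), and absorb the remaining factors and the term count into $q^{o(n)}$ using Hardy--Ramanujan and the prime polynomial theorem. Your handling of the $q^{o(n)}$ corrections---bounding $\prod_{\phi,i}(q^{-\deg\phi})_{m_i}$ below by $c_q^T$ with $T\le\sum_\phi\sqrt{2e_\phi}=O(n/\sqrt{\log_q n})$---is more explicit and self-contained than the paper's, which simply refers to the proof of Lemma~\ref{lem_small_radical_bound}, but the underlying ideas coincide.
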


\begin{proof}
Factor $h=\charc(M)=\prod_{\phi\in\Pee} \phi^{e_\phi}$. Now let $\min(M)=\prod_{\phi\in\Pee} \phi^{d_\phi}$. By \eqref{eqn_fulman_conj_classes_prob}, 
\begin{multline*}
\Pr[\deg\min(M)<n-\delta\cap\charc(M)=h]=\sum_{\substack{\lambda_\phi \vdash e_\phi \\ \max \lambda_\phi=d_\phi}} \frac{1}{\prod_\phi q^{\deg\phi\cdot \sum (\lambda_{\phi,i}^{'})^2}\prod (\frac{1}{q^{\deg\phi}})_{m_i(\lambda_\phi)}}\ll \\
\ll  \sum_{\substack{\lambda_\phi \vdash e_\phi \\ \max \lambda_\phi=d_\phi}} \frac{q^{o(n)}}{\prod_\phi q^{\deg\phi\cdot \sum (\lambda_{\phi,i}^{'})^2}}.
\end{multline*}

The proof for why $\prod (\frac{1}{q^{\deg\phi}})_{m_i(\lambda_\phi)}$ is $q^{o(n)}$ is essentially the same as the proof of Lemma \ref{lem_small_radical_bound}, hence is omitted. The number of partitions of $n$ is $q^{o(n)}$, so we just need to bound from above the summand
$
\frac{1}{\prod_\phi q^{\deg\phi\cdot \sum (\lambda_{\phi,i}^{'})^2}}.
$ Note that since $\lambda_\phi$ is a partition of $e_\phi$ with maximal part $d_\phi$, $\lambda'_\phi$ is a partition of $e_\phi$ with exactly $d_\phi$ parts. To minimize the sum $\sum (\lambda_{\phi,i}^{'})^2$ we thus take each part to be of the same size $\frac{e_\phi}{d_\phi}$. This gives 

$$
\frac{1}{\prod_\phi q^{\deg\phi\cdot \sum (\lambda_{\phi,i}^{'})^2}}<\frac{1}{\prod_\phi q^{\deg\phi\cdot d_\phi\cdot \frac{e_\phi^2}{d_\phi^2}}}=\frac{1}{\prod_\phi q^{e_\phi\deg\phi\cdot \frac{e_\phi}{d_\phi}}}.
$$

We thus want to minimize $\sum e_\phi\deg\phi \frac{e_\phi}{d_\phi}$. Define a probability measure on $\Pee$ by setting $\Pr[\phi]=\frac{e_\phi\deg\phi}{n}$. We thus want to minimize $\mathbb{E}[\frac{e_\phi}{d_\phi}]$. Note that 
$$
\mathbb{E}\left[\frac{d_\phi}{e_\phi}\right]=\frac{\deg(\min(M))}{n}<\frac{n-\delta}{n}.
$$
Now by Jensen's inequality, we get 
$
\mathbb{E}\left[\frac{e_\phi}{d_\phi}\right]<\frac{n}{n-\delta},
$
which concludes the proof.
\end{proof}

\begin{rem}
One could improve the bound a little by using Holder's defect formula (see \cite[\S 6]{Ste04}). Also, for some special polynomials (e.g. squarefree) one can improve the results by other methods. However, since the finite field results bound the ranges in our main theorems, this does not improve our main result, hence we did not pursue this.
\end{rem}

\begin{rem}
Despite the previous remark, the range in Claim \ref{claim_bounding_probability_small_minpoly} is not too far from optimal. One can see that by taking $h=(x-1)^n$, and then approximating the probability to get a conjugacy class which contains $\lfloor\frac{n}{n-\delta}\rfloor$ Jordan blocks $J_{n-\delta}(1)$. 
\end{rem}

We get as a corollary 

\begin{claim}
    Let $M\in SL_n(\field_q)$ be chosen uniformly at random, and let $h\in\field_q[x]_{=n}^\mon$ be a polynomial with $h(0)=(-1)^n$. Then, for $0\le\delta<n$ integer,
    $$\Pr[\deg\min(M)=n-\delta\cap\charc(M)=h]< q^{-\frac{n^2}{n-\delta}+o(n)}.$$
\end{claim}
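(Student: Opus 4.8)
The plan is to deduce this directly from Claim~\ref{claim_bounding_probability_small_minpoly} using the index computation $[GL_n(\field_q):SL_n(\field_q)]=q-1$. First I would observe that the hypothesis $h(0)=(-1)^n$ is precisely what places the relevant event inside $SL_n(\field_q)$: any matrix $M$ with $\charc(M)=h$ satisfies $\det(M)=(-1)^nh(0)=1$. Since both conditions $\charc(M)=h$ and $\deg\min(M)=n-\delta$ are invariant under $GL_n(\field_q)$-conjugation, the set
$$
S:=\{M\in GL_n(\field_q):\charc(M)=h,\ \deg\min(M)=n-\delta\}
$$
is a disjoint union of $GL_n(\field_q)$-conjugacy classes, each of which, by the determinant computation, lies entirely inside $SL_n(\field_q)$.

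Next I would transfer between the two uniform measures class by class. For a single $GL_n(\field_q)$-conjugacy class $C\subseteq SL_n(\field_q)$ one has
$$
\frac{|C|}{|SL_n(\field_q)|}=\frac{|GL_n(\field_q)|}{|SL_n(\field_q)|}\cdot\frac{|C|}{|GL_n(\field_q)|}=(q-1)\cdot\frac{|C|}{|GL_n(\field_q)|},
$$
using $|GL_n(\field_q)|=(q-1)\,|SL_n(\field_q)|$ (the determinant map is onto $\field_q^\times$ with kernel $SL_n(\field_q)$). Summing over the conjugacy classes comprising $S$ yields
$$
\Pr_{M\in SL_n(\field_q)}[\deg\min(M)=n-\delta\cap\charc(M)=h]=(q-1)\,\Pr_{M\in GL_n(\field_q)}[\deg\min(M)=n-\delta\cap\charc(M)=h].
$$

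Finally I would apply Claim~\ref{claim_bounding_probability_small_minpoly} to the right-hand side, giving a bound of $(q-1)\,q^{-\frac{n^2}{n-\delta}+o(n)}$, and absorb the constant factor $q-1$ (independent of both $n$ and $\delta$, so the rate of decay remains uniform in $\delta$) into the $q^{o(n)}$ term. There is essentially no obstacle here; the only point that needs care is confirming that $h(0)=(-1)^n$ makes the event sit inside $SL_n(\field_q)$ so that the index formula applies to each conjugacy class without any matrices being lost, which is exactly the determinant identity noted above.
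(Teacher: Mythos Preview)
Your proof is correct and follows exactly the same approach as the paper: observe that $\charc(M)(0)=(-1)^n$ forces $M\in SL_n(\field_q)$, then use the index $[GL_n(\field_q):SL_n(\field_q)]=q-1$ together with Claim~\ref{claim_bounding_probability_small_minpoly}, absorbing the factor $q-1$ into the $q^{o(n)}$ term. The paper's proof is simply a more terse version of what you wrote.
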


\begin{proof}
Note that $\charc(M)(0)=(-1)^n$ is equivalent to $M\in SL_n(\field_q)$. Thus if we combine the last claim with the fact that $[GL_n(\field_q):SL_n(\field_q)]=q-1$, we finish.
\end{proof}

\subsection{For $Sp_{2n}$}

We want to understand the probability that a matrix $M\in Sp_{2n}(\field_q)$, chosen uniformly at random, has an exceptionally small minimal polynomial. To do this, we first recall some results of Fulman \cite{Ful00} on the distribution of $Sp_{2n}(\field_q)$-conjugacy classes. In the current section, we use the notation introduced and discussed in section \ref{subsubsection_symplectic_group_intro}.

The collection of polynomials arising as the characteristic polynomial of a matrix in $Sp_{2n}(\field_q)$ for some $n$ is the set of monic, even-degree self-reciprocal polynomials with $f(0)\neq 0$. Equivalently, one can show that it is the set of polynomials that factor into irreducibles as follows,

$$
f=(x-1)^{2a}(x+1)^{2b}\prod_{i=1}^r P_i^{e_i}\prod_{j=1}^s (Q_jQ_j^\rs)^s,
$$
where $P_i^\rs=P_i$ and $P_i,Q_i\neq x,x\pm 1$ are distinct irreducible polynomials. 

\begin{theorem}[\cite{Ful00}, Theorem 1]\label{thm_fulman_sp}
Let $\lambda=(\lambda_\phi)_{\phi\in\Pee}$ be a signed symplectic partition with $|\lambda|=2n$. Let $C$ be the conjugacy class of $Sp_{2n}(\field_q)$ parameterized by $\lambda$. Then,

\begin{equation*}
\Pr[M\in C]=\prod_{\phi\in\Pee}\frac{1}{q^{\deg\phi(\sum_{h<i}hm_h(\lambda_\phi)m_i(\lambda_\phi)+\frac{1}{2}\sum_i (i-1)m_i(\lambda_\phi)^2)}\prod_i A(\phi^i)}  
\end{equation*}
Where here, for $\phi=x\pm 1$
$$
A(\phi^{i})=\begin{cases}
    |Sp_{m_i(\lambda_\phi)}(\field_q)|, & \text{ if }i=1\pmod{2}, \\
    q^{m_i(\lambda_\phi)/2}|O_{m_i(\lambda_\phi)}^{\mu(i)}(\field_q)|, & \text{ else.}
\end{cases}
$$
Here $\mu(i)$ is the sign attached to the $i$-th part of $\phi$ by $\lambda$ (see section \ref{subsubsection_symplectic_group_intro}). For $\phi\neq x\pm 1$,
 
$$
A(\phi^i)=\begin{cases}
    |U_{m_i(\lambda_\phi)}(\field_{q^{\deg\phi/2}})|, & \text{ if }\phi=\phi^\rs, \\
    |GL_{m_i(\lambda_\phi)}(\field_{q^{\deg\phi}})|^{1/2}, & \text{ else.}
\end{cases}
$$
\end{theorem}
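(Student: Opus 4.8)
The statement is Fulman's enumeration of $Sp_{2n}(\field_q)$-conjugacy classes, and the plan is to derive it from the orbit--stabilizer identity together with Wall's description of centralizers. Since $C$ is a single conjugacy class, for any representative $g\in C$ we have $\Pr[M\in C]=|C|/|Sp_{2n}(\field_q)|=1/|C_{Sp_{2n}(\field_q)}(g)|$, so the whole problem reduces to computing the centralizer order $|C_{Sp_{2n}(\field_q)}(g)|$ as a function of the signed symplectic partition $\lambda$. First I would use the $\field_q[x]$-module structure that $g$ induces on $V=\field_q^{2n}$ and decompose $V=\bigoplus_{\phi}V_\phi$ into primary components as in Section \ref{subsubsection_symplectic_group_intro}. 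The symplectic form pairs $V_\phi$ with $V_{\phi^\rs}$ and is nondegenerate on $V_\phi\oplus V_{\phi^\rs}$ when $\phi\neq\phi^\rs$, and nondegenerate on $V_\phi$ when $\phi=\phi^\rs$; hence the centralizer factors as a product $C_{Sp_{2n}(\field_q)}(g)=\prod C_\phi$ over these mutually orthogonal blocks, and it suffices to compute each local factor $C_\phi$.

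Next I would treat the three cases for $C_\phi$ separately. If $\phi=\phi^\rs$ is irreducible (hence of even degree $\deg\phi$, since its roots come in pairs $\{\alpha,\alpha^{-1}\}$), the restriction of the form makes $V_\phi$ a Hermitian space over $\field_{q^{\deg\phi/2}}$ and Wall's analysis splits $C_\phi$ into a product over the part sizes $i$ of groups $U_{m_i(\lambda_\phi)}(\field_{q^{\deg\phi/2}})$ together with a unipotent factor that is a power of $q$. If $\phi\neq\phi^\rs$, then $V_\phi\oplus V_{\phi^\rs}$ is a sum of two totally isotropic subspaces in perfect duality, the centralizer is the module automorphism group of $\bigoplus_i(\field_q[x]/\phi^i)^{m_i}$, and one gets $|GL_{m_i(\lambda_\phi)}(\field_{q^{\deg\phi}})|^{1/2}$ per part, the exponent $1/2$ encoding that $\phi$ and $\phi^\rs$ are bundled together. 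If $\phi=x\pm1$, then the form restricted to the isotypic piece $(\field_q[x]/(x\pm1)^i)^{m_i}$ is alternating when $i$ is odd, giving $Sp_{m_i(\lambda_\phi)}(\field_q)$, and symmetric of Witt type $\mu(i)$ when $i$ is even, giving $q^{m_i(\lambda_\phi)/2}|O^{\mu(i)}_{m_i(\lambda_\phi)}(\field_q)|$; this is exactly where the sign invariant enters. Finally I would count the ``off-diagonal'' part of $C_\phi$ --- the homomorphisms between generalized eigenspaces with different nilpotent exponents, compatible with the form --- which constitutes the unipotent radical and, by a direct dimension count, has order $q^{\deg\phi(\sum_{h<i}hm_h(\lambda_\phi)m_i(\lambda_\phi)+\frac12\sum_i(i-1)m_i(\lambda_\phi)^2)}$. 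Assembling the factors over all $\phi$ and inverting gives the claimed formula.

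The main obstacle is the bookkeeping inside each local factor: checking that the form on each isotypic piece $(\field_q[x]/(x\pm1)^i)^{m_i}$ is alternating or symmetric with exactly the claimed parity and Witt type, identifying its isometry group correctly, and pinning down the precise power of $q$ contributed by the unipotent radical (both the cross-part homomorphisms and the $q^{m_i/2}$ extra factor in the even orthogonal case). All of this is carried out in Wall \cite{Wal63} and Fulman \cite{Ful00}; rather than reproducing those computations I would simply invoke \cite[Theorem 1]{Ful00}, with the sketch above recording why the stated product structure is the natural answer.
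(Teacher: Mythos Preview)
Your proposal is correct and in fact goes further than the paper does: the paper simply quotes this as \cite[Theorem 1]{Ful00} without proof, treating it as a black-box input, whereas you provide a useful sketch of the orbit--stabilizer/Wall-centralizer argument underlying Fulman's formula before likewise deferring to the citation. There is nothing to correct.
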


It will be useful to reformulate Fulman's result. To do that, we first estimate the size of $\frac{A(\phi^{i})}{q^{\frac{\deg\phi}{2}m_i(\lambda_\phi)^2}}$.

\begin{prop}\label{prop_A_phi_bound}
Let $\phi\in\Pee$. Then,

\begin{equation*}
\frac{A(\phi^{i})}{q^{\frac{\deg\phi}{2}m_i(\lambda_\phi)^2}}=
\begin{cases}
\Theta\left(q^{\frac{1}{2}m_i(\lambda_\phi)}\right), & \text{ if } \phi=x\pm 1 \text{ and } i=1\pmod{2}, \\
\Theta(1), & \text{ else.}
\end{cases}    
\end{equation*}
\end{prop}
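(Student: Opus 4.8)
The plan is to analyze the quantity $\frac{A(\phi^i)}{q^{\frac{\deg\phi}{2}m_i(\lambda_\phi)^2}}$ case by case according to the definition of $A(\phi^i)$ given in Theorem \ref{thm_fulman_sp}, using standard order-of-magnitude estimates for the sizes of the finite classical groups. The key input is that for any finite field $\field_Q$ and any $m\ge 0$ one has $|GL_m(\field_Q)| = Q^{m^2}\prod_{j=1}^m(1-Q^{-j}) = \Theta(Q^{m^2})$, and likewise $|U_m(\field_Q)| = Q^{m^2}\prod_{j=1}^m(1-(-1)^jQ^{-j}) = \Theta(Q^{m^2})$, $|Sp_{2\ell}(\field_Q)| = \Theta(Q^{(2\ell)^2/2}) = \Theta(Q^{2\ell^2})$ and $|O^{\pm}_{2\ell}(\field_Q)| = \Theta(Q^{2\ell^2 - \ell})$ (the implied constants depending only on $Q$, hence only on $q$ since $Q$ is a bounded power of $q$). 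Here the products $\prod_{j\ge 1}(1-Q^{-j})$ are bounded above by $1$ and below by a positive constant depending only on $Q$, so they contribute only a $\Theta(1)$ factor.

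First I would handle the case $\phi = x\pm 1$. Write $m := m_i(\lambda_\phi)$. If $i$ is odd, $A(\phi^i) = |Sp_m(\field_q)|$; note $m$ must be even here (this is forced in a symplectic signed partition, as recalled in Section \ref{subsubsection_symplectic_group_intro}), say $m = 2\ell$, so $|Sp_m(\field_q)| = \Theta(q^{m^2/2 + \ell}) = \Theta(q^{m^2/2 + m/2})$ — wait, more carefully $|Sp_{2\ell}(\field_q)| = q^{\ell^2}\prod_{j=1}^\ell (q^{2j}-1) = \Theta(q^{\ell^2 + \ell(\ell+1)}) = \Theta(q^{2\ell^2+\ell})$ and $2\ell^2 + \ell = m^2/2 + m/2$, so dividing by $q^{\frac{\deg\phi}{2}m^2} = q^{m^2/2}$ (since $\deg\phi = 1$) leaves $\Theta(q^{m/2})$, as claimed. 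If $i$ is even, $A(\phi^i) = q^{m/2}|O^{\mu(i)}_m(\field_q)|$; with $m = 2\ell$ we have $|O^{\pm}_{2\ell}(\field_q)| = \Theta(q^{2\ell^2-\ell})$ so $A(\phi^i) = \Theta(q^{\ell}\cdot q^{2\ell^2-\ell}) = \Theta(q^{2\ell^2}) = \Theta(q^{m^2/2})$, and dividing by $q^{m^2/2}$ gives $\Theta(1)$.

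Next I would handle $\phi \ne x\pm 1$, again with $m := m_i(\lambda_\phi)$. If $\phi = \phi^\rs$ then $\deg\phi$ is even (a self-reciprocal irreducible $\ne x\pm 1$ has even degree), and $A(\phi^i) = |U_m(\field_{q^{\deg\phi/2}})| = \Theta((q^{\deg\phi/2})^{m^2}) = \Theta(q^{\frac{\deg\phi}{2}m^2})$, so the ratio is $\Theta(1)$. If $\phi \ne \phi^\rs$ then $A(\phi^i) = |GL_m(\field_{q^{\deg\phi}})|^{1/2} = \Theta((q^{\deg\phi\, m^2})^{1/2}) = \Theta(q^{\frac{\deg\phi}{2}m^2})$, so again the ratio is $\Theta(1)$. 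Assembling the four subcases gives exactly the stated dichotomy.

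I do not anticipate a serious obstacle here; this is a bookkeeping lemma. The one point requiring a little care is making sure the implied constants are uniform in $i$, $\lambda$, and in the choice of $\phi$ — in particular that the infinite-product corrections $\prod_{j\ge 1}(1\pm Q^{-j})$ are bounded away from $0$ and $\infty$ by constants depending only on $q$ (they are, since $Q = q^{\deg\phi}$ or $q^{\deg\phi/2}$ is always $\ge q \ge 3$, and $\prod_{j\ge 1}(1-q^{-j})$ converges to a positive limit). I would also remark that the hypothesis $m_i(\lambda_\phi) = m$ is even in the odd-$i$, $\phi = x\pm 1$ case is exactly what makes $q^{m/2}$ an integer power and is consistent with Wall's classification; this is the only place the parity constraints of the symplectic signed partition are used.
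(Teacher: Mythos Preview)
Your proof is correct and follows essentially the same approach as the paper: a case split on $\phi=x\pm 1$ versus $\phi\neq x\pm 1$, together with the standard order-of-magnitude formulas $|Sp_m(\field_q)|=\Theta(q^{(m^2+m)/2})$, $|O_m^{\pm}(\field_q)|=\Theta(q^{(m^2-m)/2})$, $|U_m(\field_Q)|=\Theta(Q^{m^2})$, $|GL_m(\field_Q)|=\Theta(Q^{m^2})$. One small remark: in the subcase $\phi=x\pm 1$, $i$ even, you write ``with $m=2\ell$'', but the symplectic signed partition imposes no parity constraint on $m_i$ for even $i$; the computation still goes through for odd $m$ using the uniform formula $|O_m^{\pm}(\field_q)|=\Theta(q^{(m^2-m)/2})$, so $q^{m/2}|O_m^{\pm}(\field_q)|=\Theta(q^{m^2/2})$ in either parity.
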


\begin{proof}
We split to cases depending on whether $\phi=x\pm 1$ or not.

\maincase{$\phi\neq x\pm 1$} By \cite[\S 2.6]{Wal63} we have

$$
|U_{m_i(\lambda_\phi)}(\field_{q^{\deg\phi/2}})|=\Theta(q^{\frac{\deg\phi}{2}m_i(\lambda_\phi)^2}), |GL_{m_i(\lambda_\phi)}(\field_{q^{\deg\phi}})|
^{1/2}=\Theta(q^{\frac{\deg\phi}{2}m_i(\lambda_\phi)^2}),$$
where the constant implicit in the $\Theta$ notation is uniformly bounded in both $m_i,q$. Hence the result follows.

\maincase{$\phi=x\pm 1$} Again by \cite[\S 2.6]{Wal63}, 
$$
|Sp_{m_i(\lambda_\phi)}(\field_q)|=\Theta(q^{\frac{m_i(\lambda_\phi)^2+m_i(\lambda_\phi)}{2}}),|O_{m_i(\lambda_\phi^\pm)}^{\mu(i)}(\field_q)|=\Theta(q^{\frac{m_i(\lambda_\phi)^2-m_i(\lambda_\phi)}{2}}).
$$
where the constant implicit in the $\Theta$ notation is uniformly bounded in both $m_i,q$. Now, the result follows from the definition of $A(\phi^i)$.

\end{proof}

\begin{corollary} \label{cor_bound_probability_conjugacy_class}
Let $\lambda=(\lambda_\phi)_{\phi\in\Pee}$ be a signed symplectic partition with $|\lambda|=2n$. Let $C$ be the conjugacy class of $Sp_{2n}(\field_q)$ parameterized by $\lambda$. Then,

\begin{equation}
\Pr[M\in C]\ll q^{o(n)}\prod_{\phi\in\Pee}\frac{1}{q^{\frac{\deg\phi}{2}\sum_i (\lambda_{\phi,i}^{'})^2}}.
\end{equation}
\end{corollary}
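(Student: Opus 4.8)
The plan is to derive the bound directly from Fulman's exact formula (Theorem~\ref{thm_fulman_sp}) by absorbing the group-order factors $A(\phi^i)$ via Proposition~\ref{prop_A_phi_bound}, and then recognizing the leftover exponent as $\frac{\deg\phi}{2}\sum_i(\lambda'_{\phi,i})^2$ through a conjugate-partition identity. Throughout, for a fixed $\phi$ write $m_i=m_i(\lambda_\phi)$, let $E_\phi=\sum_{h<i}h\,m_h m_i+\tfrac12\sum_i(i-1)m_i^2$ denote the exponent occurring in the $q$-power of Theorem~\ref{thm_fulman_sp}, and set $r_\phi=\#\{i:m_i>0\}$ and $R=\sum_{\phi\in\Pee}r_\phi$.

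First I would use Proposition~\ref{prop_A_phi_bound} to obtain a constant $c_q>0$, depending only on $q$ and uniform in $\phi,i$, with $A(\phi^i)\ge c_q\,q^{\frac{\deg\phi}{2}m_i^2}$: the proposition in fact identifies $A(\phi^i)$ with $q^{\frac{\deg\phi}{2}m_i^2}$ up to a bounded factor, with an extra $q^{m_i/2}\ge1$ when $\phi=x\pm1$ and $i$ is odd, so only this lower bound is needed. Multiplying over $i$ gives $\prod_i A(\phi^i)\ge c_q^{\,r_\phi}q^{\frac{\deg\phi}{2}\sum_i m_i^2}$, and Theorem~\ref{thm_fulman_sp} then yields
$$
\Pr[M\in C]\le c_q^{-R}\prod_{\phi\in\Pee}q^{-\deg\phi\,(E_\phi+\frac12\sum_i m_i^2)}.
$$
Next I would invoke the standard identity: for any partition $\mu$ with multiplicities $m_i$ and conjugate $\mu'$, writing $\mu'_j=\sum_{k\ge j}m_k$,
$$
\sum_j(\mu'_j)^2=\sum_{k,l\ge1}\min(k,l)\,m_k m_l=\sum_k k\,m_k^2+2\sum_{h<i}h\,m_h m_i,
$$
whence $E_\phi+\tfrac12\sum_i m_i^2=\sum_{h<i}h\,m_h m_i+\tfrac12\sum_i i\,m_i^2=\tfrac12\sum_j(\lambda'_{\phi,j})^2$. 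Plugging this in gives $\Pr[M\in C]\le c_q^{-R}\prod_{\phi\in\Pee}q^{-\frac{\deg\phi}{2}\sum_i(\lambda'_{\phi,i})^2}$, which is exactly the asserted bound once we know $c_q^{-R}=q^{o(n)}$.

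The remaining point is to show $R=o(n)$ with a rate depending only on $q$, uniformly over symplectic signed partitions with $\sum_\phi\deg\phi\,|\lambda_\phi|=2n$; this is the same counting estimate already used for Lemma~\ref{lem_small_radical_bound} and Claim~\ref{claim_bounding_probability_small_minpoly}. A partition of $m$ has $O(\sqrt m)$ distinct part sizes (the distinct sizes are at least $1,2,\dots,k$, forcing $\binom{k+1}{2}\le m$), so $r_\phi\ll\sqrt{|\lambda_\phi|}$; by the prime polynomial theorem the number of $\phi$ with $\lambda_\phi\ne\emptyset$ is $\ll n/\log n$, since each such $\phi$ contributes at least $\deg\phi$ to $\sum_\phi\deg\phi\,|\lambda_\phi|=2n$ while there are only $\sim q^L/L$ monic irreducibles of degree $\le L$; and Cauchy--Schwarz gives $R\ll\sum_\phi\sqrt{|\lambda_\phi|}\le(\#\{\phi:\lambda_\phi\ne\emptyset\}\cdot\sum_\phi\deg\phi\,|\lambda_\phi|)^{1/2}\ll((n/\log n)\cdot n)^{1/2}=o(n)$, so $c_q^{-R}=\exp(O(R))=q^{o(n)}$.

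I expect the only delicate step to be the exponent matching: confirming that the $q$-power coming out of Theorem~\ref{thm_fulman_sp} combines with the $q^{\frac{\deg\phi}{2}m_i^2}$ extracted from $A(\phi^i)$ into precisely $\frac{\deg\phi}{2}\sum_i(\lambda'_{\phi,i})^2$, and keeping track of the two cases in Proposition~\ref{prop_A_phi_bound} (it is always the ``$\Theta(1)$ up to a $q$-power'' contribution that produces the main term, while the spurious $q^{m_i/2}$ only helps, being in the denominator). Everything else is bookkeeping together with the $R=o(n)$ counting, which is already present in the paper.
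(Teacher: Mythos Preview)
Your proof is correct and follows essentially the same route as the paper: invoke Fulman's formula, absorb each $A(\phi^i)$ via Proposition~\ref{prop_A_phi_bound}, and match exponents through the conjugate-partition identity (which the paper cites as \cite[Lemma~1]{Ful00} and labels \eqref{eqn_partition_eqn}, while you rederive it directly). If anything you are more explicit than the paper, which absorbs the product of the $\Theta(1)$ constants into $q^{o(n)}$ without comment; your $R=o(n)$ argument via the distinct-part-size bound and Cauchy--Schwarz fills that gap cleanly (note your Cauchy--Schwarz is stated with $\sum_\phi\deg\phi\,|\lambda_\phi|$ on the right, whereas the inequality actually gives $\sum_\phi|\lambda_\phi|$, but since $\deg\phi\ge1$ this only strengthens the bound).
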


\begin{proof}
    Using \cite[Lemma 1]{Ful00} we get that for any partition $\lambda_\phi$, we have

\begin{equation}\label{eqn_partition_eqn}
\sum_{h<i}hm_h(\lambda_\phi)m_i(\lambda_\phi)+\frac{1}{2}\sum_i (i-1)m_i(\lambda_\phi)^2=\frac{1}{2}\sum_i((\lambda_\phi)_i'^2-m_i(\lambda_\phi)^2).
\end{equation}
Combining this with the previous proposition and applying it to the formula in Fulman's Theorem \ref{thm_fulman_sp} we get the desired result.
\end{proof}

We can now prove a result on the typical degree of the minimal polynomial. 

\begin{theorem}\label{theorem_probability_small_min_poly_sp2n}
Let $f\in\field_q[x]_{=2n}^\mon$ be a self-reciprocal polynomial, and assume that $x\nmid f$. Then, for $0\le\delta<n$ integer,
$$
\Pr[\deg\min(M)=n-\delta\cap \charc(M)=f]\ll q^{-\frac{n^2}{2(n-\delta)}+o(n)}.
$$
\end{theorem}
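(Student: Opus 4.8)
The plan is to follow the proof of Claim~\ref{claim_bounding_probability_small_minpoly} line by line, with Corollary~\ref{cor_bound_probability_conjugacy_class} (and Fulman's Theorem~\ref{thm_fulman_sp} behind it) in place of the pointwise conjugacy‑class formula. Write $f=\prod_{\phi\in\Pee,\ \phi\mid f}\phi^{e_\phi}$; since $f$ is self-reciprocal with $f(0)\neq0$ we have $e_\phi=e_{\phi^\rs}$ and $\sum_{\phi\mid f}e_\phi\deg\phi=2n$. Conjugacy classes $C$ of $Sp_{2n}(\field_q)$ with $\charc=f$ are parametrized by signed symplectic partitions $\lambda=(\lambda_\phi)_\phi$ with $\lambda_\phi\vdash e_\phi$ (subject to $\lambda_\phi=\lambda_{\phi^\rs}$ and the sign data on $x\pm1$), and every matrix in $C$ has minimal polynomial $\prod_\phi\phi^{d_\phi}$ with $d_\phi:=\max\lambda_\phi$. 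Hence
$$
\Pr\!\left[\deg\min(M)=n-\delta\ \cap\ \charc(M)=f\right]=\sum_{\lambda}\Pr[M\in C_\lambda],
$$
the sum over admissible $\lambda$, i.e.\ those with characteristic polynomial $f$ and $\sum_\phi d_\phi\deg\phi=n-\delta$, and by Corollary~\ref{cor_bound_probability_conjugacy_class} each term is $\ll q^{o(n)}\prod_\phi q^{-\frac{\deg\phi}{2}\sum_i(\lambda_{\phi,i}')^2}$.

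Next I would bound the number of admissible $\lambda$ by $q^{o(n)}$: the underlying unsigned partitions can be chosen in $\prod_{\phi\mid f}p(e_\phi)$ ways, which is $q^{o(n)}$ by the same estimate that proves Lemma~\ref{lem_small_radical_bound} (using $\log p(m)=O(\sqrt m)$ together with the prime polynomial theorem, so that the exponents $e_\phi$ cannot be small over too many primes at once), while the sign data on $x\pm1$ adds at most $2^{O(\sqrt n)}=q^{o(n)}$ further choices. This gives
$$
\Pr\!\left[\deg\min(M)=n-\delta\ \cap\ \charc(M)=f\right]\ll q^{o(n)}\max_{\lambda}\prod_\phi q^{-\frac{\deg\phi}{2}\sum_i(\lambda_{\phi,i}')^2}=q^{\,o(n)-\Delta},\qquad \Delta:=\min_{\lambda}\sum_\phi\frac{\deg\phi}{2}\sum_i(\lambda_{\phi,i}')^2 .
$$

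The heart of the matter is the lower bound for $\Delta$, carried out exactly as in Claim~\ref{claim_bounding_probability_small_minpoly}. First fix the family $(d_\phi)$ of largest parts (equivalently, the minimal polynomial). For a partition $\lambda_\phi\vdash e_\phi$ with $\max\lambda_\phi=d_\phi$, the conjugate partition $\lambda_\phi'$ has exactly $d_\phi$ parts summing to $e_\phi$, so by Cauchy--Schwarz $\sum_i(\lambda_{\phi,i}')^2\ge e_\phi^2/d_\phi$ (equality for rectangular $\lambda_\phi$), whence $\sum_\phi\frac{\deg\phi}{2}\sum_i(\lambda_{\phi,i}')^2\ge\sum_\phi\frac{\deg\phi}{2}\cdot\frac{e_\phi^2}{d_\phi}$. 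Put the probability measure $\Pr[\phi]=\frac{e_\phi\deg\phi}{2n}$ on $\{\phi:\phi\mid f\}$; then $\sum_\phi\frac{\deg\phi}{2}\cdot\frac{e_\phi^2}{d_\phi}=n\,\mathbb{E}[e_\phi/d_\phi]$ while $\mathbb{E}[d_\phi/e_\phi]=\frac{1}{2n}\sum_\phi d_\phi\deg\phi=\frac{n-\delta}{2n}$, so Jensen's inequality for the convex function $t\mapsto 1/t$ yields $\mathbb{E}[e_\phi/d_\phi]\ge\frac{2n}{n-\delta}$, hence $\Delta\ge\frac{2n^2}{n-\delta}\ge\frac{n^2}{2(n-\delta)}$. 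Combining with the previous display proves $\Pr[\deg\min(M)=n-\delta\cap\charc(M)=f]\ll q^{-\frac{n^2}{2(n-\delta)}+o(n)}$; in fact the argument gives the stronger exponent $\frac{2n^2}{n-\delta}$, which is already sharp for, e.g., $f=(x-1)^{2n}$ and $\delta=n-1$.

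The only real obstacle is the bookkeeping around Corollary~\ref{cor_bound_probability_conjugacy_class}: one must check that every factor $A(\phi^i)$ in Fulman's formula --- one of $|Sp_m(\field_q)|$, $q^{m/2}|O_m^{\pm}(\field_q)|$, $|U_m(\field_{q^{\deg\phi/2}})|$, $|GL_m(\field_{q^{\deg\phi}})|^{1/2}$ with $m=m_i(\lambda_\phi)$ --- is $\gg q^{\frac{\deg\phi}{2}m^2}$ with an implied constant uniform in $m$ and $\phi$, and that the remaining Pochhammer-type corrections together with the sums over partitions and signs cost only $q^{o(n)}$; this is exactly what Proposition~\ref{prop_A_phi_bound}, the identity \eqref{eqn_partition_eqn}, and the argument of Lemma~\ref{lem_small_radical_bound} provide. (A harmless side remark: $\deg\min(M)=n-\delta\le n$ forces $\deg\rad(f)\le n-\delta$, so the event is empty for most self-reciprocal $f$ --- the hypothesis $x\nmid f$ is only there to allow it to be nonempty at all --- but whenever the set of admissible $\lambda$ is nonempty the optimization above applies verbatim.)
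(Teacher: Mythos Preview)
Your proof is correct and follows the same approach as the paper: apply Corollary~\ref{cor_bound_probability_conjugacy_class}, bound the number of signed symplectic partitions by $q^{o(n)}$, and then carry out the Cauchy--Schwarz/Jensen optimization exactly as in Claim~\ref{claim_bounding_probability_small_minpoly}. Your write-up is in fact more detailed than the paper's (which simply says ``we finish as in the proof of Claim~\ref{claim_bounding_probability_small_minpoly}''), and you correctly observe that the argument yields the stronger exponent $\frac{2n^2}{n-\delta}$ rather than the stated $\frac{n^2}{2(n-\delta)}$.
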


\begin{proof}
Factor $f=\charc(M)=\prod_{\phi\in\Pee}\phi^{e_\phi}$, $\min(M)=\prod_{\phi\in\Pee}\phi^{d_\phi}$. By Corollary \ref{cor_bound_probability_conjugacy_class}, we have

\begin{equation*}
\Pr[\deg\min(M)=n-\delta\cap \charc(M)=f]\ll q^{o(n)}\sum_{
\substack{\lambda_\phi\vdash e_\phi \\ \max \lambda_\phi=d_\phi}
}
\frac{1}{\prod_\phi q^{\frac{\deg\phi}{2}\sum(\lambda_{\phi,i}^{'})^2}},
\end{equation*}
where the sum is over signed symplectic partitions. The total number of partitions $\lambda=(\lambda_\phi)$ is $q^{o(n)}$, so we just need to bound from above the summand 
$
\frac{1}{\prod_\phi p^{\frac{\deg\phi}{2}\sum(\lambda_{\phi,i}^{'})^2}}$.We finish as in the proof of Claim \ref{claim_bounding_probability_small_minpoly}.
\end{proof}
 
\subsection{For $SO_n^{\pm}$}
Let $\epsilon=\pm$ be fixed and work in $O_n^\epsilon(\field_q)$, $SO_n^\epsilon(\field_q)$. In the present section, we use the notation introduced and discussed in section \ref{section_so_generalities}. Similarly to the symplectic case, the characteristic polynomial $f$ of a matrix $M\in O_n^\epsilon(\field_q)$ factors as follows,

$$
f=(x-1)^a(x+1)^b\prod_{j=1}^r P_i^{e_i} \prod_{j=1}^s (Q_j Q_j^\rs)^s,
$$
where $P_i^\rs=P_i$ and $P_i$, $Q_i\neq x,x\pm 1$ are distinct irreducible polynomials.

\begin{theorem}[\cite{Ful00}, Theorem 2]
Let $\lambda=(\lambda_\phi)_{\phi\in\Pee}$ be a signed orthogonal partition with $|\lambda|=n$. Let $C$ be the conjugacy class of $O^\epsilon_n(\field_q)$ parametrized by $\lambda$. Then,
\begin{equation*}
\Pr[M\in C]<\prod_{\phi\in\Pee}\frac{1}{q^{\deg\phi(\sum_{h<i}hm_h(\lambda_\phi)m_i(\lambda_\phi)+\frac{1}{2}\sum_i (i-1)m_i(\lambda_\phi)^2)}\prod_i B(\phi^i)}.
\end{equation*}    
Here, for $\phi=x\pm 1$,
$$
B(\phi^{i}):=\begin{cases}
    q^{-m_i(\lambda_\phi)/2}|Sp_{m_i(\lambda_\phi)}(\field_q)|, & \text{ if }i=1\pmod{2}, \\
    |O_{m_i(\lambda_\phi)}^{\mu(i)}(\field_q)|, & \text{ else.}
\end{cases}
$$
Where $\mu(i)$ is the sign attached to the $i$-th part of $\lambda_\phi$ (see section \ref{section_so_generalities}). For $\phi\neq x\pm 1$,
$$
B(\phi^i)=\begin{cases}
    |U_{m_i(\lambda_\phi)}(\field_{q^{\deg\phi/2}})|, & \text{ if }\phi=\phi^\rs, \\
    |GL_{m_i(\lambda_\phi)}(\field_{q^{\deg\phi}})|^{1/2}, & \text{ else.}
\end{cases}
$$
\end{theorem}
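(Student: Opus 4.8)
The displayed inequality is quoted verbatim as \cite[Theorem 2]{Ful00}, exactly as its symplectic counterpart Theorem \ref{thm_fulman_sp} was quoted from \cite[Theorem 1]{Ful00}; so in the paper itself the ``proof'' is simply the pointer to that reference, and nothing new is established here. If one did want to reconstruct the argument, the natural route is the orbit--stabilizer principle: for a fixed $g\in C$ one has $\Pr_{M\in O_n^\epsilon(\field_q)}[M\in C]=|C|/|O_n^\epsilon(\field_q)|=1/|Z(g)|$, so the entire content is a lower bound for the centralizer order. Decomposing $V=\field_q^n$ into the $\field_q[x]$-primary components $V_\phi$, $\phi\in\Pee^\rs$, for the module structure induced by $g$ (Section \ref{section_so_generalities}), the centralizer splits as a direct product of factors indexed by $\phi$ and by the exponents $i$ occurring in $\lambda_\phi$: for $\phi\ne x\pm1$ the factor is a general linear group over $\field_{q^{\deg\phi}}$ when $\phi=hh^\rs$ and a unitary group over $\field_{q^{\deg\phi/2}}$ when $\phi=\phi^\rs$, while for $\phi=x\pm1$ the quadratic form restricts nondegenerately to each isotypic piece $(\field_q[x]/(x\mp1)^i)^{m_i}$, and it is alternating when $i$ is even and symmetric when $i$ is odd, giving $Sp_{m_i}(\field_q)$ resp.\ $O^{\mu(i)}_{m_i}(\field_q)$. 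This dichotomy is precisely Wall's classification recalled above and is the source of the $B(\phi^i)$ factors, of the $q^{-m_i(\lambda_\phi)/2}$ corrections, and of the signs $\mu(i)$. Collecting the $q$-powers and simplifying the exponent with the partition identity $\sum_{h<i}hm_h(\lambda_\phi)m_i(\lambda_\phi)+\tfrac12\sum_i(i-1)m_i(\lambda_\phi)^2=\tfrac12\sum_i((\lambda_{\phi,i}')^2-m_i(\lambda_\phi)^2)$ from \cite[Lemma 1]{Ful00} (already used for $Sp_{2n}$) then yields the displayed bound.

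With this in hand, the plan for the rest of the subsection is to mirror the symplectic case step by step. First I would prove the orthogonal analogue of Proposition \ref{prop_A_phi_bound}: using the standard order estimates for $|GL_m|$, $|U_m|$, $|Sp_m|$, $|O_m^\pm|$ from \cite[\S2.6]{Wal63}, one checks that $B(\phi^i)/q^{\frac{\deg\phi}{2}m_i(\lambda_\phi)^2}\ll 1$ for every $\phi$ and $i$ (with an extra saving of order $q^{-m_i(\lambda_\phi)/2}$ exactly when $\phi=x\pm1$ and $i$ is even, and a corresponding mild loss of the same order when $i$ is odd, of size at most $q^{n/4}$ in total, which is harmless since the $SO_n$ bounds of Theorem \ref{main_theorem_so} already carry a $q^{n/4}$). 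This gives, as in Corollary \ref{cor_bound_probability_conjugacy_class}, the estimate $\Pr[M\in C]\ll q^{o(n)}\prod_{\phi}q^{-\frac{\deg\phi}{2}\sum_i(\lambda_{\phi,i}')^2}$. Then, given a self-reciprocal $f\in\field_q[x]_{=n}^\mon$ with $x\nmid f$, one sums this over all orthogonal signed partitions $\lambda$ with $\charc(M)=f$ and $\max\lambda_\phi=d_\phi$, where $\sum_\phi d_\phi\deg\phi=\deg\min(M)=n-\delta$; since the number of such $\lambda$ is $q^{o(n)}$, it remains to minimise $\sum_\phi\deg\phi\sum_i(\lambda_{\phi,i}')^2$, which (as in Claim \ref{claim_bounding_probability_small_minpoly}) is achieved by taking each dual partition $\lambda_\phi'$ as balanced as possible — $d_\phi$ parts of size $e_\phi/d_\phi$ — and then Jensen's inequality applied to the probability measure $\Pr[\phi]=e_\phi\deg\phi/n$ on $\Pee^\rs$ gives a bound of the form $\Pr[\deg\min(M)=n-\delta\cap\charc(M)=f]\ll q^{-\frac{n^2}{2(n-\delta)}+o(n)}$, in complete analogy with Theorem \ref{theorem_probability_small_min_poly_sp2n}. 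This is exactly the input needed for the $SO_n$ equidistribution argument of Section \ref{section_so_main}.

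The one genuinely delicate point in reconstructing Fulman's formula is the centralizer computation at the primary parts $\phi=x\pm1$ — that is, verifying Wall's assertion that the induced form on $(\field_q[x]/(x\mp1)^i)^{m_i}$ is alternating for even $i$ and symmetric for odd $i$, and that its Witt class is recorded by $\mu(i)$ — together with correctly tracking the small $q^{\pm m_i/2}$ corrections that distinguish the orthogonal case from the symplectic one. Everything else (the $GL$ and $U$ blocks, the order estimates, the Jensen-type optimisation) is routine and parallels the symplectic treatment already carried out; and since in our argument the inequality is only needed as a black box, in the paper we simply cite \cite[Theorem 2]{Ful00} and pass on to the analogues of Proposition \ref{prop_A_phi_bound} and Corollary \ref{cor_bound_probability_conjugacy_class}.
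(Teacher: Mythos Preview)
You are right that the paper gives no proof here: the theorem is simply quoted from \cite[Theorem~2]{Ful00}, exactly as you say, and your orbit--stabilizer sketch of how Fulman's formula arises (Wall's centralizer decomposition, $Sp$ vs.\ $O^{\mu(i)}$ blocks at $x\pm1$) is the right picture.

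Your forward-looking outline of the rest of the subsection, however, has the bookkeeping at $\phi=x\pm1$ slightly off, and this propagates. In the paper's analogue of Proposition~\ref{prop_A_phi_bound} one finds
\[
\frac{B(\phi^i)}{q^{\frac{1}{2}m_i(\lambda_\phi)^2}}=\begin{cases}\Theta\bigl(q^{-m_i(\lambda_\phi)/2}\bigr),& i\text{ even},\\ \Theta(1),& i\text{ odd},\end{cases}
\]
so there is no ``loss of order $q^{m_i/2}$'' at odd $i$; the only deviation from $\Theta(1)$ is the \emph{deficit} $q^{-m_i/2}$ at even $i$, and since $B$ sits in the denominator of the probability, this deficit \emph{worsens} the bound on $\Pr[M\in C]$. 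Summing $\tfrac12 m_i$ over the even parts of $\lambda_{x-1}$ and $\lambda_{x+1}$ costs up to $q^{n/2}$ (two choices of $\phi$, each contributing at most $q^{n/4}$), so the paper's Corollary reads
\[
\Pr[M\in C]\ll q^{\,n/2+o(n)}\prod_{\phi}q^{-\frac{\deg\phi}{2}\sum_i(\lambda_{\phi,i}')^2},
\]
not $q^{o(n)}$ as you wrote, and consequently Theorem~\ref{theorem_probability_small_min_poly_son} carries an extra $q^{n/2}$:
\[
\Pr\bigl[\deg\min(M)=n-\delta,\ \charc(M)=f\bigr]\ll q^{\,\frac{n}{2}-\frac{n^2}{2(n-\delta)}+o(n)}.
\]
This is precisely the source of the $q^{n/2}$ in the second error term of Theorem~\ref{main_theorem_so} (not $q^{n/4}$, which appears only in the Gorodetsky--Rodgers term). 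None of this affects your identification of the cited statement's ``proof'', but it does matter for the shape of the $SO_n$ bounds downstream.
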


\begin{prop}
Let $\phi\in\Pee$. Then,

\begin{equation*}
\frac{B(\phi^i)}{q^{\frac{\deg\phi}{2}m_i(\lambda_\phi)^2}}=\begin{cases}
    \Theta\left(
    q^{-\frac{1}{2}m_i(\lambda_\phi)}
    \right), & \text{ if } \phi=x\pm 1 \text{ and }i=0\pmod{2}, \\
    \Theta(1), & \text{ else.}
\end{cases}    
\end{equation*}

\end{prop}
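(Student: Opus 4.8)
The plan is to imitate the proof of Proposition~\ref{prop_A_phi_bound} line by line, the only differences being that the exceptional behaviour now occurs for $\phi = x\pm 1$ with $i$ \emph{even} (rather than odd), and that an extra factor $q^{-m_i(\lambda_\phi)/2}$ is built into $B(\phi^i)$ in the odd case. Throughout write $m = m_i(\lambda_\phi)$ and $d = \deg\phi$. The only inputs are the orders of the finite classical groups, which I would quote from \cite[\S 2.6]{Wal63} exactly as in the symplectic argument:
\begin{equation*}
|GL_m(\field_{q^d})| = \Theta\!\left(q^{dm^2}\right),\quad |U_m(\field_{q^{d/2}})| = \Theta\!\left(q^{\frac{d}{2}m^2}\right),\quad |Sp_m(\field_q)| = \Theta\!\left(q^{\frac{m^2+m}{2}}\right),\quad |O^{\pm}_m(\field_q)| = \Theta\!\left(q^{\frac{m^2-m}{2}}\right),
\end{equation*}
where in each case the implied constants lie in $\left[\prod_{j\ge 1}(1-q^{-j}),\,1\right]$ and are therefore bounded away from $0$ and $\infty$ uniformly in $m$ and in $q$.

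I would then split into the two cases of the definition of $B(\phi^i)$. If $\phi \ne x\pm 1$, then either $\phi = \phi^\rs$, in which case $d$ is even and $B(\phi^i) = |U_m(\field_{q^{d/2}})| = \Theta(q^{\frac{d}{2}m^2})$, or $\phi \ne \phi^\rs$, in which case $B(\phi^i) = |GL_m(\field_{q^d})|^{1/2} = \Theta(q^{\frac{d}{2}m^2})$; dividing by $q^{\frac{d}{2}m^2}$ yields $\Theta(1)$ in both subcases. If $\phi = x\pm 1$, so that $d = 1$, then for $i$ odd we have $B(\phi^i) = q^{-m/2}|Sp_m(\field_q)| = q^{-m/2}\cdot\Theta(q^{\frac{m^2+m}{2}}) = \Theta(q^{m^2/2})$, so dividing by $q^{\frac{m^2}{2}}$ gives $\Theta(1)$; and for $i$ even (where $m$ is even, so $O^{\mu(i)}_m$ is an honest even orthogonal group) we have $B(\phi^i) = |O^{\mu(i)}_m(\field_q)| = \Theta(q^{\frac{m^2-m}{2}})$, so dividing by $q^{\frac{m^2}{2}}$ leaves $\Theta(q^{-m/2})$. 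These are precisely the two cases in the statement.

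I do not expect a genuine obstacle here: the content is the bookkeeping above. The one point deserving a word of care is the \emph{uniformity} of the $\Theta$-constants in $m$ and $q$ — this is what makes the estimate usable when, exactly as in the symplectic case (Theorem~\ref{theorem_probability_small_min_poly_sp2n}), the bounds for the individual $\phi$ are multiplied together and summed over (signed orthogonal) partitions, the accumulated constants then contributing only a factor $q^{o(n)}$. Uniformity follows from $\prod_{j\ge 1}(1-q^{-j}) \ge \prod_{j\ge 1}(1-2^{-j}) > 0$, valid for every prime power $q$.
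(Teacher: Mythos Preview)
Your proof is correct and follows essentially the same approach as the paper's. The only cosmetic difference is that the paper handles the case $\phi\neq x\pm 1$ by observing $B(\phi^i)=A(\phi^i)$ and invoking Proposition~\ref{prop_A_phi_bound} directly, whereas you redo that case explicitly; your added remark on the uniformity of the $\Theta$-constants is a welcome clarification.
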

\begin{proof}
We note that for $\phi\neq x\pm 1$, $B(\phi^i)=A(\phi^i)$, so that the result follows from proposition \ref{prop_A_phi_bound}. For $\phi=x\pm 1$, following the proof of proposition \ref{prop_A_phi_bound} we see that if $i$ is odd,
$
B(\phi^{i})=\Theta(1)
$, and else it is $\Theta(q^{-\frac{m_i(\lambda_\phi)}{2}})$.
\end{proof}

\begin{corollary} 
Let $\lambda=(\lambda_\phi)_{\phi\in\Pee}$ be a signed orthogonal partition with $|\lambda|=n$. Let $C$ be the conjugacy class of $O_{n}^\epsilon(\field_q)$ parameterized by $\lambda$. Then,

\begin{equation}
\Pr_{M\in O^\epsilon_n(\field_q)}[M\in C]\ll q^{n/2+o(n)}
\prod_{\phi\in\Pee}\frac{1}{q^{\frac{\deg\phi}{2}\sum_i (\lambda_{\phi,i}^{'})^2}}.
\end{equation}
\end{corollary}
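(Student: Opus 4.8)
\emph{Plan.} The statement is the orthogonal twin of Corollary \ref{cor_bound_probability_conjugacy_class}, so the plan is to run the identical argument, substituting Fulman's formula for $O_n^\epsilon(\field_q)$ (the theorem of Fulman quoted just above) and the preceding Proposition on the size of $B(\phi^i)/q^{\frac{\deg\phi}{2}m_i(\lambda_\phi)^2}$ in place of Theorem \ref{thm_fulman_sp} and Proposition \ref{prop_A_phi_bound}. First I would apply Fulman's partition identity \cite[Lemma 1]{Ful00}, i.e. equation \eqref{eqn_partition_eqn}, to each $\lambda_\phi$; this rewrites the exponent occurring in Fulman's formula as
$$\deg\phi\Big(\sum_{h<i}h m_h(\lambda_\phi)m_i(\lambda_\phi)+\tfrac12\sum_i(i-1)m_i(\lambda_\phi)^2\Big)=\tfrac{\deg\phi}{2}\sum_i\big((\lambda_{\phi,i}')^2-m_i(\lambda_\phi)^2\big).$$
Plugging this into the bound for $\Pr[M\in C]$ yields
$$\Pr_{M\in O_n^\epsilon(\field_q)}[M\in C]<\prod_{\phi\in\Pee}\frac{1}{q^{\frac{\deg\phi}{2}\sum_i(\lambda_{\phi,i}')^2}}\cdot\prod_{\phi\in\Pee}\prod_{i\ge1}\frac{q^{\frac{\deg\phi}{2}m_i(\lambda_\phi)^2}}{B(\phi^i)},$$
so the whole task is to show the second double product is $q^{n/2+o(n)}$.

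For that I would invoke the preceding Proposition: each factor $q^{\frac{\deg\phi}{2}m_i(\lambda_\phi)^2}/B(\phi^i)$ equals $\Theta(1)$ unless $\phi=x\pm1$ and $i$ is even, in which case it equals $\Theta(q^{\frac12 m_i(\lambda_\phi)})$, with absolute implicit constants (recall that for $\phi\ne x\pm1$ one has $B(\phi^i)=A(\phi^i)$, and for $\phi=x\pm1$ the relevant input is $|O^{\pm}_m(\field_q)|=\Theta(q^{(m^2-m)/2})$). Hence the second product splits into the $\Theta(1)$-factors and the exceptional factors. The product of the $\Theta(1)$-factors is $q^{o(n)}$ by exactly the reasoning used in Corollary \ref{cor_bound_probability_conjugacy_class} and Claim \ref{claim_bounding_probability_small_minpoly}: each such factor is of the shape $\prod_j(1+O(q^{-j\deg\phi/\ast}))^{\pm1}$, the primes $\phi$ with $\lambda_\phi\ne\emptyset$ have total degree $\le n$, and each $\lambda_\phi$ has only $O(\sqrt{|\lambda_\phi|})$ distinct parts, so a Lemma \ref{lem_small_radical_bound}-type computation gives $q^{o(n)}$. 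For the exceptional factors, their product is $\Theta\!\big(q^{\frac12(\sum_{i\ \mathrm{even}}m_i(\lambda_{x-1})+\sum_{i\ \mathrm{even}}m_i(\lambda_{x+1}))}\big)$; since $\sum_{i}m_i(\lambda_{x\pm1})\le|\lambda_{x\pm1}|$ and $|\lambda_{x-1}|+|\lambda_{x+1}|\le|\lambda|=n$, this is at most $q^{n/2}$ times $(\mathrm{const})^{O(\sqrt n)}=q^{o(n)}$, the last factor absorbing the bounded number of distinct parts of $\lambda_{x\pm1}$. Combining gives $\Pr[M\in C]\ll q^{n/2+o(n)}\prod_{\phi\in\Pee}q^{-\frac{\deg\phi}{2}\sum_i(\lambda_{\phi,i}')^2}$.

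The only place requiring care is the bookkeeping for the $q^{o(n)}$ error term, but this is handled verbatim as in the symplectic case (and ultimately reduces to Lemma \ref{lem_small_radical_bound}), so I expect no real obstacle. It is worth emphasizing, though, that unlike the symplectic situation the extra multiplicative factor $q^{n/2}$ here is genuinely present and not an artifact of the estimates: it comes precisely from the orthogonal groups $O^{\mu(i)}_{m_i(\lambda_\phi)}(\field_q)$ attached to the even parts of $\lambda_{x\pm1}$, whose order $\Theta(q^{(m^2-m)/2})$ is smaller than $q^{m^2/2}$ by a factor $\Theta(q^{m/2})$.
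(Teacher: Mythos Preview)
Your proposal is correct and follows essentially the same route as the paper: apply equation \eqref{eqn_partition_eqn} to rewrite the exponent, then control the correction product $\prod_{\phi,i} q^{\frac{\deg\phi}{2}m_i(\lambda_\phi)^2}/B(\phi^i)$ using the preceding Proposition, with the only non-$\Theta(1)$ contributions coming from even parts of $\lambda_{x\pm1}$.

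The one minor difference is in how you extract the $q^{n/2}$ factor. You bound $\sum_{i\text{ even}} m_i(\lambda_{x\pm1})\le \sum_i m_i(\lambda_{x\pm1})\le|\lambda_{x\pm1}|$ and then use $|\lambda_{x-1}|+|\lambda_{x+1}|\le n$. The paper instead observes that since every even part has size $\ge 2$, the sum of multiplicities of even parts is at most half the total weight of the even parts, giving $\sum_{i\text{ even}} m_i(\lambda_\phi)\le |\lambda_\phi|/2$; it then bounds each $|\lambda_\phi|$ crudely by $n$ and sums over the two choices $\phi=x\pm1$. Both arguments land on the exponent $n/2$, so this is purely cosmetic.
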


\begin{proof}
The proof is almost the same as in the symplectic case, only now we have to bound the sum $\sum_{i\text{ is even}}\frac{m_{2i}(\lambda_{\phi})}{2}$ for $\phi=x\pm 1$. We note that if the total weight of the even parts is $k$, then the sum of multiplicities of the even parts is at most $k/2$. Since $k<n$, we get that $\sum_{i\text{ is even}}\frac{m_{2i}(\lambda_{\phi})}{2}<\frac{n}{4}$. Since there are two possible choices for $\phi$ we get the result.
\end{proof}

\begin{theorem}\label{theorem_probability_small_min_poly_son}
Let $f\in\field_q[x]_{=n}^\mon$ be a self-reciprocal polynomial, and assume that $x\nmid f$. Then, for $0\le\delta<n$ integer,
$$
\Pr_{M\in SO^\epsilon_n(\field_q)}[\deg\min(M)=n-\delta\cap \charc(M)=h]\ll q^{\frac{n}{2}-\frac{n^2}{2(n-\delta)}+o(n)}.
$$
\end{theorem}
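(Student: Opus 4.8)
The plan is to mimic the proofs of Claim \ref{claim_bounding_probability_small_minpoly} and Theorem \ref{theorem_probability_small_min_poly_sp2n}, now using the orthogonal analogue of Fulman's conjugacy class bound, i.e. the Corollary stated just above (with the extra $q^{n/2+o(n)}$ factor). First, factor the given self-reciprocal polynomial $f=\charc(M)=\prod_{\phi\in\Pee}\phi^{e_\phi}$ and write $\min(M)=\prod_{\phi\in\Pee}\phi^{d_\phi}$. The event $\{\deg\min(M)=n-\delta\}\cap\{\charc(M)=f\}$ is a disjoint union, over signed orthogonal partitions $\lambda=(\lambda_\phi)$ with $\lambda_\phi\vdash e_\phi$ and $\max\lambda_\phi=d_\phi$, of the corresponding conjugacy classes. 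Applying the Corollary, this probability is
$$
\ll q^{n/2+o(n)}\sum_{\substack{\lambda_\phi\vdash e_\phi\\ \max\lambda_\phi=d_\phi}}\prod_{\phi}\frac{1}{q^{\frac{\deg\phi}{2}\sum_i(\lambda_{\phi,i}')^2}}.
$$

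Next, since the number of tuples of partitions of total size $n$ is $q^{o(n)}$ (Hardy–Ramanujan), it suffices to bound a single summand $\prod_\phi q^{-\frac{\deg\phi}{2}\sum_i(\lambda_{\phi,i}')^2}$. Here I repeat the optimization from the proof of Claim \ref{claim_bounding_probability_small_minpoly}: $\lambda_\phi'$ is a partition of $e_\phi$ into exactly $d_\phi$ parts, so $\sum_i(\lambda_{\phi,i}')^2$ is minimized (making the summand largest) when the parts are as equal as possible, giving $\sum_i(\lambda_{\phi,i}')^2\ge d_\phi\cdot(e_\phi/d_\phi)^2=e_\phi^2/d_\phi$. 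Hence the summand is at most $\prod_\phi q^{-\frac{\deg\phi}{2}\,e_\phi^2/d_\phi}$. Putting $\Pr[\phi]=e_\phi\deg\phi/n$ defines a probability measure on the primes dividing $f$ with $\mathbb{E}[d_\phi/e_\phi]=\deg\min(M)/n=(n-\delta)/n$, and by Jensen's inequality $\mathbb{E}[e_\phi/d_\phi]\ge n/(n-\delta)$, so $\sum_\phi e_\phi\deg\phi\cdot(e_\phi/d_\phi)\ge n\cdot n/(n-\delta)=n^2/(n-\delta)$. Therefore a single summand is $\ll q^{-\frac{n^2}{2(n-\delta)}}$, and combining with the $q^{n/2+o(n)}$ prefactor gives the claimed bound $q^{\frac{n}{2}-\frac{n^2}{2(n-\delta)}+o(n)}$, uniformly in $\delta$.

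One point that needs a remark rather than a computation: the $A(\phi^i)$-type denominator factors $\prod_i B(\phi^i)$ contribute only $q^{o(n)}$ on the scale $q^{\frac{\deg\phi}{2}m_i(\lambda_\phi)^2}$ already accounted for, exactly as in the symplectic case — this is precisely what the Proposition and Corollary above encode (with the even-part correction absorbed into the $q^{n/2+o(n)}$ factor), so no new work is needed. The hypothesis $x\nmid f$ just guarantees $f$ is genuinely a characteristic polynomial of an orthogonal matrix so that at least one conjugacy class exists; it plays no quantitative role. The one genuine subtlety — and the main thing to be careful about — is that we are working in $SO_n^\epsilon(\field_q)$ rather than $O_n^\epsilon(\field_q)$: here one passes from $O_n^\epsilon$ to $SO_n^\epsilon$ using $[O_n^\epsilon(\field_q):SO_n^\epsilon(\field_q)]=2$ (as was done for $SL_n$ via the index $q-1$), which only changes the implied constant. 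Thus the result follows, and I expect the argument to be essentially a transcription of the $Sp_{2n}$ proof with the orthogonal bookkeeping, the only real obstacle being to confirm that the extra $q^{n/2}$ loss in the orthogonal conjugacy-class estimate is the only difference and does not interact with the Jensen optimization.
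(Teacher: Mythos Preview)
Your proof is correct and follows essentially the same approach as the paper: apply the orthogonal conjugacy-class bound (the Corollary with the extra $q^{n/2+o(n)}$ factor), absorb the number of partitions into $q^{o(n)}$, run the Jensen optimization from Claim~\ref{claim_bounding_probability_small_minpoly}, and pass from $O_n^\epsilon$ to $SO_n^\epsilon$ via the index~$2$. The paper's own proof is in fact just a two-sentence pointer to exactly these steps.
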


\begin{proof}
The bound for $O^\epsilon_n(\field_q)$ is essentially the same as in the symplectic case, only now we use the bound from the last corollary. To go from $O_n^\epsilon(\field_q)$ to $SO^\epsilon_n(\field_q)$, we observe that for $M\in O_n^\epsilon(\field_q)$, $\charc(M)(0)=(-1)^n$ is equivalent to $M\in SO_n^\epsilon(\field_q)$. Hence since $[O_n^\epsilon(\field_q):O_n^\epsilon(\field_q)]=2$ we prove the theorem.
\end{proof}

\subsection{For $U_n$}
The characteristic polynomial $f$ of a matrix $M\in U_n(\field_{q})$ factors as follows,
$$
f=\prod_{i=1}^r P_i^{e_i}\prod_{j=1}^s (Q_j\widetilde{Q_j})^{d_j},
$$
where $\widetilde{P_i}=P_i$ and $P_i,Q_i$ are distinct irreducible polynomials in $\field_{q^2}[x]$. In the following section, we use the notation introduced and discussed in section \ref{sec_unitary_generalities}. 

\begin{theorem}[\cite{Ful99}, \S3]
Let $\lambda=(\lambda_\phi)_{\phi\in\Pee}$ be a unitary partition with $|\lambda|=n$. Let $C$ be the conjugacy class of $U_{n}(\field_q)$ parameterized by $\lambda$. Then,

\begin{equation*}
\Pr[M\in C]=
\prod_{\phi}\frac{1}{q^{2\deg\phi(\sum_{h<i}hm_h(\lambda_\phi)m_i(\lambda_\phi)+\frac{1}{2}\sum_i (i-1)m_i(\lambda_\phi)^2)}\prod_i C(\phi^i)},
\end{equation*}
Where here,
$$
C(\phi^{i})=\begin{cases}
    |U_{m_i(\lambda_\phi)}(\field_{q^{\deg\phi}})|, & \text{ if }\widetilde{\phi}=\phi, \\
    |GL_{m_i(\lambda_\phi)}(\field_{q^{2\deg\phi}})|^{1/2}, & \text{ else.}
\end{cases}
$$
\end{theorem}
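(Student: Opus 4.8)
Since the statement is quoted from \cite[\S 3]{Ful99}, my aim is to describe how one would prove it; the cleanest route is the direct centralizer computation (Fulman himself organises it via cycle-index generating functions, but that only repackages the same content). The first step is to note that $U_n(\field_q)$ acts transitively by conjugation on the class $C$, so for any $g\in C$ one has $\Pr[M\in C]=|C|/|U_n(\field_q)|=1/|Z(g)|$, where $Z(g)$ is the centralizer of $g$ in $U_n(\field_q)$. Thus the entire theorem reduces to computing $|Z(g)|$ as an explicit function of the unitary partition $\lambda=(\lambda_\phi)$ parametrizing $C$.

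Next I would use the $\field_{q^2}[x]$-module structure on $V=\field_{q^2}^n$ induced by $g$, as set up in Section \ref{sec_unitary_generalities}, giving $V=\bigoplus_{\phi\in\tilde{\Pee}}V_\phi$ with $V_\phi\cong\bigoplus_i\bigl(\field_{q^2}[x]/\phi^i\bigr)^{m_i(\lambda_\phi)}$. The key observation is that the Hermitian form $h$ defining $U_n(\field_q)$ pairs $V_\phi$ nontrivially only against $V_{\tilde\phi}$; hence $Z(g)$ factors as a direct product of ``local'' centralizers, one for each $\sim$-irreducible $\phi$ (with $\phi$ and $\tilde\phi$ handled jointly). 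I would then compute each local factor separately. When $\tilde\phi\neq\phi$, the form puts $V_\phi$ and $V_{\tilde\phi}$ in perfect duality, so an element of the local centralizer is determined by its restriction to $V_\phi$; the local factor is therefore $\mathrm{Aut}_{\field_{q^2}[x]}(V_\phi)$, a $GL$-type group over the residue field $\field_{q^2}[x]/\phi\cong\field_{q^{2\deg\phi}}$. Because both $\phi$ and $\tilde\phi$ index a factor in the product $\prod_{\phi\in\Pee}$, the order of this group must be split evenly between them, which is exactly why the theorem records $C(\phi^i)=|GL_{m_i}(\field_{q^{2\deg\phi}})|^{1/2}$ together with a radical contribution $q^{2\deg\phi(\cdots)}$ per index. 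When $\tilde\phi=\phi$, the restriction of $h$ to $V_\phi$ is nondegenerate and, under the identification of $\field_{q^2}[x]/\phi$ with $\field_{q^{2\deg\phi}}$, becomes a nondegenerate Hermitian form over the subfield $\field_{q^{\deg\phi}}$, so the local factor is of unitary type over $\field_{q^{\deg\phi}}$, giving $C(\phi^i)=|U_{m_i}(\field_{q^{\deg\phi}})|$.

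To pin down the prime-power factor I would argue, exactly as in the $GL_n$, $Sp_{2n}$ and $SO_n$ cases already treated in this section, that the automorphism group of $\bigoplus_i(\field_{q^2}[x]/\phi^i)^{m_i}$ --- with or without the form --- carries a normal filtration whose associated graded is a product of the classical groups above over the residue field and whose unipotent radical is a $q$-group, of order $q^{2\deg\phi\bigl(\sum_{h<i}h\,m_h(\lambda_\phi)m_i(\lambda_\phi)+\frac12\sum_i(i-1)m_i(\lambda_\phi)^2\bigr)}$; here $2\deg\phi=\dim_{\field_q}(\field_{q^2}[x]/\phi)$ and the combinatorial exponent is the $\field_q$-codimension of the Levi, which one recognises via the identity of \cite[Lemma 1]{Ful00} already invoked in \eqref{eqn_partition_eqn}. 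Assembling the local factors over all $\phi$ and taking reciprocals then yields the displayed product.

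I expect the one genuinely delicate point to be the form bookkeeping in the case $\tilde\phi=\phi$: one must check that $h$ restricts to a nondegenerate Hermitian form on each isotypic block $(\field_{q^2}[x]/\phi^i)^{m_i}$ over $\field_{q^{\deg\phi}}$, and --- in contrast to the symplectic and orthogonal cases, where Wall's analysis leaves a surviving sign invariant $\mu(i)$ --- that in the unitary setting no such invariant survives, so that a unitary partition carries no tags and $C(\phi^i)$ is simply $|U_{m_i}(\field_{q^{\deg\phi}})|$. This is precisely Wall's classification of unitary conjugacy classes in \cite[p.~34]{Wal63}, on which \cite[\S 3]{Ful99} is built; for the purposes of the present paper it is enough to quote that result.
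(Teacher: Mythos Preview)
The paper does not prove this theorem at all: it is stated with attribution to \cite[\S 3]{Ful99} and used as a black box, exactly as the analogous results of Fulman for $Sp_{2n}$ and $O_n^\pm$ are quoted in the preceding subsections. There is therefore no proof in the paper to compare your proposal against.

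That said, your sketch is a correct outline of the underlying centralizer computation (due essentially to Wall \cite{Wal63}), and you correctly identify the one subtle point --- that in the unitary case no sign invariant survives on the blocks with $\tilde\phi=\phi$, so the local factor is simply a smaller unitary group. Your remark that Fulman's cycle-index formulation repackages the same content is accurate. For the purposes of this paper, however, none of this is needed: the statement is simply imported from the literature.
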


\begin{corollary}
Let $\lambda=(\lambda_\phi)_{\phi\in\Pee}$ be a unitary partition with $|\lambda|=n$. Let $C$ be the conjugacy class of $U_n(\field_q)$ parametrized by $\lambda$. Then,
$$
\Pr[M\in C]\ll q^{o(n)}\prod_\phi q^{-\deg\phi\sum_i(\lambda_{\phi,i}')^2}.
$$
\end{corollary}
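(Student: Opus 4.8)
The plan is to transcribe, almost verbatim, the argument used above for $Sp_{2n}$ (Corollary \ref{cor_bound_probability_conjugacy_class}) and $SO_n^\pm$, with Proposition \ref{prop_A_phi_bound} replaced by the corresponding estimate for the unitary building blocks. First I would establish the unitary analog of Proposition \ref{prop_A_phi_bound}: for every $\phi\in\Pee$ and every $i$ one has
$$
\frac{C(\phi^i)}{q^{\deg\phi\, m_i(\lambda_\phi)^2}}=\Theta(1),
$$
with implied constants depending on $q$ only (uniformly in $m_i(\lambda_\phi)$ and $\deg\phi$). This is immediate from the classical order formulas $|U_m(\field_Q)|=Q^{m^2}\prod_{j=1}^m(1-(-1)^jQ^{-j})$ and $|GL_m(\field_Q)|=Q^{m^2}\prod_{j=1}^m(1-Q^{-j})$: in both branches of the definition of $C(\phi^i)$ the $q$-power part equals exactly $q^{\deg\phi\, m_i(\lambda_\phi)^2}$, while the remaining product stays in a fixed interval bounded away from $0$ and $\infty$. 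Note that, in contrast to the symplectic and orthogonal cases, there is no exceptional correction of size $q^{\pm m_i/2}$ here (the unipotent part is not split off into $Sp$ and $O$ pieces); this is precisely why the final estimate has no $q^{cn}$ prefactor.

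Next I would invoke Fulman's partition identity \eqref{eqn_partition_eqn} (that is, \cite[Lemma 1]{Ful00}), exactly as in the symplectic case, to rewrite the exponent occurring in Fulman's theorem as
$$
2\deg\phi\Big(\sum_{h<i}h\,m_h(\lambda_\phi)m_i(\lambda_\phi)+\tfrac12\sum_i(i-1)m_i(\lambda_\phi)^2\Big)=\deg\phi\sum_i\big((\lambda'_{\phi,i})^2-m_i(\lambda_\phi)^2\big).
$$
Substituting this together with the estimate for $C(\phi^i)$ into the formula from Fulman's theorem, the factors $q^{-\deg\phi\sum_i m_i(\lambda_\phi)^2}$ cancel against the main part of $\prod_i C(\phi^i)$, and one is left with
$$
\Pr[M\in C]=\Big(\prod_{\phi}\prod_i \frac{C(\phi^i)}{q^{\deg\phi\, m_i(\lambda_\phi)^2}}\Big)^{-1}\ \prod_\phi q^{-\deg\phi\sum_i(\lambda'_{\phi,i})^2}.
$$

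The remaining point is to show that the reciprocal of the product of the $\Theta(1)$ factors is $q^{o(n)}$, and this is the only nontrivial step. It follows by the same reasoning as in the proof of Lemma \ref{lem_small_radical_bound} and of Claim \ref{claim_bounding_probability_small_minpoly}: the number of pairs $(\phi,i)$ with $m_i(\lambda_\phi)>0$ is $o(n)$ for a unitary partition with $|\lambda|=n$ (the number of distinct irreducible factors of a polynomial of degree $n$ is $O(n/\log n)$, and each $\lambda_\phi$ has only $O(\sqrt{|\lambda_\phi|})$ distinct part sizes), while each factor $\prod_{j=1}^{m_i}(1-(-1)^jq^{-j\deg\phi})$ (resp. its square root in the $GL$ branch) is uniformly bounded above and below; the relevant sub-exponential bookkeeping is identical to that already carried out there. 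Combining, $\Pr[M\in C]\ll q^{o(n)}\prod_\phi q^{-\deg\phi\sum_i(\lambda'_{\phi,i})^2}$, as claimed. I expect this last $q^{o(n)}$ estimate to be the main obstacle — making precise that a product of $o(n)$ uniformly bounded constants contributes only a sub-exponential factor — while every other step is a mechanical adaptation of the symplectic and orthogonal arguments given above.
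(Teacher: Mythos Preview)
Your proposal is correct and follows essentially the same approach as the paper: estimate $C(\phi^i)=\Theta(q^{\deg\phi\, m_i(\lambda_\phi)^2})$ via the order formulas for $U_m$ and $GL_m$, apply the partition identity \eqref{eqn_partition_eqn}, and absorb the residual product into a $q^{o(n)}$ factor by the same bookkeeping as in Lemma~\ref{lem_small_radical_bound}. Your discussion of the $q^{o(n)}$ step (bounding the number of pairs $(\phi,i)$ via the $O(n/\log n)$ bound on distinct irreducible factors combined with the $O(\sqrt{|\lambda_\phi|})$ bound on distinct part sizes, then Cauchy--Schwarz) is in fact more explicit than what the paper writes, which simply cites the order estimates and equation~\eqref{eqn_partition_eqn} and declares the result.
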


\begin{proof}
We recall that $|U_{m_i(\lambda_\phi)}(\field_{q^{\deg\phi}})|=\Theta(q^{\deg\phi m_i(\lambda_\phi)^2})$, and also $|GL_{m_i(\lambda_\phi)}(\field_{q^{2\deg\phi}})|=\Theta(q^{2\deg\phi\cdot m_i(\lambda_\phi)^2})$. Thus, $C(\phi^i)=\Theta(q^{\deg\phi\cdot m_i(\lambda_\phi)^2}).$ Combining this with equation \eqref{eqn_partition_eqn}, we get the result.
\end{proof}

Now using the previous corollary and a similar proof to that of claim \ref{claim_bounding_probability_small_minpoly}, we get

\begin{claim}\label{claim_bounding_probability_small_minpoly_un}
    Let $M\in U_n(\field_q)$ be chosen uniformly at random, and let $h\in\field_{q^2}[x]_{=n}^\mon$ be a self-skew-reciprocal polynomial. Then, for $0\le\delta<n$ integer,
    $$\Pr[\deg\min(M)=n-\delta\cap\charc(M)=h]<q^{-\frac{n^2}{n-\delta}+o(n)}.$$
\end{claim}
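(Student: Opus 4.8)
This claim is the unitary-group analog of Claim~\ref{claim_bounding_probability_small_minpoly}, and the statement just before it tells us exactly how to proceed: mimic the $GL_n$ argument with Fulman's unitary conjugacy-class formula in place of the $GL_n$ one. So the plan is as follows.

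\textbf{Step 1: Reduce to a sum over conjugacy classes.} Fix the target self-skew-reciprocal polynomial $h$ and factor $h=\charc(M)=\prod_{\phi\in\Pee}\phi^{e_\phi}$ in $\field_{q^2}[x]$ (using $\sim$-irreducible factors, each of which is either in $\Pee$ or splits as $\phi\widetilde\phi$; in any case the conjugacy class datum over $GL_n(\field_{q^2})$ is what matters). Write $\min(M)=\prod_{\phi}\phi^{d_\phi}$. Then $\{\deg\min(M)=n-\delta\}\cap\{\charc(M)=h\}$ is the disjoint union of conjugacy classes $C$ parametrized by unitary partitions $\lambda=(\lambda_\phi)$ with $\lambda_\phi\vdash e_\phi$ and $\max\lambda_\phi=d_\phi$ for every $\phi$. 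Summing the bound from the corollary just proved (the one giving $\Pr[M\in C]\ll q^{o(n)}\prod_\phi q^{-\deg\phi\sum_i(\lambda_{\phi,i}')^2}$) over all such $\lambda$, and using that the number of partition-tuples with $\sum e_\phi\deg\phi = n$ is $q^{o(n)}$ (Hardy--Ramanujan, as in Lemma~\ref{lem_small_radical_bound}), it suffices to bound a single summand $\prod_\phi q^{-\deg\phi\sum_i(\lambda_{\phi,i}')^2}$ from above by $q^{-n^2/(n-\delta)+o(n)}$.

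\textbf{Step 2: The optimization.} This is verbatim the optimization in the proof of Claim~\ref{claim_bounding_probability_small_minpoly}, so I would just refer to it. The dual partition $\lambda_\phi'$ is a partition of $e_\phi$ into exactly $d_\phi$ parts, so $\sum_i(\lambda_{\phi,i}')^2$ is minimized, by convexity (power mean / Jensen), when all $d_\phi$ parts equal $e_\phi/d_\phi$, giving $\sum_i(\lambda_{\phi,i}')^2\ge e_\phi^2/d_\phi$. Hence $\prod_\phi q^{-\deg\phi\sum_i(\lambda_{\phi,i}')^2}\le q^{-\sum_\phi \deg\phi\cdot e_\phi^2/d_\phi}$, and the exponent is $-\sum_\phi e_\phi\deg\phi\cdot(e_\phi/d_\phi)$. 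Put the probability measure $\Pr[\phi]=e_\phi\deg\phi/n$ on $\Pee$; then we must lower-bound $n\cdot\mathbb E[e_\phi/d_\phi]$. We know $\mathbb E[d_\phi/e_\phi]=\deg\min(M)/n=(n-\delta)/n$, so by Jensen's inequality $\mathbb E[e_\phi/d_\phi]\ge (n-\delta)^{-1}n$, i.e.\ the exponent is $\le -n^2/(n-\delta)$. Combining with the $q^{o(n)}$ factors from Step~1 gives the claimed bound $q^{-n^2/(n-\delta)+o(n)}$.

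\textbf{Main obstacle.} There is essentially no conceptual obstacle here: the work was already done in establishing the preceding corollary (absorbing the $C(\phi^i)$ factors and Fulman's exponent into $q^{o(n)}\prod_\phi q^{-\deg\phi\sum_i(\lambda_{\phi,i}')^2}$ via equation~\eqref{eqn_partition_eqn} and the size estimates $|U_m(\field_{q^d})|=\Theta(q^{dm^2})$, $|GL_m(\field_{q^{2d}})|=\Theta(q^{2dm^2})$), and in the $GL_n$ proof for the combinatorial optimization. The only point that deserves a sentence of care is that here $\charc(M)$ lives in $\field_{q^2}[x]$ and one should take the factorization and conjugacy-class datum over $\field_{q^2}$ (equivalently, via the unitary partition), but this does not affect the estimate since the exponent $n=\sum_\phi\deg\phi\, e_\phi$ is still the degree. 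I would therefore write the proof as: ``The proof is essentially the same as that of Claim~\ref{claim_bounding_probability_small_minpoly}, using the previous corollary in place of \eqref{eqn_fulman_conj_classes_prob}'' and, if desired, spell out Steps~1--2 above in a few lines.
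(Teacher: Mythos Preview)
Your proposal is correct and matches the paper's approach exactly: the paper itself does not write out a proof but simply states that the claim follows ``using the previous corollary and a similar proof to that of Claim~\ref{claim_bounding_probability_small_minpoly}'', which is precisely your Steps~1--2. Your write-up even fixes a small sign slip in the paper's version of the Jensen step (the inequality should read $\mathbb{E}[e_\phi/d_\phi]\ge n/(n-\delta)$, as you have it).
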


\section{Traces and characteristic polynomial for $GL_n$}\label{section_gln_main}

For the rest of Section \ref{section_gln_main}, $A$ will be a matrix from $GL_n(\resring{k})$ for some $k\ge 1$, unless explicitly stated otherwise.

\subsection{The image of $\partial\charc_{A_0}$}\label{section_charpoly_derivative_gln}

Recall that we defined the map $\partial\charc_{A_0}:\mathfrak{gl}_n(\field_q)\to\field_q[x]_{<n}$ by $A_1\mapsto \tr(\Adj(x-A_0)A_0A_1)$. We note that in the case of $G=GL_n$, since $A_1\mapsto A_0A_1$ is a permutation on $\mathfrak{gl}_n$, $\img(\partial\charc_{A_0})$ is equal to the image of the map $A_1\mapsto \tr(\Adj(x-A_0)A_1)$. Hence, as explained in Section \ref{subsection_explicit_representatives}, one can move from $A_0$ to any element in its conjugacy class without changing $\img(\partial\charc_{A_0})$. To calculate the image, we will need the following preparatory lemma:

\begin{lem}\label{lemma_derivative_of_direct_sums}
Let $A\in GL_n(\field_q)$, $B\in GL_m(\field_q)$ be two matrices. Then,

$$
\img(\partial\charc_{A\oplus B})=\mathrm{span}_{\field_q}(\charc(B)\cdot \img(\partial\charc_{A}),\charc(A)\cdot \img(\partial\charc_{B})).
$$
\end{lem}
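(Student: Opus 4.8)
The plan is to prove this by a direct matrix computation using the block structure of $A \oplus B$ and the explicit description of $\Adj(x - (A\oplus B))$.

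First I would observe that since $A \oplus B$ is block diagonal, $xI - (A\oplus B) = (xI_n - A) \oplus (xI_m - B)$, and hence $\Adj(x - (A\oplus B))$ is itself block diagonal with blocks $\det(xI_m - B)\cdot\Adj(xI_n - A) = \charc(B)\cdot\Adj(x-A)$ and $\charc(A)\cdot\Adj(x-B)$. This follows from the cofactor formula: the determinant of a block-diagonal matrix factors, and the relevant minors of $(xI-A)\oplus(xI-B)$ split accordingly (minors mixing the two blocks vanish since deleting one row/column from one block leaves a non-square block elsewhere, forcing a zero). So $\Adj(x - (A\oplus B)) = \mathrm{diag}(\charc(B)\Adj(x-A),\ \charc(A)\Adj(x-B))$.

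Next, since for $GL_n$ we established $\img(\partial\charc_{A_0})$ equals the image of $A_1 \mapsto \tr(\Adj(x-A_0)A_1)$ (because $A_1 \mapsto A_0 A_1$ permutes $\mathfrak{gl}$), I work with the map $X \mapsto \tr(\Adj(x - (A\oplus B))\cdot X)$ on $X \in \mathfrak{gl}_{n+m}(\field_q) = M_{n+m}(\field_q)$. Writing $X$ in block form $\begin{pmatrix} X_{11} & X_{12} \\ X_{21} & X_{22}\end{pmatrix}$ and computing the trace against the block-diagonal adjugate matrix, the cross blocks $X_{12}, X_{21}$ contribute nothing to the trace, and we get
$$
\tr\bigl(\Adj(x-(A\oplus B))X\bigr) = \charc(B)\cdot\tr(\Adj(x-A)X_{11}) + \charc(A)\cdot\tr(\Adj(x-B)X_{22}).
$$
As $X_{11}$ ranges over $M_n(\field_q)$ and $X_{22}$ independently over $M_m(\field_q)$, the first term ranges over $\charc(B)\cdot\img(\partial\charc_A)$ and the second over $\charc(A)\cdot\img(\partial\charc_B)$, and the two can be chosen independently. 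Therefore the image of the sum is exactly $\mathrm{span}_{\field_q}(\charc(B)\cdot\img(\partial\charc_A),\ \charc(A)\cdot\img(\partial\charc_B))$, which is the claim.

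I do not anticipate a serious obstacle here; the only point requiring a little care is the claim that the adjugate of a block-diagonal matrix is block-diagonal with the stated blocks — this should be stated cleanly (perhaps as a one-line lemma or an inline justification via the cofactor expansion / the identity $\Adj(M)M = \det(M)I$ applied blockwise). One should also note explicitly that $\img(\partial\charc_A)$ and $\img(\partial\charc_B)$ already consist of polynomials of degree $< n$ and $< m$ respectively, so after multiplying by $\charc(B)$ (degree $m$) and $\charc(A)$ (degree $n$) all resulting polynomials have degree $< n+m$, consistent with $\img(\partial\charc_{A\oplus B}) \subseteq \field_q[x]_{<n+m}$; this is a sanity check rather than a step in the proof. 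The overall argument is short and essentially bookkeeping with block matrices.
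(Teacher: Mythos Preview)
Your proposal is correct and follows essentially the same approach as the paper: write $X\in M_{n+m}(\field_q)$ in block form, compute the block-diagonal adjugate $\Adj(x-(A\oplus B))=\mathrm{diag}(\charc(B)\Adj(x-A),\charc(A)\Adj(x-B))$, observe the off-diagonal blocks of $X$ contribute nothing to the trace, and read off the result. The only cosmetic difference is that you spell out the adjugate computation and the degree sanity check a bit more explicitly than the paper does.
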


\begin{proof}
We note that $\mathfrak{gl}_{n+m}(\field_q)=M_{n+m}(\field_q)$. Write an element $A_1\in M_{n+m}(\field_q)$ as a block matrix

$$
A_1=\left(
\begin{array}{cc}
    X & Y \\
    Z & W
\end{array}
\right).
$$
Then,

$$\partial\charc_{A\oplus B}(A_1)=\tr\left(
\Adj\left(
\begin{array}{cc}
    x-A & 0 \\
    0 & x-B
\end{array}
\right)\left(
\begin{array}{cc}
    X & Y \\
    Z & W
\end{array}
\right)
\right).$$
Note that the entries of $Y,Z$ do not contribute to the result. Thus we may assume freely that they are zero. We get that

\begin{multline*}
\partial\charc_{A\oplus B}(A_1)=\tr\left(
\Adj\left(
\begin{array}{cc}
    x-A & 0 \\
    0 & x-B
\end{array}
\right)\left(
\begin{array}{cc}
    X & 0 \\
    0 & W
\end{array}
\right)
\right)=\\ 
=\tr\left(
\Adj\left(
\begin{array}{cc}
    \charc(B)\Adj(x-A) & 0 \\
    0 & \charc(A)\Adj(x-B)
\end{array}
\right)\left(
\begin{array}{cc}
    X & 0 \\
    0 & W
\end{array}
\right)
\right)=\\ =\charc(B)\tr(\Adj(x-A)X)+\charc(A)\tr(\Adj(x-B)W),
\end{multline*}
which finishes the proof.
\end{proof}

\begin{theorem}\label{theorem_image_char_derivative_gl_n}
Let $A\in GL_n(\field_q)$ be a matrix. Then, $\img(\partial\charc_A)=\left\{\frac{\charc A}{\min A}a:a\in\field_q[x]_{<\deg\min A}\right\}$.
\end{theorem}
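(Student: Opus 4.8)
The plan is to reduce the computation to the case of a single Jordan block and then reassemble via the direct-sum formula of Lemma~\ref{lemma_derivative_of_direct_sums}. Since $\img(\partial\charc_A)$ depends only on the conjugacy class of $A$ in $GL_n(\field_q)$, I would first pass to a matrix in rational canonical form. By the structure theory recalled above, $A$ is a direct sum of blocks $C(\phi^m)$ over primes $\phi$ and multiplicities $m$ appearing in the conjugacy class datum. Working over a splitting field $\field_{q^l}$ (which is harmless for computing the dimension and the shape of the image, as long as one checks the final answer descends to $\field_q$ — see the remark in Section~\ref{subsection_explicit_representatives}), each such block becomes a Jordan block $J_m(\alpha)$, for which $\Adj(x-J_m(\alpha))=X_\alpha^m$ is given explicitly in Lemma~\ref{lem_gl_jordan_block_adj}.

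The core computation is: for a single Jordan block $J_e(\alpha)$, show that $\img(\partial\charc_{J_e(\alpha)}) = \{(x-\alpha)^{e}\cdot\frac{a}{(x-\alpha)^{e}}\}$-style, i.e. precisely $(x-\alpha)^{0}\field_q[x]_{<e}$ restricted appropriately — more precisely, that it equals $\field_q[x]_{<e}$ (all polynomials of degree $<e$), since for a single Jordan block $\charc=\min=(x-\alpha)^e$ and the asserted formula gives $\frac{\charc}{\min}=1$ times $\field_q[x]_{<e}$. For this I would write $\tr(X_\alpha^e A_1)$ as a sum over the entries of $A_1$ against the entries of $X_\alpha^e$; from the explicit upper-triangular Toeplitz form of $X_\alpha^e$ in Lemma~\ref{lem_gl_jordan_block_adj}, the entry in position $(i,j)$ for $j\ge i$ is $(x-\alpha)^{e-(j-i+1)}$, a polynomial. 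Summing against a free matrix $A_1$, the coefficient of $(x-\alpha)^{e-1-t}$ (for $0\le t\le e-1$) is $\sum_{j-i=t}(A_1)_{j,i}$, which ranges freely over $\field_q$ as $A_1$ varies. Hence the image is exactly the span of $\{(x-\alpha)^{e-1},(x-\alpha)^{e-2},\ldots,1\}=\field_q[x]_{<e}$, matching the claim in this case.

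Next I would handle a single primary block $C(\phi^m)$ for a general prime $\phi$ of degree $d$, where $\charc=\min=\phi^m$. Over the splitting field this is conjugate to $\bigoplus_{\text{roots }\alpha}J_m(\alpha)$, and by the single-block case plus iterated application of Lemma~\ref{lemma_derivative_of_direct_sums} one gets that the image is spanned by $\{\prod_{\beta\ne\alpha}(x-\beta)^m\cdot g_\alpha : \deg g_\alpha<m\}$; a short argument (partial fractions / CRT for the factors $(x-\alpha)^m$) identifies this with $\field_q[x]_{<md}=\field_q[x]_{<\deg\min}$, which again equals $\{\frac{\charc}{\min}\cdot a : \deg a<\deg\min\}$. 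For the general matrix $A$ with conjugacy class datum $(\phi_s^{m_{s,j}})$, I would order the primary blocks and induct on the number of blocks, applying Lemma~\ref{lemma_derivative_of_direct_sums} at each step. Writing $A=A'\oplus C$ where $C=C(\phi^{m})$ with $m$ the largest multiplicity for $\phi$ (so $\min C$ contributes the $\phi$-part of $\min A$), the lemma gives $\img(\partial\charc_A)=\charc(C)\img(\partial\charc_{A'})+\charc(A')\img(\partial\charc_C)$; substituting the inductive description $\img(\partial\charc_{A'})=\frac{\charc A'}{\min A'}\field_q[x]_{<\deg\min A'}$ and $\img(\partial\charc_C)=\field_q[x]_{<\deg\min C}$, the claim will follow after one checks that $\frac{\charc A}{\min A}$ is the gcd-type common factor pulled out — concretely, that $\mathrm{span}(\frac{\charc A}{\min A'}\field_q[x]_{<\deg\min A'},\ \frac{\charc A}{\min C}\field_q[x]_{<\deg\min C})=\frac{\charc A}{\min A}\field_q[x]_{<\deg\min A}$, using $\deg\min A=\deg\min A'+\deg\min C$ when $\phi\nmid\min A'$, and absorbing overlapping $\phi$-powers via partial fractions when $\phi\mid\min A'$.

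The main obstacle I expect is this last bookkeeping step: verifying that the two-term span on the right-hand side of Lemma~\ref{lemma_derivative_of_direct_sums}, after the inductive substitution, collapses exactly to the single coset $\frac{\charc A}{\min A}\field_q[x]_{<\deg\min A}$ — i.e. that there is neither a dimension deficit (the two pieces might fail to span enough) nor overcounting, and that the answer is insensitive to the order in which blocks are joined. The cleanest way around it is probably to avoid the block-by-block induction for the dimension count and instead argue directly: show $\img(\partial\charc_A)\subseteq \frac{\charc A}{\min A}\field_q[x]_{<\deg\min A}$ by a global argument (every $\tr(\Adj(x-A)A_1)$ is divisible by $\frac{\charc A}{\min A}=\gcd$ of the entries of $\Adj(x-A)$, which is a standard fact since $\Adj(x-A)=\frac{\charc A}{\min A}\cdot(\text{adjugate-type matrix for }\min A)$), and then match dimensions by observing $\dim\img(\partial\charc_A)=\deg\min A$ via the Jordan/primary computation above, whence the inclusion is an equality of $\field_q$-vector spaces of the same dimension. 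Finally I would remark that all the intermediate identities are defined over $\field_q$, or alternatively that both sides of the claimed equality are $\field_q$-subspaces whose extensions of scalars to $\field_{q^l}$ agree, so they agree over $\field_q$.
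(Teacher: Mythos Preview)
Your proposal is correct and follows essentially the same approach as the paper: pass to a splitting field (using rank-invariance to descend), reduce to the primary case via Lemma~\ref{lemma_derivative_of_direct_sums} and CRT, and handle the single-Jordan-block base case via the explicit formula of Lemma~\ref{lem_gl_jordan_block_adj}. The paper organizes the induction slightly differently (first reducing to a primary matrix $\charc A=(x-\alpha)^n$, then inducting on the number of Jordan blocks with the \emph{same} eigenvalue), whereas you split off companion blocks $C(\phi^m)$; but the content is identical, and the ``bookkeeping'' span identity you flag is exactly what the paper verifies in its two nested inductions. Your suggested shortcut---proving the containment $\img(\partial\charc_A)\subseteq\frac{\charc A}{\min A}\field_q[x]_{<\deg\min A}$ via the determinantal-divisor fact that $\frac{\charc A}{\min A}$ is the gcd of the entries of $\Adj(x-A)$, and then matching dimensions---is a perfectly valid and arguably cleaner alternative to the paper's explicit span computation.
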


\begin{proof}
It's enough to prove the claim over an extension field $\field_{q^l}$ over which $\charc A$ splits completely. Indeed, assuming this, the image is a linear space contained in the RHS. If there is equality over an extension field, then $\mathrm{rank}(\partial\charc_A)=\deg\min A$ over an extension field. But the rank is invariant to field extensions, thus we get the result. 

Further, we claim that it is enough to prove the claim for the case of $A$ primary (that is, $\charc A=(x-\alpha)^n$). This can be shown by induction on the number of different primary parts in $A$. if $A=B\oplus C$, where $C$ is primary and $\gcd(\charc B,\charc C)=1$, then by Lemma \ref{lemma_derivative_of_direct_sums} and the induction hypothesis we get 

$$
\img(\partial\charc_{B\oplus C})=\mathrm{span}_{\field_{q^l}}\left\{
\charc C\left\{\frac{\charc B}{\min B}b:\deg b<\deg\min B\right\},
\charc B\left\{\frac{\charc C}{\min C}c:\deg c<\deg\min C\right\}
\right\},
$$
and this proves the result using the Chinese remainder theorem for $\field_{q^l}[x]$ and comparing dimensions. 

It remains to prove the claim for a primary matrix. Let $A$ be such that $\charc A=(x-\alpha)^n$. Write $A$ as a direct sum of $r$ matrices $A_i$, each with $\charc A_i=\min A_i=(x-\alpha)^{m_i}$, and such that $m_1\ge m_2\ge \cdots \ge m_r$. We again prove the claim by induction on the number of parts $r$. Write $A=B\oplus A_r$. Then,

$$
\img(\partial\charc_{B\oplus A_r})=\mathrm{span}_{\field_{q^l}}\left\{
(x-\alpha)^{m_r}\left\{\frac{(x-\alpha)^{n-m_r}}{(x-\alpha)^{m_1}}b:\deg b<m_1\right\},
(x-\alpha)^{n-m_r}\left\{c:\deg c<\deg m_r\right\}
\right\}.
$$
which proves the result. Thus, we are only left with the case of a matrix $A_0$ with $\charc A_0=\min A_0=(x-\alpha)^r$. We note that since conjugation from elements in $GL_n(\field_{q^l})$ preserves the Lie algebra $\mathfrak{gl}_n(\field_{q^l})$, we can study the image of $A_1\mapsto \tr(\Adj(x-A_0)gA_1g^{-1})$ instead, for any $g\in GL_n(\field_{q^l})$. The trace is cyclic and by the properties of the adjoint matrix, this is equal to $\tr(\Adj(x-gA_0g^{-1})A_1)$. Thus without loss of generality, we may assume that $A_0$ is given in Jordan form. But then we know that it is a Jordan block, and we have by Lemma \ref{lem_gl_jordan_block_adj},

$$\Adj(x-A_0)=\left(\begin{array}{ccccc}
    (x-\alpha)^{r-1} & (x-\alpha)^{r-2} & (x-\alpha)^{r-3} & \cdots & 1 \\
    0 & (x-\alpha)^{r-1} & (x-\alpha)^{r-2} & \cdots & x-\alpha \\
    \vdots & \vdots & \ddots & \cdots & \vdots \\
    0 & \cdots & 0 & (x-\alpha)^{r-1} & (x-\alpha)^{r-2} \\
    0 & 0 & \cdots & 0 & (x-\alpha)^{r-1}
  \end{array}\right).$$ 
Since $\mathfrak{gl}_n(\field_{q^l})=M_n(\field_{q^l})$, one can see that the claim follows for such $A_0$, and thus we finish the proof.
\end{proof}

\subsection{Distribution of a single trace}

Our goal is to bound the total variation distance $d_{TV}(\mu_{\tr(A^r)},\mu_U)$ for $A\in GL_n(\Ol)$ chosen uniformly at random w.r.t. the Haar probability measure. By Lemma \ref{lem_total_variation_dist_limit}, it is enough to bound $d_{TV}(\mu_{\tr(\rho_k(A^r))},\mu_{\rho_k(U)})$, and show that it remains $o(1)$ as $n\to\infty$, uniformly in $k$. In this part, we assume $p\nmid r$.

\begin{theorem}
Let $U$ be the uniform probability measure on $\resring{k}$, $A$ chosen u.a.r. from $GL_n(\resring{k})$. Let $r=r(n)$ be such that $\log r=o(n^2)$, $p\nmid r$. Then,
$$
d_{TV}(\mu_{\tr(A^r)},\mu_U)\to_{n\to\infty} 0,
$$
uniformly in $k$. 
\end{theorem}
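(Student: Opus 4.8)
The plan is to prove the statement by induction on $k$, reducing in the end to the finite field theorem of Gorodetsky--Kovaleva \cite{GK23}. The point is that for $GL_n$ a single lifting step $\resring{k-1}\rightsquigarrow\resring{k}$ introduces exactly one fresh, perfectly uniform $\pi$-adic digit of $\tr(A^r)$, with no arithmetic obstruction, so that the $k$-dependence disappears entirely. For the base case $k=1$, I would observe that $\rho_1\colon GL_n(\resring{k})\to GL_n(\field_q)$ is a surjective group homomorphism, hence all its fibers have equal cardinality and the reduction $\overline A$ of a Haar-random $A\in GL_n(\resring{k})$ is uniform on $GL_n(\field_q)$; then $d_{TV}(\mu_{\tr(\overline A^{\,r})},\mu_{U})\to0$ as $n\to\infty$ by \cite{GK23}, precisely because $\log r=o(n^2)$. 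No $k$ enters here.

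For the inductive step, fix $k\ge 2$ and assume the bound for $k-1$. I would use the decomposition of Section~\ref{section_lifting}: write $A=A_0(1+\pi^{k-1}A_1)$ with $A_0\in H_{GL_n,k}$ and $A_1\in\mathfrak{gl}_n(\field_q)=M_n(\field_q)$; when $A$ is Haar-random the pair $(A_0,A_1)$ is uniform on $H_{GL_n,k}\times M_n(\field_q)$, and $A_0\bmod\pi^{k-1}$ is uniform on $GL_n(\resring{k-1})$. By Lemma~\ref{lem_trace_derivative}, reduced mod $\pi^k$,
$$
\tr(A^r)\equiv\tr(A_0^r)+r\pi^{k-1}\tr(\overline{A_0}^{\,r}A_1)\pmod{\pi^k}.
$$
The key observation will be that the $\field_q$-linear functional $A_1\mapsto r\tr(\overline{A_0}^{\,r}A_1)$ on $M_n(\field_q)$ is \emph{never} identically zero: $\overline{A_0}$ is invertible, so $r\overline{A_0}^{\,r}\neq0$ (here $p\nmid r$ is used), and the trace form on $M_n(\field_q)$ is nondegenerate; a nonzero $\field_q$-linear functional is surjective with all fibers of equal size. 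Hence, conditionally on $A_0$, the element $\tr(A^r)\bmod\pi^k$ is uniform on $\{x\in\resring{k}:x\equiv\tr(A_0^r)\bmod\pi^{k-1}\}$. Averaging over $A_0$ and grouping by the value of $\tr(A_0^r)\bmod\pi^{k-1}=\tr\bigl((A\bmod\pi^{k-1})^r\bigr)$ then gives, for every $x\in\resring{k}$,
$$
\Pr[\tr(A^r)\equiv x\bmod\pi^k]=\frac1q\,\Pr\bigl[\tr\bigl((A\bmod\pi^{k-1})^r\bigr)\equiv x\bmod\pi^{k-1}\bigr].
$$
In other words, the law of $\tr(A^r)$ on $\resring{k}$ is obtained from its law on $\resring{k-1}$ by spreading each atom uniformly over its $q$ lifts; the uniform measure on $\resring{k}$ arises from the uniform measure on $\resring{k-1}$ in exactly the same way, and such a spreading operation preserves total variation distance (each term $\lvert\mu_1(v)-\mu_2(v)\rvert$ splits into $q$ equal terms $\tfrac1q\lvert\mu_1(v)-\mu_2(v)\rvert$). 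Therefore $d_{TV}$ of $\tr(A^r)$ from uniform on $\resring{k}$ equals that on $\resring{k-1}$, closing the induction; combined with the base case, it equals the $k=1$ quantity for every $k$, which is $o(1)$ as $n\to\infty$ by \cite{GK23}, and in particular the decay is uniform in $k$.

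The only substantial external input is the finite field theorem of \cite{GK23}; everything else is elementary. The main thing to get right — and the only place I anticipate any subtlety — is the conditional-uniformity computation: it relies on the genuine independence of $A_0$ and $A_1$ in the lifting decomposition, and on the automatic nonvanishing of the relevant linear functional on $\mathfrak{gl}_n(\field_q)$, which is where the invertibility of $A_0$ and the hypothesis $p\nmid r$ come in. I do not expect any serious obstacle beyond this bookkeeping.
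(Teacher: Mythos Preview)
Your proposal is correct and follows essentially the same approach as the paper: decompose $A=A_0(1+\pi^{k-1}A_1)$, use Lemma~\ref{lem_trace_derivative} to see that the top $\pi$-adic digit of $\tr(A^r)$ is governed by the linear functional $A_1\mapsto r\tr(\overline{A_0}^{\,r}A_1)$ on $M_n(\field_q)$, observe this functional is never zero (invertibility of $\overline{A_0}$ and $p\nmid r$), conclude that each lifting step contributes a perfectly uniform digit so that the total variation distance at level $k$ equals that at level $1$, and invoke \cite{GK23} for the base case. Your write-up is in fact slightly more explicit than the paper's about the role of the hypothesis $p\nmid r$ and about why the uniform-spreading operation preserves total variation distance.
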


\begin{proof}
Let $A=A_0+\pi^{k-1} A_0A_1$, where $A_0$ is chosen from a set of representatives for $GL_n(\resring{k-1})$, $A_1\in \mathfrak{gl}_n(\field_q)$. From Lemma \ref{lem_trace_derivative}, we have $\tr(A^r)=\tr(A_0^r)+r\pi^{k-1}\tr(A_0^r A_1)$. Recall that we denoted the linear functional $A_1\mapsto \tr(A_0^r A_1)$ by $\partial \tr_{\overline{A_0}}$. This linear functional is either identically zero or equidistributed. Note that for it to be identically zero we must have $\overline{A_0}^r$ in the orthogonal space to $\mathfrak{gl}_n(\field_q)$ under the bilinear form  $\tr$. But $\mathfrak{gl}_n(\field_q)=M_n(\field_q)$, and $\overline{A_0}\in GL_n(\field_q)$, thus $\partial\tr_{\overline{A_0}}$ always equidsitributes. By recursion, we get that for any $x\in \resring{k}$, 

$$
\Pr[\tr(A^r)=x]=\Pr[\tr(\overline{A}^r)=\overline{x}]\prod_{i=2}^k \Pr[\tr(\rho_i(A)^r)=\rho_i(x)|\tr(\rho_{i-1}(A)^r)=\tr(\rho_{i-1}(x))]=q^{-(k-1)}\Pr[\tr(\overline{A}^r)=\overline{x}].
$$

By \cite{GK23}, for any $x\in\field_q$ we have

$$
|\Pr[\tr(\overline{A}^r)=\overline{x}]-q^{-1}|\to_{n\to\infty} 0,
$$
independently of $k$. Thus, we get the desired result on the total variation distance, by summing the error terms and using the triangle inequality.
\end{proof}

As a corollary, we get using Lemma \ref{lem_total_variation_dist_limit}.

\begin{theorem}
Let $U$ be the Haar probability measure on $\Ol$, and let $r=r(n)$ be such that $\log r=o(n^2)$, and $p\nmid r$. Let $A$ be chosen from $GL_n(\Ol)$, uniformly at random w.r.t. the Haar measure. Then, 

$$
d_{TV}(\mu_{\tr(A^r)},\mu_U)\to_{n\to\infty} 0.
$$
\end{theorem}

\begin{rem}
Modifications of this argument can prove similar results on a single trace of power for the other groups. However, since there is no extended range result for a single trace of power in the other finite groups, and since we were not able to get better results here than the results for the joint distribution of traces of powers, we did not pursue this.
\end{rem}

\subsection{Distribution of the characteristic polynomial}\label{section_distribution_charpoly_gln}

To compute the total variation distance 
$d_{TV}(\mu_{\TR_{d_1,d_2}^G},\mu_{\UTR_{d_1,d_2}})$ we pass through estimating the distribution of the characteristic polynomial in short intervals of $\field_q[x]$. While we cannot get a discrepancy bound for the distribution in intervals as in \cite{GR21}, which is good for any interval, we do manage to show that on average the discrepancy is small. For the rest of the section, fix $0\le d_1,d_2<n$ two integers such that $d_1+d_2=d<n-1$.

\subsubsection{One step from $\pi^{k-1}$ to $\pi^{k}$}

The basic idea of the proof is sort of an induction step, assuming we already know that the distribution modulo $\pi^{k-1}$ is close to uniform and going up to mod $\pi^k$. The base case $k=1$ is given by our extension of the results in \cite{GR21}, namely Proposition \ref{corollary_neg_traces_gln}. The induction step is based on the properties of  $\img(\partial\charc_{A_0})$, as discussed in Section \ref{section_charpoly_derivative_gln}.

Let $g\in(\resring{k})[x]$ be a monic polynomial of degree $n$, and let $A\in GL_n(\resring{k})$ be written as $A_0+\pi^k A_1$, where $A_0\in H_{G,k}$ is a representative for $A\pmod {\pi^{k-1}}$. Recall that then, the possible values for $\charc(A)$ are precisely the polynomials in $\charc(A_0)+\pi^{k-1}\img(\partial\charc_{\overline{A}_0})$, and that

$$
\img(\partial\charc_{\overline{A}_0})=\left\{\frac{\charc A_0}{\min A_0}a\Bigg|
a\in\field_q[x]_{<\deg\min A_0}\right\}.
$$

\begin{lem}\label{lem_onestep_charpoly_distribution}
Let $g\in(\resring{k})[x]$ be a monic polynomial of degree $n$. Then 

$$
\Pr\left[\charc(A)\equiv g\pmod{R_{d_1,x^{d_2+1},\pi^k}}\Bigg|\charc(A)\equiv g\pmod{R_{d_1,x^{d_2+1},\pi^{k-1}}},\deg\min\overline{A}> d\right]=q^{-(d+1)}.
$$
\end{lem}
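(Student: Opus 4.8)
The plan is to reduce the statement to a surjectivity statement for an $\field_q$-linear map and then invoke Theorem \ref{theorem_image_char_derivative_gl_n}. Assume $k\ge 2$. Write $A=A_0(1+\pi^{k-1}A_1)$ with $A_0\in H_{G,k}$ and $A_1\in\mathfrak{gl}_n(\field_q)$, as in Section \ref{section_lifting}; under the uniform measure on $GL_n(\resring{k})$ the pair $(A_0,A_1)$ is uniform and independent. Since $\rho_{k-1}(A)=\rho_{k-1}(A_0)$ and $\overline{A}=\overline{A}_0$, the conditioning event — $\charc(A_0)\equiv g\pmod{\hayesrelation{k-1}}$ together with $\deg\min\overline{A}_0>d$ — depends only on $A_0$, so after conditioning $A_1$ is still uniform on $\mathfrak{gl}_n(\field_q)$. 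Hence it suffices to fix one such $A_0$ and show $\Pr_{A_1}\bigl[\charc(A)\equiv g\pmod{\hayesrelation{k}}\bigr]=q^{-(d+1)}$.

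By \eqref{eqn_charpoly_lift_intro} (Lemma \ref{lem_charpoly_step_in_p_direction}) we have $\charc(A)=\charc(A_0)+\pi^{k-1}\partial\charc_{\overline{A}_0}(A_1)$, and membership of $\charc(A)$ in the class of $g$ modulo $\hayesrelation{k}$ is determined by $d+1$ coefficients: those of $x^{n-1},\dots,x^{n-d_1}$ and those of $x^{0},\dots,x^{d_2}$. The conditioning forces these $d+1$ coefficients of $\charc(A_0)-g$ to be divisible by $\pi^{k-1}$; and since $\partial\charc_{\overline{A}_0}(A_1)\in\field_q[x]_{<n}$, each coefficient of $\pi^{k-1}\partial\charc_{\overline{A}_0}(A_1)$ lies in $\pi^{k-1}\resring{k}\cong\field_q$. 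So the event $\{\charc(A)\equiv g\pmod{\hayesrelation{k}}\}$ is exactly a fixed affine-linear condition $\pi_{d_1,d_2}\bigl(\partial\charc_{\overline{A}_0}(A_1)\bigr)=v$ with $v\in\field_q^{d+1}$, where $\pi_{d_1,d_2}\colon\field_q[x]_{<n}\to\field_q^{d+1}$ records those $d+1$ coefficients. Thus the desired probability equals $q^{-(d+1)}$ as soon as $A_1\mapsto\pi_{d_1,d_2}(\partial\charc_{\overline{A}_0}(A_1))$ is surjective, i.e.\ as soon as $\pi_{d_1,d_2}$ carries $\img(\partial\charc_{\overline{A}_0})$ onto $\field_q^{d+1}$.

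This surjectivity is the heart of the argument, and it is where $m:=\deg\min\overline{A}_0>d$ is used. By Theorem \ref{theorem_image_char_derivative_gl_n}, $\img(\partial\charc_{\overline{A}_0})=\{Pa:a\in\field_q[x]_{<m}\}$ with $P=\charc(\overline{A}_0)/\min(\overline{A}_0)$ monic of degree $n-m$ and $P(0)\ne 0$ (since $\overline{A}_0\in GL_n(\field_q)$, neither $\charc$ nor $\min$ is divisible by $x$). Writing $a=\sum_{i=0}^{m-1}a_ix^i$ and expanding $Pa$, one checks that for $1\le j\le d_1$ the coefficient of $x^{n-j}$ in $Pa$ involves only $a_{m-d_1},\dots,a_{m-1}$ and equals $a_{m-j}$ plus a linear combination of $a_{m-j'}$ with $j'<j$ (because $P$ is monic of degree $n-m$), while for $0\le j\le d_2$ the coefficient of $x^{j}$ involves only $a_0,\dots,a_{d_2}$ and equals $P(0)a_j$ plus a linear combination of $a_{j'}$ with $j'<j$. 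Since $m>d=d_1+d_2$, the two index blocks $\{0,\dots,d_2\}$ and $\{m-d_1,\dots,m-1\}$ are disjoint and both contained in $\{0,\dots,m-1\}$; fixing the remaining coefficients of $a$ to be $0$, the resulting map from $(a_0,\dots,a_{d_2},a_{m-d_1},\dots,a_{m-1})$ to the $d+1$ relevant coefficients of $Pa$ is block diagonal with an invertible unipotent triangular block and an invertible triangular block with diagonal $P(0)$, hence bijective. Therefore $\pi_{d_1,d_2}\bigl(\img(\partial\charc_{\overline{A}_0})\bigr)=\field_q^{d+1}$, and the lemma follows.

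The main obstacle is exactly the simultaneous, independent control of the top $d_1$ and bottom $d_2+1$ coefficients of $\charc(A)$: in the model $\img(\partial\charc_{\overline{A}_0})=P\cdot\field_q[x]_{<m}$ the top coefficients are governed by the top of $a$ and the bottom ones by the bottom of $a$, and these ranges overlap unless $m>d$ — which is precisely the hypothesis $\deg\min\overline{A}>d$ (and, not coincidentally, the same inequality that bounds the useful range of $d$ in Theorem \ref{main_theorem_gl}). Everything else — the parametrization $A\leftrightarrow(A_0,A_1)$, the reduction to $A_1$ uniform, and the passage from a Hayes-class membership mod $\pi^{k}$ to one $\field_q$-affine condition — is routine.
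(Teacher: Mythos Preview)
Your proof is correct and rests on the same core input as the paper's, namely Theorem \ref{theorem_image_char_derivative_gl_n} giving $\img(\partial\charc_{\overline{A}_0})=P\cdot\field_q[x]_{<m}$ with $P=\charc(\overline{A}_0)/\min(\overline{A}_0)$ monic and $P(0)\ne 0$; but the way you extract the conclusion differs from the paper.

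The paper proceeds via Hayes characters: it rewrites the Hayes-class membership as an indicator, expands it over $\chi\in\hayescharacters$, factors out $\chi(P)$ by multiplicativity, and is left with the inner sum $q^{-m}\sum_{a\in\field_q[x]_{<m}}\chi(x^m+a)$, which equals $1_{\chi=\chi_0}$ by orthogonality \eqref{eqn_hayes_first_orthogonality_of_characters} since $m>d\ge d_1+(d_2+1)-1$. You instead bypass characters entirely and argue surjectivity by elementary linear algebra: multiplication by $P$ is unit upper-triangular on the top $d_1$ coefficients and lower-triangular with diagonal $P(0)\ne 0$ on the bottom $d_2+1$ coefficients, and the hypothesis $m>d$ makes the two controlling blocks of coefficients of $a$ disjoint.

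Your argument is cleaner for $GL_n$ and makes the role of the inequality $m>d$ very transparent. The paper's character approach, on the other hand, is what generalizes smoothly to the other groups: for $Sp_{2n}$, $SO_n$, $U_n$ the image consists of palindromic or skew-palindromic polynomials, and the analogous orthogonality (Lemmas \ref{lem_hayes_character_sum_over_palindromic_polys} and \ref{lem_hayes_character_sum_over_skpalindromic_polys}) replaces \eqref{eqn_hayes_first_orthogonality_of_characters}; a direct triangular argument would be less immediate there.
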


\begin{proof}
We show that for any possible choice of $A\mod{\pi^{k-1}}$, the conditional probability is $q^{-(d+1)}$, and this will prove the claim. 
Write $A=A_0+\pi^{k-1}A_0A_1$. Then, 

$$
\charc(A)=\charc(A_0)+\pi^{k-1} \partial\charc_{\overline{A}_0}(A_1).
$$

We are given that $\charc(\rho_{k-1}(A))\equiv g\pmod{R_{d_1,x^{d_2+1},\pi^{k-1}}}$, thus we can write $g-\charc(A_0)=e_0(x)+\pi^{k-1}e_1(x)$, where $\deg e_0(x)<n-d_1$ and $e_0(x)=0\pmod {x^{d_2+1}}$. Hence to have $\charc(A)\equiv g\pmod{R_{d_1,x^{d_2+1},\pi^{k}}}$ we actually need to have
$$
\deg(\partial\charc_{\overline{A_0}}(A_1)-\overline{e_1})<n-d_1,\partial\charc_{\overline{A_0}}(A_1)-\overline{e_1}\equiv 0\pmod{x^{d_2+1}}.
$$
Since $\deg(\overline{e_1}),\deg(\partial\charc_{\overline{A_0}}(A_1))<n$ this is equivalent to having
$$
x^n+\partial\charc_{\overline{A}_0}(A_1)\equiv x^n+\overline{e_1} \pmod{R_{d_1,x^{d_2+1},\pi}}.
$$
Finally, since $\frac{\charc\overline{A_0}}{\min\overline{A_0}}$ is monic, this is equivalent to having 
$$
x^{\deg\min\overline{A_0}}\frac{\charc\overline{A_0}}{\min\overline{A_0}}+\partial\charc_{\overline{A_0}}(A_1)\equiv x^{\deg\min\overline{A_0}}\frac{\charc\overline{A_0}}{\min\overline{A_0}}+\overline{e_1}\pmod{R_{d_1,x^{d_2+1},\pi}}.
$$
Let $r,s\in\field_q[x]_{<n}$ be two polynomials. We denote by $1_{r,s}$ the indicator that
$$
x^{\deg\min\overline{A_0}}\frac{\charc\overline{A_0}}{\min\overline{A_0}}+r\equiv x^{\deg\min\overline{A_0}}\frac{\charc\overline{A_0}}{\min\overline{A_0}}+s\pmod{R_{d_1,x^{d_2+1},\pi}}.
$$

We now note that the map $\partial\charc_{\overline{A}_0}:\mathfrak{gl}_n(\field_q)\to \field_q[x]_{<n}$ is linear, hence all fibers are of the same size. Thus, it is enough to argue that the image is equidistributed modulo $R_{d_1,x^{d_2+1},\pi}$. Now we are ready to compute the probability directly, using Hayes characters. From the above discussion, it is equal to

\begin{multline*}
q^{-\deg\min\overline{A_0}}\sum_{a\in \field_q[x]_{<\deg\min\overline{A_0}}}1_{\frac{\charc\overline{A_0}}{\min\overline{A_0}}a,\overline{e_1}}=\\
=q^{-\deg\min\overline{A_0}}\sum_{a\in \field_q[x]_{<\deg\min\overline{A_0}}}q^{-(d+1)}\sum_{\chi\in \widehat{R}_{d_1,x^{d_2+1},\pi}} \chi\left(\frac{\charc\overline{A_0}}{\min\overline{A_0}}(x^{\deg\min\overline{A_0}}+a)\right)\overline{\chi}
\left(x^{\deg\min\overline{A_0}}\frac{\charc\overline{A_0}}{\min\overline{A_0}}+\overline{e_1}\right)=\\
=q^{-(d+1)}\sum_{\chi\in \widehat{R}_{d_1,x^{d_2+1},\pi}}\chi\left(\frac{\charc\overline{A_0}}{\min\overline{A_0}}\right)\overline{\chi}
\left(
x^{\deg\min\overline{A_0}}\frac{\charc\overline{A_0}}{\min\overline{A_0}}+\overline{e_1}
\right)q^{-\deg\min\overline{A_0}}\sum_{a\in \field_q[x]_{<\deg\min\overline{A_0}}}\chi(x^{\deg\min\overline{A_0}}+a).
\end{multline*}
Here the innermost sum equals $1_{\chi=\chi_0}$ by the the orthogonality of characters, hence we finish.
\end{proof}

\subsubsection{Distribution mod $\pi^k$ given mod $\pi$ behavior}\label{section_gln_pk_given_mod_p}

Let $g\in(\resring{k})[x]$ be a monic polynomial of degree $n$. Let $B\in GL_n(\field_q)$ be a matrix with $\charc B\equiv \overline{g}\pmod{R_{d_1,x^{d_2+1},\pi}}$. Define the discrepancy of the conditional distribution starting from $B$ to be 

$$
\Delta_{B,k}^{g}:=
\condprob{\charc(A)\equiv g\pmod{R_{d_1,x^{d_2+1},\pi^k}}}{\overline{A}=B}
-q^{-(d+1)(k-1)}.
$$

For $\deg\min B> d$, the discrepancy is zero.

\begin{prop}\label{prop_deltabk_large_minpoly}
Let $g\in(\resring{k})[x]$ be a monic polynomial of degree $n$, and let $B\in GL_n(\field_q)$ be a matrix with $\charc(B)\equiv g\pmod{\hayesrelation{}}$. Then, if $\deg\min(B)> d$, $\Delta_{B,k}^{g}=0$. 
\end{prop}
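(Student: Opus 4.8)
The statement $\Delta_{B,k}^g = 0$ when $\deg\min(B) > d$ follows by an induction on $k$, bootstrapping from the one-step Lemma \ref{lem_onestep_charpoly_distribution}. The plan is as follows. First, unwind the definition: $\Delta_{B,k}^g$ is the difference between $\condprob{\charc(A)\equiv g \pmod{R_{d_1,x^{d_2+1},\pi^k}}}{\overline{A}=B}$ and the ``expected'' value $q^{-(d+1)(k-1)}$. For $k=1$ there is nothing to prove since both terms equal $1$ (the condition modulo $\pi$ is automatic given $\overline{A}=B$ and $\charc(B)\equiv g\pmod{\hayesrelation{}}$). So assume the claim holds for $k-1$, i.e. the conditional probability of landing in the right Hayes class modulo $\pi^{k-1}$, given $\overline{A}=B$, is exactly $q^{-(d+1)(k-2)}$.

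Next, I would condition on the reduction $\rho_{k-1}(A)$ and use the tower/chain rule for conditional probabilities. Write $\Pr[\charc(A)\equiv g \pmod{R_{d_1,x^{d_2+1},\pi^k}} \mid \overline{A}=B]$ as the sum over intermediate lifts $A_0 \in H_{G,k}$ of $B$, but more cleanly: condition on the event $E_{k-1} = \{\charc(A)\equiv g \pmod{R_{d_1,x^{d_2+1},\pi^{k-1}}}\}$ and note that $\{\charc(A)\equiv g\pmod{R_{d_1,x^{d_2+1},\pi^k}}\}\subset E_{k-1}$. Then the target probability equals $\Pr[E_{k-1}\mid \overline{A}=B]$ times $\Pr[\charc(A)\equiv g\pmod{R_{d_1,x^{d_2+1},\pi^k}} \mid E_{k-1},\ \overline{A}=B]$. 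The first factor is $q^{-(d+1)(k-2)}$ by the induction hypothesis. For the second factor, the key point is that $\overline{A}=B$ forces $\deg\min(\overline{A})=\deg\min(B) > d$, so the hypothesis of Lemma \ref{lem_onestep_charpoly_distribution} is met; that lemma (applied with the matrix reduced mod $\pi^k$, whose mod-$\pi$ minimal polynomial is that of $B$) gives the conditional one-step probability as exactly $q^{-(d+1)}$. This is uniform over all the relevant conditioning, which is what makes the chain rule go through cleanly. Multiplying, the conditional probability modulo $\pi^k$ is $q^{-(d+1)(k-1)}$, so $\Delta_{B,k}^g = 0$.

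The one subtlety to be careful about — and the main thing that needs checking rather than hand-waving — is that Lemma \ref{lem_onestep_charpoly_distribution} is stated for a matrix $A\in GL_n(\resring{k})$ with a condition on $\deg\min\overline{A}$, and I need its conclusion to hold conditionally on the finer data $\overline{A}=B$, not just $\deg\min\overline{A}>d$. But inspecting the proof of Lemma \ref{lem_onestep_charpoly_distribution}, the one-step conditional probability is shown to equal $q^{-(d+1)}$ \emph{for every fixed choice of $A \bmod \pi^{k-1}$} (the proof fixes a representative $A_0$ and computes); in particular it only depends on $\overline{A}$ through $\min\overline{A_0}$, and the computation is valid for any $A_0$ with $\deg\min\overline{A_0}>d$. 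So conditioning additionally on $\overline{A}=B$ (hence on any particular lift $A_0$) does not change the value. Once this compatibility is spelled out, the induction is routine and the proposition follows; I expect this bookkeeping about which $\sigma$-algebra we condition on to be the only place requiring genuine care.
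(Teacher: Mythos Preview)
Your proof is correct and follows essentially the same approach as the paper: the paper writes the conditional probability as a telescoping product $\prod_{i=2}^k$ of one-step conditional probabilities and invokes Lemma \ref{lem_onestep_charpoly_distribution} for each factor, which is exactly your induction unrolled. Your attention to the subtlety that Lemma \ref{lem_onestep_charpoly_distribution} actually proves the one-step equality for every fixed $A_0$ (hence uniformly under the finer conditioning $\overline{A}=B$) is spot on and is precisely what the paper uses implicitly.
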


\begin{proof}
One can write 

    \begin{multline*}
    \condprob{\charc(A)\equiv g\pmod{\hayesrelation{k}}}{\overline{A}=B}
    =\\
    =\prod_{i=2}^k \condprob{\charc(\rho_i(A))\equiv\rho_i(g)\pmod{\hayesrelation{i}}}{\charc(\rho_{i-1}(A))\equiv\rho_{i-1}(g)\pmod{\hayesrelation{i-1}},\overline{A}=B}.
    \end{multline*}
We get that to compute $\Delta_{B,k}^g$, it is enough to compute each term in the product above.
Using Lemma \ref{lem_onestep_charpoly_distribution}, we get that for all $i$,

$$
\condprob{\charc(\rho_i(A))\equiv\rho_i(g)\pmod{\hayesrelation{i}}}{\charc(\rho_{i-1}(A))\equiv\rho_{i-1}(g)\pmod{\hayesrelation{i-1}},\overline{A}=B}=q^{-(d+1)},
$$
so that  $\Delta_{B,k}^{g}=0$.

\end{proof}

In the case of $B$ with $\deg\min B$ small, we would like to bound this discrepancy as tightly as possible, to get a result on the distribution of $\TR_{d_1,d_2}^{G,k}$ (or equivalently, on the distribution of $\charc A$ in Hayes residue classes). We start by considering the lifts of $B\in GL_n(\field_q)$, when $\deg\min B=l\le d$. 

\begin{prop}\label{prop_deltabk_small_minpoly}
Let $g\in(\resring{k})[x]$ be a monic polynomial of degree $n$. Let $B\in GL_n(\field_q)$ be a matrix with $\deg\min B=l\le d$. Let $\mathcal{P}_B^k=\{\charc(A):A\in GL_n(\resring{k}),\overline{A}=B\}$. Then,

\begin{enumerate}
    \item $|\mathcal{P}_B^k|=q^{(k-1)l}$. Moreover, each element of $\mathcal{P}_B^k$ is in a different residue class modulo $\hayesrelation{k}$. 
    \item $\condprob{\charc(A)\equiv g\pmod{\hayesrelation{k}}}{\overline{A}=B}=1_{\mathcal{P}_B^k\cap g\hayesrelation{k}\neq \emptyset}q^{-(k-1)l}
$. Equivalently, we have
$$\Delta_{B,k}^{g}=1_{\mathcal{P}_B^k\cap g\hayesrelation{k}\neq \emptyset}q^{-(k-1)l}-q^{-(d+1)(k-1)}.$$
\end{enumerate}
\end{prop}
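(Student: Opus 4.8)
The plan is to prove both assertions together by induction on $k$, the step being the passage from $\resring{k-1}$ to $\resring{k}$. For $k=1$ everything is trivial, since $\mathcal{P}_B^1=\{\charc(B)\}$ and $\charc(B)$ lies in the residue class of $g$ by hypothesis. So assume the claim for $k-1$. Recall from Lemma \ref{lem_charpoly_step_in_p_direction} and Theorem \ref{theorem_image_char_derivative_gl_n} that if $A_0\in H_{G,k}$ has $\overline{A_0}=B$ and $A=A_0(1+\pi^{k-1}A_1)$ with $A_1\in\mathfrak{gl}_n(\field_q)$ then
$$\charc(A)=\charc(A_0)+\pi^{k-1}\partial\charc_{\overline{A_0}}(A_1),\qquad \img(\partial\charc_{\overline{A_0}})=V:=\left\{\frac{\charc(B)}{\min(B)}a:\deg a<l\right\},$$
a $\field_q$-subspace of $\field_q[x]_{<n}$ of dimension $l$ depending only on $B$ (since $\overline{A_0}=B$). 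The proof of Proposition \ref{prop_deltabk_large_minpoly} already records the factorization of $\condprob{\charc(A)\equiv g\pmod{\hayesrelation{k}}}{\overline A=B}$ as a product of one-step conditional probabilities over the levels $2,\dots,k$, so it suffices to control a single lifting step and then assemble.

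The core of the argument is the structure of the reduction $\rho_{k-1}\colon\mathcal{P}_B^k\to\mathcal{P}_B^{k-1}$. I would establish: \textbf{(a)} the fibre over each $g'\in\mathcal{P}_B^{k-1}$ is a single coset of $\pi^{k-1}V$ inside the affine space of monic degree-$n$ lifts of $g'$ to $(\resring{k})[x]$ --- equivalently, whenever $A',A''\in GL_n(\resring{k-1})$ satisfy $\overline{A'}=\overline{A''}=B$ and $\charc(A')=\charc(A'')$, then $\charc(\widetilde{A'})-\charc(\widetilde{A''})\in\pi^{k-1}V$ for any lifts $\widetilde{A'},\widetilde{A''}$ to $GL_n(\resring{k})$. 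From (a) and the linearity of $\partial\charc_B$ it follows that, for $A$ uniform among lifts of $B$, $\charc(A)\bmod\pi^k$ is uniform on the fibre of $\rho_{k-1}$ over $\charc(A)\bmod\pi^{k-1}$; combined with the inductive hypothesis (that $\mathcal{P}_B^{k-1}$ has $q^{(k-2)l}$ elements in distinct classes mod $\hayesrelation{k-1}$, and that (a) holds at all lower levels) this gives $|\mathcal{P}_B^k|=q^l\cdot q^{(k-2)l}=q^{(k-1)l}$ and $\charc(A)$ uniform on $\mathcal{P}_B^k$. Distinctness of classes mod $\hayesrelation{k}$ is then propagated: if $f\neq f'$ in $\mathcal{P}_B^k$ were congruent mod $\hayesrelation{k}$, let $\pi^i$ ($1\le i\le k-1$) be the exact power dividing $f-f'$, so $f-f'\equiv\pi^i v\pmod{\pi^{i+1}}$ for a unique nonzero $v\in\field_q[x]_{<n}$; by (a) at level $i+1\le k$ we get $v\in V$, while the Hayes congruence forces the top $d_1$ and the bottom $d_2+1$ coefficients of $v$ to vanish. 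Writing $v=\frac{\charc(B)}{\min(B)}a$ with $\deg a<l$ and using $\charc(B)(0)\neq0$, $\deg\frac{\charc(B)}{\min(B)}=n-l$ and $l\le d_1+d_2$, a short degree-and-divisibility check gives $a=0$, a contradiction. Part 2 is then immediate: $\mathcal{P}_B^k\cap g\hayesrelation{k}$ is empty or a single polynomial, so uniformity of $\charc(A)$ on the $q^{(k-1)l}$-element set $\mathcal{P}_B^k$ yields $\condprob{\charc(A)\equiv g\pmod{\hayesrelation{k}}}{\overline A=B}=1_{\mathcal{P}_B^k\cap g\hayesrelation{k}\neq\emptyset}\,q^{-(k-1)l}$, and the formula for $\Delta_{B,k}^{g}$ is just the definition.

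The main obstacle is clearly step (a), and it is the crux of the whole proposition. Naively one hopes that $\charc$ of a lift of $A'$ depends on $A'$ only through $\charc(A')\bmod\pi^{k-1}$ up to $\pi^{k-1}V$, but $\charc$ of a lift to $\resring{k}$ records second-order information in the perturbation, so this needs a genuine argument and a naive induction appears to miss it. My intended route is to connect any two admissible pairs $A',A''$ (same mod-$\pi$ reduction $B$, same characteristic polynomial over $\resring{k-1}$) by a chain of moves whose effect on the characteristic polynomial of a representative is visibly confined to $\pi^{k-1}V$ --- namely exact-$\charc$-preserving conjugations $A'\mapsto gA'g^{-1}$ with $\overline g$ in the centralizer of $B$ (these fix the mod-$\pi$ reduction), together with $\charc$-preserving perturbations $A'\mapsto A'+\pi^i D$ --- and to show that such moves act transitively on the fibre. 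Isolating the correct generating family and verifying the containment in $\pi^{k-1}V$ at each move is where the real work lies; the alternative of expanding $\charc(B+\pi D)$ directly over $\resring{k}$ and reading off the image modulo $\hayesrelation{k}$ is possible in principle but turns into heavy bookkeeping that does not transparently collapse to the stated count.
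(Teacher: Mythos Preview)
You have correctly isolated step~(a) --- that each fibre of $\rho_{k-1}\colon\mathcal{P}_B^k\to\mathcal{P}_B^{k-1}$ is a single coset of $\pi^{k-1}V$ --- as the load-bearing assertion, and you are right that it is not a formality. The paper's own proof simply asserts this fibre structure and moves on, so your proposal is an attempted elaboration of the same route rather than a different one. Unfortunately the assertion~(a) is \emph{false}, and with it the proposition as stated. Take $n=3$, $B=I_3$ (so $l=\deg\min B=1$ and $V=(x-1)^2\field_q$), any $d_1+d_2=d=1$, and $k=3$. Every lift of $B$ to $GL_3(\resring{3})$ is of the form $A=I+\pi D$ with $D\in M_3(\resring{2})$, and setting $y=x-1$ one finds
\[
\charc(A)\;=\;\det(yI-\pi D)\;\equiv\;y^3-\pi\,\tr(D)\,y^2+\pi^2\,e_2(\overline D)\,y\pmod{\pi^3}.
\]
As $D$ varies, the pair $(\tr D,\,e_2(\overline D))$ ranges freely over $(\resring{2})\times\field_q$, so $|\mathcal{P}_B^3|=q^3\neq q^{(k-1)l}=q^2$; the fibre over $(x-1)^3\in\mathcal{P}_B^2$ is $(x-1)^3+\pi^2\{sy^2+uy:s,u\in\field_q\}$, of size $q^2$, not a coset of $\pi^2V$. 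The culprit is precisely the ``second-order information'' you flagged: the term $\pi^2 e_2(\overline D)\,y$ escapes $V$. Part~2 fails too: the conditional probability that $\charc(A)$ lands in a given Hayes class equals $q^{-1}\cdot\Pr_{M\in M_3(\field_q)}[\tr M=\overline{t_0},\ e_2(M)=u_0]$, which is not constant in $(\overline{t_0},u_0)$. Your chain-of-moves plan for~(a) therefore cannot be completed --- not because the moves are hard to find, but because the target statement is wrong.

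Fortunately nothing downstream actually depends on this proposition. Its only use is Proposition~\ref{prop_summing_deltabbk_over_all_g}, and the bound $\sum_{g\in\mathcal G}|\Delta_{B,k}^g|\le 2$ there follows at once from the triangle inequality,
\[
\sum_{g\in\mathcal G}|\Delta_{B,k}^g|\;\le\;\sum_{g\in\mathcal G}\Pr\bigl[\charc(A)\equiv g\bigm|\overline A=B\bigr]\;+\;|\mathcal G|\,q^{-(d+1)(k-1)}\;=\;1+1,
\]
with no appeal to the structure of $\mathcal{P}_B^k$ at all.
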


\begin{proof}
We prove the claim by induction on $k$. For $k=1$ this is trivial. Suppose that we proved the proposition for all natural numbers $<k$, and prove it to $k$. We note that the fibers of the map $\mathcal{P}_B^k\to\mathcal{P}_B^{k-1}$, $h\mapsto \rho_{k-1}(h)$, are precisely of the form $f_0+\pi^{k-1}\img(\partial\charc_{B})$, for some $f_0\in(\resring{k})[x]$. We note that by Theorem \ref{theorem_image_char_derivative_gl_n}, each fiber is of size $q^l$. Finally, we note that by Lemma \ref{lem_onestep_charpoly_distribution} with $d=l$, each of the elements in the fiber is in a different residue class of $\hayesrelation{k}$, hence the first assertion follows for $k$. As for the second assertion, it follows for $k$ from the previous proposition when we take $d=l$, together with the first assertion of the current proposition.
\end{proof}

We now let $\mathcal{G}$ be a set of representatives for the lifts of $\charc(B)$ to a monic polynomial $g\in(\resring{k})[x]$ of degree $n$, modulo the relation $R_{d_1,x^{d_2+1},\pi^k}$. We note that $|\mathcal{G}|=q^{(d+1)(k-1)}$. 

\begin{prop}\label{prop_summing_deltabbk_over_all_g}
Let $B\in GL_n(\field_q)$. Then,

\begin{enumerate}
    \item If $\deg\min{B}> d$, we have $\sum_{g\in \mathcal{G}}|\Delta_{B,k}^{g}|=0$.
    \item If $\deg\min B=l\le d$, we have $\sum_{g\in\mathcal{G}}|\Delta_{B,k}^{g}|\le 2$.
\end{enumerate}
\end{prop}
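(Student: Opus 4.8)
The plan is to split into the two cases according to the size of $\deg\min B$ and invoke the preceding propositions.

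For the first case, $\deg\min B > d$, Proposition \ref{prop_deltabk_large_minpoly} gives $\Delta_{B,k}^g = 0$ for every $g$, so the sum is trivially $0$. Nothing more is needed here.

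For the second case, $\deg\min B = l \le d$, I would use part (2) of Proposition \ref{prop_deltabk_small_minpoly}, which says $\Delta_{B,k}^g = 1_{\mathcal{P}_B^k \cap g R_{d_1,x^{d_2+1},\pi^k} \ne \emptyset}\, q^{-(k-1)l} - q^{-(d+1)(k-1)}$. Summing $|\Delta_{B,k}^g|$ over the $q^{(d+1)(k-1)}$ representatives $g \in \mathcal{G}$, I would bound it by the triangle inequality as a sum of two pieces: $\sum_{g \in \mathcal{G}} 1_{\mathcal{P}_B^k \cap g R \ne \emptyset} q^{-(k-1)l}$ and $\sum_{g \in \mathcal{G}} q^{-(d+1)(k-1)}$. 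The second piece is exactly $q^{(d+1)(k-1)} \cdot q^{-(d+1)(k-1)} = 1$. For the first piece, the key point (from part (1) of Proposition \ref{prop_deltabk_small_minpoly}) is that $|\mathcal{P}_B^k| = q^{(k-1)l}$ and these elements lie in distinct residue classes mod $R_{d_1,x^{d_2+1},\pi^k}$; hence at most $q^{(k-1)l}$ of the classes $g R_{d_1,x^{d_2+1},\pi^k}$ (with $g \in \mathcal{G}$) can meet $\mathcal{P}_B^k$, so the indicator is nonzero for at most $q^{(k-1)l}$ values of $g$, giving a contribution of at most $q^{(k-1)l} \cdot q^{-(k-1)l} = 1$. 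Adding the two pieces yields the bound $2$.

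The only mild subtlety — and the step I would be most careful about — is the bookkeeping that the residue classes $\{g R_{d_1,x^{d_2+1},\pi^k} : g \in \mathcal{G}\}$ are exactly the classes containing lifts of $\charc(B)$, and that $\mathcal{P}_B^k$ is contained in the union of these classes (since every $\charc(A)$ with $\overline{A} = B$ reduces to $\charc(B)$ mod $\pi$, hence lies in one of them); this is what lets me count the nonzero indicators by counting elements of $\mathcal{P}_B^k$ rather than overcounting. Once that is pinned down, both cases are immediate and the proof is a short argument.

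\begin{proof}
We treat the two cases separately. If $\deg\min B > d$, then by Proposition \ref{prop_deltabk_large_minpoly} we have $\Delta_{B,k}^g = 0$ for every monic $g \in (\resring{k})[x]$ of degree $n$ lifting $\charc(B)$, so $\sum_{g \in \mathcal{G}} |\Delta_{B,k}^g| = 0$.

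Now suppose $\deg\min B = l \le d$. By part (2) of Proposition \ref{prop_deltabk_small_minpoly},
$$
|\Delta_{B,k}^g| \le 1_{\mathcal{P}_B^k \cap g\hayesrelation{k} \ne \emptyset}\, q^{-(k-1)l} + q^{-(d+1)(k-1)}
$$
for every $g \in \mathcal{G}$. Since $|\mathcal{G}| = q^{(d+1)(k-1)}$, summing the second term over $g \in \mathcal{G}$ contributes exactly $1$. For the first term, recall from part (1) of Proposition \ref{prop_deltabk_small_minpoly} that $|\mathcal{P}_B^k| = q^{(k-1)l}$ and that the elements of $\mathcal{P}_B^k$ lie in pairwise distinct residue classes modulo $\hayesrelation{k}$. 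The classes $g\hayesrelation{k}$ for $g \in \mathcal{G}$ are distinct, so each of them can intersect $\mathcal{P}_B^k$ in at most one element, and hence the number of $g \in \mathcal{G}$ with $\mathcal{P}_B^k \cap g\hayesrelation{k} \ne \emptyset$ is at most $|\mathcal{P}_B^k| = q^{(k-1)l}$. Therefore
$$
\sum_{g \in \mathcal{G}} 1_{\mathcal{P}_B^k \cap g\hayesrelation{k} \ne \emptyset}\, q^{-(k-1)l} \le q^{(k-1)l} \cdot q^{-(k-1)l} = 1.
$$
Combining the two bounds gives $\sum_{g \in \mathcal{G}} |\Delta_{B,k}^g| \le 2$, as claimed.
\end{proof}
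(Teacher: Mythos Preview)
Your proof is correct and follows exactly the approach the paper takes: the first case is immediate from Proposition~\ref{prop_deltabk_large_minpoly}, and the second is the triangle inequality applied to the formula in Proposition~\ref{prop_deltabk_small_minpoly}(2), using Proposition~\ref{prop_deltabk_small_minpoly}(1) to count the nonzero indicators. You have simply written out the details the paper leaves implicit.
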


\begin{proof}
The first claim follows directly from Proposition \ref{prop_deltabk_large_minpoly}. The second follows from the triangle inequality, together with the two parts of Proposition \ref{prop_deltabk_small_minpoly}.
\end{proof}

Let $g\in(\resring{k})[x]$ be a monic polynomial of degree $n$. We define the discrepancy of a polynomial $\overline{f}\in\field_q[x]$ such that $\overline{f}\equiv \overline{g}\pmod {R_{d_1,x^{d_2+1},\pi}}$ by 

$$
\Delta_{\overline{f},k}^{g}:=\Pr[\charc(A)\equiv g\pmod{\hayesrelation{k}}|\charc(\overline{A})=\overline{f}]-q^{-(d+1)(k-1)}.
$$

\begin{lem}\label{lem_discrepancy_bound_interval_starting_from_f_mod_p}
Let $\overline{f}\in \field_q[x]_{=n}^\mon$ be a polynomial, and let $\mathcal{G}$ be a set of representatives for the possible lifts of $\overline{f}$ to a monic polynomial in $(\resring{k})[x]$ of degree $n$, modulo the relation $\hayesrelation{k}$.
\begin{enumerate}
    \item If $\deg\rad\overline{f}> d$, then $\sum_{g\in\mathcal{G}}|\Delta_{\overline{f},k}^g
    |=0$. 
    \item If $\deg\rad\overline{f}=s\le d$, then $\sum_{g\in\mathcal{G}}|\Delta_{\overline{f},k}^g|\le 2\Pr_{B\in GL_n(\field_q)}[\deg\min(B)\le d|\charc(B)=\overline{f}]$.
\end{enumerate}
\end{lem}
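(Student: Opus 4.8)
The plan is to express the conditional probability defining $\Delta^{g}_{\overline f,k}$ as a weighted average of the per-matrix discrepancies $\Delta^{g}_{B,k}$ over matrices $B\in GL_n(\field_q)$ with $\charc(B)=\overline f$, and then quote Proposition \ref{prop_summing_deltabbk_over_all_g}. First I would note that since $A$ is uniform on $GL_n(\resring k)$, its reduction $\overline A=\rho_1(A)$ is uniform on $GL_n(\field_q)$, so conditioning on the event $\charc(\overline A)=\overline f$ makes $\overline A$ uniform on $\{B\in GL_n(\field_q):\charc(B)=\overline f\}$ (we may assume this set is non-empty, as otherwise the conditional probabilities in the statement are not defined). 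Conditioning further on $\overline A=B$ and using the law of total probability,
$$
\Pr[\charc(A)\equiv g\!\!\pmod{\hayesrelation k}\mid \charc(\overline A)=\overline f]=\sum_{B:\,\charc(B)=\overline f}\Pr[\overline A=B\mid\charc(\overline A)=\overline f]\cdot\condprob{\charc(A)\equiv g\!\!\pmod{\hayesrelation k}}{\overline A=B}.
$$
Since the weights $\Pr[\overline A=B\mid\charc(\overline A)=\overline f]$ sum to $1$ and the centering $q^{-(d+1)(k-1)}$ is a constant, subtracting it from both sides gives the identity $\Delta^{g}_{\overline f,k}=\sum_{B:\,\charc(B)=\overline f}\Pr[\overline A=B\mid\charc(\overline A)=\overline f]\,\Delta^{g}_{B,k}$. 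Applying the triangle inequality, summing over $g\in\mathcal G$, and interchanging the two finite sums, I obtain
$$
\sum_{g\in\mathcal G}|\Delta^{g}_{\overline f,k}|\le\sum_{B:\,\charc(B)=\overline f}\Pr[\overline A=B\mid\charc(\overline A)=\overline f]\sum_{g\in\mathcal G}|\Delta^{g}_{B,k}|.
$$
Here one must observe that $\mathcal G$, a set of representatives for the lifts of $\overline f=\charc(B)$ modulo $\hayesrelation k$, is exactly the set $\mathcal G$ to which Proposition \ref{prop_summing_deltabbk_over_all_g} applies for each such $B$.

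Next I would invoke Proposition \ref{prop_summing_deltabbk_over_all_g}: the inner sum $\sum_{g\in\mathcal G}|\Delta^{g}_{B,k}|$ equals $0$ whenever $\deg\min(B)>d$ and is at most $2$ whenever $\deg\min(B)\le d$. Hence only matrices $B$ with $\deg\min(B)\le d$ contribute, and
$$
\sum_{g\in\mathcal G}|\Delta^{g}_{\overline f,k}|\le 2\,\Pr[\deg\min(\overline A)\le d\mid\charc(\overline A)=\overline f]=2\,\Pr_{B\in GL_n(\field_q)}[\deg\min(B)\le d\mid\charc(B)=\overline f],
$$
which is the second assertion. For the first assertion I would use that $\min(B)$ and $\charc(B)$ have the same set of prime divisors, so $\rad(\min(B))=\rad(\charc(B))=\rad(\overline f)$; therefore $\deg\rad(\overline f)>d$ forces $\deg\min(B)\ge\deg\rad(\min(B))>d$ for every $B$ with $\charc(B)=\overline f$, so every inner sum vanishes and $\sum_{g\in\mathcal G}|\Delta^{g}_{\overline f,k}|=0$.

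The argument is essentially bookkeeping built on the previously established propositions, so there is no substantial obstacle. The only points needing care are verifying that conditioning on $\charc(\overline A)=\overline f$ really yields the uniform measure on $\{B:\charc(B)=\overline f\}$ (so that the $\Delta^{g}_{B,k}$ enter with the correct weights and the constant centering cancels cleanly), matching the set of representatives $\mathcal G$ here with the one in Proposition \ref{prop_summing_deltabbk_over_all_g}, and the elementary identity $\rad(\min(B))=\rad(\charc(B))$ used in the first case. No new estimates beyond Propositions \ref{prop_deltabk_large_minpoly}, \ref{prop_deltabk_small_minpoly} and \ref{prop_summing_deltabbk_over_all_g} are required.
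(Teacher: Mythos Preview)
Your proposal is correct and follows essentially the same approach as the paper: both write $\Delta_{\overline f,k}^{g}$ as the weighted average $\sum_{B:\charc(B)=\overline f}\Pr[\overline A=B\mid\charc(\overline A)=\overline f]\,\Delta_{B,k}^{g}$, restrict to $\deg\min B\le d$ via Proposition~\ref{prop_deltabk_large_minpoly}, and then use Proposition~\ref{prop_summing_deltabbk_over_all_g} together with $\rad\overline f\mid\min B$ to handle the two cases. The only cosmetic difference is that the paper observes $\Delta_{\overline f,k}^{g}=0$ for each $g$ in case~(1) before summing, whereas you sum first; the content is identical.
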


\begin{proof}
We have

\begin{equation*}\label{eqn_disc_sum_gl}
\Delta_{\overline{f},k}^g=\sum_{\substack{B\in GL_n(\field_q)\\ \charc(B)=\overline{f}}}\Delta_{B,k}^g\cdot \Pr[\overline{A}=B|\charc(\overline{A})=\overline{f}].    
\end{equation*}
By Proposition \ref{prop_deltabk_large_minpoly} we conclude that 

$$
\Delta_{\overline{f},k}^g=\sum_{\substack{B\in GL_n(\field_q)\\ \charc(B)=\overline{f},\deg\min B\le d}}\Delta_{B,k}^g\cdot \Pr[\overline{A}=B|\charc(\overline{A})=\overline{f}]
$$
For $\deg\rad\overline{f}>d$, since $\rad\overline{f}|\min B$, we get that $\Delta_{\overline{f},k}^g=0$ and this proves the first assertion of the lemma. For $\deg\rad\overline{f}=s\le d$, summing the previous equation over all $g\in\mathcal{G}$ and then applying the triangle inequality and exchanging the order of summation we get

\begin{multline*}
\sum_{g\in\mathcal{G}}|\Delta_{\overline{f},k}^g|\le
\sum_{\substack{B\in GL_n(\field_q)\\ \charc(B)=\overline{f},\deg\min B\le d}} \Pr[\overline{A}=B|\charc(\overline{A})=\overline{f}]\sum_{g\in\mathcal{G}}|\Delta_{B,k}^g|\le\\
\le 2\Pr_{B\in GL_n(\field_q)}[\deg\min(B)\le d|\charc(B)=\overline{f}].
\end{multline*}
\end{proof}

\subsubsection{Discrepancy bound on distribution in Hayes residue classes}\label{section_discrepancy_bound_on_intervals_gln}

Now, for a monic polynomial $g\in(\resring{k})[x]$ of degree $n$ we define the discrepancy of its Hayes residue class to be

$$
\Delta_{g}:=\Pr[\charc A\equiv g\pmod{\hayesrelation{k}}]-q^{-k(d+1)}.
$$

\begin{lem}\label{lem_average_discrepancy_bound_gln}
Let $\mathcal{G}$ be a set of representatives for monic polynomials in $(\resring{k})[x]$ of degree $n$ modulo the relation $\hayesrelation{k}$. Then,

\begin{equation*}
\sum_{g\in\mathcal{G}}|\Delta_g|=O\left(q^{-\frac{n^2}{2d}}\left(1+\frac{1}{q-1}\right)^n\binom{n+d}{n}+q^{d-\frac{n^2}{d}+o(n)}\right).
\end{equation*}
\end{lem}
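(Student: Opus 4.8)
The plan is to split the sum over residue classes according to the behavior of $\charc A$ modulo $\pi$, and for each fixed reduction $\overline{f} \in \field_q[x]_{=n}^\mon$ use the bounds accumulated in the previous subsections. First I would write, for each $g \in \mathcal{G}$,
$$
\Delta_g = \sum_{\overline{f}} \Pr[\charc(\overline{A}) = \overline{f}] \cdot \Delta_{\overline{f},k}^g + \left(\Pr[\charc(\overline{A}) \equiv \overline{g} \pmod{R_{d_1,x^{d_2+1},\pi}}] - q^{-(d+1)}\right) q^{-(d+1)(k-1)},
$$
where the sum is over $\overline{f}$ lying in the mod $\pi$ Hayes class of $\overline{g}$; the second term isolates the mod $\pi$ discrepancy, which is controlled by Corollary \ref{corollary_neg_traces_gln}. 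Summing $|\Delta_g|$ over $g \in \mathcal{G}$ (with $|\mathcal{G}| = q^{k(d+1)}$ but organized so that each mod $\pi$ class contains $q^{(d+1)(k-1)}$ elements of $\mathcal{G}$) and applying the triangle inequality, the mod $\pi$ contribution is bounded by $q^{-\frac{n^2}{2d}}\left(1+\frac{1}{q-1}\right)^n\binom{n+d}{n}$ via Corollary \ref{corollary_neg_traces_gln}, since the number of mod $\pi$ Hayes classes times $q^{(d+1)(k-1)}$ times the per-class discrepancy telescopes correctly.

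For the remaining term I would exchange the order of summation: for each $\overline{f}$,
$$
\sum_{g \in \mathcal{G}_{\overline{f}}} |\Delta_{\overline{f},k}^g| \le 2 \Pr_{B \in GL_n(\field_q)}[\deg \min(B) \le d \mid \charc(B) = \overline{f}]
$$
by Lemma \ref{lem_discrepancy_bound_interval_starting_from_f_mod_p}, and this is zero unless $\deg\rad\overline{f} \le d$. So the total contribution of these terms is at most
$$
2 \sum_{\substack{\overline{f} \in \field_q[x]_{=n}^\mon \\ \deg\rad\overline{f} \le d}} \Pr[\charc(\overline{A}) = \overline{f}] \cdot \Pr_{B}[\deg\min(B) \le d \mid \charc(B) = \overline{f}] = 2 \Pr_{B \in GL_n(\field_q)}[\deg\min(B) \le d].
$$
Now I would bound $\Pr[\deg\min(B) \le d]$ by summing Claim \ref{claim_bounding_probability_small_minpoly} over all $\overline{f}$ with $\deg\rad\overline{f} \le d$ and over $\delta \ge n - d$: the number of such $\overline{f}$ is $q^{d + o(n)}$ by Lemma \ref{lem_small_radical_bound}, and for each one Claim \ref{claim_bounding_probability_small_minpoly} gives $\Pr[\deg\min(B) = n-\delta \cap \charc(B) = \overline{f}] < q^{-\frac{n^2}{n-\delta} + o(n)}$, maximized at $\delta = n - d$ where it is $q^{-\frac{n^2}{d} + o(n)}$. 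Summing over the $O(n)$ values of $\delta$ and the $q^{d+o(n)}$ polynomials yields $q^{d - \frac{n^2}{d} + o(n)}$, the second term in the asserted bound.

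The main obstacle I anticipate is the bookkeeping in the mod $\pi$ term: one must check carefully that summing $|\Delta_g|$ over the $q^{k(d+1)}$ representatives in $\mathcal{G}$, after the split, genuinely reduces to $|\mathcal{G}_{\text{mod }\pi}|$-many copies of the mod $\pi$ discrepancy (each weighted by $q^{(d+1)(k-1)}$, which cancels against the normalization), so that no spurious factor of $q^{(d+1)(k-1)}$ survives and the bound stays uniform in $k$. This requires combining Propositions \ref{prop_deltabk_large_minpoly}, \ref{prop_deltabk_small_minpoly} and \ref{prop_summing_deltabbk_over_all_g} to see that for $\overline{f}$ with large radical the entire conditional distribution mod $\pi^k$ is exactly uniform given the mod $\pi$ value, so the only discrepancy that propagates upward is the mod $\pi$ one; the small-radical $\overline{f}$ are then handled separately by the counting argument above. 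Once this reduction is clean, the two error terms assemble exactly as stated.
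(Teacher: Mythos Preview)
Your proposal is correct and follows essentially the same route as the paper's proof: decompose $\Delta_g$ into a mod~$\pi$ discrepancy term (handled by Corollary~\ref{corollary_neg_traces_gln}) plus the conditional discrepancies $\Delta_{\overline f,k}^g$, sum over $g\in\mathcal G$, swap the order of summation, and invoke Lemma~\ref{lem_discrepancy_bound_interval_starting_from_f_mod_p} together with Claim~\ref{claim_bounding_probability_small_minpoly} and Lemma~\ref{lem_small_radical_bound} for the small-radical contribution. One bookkeeping remark on the point you flagged: after the $q^{(d+1)(k-1)}$ factors cancel (so the bound is indeed uniform in $k$), you still sum over the $q^{d+1}$ mod~$\pi$ Hayes classes, so the first term in the paper's own proof actually comes out as $q^{d-\frac{n^2}{2d}}\bigl(1+\tfrac{1}{q-1}\bigr)^n\binom{n+d}{n}$ rather than without the $q^d$; this is what is used downstream in Theorem~\ref{theorem_mod_pk_joint_traces}.
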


\begin{proof}
Let $g\in\mathcal{G}$. We have (for $A\in GL_n(\resring{k})$)

\begin{multline*}
\Pr[\charc A\equiv g\pmod{\hayesrelation{k}}]=\\
=\sum_{\overline{f}\equiv \overline{g}\pmod{\hayesrelation{}}\atop{\overline{f}\in\field_q[x]}}\Pr[\charc A\equiv g\pmod{\hayesrelation{k}}|\charc\overline{A}=\overline{f}]\Pr[\charc\overline{A}=\overline{f}]=\\
=\sum_{\overline{f}\equiv \overline{g}\pmod{\hayesrelation{}}\atop{\overline{f}\in\field_q[x]}}\left[q^{-(k-1)(d+1)}+\Delta_{\overline{f},k}^g\right]\Pr[\charc\overline{A}=\overline{f}]=\\
=q^{-(k-1)(d+1)}\Pr[\charc\overline{A}\equiv \overline{g}\pmod{\hayesrelation{}}]+
\sum_{\overline{f}\equiv \overline{g}\pmod{\hayesrelation{}}}\Delta_{\overline{f},k}^g\cdot \Pr[\charc\overline{A}=\overline{f}].
\end{multline*}
By Corollary \ref{corollary_neg_traces_gln}, we have the following bound on the mod $\pi$ discrepancy:

$$
\Pr[\charc\overline{A}\equiv \overline{g}\pmod{\hayesrelation{}}]=q^{-(d+1)}+O\left(q^{-\frac{n^2}{2d}}\left(1+\frac{1}{q-1}\right)^n\binom{n+d}{n}\right).
$$
Hence,

$$
|\Delta_g|<O\left(q^{-(k-1)(d+1)-\frac{n^2}{2d}}\left(1+\frac{1}{q-1}\right)^n\binom{n+d}{n}\right)+\sum_{\overline{f}\equiv \overline{g}\pmod{\hayesrelation{}}}|\Delta_{\overline{f},k}^g|\cdot \Pr[\charc\overline{A}=\overline{f}].
$$
Summing the last equation over all $g\in\mathcal{G}$, and using Lemma \ref{lem_discrepancy_bound_interval_starting_from_f_mod_p}, we get that 

\begin{multline*}
\sum_{g\in\mathcal{G}}|\Delta_g|<\\
<O\left(q^{d-\frac{n^2}{2d}}\left(1+\frac{1}{q-1}\right)^n\binom{n+d}{n}\right)+\sum_{\substack{\overline{f}\in\field_q[x]\\
\deg\rad \overline{f}\le d}}2\Pr_{B\in GL_n(\field_q)}[\deg\min(B)\le d|\charc(B)=\overline{f}]\Pr_{B\in GL_n(\field_q)}[\charc B=\overline{f}].
\end{multline*}
We note that by Claim \ref{claim_bounding_probability_small_minpoly},

$$\Pr_{B\in GL_n(\field_q)}[\deg\min(B)\le d|\charc(B)=\overline{f}]\Pr_{B\in GL_n(\field_q)}[\charc B=\overline{f}]=\Pr[\deg\min(B)\le d\cap \charc(B)=\overline{f}]<q^{-\frac{n^2}{d}+o(n)},$$
By Lemma \ref{lem_small_radical_bound}, the number of polynomials in $\field_q[x]_{=n}^\mon$ with a radical of degree $\le d$ is $q^{d+o(n)}$. Hence we get the result.   
\end{proof}

\subsection{Joint distribution of traces}\label{section_join_distribution_traces_gln}

We would like to bound the total variation distance $d_{TV}(\mu_{\TR_{d_1,d_2}^{GL}},\mu_{\UTR_{d_1,d_2}})$. Using Lemma \ref{lem_total_variation_dist_limit} we may reduce to the $\pmod{\pi^k}$ total variation distance. We first recall our Corollary \ref{corollary_neg_traces_gln}, which extends the result \cite[Theorem 1.3]{GR21}. It follows from it that there is a constant $c_q^{GL}$ for which there is a super-exponential bound for the discrepancy of the distribution of $\charc(B)$ (for $B\in GL_n(\field_q)$) in Hayes residue classes of length $d<c_q^{GL}\cdot n$. The constant $c_{q}^{GL}$ is determined by the range of $d$ for which 

$$
q^{d-\frac{n^2}{2d}}\left(1+\frac{1}{q-1}\right)^n\binom{n+d}{n}=_{n\to\infty}o(1).
$$

\begin{theorem}\label{theorem_mod_pk_joint_traces}
Let $0\le d_1,d_2<n$ be integers such that $d_1+d_2=d<n-1$. Let $\TR_{d_1,d_2}^{GL,k}$ be the random variable attaching to a matrix $M\in GL_n(\resring{k})$ its $(d_1,d_2)$-trace datum, and let $\UTR_{d_1,d_2}^k$ be the uniform measure on $(d_1,d_2)$-trace data, as defined in Section \ref{section_trace_data_poly_intervals}. Then, the total variation distance

\begin{multline*}
d_{TV}(\mu_{\TR_{d_1,d_2}^{GL,k}},\mu_{\UTR_{d_1,d_2}^k})=\\
=\sum_{\mathfrak{a} \text{ is a }(d_1,d_2)-\text{trace datum}}
\Bigg|
\Pr_{M\in GL_n(\resring{k})}[M\text{ has trace data }\mathfrak{a}]-q^{-(kd-S(d_1,k)-S(d_2,k))}
\Bigg|
<\\
<O\left(
q^{d-\frac{n^2}{2d}}\left(1+\frac{1}{q-1}\right)^n\binom{n+d}{n}+q^{d-\frac{n^2}{d}+o(n)}
\right).
\end{multline*}
In particular, as long as $d<c_q^{GL}\cdot n$, this total variation distance is $o(1)$ as $n\to\infty$, uniformly in $k$. 
\end{theorem}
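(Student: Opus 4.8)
The statement is an assembly of the two technical inputs already in hand: Lemma~\ref{lem_connection_traces_hayes_classes}, which rewrites the event ``$M$ has $(d_1,d_2)$-trace datum $\mathfrak a$'' as a disjoint union of Hayes residue classes for $\charc(M)$ modulo $\hayesrelation{k}=R_{d_1,x^{d_2+1},\pi^k}$, and Lemma~\ref{lem_average_discrepancy_bound_gln}, which controls the total discrepancy $\sum_{g\in\mathcal G}|\Delta_g|$ of the distribution of $\charc(M)$ over a full set $\mathcal G$ of representatives for such classes.

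First I would fix a $(d_1,d_2)$-trace datum $\mathfrak a$. By Lemma~\ref{lem_connection_traces_hayes_classes} there are $q^{S(d_1,k)+S(d_2,k)+k}$ monic degree-$n$ polynomials $f_1,\dots$ (depending on $\mathfrak a$) lying in pairwise distinct residue classes mod $\hayesrelation{k}$, such that $M$ has trace datum $\mathfrak a$ precisely when $\charc(M)$ lies in one of them; hence
$$\Pr_{M\in GL_n(\resring{k})}[M\text{ has trace datum }\mathfrak a]=\sum_{i}\Big(q^{-k(d+1)}+\Delta_{f_i}\Big).$$
The accounting identity $q^{S(d_1,k)+S(d_2,k)+k}\cdot q^{-k(d+1)}=q^{-(kd-S(d_1,k)-S(d_2,k))}$ shows the constant terms sum exactly to the uniform mass of $\mathfrak a$; this exact cancellation is what keeps the bound tight, and the one point requiring care is that the $f_i$ must include all residue classes obtained by letting the constant coefficient (which the trace datum does not pin down) range over $\resring{k}$, since only then do the main terms match. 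What remains is $\big|\Pr[M\text{ has trace datum }\mathfrak a]-q^{-(kd-S(d_1,k)-S(d_2,k))}\big|=\big|\sum_i\Delta_{f_i}\big|\le\sum_i|\Delta_{f_i}|$.

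Next I would sum over all trace data $\mathfrak a$. Since every monic degree-$n$ polynomial has a well-defined trace datum and, by Lemma~\ref{lem_connection_traces_hayes_classes}, distinct $\mathfrak a$'s give disjoint collections of residue classes whose union exhausts $\mathcal G$, the double sum $\sum_{\mathfrak a}\sum_i|\Delta_{f_i}|$ equals $\sum_{g\in\mathcal G}|\Delta_g|$. Therefore
$$d_{TV}\big(\mu_{\TR_{d_1,d_2}^{GL,k}},\mu_{\UTR_{d_1,d_2}^k}\big)\le\sum_{g\in\mathcal G}|\Delta_g|=O\!\left(q^{d-\frac{n^2}{2d}}\Big(1+\tfrac{1}{q-1}\Big)^n\binom{n+d}{n}+q^{d-\frac{n^2}{d}+o(n)}\right)$$
by Lemma~\ref{lem_average_discrepancy_bound_gln}; the right-hand side does not involve $k$, which gives uniformity in $k$. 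Finally, for the ``in particular'' clause: by the definition of $c_q^{GL}$ in this section the first error term is $o(1)$ once $d<c_q^{GL}n$, and since $c_q^{GL}<1$ (the first term already fails to be $o(1)$ when $d$ approaches $n$, as $\binom{n+d}{n}(1+\tfrac1{q-1})^n$ grows exponentially), for such $d$ one has $d-\frac{n^2}{d}\le n\big(c_q^{GL}-(c_q^{GL})^{-1}\big)\le-\varepsilon n$ for a fixed $\varepsilon>0$, so the second term $q^{d-n^2/d+o(n)}$ is also $o(1)$.

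\textbf{Main obstacle.} All the analytic work — the mod~$\pi$ input (Corollary~\ref{corollary_neg_traces_gln}), the one-step lifting analysis via $\img(\partial\charc_{A_0})$, and the small-minimal-polynomial estimate (Claim~\ref{claim_bounding_probability_small_minpoly}) — is already packaged into Lemma~\ref{lem_average_discrepancy_bound_gln}, so the only genuinely delicate point here is the combinatorial bookkeeping: checking that summing the Hayes-class probabilities over all $q^{S(d_1,k)+S(d_2,k)+k}$ classes attached to a trace datum reproduces its uniform mass exactly (so that no spurious $O_q(1)$ contribution survives from degenerate constant coefficients), and that as $\mathfrak a$ ranges these collections tile $\mathcal G$ without overlap.
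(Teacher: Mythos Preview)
Your proposal is correct and follows essentially the same approach as the paper: invoke Lemma~\ref{lem_connection_traces_hayes_classes} to rewrite each trace-datum probability as a sum of $q^{S(d_1,k)+S(d_2,k)+k}$ Hayes-class probabilities, subtract the uniform mass to isolate the $\Delta_{f_i}$'s, apply the triangle inequality, observe that as $\mathfrak a$ varies the associated Hayes classes partition $\mathcal G$, and finish with Lemma~\ref{lem_average_discrepancy_bound_gln}. Your write-up is in fact more careful than the paper's (which contains a minor citation slip, referencing Lemma~\ref{lem_connection_traces_charpoly} when Lemma~\ref{lem_connection_traces_hayes_classes} is what is needed), and your explicit check that the Hayes classes tile $\mathcal G$ and your treatment of the ``in particular'' clause are details the paper leaves implicit.
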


\begin{proof} 
Let $S=S(d_1,k)+S(d_2,k)$. 
Lemma \ref{lem_connection_traces_charpoly} gives that there are polynomials $\{g_j\}_{j=1}^{q^{S+k}}\subset(\resring{k})[x]$, such that

$$
\Pr_{M\in GL_n(\resring{k})}[M\text{ has trace datum }\mathfrak{a}]=\sum_{i=1}^{q^{S+k}}\Pr_{M\in GL_n(\resring{k})}[M\equiv g_j\pmod{\hayesrelation{k}}]=q^{S-kd}+\sum_{i=1}^{q^{S+k}}\Delta_{g_j}.
$$

Now, let $\mathcal{G}=\{g_j\}_{j=1}^{S+k}, \mathcal{G}\subset(\resring{k})[x]$ be a set of representatives for the monic polynomials modulo $\hayesrelation{k}$. Using the triangle inequality we get that 

\begin{equation*}
\sum_{\mathfrak{a}\text{ trace datum}}
\Bigg|
\Pr_{M\in GL_n(\resring{k})}[M\text{ has trace data }\mathfrak{a}]-q^{-(kd-S)}
\Bigg|
<\sum_{g\in \mathcal{G}}|\Delta_g|.
\end{equation*}
Now the theorem follows from Lemma \ref{lem_average_discrepancy_bound_gln}.
\end{proof}

By taking $k\to\infty$ we get as a consequence  Theorem \ref{main_theorem_gl} (using Lemma \ref{lem_total_variation_dist_limit}), which we restate here for convenience. 

\begin{theorem}
Let $0\le d_1,d_2<n$ be integers such that $d_1+d_2=d<n-1$. Let $\TR_{d_1,d_2}^{GL}$ be the random variable attaching to a matrix $M\in GL_n(\Ol)$ its $(d_1,d_2)$-trace datum. Let $\UTR_{d_1,d_2}$ be uniform $(d_1,d_2)$-trace datum. Then, we have the following total variation bound

$$
d_{TV}(\mu_{\TR_{d_1,d_2}^{GL}}, \mu_{\UTR_{d_1,d_2}})=O\left(
q^{d-\frac{n^2}{2d}}
\left(
1+\frac{1}{q-1}
\right)^n\binom{n+d}{n}+q^{d-\frac{n^2}{d}+o(n)}
\right).
$$
In particular, for $d<c_q^{GL}\cdot n$, this total variation distance is $o(1)$ as $n\to\infty$.
\end{theorem}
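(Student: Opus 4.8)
The plan is to deduce the final Theorem \ref{main_theorem_gl} directly from Theorem \ref{theorem_mod_pk_joint_traces}, which establishes the mod $\pi^k$ version of the bound uniformly in $k$. The key tool is Lemma \ref{lem_total_variation_dist_limit}, which states that $d_{TV}(\mu_1,\mu_2) = \lim_{k\to\infty} d_{TV}(\rho_k(\mu_1),\rho_k(\mu_2))$ for Borel probability measures on $\Ol^d$. So first I would identify the two measures: $\mu_{\TR_{d_1,d_2}^{GL}}$ is the law of the $(d_1,d_2)$-trace datum of a Haar-random $M \in GL_n(\Ol)$, and $\mu_{\UTR_{d_1,d_2}}$ is the law of a uniform $(d_1,d_2)$-trace datum; both are Borel probability measures on $\Ol^d$.

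Next I would check that reduction mod $\pi^k$ of these measures gives exactly the finite-level objects appearing in Theorem \ref{theorem_mod_pk_joint_traces}. By the definition of $\TR_{d_1,d_2}^{G,k}$ as $\TR_{d_1,d_2}^G \pmod{\pi^k}$, and since the Haar measure on $G(\Ol)$ pushes forward to the uniform measure on $G(\resring{k})$ (the reduction map $\rho_k: G(\Ol)\to G(\resring{k})$ is surjective by Lemma \ref{lem_PRR_reductive_surjective} and measure-preserving in the appropriate sense), we get $\rho_k(\mu_{\TR_{d_1,d_2}^{GL}}) = \mu_{\TR_{d_1,d_2}^{GL,k}}$. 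Similarly $\rho_k(\mu_{\UTR_{d_1,d_2}}) = \mu_{\UTR_{d_1,d_2}^k}$ by the definition of $\UTR_{d_1,d_2}^k$. Hence Theorem \ref{theorem_mod_pk_joint_traces} gives, for every $k$,
$$
d_{TV}(\rho_k(\mu_{\TR_{d_1,d_2}^{GL}}), \rho_k(\mu_{\UTR_{d_1,d_2}})) = O\left(q^{d-\frac{n^2}{2d}}\left(1+\frac{1}{q-1}\right)^n\binom{n+d}{n}+q^{d-\frac{n^2}{d}+o(n)}\right),
$$
with the implied constant and the $o(n)$ term independent of $k$ (this uniformity is exactly what Theorem \ref{theorem_mod_pk_joint_traces} asserts, since the bound there does not involve $k$).

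Finally I would take $k\to\infty$: by Lemma \ref{lem_total_variation_dist_limit} the left-hand side converges to $d_{TV}(\mu_{\TR_{d_1,d_2}^{GL}}, \mu_{\UTR_{d_1,d_2}})$, while the right-hand side is a fixed quantity depending only on $n,d,q$ (not on $k$), so the same bound passes to the limit. This yields the displayed total variation bound in the statement. The ``in particular'' clause is then immediate: by the definition of $c_q^{GL}$ (the threshold below which $q^{d-\frac{n^2}{2d}}(1+\frac{1}{q-1})^n\binom{n+d}{n} = o(1)$), when $d < c_q^{GL}\cdot n$ both error terms are $o(1)$ as $n\to\infty$ — for the second term one checks $q^{d - n^2/d + o(n)} = o(1)$ holds in an even wider range of $d$, in particular whenever the first term does, since $n^2/d$ dominates $d$ for $d < n$.

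The argument is essentially a bookkeeping reduction, so there is no serious obstacle at this stage; the only point requiring a little care is confirming the uniformity in $k$ of the bound from Theorem \ref{theorem_mod_pk_joint_traces} and that the push-forward identities $\rho_k(\mu_{\TR}) = \mu_{\TR^k}$ and $\rho_k(\mu_{\UTR}) = \mu_{\UTR^k}$ hold on the nose — both of which follow from the definitions in Section \ref{section_trace_data_poly_intervals} and the measure-theoretic setup in Section \ref{section_preliminary_background}. All the genuine mathematical content (the analysis of $\partial\charc_{A_0}$, the minimal polynomial degree estimates, and the Gorodetsky--Rodgers input) has already been absorbed into Theorem \ref{theorem_mod_pk_joint_traces}.
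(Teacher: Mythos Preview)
Your proposal is correct and follows exactly the paper's own argument: the paper simply states that Theorem~\ref{main_theorem_gl} follows by taking $k\to\infty$ in Theorem~\ref{theorem_mod_pk_joint_traces} via Lemma~\ref{lem_total_variation_dist_limit}. Your write-up spells out the pushforward identifications $\rho_k(\mu_{\TR_{d_1,d_2}^{GL}})=\mu_{\TR_{d_1,d_2}^{GL,k}}$ and $\rho_k(\mu_{\UTR_{d_1,d_2}})=\mu_{\UTR_{d_1,d_2}^k}$ a bit more carefully than the paper does, but the approach is identical.
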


As a direct consequence, we get the following

\begin{theorem}
    Fix $d_1,d_2>0$. Let $B_{-d_2},\ldots,B_{-1},B_1,\ldots,B_{d_1}$ be a fixed collection of balls in $\Ol$, and let $U_{-1},\ldots,U_{-d_2},U_1,\ldots,U_{d_1}$ be independent uniform variables on $\Ol$. Set $Z_i=\frac{1}{|i|_p}U_i$. Then,
    $$
    \lim_{n\to\infty} \Pr_{M\in GL_n(\Ol)}
    \left[
\left(
\tr(M^i)-1_{p|i}\sigma(\tr(M^{i/p}))\in B_i
\right)_{0\neq i=-d_2}^{d_1}
\right]=\prod_{0\neq i=-d_2}^{d_1} \Pr[Z_i\in B_i].
    $$
\end{theorem}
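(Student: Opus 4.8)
The plan is to derive this corollary from the preceding Theorem \ref{main_theorem_gl} (the total variation bound for $\mu_{\TR_{d_1,d_2}^{GL}}$ against $\mu_{\UTR_{d_1,d_2}}$) by unwinding the definitions of the random variables involved. First I would recall that by the definition of $\TR_{d_1,d_2}^{GL}$, the event that $\tr(M^i)-1_{p|i}\sigma(\tr(M^{i/p}))\in B_i$ for all $0\neq i=-d_2,\ldots,d_1$ is exactly the event that the $(d_1,d_2)$-trace datum of $M$ lands in the product set $B:=\prod_{0\neq i=-d_2}^{d_1}B_i\subset\Ol^{d}$. Since each $B_i$ is a ball in $\Ol$, and balls are precisely the fibers of some reduction map $\rho_{k_i}$ (up to translation), the set $B$ — after passing to a common large $k$ exceeding all the $k_i$ — is a union of fibers of $\rho_k:\Ol^d\to(\resring{k})^d$, hence Borel measurable, so Theorem \ref{main_theorem_gl} applies to it directly.

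Next I would observe that $\Pr_{M\in GL_n(\Ol)}[\TR_{d_1,d_2}^{GL}\in B]$ differs from $\mu_{\UTR_{d_1,d_2}}(B)$ by at most $d_{TV}(\mu_{\TR_{d_1,d_2}^{GL}},\mu_{\UTR_{d_1,d_2}})$, which by Theorem \ref{main_theorem_gl} is $O(q^{d-n^2/(2d)}(1+\tfrac{1}{q-1})^n\binom{n+d}{n}+q^{d-n^2/d+o(n)})$. Since $d=d_1+d_2$ is fixed (independent of $n$), this error term tends to $0$ as $n\to\infty$ — one simply checks that $q^{d-n^2/(2d)}$ and $q^{d-n^2/d+o(n)}$ both decay super-exponentially and dominate the polynomial-in-$n$ factors $(1+\tfrac{1}{q-1})^n$ and $\binom{n+d}{n}=O(n^d)$. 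Therefore $\lim_{n\to\infty}\Pr_{M\in GL_n(\Ol)}[\TR_{d_1,d_2}^{GL}\in B]=\mu_{\UTR_{d_1,d_2}}(B)$.

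Finally I would identify $\mu_{\UTR_{d_1,d_2}}(B)$ with the product on the right-hand side of the statement. By the definition of $\UTR_{d_1,d_2}$ as the sequence $(u_i)_{0\neq i=-d_2}^{d_1}$ with $u_i\sim \tfrac{1}{|i|_p}U(\Ol)$ independent, and by definition $Z_i=\tfrac{1}{|i|_p}U_i$ has exactly the law of $u_i$, independence gives $\mu_{\UTR_{d_1,d_2}}(B)=\mu_{\UTR_{d_1,d_2}}\big(\prod_i B_i\big)=\prod_{0\neq i=-d_2}^{d_1}\Pr[u_i\in B_i]=\prod_{0\neq i=-d_2}^{d_1}\Pr[Z_i\in B_i]$. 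Combining the three steps yields the claimed limit. I do not anticipate a genuine obstacle here: the only point requiring a little care is the bookkeeping that the index conventions match (the corollary writes $(\tr(M^i)-1_{p|i}\sigma(\tr(M^{i/p})))_{0\neq i=-d_2}^{d_1}$, whose negative-index entries are the trace datum of $M^{-1}$, exactly as in the definition of $\TR^{d_1,d_2}(M)$), and that the measurability/ball reduction is handled by choosing $k$ large enough — both routine.
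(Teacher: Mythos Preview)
Your proposal is correct and is exactly the approach the paper takes: it states this theorem ``as a direct consequence'' of Theorem~\ref{main_theorem_gl} without further proof, and your write-up simply spells out that deduction. The only minor bookkeeping point is that the paper's $(d_1,d_2)$-trace datum is indexed $-d_1,\ldots,d_2$ while the corollary is indexed $-d_2,\ldots,d_1$, so you are really invoking Theorem~\ref{main_theorem_gl} with the roles of $d_1,d_2$ swapped --- harmless since the bound depends only on $d=d_1+d_2$.
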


\section{Traces and characteristic polynomial for $SL_n$}\label{section_sln_main}

\subsection{The image of $\partial \charc_{A_0}$}

Let $A_0\in SL_n(\field_q)$. Since $\mathfrak{sl}_n$ is preserved by conjugation from $GL_n(\field_q)$, the proofs for $SL_n$ are close to the proofs for $GL_n$. However, there are some modifications to make. In this section, we use the map $\partial\charc_{A_0}$ with domain equal to either $\mathfrak{sl}_n$, $\mathfrak{gl}_n$. When there is a chance of confusion we write $\partial\charc_{A_0}^{SL}$, $\partial\charc_{A_0}^{GL}$ (respectively) to make clear which domain we refer to. Whenever we write $\partial\charc_{A_0}$ alone we always mean the map $\partial\charc_{A_0}^{SL}$.  For the current section \ref{section_sln_main}, $A$ will be a matrix in $SL_n(\resring{k})$ unless explicitly stated otherwise.

\begin{theorem}\label{theorem_image_char_derivative_sln}
Let $B\in SL_n(\field_q)$ be a matrix.  Then, $\img(\partial\charc_B)=
\left\{\frac{\charc B}{\min B}b:\deg b<\deg\min B,b(0)=0\right\}$. 
\end{theorem}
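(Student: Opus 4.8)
The plan is to reduce the $SL_n$ statement to the already-proven $GL_n$ statement (Theorem \ref{theorem_image_char_derivative_gl_n}) by comparing the maps $\partial\charc_B^{SL}$ and $\partial\charc_B^{GL}$. Recall $\partial\charc_B(A_1)=\tr(\Adj(x-B)BA_1)$ and that $A_1\mapsto BA_1$ is a bijection of $\mathfrak{gl}_n$ (resp. of $\mathfrak{sl}_n$, since $B\in SL_n$ normalizes $\mathfrak{sl}_n$ and $\tr(BA_1)=0$ iff $\tr(A_1)=0$ after writing $A_1=B^{-1}A_1'$). So it is equivalent to study the image of $A_1\mapsto \tr(\Adj(x-B)A_1)$ with $A_1$ ranging over $\mathfrak{gl}_n$ versus over $\mathfrak{sl}_n=\{A_1:\tr(A_1)=0\}$. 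Since $\mathfrak{sl}_n$ has codimension $1$ in $\mathfrak{gl}_n$, the image $\img(\partial\charc_B^{SL})$ has codimension $0$ or $1$ in $\img(\partial\charc_B^{GL})=\left\{\frac{\charc B}{\min B}b:\deg b<\deg\min B\right\}$, and it is codimension $1$ exactly when the restriction of the functional $A_1\mapsto\tr(A_1)$-direction is not already "absorbed", i.e.\ when $\mathfrak{gl}_n=\mathfrak{sl}_n+\ker(\partial\charc_B^{GL})$ fails.

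Concretely, I would argue as follows. First, $\frac{\charc B}{\min B}\cdot 1\in\img(\partial\charc_B^{GL})$ is the unique (up to scalar) element of top degree $n-\deg\min B$ coming from the "$b$ constant" direction; I claim the identity matrix $I$ (the complement of $\mathfrak{sl}_n$ in $\mathfrak{gl}_n$) maps under $A_1\mapsto\tr(\Adj(x-B)A_1)$ to $\tr(\Adj(x-B))$, which equals $\charc(B)'$ up to sign, a polynomial of degree $n-1$. Then I would observe that $\img(\partial\charc_B^{SL})=\img(\partial\charc_B^{GL})$ if and only if $\tr(\Adj(x-B))\in\img(\partial\charc_B^{SL})$, and that the target claim is precisely that $\img(\partial\charc_B^{SL})$ is the hyperplane of $\img(\partial\charc_B^{GL})$ cut out by "$b(0)=0$", equivalently consisting of multiples $\frac{\charc B}{\min B}b$ that vanish at $0$ to order at least $\mathrm{ord}_{x=0}\frac{\charc B}{\min B}+1$. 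To get this cleanly I would again reduce to $B$ primary and then to a single Jordan block $J_r(\alpha)$ over a splitting field $\field_{q^l}$ (rank is field-extension invariant, and the direct-sum decomposition Lemma \ref{lemma_derivative_of_direct_sums} — which I would re-derive or adapt in the $\mathfrak{sl}$ setting, noting $\tr$ of a block matrix is the sum of block traces — combines with CRT as in the proof of Theorem \ref{theorem_image_char_derivative_gl_n}). For $B=J_r(\alpha)$, $\Adj(x-J_r(\alpha))$ is the explicit upper-triangular matrix of Lemma \ref{lem_gl_jordan_block_adj}; pairing it against trace-zero matrices $A_1$, the span of $\{\tr(X_\alpha^r A_1):\tr A_1=0\}$ is spanned by the strictly-upper-triangular entries of $X_\alpha^r$ (which already span all of $\field_{q^l}[x]_{<r}$ since $\deg\min = r = n$ here) — wait, here $\min B=\charc B=(x-\alpha)^r$, so $\frac{\charc B}{\min B}=1$ and the claimed image is $\{b:\deg b<r,\ b(0)=0\}$, a hyperplane; I would check directly that the diagonal entries of $X_\alpha^r$ contribute the "missing" constant, so restricting to $\tr A_1=0$ removes exactly the $b(0)$ freedom.

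The main obstacle I anticipate is the bookkeeping in the reduction to primary and then Jordan-block form: one must check that the $\mathfrak{sl}$-analogue of Lemma \ref{lemma_derivative_of_direct_sums} holds with the correct "$b(0)=0$" constraint propagating correctly through direct sums and CRT. The subtlety is that for $A=B\oplus C$ a general trace-zero perturbation $A_1$ need not be block-diagonal trace-zero on each block — only the total trace vanishes — so the decomposition $\img(\partial\charc_{B\oplus C}^{SL})$ is not simply a direct sum of the pieces but rather $\mathrm{span}(\charc(C)\img(\partial\charc_B^{GL}),\ \charc(B)\img(\partial\charc_C^{GL}))\cap\{b(0)=0\text{-type condition}\}$, i.e.\ one loses exactly one dimension overall, not one per block. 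I would handle this by first computing $\img(\partial\charc_B^{GL}\oplus\partial\charc_C^{GL})$ via Lemma \ref{lemma_derivative_of_direct_sums}, identifying the constant-term functional on it, and checking that intersecting with $\mathfrak{sl}_{n+m}$ (one linear condition) cuts out precisely the multiples of $\frac{\charc(B\oplus C)}{\min(B\oplus C)}$ vanishing at $0$; the key numerical input is that $\frac{\charc B}{\min B}$ and $\frac{\charc C}{\min C}$ have nonzero constant term whenever $x\nmid\charc B,\charc C$ (which holds since $B,C\in SL_n$, so $0$ is not an eigenvalue), making the "$b(0)=0$" condition on the product compatible with the per-block descriptions.

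\begin{proof}[Proof sketch]
We compare $\partial\charc_B^{SL}$ with $\partial\charc_B^{GL}$. As in Section \ref{section_charpoly_derivative_gln}, since $B\in SL_n$ the map $A_1\mapsto BA_1$ is a linear bijection of $\mathfrak{sl}_n(\field_q)$ (it preserves the trace-zero condition after the substitution $A_1\mapsto B^{-1}A_1$), so $\img(\partial\charc_B^{SL})$ equals the image of $A_1\mapsto\tr(\Adj(x-B)A_1)$ restricted to $\mathfrak{sl}_n$; likewise $\img(\partial\charc_B^{GL})$ is the image of the same functional on all of $\mathfrak{gl}_n=M_n(\field_q)$. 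As conjugation by $GL_n(\field_{q^l})$ preserves both $\mathfrak{gl}_n$ and $\mathfrak{sl}_n$ and the rank is invariant under field extension, we may pass to a splitting field $\field_{q^l}$ of $\charc B$ and assume $B$ is in Jordan form.

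Since $\mathfrak{sl}_n$ has codimension $1$ in $\mathfrak{gl}_n$, spanned mod $\mathfrak{sl}_n$ by the identity $I$, we have either $\img(\partial\charc_B^{SL})=\img(\partial\charc_B^{GL})$ or it has codimension $1$ there; the latter occurs iff $\tr(\Adj(x-B)I)=\tr(\Adj(x-B))$ is not in the image of the trace-zero part. By Theorem \ref{theorem_image_char_derivative_gl_n}, $\img(\partial\charc_B^{GL})=\left\{\frac{\charc B}{\min B}b:\deg b<\deg\min B\right\}$, and since $x\nmid\charc B$ (as $0$ is not an eigenvalue of $B\in SL_n$) the polynomial $\frac{\charc B}{\min B}$ has nonzero constant term, so the subspace $\left\{\frac{\charc B}{\min B}b:\deg b<\deg\min B,\ b(0)=0\right\}$ is a hyperplane in $\img(\partial\charc_B^{GL})$, namely the one cut out by the "constant term of $b$" functional.

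It remains to identify $\img(\partial\charc_B^{SL})$ with precisely this hyperplane. First reduce to $B$ primary: writing $B=B'\oplus C$ with $C$ primary and $\gcd(\charc B',\charc C)=1$, the argument of Lemma \ref{lemma_derivative_of_direct_sums} (the trace of a block matrix being the sum of the block traces, so the off-diagonal blocks of $A_1$ are irrelevant) shows $\img(\partial\charc_B^{SL})$ is the span of $\charc(C)\img(\partial\charc_{B'}^{GL})$ and $\charc(B')\img(\partial\charc_C^{GL})$, intersected with the single linear condition coming from $\tr(A_1)=0$; by the Chinese remainder theorem for $\field_{q^l}[x]$ and comparing dimensions, this intersection is exactly $\left\{\frac{\charc B}{\min B}b:\deg b<\deg\min B,\ b(0)=0\right\}$, using that $\frac{\charc B'}{\min B'}(0)\neq 0$ and $\frac{\charc C}{\min C}(0)\neq 0$. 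Then reduce, by the same direct-sum argument applied to the primary parts $J_{m_i}(\alpha)$ and induction on the number of parts, to a single Jordan block $B=J_r(\alpha)$ with $\charc B=\min B=(x-\alpha)^r$, $\alpha\neq 0$. Here $\frac{\charc B}{\min B}=1$ and the claim is that restricting $A_1\mapsto\tr(X_\alpha^r A_1)$ to trace-zero matrices has image $\{b:\deg b<r,\ b(0)=0\}$. Pairing $X_\alpha^r$ (Lemma \ref{lem_gl_jordan_block_adj}) against the elementary matrices $E_{ij}$, the off-diagonal entries $E_{ij}$ ($i<j$) already span all of $\{b:\deg b<r\}$ except the constant, which is supplied only by the diagonal entries $E_{ii}$ (each giving the constant $1$ up to scalar times a power of $(x-\alpha)$); imposing $\sum a_{ii}=0$ removes exactly the constant-term direction, giving the hyperplane. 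This completes the proof.
\end{proof}
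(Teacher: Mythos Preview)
Your reduction step is wrong, and the error propagates to the Jordan-block computation. You claim that $A_1\mapsto BA_1$ is a bijection of $\mathfrak{sl}_n$, but left multiplication by $B$ does \emph{not} preserve the trace-zero condition: $\tr(BA_1)\neq\tr(A_1)$ in general (only conjugation $A_1\mapsto BA_1B^{-1}$ preserves $\mathfrak{sl}_n$). So it is \emph{not} equivalent to study $A_1\mapsto\tr(\Adj(x-B)A_1)$ on $\mathfrak{sl}_n$; you must keep the factor $B$ and study $A_1\mapsto\tr(\Adj(x-B)\,B\,A_1)$ on $\mathfrak{sl}_n$. With the wrong map your Jordan-block calculation gives the wrong hyperplane: for $B=J_r(\alpha)$, every diagonal entry of $X_\alpha^r$ equals $(x-\alpha)^{r-1}$, so the diagonal contribution is $(\sum_i a_{ii})(x-\alpha)^{r-1}$ and the constraint $\sum_i a_{ii}=0$ kills the \emph{top-degree} direction, not the constant-term direction. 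The off-diagonal entries (which are $(x-\alpha)^{r-2},\dots,1$) span $\{b:\deg b\le r-2\}$, a space that \emph{contains} the constants; this is a different hyperplane from $\{b:\deg b<r,\,b(0)=0\}$. If instead you keep the factor $B$, a direct check shows $(X_\alpha^r J_r(\alpha))_{ij}=x(x-\alpha)^{r-1-(j-i)}$ for $j>i$ and $\alpha(x-\alpha)^{r-1}$ on the diagonal, so the trace-zero constraint now correctly yields $x\cdot\field_q[x]_{<r-1}=\{b:\deg b<r,\,b(0)=0\}$.

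Even with this fix, your route through field extension, primary decomposition, and block computations is much heavier than needed. The paper's proof is a two-line sandwich: first, since $\mathfrak{gl}_n=\mathfrak{sl}_n\oplus\field_q E_{11}$, one has $\dim\img(\partial\charc_B^{SL})\ge\dim\img(\partial\charc_B^{GL})-1=\deg\min B-1$; second, for any lift $B_0\in SL_n(\resring{2})$ and any $B_1\in\mathfrak{sl}_n(\field_q)$, the matrix $B_0(I+\pi B_1)$ is again in $SL_n(\resring{2})$, so its characteristic polynomial has the same constant term $(-1)^n$ as $\charc(B_0)$, forcing $\partial\charc_B^{SL}(B_1)$ to have constant term $0$. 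Since $\frac{\charc B}{\min B}(0)\neq 0$ (as $0$ is not an eigenvalue of $B$), this places the image inside the hyperplane $\{\frac{\charc B}{\min B}b:\deg b<\deg\min B,\,b(0)=0\}$, and the dimension count gives equality. No splitting field, no Jordan blocks, no direct-sum bookkeeping.
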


\begin{proof}
We note that $\mathfrak{gl}_n(\field_q)$ is the direct sum of $\mathfrak{sl}_n(\field_q)$ and the one dimensional space 
$
\field_qE_{11}
$,
where $E_{11}$ is the matrix with $1$ in the $(1,1)$ entry and zero elsewhere. Comparing dimensions we get that $\dim \img(\partial\charc_{B}^{SL})\ge \dim(\partial\charc_{B}^{GL})-1=\deg\min B-1$. 

Now let $B_0$ be any specific lift of $B$ to $SL_n(\resring{2})$, and $B'=B_0(I+\pi B_1)$, where $B_1\in \mathfrak{sl}_n(\field_q)$ be any other lift. We recall that by Lemma \ref{lem_charpoly_step_in_p_direction},

$$
\charc(B')=\charc(B_0)-\pi \partial\charc_{B}^{SL}(B_1).
$$
Since $B',B_0\in SL_n(\resring{2})$ we must have that the free coefficients of $\charc(B'),\charc(B_0)$ are equal. Hence, the free coefficient of $\partial\charc_{B}^{SL}(B_1)$ must be zero. Thus, 
$$\img(\partial\charc_B^{SL}(B_1))\subset\left\{\frac{\charc B}{\min B}b:\deg b<\deg\min B,b(0)=0\right\},$$
and is of maximal dimension, hence the equality.
\end{proof}

\subsection{Distribution of the characteristic polynoimal}

The strategy for proving the total variation bound on $d_{TV}(\mu_{\TR_{d_1,d_2}^g},\mu_{\UTR_{d_1,d_2}})$ for p-adic matrix groups $G\neq GL_n$ is the same as for $GL_n(\Ol)$. In the $SL_n$ case, we omit many of the proofs, due to the high similarity to the $GL_n$ case. The main difference is that in this case, the free coefficient is determined, thus some of the constants look different. We now fix a choice of $0\le d_1,d_2<n$ such that $d_1+d_2=d<n-1$ for the rest of the section.

\subsubsection{One step from $\pi^{k-1}$ to $\pi^{k}$}

\begin{lem}
Let $g\in(\resring{k})[x]$ be a monic polynomial of degree $n$, with $g(0)=(-1)^n$. Then 

$$
\Pr[\charc(A)\equiv g\pmod{\hayesrelation{k}}|\charc(\rho_{k-1}(A))\equiv \rho_{k-1}(g)\pmod{\hayesrelation{k-1}},\deg\min\overline{A}\ge d+1]=q^{-d}.
$$
\end{lem}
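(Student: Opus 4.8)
The plan is to mimic the proof of Lemma \ref{lem_onestep_charpoly_distribution}, which handled the $GL_n$ case, while tracking the one extra constraint coming from $SL_n$: the free coefficient of $\charc(A)$ is forced to equal $(-1)^n$, so only $d$ (not $d+1$) coefficients are genuinely free. First I would fix a representative $A_0\in H_{SL,k}$ for $\rho_{k-1}(A)$ and write $A = A_0(1+\pi^{k-1}A_1)$ with $A_1\in\mathfrak{sl}_n(\field_q)$. By Lemma \ref{lem_charpoly_step_in_p_direction} we have $\charc(A)=\charc(A_0)+\pi^{k-1}\partial\charc_{\overline{A_0}}^{SL}(A_1)$, and the hypothesis $\charc(\rho_{k-1}(A))\equiv\rho_{k-1}(g)\pmod{\hayesrelation{k-1}}$ lets me write $g-\charc(A_0)=e_0(x)+\pi^{k-1}e_1(x)$ with $\deg e_0<n-d_1$ and $e_0\equiv 0\pmod{x^{d_2+1}}$, exactly as in the $GL_n$ proof. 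So the event in question reduces to $x^n+\partial\charc_{\overline{A_0}}^{SL}(A_1)\equiv x^n+\overline{e_1}\pmod{\hayesrelation{}}$.

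Next I would invoke Theorem \ref{theorem_image_char_derivative_sln}: $\img(\partial\charc_{\overline{A_0}}^{SL})=\{\frac{\charc\overline{A_0}}{\min\overline{A_0}}b : \deg b<\deg\min\overline{A_0},\ b(0)=0\}$. Since $\partial\charc_{\overline{A_0}}^{SL}$ is $\field_q$-linear, all its fibers have the same cardinality, so the conditional probability equals the fraction of $b\in\field_q[x]_{<\deg\min\overline{A_0}}$ with $b(0)=0$ for which $\frac{\charc\overline{A_0}}{\min\overline{A_0}}(x^{\deg\min\overline{A_0}}+b)$ lands in the prescribed Hayes residue class of $\overline{e_1}$ modulo $\hayesrelation{}$. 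Writing this count with Hayes characters as in Lemma \ref{lem_onestep_charpoly_distribution}, I get
\begin{multline*}
q^{-(\deg\min\overline{A_0}-1)}\sum_{\substack{b\in\field_q[x]_{<\deg\min\overline{A_0}}\\ b(0)=0}} q^{-d}\sum_{\chi\in\hayescharacters{}}\chi\!\left(\tfrac{\charc\overline{A_0}}{\min\overline{A_0}}(x^{\deg\min\overline{A_0}}+b)\right)\overline{\chi}\!\left(x^{\deg\min\overline{A_0}}\tfrac{\charc\overline{A_0}}{\min\overline{A_0}}+\overline{e_1}\right),
\end{multline*}
using that $\hayesrelation{}=R_{d_1,x^{d_2+1},\pi}$ has $q^{d}$ classes (here $d=d_1+d_2$, and the Dirichlet part $x^{d_2+1}$ contributes $q^{d_2}$, the short-interval part $q^{d_1}$). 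Note the normalization is now $q^{-(\deg\min\overline{A_0}-1)}$ because there are $q^{\deg\min\overline{A_0}-1}$ choices of $b$ with $b(0)=0$.

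Swapping the order of summation, the inner sum over $b$ with $b(0)=0$ of $\chi\!\left(\frac{\charc\overline{A_0}}{\min\overline{A_0}}(x^{\deg\min\overline{A_0}}+b)\right)$ must be shown to vanish unless $\chi=\chi_0$. This is the analog of the orthogonality step, but restricted to polynomials $x^m+b$ with $b(0)=0$; since we assumed $\deg\min\overline{A_0}>d\ge d_1$, changing the $d_2$-near-leading coefficients and the $d_1$ top coefficients is unobstructed — the constraint $b(0)=0$ only fixes the constant term, which does not interfere with either the short-interval data (top $d_1$ coefficients) or the Dirichlet data modulo $x^{d_2+1}$ as long as $d_2+1\le\deg\min\overline{A_0}$, which holds since $\deg\min\overline{A_0}>d\ge d_2$. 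Hence by Hayes orthogonality the inner sum is $q^{\deg\min\overline{A_0}-1}\cdot 1_{\chi=\chi_0}$, and multiplying by $\chi_0$ on the other factor the whole expression collapses to $q^{-d}$. The main obstacle, and the only place needing care, is precisely this orthogonality-with-fixed-constant-term step: I need to confirm that restricting to $b(0)=0$ still gives a complete (scaled) system of representatives for $\hayesrelation{}$ among the polynomials $\frac{\charc\overline{A_0}}{\min\overline{A_0}}(x^{\deg\min\overline{A_0}}+b)$, which is exactly where the hypothesis $\deg\min\overline{A}\ge d+1$ is used — it guarantees enough free coefficients above the constant term to realize every residue class. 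Since the argument is uniform over the choice of $A_0$, this establishes the stated conditional probability $q^{-d}$.
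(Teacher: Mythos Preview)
Your direct approach has a real gap at the orthogonality step. You claim that fixing $b(0)=0$ ``does not interfere with \ldots the Dirichlet data modulo $x^{d_2+1}$,'' but this is false: the constant term is precisely the first coordinate of the residue modulo $x^{d_2+1}$. Concretely, every polynomial $\frac{\charc\overline{A_0}}{\min\overline{A_0}}(x^{\deg\min\overline{A_0}}+b)$ with $b(0)=0$ has constant term $0$ and is therefore not coprime to $x$; by the Hayes-character convention, $\chi$ vanishes on all such polynomials for \emph{every} $\chi\in\widehat{R}_{d_1,x^{d_2+1},\pi}$, including the principal one. Your inner sum over $b$ is identically zero for all $\chi$, so the claimed collapse to $1_{\chi=\chi_0}$ never occurs. (Relatedly, the normalization $q^{-d}$ you quote for the character sum does not match the order of the character group, which is $q^{d_1}\varphi(x^{d_2+1})=(q-1)q^d$.) The route is salvageable: writing $b=xc$ and noting that the $R_{d_1,x^{d_2+1},\pi}$-class of a polynomial with vanishing constant term is determined by the $R_{d_1,x^{d_2},\pi}$-class of its quotient by $x$ reduces one to equidistribution of $\frac{\charc\overline{A_0}}{\min\overline{A_0}}(x^{\deg\min\overline{A_0}-1}+c)$ over classes of $R_{d_1,x^{d_2},\pi}$ with $c$ free in $\field_q[x]_{<\deg\min\overline{A_0}-1}$, and here the hypothesis $\deg\min\overline{A_0}\ge d+1$ gives exactly the $d_1+d_2$ free coefficients needed. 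But the argument as written does not go through.

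The paper avoids this entirely and gives a two-line proof. It observes that polynomials with free coefficient $(-1)^n$ form a single Hayes residue class modulo $R_{0,x,\pi^k}$, and that $\hayesrelation{k}$ refines this relation. Lemma~\ref{lem_onestep_charpoly_distribution} (the $GL_n$ case) already gives uniform distribution over the $q^{d+1}$ classes of $\hayesrelation{k}$ conditional on the $\pi^{k-1}$ data and $\deg\min\overline{A}>d$; the formula for conditional probability then yields $q^{-d}$ for each of the $q^d$ classes compatible with the fixed constant term. The passage from $GL_n$ to $SL_n$ is exactly that for a fixed representative $A_0$ with $\det A_0=1$, the $GL_n$-lifts landing in $SL_n$ are precisely those with $\charc(A)(0)=(-1)^n$, so conditioning on this Hayes event is the same as restricting to $SL_n$.
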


\begin{proof}
The polynomials with $g(0)=(-1)^n$ form a Hayes residue class mod $R_{0,x,\pi^{k-1}}$. Hence this follows from Lemma \ref{lem_onestep_charpoly_distribution} and the formula for conditional probability.
\end{proof}

\subsubsection{Distribution mod $\pi^k$ given mod $\pi$ behavior}

We define the discrepancy 
of the conditional distribution given a polynomial $\overline{f}\equiv \overline{g}\pmod{\hayesrelation{}}$ by

$$
\Delta_{\overline{f},k}^g:=\Pr[\charc(A)\equiv g\pmod{\hayesrelation{k}}|\charc(\overline{A})=\overline{f}]-q^{-(k-1)d}.
$$
The $SL_n$ analog of the main lemma of Section \ref{section_gln_pk_given_mod_p} is then

\begin{lem}\label{lem_discrepancy_bound_interval_starting_from_f_mod_p_sln}
Let $\overline{f}\in \field_q[x]_{=n}^\mon$ be a polynomial with $\overline{f}(0)=(-1)^n$, and let $\mathcal{G}$ be a set of representatives for the possible lifts of $\overline{f}$ to a monic polynomial in $(\resring{k})[x]$ of degree $n$ with $g(0)=(-1)^n$, modulo $\hayesrelation{k}$.
\begin{enumerate}
    \item If $\deg\rad\overline{f}\ge d$, then $\sum_{g\in\mathcal{G}}|\Delta_{\overline{f},k}^g
    |=0$. 
    \item If $\deg\rad\overline{f}=s<d$, then $\sum_{g\in\mathcal{G}}|\Delta_{\overline{f},k}^g|\le 2\Pr_{B\in SL_n(\field_q)}[\deg\min(B)<d|\charc(B)=\overline{f}]$.
\end{enumerate}
\end{lem}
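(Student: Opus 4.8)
The plan is to adapt the $GL_n$ argument from Lemma~\ref{lem_discrepancy_bound_interval_starting_from_f_mod_p} to the $SL_n$ setting essentially verbatim, the only genuine bookkeeping difference being that the free coefficient of $\charc(M)$ is fixed (equal to $(-1)^n$), which is why $\hayesrelation{k}$ and the exponent $q^{-(k-1)d}$ appear in place of their $GL_n$ analogues with $d+1$. First I would set up the per-matrix discrepancy $\Delta_{B,k}^g := \condprob{\charc(A)\equiv g\pmod{\hayesrelation{k}}}{\overline{A}=B}-q^{-(k-1)d}$ for $B\in SL_n(\field_q)$ with $\charc(B)=\overline f$, and note — exactly as in Proposition~\ref{prop_deltabk_large_minpoly} but using the new one-step lemma above (the $SL_n$ version, giving conditional probability $q^{-d}$ when $\deg\min\overline{A}\ge d+1$) together with the multiplicativity of the conditional probabilities along the tower $\resring{k-1}\to\resring{k}$ — that $\Delta_{B,k}^g=0$ whenever $\deg\min B\ge d+1$. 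The key input here is Theorem~\ref{theorem_image_char_derivative_sln}, which identifies $\img(\partial\charc_B^{SL})$ with $\{\frac{\charc B}{\min B}b : \deg b<\deg\min B,\ b(0)=0\}$, so that the linear map $\partial\charc_B^{SL}$ has full rank $\deg\min B$ and its image equidistributes modulo $\hayesrelation{k}$ once $\deg\min B\ge d+1$; the orthogonality of Hayes characters then kills all nontrivial characters just as in the proof of Lemma~\ref{lem_onestep_charpoly_distribution}.

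Next I would prove the $SL_n$ analogue of Proposition~\ref{prop_deltabk_small_minpoly}: for $B$ with $\deg\min B=l\le d$, writing $\mathcal{P}_B^k=\{\charc(A):A\in SL_n(\resring{k}),\ \overline{A}=B\}$, an induction on $k$ using Theorem~\ref{theorem_image_char_derivative_sln} shows $|\mathcal{P}_B^k|=q^{(k-1)l}$, all of these lie in distinct residue classes modulo $\hayesrelation{k}$ (here the fibers of $\mathcal{P}_B^k\to\mathcal{P}_B^{k-1}$ are cosets of $\pi^{k-1}\img(\partial\charc_B^{SL})$, each of size $q^l$, and the $SL_n$ one-step lemma with $d$ replaced by $l$ separates them), and consequently $\Delta_{B,k}^g = 1_{\mathcal{P}_B^k\cap g\hayesrelation{k}\neq\emptyset}\,q^{-(k-1)l}-q^{-(k-1)d}$. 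The triangle inequality over a set $\mathcal{G}$ of representatives (now of size $q^{(k-1)d}$, since the free coefficient is pinned down) then gives $\sum_{g\in\mathcal{G}}|\Delta_{B,k}^g|\le 2$.

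Finally, to pass from per-matrix to per-polynomial discrepancy I would write, as in the $GL_n$ case,
$$
\Delta_{\overline f,k}^g=\sum_{\substack{B\in SL_n(\field_q)\\ \charc(B)=\overline f}}\Delta_{B,k}^g\cdot\Pr[\overline{A}=B\mid \charc(\overline A)=\overline f],
$$
observe that only $B$ with $\deg\min B\le d$ contribute (the rest have $\Delta_{B,k}^g=0$), and note that $\rad\overline f\mid \min B$ forces $\deg\min B\ge\deg\rad\overline f$; hence when $\deg\rad\overline f\ge d$ there are no contributing $B$ and $\sum_{g\in\mathcal{G}}|\Delta_{\overline f,k}^g|=0$, which is part~(1). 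For part~(2), when $\deg\rad\overline f=s<d$ we sum over $\mathcal{G}$, swap the order of summation, and bound the inner sum by $2$ using the previous paragraph, yielding $\sum_{g\in\mathcal{G}}|\Delta_{\overline f,k}^g|\le 2\Pr_{B\in SL_n(\field_q)}[\deg\min(B)<d\mid\charc(B)=\overline f]$. The main thing to be careful about — and the only real obstacle beyond transcription — is tracking how fixing $\det=1$ (equivalently $\charc(M)(0)=(-1)^n$) interacts with the Hayes modulus: one must check that the set of monic degree-$n$ polynomials with prescribed free coefficient is a single residue class modulo $R_{0,x,\pi^{k-1}}$ and that intersecting with this condition throughout is consistent, so that the counts $q^{(k-1)d}$ for $|\mathcal{G}|$ and $q^{-(k-1)d}$ for the main term are correct; the rank computation in Theorem~\ref{theorem_image_char_derivative_sln} already guarantees the degrees of freedom match up, so no new equidistribution input is needed.
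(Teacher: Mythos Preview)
Your approach is correct and matches the paper's exactly: the paper explicitly omits the proof, saying it is ``essentially the same as the proof of Lemma~\ref{lem_discrepancy_bound_interval_starting_from_f_mod_p}'', and your write-up carries out precisely that transcription with the free-coefficient bookkeeping you describe. One small slip worth fixing: by Theorem~\ref{theorem_image_char_derivative_sln} the image of $\partial\charc_B^{SL}$ has dimension $\deg\min B-1$ (the constraint $b(0)=0$ removes one degree of freedom), not $\deg\min B$, so $|\mathcal{P}_B^k|=q^{(k-1)(l-1)}$ rather than $q^{(k-1)l}$; this does not affect the triangle-inequality bound $\sum_{g\in\mathcal G}|\Delta_{B,k}^g|\le 2$, so the argument goes through unchanged.
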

The proof is omitted since it is essentially the same as the proof of Lemma \ref{lem_discrepancy_bound_interval_starting_from_f_mod_p}.

\subsubsection{Discrepancy bound on distribution in Hayes residue classes}

For $g\in(\resring{k})[x]$ monic of degree $n$ with $g(0)=(-1)^n$, we define the discrepancy $\Delta_g$ by 

$$
\Delta_g:=\Pr[\charc(A)\equiv g\pmod{\hayesrelation{k}}]-q^{-kd}.
$$

\begin{lem}\label{lem_average_discrepancy_bound_sln}
Let $\mathcal{G}$ be a set of representatives for monic polynomials $g$ in $(\resring{k})[x]$ of degree $n$, with $g(0)=(-1)^n$, modulo the relation $\hayesrelation{k}$. Then,

\begin{equation*}
\sum_{g\in\mathcal{G}}|\Delta_g|=O\left(q^{-\frac{n^2}{2d}}\left(1+\frac{1}{q-1}\right)^n\binom{n+d}{n}+q^{d-\frac{n^2}{d}+o(n)}\right).
\end{equation*}
\end{lem}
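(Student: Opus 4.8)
The plan is to mirror the proof of Lemma~\ref{lem_average_discrepancy_bound_gln} essentially verbatim, keeping track of the three places where the $SL_n$ setting differs from the $GL_n$ one: the free coefficient of the characteristic polynomial is pinned to $(-1)^n$, so each level of lifting carries ``$q^{d}$'' rather than ``$q^{d+1}$'' of information and $\mathcal{G}$ has $q^{kd}$ elements; the mod~$\pi$ input is Corollary~\ref{cor_neg_traces_mod_q_sln} in place of Corollary~\ref{corollary_neg_traces_gln}, which costs a harmless extra factor of $q-1$; and the small–minimal–polynomial estimate is the $SL_n$ analogue of Claim~\ref{claim_bounding_probability_small_minpoly}. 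Nothing conceptually new is needed, since every ingredient is already an $SL_n$ statement proved earlier in the paper.

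Concretely, first I would fix $g\in\mathcal{G}$ (so $g$ is monic of degree $n$ with $g(0)=(-1)^n$) and condition on $\overline f:=\charc(\overline A)$, which ranges over the monic degree-$n$ polynomials with $\overline f(0)=(-1)^n$ and $\overline f\equiv\overline g\pmod{\hayesrelation{}}$. Writing $\Pr[\charc A\equiv g\pmod{\hayesrelation{k}}\mid\charc\overline A=\overline f]=q^{-(k-1)d}+\Delta_{\overline f,k}^{g}$ and summing over the relevant $\overline f$, whose events partition $\{\charc\overline A\equiv\overline g\pmod{\hayesrelation{}}\}$, I obtain
$$\Delta_g=q^{-(k-1)d}\bigl(\Pr[\charc\overline A\equiv\overline g\pmod{\hayesrelation{}}]-q^{-d}\bigr)+\sum_{\overline f}\Delta_{\overline f,k}^{g}\,\Pr[\charc\overline A=\overline f].$$
For the first term I would invoke Corollary~\ref{cor_neg_traces_mod_q_sln} to bound the mod~$\pi$ discrepancy by $O\!\bigl(q^{-n^2/(2d)}(q-1)(1+\tfrac1{q-1})^n\binom{n+d}{n}\bigr)$; summing over the $q^{kd}$ classes in $\mathcal{G}$ and using $q^{kd}\cdot q^{-(k-1)d}=q^{d}$, this produces the first term of the asserted bound, the $q-1$ and $q$ factors being absorbed into the implied constant (and, crucially, the result is uniform in $k$).

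For the second term I would exchange the order of summation and apply Lemma~\ref{lem_discrepancy_bound_interval_starting_from_f_mod_p_sln}: any $\overline f$ with $\deg\rad\overline f\ge d$ contributes $0$, and otherwise $\sum_{g\in\mathcal{G}}|\Delta_{\overline f,k}^{g}|\le 2\Pr_{B\in SL_n(\field_q)}[\deg\min B<d\mid\charc B=\overline f]$. Hence the second term is at most $2\sum_{\overline f:\,\deg\rad\overline f<d}\Pr_{B\in SL_n(\field_q)}[\deg\min B<d\cap\charc B=\overline f]$. By the $SL_n$ analogue of Claim~\ref{claim_bounding_probability_small_minpoly} (applicable since $\overline f(0)=(-1)^n$), each summand is $q^{-n^2/d+o(n)}$, the sum over the finitely many relevant values of $\deg\min B$ being absorbed into the $o(n)$; and by Lemma~\ref{lem_small_radical_bound} there are at most $q^{d+o(n)}$ polynomials $\overline f\in\field_q[x]_{=n}^{\mon}$ with $\deg\rad\overline f\le d$ (the extra constraint $\overline f(0)=(-1)^n$ can only decrease this count). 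Multiplying gives $q^{d-n^2/d+o(n)}$, and adding the two contributions yields the claimed estimate.

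As for difficulty, I expect essentially no genuine obstacle: the entire argument is the $GL_n$ proof with the exponent shift $d+1\mapsto d$ and the spurious $q-1$ factor, plus the observation that restricting to $\overline f(0)=(-1)^n$ breaks neither the radical count in Lemma~\ref{lem_small_radical_bound} nor the hypotheses of the $SL_n$ small–minimal–polynomial bound. The only mildly delicate point is verifying that the conditional-probability manipulation in the first step is legitimate, i.e.\ that the events $\{\charc\overline A=\overline f\}$ over the admissible $\overline f$ really do partition $\{\charc\overline A\equiv\overline g\pmod{\hayesrelation{}}\}$ inside $SL_n(\field_q)$ — which is immediate — and that all error terms are uniform in $k$, which they are by construction.
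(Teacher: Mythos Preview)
Your proposal is correct and follows essentially the same approach as the paper, which itself simply states that the proof is the same as Lemma~\ref{lem_average_discrepancy_bound_gln} with the $SL_n$ analogues substituted and Corollary~\ref{cor_neg_traces_mod_q_sln} used for the mod~$\pi$ discrepancy. Your explicit tracking of the exponent shift $d+1\mapsto d$ and the harmless extra factor $q-1$ is exactly the bookkeeping the paper leaves implicit.
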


\begin{proof}
The proof is the same as that of Lemma \ref{lem_average_discrepancy_bound_gln}, with the modifications of using the analogous results for $SL_n$ (proven in the current section) when needed. To bound the mod $\pi$ discrepancy, we use our extension of \cite[Theorem 4.6]{GR21}, namely Corollary \ref{cor_neg_traces_mod_q_sln}.
\end{proof}

\subsection{Joint distribution of traces}

As in Section \ref{section_join_distribution_traces_gln}, we start by a bound on the total variation distance of the mod $\pi^k$ variables, $\TR_{d_1,d_2}^{SL,k},\UTR_{d_1,d_2}^k$.

\begin{theorem}
Let $\TR_{d_1,d_2}^{SL,k}$ be the random variable attaching to a matrix $M\in SL_n(\resring{k})$ its $(d_1,d_2)$-trace datum, and let $\UTR_{d_1,d_2}^{k}$ be a uniform $(d_1,d_2)$-trace datum, as defined in Section \ref{section_trace_data_poly_intervals}. Let $S=S(d_1,k)+S(d_2,k)$. Then,

\begin{multline*}
d_{TV}(\mu_{\TR_{d_1,d_2}^{SL,k}},\mu_{\UTR_{d_1,d_2}^k})=\sum_{\mathfrak{a}\text{ a } (d_1,d_2)\text{-trace data}}
\Bigg|
\Pr_{M\in SL_n(\resring{k})}[M\text{ has }(d_1,d_2)\text{-trace data }\mathfrak{a}]-q^{-(kd-S)}
\Bigg|<\\
<O\left(
 q^{\frac{n}{2}-\frac{n^2}{2d}}\left(1+\frac{1}{q^2-1}\right)^n\binom{n+d}{n}+q^{d-\frac{n^2}{d}+o(n)}
\right).
\end{multline*}
In particular, as long as $d<c_q^{SL}\cdot n$, this total variation distance is $o(1)$ as $n\to\infty$, uniformly in $k$. 
\end{theorem}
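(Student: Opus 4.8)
The plan is to follow the $GL_n$ template of Theorem~\ref{theorem_mod_pk_joint_traces} essentially line by line, feeding in the $SL_n$-specific ingredients already assembled in this section. First I would apply Lemma~\ref{lem_connection_traces_hayes_classes}: for $M\in SL_n(\resring{k})$, prescribing the $(d_1,d_2)$-trace datum $\mathfrak a$ is equivalent to $\charc(M)$ landing in one of a controlled family of residue classes modulo $\hayesrelation{k}$. The one genuine difference from $GL_n$ is that $\det(M)=1$ forces $\charc(M)(0)=(-1)^n$, so throughout one restricts to monic $g\in(\resring{k})[x]$ of degree $n$ with $g(0)=(-1)^n$; the relevant family then has size $q^{S(d_1,k)+S(d_2,k)}$ rather than $q^{S(d_1,k)+S(d_2,k)+k}$. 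Summing over all $\mathfrak a$ and using the triangle inequality, exactly as in the $GL_n$ proof, reduces the claimed total variation bound to an estimate of the form $\sum_{g\in\mathcal G}|\Delta_g|=O(\cdots)$, where $\mathcal G$ runs over representatives of these residue classes and $\Delta_g=\Pr[\charc(A)\equiv g\pmod{\hayesrelation{k}}]-q^{-kd}$.

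That estimate is precisely Lemma~\ref{lem_average_discrepancy_bound_sln}, so the theorem follows once it is in hand. Its proof is the same induction on $k$ as in the $GL_n$ case: the base case $k=1$ is our extension Corollary~\ref{cor_neg_traces_mod_q_sln} of the Gorodetsky--Rodgers estimate for $SL_n(\field_q)$, which supplies the mod~$\pi$ discrepancy and hence the first term of the bound; the inductive step passes from $\resring{k-1}$ to $\resring{k}$ using the lifting formula of Lemma~\ref{lem_charpoly_step_in_p_direction} together with the computation $\img(\partial\charc_B^{SL})=\{\tfrac{\charc B}{\min B}b : \deg b<\deg\min B,\ b(0)=0\}$ of Theorem~\ref{theorem_image_char_derivative_sln}, which shows that the fibres of $\charc$ over a fixed lift are equidistributed among the admissible Hayes classes whenever $\deg\min(\overline{A_0})>d$. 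The contribution of matrices whose minimal polynomial modulo $\pi$ is exceptionally small is bounded by the $SL_n$ analog of Claim~\ref{claim_bounding_probability_small_minpoly}, which gives $\Pr[\deg\min(B)\le d\ \cap\ \charc(B)=\overline f]<q^{-n^2/d+o(n)}$, combined with Lemma~\ref{lem_small_radical_bound} (only $q^{d+o(n)}$ admissible $\overline f$ have $\deg\rad(\overline f)\le d$); Lemma~\ref{lem_discrepancy_bound_interval_starting_from_f_mod_p_sln} packages this into the per-$\overline f$ discrepancy estimate, and summing yields the second term $q^{d-n^2/d+o(n)}$.

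For the ``in particular'' clause, the first term is $o(1)$ precisely in the range $d<c_q^{SL}n$ --- by definition $c_q^{SL}$ is the threshold at which this quantity, inherited from the $SL_n$ finite-field input, tends to $0$ --- while the second term is $o(1)$ in an even wider range since $d<n$; as every bound occurring here is independent of $k$, the estimate is automatically uniform in $k$. I do not expect a substantial new obstacle beyond the $GL_n$ argument: the only point requiring genuine care is the bookkeeping imposed by the fixed determinant, namely working consistently with $\hayesrelation{k}$ restricted to $g(0)=(-1)^n$ (equivalently, tracking the Dirichlet-mod-$x$ component of the Hayes character) so that the count of residue classes, the normalization $q^{-(kd-S)}$, and the index factor $[GL_n(\field_q):SL_n(\field_q)]=q-1$ all appear with the correct powers. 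Once this is arranged, every remaining step is identical to the $GL_n$ proof, which is why the omitted details may safely be left to the reader.
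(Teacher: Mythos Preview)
Your proposal is correct and follows essentially the same route as the paper: the paper's own proof is literally ``omitted since it is essentially the same as that of Theorem~\ref{theorem_mod_pk_joint_traces}, combining the observation that since the free coefficient is determined, many of the summands in Lemma~\ref{lem_connection_traces_hayes_classes} are $0$,'' which is precisely the reduction you outline (restricting to $g(0)=(-1)^n$, then invoking Lemma~\ref{lem_average_discrepancy_bound_sln}). Your additional remarks unpacking how Lemma~\ref{lem_average_discrepancy_bound_sln} itself is assembled from Corollary~\ref{cor_neg_traces_mod_q_sln}, Theorem~\ref{theorem_image_char_derivative_sln}, Lemma~\ref{lem_discrepancy_bound_interval_starting_from_f_mod_p_sln}, and the small-radical/small-minimal-polynomial bounds are accurate and go a bit beyond what the paper spells out.
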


The proof is omitted since it is essentially the same as that of Theorem \ref{theorem_mod_pk_joint_traces}, combining the observation that since the free coefficient is determined, many of the summands in Lemma \ref{lem_connection_traces_hayes_classes} are $0$. From here Theorem \ref{main_theorem_sl} follows by taking  $k\to\infty$ in the previous theorem. We also get as a corollary

\begin{theorem}
    Fix $d_1,d_2>0$. Let $B_{-d_2},\ldots,B_{-1},B_1,\ldots,B_{d_1}$ be a collection of balls in $\Ol$, and let $U_{-1},\ldots,U_{-d_2},U_1,\ldots,U_{d_1}$ be independent, uniform random variables on $\Ol$. Set $Z_i=\frac{1}{|i|_p}U_i$. Then,
    $$
    \lim_{n\to\infty} \Pr_{M\in SL_n(\Ol)}\left[
\left(
\tr(M^i)-1_{p|i}\sigma(\tr(M^{i/p}))\in B_i
\right)_{0\neq i=-d_2}^{d_1}
\right]=\prod_{0\neq i=-d_2}^{d_1} \Pr[Z_i\in B_i].
    $$
\end{theorem}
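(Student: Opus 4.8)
The plan is to obtain this as an immediate corollary of Theorem \ref{main_theorem_sl}, exactly in the way Theorem \ref{thm_diaconis_shahshahani_analog} is obtained from Theorem \ref{main_theorem_un}. First I would recognize the event on the left-hand side as a ``cylinder'' event for the trace datum: by the definition of the $(d_1,d_2)$-trace datum $\TR^{d_1,d_2}(M)$, the event
$$
\left\{\tr(M^i)-1_{p|i}\sigma(\tr(M^{i/p}))\in B_i \text{ for all } 0\neq i=-d_2,\ldots,d_1\right\}
$$
is precisely the event $\{\TR^{d_1,d_2}(M)\in\mathcal{B}\}$, where $\mathcal{B}:=\prod_{0\neq i=-d_2}^{d_1}B_i$. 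Here one uses Lemma \ref{lem_trace_dependencies} to know that each coordinate $\tr(M^i)-1_{p|i}\sigma(\tr(M^{i/p}))$ lies in $\tfrac{1}{|i|_p}\Ol\subseteq\Ol$, so that $\TR^{d_1,d_2}(M)$ takes values in $\Ol^{d}$ with $d=d_1+d_2$ and $\mathcal{B}$ is a Borel subset of $\Ol^{d}$ (intersecting $B_i$ with $\tfrac{1}{|i|_p}\Ol$ changes nothing). Consequently the left-hand probability equals $\mu_{\TR_{d_1,d_2}^{SL}}(\mathcal{B})$, for $M$ chosen according to the Haar measure on $SL_n(\Ol)$. (The index set $\{-d_2,\dots,-1,1,\dots,d_1\}$ in the statement is the $(d_1,d_2)$-trace datum of the paper after the harmless relabelling interchanging the roles of positive and negative indices, so no real change is needed.)

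Next I would invoke the total variation bound. Since $d=d_1+d_2$ is fixed, we have $d<c_q^{SL}\cdot n$ for all $n$ large enough, so Theorem \ref{main_theorem_sl} gives $d_{TV}(\mu_{\TR_{d_1,d_2}^{SL}},\mu_{\UTR_{d_1,d_2}})=o(1)$ as $n\to\infty$. Using the elementary inequality $|\mu_1(A)-\mu_2(A)|\le d_{TV}(\mu_1,\mu_2)$ with $A=\mathcal{B}$, and noting that $\mu_{\UTR_{d_1,d_2}}$ does not depend on $n$, I conclude
$$
\lim_{n\to\infty}\mu_{\TR_{d_1,d_2}^{SL}}(\mathcal{B})=\mu_{\UTR_{d_1,d_2}}(\mathcal{B}).
$$
Finally I would evaluate the right-hand side directly from the definition of $\UTR_{d_1,d_2}$: it is the law of a sequence $(u_i)$ of independent random variables with $u_i\sim\tfrac{1}{|i|_p}U(\Ol)$, so by independence
$$
\mu_{\UTR_{d_1,d_2}}(\mathcal{B})=\prod_{0\neq i=-d_2}^{d_1}\Pr[u_i\in B_i]=\prod_{0\neq i=-d_2}^{d_1}\Pr[Z_i\in B_i],
$$
where the last equality holds because $Z_i=\tfrac{1}{|i|_p}U_i$ has the same distribution as $u_i$. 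Combining the two displays completes the proof.

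There is no serious obstacle: all the substance is packaged in Theorem \ref{main_theorem_sl}. The only steps requiring a line of justification are (i) the identification of the event with $\{\TR^{d_1,d_2}(M)\in\mathcal{B}\}$, which follows from unwinding the definition of the $(d_1,d_2)$-trace datum together with Lemma \ref{lem_trace_dependencies}, and (ii) the observation that a fixed $d$ lies in the admissible range $d<c_q^{SL}n$ once $n$ is large, which is automatic. I would present the argument compactly, mirroring the proof of Theorem \ref{thm_diaconis_shahshahani_analog}.
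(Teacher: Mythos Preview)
Your proposal is correct and follows exactly the approach the paper intends: the statement is presented there as an immediate corollary of Theorem \ref{main_theorem_sl} with no proof given, and your deduction via the total variation bound applied to the cylinder set $\mathcal{B}$ is precisely how the paper derives the analogous Theorem \ref{thm_diaconis_shahshahani_analog} from Theorem \ref{main_theorem_un}.
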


\section{Traces and characteristic polynomial for $Sp_{2n}$}\label{section_sp_main}

\subsection{The image of $\partial\charc_{A_0}$}\label{section_img_derv_charpoly_sp2n}

Recall the map $\partial \charc_{A_0}:\mathfrak{sp}_{2n}(\field_q)\to\field_q[x]_{<2n}$, defined (for $G=Sp_{2n}$) by $A_1\mapsto x\cdot\tr(\Adj(x-A_0) A_1)$. As discussed in Section \ref{subsection_explicit_representatives}, to compute the image of $\partial\charc_{A_0}$ one can move from $A_0$ to any element in its conjugacy class. To calculate the image, we will need the following preparatory lemma:

\begin{lem}\label{lemma_image_of_diagonal_sum_symplectic}
Let $A\in Sp_{2n}(\field_q)$, $B\in Sp_{2m}(\field_q)$, and suppose that $A$ is an upper-triangular (or lower-triangular) block matrix. Let $A\triangle B\in Sp_{2n+2m}(\field_q)$ be their triangular join. Then,
$$
\img(\partial\charc_{A\triangle B})= \mathrm{span}_{\field_q}(\charc(B)\cdot \img (\partial\charc_A),\charc(A)\cdot \img(\partial\charc_B)).
$$
\end{lem}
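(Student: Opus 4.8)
The plan is to mirror the proof of the $GL_n$ analogue, Lemma~\ref{lemma_derivative_of_direct_sums}, adapting the block bookkeeping to the symplectic setting and exploiting the triangularity hypothesis on $A$.

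First I would pin down the shape of the triangular join. Writing $A=\left(\begin{smallmatrix} A_1 & 0 \\ A_3 & A_2\end{smallmatrix}\right)\in Sp_{2n}(\field_q)$ (the upper-triangular case being identical up to transposing the discussion), Definition~\ref{def_triangular_join} gives
$$
A\triangle B=\left(\begin{array}{ccc} A_1 & 0 & 0 \\ 0 & B & 0 \\ A_3 & 0 & A_2\end{array}\right),
$$
with block sizes $n,2m,n$, and a direct check shows this matrix preserves the symplectic form $J=\left(\begin{smallmatrix} 0 & 0 & I_n \\ 0 & \Omega_m & 0 \\ -I_n & 0 & 0\end{smallmatrix}\right)$ — indeed this is exactly the condition that $A$ preserves $\Omega_n$ on the outer blocks and $B$ preserves $\Omega_m$. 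Since $\Adj$ is conjugation-equivariant, $\img(\partial\charc_{A_0})$ is unchanged if $A_0$ is replaced by a conjugate (cf.\ the discussion opening Section~\ref{subsection_explicit_representatives}), so it is harmless to work with $J$ instead of the standard form $\Omega_{n+m}$.

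Next I would parametrize $\mathfrak{sp}(J)$. Writing $X\in\mathfrak{sp}(J)$ in $3\times 3$ block form $X=(X_{ij})$, the defining relation $JX+X^tJ=0$, i.e.\ $JX$ symmetric, forces $X_{33}=-X_{11}^t$, forces $X_{13},X_{31}$ symmetric, forces $X_{22}\in\mathfrak{sp}_{2m}(\Omega_m)$, and determines $X_{23},X_{32}$ from $X_{12},X_{21}$, while $X_{11},X_{12},X_{21}$ are otherwise free. By the explicit description of $\mathfrak{sp}_{2n}$ recalled in Section~\ref{subsubsection_symplectic_group_intro} (off-diagonal blocks symmetric, diagonal blocks negative-transposes of one another), the sub-block $\widetilde X_A:=\left(\begin{smallmatrix} X_{11} & X_{13} \\ X_{31} & X_{33}\end{smallmatrix}\right)$ ranges over all of $\mathfrak{sp}_{2n}(\Omega_n)$, $X_{22}$ ranges over all of $\mathfrak{sp}_{2m}(\Omega_m)$, and the two vary independently. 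Now, because the middle block of $A\triangle B$ is entirely decoupled, $\det(x-A\triangle B)=\charc(A)\charc(B)$ and, writing $\Adj(x-A)=\left(\begin{smallmatrix} P & Q \\ R & S\end{smallmatrix}\right)$ and $T=\Adj(x-B)$,
$$
\Adj(x-A\triangle B)=\left(\begin{array}{ccc} \charc(B)P & 0 & \charc(B)Q \\ 0 & \charc(A)T & 0 \\ \charc(B)R & 0 & \charc(B)S\end{array}\right).
$$
Substituting into $\partial\charc_{A\triangle B}(X)=x\cdot\tr\!\big(\Adj(x-A\triangle B)\,X\big)$, the entries of $X_{12},X_{21}$ (hence also $X_{23},X_{32}$) drop out after taking the trace, the outer rows and columns contribute $\charc(B)\cdot x\tr(\Adj(x-A)\widetilde X_A)=\charc(B)\,\partial\charc_A(\widetilde X_A)$, and the middle block contributes $\charc(A)\cdot x\tr(\Adj(x-B)X_{22})=\charc(A)\,\partial\charc_B(X_{22})$; taking the span over all $X$ and using the independent variation of $\widetilde X_A$ and $X_{22}$ yields the claimed formula.

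The main obstacle is the symplectic block bookkeeping: correctly determining which blocks of $\mathfrak{sp}(J)$ are constrained versus free, confirming that $\widetilde X_A$ truly sweeps out all of $\mathfrak{sp}_{2n}(\Omega_n)$ rather than a proper subspace, and verifying that the triangularity of $A$ is precisely what makes $A\triangle B$ preserve the clean block form $J$ — once these are in place, the remainder is a routine transcription of the $GL_n$ computation.
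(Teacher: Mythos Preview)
Your proposal is correct and follows essentially the same approach as the paper: both arguments compute $\Adj(x-A\triangle B)$ in $3\times 3$ block form using the triangular structure, observe that only the ``outer'' blocks $X_{11},X_{13},X_{31},X_{33}$ and the middle block $X_{22}$ of an element of the Lie algebra contribute to the trace, and identify these with elements of $\mathfrak{sp}_{2n}$ and $\mathfrak{sp}_{2m}$ respectively. Your version is in fact more explicit than the paper's on one point---you write down the form $J$ preserved by $A\triangle B$ and verify directly that $\widetilde X_A$ sweeps out all of $\mathfrak{sp}_{2n}(\Omega_n)$ while $X_{22}$ independently sweeps out $\mathfrak{sp}_{2m}(\Omega_m)$---whereas the paper simply asserts ``by the structure of $\mathfrak{sp}_{2n+2m}(\field_q)$'' that the relevant sub-blocks lie in the right Lie algebras.
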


\begin{proof}
Write 

$$
A=
\left(
\begin{array}{cc}
    A_1 & A_3 \\
    0 & A_2
\end{array}
\right), 
A\triangle B=
\left(
\begin{array}{ccc}
    A_1 & 0 & A_3\\
    0 & B & 0 \\
    0 & 0 & A_2
\end{array}
\right).
$$
We note that 

$$
\Adj(x-A\triangle B)=
\charc(A)\charc(B)\left(
\begin{array}{ccc}
    x-A_1 & 0 & -A_3\\
    0 & x-B & 0 \\
    0 & 0 & x-A_2
\end{array}
\right)^{-1}.
$$
A simple calculation shows now that 

$$
\Adj(x-A\triangle B)=
\charc(A)\charc(B)\left(
\begin{array}{ccc}
    (x-A_1)^{-1} & 0 & (x-A_1)^{-1}A_3(x-A_2)^{-1}\\
    0 & (x-B)^{-1} & 0 \\
    0 & 0 & (x-A_2)^{-1}
\end{array}
\right).
$$

Now let $C\in\mathfrak{sp}_{2n+2n}(\field_q)$. We can write $C$ as a $3\times 3$ block matrix, and note that whenever a block in $\Adj(x-A\triangle B)$ is zero, the corresponding block in $C$ does not contribute to $\tr(\Adj(x-A\triangle B) C)$, thus we may assume for simplicity that it is zero. Thus, we may write

$$
C = 
\left(
\begin{array}{ccc}
    C_1 & 0 & C_3\\
    0 & C_B & 0 \\
    0 & 0 & C_2
\end{array}
\right).
$$
By the structure of $\mathfrak{sp}_{2n+2m}(\field_q)$, we get that $C_B\in \mathfrak{sp}_{2m}(\field_q)$, $
C_A=\left(
\begin{array}{cc}
    C_1 & C_3 \\
    0 & C_2 
\end{array}
\right)\in \mathfrak{sp}_{2n}(\field_q)$. Again, a simple calculation shows that

$$
\tr(\Adj(x-A\triangle B) C)=\charc(B)\tr(\Adj(x-A) C_A)+\charc(A)\tr(\Adj(x-B) C_B),
$$
thus proving the claim.
\end{proof}

\begin{prop}\label{prop_image_is_palindromic}
Let $A_0\in Sp_{2n}(\field_q)$, $A_1\in\mathfrak{sp}_{2n}(\field_q)$. Then $\partial\charc_{A_0}(A_1)$ is a $2n$-palindromic polynomial of degree $<2n$. 
\end{prop}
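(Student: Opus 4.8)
The plan is to show that $\partial\charc_{A_0}(A_1)$, being equal to $\charc(A_0(1+\pi A_1)) - \charc(A_0)$ up to a scalar $\pi$ factor (specialized to $k=2$, identifying $\resring{2}/\pi$ with $\field_q$ via the lift), inherits the palindromy forced by the symplectic structure on the characteristic polynomials themselves. Concretely, I would work over $\resring{2}$: take any $A\in Sp_{2n}(\resring{2})$ with $\overline{A}=\overline{A_0}$, write $A = A_0(1+\pi A_1)$ with $A_1\in\mathfrak{sp}_{2n}(\field_q)$, and invoke Lemma~\ref{lem_charpoly_step_in_p_direction} to get $\charc(A) = \charc(A_0) - \pi\, x\cdot\tr(\Adj(x-\overline{A_0})A_1) = \charc(A_0) - \pi\,\partial\charc_{A_0}(A_1)$. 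Since $A, A_0 \in Sp_{2n}(\resring{2})$, both $\charc(A)$ and $\charc(A_0)$ are self-reciprocal polynomials over $\resring{2}$ of degree $2n$ with invertible free coefficient: $x^{2n}\charc(A)(1/x)/\charc(A)(0) = \charc(A)$, and likewise for $A_0$. (This is the standard fact recorded in Section~\ref{subsubsection_symplectic_group_intro}, that $\charc(g)^\rs = \charc(g)$ for $g\in Sp_{2n}$, which holds verbatim over any commutative ring where the form is defined.)

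The main step is then to extract palindromy of the difference $\partial\charc_{A_0}(A_1)$ from self-reciprocity of $\charc(A)$ and $\charc(A_0)$. The subtlety is that self-reciprocity is a nonlinear condition (it involves division by the free coefficient), so I cannot just subtract. I would handle this as follows. Both $\charc(A_0)$ and $\charc(A)$ have the same free coefficient modulo $\pi$ — call its lift $c_0 \in (\resring{2})^\times$ — in fact $\charc(A)(0) = \charc(A_0)(0) + \pi\cdot(\text{something})$, but more importantly $\partial\charc_{A_0}(A_1)$ has free coefficient: note from Lemma~\ref{lem_charpoly_step_in_p_direction} that for $G = Sp_{2n}$ we have $\partial\charc_{A_0}(A_1) = x\cdot\tr(\Adj(x-\overline{A_0})A_1)$, which visibly has zero constant term. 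So $\charc(A)$ and $\charc(A_0)$ share the same constant term $c$. Writing $f = \charc(A_0)$, $h = \partial\charc_{A_0}(A_1) \in \field_q[x]_{<2n}$ (lifted arbitrarily), self-reciprocity of $f$ reads $x^{2n}f(1/x) = c\cdot f(x)$; self-reciprocity of $\charc(A) = f - \pi h$ reads $x^{2n}(f-\pi h)(1/x) = (f-\pi h)(0)^{-1}\cdot(f - \pi h)(x) \cdot (f-\pi h)(0)\cdot(\text{normalization})$. Since $h(0)=0$, $(f-\pi h)(0) = f(0) = c$, so this simplifies to $x^{2n}f(1/x) - \pi x^{2n}h(1/x) = f(x) - \pi h(x)$ after clearing $c$ (using $c$ invertible). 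Subtracting the relation for $f$ and dividing by $\pi$ (legitimate since $\pi\cdot(\text{anything})=0$ only when the anything vanishes mod $\pi$, and both sides are $\pi$ times polynomials over $\field_q$), I obtain $x^{2n}h(1/x) = h(x)$ over $\field_q$, i.e. $h$ is $2n$-palindromic; it has degree $<2n$ by construction since $\charc(A),\charc(A_0)$ are both monic of degree $2n$.

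The one genuinely delicate point — the part I expect to be the main obstacle — is being careful about the normalization factor in the definition of $^\rs$ when one works mod $\pi$ rather than over a field: I must make sure that the constant term of $\charc(A)$ really does equal that of $\charc(A_0)$ exactly in $\resring{2}$ (not merely mod $\pi$), which is exactly what the vanishing of the constant term of $\partial\charc_{A_0}(A_1)$ guarantees, so this should go through cleanly. An alternative, perhaps cleaner, route avoids mod-$\pi^2$ arithmetic entirely: one computes directly that $x^{2n}\cdot\partial\charc_{A_0}(A_1)(1/x) = \partial\charc_{A_0}(A_1)(x)$ as an identity of rational functions, by using that $A_0 \in Sp_{2n}$ implies $\Omega_n(x-A_0)^t\Omega_n^{-1} = x - \Omega_n A_0^t\Omega_n^{-1} = x - A_0^{-1}$, together with $A_1 \in \mathfrak{sp}_{2n}$ meaning $\Omega_n A_1^t \Omega_n^{-1} = -A_1$, and the cyclicity and transpose-invariance of the trace; this gives a relation between $\tr(\Adj(x-A_0)A_1)$ evaluated at $x$ and at $1/x$. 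I would present the mod-$\pi^2$ argument as the primary proof since it is shorter and conceptually transparent, relegating the direct computation to a remark if space permits. Either way the result follows and the palindromy of the image $\img(\partial\charc_{A_0}) \subseteq \field_q[x]^{\pal}_{<2n}$ is established, which is exactly what is needed to later apply Lemma~\ref{lem_hayes_character_sum_over_palindromic_polys}.
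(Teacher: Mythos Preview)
Your approach is exactly the paper's: lift to $\resring{2}$, use that both $\charc(A)$ and $\charc(A_0)$ are self-reciprocal, and subtract. The ``delicate point'' you flag about the constant term dissolves once you note that symplectic matrices have determinant $1$, so $\charc(A_0)(0) = \charc(A)(0) = 1$ exactly in $\resring{2}$ (not merely mod $\pi$); self-reciprocity then reads $x^{2n}f(1/x) = f(x)$ with no normalization factor to track, and the subtraction is immediate --- this is what the paper uses implicitly, and it makes your careful check that $h(0)=0$ unnecessary.
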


\begin{proof}
Indeed, lift $A_0$ to a matrix in $Sp_{2n}(\resring{2})$, denoted also $A_0$ by a small abuse of notation. Writing $A=A_0+\pi A_0A_1$ we have that $f=\charc(A)$ is self-reciprocal. Write $f=\charc(A_0)+\pi \partial\charc_{A_0}(A_1)$. Now,

$$
f(x)=x^{2n}f(1/x)=x^{2n}\charc(A_0)(1/x)+\pi x^{2n}\partial\charc_{A_0}(A_1)(1/x).
$$
Since $\charc(A_0)$ is itself self-reciprocal, we get that $\partial\charc_{A_0}(A_1)$ is $2n$-palindromic.
\end{proof}

\begin{theorem}\label{theorem_image_char_derivative_sp2n}
Let $A_0\in Sp_{2n}(\field_q)$ be a symplectic matrix. Then, 
$$
\img(\partial\charc_{A_0})=\left\{a:a=\frac{\charc\overline{A_0}}{\min\overline{A_0}}b,
b\in\field_q[x]_{<\deg\min \overline{A}_0}^\pal\right\}.
$$
\end{theorem}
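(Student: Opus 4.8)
The plan is to reduce, exactly as in the $GL_n$ case (Theorem \ref{theorem_image_char_derivative_gl_n}), to a computation over an extension field and then to primary building blocks, using the triangular join in place of the direct sum. First I would pass to an extension $\field_{q^l}$ in which $\charc(A_0)$ splits completely: since the rank of the linear map $\partial\charc_{A_0}$ is preserved under field extension, and since by Proposition \ref{prop_image_is_palindromic} the image always lies inside the palindromic polynomials divisible by $\frac{\charc\overline{A_0}}{\min\overline{A_0}}$, it suffices to show the image attains the full predicted dimension $\deg\min\overline{A_0}$ over $\field_{q^l}$. Here I must be slightly careful about what "palindromic" means: the predicted space $\{\frac{\charc\overline{A_0}}{\min\overline{A_0}}b:b\in\field_q[x]^{\pal}_{<\deg\min\overline{A_0}}\}$ has dimension $\deg\min\overline{A_0}$ over $\field_q$ (roughly half the coefficients of $b$ are free, but $b$ ranges over polynomials of degree up to $2\deg\min\overline{A_0}$ after multiplying by $\frac{\charc\overline{A_0}}{\min\overline{A_0}}$ — I would state the dimension count cleanly at the start), and this count is extension-stable, so a dimension comparison closes the argument once the extension-field case is done.

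Next I would decompose $A_0$ over $\field_{q^l}$ into its primary parts and induct on the number of distinct reciprocal-primary blocks, using Lemma \ref{lemma_image_of_diagonal_sum_symplectic}: if $A_0 = A\triangle B$ with $\charc(A),\charc(B)$ coprime (and $A$ triangular, which one can always arrange for a join of primary blocks built from the explicit Type I/II/III representatives of Section \ref{subsubsection_explicit_representatives_sp2n}), then $\img(\partial\charc_{A_0})=\mathrm{span}_{\field_{q^l}}(\charc(B)\img(\partial\charc_A),\charc(A)\img(\partial\charc_B))$, and the Chinese remainder theorem together with the inductive hypothesis and the palindromic structure (which is multiplicative in the appropriate sense) gives the claimed image. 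A second, inner induction handles the case of several blocks attached to the same eigenvalue pair $\{\alpha,\alpha^{-1}\}$, again peeling off the smallest block via the triangular join and the fact that $\min\overline{A_0}$ is the largest-exponent block. So the core reduces to: compute $\img(\partial\charc_{A_0})$ when $\charc(A_0)=\min(A_0)$ is a single primary factor — i.e. $A_0$ is (conjugate to) one of the Type I, II, or III representatives.

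For each of the three types I would use the explicit formula for $\Adj(x-A_0)$ from Lemma \ref{lem_sp_jordan_block_adj} (in terms of the matrices $X_\alpha^e$, $Y_\alpha^e$, $Z_\alpha^{2e}$ of Lemma \ref{lem_gl_jordan_block_adj} and Proposition \ref{prop_matrix_comp_sp2n}), plug in the block description of $\mathfrak{sp}_{2n}(\field_q)$ (with $B,C$ symmetric and $A^t=-D$), and compute $x\cdot\tr(\Adj(x-A_0)A_1)$ as $A_1$ ranges over this Lie algebra. For Type I, where the block is $\left(\begin{smallmatrix}J_m(\alpha)^{-1}&0\\0&J_m(\alpha)^t\end{smallmatrix}\right)$ with $\alpha\ne\pm1$, the symmetry constraints on $A_1$ are mild and I expect the off-diagonal symmetric blocks $B,C$ to already sweep out all of $\frac{\charc A_0}{\min A_0}\cdot\field_q[x]^{\pal}$ — here $\min A_0=\charc A_0=(x-\alpha)^m(x-\alpha^{-1})^m$ so the predicted image is a space of dimension $2m$ inside $\field_q[x]_{<2m}^{\pal}$, which is the whole thing. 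For Types II and III ($\alpha=\pm1$, so the relevant polynomial is a power of $x\mp1$ which is self-reciprocal up to scalar) the constraints are more rigid because of the sign invariant and the matrix $S$; I would verify that even so the map is surjective onto the predicted space by exhibiting enough explicit $A_1$'s, exploiting the freedom in the symmetric blocks $B$ and $C$ and tracking the palindromy that Proposition \ref{prop_image_is_palindromic} already guarantees.

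The main obstacle I anticipate is the Type III computation: the representative $A_{\alpha,\pm}^{2m}$ is genuinely non-diagonal (it has the nilpotent-like lower-left block $aS$), so $\Adj(x-A_{\alpha,\pm}^{2m})$ has the extra term $aZ_\alpha^{2m}$, and one must check that the image does not accidentally lose a dimension relative to the palindromic target — in particular that the single "reciprocally irreducible but $GL$-reducible" factor $(x-\alpha)^{2m}$ behaves as predicted and that the sign $\pm$ plays no role in the dimension. I would handle this by doing the trace pairing against a well-chosen basis of the subspace of $\mathfrak{sp}_{2m}(\field_q)$ complementary to the centralizer of $A_0$, using that $\dim\mathfrak{sp}_{2m} - \dim\mathfrak{z}_{\mathfrak{sp}}(A_0)$ matches $\deg\min A_0$ for these primary classes; since $\partial\charc_{A_0}$ kills exactly the centralizer directions (the trace of $\Adj(x-A_0)$ against a commuting element collapses), a rank count then forces surjectivity onto the $\deg\min\overline{A_0}$-dimensional palindromic space, and combined with Proposition \ref{prop_image_is_palindromic} this yields the theorem.
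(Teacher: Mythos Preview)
Your overall plan --- pass to a splitting field, invoke Proposition \ref{prop_image_is_palindromic} for the containment, then reduce via Lemma \ref{lemma_image_of_diagonal_sum_symplectic} and an induction on primary pieces to the three explicit Types --- is exactly the paper's route, and that part is fine. But there are two genuine errors in the execution.

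First, the dimension count is wrong. The space $\field_q[x]_{<m}^{\pal}$ (with $m=\deg\min\overline{A_0}$) has dimension $\lceil\frac{m-1}{2}\rceil$, not $m$: an $m$-palindromic polynomial $b=\sum_{i<m}b_ix^i$ satisfies $b_i=b_{m-i}$, which forces $b_0=0$ and halves the remaining freedom (cf.\ Definition \ref{def_delta_number} and Lemma \ref{lem_hayes_character_sum_over_palindromic_polys}). Your parenthetical about ``$b$ ranging over polynomials of degree up to $2\deg\min\overline{A_0}$'' is not right --- $b$ stays below degree $\deg\min\overline{A_0}$; it is the product $a$ that sits in $\field_q[x]_{<2n}$. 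This error propagates: for Type I you say the target has dimension $2m$, but $\dim\field_q[x]_{<2m}^{\pal}=m$. Also, for Type I the off-diagonal symmetric blocks $B,C$ contribute nothing (the adjugate is block-diagonal); it is the arbitrary block $A$, via $\tr(A[(x-\alpha)^mY_\alpha^m-(x-\alpha^{-1})^mX_\alpha^m])$, that sweeps the image.

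Second, the centralizer shortcut for Type III does not work as stated. Commuting elements are \emph{not} killed by $\partial\charc_{A_0}$: take $A_0=\mathrm{diag}(\alpha,\alpha^{-1})\in Sp_2(\field_q)$ with $\alpha\ne\pm1$ and $A_1=\mathrm{diag}(1,-1)\in\mathfrak{sp}_2$; then $A_1$ commutes with $A_0$ but $x\cdot\tr(\Adj(x-A_0)A_1)=x(\alpha-\alpha^{-1})\ne0$. What is true is the opposite containment, $[\mathfrak{sp}_{2m},A_0]\subset\ker\partial\charc_{A_0}$ (since $\Adj(x-A_0)$ commutes with $A_0$ and the trace is cyclic), which yields only the \emph{upper} bound $\dim\img\le\dim\mathfrak{z}_{\mathfrak{sp}}(A_0)$; turning this into equality and matching it with the correct target dimension $\lceil\frac{\deg\min A_0-1}{2}\rceil$ would require a separate argument. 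The paper instead settles Type III by direct computation: pairing $\Adj(x-A_{\alpha,\pm}^{2m})$ from Lemma \ref{lem_sp_jordan_block_adj} against both the arbitrary block $A$ and the symmetric block $B$ (the latter hitting the $Z_\alpha^{2m}$ term), and exhibiting $m$ image elements with pairwise distinct orders of vanishing at $\alpha$.
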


\begin{proof}
Without loss of generality, we assume that $\charc(A_0)$ splits completely (otherwise we move to work in an algebraic extension $\field_{q^l}$, as in the proof of Theorem \ref{theorem_image_char_derivative_gl_n}). Now using Lemma \ref{lemma_image_of_diagonal_sum_symplectic} we may reduce to the case of a matrix which is of Type I, II or III as defined in Section \ref{subsubsection_explicit_representatives_sp2n} (the proof for this reduction being essentially the same as the argument given in the proof of Theorem \ref{theorem_image_char_derivative_gl_n}).

\maincase{Type I} 
We want to compute $\img(\partial\charc_{A_{\alpha,\alpha^{-1}}^m})$. Let $A_1=\left(
\begin{array}{cc}
    A & B \\
    C & D
\end{array}
\right)\in\mathfrak{sp}_{2m}(\field_q)$, so that $B,C$ are symmetric and $A=-D^t$. Using Lemma \ref{lem_sp_jordan_block_adj}, we see that 

\begin{multline*}
\partial\charc_{A_{\alpha,\alpha^{-1}}^m}(A_1)=\tr\left(
    \begin{array}{cc}
        (x-\alpha)^m Y_{\alpha}^m &  \\
         & (x-\alpha^{-1})^m (X_{\alpha}^m)^t
    \end{array}
    \right)\left(
\begin{array}{cc}
    A & B \\
    C & D
\end{array}
\right)=\\
=\tr(A(x-\alpha)^m Y_{\alpha}^m)+\tr(D(x-\alpha^{-1})^m (X_{\alpha}^m)^t)=\tr\left(A((x-\alpha)^m Y_{\alpha}^m-(x-\alpha^{-1})^m X_{\alpha}^m)
\right).
\end{multline*}

Since $A$ is arbitrary, we get that
\begin{multline*}
\img(\partial\charc_{A_{\alpha,\alpha^{-1}}^m})=
(x-\alpha)^{m}(x-\alpha^{-1})^{m} \mathrm{span}_{\field_q}\bigg(\frac{\alpha}{1-x\alpha}+\frac{1}{x-\alpha},\frac{1}{(1-x\alpha)^2}+\frac{1}{(x-\alpha)^2},\\
\frac{x}{(1-x\alpha)^3}+\frac{1}{(x-\alpha)^3},\cdots,\frac{x^{m-2}}{(1-x\alpha)^{m}}+\frac{1}{(x-\alpha)^{m}}\bigg).    
\end{multline*}

We claim that this is simply $\field_q[x]_{<2m}^{\pal}$. Note that by Proposition \ref{prop_image_is_palindromic}, the image must be contained in this set. On the other hand, the $\field_q$-dimension of $\field_q[x]_{<2m}^\pal$ is $m$, thus it is enough to exhibit $m$ linearly independent functions in the image. But each element in the spanning set above vanishes to a different order at $\alpha$, thus we prove the claim for Type I matrices.

\maincase{Type II} As in the previous case we get that 

\begin{multline*}
\img(\partial\charc_{A_{\alpha,\pm}^m})=
(x-\alpha)^{2m} \mathrm{span}_{\field_q}\Bigg(\frac{\alpha}{1-x\alpha}+\frac{1}{x-\alpha},\frac{1}{(1-x\alpha)^2}+\frac{1}{(x-\alpha)^2},\\
\frac{x}{(1-x\alpha)^3}+\frac{1}{(x-\alpha)^3},\cdots,\frac{x^{m-2}}{(1-x\alpha)^{m}}+\frac{1}{(x-\alpha)^{m}}\Bigg).
\end{multline*}
By Proposition \ref{prop_image_is_palindromic}, all polynomials in the image are $2m$-palindromic. However, since $\alpha=\alpha^{-1}$ in this case, every polynomial in the image is divisible by $(x-\alpha)^m$. The space of polynomials in $\field_q[x]_{<2m}^{\pal}$ which are divisible by $(x-\alpha)^m$ is simply $(x-\alpha)^m\field_q[x]_{<m}^{\pal}$, and since $m$ is odd, its dimension is $\frac{m-1}{2}$. Hence to show the result we need to find $\frac{m-1}{2}$ linearly independent elements in the image. Now note that for $i=1,\cdots, \frac{m-1}{2}$, 

$$
(x-\alpha)^{2m}\left(\frac{x^{2i-2}}{(1-x\alpha)^{2i}}+\frac{1}{(x-\alpha)^{2i}}\right)=(x-\alpha)^{2m-2i}(1-\alpha x^{2i-2})
$$
is of order $2m-2i$ at $\alpha$ (as $1-\alpha x^{2i-2}$ does not vanish at $\alpha$), thus we proved the result for Type II matrices.

\maincase{Type III} Let $A_1=\left(
\begin{array}{cc}
    A & B \\
    C & D
\end{array}
\right)\in\mathfrak{sp}_{2m}(\field_q)$, so that $B,C$ are symmetric and $A=-D^t$. Using Lemma \ref{lem_sp_jordan_block_adj}, we see that 

\begin{multline*}
\partial\charc_{A_{\alpha,\pm}^{2m}}(A_1)=\tr \left(
    \begin{array}{cc}
        (x-\alpha)^mY_\alpha^m & 0 \\
         Z_\alpha^{2m} & (x-\alpha)^m(X_\alpha^m)^t
    \end{array}
    \right)\left(
\begin{array}{cc}
    A & B \\
    C & D
\end{array}
\right)=\\
=\tr((x-\alpha)^mY_\alpha^m A)+\tr(B Z_\alpha^{2m})+\tr((x-\alpha)^m(X_\alpha^m)^t D)=\\
=\tr\left(A((x-\alpha)^m Y_{\alpha}^m-(x-\alpha^{-1})^m X_{\alpha}^m)
\right)+\tr(BZ_\alpha^{2m}).
\end{multline*}
Since $A$ is arbitrary and $B$ is arbitrary symmetric, the image is simply the $\field_q$-span of the polynomials 

\begin{multline*}
(x-\alpha)^{2m}\left(\frac{\alpha}{1-x\alpha}+\frac{1}{x-\alpha},\frac{1}{(1-x\alpha)^2}+\frac{1}{(x-\alpha)^2},\frac{x}{(1-x\alpha)^3}+\frac{1}{(x-\alpha)^3},\cdots,\frac{x^{m-2}}{(1-x\alpha)^{m}}+\frac{1}{(x-\alpha)^{m}}\right),\\
(x-\alpha)^{2m}\left(\frac{(-1)^{r-1}\alpha^s f_r}{(x-\alpha)^s}+\frac{(-1)^{s-1}\alpha^r f_s}{(x-\alpha)^r}\right)_{r,s=1}^m.
\end{multline*}
Here, $f_i$ are defined as in Proposition \ref{prop_matrix_comp_sp2n}. We claim that the span is simply $\field_q[x]_{<2m}^{\pal}$. To prove the claim we need to find $m$ linearly independent functions in the image. We note that $(x-\alpha)^{2m}\frac{(-1)^{j-1}\alpha^j f_j}{(x-\alpha)^j}$ (that is, choosing $r=s=j$) vanishes to order $2m-2j$ at $1$, so we finish the proof.
\end{proof}

\subsection{Distribution of the characteristic polynomial}

In computing the distribution and discrepancies of the characteristic polynomial, we follow the same steps as in Section \ref{section_distribution_charpoly_gln}. For the rest of Section \ref{section_sp_main}, $A$ will be a matrix from $Sp_{2n}(\resring{k})$, unless explicitly stated otherwise.

\subsubsection{One step from $\pi^{k-1}$ to $\pi^k$}

Let $A\in Sp_{2n}(\resring{k})$ be written as $A=A_0+\pi^{k-1}A_0A_1$, where $A_0\in Sp_{2n}(\resring{k})$ is a representative for $A\pmod{\pi^{k-1}}$. When we fix $A_0$, the possibilities for $\charc(A)$ ranges over $\charc(A_0)+\pi^{k-1} \img(\partial\charc_{A_0})$. Recall that from Theorem \ref{theorem_image_char_derivative_sp2n}, 

$$
\img(\partial\charc_{A_0})=\left\{a
\Bigg| 
a=\frac{\charc\overline{A_0}}{\min\overline{A_0}}b, b\in\field_q[x]_{<\deg\min\overline{A_0}}^\pal\right\}.
$$
Since $\frac{\charc\overline{A_0}}{\min\overline{A_0}}$ is palindromic, the condition on $b$ being palindromic is equivalent to $a$ being palindromic. 

\begin{definition}\label{def_delta_number}
For $B\in Sp_{2n}(\field_q)$, define $\delta(B)=\lceil\frac{\deg\min B-1}{2}\rceil$. Slightly more generally, for $r\in \mathbb{N}$ we let $\delta(r)=\lceil\frac{r-1}{2}\rceil$.
\end{definition}

We use the structure of $\img(\partial\charc_{A_0})$ to prove the following lemma, on the distribution of $\charc A$ given $A_0$.

\begin{lem}\label{lem_one_step_sp2n}
Let $g\in (\resring{k})[x]$ be a monic polynomial of degree $2n$. Then for $d<n$,

$$
\Pr[\charc A\in I(g,2n-d)|\charc(\rho_{k-1}(A))\in I(\rho_{k-1}(g),2n-d),\delta(\deg\min\overline{A})\ge d]=q^{-d}.
$$
\end{lem}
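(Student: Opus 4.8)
The plan is to mimic the proof of Lemma \ref{lem_onestep_charpoly_distribution}, replacing the ordinary short-interval character argument by the palindromic orthogonality relation of Lemma \ref{lem_hayes_character_sum_over_palindromic_polys}. Fix a choice of $A_0 \in H_{Sp,k}$, a representative for $A \pmod{\pi^{k-1}}$, with $\delta(\deg\min\overline{A_0}) \ge d$; write $r = \deg\min\overline{A_0}$ and $\delta = \delta(r) \ge d$. We will show the conditional probability is $q^{-d}$ for every such $A_0$, which gives the claim. Writing $A = A_0(I + \pi^{k-1}A_1)$ with $A_1 \in \mathfrak{sp}_{2n}(\field_q)$, Lemma \ref{lem_charpoly_step_in_p_direction} gives $\charc(A) = \charc(A_0) + \pi^{k-1}\partial\charc_{\overline{A_0}}(A_1)$. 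Given that $\charc(\rho_{k-1}(A)) \in I(\rho_{k-1}(g), 2n-d)$, write $g - \charc(A_0) = e_0(x) + \pi^{k-1}e_1(x)$ with $\deg e_0 < 2n-d$, so that $\charc(A) \in I(g, 2n-d)$ becomes the condition $\deg(\partial\charc_{\overline{A_0}}(A_1) - \overline{e_1}) < 2n-d$. Since both polynomials have degree $< 2n$, this is equivalent to $x^{2n} + \partial\charc_{\overline{A_0}}(A_1) \equiv x^{2n} + \overline{e_1} \pmod{R_{d,1,\pi}}$.

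Next I would use the structure of the image from Theorem \ref{theorem_image_char_derivative_sp2n}: $\partial\charc_{\overline{A_0}}(A_1) = \frac{\charc\overline{A_0}}{\min\overline{A_0}} b$ as $b$ ranges over $\field_q[x]^{\pal}_{<r}$, and since $\partial\charc_{\overline{A_0}}$ is linear with all fibers of equal size, the conditional probability equals the fraction of $b \in \field_q[x]^{\pal}_{<r}$ with $\frac{\charc\overline{A_0}}{\min\overline{A_0}}(x^r + b) \equiv x^r \frac{\charc\overline{A_0}}{\min\overline{A_0}} + \overline{e_1} \pmod{R_{d,1,\pi}}$ (using that $\frac{\charc\overline{A_0}}{\min\overline{A_0}}$ is monic, so leading behaviour is controlled). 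Note $\overline{e_1}$ is itself palindromic of degree $< 2n$ by Proposition \ref{prop_image_is_palindromic} (it lies in the image), hence equals $\frac{\charc\overline{A_0}}{\min\overline{A_0}} b_0$ for a unique $b_0 \in \field_q[x]^{\pal}_{<r}$; so we are counting $b$ with $x^r + b \equiv x^r + b_0 \pmod{R_{d,1,\pi}}$. Expanding the indicator via Hayes characters (orthogonality relation \eqref{eqn_hayes_ortho_relation} with $H = 1$, $l = d$) and swapping sums, the count is
$$
q^{-\delta} \cdot \frac{1}{q^d}\sum_{\chi \in \widehat{R}_{d,1}} \overline{\chi}(x^r + b_0) \sum_{b \in \field_q[x]^{\pal}_{<r}} \chi(x^r + b).
$$
By Lemma \ref{lem_hayes_character_sum_over_palindromic_polys} (applicable since $d \le \delta$), the inner sum over palindromic $b$ is $q^{\delta} 1_{\chi = \chi_0}$, so only the trivial character survives and the whole expression collapses to $q^{-d}$, as desired.

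The only subtlety — and the step I would be most careful about — is the bookkeeping around leading coefficients and the exact cutoff $d$ versus $\delta$. Specifically, I must check that requiring agreement of the top $d$ next-to-leading coefficients of the degree-$2n$ polynomials $\charc(A)$ and $g$ really does translate, after dividing out the monic factor $\frac{\charc\overline{A_0}}{\min\overline{A_0}}$ of degree $2n-r$, into a condition on the top $d$ coefficients of a degree-$r$ (or degree $< r$) palindromic polynomial, and that $d \le \delta = \lceil (r-1)/2 \rceil$ is exactly the hypothesis $\delta(\deg\min\overline{A}) \ge d$ needed to invoke Lemma \ref{lem_hayes_character_sum_over_palindromic_polys}; the hypothesis $d < n$ ensures $2n - d$ is a genuine interval length and that the representative-counting ($|\{A_0\}|$-many fibers) is consistent. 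I expect no real obstacle beyond this matching of indices, since all the analytic content is already packaged in the palindromic orthogonality lemma and the image computation.
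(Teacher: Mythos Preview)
Your overall strategy is exactly the paper's: fix $A_0$, reduce to a short-interval condition on $\partial\charc_{\overline{A_0}}(A_1)$, expand via Hayes characters, and kill the nontrivial characters using the palindromic orthogonality relation (Lemma \ref{lem_hayes_character_sum_over_palindromic_polys}). That part is fine.

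There is, however, one unjustified step. You claim that $\overline{e_1}$ ``lies in the image'' of $\partial\charc_{\overline{A_0}}$ (hence is palindromic and equals $\frac{\charc\overline{A_0}}{\min\overline{A_0}}b_0$ for some palindromic $b_0$). This is false in general: $e_1$ is read off from $g-\charc(A_0)$, and $g$ is an \emph{arbitrary} monic polynomial of degree $2n$, so $\overline{e_1}$ need not be palindromic, let alone divisible by $\frac{\charc\overline{A_0}}{\min\overline{A_0}}$. Proposition \ref{prop_image_is_palindromic} tells you that $\partial\charc_{\overline{A_0}}(A_1)$ is palindromic, not that the target $\overline{e_1}$ is. Consequently your displayed expression $\overline{\chi}(x^r+b_0)$ is not meaningful.

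The fix is immediate and is what the paper does: do not attempt to factor the $\overline{\chi}$-term. After multiplicativity gives $\chi\bigl(\frac{\charc\overline{A_0}}{\min\overline{A_0}}(x^r+b)\bigr)=\chi\bigl(\frac{\charc\overline{A_0}}{\min\overline{A_0}}\bigr)\chi(x^r+b)$, simply leave the factor $\chi\bigl(\frac{\charc\overline{A_0}}{\min\overline{A_0}}\bigr)\,\overline{\chi}\bigl(x^{r}\frac{\charc\overline{A_0}}{\min\overline{A_0}}+\overline{e_1}\bigr)$ as is. It does not depend on $b$, so the inner sum $q^{-\delta}\sum_{b}\chi(x^r+b)=1_{\chi=\chi_0}$ still forces $\chi=\chi_0$, and for the trivial character that leftover factor equals $1$. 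Your final answer $q^{-d}$ then drops out exactly as before.
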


\begin{proof}
We show that for any possible choice of $A_0$ with $\delta(\deg\min\overline{A_0})\ge d$, the conditional probability is $q^{-d}$, from which the result follows.

We are given that $\charc(\rho_{k-1}(A_0))\in I(\rho_{k-1}(g),2n-d)$, hence $\charc(A_0)-g=e_0(x)+\pi^k e_1(x)$. Here $\deg e_0(x)<2n-d$ by assumption, and $\deg e_1(x)<2n$ since $g,\charc(A_0)$ are both monic. Thus to have $\charc(A)\in I(g,2n-d)$ we actually need to have 

$$
\deg(\partial\charc_{\overline{A_0}}(A_1)-\overline{e_1})<2n-d.
$$
We assume, WLOG, that $\overline{e_1}(0)=0$ (in reality $\overline{e_1}(0)$ might be non-zero, but the interval defined later will not be different, so we can assume that). Since $\deg(\partial\charc_{\overline{A_0}}(A_1)),\deg(\overline{e_1})<2n$ the last condition is equivalent to 
$$
x^{2n}+\partial\charc_{\overline{A_0}}(A_1)\in I(x^{2n}+\overline{e_1},2n-d).
$$
Since $x^{\deg\min\overline{A_0}}\frac{\charc\overline{A_0}}{\min\overline{A_0}}$ is monic of degree $2n$, this is equivalent to 
$$
x^{\deg\min\overline{A_0}}\frac{\charc\overline{A_0}}{\min\overline{A_0}}+\partial\charc_{\overline{A_0}}(A_1)\in I\left(x^{\deg\min\overline{A_0}}\frac{\charc\overline{A_0}}{\min\overline{A_0}}+\overline{e_1},2n-d\right).
$$
For $r,s$ polynomials in $\field_q[x]_{<2n}$, let us denote by $1_{r,s}$ the indicator that 
$$
x^{\deg\min\overline{A_0}}\frac{\charc\overline{A_0}}{\min\overline{A_0}}+r\in I\left(x^{\deg\min\overline{A_0}}\frac{\charc\overline{A_0}}{\min\overline{A_0}}+s,2n-d\right).
$$

Note that $\dim \mathrm{Im}(\partial\charc_{\overline{A}_0}(A_1))=\delta(A_0)=\delta$. Here, by assumption, $\delta\ge d$. We compute the probability in question using Hayes characters:

\begin{multline*}
\Pr_{A_1\in \mathfrak{sp}_{2n}(\field_q)}[\partial\charc_{\overline{A_0}}(A_1)\in I(\overline{e_1},2n-d)]=q^{-\delta}\sum_{\substack{b\in \field_q[x]\\ \deg(b)<\deg\min\overline{A}_0\\b\text{ is palindromic}}}1_{\frac{\charc\overline{A_0}}{\min\overline{A_0}}b,\overline{e_1}}=
\\
=q^{-\delta}\sum_{\substack{b\in \field_q[x]\\ \deg(b)<\deg\min\overline{A}_0\\b\text{ is palindromic}}}q^{-d}\sum_{\chi\in \shortintervalcharacters}\chi\left(x^{\deg\min\overline{A_0}}\frac{\charc\overline{A_0}}{\min\overline{A_0}}+\frac{\charc\overline{A}_0}{\min\overline{A}_0}b\right)\overline{\chi}\left(x^{\deg\min\overline{A_0}}\frac{\charc\overline{A_0}}{\min\overline{A_0}}+\overline{e_1}\right)=\\
=q^{-d}\sum_{\chi\in \shortintervalcharacters}\chi\left(\frac{\charc\overline{A}_0}{\min\overline{A}_0}\right)\overline{\chi}\left(x^{\deg\min\overline{A_0}}\frac{\charc\overline{A_0}}{\min\overline{A_0}}+\overline{e_1}\right)
q^{-\delta}\sum_{\substack{b\in \field_q[x]\\ \deg(b)< \deg\min\overline{A}_0\\b\text{ is palindromic}}}\chi(x^{\deg\min\overline{A_0}}+b).
\end{multline*}
Using Lemma \ref{lem_hayes_character_sum_over_palindromic_polys} we get that
$$
\sum_{\substack{b\in \field_q[x]\\ \deg(b)< \deg\min\overline{A}_0\\b\text{ is palindromic}}}\chi(x^{\deg\min\overline{A_0}}+b)=q^{\delta}1_{\chi'=\chi_0}.
$$
Substituting this into the previous equation we get the result.
\end{proof}

\subsubsection{Distribution mod $\pi^k$ given mod $\pi$ behavior}

Let $g\in(\resring{k})[x]$ be a monic polynomial of degree $2n$. Define the discrepancy of the conditional distribution given a matrix $B\in Sp_{2n}(\field_q)$ such that $\charc(B)\in I(\overline{g},2n-d)$ to be 

$$
\Delta_{B,k}^g:=\Pr[\charc(A)\in I(g,2n-d)|\overline{A}=B]-q^{-d(k-1)}.
$$

Define $\delta$ as in Definition \ref{def_delta_number}. For $\delta(B)>d$ the discrepancy is zero.

\begin{prop}\label{prop_deltabk_large_minpoly_sp}
Let $g\in (\resring{k})[x]$ be a monic polynomial of degree $2n$, $d<n$. For every matrix $B\in Sp_{2n}(\field_q)$ with $\charc(B)\in I(\overline{g},2n-d)$ and $\delta(B)>d$, we have $\Delta_{B,k}^g=0$.
\end{prop}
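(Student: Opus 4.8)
The plan is to imitate the proof of Proposition \ref{prop_deltabk_large_minpoly} for $GL_n$, replacing the one-step lemma there by its symplectic counterpart, Lemma \ref{lem_one_step_sp2n}. Concretely, I would first factor the conditional probability as a telescoping product over the intermediate levels $\pi^i$, $2\le i\le k$:
\begin{multline*}
\Pr[\charc(A)\in I(g,2n-d)\mid \overline{A}=B]=\\
\prod_{i=2}^k \Pr[\charc(\rho_i(A))\in I(\rho_i(g),2n-d)\mid \charc(\rho_{i-1}(A))\in I(\rho_{i-1}(g),2n-d),\ \overline{A}=B].
\end{multline*}
This is legitimate because conditioning on $\overline{A}=B$ is compatible with conditioning on the event at level $\pi^{i-1}$ (the latter is implied once we know $\charc(\rho_{i-1}(A))$ lands in the right interval, and every lift of $B$ does once $B$ does).

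Next I would invoke Lemma \ref{lem_one_step_sp2n} to evaluate each factor. The hypothesis needed there is $\delta(\deg\min\rho_{i-1}(A))\ge d$; since $\rho_{i-1}(A)$ reduces mod $\pi$ to $B$, and the degree of the minimal polynomial can only be non-decreasing under lifting (the minimal polynomial mod $\pi$ divides the reduction mod $\pi$ of the minimal polynomial at any higher level, so $\deg\min(\overline{A})\le \deg\min(\rho_{i-1}(A))$), we get $\delta(\rho_{i-1}(A))\ge \delta(B)>d\ge d$, so the hypothesis holds at every level. Hence each of the $k-1$ factors equals $q^{-d}$, the product is $q^{-d(k-1)}$, and $\Delta_{B,k}^g=q^{-d(k-1)}-q^{-d(k-1)}=0$.

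The only point requiring a little care — and the one I would flag as the main (minor) obstacle — is the monotonicity of $\deg\min$ under lifting: one must check that $\delta(\rho_{i-1}(A))\ge \delta(B)$ genuinely follows, i.e. that $\deg\min(\overline{A}) \le \deg\min(\rho_{i-1}(A))$. This holds because reducing a matrix modulo $\pi$ sends its minimal polynomial to a multiple (over $\resring{}\cong\field_q$) of the minimal polynomial of the reduced matrix; equivalently, if $\min(\rho_{i-1}(A))(\rho_{i-1}(A))=0$ then reducing mod $\pi$ kills $\overline{A}$, so $\min(\overline{A})\mid \overline{\min(\rho_{i-1}(A))}$, giving the degree inequality, and then $\delta$ is monotone in its argument by definition. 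Everything else is a direct transcription of the $GL_n$ argument, so the proof is short.

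\begin{proof}
By Lemma \ref{lem_PRR_reductive_surjective} and the discussion in Section \ref{section_lifting}, for $2\le i\le k$ conditioning on $\overline{A}=B$ together with the event $\charc(\rho_{i-1}(A))\in I(\rho_{i-1}(g),2n-d)$ makes sense, and we may write
\begin{multline*}
\condprob{\charc(A)\in I(g,2n-d)}{\overline{A}=B}=\\
\prod_{i=2}^k \condprob{\charc(\rho_i(A))\in I(\rho_i(g),2n-d)}{\charc(\rho_{i-1}(A))\in I(\rho_{i-1}(g),2n-d),\ \overline{A}=B}.
\end{multline*}
Fix $i$. If $\rho_{i-1}(A)$ reduces mod $\pi$ to $B$, then $\min(B)$ divides the reduction mod $\pi$ of $\min(\rho_{i-1}(A))$, so $\deg\min(\rho_{i-1}(A))\ge\deg\min(B)$, and hence $\delta(\deg\min(\rho_{i-1}(A)))\ge\delta(\deg\min(B))=\delta(B)>d$. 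Thus the hypothesis of Lemma \ref{lem_one_step_sp2n} is satisfied, and that lemma gives that the $i$-th factor equals $q^{-d}$. Taking the product over $i=2,\ldots,k$ we obtain
$$
\condprob{\charc(A)\in I(g,2n-d)}{\overline{A}=B}=q^{-d(k-1)},
$$
so that $\Delta_{B,k}^g=q^{-d(k-1)}-q^{-d(k-1)}=0$.
\end{proof}
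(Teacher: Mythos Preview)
Your proof is correct and follows the same approach as the paper: factor the conditional probability as a product over levels $i=2,\ldots,k$ and apply Lemma \ref{lem_one_step_sp2n} to each factor. One minor point: the hypothesis of Lemma \ref{lem_one_step_sp2n} is $\delta(\deg\min\overline{A})\ge d$, where $\overline{A}$ is the reduction mod $\pi$ (not mod $\pi^{i-1}$), so under the conditioning $\overline{A}=B$ this is simply $\delta(B)\ge d$, which holds directly since $\delta(B)>d$. Your monotonicity argument for $\deg\min$ under lifting is therefore unnecessary (though correct), and the paper's proof omits it.
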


\begin{proof}
We write

\begin{multline*}
\Pr[\charc(A)\in I(g,2n-d)|\overline{A}=B]=\\
=\prod_{i=2}^k\Pr[\charc(\rho_{i}(A))\in I(\rho_i(g),2n-d)|\charc(\rho_{i-1}(A))\in I(\rho_{i-1}(g),2n-d),\overline{A}=B],
\end{multline*}
hence using Lemma \ref{lem_one_step_sp2n} we get that indeed $\Delta_{B,k}^g=0$. 
\end{proof}

Now we consider how $\charc(A)$ behaves for lifts $A\in Sp_{2n}(\resring{k})$ of $B$, when $\delta(B)<d$.

\begin{prop}\label{prop_deltabk_small_minpoly_sp}
Let $g\in(\resring{k})[x]$ be a monic polynomial of degree $2n$. Let $B\in Sp_{2n}(\field_q)$ be a matrix with $\delta(B)=l<d$. Let $\mathcal{P}_B^k=\{\charc(A):A\in Sp_{2n}(\resring{k}),\overline{A}=B\}$. Then,

\begin{enumerate}
    \item $|\mathcal{P}_B^k|=q^{(k-1)l}$. Moreover, each element of $\mathcal{P}_B^k$ has different $l$ next-to-leading coefficients modulo $\pi^k$. 
    \item $
\Pr[\charc(A)\in I(g,2n-d)|\overline{A}=B]=1_{\mathcal{P}_B^k\cap I(g,2n-d)\neq \emptyset}q^{-(k-1)l}
$. Equivalently, we have
$$\Delta_{B,k}^g=1_{\mathcal{P}_B^k\cap I(g,2n-d)\neq \emptyset}q^{-(k-1)l}-q^{-(k-1)d}.$$
\end{enumerate}
\end{prop}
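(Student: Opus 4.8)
The proof of Proposition \ref{prop_deltabk_small_minpoly_sp} will follow the same induction-on-$k$ scheme as the proof of its $GL_n$ analog, Proposition \ref{prop_deltabk_small_minpoly}, with the only substantive change being the replacement of the dimension count $\deg\min\overline{A_0}$ by $\delta(B)$ and the use of Lemma \ref{lem_one_step_sp2n} in place of Lemma \ref{lem_onestep_charpoly_distribution}. The plan is as follows.

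For the base case $k=1$ both assertions are vacuous: $\mathcal{P}_B^1=\{\charc(B)\}$, which is a single polynomial, so $|\mathcal{P}_B^1|=q^0=1$, and the conditional probability equals the indicator that $\charc(B)\in I(g,2n-d)$. For the inductive step, assume the proposition for all naturals smaller than $k$. Consider the map $\mathcal{P}_B^k\to\mathcal{P}_B^{k-1}$ given by reduction $h\mapsto\rho_{k-1}(h)$. Each fiber over a polynomial $f_0\in\mathcal{P}_B^{k-1}$ is of the form $f_0+\pi^{k-1}\img(\partial\charc_B)$, since the lifts of a fixed $A_0\in Sp_{2n}(\resring{k-1})$ (chosen so that $\charc(A_0)=f_0$, lifted arbitrarily to $\resring{k}$) are exactly $A_0(1+\pi^{k-1}A_1)$ with $A_1\in\mathfrak{sp}_{2n}(\field_q)$, and by Lemma \ref{lem_charpoly_step_in_p_direction} these produce $\charc(A_0)+\pi^{k-1}\partial\charc_{B}(A_1)$. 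By Theorem \ref{theorem_image_char_derivative_sp2n}, $\img(\partial\charc_B)=\frac{\charc B}{\min B}\cdot\field_q[x]_{<\deg\min B}^{\pal}$, a $\field_q$-vector space of dimension $\delta(B)=l$ (the palindromic polynomials of degree $<\deg\min B$ form a space of dimension $\lceil\frac{\deg\min B-1}{2}\rceil=l$, and multiplication by the monic $\frac{\charc B}{\min B}$ is injective). Hence each fiber has size $q^l$. Applying Lemma \ref{lem_one_step_sp2n} with $d$ replaced by $l$, the $q^l$ elements of a single fiber have pairwise distinct $l$-tuples of next-to-leading coefficients modulo $\pi^k$ (more precisely, they lie in distinct length-$(2n-l)$ intervals), so combined with the inductive hypothesis that $\mathcal{P}_B^{k-1}$ consists of $q^{(k-2)l}$ elements with pairwise distinct $l$ next-to-leading coefficients mod $\pi^{k-1}$, one gets $|\mathcal{P}_B^k|=q^{(k-1)l}$ with the distinctness property, proving the first assertion for $k$.

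For the second assertion, observe that by Proposition \ref{prop_deltabk_large_minpoly_sp} applied with $d=l$ (note $\delta(B)=l$ is not $>l$, so that proposition does not directly apply, but the argument there does: its proof only uses that $\delta(A_0)\ge l$ along the tower, which holds since $\deg\min$ cannot decrease under lifting), the conditional distribution of $\charc(A)$ given $\overline{A}=B$ is uniform over $\mathcal{P}_B^k$, each residue class carrying mass $q^{-(k-1)l}$. Since the $q^{(k-1)l}$ elements of $\mathcal{P}_B^k$ lie in distinct intervals $I(\cdot,2n-l)\supset$ the relevant sub-intervals, an interval $I(g,2n-d)$ of the shorter length $2n-d$ (recall $d>l$) either contains exactly one element of $\mathcal{P}_B^k$ or none, giving $\Pr[\charc(A)\in I(g,2n-d)\mid\overline{A}=B]=1_{\mathcal{P}_B^k\cap I(g,2n-d)\neq\emptyset}\,q^{-(k-1)l}$; subtracting $q^{-(k-1)d}$ yields the formula for $\Delta_{B,k}^g$.

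The main obstacle — and the only point requiring genuine care — is verifying that the dimension of $\img(\partial\charc_B)$ is exactly $\delta(B)$, i.e.\ that the palindromic constraint from Theorem \ref{theorem_image_char_derivative_sp2n} really does cut the dimension to $\lceil\frac{\deg\min B-1}{2}\rceil$ rather than to $\deg\min B$; this is what makes the symplectic exponents differ from the $GL_n$ ones and must be invoked correctly at the fiber-size step. Everything else is a routine transcription of the $GL_n$ argument, and the interval-versus-Hayes-class bookkeeping is slightly simpler here since for $Sp_{2n}$ we work with honest short intervals rather than Hayes residue classes.
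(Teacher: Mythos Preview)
Your proposal is correct and follows essentially the same approach as the paper: induction on $k$, identifying fibers of $\mathcal{P}_B^k\to\mathcal{P}_B^{k-1}$ with $f_0+\pi^{k-1}\img(\partial\charc_B)$, invoking Theorem \ref{theorem_image_char_derivative_sp2n} for the fiber size $q^l$, Lemma \ref{lem_one_step_sp2n} (with $d=l$) for the distinctness of the $l$ next-to-leading coefficients, and Proposition \ref{prop_deltabk_large_minpoly_sp} (with $d=l$) for the uniformity needed in the second assertion. You are in fact slightly more careful than the paper in flagging the $>$ versus $\ge$ discrepancy in the hypothesis of Proposition \ref{prop_deltabk_large_minpoly_sp}; note, however, that your justification there need not appeal to ``$\deg\min$ cannot decrease under lifting'': the condition in Lemma \ref{lem_one_step_sp2n} is on $\overline{A}=B$, which is fixed throughout the tower, so $\delta(\overline{A})=l\ge l$ holds trivially at every level.
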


\begin{proof}
    We prove the claim by induction on $k$. For $k=1$ this is trivial. Suppose that we proved the claim for $k\ge 1$. We note that the fibers of the map $\mathcal{P}_B^k\to\mathcal{P}_B^{k-1}$ are precisely of the form $f_0+\pi^{k-1}\img(\partial\charc_{B})$. We note that by Theorem \ref{theorem_image_char_derivative_sp2n}, this set is of size $q^l$. Finally, we note that by Lemma \ref{lem_onestep_charpoly_distribution} with $d=l$, each element has different $l$ next-to-leading coefficients mod $\pi^k$, hence the first assertion follows. As for the second assertion, it follows from the previous Proposition when we take $d=l$, together with the first assertion.
\end{proof}

We now let $\mathcal{G}$ be a set of representatives for the lifts of $\charc(B)$ to a monic polynomial $g\in(\resring{k})[x]$ of degree $2n$, modulo equality in the $d$ next-to-leading coefficients. We note that $|\mathcal{G}|=q^{(k-1)d}$.

\begin
{prop}\label{prop_summing_deltabbk_over_all_g_sp}
Let $B\in Sp_{2n}(\field_q)$. Then,

\begin{enumerate}
    \item If $\delta(B)\ge d$, we have $\sum_{g\in G}|\Delta_{B,k}^g|=0$.
    \item If $\delta(B)=l<d$, we have $\sum_{g\in\mathcal{G}}|\Delta_{B,k}^g|\le 2$.
\end{enumerate}
\end{prop}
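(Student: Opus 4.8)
The statement is the $Sp_{2n}$-analog of Proposition \ref{prop_summing_deltabbk_over_all_g}, so the plan is to mirror that proof exactly, substituting the symplectic building blocks already assembled in this section. The first assertion is immediate: if $\delta(B)\ge d$, then by Proposition \ref{prop_deltabk_large_minpoly_sp} we have $\Delta_{B,k}^g=0$ for every $g$, hence $\sum_{g\in\mathcal{G}}|\Delta_{B,k}^g|=0$. (One should note the harmless boundary issue: Proposition \ref{prop_deltabk_large_minpoly_sp} is stated for $\delta(B)>d$, while here we want $\delta(B)\ge d$; but applying Proposition \ref{prop_deltabk_small_minpoly_sp} with $l=d$ — or equivalently re-running Lemma \ref{lem_one_step_sp2n} — also gives $\Delta^g_{B,k}=0$ in the case $\delta(B)=d$, so the first assertion holds as stated.)

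For the second assertion, suppose $\delta(B)=l<d$. By the second part of Proposition \ref{prop_deltabk_small_minpoly_sp} we have, for each $g\in\mathcal{G}$,
$$
\Delta_{B,k}^g=1_{\mathcal{P}_B^k\cap I(g,2n-d)\neq\emptyset}\,q^{-(k-1)l}-q^{-(k-1)d}.
$$
Summing the absolute values over $g\in\mathcal{G}$ and applying the triangle inequality splits the bound into two pieces. The second term contributes $\sum_{g\in\mathcal{G}}q^{-(k-1)d}=|\mathcal{G}|\,q^{-(k-1)d}=1$, since $|\mathcal{G}|=q^{(k-1)d}$. For the first term, the key combinatorial input is the first part of Proposition \ref{prop_deltabk_small_minpoly_sp}: the set $\mathcal{P}_B^k$ has size $q^{(k-1)l}$ and its elements occupy pairwise distinct $l$-tuples of next-to-leading coefficients mod $\pi^k$, hence they lie in pairwise distinct intervals of the form $I(g,2n-d)$ as $g$ ranges over $\mathcal{G}$ (here one uses $l<d$ so that the $d$ next-to-leading coefficients determine the $l$ next-to-leading coefficients). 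Therefore the indicator $1_{\mathcal{P}_B^k\cap I(g,2n-d)\neq\emptyset}$ is nonzero for exactly $q^{(k-1)l}$ values of $g\in\mathcal{G}$, so $\sum_{g\in\mathcal{G}}1_{\mathcal{P}_B^k\cap I(g,2n-d)\neq\emptyset}\,q^{-(k-1)l}=q^{(k-1)l}\cdot q^{-(k-1)l}=1$. Adding the two pieces gives $\sum_{g\in\mathcal{G}}|\Delta_{B,k}^g|\le 2$.

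I do not expect any genuine obstacle here; the real work was done in Propositions \ref{prop_deltabk_large_minpoly_sp} and \ref{prop_deltabk_small_minpoly_sp}, which in turn rest on Lemma \ref{lem_one_step_sp2n} and the explicit computation of $\img(\partial\charc_{A_0})$ in Theorem \ref{theorem_image_char_derivative_sp2n}. The only point requiring a moment of care is the bookkeeping with $\delta$ versus $\deg\min$: the intervals here have length $2n-d$ (controlling $d$ coefficients), whereas the dimension of $\img(\partial\charc_{A_0})$ is governed by $\delta(B)=\lceil(\deg\min B-1)/2\rceil$ rather than by $\deg\min B$ directly, because of the palindromic constraint. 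One should double-check that the comparison "$\delta(B)<d$ versus $\delta(B)\ge d$" is indeed the correct dichotomy inherited from the two regimes of Lemma \ref{lem_one_step_sp2n}, but this is exactly how the preceding propositions are phrased, so the argument goes through verbatim.
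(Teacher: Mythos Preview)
Your proposal is correct and follows exactly the paper's approach: the first assertion from Proposition~\ref{prop_deltabk_large_minpoly_sp}, the second from the triangle inequality applied to the explicit formula in Proposition~\ref{prop_deltabk_small_minpoly_sp}. You are in fact more careful than the paper, which glosses over the boundary case $\delta(B)=d$ that you correctly flag and resolve via Lemma~\ref{lem_one_step_sp2n}.
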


\begin{proof}
The first claim follows directly from Proposition \ref{prop_deltabk_large_minpoly_sp}. The second follows from the triangle inequality, together with the two parts of Proposition \ref{prop_deltabk_small_minpoly_sp}.
\end{proof}

Let $\overline{f}\in\field_q[x]_{=2n}^\mon$ be a polynomial s.t. $\overline{f}\in I(\overline{g},2n-d)$. We define the discrepancy starting from $\overline{f}$ by 

$$
\Delta_{\overline{f},k}^g=\Pr[\charc(A)\in I(g,2n-d)|\charc(\overline{A})=\overline{f}]-q^{-d(k-1)}.
$$

\begin{lem}\label{lem_discrepancy_bound_interval_starting_from_f_mod_p_sp}
Let $\overline{f}\in \field_q[x]_{=2n}^\mon$ be a polynomial, and let $\mathcal{G}$ be a set of representatives for the possible lifts of $\overline{f}$ to a monic polynomial in $(\resring{k})[x]$ of degree $2n$, modulo equality in the $d$ next-to-leading coefficients.
\begin{enumerate}
    \item If $\delta(\deg\rad\overline{f})\ge d$, then $\sum_{g\in\mathcal{G}}|\Delta_{\overline{f},k}^g
    |=0$. 
    \item If $\delta(\deg\rad\overline{f})=s<d$, then $\sum_{g\in\mathcal{G}}|\Delta_{\overline{f},k}^g|\le 2\Pr_{B\in Sp_{2n}(\field_q)}[\delta(B)<d|\charc(B)=\overline{f}]$.
\end{enumerate}
\end{lem}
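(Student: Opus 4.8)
The plan is to mirror the proof of Lemma~\ref{lem_discrepancy_bound_interval_starting_from_f_mod_p} almost verbatim, substituting the symplectic ingredients that were established in Section~\ref{section_sp_main} for their $GL_n$ counterparts. First I would write, exactly as in the $GL_n$ case, the decomposition
\begin{equation*}
\Delta_{\overline{f},k}^g=\sum_{\substack{B\in Sp_{2n}(\field_q)\\ \charc(B)=\overline{f}}}\Delta_{B,k}^g\cdot \Pr[\overline{A}=B\mid\charc(\overline{A})=\overline{f}],
\end{equation*}
which is just the law of total probability, grouping the lifts of $\overline f$ by their reduction $B\bmod\pi$. The conditioning $\charc(B)=\overline f$ is a legitimate event here because $\overline f\in I(\overline g,2n-d)$ is a self-reciprocal polynomial that arises as a symplectic characteristic polynomial.

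Next I would invoke Proposition~\ref{prop_deltabk_large_minpoly_sp}: any $B$ with $\delta(B)\ge d$ contributes $\Delta_{B,k}^g=0$, so the sum collapses to the range $\delta(B)<d$. For assertion~(1), I would observe that $\rad\overline f\mid\min B$ forces $\deg\rad\overline f\le\deg\min B$, hence $\delta(\deg\rad\overline f)\le\delta(B)$; thus if $\delta(\deg\rad\overline f)\ge d$ there are no $B$ in the surviving range at all, giving $\Delta_{\overline f,k}^g=0$ for every $g$, and in particular $\sum_{g\in\mathcal G}|\Delta_{\overline f,k}^g|=0$. This uses only the monotonicity of $\delta$ (Definition~\ref{def_delta_number}) and the divisibility relation between radical and minimal polynomial.

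For assertion~(2), I would sum the collapsed identity over $g\in\mathcal G$, apply the triangle inequality, interchange the two (finite) summations, and then bound the inner sum $\sum_{g\in\mathcal G}|\Delta_{B,k}^g|\le 2$ by part~(2) of Proposition~\ref{prop_summing_deltabbk_over_all_g_sp}. This yields
\begin{equation*}
\sum_{g\in\mathcal G}|\Delta_{\overline f,k}^g|\le\sum_{\substack{B\in Sp_{2n}(\field_q)\\ \charc(B)=\overline f,\ \delta(B)<d}}2\Pr[\overline A=B\mid\charc(\overline A)=\overline f]=2\Pr_{B\in Sp_{2n}(\field_q)}[\delta(B)<d\mid\charc(B)=\overline f],
\end{equation*}
which is exactly the claimed bound since the probabilities of the disjoint events $\{\overline A=B\}$ with $\delta(B)<d$, conditioned on $\charc(\overline A)=\overline f$, add up to $\Pr[\delta(B)<d\mid\charc(B)=\overline f]$.

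I do not anticipate a genuine obstacle: every step is a direct transcription of the $GL_n$ argument, and the only place where care is needed is the bookkeeping around $\delta$ versus $\deg\min$ versus $\deg\rad$ (the symplectic proof works with $\delta(B)=\lceil(\deg\min B-1)/2\rceil$ rather than $\deg\min B$ itself, because $\img(\partial\charc_{A_0})$ has dimension $\delta$ by Theorem~\ref{theorem_image_char_derivative_sp2n}). The mildly delicate point worth stating explicitly is the monotonicity inequality $\delta(\deg\rad\overline f)\le\delta(\deg\min B)$ used in part~(1); this follows because $x\mapsto\lceil(x-1)/2\rceil$ is non-decreasing and $\deg\rad\overline f\le\deg\min B$ whenever $\charc(B)=\overline f$. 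Everything else is the triangle inequality and Propositions~\ref{prop_deltabk_large_minpoly_sp} and~\ref{prop_summing_deltabbk_over_all_g_sp}, so the proof can simply be cited as ``essentially the same as the proof of Lemma~\ref{lem_discrepancy_bound_interval_starting_from_f_mod_p}.''
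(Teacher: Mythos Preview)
Your proposal is correct and follows essentially the same approach as the paper's own proof: decompose $\Delta_{\overline f,k}^g$ over $B$ with $\charc(B)=\overline f$, drop the terms with $\delta(B)\ge d$ via Proposition~\ref{prop_deltabk_large_minpoly_sp}, then sum over $g\in\mathcal G$ and bound using Proposition~\ref{prop_summing_deltabbk_over_all_g_sp}. Your explicit remark on the monotonicity $\delta(\deg\rad\overline f)\le\delta(B)$ (from $\rad\overline f\mid\min B$) is a helpful clarification that the paper leaves implicit.
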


\begin{proof}
We have

\begin{equation*}\label{eqn_disc_sum_sp}
\Delta_{\overline{f},k}^g=\sum_{\substack{B\in Sp_{2n}(\field_q)\\ \charc(B)=\overline{f}}}\Delta_{B,k}^g\cdot \Pr[\overline{A}=B|\charc(\overline{A})=\overline{f}].    
\end{equation*}
By Proposition \ref{prop_deltabk_large_minpoly_sp} we conclude that 

$$
\Delta_{\overline{f},k}^g=\sum_{\substack{B\in Sp_{2n}(\field_q)\\ \charc(B)=\overline{f},\delta(B)<d}}\Delta_{B,k}^g\cdot \Pr[\overline{A}=B|\charc(\overline{A})=\overline{f}]
$$
Now using the triangle inequality and Proposition \ref{prop_summing_deltabbk_over_all_g_sp} we get the results. 
\end{proof}

\subsubsection{Discrepancy bound on distribution in intervals}

Similarly to the $GL_n$ case, we define for a monic polynomial $g\in(\resring{k})[x]$ of degree $2n$ the discrepancy of the surrounding interval,

$$
\Delta_{g}=\Pr[\charc A\in I(g,2n-d)]-q^{-kd}.
$$

\begin{lem}\label{lem_discrepancy_bound_single_interval_sp2n}
Let $d<n$, and let $\mathcal{G}$ be a set of representatives for the monic polynomials $g\in(\resring{k})[x]$ of degree $2n$, modulo equivalence in the $d$ next-to-leading coefficients. Then,

\begin{equation*}
    \sum_{g\in\mathcal{G}} |\Delta_g|=O\left(
    q^{d-\frac{n^2}{2d}+\frac{n}{2}}(1+\frac{1}{q^2-1})^n\binom{n+2d-1}{n}
    +q^{2d-\frac{n^2}{2d}+o(n)}
    \right).
\end{equation*}
\end{lem}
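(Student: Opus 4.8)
The plan is to follow the exact template established for $GL_n$ in Lemma \ref{lem_average_discrepancy_bound_gln}, substituting at each step the symplectic analogues that were proved earlier in Section \ref{section_sp_main}. Concretely, I would begin by conditioning on the reduction mod $\pi$: for $g\in\mathcal{G}$ and $A\in Sp_{2n}(\resring{k})$,
\begin{equation*}
\Pr[\charc A\in I(g,2n-d)]=\sum_{\substack{\overline{f}\in\field_q[x]_{=2n}^\mon\\ \overline{f}\in I(\overline{g},2n-d)}}\Pr[\charc A\in I(g,2n-d)\mid \charc\overline{A}=\overline{f}]\,\Pr[\charc\overline{A}=\overline{f}].
\end{equation*}
Writing the inner conditional probability as $q^{-(k-1)d}+\Delta_{\overline{f},k}^g$ and using that $\field_q[x]_{=2n}^\mon$ splits into $I(\overline{g},2n-d)$-intervals, this becomes
\begin{equation*}
\Pr[\charc A\in I(g,2n-d)]=q^{-(k-1)d}\Pr[\charc\overline{A}\in I(\overline{g},2n-d)]+\sum_{\overline{f}\in I(\overline{g},2n-d)}\Delta_{\overline{f},k}^g\,\Pr[\charc\overline{A}=\overline{f}].
\end{equation*}

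Next I would insert the mod $\pi$ discrepancy bound. Here the relevant input is the symplectic analogue of \cite[Theorem 1.3]{GR21} (the interval-distribution result for $\charc(B)$, $B\in Sp_{2n}(\field_q)$, which the paper has quoted as part of the Gorodetsky–Rodgers package), giving
\begin{equation*}
\Pr[\charc\overline{A}\in I(\overline{g},2n-d)]=q^{-d}+O\!\left(q^{-\frac{n^2}{2d}+\frac{n}{2}}\Bigl(1+\frac{1}{q^2-1}\Bigr)^n\binom{n+2d-1}{n}\right),
\end{equation*}
so that
\begin{equation*}
|\Delta_g|\ll q^{-(k-1)d-\frac{n^2}{2d}+\frac{n}{2}}\Bigl(1+\frac{1}{q^2-1}\Bigr)^n\binom{n+2d-1}{n}+\sum_{\overline{f}\in I(\overline{g},2n-d)}|\Delta_{\overline{f},k}^g|\,\Pr[\charc\overline{A}=\overline{f}].
\end{equation*}
Summing over the $q^{(k-1)d}$ representatives $g\in\mathcal{G}$ kills the $(k-1)d$ exponent in the first term, producing the $q^{d-\frac{n^2}{2d}+\frac{n}{2}}(1+\frac1{q^2-1})^n\binom{n+2d-1}{n}$ contribution. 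For the second term I would apply Lemma \ref{lem_discrepancy_bound_interval_starting_from_f_mod_p_sp}: the contribution of $\overline{f}$ with $\delta(\deg\rad\overline{f})\ge d$ vanishes, and for the rest exchanging the order of summation gives $\sum_{g}\sum_{\overline{f}}|\Delta_{\overline{f},k}^g|\Pr[\charc\overline{A}=\overline{f}]\le 2\sum_{\overline{f}}\Pr_{B}[\delta(B)<d\mid\charc B=\overline{f}]\Pr_B[\charc B=\overline{f}]$, where the outer sum runs over $\overline{f}\in\field_q[x]_{=2n}^\mon$ with $\delta(\deg\rad\overline{f})<d$, i.e.\ with $\deg\rad\overline{f}\le 2d-1$.

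Finally I would estimate this last sum. Each summand equals $2\Pr_{B\in Sp_{2n}(\field_q)}[\delta(\min B)<d\cap\charc B=\overline{f}]$; since $\delta(r)<d$ forces $\deg\min B\le 2d$, Theorem \ref{theorem_probability_small_min_poly_sp2n} (applied with $\deg\min B=2n-\delta$, $\delta\ge 2n-2d$, so $\frac{n^2}{2(n-\delta)}\ge\frac{n^2}{2\cdot 2d}\cdot\text{(factor)}$ — more precisely $\frac{(2n)^2}{2\cdot 2d}\cdot\frac14$, giving exponent $-\frac{n^2}{2d}$ after the $2n$ vs $n$ bookkeeping) yields $\ll q^{-\frac{n^2}{2d}+o(n)}$ uniformly, and by Lemma \ref{lem_small_radical_bound} the number of monic degree-$2n$ polynomials with $\deg\rad\le 2d-1$ is $q^{2d+o(n)}$. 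Multiplying gives the $q^{2d-\frac{n^2}{2d}+o(n)}$ term, completing the bound. The main technical care — and the place I expect the only real friction — is the bookkeeping between the degree-$2n$ world of symplectic characteristic polynomials and the degree-$n$ normalization in Theorem \ref{theorem_probability_small_min_poly_sp2n}: one must check that the small-minimal-polynomial regime $\delta(B)<d$ translates to $\deg\min B=2n-\delta'$ with $\delta'$ large enough that Theorem \ref{theorem_probability_small_min_poly_sp2n} delivers exactly the exponent $-\frac{n^2}{2d}$ (not $-\frac{n^2}{4d}$ or similar), and similarly that the palindromic interval counting via Lemma \ref{lem_small_radical_bound} is applied to degree $2n$ polynomials with the correct radical bound. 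Everything else is a line-by-line transcription of the $GL_n$ argument.
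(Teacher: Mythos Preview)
Your proposal is correct and follows the paper's proof essentially line for line: condition on $\charc\overline{A}$, decompose via $\Delta_{\overline{f},k}^g$, invoke \cite[Theorem 6.21]{GR21} for the mod-$\pi$ discrepancy, sum over $\mathcal{G}$, and finish with Lemma \ref{lem_discrepancy_bound_interval_starting_from_f_mod_p_sp}, Theorem \ref{theorem_probability_small_min_poly_sp2n}, and Lemma \ref{lem_small_radical_bound}. One minor slip: $|\mathcal{G}|=q^{kd}$, not $q^{(k-1)d}$, which is exactly what turns the exponent $-(k-1)d$ into $+d$ as you want; your flagged concern about the $2n$-vs-$n$ bookkeeping in applying Theorem \ref{theorem_probability_small_min_poly_sp2n} is legitimate and the paper handles it the same way you suggest (interpreting $\delta(B)<d$ as $\deg\min B<2d$ and reading off the exponent $-\frac{n^2}{2d}$).
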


\begin{proof}
Write

\begin{multline*}
\Pr[\charc A\in I(g,2n-d)]=\sum_{\overline{f}\in I(\overline{g},2n-d)}\Pr[\charc A\in I(g,2n-d)|\charc\overline{A}=\overline{f}]\Pr[\charc\overline{A}=\overline{f}]=\\
=\sum_{\overline{f}\in I(\overline{g},2n-d)}\left[q^{-(k-1)d}+\Delta_{\overline{f},k}^g\right]\Pr[\charc\overline{A}=\overline{f}]=\\
=q^{-(k-1)d}\Pr[\charc\overline{A}\in I(\overline{g},2n-d)]+
\sum_{\overline{f}\in I(\overline{g},2n-d)}\Delta_{\overline{f},k}^g\cdot \Pr[\charc\overline{A}=\overline{f}].
\end{multline*}

By \cite[Theorem 6.21]{GR21},

$$
\Pr[\charc\overline{A}\in I(\overline{g},2n-d)]=q^{-d}+O\left(q^{-\frac{n^2}{2d}+\frac{n}{2}}\left(1+\frac{1}{q^2-1}\right)^n\binom{n+2d-1}{n}\right).
$$
Hence, 

$$
|\Delta_g|<O\left(q^{-(k-1)d-\frac{n^2}{2d}+\frac{n}{2}}\left(1+\frac{1}{q^2-1}\right)^n\binom{n+2d-1}{n}\right)+\sum_{\overline{f}\in I(\overline{g},2n-d)}|\Delta_{\overline{f},k}^g|\cdot \Pr[\charc\overline{A}=\overline{f}]
$$
Summing the last equation over all $g\in\mathcal{G}$, and using Lemma \ref{lem_discrepancy_bound_interval_starting_from_f_mod_p_sp}, we get that 

$$
\sum_{g\in\mathcal{G}}|\Delta_g|<O\left(q^{d-\frac{n^2}{2d}+\frac{n}{2}}\left(1+\frac{1}{q^2-1}\right)^n\binom{n+2d-1}{n}\right)+\sum_{\substack{\overline{f}\in\field_q[x]\\
\delta(\deg\rad \overline{f})<d}}2\Pr_{B\in Sp_{2n}(\field_q)}[\delta(B)<d|\charc(B)=\overline{f}]\Pr[\charc B=\overline{f}]
$$
Note that if $\delta(B)<d$ then $\deg\min(B)<2d$. Now by Theorem \ref{theorem_probability_small_min_poly_sp2n},

$$\Pr_{B\in Sp_{2n}(\field_q)}[\delta(B)<d|\charc(B)=\overline{f}]\Pr[\charc B=\overline{f}]<\Pr[\deg\min(B)<2d\cap \charc(B)=\overline{f}]<q^{-\frac{n^2}{2d}+o(n)},$$
and by Lemma \ref{lem_small_radical_bound}, the number of polynomials in $\field_{q}[x]_{=2n}$ with a radical of degree $<2d$ is $q^{2d+o(n)}$, hence we get the result.   
\end{proof}

\subsection{Joint distribution of traces}

As in the $GL_n$ case, the total variation distance of $\TR_d^{Sp}$ from $\UTR_d$  may be computed as the limit of the total variation distance of the $\resring{k}$ variables 
 $\TR_d^{Sp,k}$, $\UTR_d^k$. We recall that by the discussion of \cite[Theorem 6.21]{GR21}, There is a constant $c_q^{Sp}$ such that for $d<c_q^{Sp}\cdot n$,

$$
q^{d-\frac{n^2}{2d}+\frac{n}{2}}\left(1+\frac{1}{q^2-1}\right)^n\binom{n+2d-1}{n}=_{n\to\infty} o(1).
$$

\begin{theorem}
Let $d<n$, and let $\{a_i\}_{1\le i\le d}$ be a trace datum. Let $\TR_d^{Sp,k}$ be the random variable attaching to a matrix $M\in Sp_{2n}(\resring{k})$ its trace datum of length $d$, and let $\UTR_d^k$ be the uniform measure on trace data, as defined in Section \ref{section_trace_data_poly_intervals}. Then,

\begin{multline*}
d_{TV}(\mu_{\TR_d^{Sp,k}},\mu_{\UTR_d^k})=\sum_{\{a_i\}_{1\le i\le d}\text{ trace datum}}
\Bigg|
\Pr_{M\in Sp_{2n}(\resring{k})}[M\text{ has trace data }\{a_i\}_{1\le i\le d}]-q^{-(kd-S(d,k))}
\Bigg|
<\\
<O\left(
q^{d-\frac{n^2}{2d}+\frac{n}{2}}\left(1+\frac{1}{q^2-1}\right)^n\binom{n+2d-1}{n}+q^{2d-\frac{n^2}{2d}+o(n)}
\right).
\end{multline*}
\end{theorem}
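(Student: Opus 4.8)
The plan is to follow the proof of Theorem~\ref{theorem_mod_pk_joint_traces} for $GL_n$ essentially verbatim, substituting at each step the symplectic analogue of the ingredient used there. First I would invoke Lemma~\ref{lem_connection_traces_charpoly} --- which is stated uniformly for every group $G$ under consideration --- to translate the joint distribution of $(\tr(M^i))_{1\le i\le d}$ into the distribution of $\charc(M)$ in a prescribed union of short intervals: for a fixed trace datum $\{a_i\}_{1\le i\le d}$ there are $q^{S}$ monic polynomials $g_1,\ldots,g_{q^S}\in(\resring{k})[x]$ of degree $2n$, with $S=S(d,k)=\lfloor d/p\rfloor+\cdots+\lfloor d/p^k\rfloor$ the usual count coming from the $p$-power divisors of the indices (Lemma~\ref{lem_trace_dependencies} together with Newton's identities~\eqref{eqn_newton_formula}), such that
\[
\Pr_{M\in Sp_{2n}(\resring{k})}\!\bigl[M\text{ has trace datum }\{a_i\}_{1\le i\le d}\bigr]=\sum_{j=1}^{q^S}\Pr_{M\in Sp_{2n}(\resring{k})}\!\bigl[\charc(M)\in I(g_j,2n-d-1)\bigr],
\]
the intervals being pairwise disjoint. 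Because $d<n$, the self-reciprocity of $\charc(M)$ for $M\in Sp_{2n}$ --- which relates the bottom $n$ coefficients to the top $n$ ones --- places no constraint on the $d$ next-to-leading coefficients, so it neither reduces the count $q^S$ nor changes the uniform normalization, which works out to $q^{-(kd-S(d,k))}$ exactly as in the $GL_n$ computation.

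Next, letting $\mathcal G$ be a set of representatives for monic degree-$2n$ polynomials in $(\resring{k})[x]$ modulo equality of the $d$ next-to-leading coefficients, and writing $\Delta_g=\Pr[\charc(M)\in I(g,2n-d)]-q^{-kd}$ as in the preceding subsection, the triangle inequality gives
\[
\sum_{\{a_i\}}\Bigl|\Pr_{M\in Sp_{2n}(\resring{k})}\!\bigl[M\text{ has trace datum }\{a_i\}\bigr]-q^{-(kd-S(d,k))}\Bigr|\ \le\ \sum_{g\in\mathcal G}|\Delta_g|.
\]
Lemma~\ref{lem_discrepancy_bound_single_interval_sp2n} then bounds the right-hand side by
\[
O\!\left(q^{d-\frac{n^2}{2d}+\frac{n}{2}}\Bigl(1+\tfrac1{q^2-1}\Bigr)^{n}\binom{n+2d-1}{n}+q^{2d-\frac{n^2}{2d}+o(n)}\right),
\]
which is exactly the asserted estimate; and, by the discussion surrounding \cite[Theorem~6.21]{GR21}, there is a constant $c_q^{Sp}$ such that this tends to $0$ as $n\to\infty$, uniformly in $k$, whence (letting $k\to\infty$ via Lemma~\ref{lem_total_variation_dist_limit}) Theorem~\ref{main_theorem_sp} follows.

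All of the real work is contained in the earlier subsections: Theorem~\ref{theorem_image_char_derivative_sp2n} identifies $\img(\partial\charc_{A_0})$ with $\tfrac{\charc\overline{A_0}}{\min\overline{A_0}}\cdot\field_q[x]^{\pal}_{<\deg\min\overline{A_0}}$; Lemma~\ref{lem_one_step_sp2n} promotes this to one-step equidistribution of $\charc$ in intervals, using the palindromic Hayes-orthogonality relation of Lemma~\ref{lem_hayes_character_sum_over_palindromic_polys}; Theorem~\ref{theorem_probability_small_min_poly_sp2n} bounds the chance of an atypically short minimal polynomial; and these are assembled into Lemma~\ref{lem_discrepancy_bound_single_interval_sp2n}. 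Consequently the present statement is pure bookkeeping, and the only points needing attention are: tracking the threshold $\delta(B)=\lceil(\deg\min B-1)/2\rceil$, so that the favorable regime is $\deg\min\overline A\gtrsim 2d$ --- this is what yields the exponent $\tfrac{n^2}{2d}$ in place of $\tfrac{n^2}{d}$ and forces $d<n$ --- and reconciling the interval-length conventions of Lemma~\ref{lem_connection_traces_charpoly} and Lemma~\ref{lem_discrepancy_bound_single_interval_sp2n} while checking that the determinant constraint $\charc(M)(0)=1$ for $M\in Sp_{2n}$ does not shrink the index set relevant to the top $d$ coefficients. I expect no genuine obstacle beyond these routine verifications, all of which parallel the $GL_n$ and $SL_n$ cases.
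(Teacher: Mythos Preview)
Your proposal is correct and follows essentially the same route as the paper: invoke Lemma~\ref{lem_connection_traces_charpoly} to rewrite the trace-datum probability as a sum of $q^S$ short-interval probabilities, apply the triangle inequality to bound the total variation by $\sum_{g\in\mathcal G}|\Delta_g|$, and then finish with Lemma~\ref{lem_discrepancy_bound_single_interval_sp2n}. Your additional remarks on why self-reciprocity imposes no constraint on the top $d$ coefficients when $d<n$, and on the role of the threshold $\delta(B)$, are accurate and somewhat more explicit than the paper's own brief treatment.
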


\begin{proof}
We follow closely the proof of Theorem \ref{theorem_mod_pk_joint_traces}. Lemma \ref{lem_connection_traces_charpoly} gives that there are polynomials $\{g_j\}_{j=1}^{q^S}\subset\resring{k}[x]$, such that

$$
\Pr_{M\in Sp_{2n}(\resring{k})}[M\text{ has trace data }\{a_i\}_{1\le i\le d}]=\sum_{i=1}^{q^S}\Pr_{M\in Sp_{2n}(\resring{k})}[M\in I(g_j,n-d-1)]=q^{S-kd}+\sum_{i=1}^{q^S}\Delta_{g_j}.
$$
Denote $\mathcal{G}=\{g_j\}_{j=1}^{q^S}$. $\mathcal{G}$ is a set of representatives for the monic polynomials modulo equality in the $d$ next-to-leading coefficients (that is, a set of representatives for the Hayes equivalence relation $R_{1,d}$). Using the triangle inequality we get that 

\begin{equation*}
\sum_{\{a_i\}_{1\le i\le d}\text{ trace data}}
\Bigg|\Pr_{M\in Sp_{2n}(\resring{k})}[M\text{ has trace data }\{a_i\}_{1\le i\le d}]-q^{-(kd-S(d,k))}
\Bigg|<\sum_{g\in \mathcal{G}}|\Delta_g|.
\end{equation*}
Now the theorem follows from Lemma \ref{lem_discrepancy_bound_single_interval_sp2n}.
\end{proof}

By taking $k\to\infty$ we get Theorem \ref{main_theorem_sp} as a consequence (using Lemma \ref{lem_total_variation_dist_limit}). We restate it here for convenience. 

\begin{theorem}
Let $d<n$, and let $\TR_d^{Sp}$ be the random variable attaching to a matrix $M\in Sp_{2n}(\Ol)$ its trace datum of length $d$. Let $\UTR_d$ be the random variable $(a_i)_{i=1}^d$, such that $a_i\sim \frac{u_i}{|i|_p}$, where $u_i\sim \Ol$ is sampled uniformly at random w.r.t. the Haar measure. Then, 

$$
d_{TV}(\mu_{\TR_d^{Sp}}, \mu_{\UTR_d})=O\left(
q^{d-\frac{n^2}{2d}+\frac{n}{2}}\left(1+\frac{1}{q^2-1}\right)^n\binom{n+2d-1}{n}+q^{2d-\frac{n^2}{2d}+o(n)}
\right).
$$
In particular, for $d<c_q^{Sp}\cdot n$, this total variation distance is $o(1)$ as $n\to\infty$.

\end{theorem}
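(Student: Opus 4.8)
The plan is to obtain Theorem~\ref{main_theorem_sp} from the mod~$\pi^k$ statement proved just above by passing to the limit $k\to\infty$, in complete analogy with the $GL_n$ case treated in Section~\ref{section_join_distribution_traces_gln}. First I would apply Lemma~\ref{lem_total_variation_dist_limit}, which gives
\[
d_{TV}(\mu_{\TR_d^{Sp}},\mu_{\UTR_d})=\lim_{k\to\infty} d_{TV}\bigl(\rho_k(\mu_{\TR_d^{Sp}}),\rho_k(\mu_{\UTR_d})\bigr).
\]
Next I would identify the reduced measures with the distributions of the finite random variables: by construction $\rho_k(\mu_{\TR_d^{Sp}})=\mu_{\TR_d^{Sp,k}}$ and $\rho_k(\mu_{\UTR_d})=\mu_{\UTR_d^k}$, since reducing a $p$-adic trace datum modulo $\pi^k$ is performed coordinate-wise and, by Lemma~\ref{lem_PRR_reductive_surjective}, the Haar measure on $Sp_{2n}(\Ol)$ pushes forward to the uniform measure on $Sp_{2n}(\resring{k})$.

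With this identification in hand, the bound on $d_{TV}(\mu_{\TR_d^{Sp,k}},\mu_{\UTR_d^k})$ established in the previous theorem applies directly. The key observation is that this bound,
\[
O\!\left(q^{d-\frac{n^2}{2d}+\frac{n}{2}}\left(1+\frac{1}{q^2-1}\right)^n\binom{n+2d-1}{n}+q^{2d-\frac{n^2}{2d}+o(n)}\right),
\]
does not depend on $k$, so it is preserved under the limit $k\to\infty$ and yields precisely the stated estimate for $d_{TV}(\mu_{\TR_d^{Sp}},\mu_{\UTR_d})$.

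It remains to check the ``in particular'' clause. For the first error term, the discussion following \cite[Theorem 6.21]{GR21}, recalled just before the mod~$\pi^k$ theorem, produces a constant $c_q^{Sp}$ such that $q^{d-\frac{n^2}{2d}+\frac{n}{2}}(1+\frac{1}{q^2-1})^n\binom{n+2d-1}{n}=o(1)$ whenever $d<c_q^{Sp}\cdot n$. For the second term, $q^{2d-\frac{n^2}{2d}+o(n)}=o(1)$ as soon as $2d-\frac{n^2}{2d}<-\varepsilon n$ for some fixed $\varepsilon>0$, i.e.\ whenever $d<(\frac{1}{2}-o_q(1))n$; after possibly shrinking $c_q^{Sp}$ below $\frac12$ both terms are simultaneously $o(1)$ in the range $d<c_q^{Sp}\cdot n$. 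I do not anticipate a genuine obstacle here: all of the substantive work is already contained in the mod~$\pi^k$ theorem and in the Gorodetsky--Rodgers finite-field input, and the only points requiring any care are the uniformity in $k$ of the error bound (which is manifest from its statement) and the routine measure-theoretic identification carried out in the first paragraph.
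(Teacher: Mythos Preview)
Your proposal is correct and follows exactly the approach of the paper, which simply says to take $k\to\infty$ in the preceding mod~$\pi^k$ theorem using Lemma~\ref{lem_total_variation_dist_limit}. You have merely spelled out the (routine) details that the paper leaves implicit.
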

We get as a result

\begin{theorem}
Fix $d>0$, and let $B_1,\ldots,B_D$ be balls in $\mathcal{O}$. Let $Z_i\sim \frac{1}{|i|_p}U_i$, where $U_i$ are independent uniform random variables on $\mathcal{O}$. Then, 

$$
\lim_{n\to\infty}\Pr_{M\in Sp_{2n}(\Ol)}\left[
\left(
\tr(M^i)-1_{p|i}\sigma(\tr(M^{i/p}))
\right)_{i=1}^d
\right]=\prod_{i=1}^d \Pr[Z_i\in B_i].
$$
\end{theorem}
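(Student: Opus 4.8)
The plan is to derive this Diaconis--Shahshahani-type corollary directly from the total variation bound of the preceding theorem, following the same template used for the analogous corollaries in the $GL_n$ and $SL_n$ sections. First I would fix $d>0$ and the balls $B_1,\ldots,B_d\subset\Ol$, and set $Z_i\sim\frac{1}{|i|_p}U_i$ with $U_i$ independent uniform on $\Ol$. The key point is that the event $\left(\tr(M^i)-1_{p|i}\sigma(\tr(M^{i/p}))\right)_{i=1}^d\in\prod_i B_i$ is precisely the event that the trace datum $\TR^d(M)$, which is the random variable $\TR_d^{Sp}$, lands in the Borel set $A=\prod_{i=1}^d B_i\subset\Ol^d$. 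Similarly $\prod_{i=1}^d\Pr[Z_i\in B_i]=\mu_{\UTR_d}(A)$, since $\UTR_d=(a_i)_{i=1}^d$ with $a_i\sim\frac{1}{|i|_p}U(\Ol)$ independent (this is exactly the definition given in Section~\ref{section_trace_data_poly_intervals}). Hence
\begin{equation*}
\left|\Pr_{M\in Sp_{2n}(\Ol)}\left[\TR^d(M)\in A\right]-\prod_{i=1}^d\Pr[Z_i\in B_i]\right|=\left|\mu_{\TR_d^{Sp}}(A)-\mu_{\UTR_d}(A)\right|\le d_{TV}(\mu_{\TR_d^{Sp}},\mu_{\UTR_d}).
\end{equation*}

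Next I would invoke the restated Theorem~\ref{main_theorem_sp} (equivalently the $k\to\infty$ limit of the mod $\pi^k$ theorem together with Lemma~\ref{lem_total_variation_dist_limit}), which gives
\begin{equation*}
d_{TV}(\mu_{\TR_d^{Sp}},\mu_{\UTR_d})=O\left(q^{d-\frac{n^2}{2d}+\frac{n}{2}}\left(1+\frac{1}{q^2-1}\right)^n\binom{n+2d-1}{n}+q^{2d-\frac{n^2}{2d}+o(n)}\right).
\end{equation*}
Since $d$ is fixed (in particular $d<c_q^{Sp}\cdot n$ for all large $n$, and indeed $d=O(1)$ makes the $\frac{n^2}{2d}$ term dominate everything else), the right-hand side tends to $0$ as $n\to\infty$. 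Combining the two displays and letting $n\to\infty$ yields
\begin{equation*}
\lim_{n\to\infty}\Pr_{M\in Sp_{2n}(\Ol)}\left[\left(\tr(M^i)-1_{p|i}\sigma(\tr(M^{i/p}))\in B_i\right)_{i=1}^d\right]=\prod_{i=1}^d\Pr[Z_i\in B_i],
\end{equation*}
which is the claim.

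There is essentially no obstacle here: the corollary is a soft consequence of the quantitative theorem once one unwinds the definitions of $\TR_d^{Sp}$ and $\UTR_d$ as random variables on $\Ol^d$ and observes that a product of balls is a Borel set to which the total variation bound applies. The only minor point worth spelling out is the measurability/convergence bookkeeping --- namely that $\Pr[\TR_d^{Sp}\in A]$ genuinely equals $\mu_{\TR_d^{Sp}}(A)$ and that the Haar measure on $Sp_{2n}(\Ol)$ pushes forward correctly --- but this is already handled abstractly in Section~\ref{section_preliminary_background} (the measures $\mu_X$ and the relation to the mod $\pi^k$ reductions), so the argument is just a two-line application of the triangle inequality as above.
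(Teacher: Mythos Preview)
Your proposal is correct and matches the paper's approach exactly: the paper states this theorem with only the phrase ``We get as a result'' and no further proof, treating it as an immediate consequence of Theorem~\ref{main_theorem_sp}. Your two-line deduction via the definitions of $\TR_d^{Sp}$, $\UTR_d$ and the total variation bound is precisely the intended argument.
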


\section{Traces and characteristic polynomial for $SO_{n}$}\label{section_so_main}
For the rest of the section, $A$ would be a matrix in $SO_n(\resring{k})$ unless explicitly stated otherwise.

\subsection{The image of $\partial\charc_{A_0}$}

For the rest of the section, fix $\epsilon=\pm 1$ and work in $SO_n^\epsilon(\field_q)$. As we have done for the previous groups, we want to show that for any $A_0\in SO_n^\epsilon(\field_q)$, the map $\partial\charc_{A_0}:\mathfrak{so}_n^\epsilon(\field_q)\to\field_q[x]_{<n}$ behaves well under the orthogonal join. We first work with $O_n^\epsilon(\field_q)$ for convenience.

\begin{lem}\label{lem_derivative_of_ortho_sum}
Let $A\in O_m^\pm (\field_q)$, $B\in O_n^\pm(\field_q)$. Then, 
 
$$
\img(\partial\charc_{A\oplus_{\mathrm{orth}} B})= \mathrm{span}_{\field_q}(\charc(B)\cdot \img (\partial\charc_A),\charc(A)\cdot \img(\partial\charc_B)),
$$
where here $\oplus_{\mathrm{orth}}$ is the orthogonal join, as defined in Section \ref{section_so_generalities}. 
\end{lem}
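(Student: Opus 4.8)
The structure of the argument parallels the proof of Lemma~\ref{lemma_image_of_diagonal_sum_symplectic} (the symplectic triangular-join lemma) and Lemma~\ref{lemma_derivative_of_direct_sums} (the $GL_n$ case), with the extra bookkeeping coming from the conjugating matrix $X$ appearing in the definition of the orthogonal join. Recall from Claim~\ref{claim_orthogonal_join_def} that $A\oplus_{\mathrm{orth}}B=X^{-1}(A\oplus B)X$ where $X^t(J_A\oplus J_B)X=K=J_{A\oplus_{\mathrm{orth}}B}$, and from Claim~\ref{claim_lie_algebra_of_sum} that $\mathfrak{o}(\field_q,K)=X^{-1}\mathfrak{o}(\field_q,J_A\oplus J_B)X$. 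The plan is first to reduce the computation of $\img(\partial\charc_{A\oplus_{\mathrm{orth}}B})$ to the computation of $\img(\partial\charc_{A\oplus B})$ with respect to the (non-standard) form $J_A\oplus J_B$, using the conjugation-invariance already exploited in Section~\ref{subsection_explicit_representatives}; then to compute the latter by the same block-matrix bookkeeping as in the symplectic case.

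First I would record the reduction. For any $g\in GL_n(\field_q)$ preserving the Lie-algebra structure by conjugation, we have $\tr(\Adj(x-A_0)\cdot gA_1g^{-1})=\tr(\Adj(x-g^{-1}A_0g)A_1)$ (exactly as in the display in Section~\ref{subsection_explicit_representatives}). Taking $A_0=A\oplus_{\mathrm{orth}}B=X^{-1}(A\oplus B)X$ and $g=X^{-1}$, and using that $X^{-1}$ carries $\mathfrak{o}(\field_q,J_A\oplus J_B)$ onto $\mathfrak{o}(\field_q,K)$ by Claim~\ref{claim_lie_algebra_of_sum}, we conclude that $\img(\partial\charc_{A\oplus_{\mathrm{orth}}B})$ computed on $\mathfrak{o}(\field_q,K)$ equals $\img(\partial\charc_{A\oplus B})$ computed on $\mathfrak{o}(\field_q,J_A\oplus J_B)$, since $\Adj(x-A_0)\,A_0 A_1 = \Adj(x-A_0)A_0 A_1$ transforms correctly (one also uses, as in the $GL_n$ case, that $A_1\mapsto A_0A_1$ is a bijection of the Lie algebra, or works directly with $\partial\charc$ in the form $A_1\mapsto x\cdot\tr(\Adj(x-A_0)A_1)$ valid for $G\ne GL_n,U_n$). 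So it suffices to prove the identity with $A\oplus_{\mathrm{orth}}B$ replaced by the plain block-diagonal $A\oplus B$ acting on $\field_q^{n+m}$ equipped with $J_A\oplus J_B$.

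Next I would carry out the block computation, just as in Lemma~\ref{lemma_image_of_diagonal_sum_symplectic} but in the purely block-diagonal situation (which is simpler — no off-diagonal $A_3$ term). Writing $\Adj(x-(A\oplus B))=\mathrm{diag}(\charc(B)\Adj(x-A),\ \charc(A)\Adj(x-B))$ and writing a general $C\in\mathfrak{o}(\field_q,J_A\oplus J_B)$ as a $2\times2$ block matrix $\begin{pmatrix}C_A & *\\ * & C_B\end{pmatrix}$, the off-diagonal blocks do not contribute to $\tr(\Adj(x-(A\oplus B))\,C)$ because the corresponding blocks of $\Adj$ vanish; moreover the defining relation of $\mathfrak{o}(\field_q,J_A\oplus J_B)$ forces $C_A\in\mathfrak{o}(\field_q,J_A)$ and $C_B\in\mathfrak{o}(\field_q,J_B)$, and conversely any such pair is realized (set the off-diagonal blocks to zero). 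Hence
$$
\tr(\Adj(x-(A\oplus B))\,(A\oplus B)\,C)=\charc(B)\,\tr(\Adj(x-A)\,A\,C_A)+\charc(A)\,\tr(\Adj(x-B)\,B\,C_B),
$$
and letting $C_A$, $C_B$ range independently over $\mathfrak{o}(\field_q,J_A)$, $\mathfrak{o}(\field_q,J_B)$ gives exactly $\mathrm{span}_{\field_q}(\charc(B)\cdot\img(\partial\charc_A),\ \charc(A)\cdot\img(\partial\charc_B))$.

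The main obstacle, as I see it, is not any single computation but making the reduction in the first paragraph airtight: one must check that conjugation by $X^{-1}$ really does intertwine $\partial\charc$ on the two Lie algebras as a map of images (so that images, not just ranks, agree), and that the passage between the two equivalent formulas for $\partial\charc_{A_0}$ — the general $A_1\mapsto\tr(\Adj(x-A_0)A_0A_1)$ and the orthogonal-group form $A_1\mapsto x\cdot\tr(\Adj(x-A_0)A_1)$ from Lemma~\ref{lem_charpoly_step_in_p_direction} — is handled consistently. Once this is pinned down, everything else is a routine transcription of the symplectic argument with the triangular piece dropped. I would also note in passing (for use in the next section) that, exactly as in the symplectic case, $\partial\charc_{A_0}(A_1)$ lands in the space of $n$-palindromic polynomials, since $\charc$ of an orthogonal matrix is self-reciprocal; this is the orthogonal analog of Proposition~\ref{prop_image_is_palindromic} and will be needed when one later identifies the image with a concrete palindromic subspace, but it is not required for the present lemma.
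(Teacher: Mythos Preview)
Your proposal is correct and follows essentially the same route as the paper: first conjugate by $X$ to pass from $A\oplus_{\mathrm{orth}}B$ on $\mathfrak{o}(\field_q,K)$ to the plain block-diagonal $A\oplus B$ on $\mathfrak{o}(\field_q,J_A\oplus J_B)$ (using Claim~\ref{claim_lie_algebra_of_sum}), then do the block computation exactly as in Lemmas~\ref{lemma_derivative_of_direct_sums} and~\ref{lemma_image_of_diagonal_sum_symplectic}. The paper carries this out slightly more tersely, working directly with $\tr(\Adj(x-A_0)Y)$ rather than tracking the $A_0$ or $x$ factor, but the substance is identical; your care about which form of $\partial\charc$ to use is warranted but, as you note, harmless once one observes the two forms differ only by a fixed polynomial factor common to both sides.
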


\begin{proof}

Let $J_A$, $J_B$, $J_{A\oplus_{\mathrm{orth}} B}$ be the forms preserved by $A$, $B$ and $A\oplus_{\mathrm{orth}} B$ (respectively) and let $X$ be the matrix defined in equation \eqref{eqn_K_def}. Also let $Y\in \mathfrak{o}(\field_q,J_{A\oplus_{\mathrm{orth}} B})$. Then, using Claim \ref{claim_lie_algebra_of_sum}, setting $Y'=X^{-1}YX$, we have $Y'\in \mathfrak{o}(\field_q,\mathrm{diag}(J_A,J_B))$. Moreover,

$$
\partial\charc_{A\oplus_{\mathrm{orth}} B}(Y)=\tr(\Adj(x-A\oplus_{\mathrm{orth}} B)Y)=\tr(X^{-1}\Adj(x-\mathrm{diag}(A,B))X\cdot Y').
$$

A simple calculation shows that $\Adj(x-\mathrm{diag}(A,B))=\mathrm{diag}(\charc(B)\Adj(x-A),\charc(A)\Adj(x-B))$. Thus, using the fact that the trace is invariant under conjugation, we get

$$
\partial\charc_{A\oplus_{\mathrm{orth}} B}(Y)=\tr\left(
\left(
\begin{array}{cc}
    \charc(B)\Adj(x-A) & 0 \\
    0 & \charc(A)\Adj(x-B)
\end{array}
\right)
Y'
\right).
$$
Write $Y'=\left(
\begin{array}{cc}
    Y'_{11} & Y'_{12} \\
    Y'_{21} & Y'_{22}
\end{array}
\right)$. Note that without loss of generality, we may assume $Y'_{12},Y'_{21}=0$, since they do not contribute to the result. Now,
For $Y'$ to be in $\mathfrak{o}(\field_q,\mathrm{diag}(J_A,J_B))$ we simply need to have $Y'_{11}\in \mathfrak{o}(\field_q,J_A)$, $Y'_{22}\in\mathfrak{o}(\field_q,J_B)$, hence the result.
\end{proof}

The ideas in the previous Lemma can be used to prove the following

\begin{lem}\label{lem_congruent_forms_same_image}
Let $A\in O(\field_q, J_A)$, and let $J_B$ be a matrix of a form congruent to the form defined by $J_A$ over $\field_q$. Let $X$ be the matrix such that $J_B=X^t J_A X$, and define $B=X^{-1} AX$. Then, 

$$
\img(\partial\charc_A)=\img(\partial\charc_B).
$$
\end{lem}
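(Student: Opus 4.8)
The plan is to mimic the proof of Lemma \ref{lem_derivative_of_ortho_sum}, but in an even simpler setting where no actual ``join'' is taking place — only a change of the ambient bilinear form by a congruence transformation. First I would unwind the definitions: by \eqref{eqn_defining_go_lie_algebra} the Lie algebra $\mathfrak{o}(\field_q,J_A)$ consists of those $Y$ with $J_A Y = -Y^t J_A$, and similarly for $J_B$. Using $J_B = X^t J_A X$ one checks, exactly as in Claim \ref{claim_lie_algebra_of_sum}, that $Y\mapsto X^{-1}YX$ is a linear bijection $\mathfrak{o}(\field_q,J_A)\to\mathfrak{o}(\field_q,J_B)$: if $J_A Y=-Y^tJ_A$ then
$$
J_B(X^{-1}YX)=X^tJ_AYX=-X^tY^tJ_AX=-X^tY^tX^{-t}\cdot X^tJ_AX=(X^{-1}YX)^tJ_B,
$$
and symmetrically for the reverse direction. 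So $\mathfrak{o}(\field_q,J_B)=X^{-1}\mathfrak{o}(\field_q,J_A)X$.

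Next I would use the conjugation-invariance of the adjugate and of the trace. For any $Y'\in\mathfrak{o}(\field_q,J_B)$, write $Y'=X^{-1}YX$ with $Y\in\mathfrak{o}(\field_q,J_A)$. Since $B=X^{-1}AX$, we have $\Adj(x-B)=\Adj(X^{-1}(x-A)X)=X^{-1}\Adj(x-A)X$ (the adjugate of a conjugate is the conjugate of the adjugate, as $\Adj(M)=\det(M)M^{-1}$ and $\det$ is conjugation-invariant). Hence
$$
\partial\charc_B(Y')=\tr\!\left(\Adj(x-B)\,Y'\right)=\tr\!\left(X^{-1}\Adj(x-A)X\cdot X^{-1}YX\right)=\tr\!\left(\Adj(x-A)\,Y\right)=\partial\charc_A(Y),
$$
using cyclicity of the trace. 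As $Y$ ranges over all of $\mathfrak{o}(\field_q,J_A)=\mathfrak{so}(\field_q,J_A)$ (recall $\mathfrak{so}=\mathfrak{o}$ in odd characteristic) while $Y'$ ranges over all of $\mathfrak{o}(\field_q,J_B)=\mathfrak{so}(\field_q,J_B)$, this shows the two images coincide: $\img(\partial\charc_A)=\img(\partial\charc_B)$.

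I do not anticipate a genuine obstacle here — this is essentially a bookkeeping lemma. The one point to be careful about is that $\partial\charc$ for the orthogonal groups is defined on the algebra $\mathfrak{so}_n^\epsilon(\field_q)$, so I should make sure the bijection $Y\mapsto X^{-1}YX$ really does carry $\mathfrak{so}(\field_q,J_A)$ onto $\mathfrak{so}(\field_q,J_B)$; but since in characteristic $\neq 2$ the trace-zero condition is automatic for elements of $\mathfrak{o}_n$ (as noted right after \eqref{eqn_defining_go_lie_algebra}), this is immediate. A second small subtlety is that $\partial\charc_{A_0}$ was defined in \eqref{eqn_def_charpoly_linear_maps} with an extra factor of $A_0$ inside the trace (as $A_1\mapsto\tr(\Adj(x-\overline{A_0})A_0A_1)$); for $G\neq GL_n,U_n$ this was simplified to $A_1\mapsto x\cdot\tr(\Adj(x-\overline{A_0})A_1)$, which is the form I use above, and the simplified expression is manifestly conjugation-compatible in the same way, so the argument goes through verbatim.
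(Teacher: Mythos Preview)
Your proof is correct and follows essentially the same approach as the paper's: first establish $\mathfrak{o}(\field_q,J_B)=X^{-1}\mathfrak{o}(\field_q,J_A)X$ via the congruence relation, then use conjugation-invariance of the adjugate together with cyclicity of the trace to conclude $\partial\charc_B(Y')=\partial\charc_A(Y)$. (There is a small sign slip in your displayed Lie-algebra computation --- the final expression should read $-(X^{-1}YX)^tJ_B$ --- but the argument is clearly the intended one.)
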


\begin{proof}
We claim that $\mathfrak{o}(\field_q,J_B)=X^{-1}\mathfrak{o}(\field_q,J_A)X$. Indeed, if $Y\in\mathfrak{o}(\field_q,J_A)$ then

$$
J_B(X^{-1}YX)=X^{t}J_AYX=-X^{t}Y^tJ_AX=-(X^{-1}YX)^t(X^tJ_AX)=-(X^{-1}YX)^tJ_B,
$$
hence $ X^{-1}\mathfrak{o}(\field_q,J_A)X\subset \mathfrak{o}(\field_q,J_B)$, and comparing $\field_q$-dimensions we get equality. Now, let $Y\in \mathfrak{o}(\field_q,J_A)$. Then,

$$
\partial\charc_A(Y)=\tr(\Adj(x-A)Y)=\tr(X^{-1}\Adj(x-B)Y'X)=\tr(\Adj(x-B)Y'),
$$
where $B'\in \mathfrak{o}(\field_q,J_B)$. Hence we get the result. 
\end{proof}

The last lemma will be useful for us since it allows us to work only with Witt types $\langle 0\rangle$, $\langle 1\rangle$. Indeed, by passing to an extension of even degree $\field_{q^{2l}}/\field_q$ the Witt type $\langle 1,\delta\rangle$ becomes congruent to $\langle 0\rangle$ and $\langle \delta\rangle$ becomes congruent to $\langle 1\rangle$. 

\begin{theorem}\label{theorem_derivative_image_so}
Let $A\in SO_n^\epsilon(\field_q)$. Then,

$$
\img(\partial\charc_A)=\left\{
\frac{\charc A}{\min A}a:\deg A<\deg\min A, a \text{ is palindromic}
\right\}.
$$
\end{theorem}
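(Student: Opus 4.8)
\textbf{Proof plan for Theorem \ref{theorem_derivative_image_so}.}
The plan is to mirror the structure of the proofs of Theorems \ref{theorem_image_char_derivative_gl_n} and \ref{theorem_image_char_derivative_sp2n}, replacing the symplectic building blocks by the orthogonal ones from Section \ref{section_explicit_rep_so}. First, I would reduce to the case where $\charc(A)$ splits completely over $\field_q$: since the rank of the $\field_q$-linear map $\partial\charc_A$ is unchanged under base change, it suffices to prove the claimed equality of images over an extension $\field_{q^l}/\field_q$ in which $\charc(A)$ factors into linear factors, exactly as in the opening paragraph of the proof of Theorem \ref{theorem_image_char_derivative_gl_n}. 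Second, using Lemma \ref{lem_derivative_of_ortho_sum} together with the Chinese remainder theorem for $\field_{q^l}[x]$ and a dimension count, I would reduce to the case of a single primary block, i.e.\ $\charc(A)=(x-\alpha)^e$ with $\min(A)$ a power of $(x-\alpha)$; the inductive bookkeeping here is identical to the one used for $Sp_{2n}$, except that the role of the triangular join is played by the orthogonal join $\oplus_{\mathrm{orth}}$, and one must remember that the spanning sets coming from the two summands are multiplied by $\charc$ of the complementary summand. Third, by a further induction on the number of Jordan blocks within a primary component (again as in the symplectic proof, writing $A = B\oplus_{\mathrm{orth}} A_r$ with $A_r$ a single block), I would reduce to a matrix $A_0$ with $\charc A_0=\min A_0=(x-\alpha)^e$, which is a conjugate of one of the Type I, II, III representatives of Section \ref{section_explicit_rep_so}.

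It then remains to treat each of the three types. For each, I would use Lemma \ref{lem_congruent_forms_same_image} to reduce to the convenient Witt type ($\langle 0\rangle$ for Types I and II, $\langle 1\rangle$ for Type III) — this is exactly the reason that lemma was proved, and it is legitimate because over an even-degree extension $\langle 1,\delta\rangle \cong \langle 0\rangle$ and $\langle\delta\rangle\cong\langle 1\rangle$, and we are already working over such an extension after the first reduction step. For each type, I would then compute $\partial\charc_{A_0}(A_1) = \tr(\Adj(x-A_0)A_1)$ by plugging in the explicit adjoint matrices from Lemma \ref{lem_so_jordan_block_adj} (the blocks $X_\alpha^m$, $(Y_\alpha^m)^{\mathfrak t}$, and $W_\alpha^{2m+1}$) and running over $A_1\in\mathfrak{so}_n^\epsilon(\field_q) = \{A_1 : K A_1 = -A_1^t K\}$, i.e.\ over matrices anti-symmetric with respect to the anti-diagonal transpose when $K=\Lambda_n$. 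The upper bound on the image — that every element is of the form $\frac{\charc A_0}{\min A_0}\cdot(\text{palindromic of degree} <\deg\min A_0)$ — should follow from an analogue of Proposition \ref{prop_image_is_palindromic}: lifting $A_0$ to $\resring{2}$ and using that $\charc$ of an orthogonal matrix is self-reciprocal forces $\partial\charc_{A_0}(A_1)$ to be $n$-palindromic, and the factor $\frac{\charc A_0}{\min A_0}$ is forced by the structure of $\Adj(x-A_0)$ (each entry is divisible by $\frac{\charc A_0}{\min A_0}$ since the blocks have a common factor). For the matching lower bound I would, for each type, exhibit the right number of linearly independent elements in the image by choosing simple $A_1$ (e.g.\ the generic anti-symmetric matrix, or a rank-one piece) and observing that the resulting polynomials vanish to distinct orders at $\alpha$, so are linearly independent; the target dimension is $\lceil(\deg\min A_0)/2\rceil$ or the dimension of the palindromic polynomials divisible by the appropriate power of $(x-\alpha)$, depending on whether $\alpha\neq\pm1$ or $\alpha=\pm1$ — precisely the same dichotomy as in the three cases of Theorem \ref{theorem_image_char_derivative_sp2n}.

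The main obstacle I anticipate is the Type III computation: there $\Adj(x-A_\alpha^{2m+1})$ has the off-diagonal block $-W_\alpha^{2m+1}$ from Proposition \ref{prop_matrix_comp_son}, whose entries are the somewhat intricate rational functions built from the $f_i$ and $g_i$, and one must pair these against the off-diagonal part of a Lie-algebra element (the piece constrained to be, up to the anti-transpose symmetry, an arbitrary matrix of a certain shape) to see that one still gets the full space of palindromic polynomials of the expected dimension. I expect this to require the same kind of "vanishing to distinct orders at $\alpha$" argument as in the Type III case of the symplectic theorem, applied to the diagonal entries $(x-\alpha)^{2m+1}\frac{g_j}{x-\alpha}$ of $W_\alpha^{2m+1}$, but keeping track of the extra factor $\left(\frac{(-1)^m}{2}\alpha - \frac{1}{x-\alpha}\right)$ and checking it does not create unexpected cancellation is the delicate point. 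Everything else — the reductions, the palindromicity, the Types I and II computations — should go through essentially verbatim from the symplectic case, with $X_\alpha^m$, $Y_\alpha^m$ replaced by their orthogonal counterparts and the anti-diagonal transpose replacing the ordinary transpose in the defining relation of the Lie algebra.
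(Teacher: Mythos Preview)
Your proposal is correct and follows essentially the same approach as the paper: reduce to a splitting field, use Lemma \ref{lem_derivative_of_ortho_sum} and Lemma \ref{lem_congruent_forms_same_image} to reduce to the three types of Section \ref{section_explicit_rep_so} with the convenient Witt types, and then for each type bound the image from above by palindromicity and from below by exhibiting elements with distinct vanishing orders at $\alpha$. The paper carries this out exactly as you anticipate --- in fact for Types I and II it observes that the trace computation literally reproduces the expression $\tr(A[(x-\alpha^{-1})^m X_\alpha^m - (x-\alpha)^m Y_\alpha^m])$ from the symplectic case and simply cites Theorem \ref{theorem_image_char_derivative_sp2n}, while for Type III it picks specific Lie algebra elements $X^{ij}$ (supported in the upper-right block, anti-symmetric under $\mathfrak t$) and $B_j$ (supported in the middle column/row) and computes orders at $\alpha$, confirming your expectation that this is the one case requiring genuine new computation.
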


\begin{proof}
By the usual argument, using Lemma \ref{lem_derivative_of_ortho_sum} it is enough to treat the matrices of types I, II, and III as defined in Section \ref{section_explicit_rep_so}. We will only treat the matrices of types II and III for Witt types $\langle 0 \rangle$, $\langle 1 \rangle$ as discussed above, reducing the other Witt types to those cases via Lemma \ref{lem_congruent_forms_same_image}, by passing to a field extension of even dimension $\field_{q^{2l}}$ in which the characteristic polynomial splits. 

\maincase{Type I} Let $A_1=\left(\begin{array}{cc}
    A & B \\
    C & D
\end{array}\right)\in \mathfrak{so}_n^\epsilon(\field_q)$, so that $A=-D^\mathfrak{t}$, and $B$, $C$ are $\mathfrak{t}$ anti-symmetric. Using Lemma \ref{lem_so_jordan_block_adj}, we get that 

\begin{multline*}
\partial\charc_{A_{\alpha,\alpha^{-1}}^m}(A_1)=\tr\left(
    \begin{array}{cc}
       (x-\alpha^{-1})^mX_\alpha^m  &  \\
         & (x-\alpha)^m (Y_\alpha^m)^\mathfrak{t}
    \end{array}\right)\left(\begin{array}{cc}
    A & B \\
    C & D
\end{array}\right)=\\
=\tr(A(x-\alpha^{-1})^mX_\alpha^m)+\tr(D(x-\alpha)^m (Y_\alpha^m)^\mathfrak{t})=\tr(A(x-\alpha^{-1})^mX_\alpha^m)+\tr(-A^\mathfrak{t}(x-\alpha)^m (Y_\alpha^m)^\mathfrak{t})=\\
=(x-\alpha^{-1})^m\tr(AX_\alpha^m)-(x-\alpha)^m\tr(AY_\alpha^m)=\tr(A[(x-\alpha^{-1})^{m}X_{\alpha}^m-(x-\alpha)^mY_\alpha^m]).
\end{multline*}
From here the result follows from the Type I case of the symplectic group, proved in Theorem \ref{theorem_image_char_derivative_sp2n}.

\maincase{Type II} This follows the same lines as the previous case, reducing to the Type II case of the symplectic group.

\maincase{Type III} 
The image is contained in $\field_q[x]_{<2m+1}^{\pal}$, a linear space of dimension $m$. To show equality, it is enough to find $m$ linearly independent functions. Let $1\le i,j\le m$ and write 
$$
X^{ij}=\left(
\begin{array}{ccc}
     0 & & X_{ij}\\
      & 0 & \\
      & & 0
\end{array}
\right)\in \mathfrak{o}^+(\field_q),(X_{ij})^\mathfrak{t}=-X_{ij}.
$$
More specifically we take $X_{ij}$ as the matrix whose $i,j$ entry is $1$ and whose $m+1-j,m+1-i$ entry is $-1$, and is zero elsewhere. Then we get (using the definition of $g_j$ from Proposition \ref{prop_matrix_comp_son})

$$
\partial\charc_{A_\alpha}(X^{ij})=(x-\alpha)^{2m+1}
\left(
\frac{(-1)^{m+1}}{2}\alpha-\frac{1}{x-\alpha}
\right)\left(
(-1)^{j-1}g_j(x)(x-\alpha)^{-m-1+i}-(-1)^{m-i}g_{m+1-i}(x)(x-\alpha)^{-j}
\right).
$$
If $m>i,j>1$ then we can compute the order at $\alpha$ of $\partial\charc_{A_\alpha}(X^{ij})$ by computing the order of

\begin{multline*}
\left(
\frac{(-1)^{m+1}}{2}\alpha+\frac{1}{x-\alpha}
\right)\left(
(-1)^{j-1}\frac{(-1)^{j-1}x^{j-2}}{(1-x\alpha)^j(x-\alpha)^{m+1-i}}
-(-1)^{m-i}\frac{(-1)^{m-i}x^{m-1-i}}{(1-x\alpha)^{m+1-i}(x-\alpha)^j}
\right)=\\
\left(
\frac{(-1)^m}{2}\alpha-\frac{1}{x-\alpha}
\right)\left(
\frac{(-\alpha)^jx^{j-2}-(-\alpha)^{m+1-i}x^{m-1-i}}
{(x-\alpha)^{m+1-i+j}}
\right).
\end{multline*}
We note that since

$$
(-\alpha)^j\alpha^{j-2}-(-\alpha)^{m+1-i}\alpha^{m-1-i}=(-1)^j-(-1)^{m+1-i},
$$
if $m+1-i\neq j\pmod{2}$, the order of the pole this function at $\alpha$ is $m+2-
i+j$. Thus the order of vanishing of $\partial\charc_{A_\alpha}(X^{ij})$ at $\alpha$ is $m-1+i-j$. This way we cover every odd order $1\le j\le 2m-4$.

If we take $A=0$, $B=B_j=\left(
\begin{array}{ccc}
     0_{m\times m} & u_j & 0_{m\times m} \\
     0_{1\times m} & 0  & v_j \\
     0_{m\times m} & 0_{m\times 1}  & 0_{m\times m}
\end{array}
\right)$, where $u_j$ is the $m\times 1$ vector with $1$ in the $j$-th coordinate and $v_j$ is the $1\times m$ vector with $-1$ in the $j$-th coordinate, we get that the following function is in the image:

$$
(x-\alpha)^{2m+1}\left( \frac{(-1)^{j-1}g_j}{x-\alpha}-\frac{x^j}{(1-x\alpha)^{j+1}}
\right).
$$
Similarly to the previous computation, we can get that if $j$ is odd then its order at $\alpha$ is $2m-j$. This way we cover all possible odd order $1\le j\le 2m-1$ so that the claim follows.
\end{proof}

\subsection{Distribution of the characteristic polynomial}\label{sec_chapoly_dist_ortho}

We follow the usual steps.

\subsubsection{One step from $\pi^{k-1}$ to $\pi^k$}

Let $A\in SO_{n}(\resring{k})$ be written as $A=A_0+\pi^{k-1}A_0A_1$, where $A_0 \in H_{G,k}$ is a representative in $SO_n(\resring{k})$ for $A\pmod{\pi^{k-1}}$, $A_1\in\mathfrak{so}(\field_q)$. When we fix $A_0$, the possibilities for $\charc(A)$ range over $\charc(A_0)+\pi^{k-1}\img(\partial\charc_{A_0})$. Recall that from Theorem \ref{theorem_image_char_derivative_sp2n}, 

$$
\img(\partial\charc_{A_0})=\left\{a:a=\frac{\charc\overline{A_0}}{\min\overline{A_0}}b, b\in\field_q[x]_{<\deg\min\overline{A_0}}^{\pal}
\right\}.
$$
Since $\frac{\charc\overline{A_0}}{\min\overline{A_0}}$ is palindromic, the condition on $a$ being palindromic is equivalent to $b$ being palindromic. We use this to prove the following lemma, on the distribution of $\charc A$ given $A_0$.

\begin{lem}\label{lem_one_step_son}
Let $g\in (\resring{k})[x]$ be a monic polynomial of degree $n$. Then for $d<n$,

$$
\Pr[\charc A\in I(g,n-d)|\charc(\rho_{k-1}(A))\in I(\rho_{k-1}(g),n-d),\delta(\deg\min\overline{A})\ge d]=q^{-d}.
$$
\end{lem}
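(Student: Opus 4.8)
The plan is to follow the proof of the symplectic analog, Lemma \ref{lem_one_step_sp2n}, essentially verbatim, substituting the orthogonal data for the symplectic data and invoking Theorem \ref{theorem_derivative_image_so} in place of Theorem \ref{theorem_image_char_derivative_sp2n}. As there, it suffices to show that for every fixed lift $A_0\in H_{G,k}$ of $\rho_{k-1}(A)$ with $\delta(\deg\min\overline{A_0})\ge d$, the conditional probability over the choice of $A_1\in\mathfrak{so}_n^\epsilon(\field_q)$ equals $q^{-d}$; averaging over $A_0$ then gives the claim. Writing $A=A_0(1+\pi^{k-1}A_1)$, Lemma \ref{lem_charpoly_step_in_p_direction} gives $\charc(A)=\charc(A_0)\pm\pi^{k-1}\partial\charc_{\overline{A_0}}(A_1)$ (the sign is immaterial since $A_1\mapsto -A_1$ is a bijection of the Lie algebra). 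The hypothesis $\charc(\rho_{k-1}(A))\in I(\rho_{k-1}(g),n-d)$ lets us write $\charc(A_0)-g=e_0+\pi^{k-1}e_1$ with $\deg e_0<n-d$, so that $\charc(A)\in I(g,n-d)$ is equivalent to $\partial\charc_{\overline{A_0}}(A_1)-\overline{e_1}$ having degree $<n-d$; since both polynomials have degree $<n$, this is in turn equivalent to
$$x^{\deg\min\overline{A_0}}\tfrac{\charc\overline{A_0}}{\min\overline{A_0}}+\partial\charc_{\overline{A_0}}(A_1)\in I\!\left(x^{\deg\min\overline{A_0}}\tfrac{\charc\overline{A_0}}{\min\overline{A_0}}+\overline{e_1},\,n-d\right),$$
using that $x^{\deg\min\overline{A_0}}\tfrac{\charc\overline{A_0}}{\min\overline{A_0}}$ is monic of degree $n$.

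Next I would invoke Theorem \ref{theorem_derivative_image_so}, which identifies $\img(\partial\charc_{\overline{A_0}})$ with $\{\tfrac{\charc\overline{A_0}}{\min\overline{A_0}}b:b\in\field_q[x]^{\pal}_{<\deg\min\overline{A_0}}\}$, a space of dimension $\delta:=\delta(\deg\min\overline{A_0})$ in the sense of Definition \ref{def_delta_number}. Since $\partial\charc_{\overline{A_0}}$ is linear, all its fibers have equal size, so the conditional probability equals $q^{-\delta}$ times the number of palindromic $b$ of degree $<\deg\min\overline{A_0}$ for which the above containment holds. Expanding the interval indicator with short-interval characters $\chi\in\shortintervalcharacters$ and applying orthogonality of Hayes characters, the probability becomes
$$q^{-d}\sum_{\chi\in\shortintervalcharacters}\chi\!\left(\tfrac{\charc\overline{A_0}}{\min\overline{A_0}}\right)\overline{\chi}\!\left(x^{\deg\min\overline{A_0}}\tfrac{\charc\overline{A_0}}{\min\overline{A_0}}+\overline{e_1}\right)q^{-\delta}\!\!\sum_{\substack{b\in\field_q[x]^{\pal}\\ \deg b<\deg\min\overline{A_0}}}\!\!\chi\!\left(x^{\deg\min\overline{A_0}}+b\right).$$
Because $d\le\delta$ by the standing hypothesis $\delta(\deg\min\overline{A_0})\ge d$, Lemma \ref{lem_hayes_character_sum_over_palindromic_polys}, applied with its parameter $n$ taken to be $\deg\min\overline{A_0}$, evaluates the inner sum as $q^{\delta}\,1_{\chi=\chi_0}$ (using the correspondence $\chi\mapsto\chi'$ of Propositions \ref{prop_psi_even}, \ref{prop_psi_odd} and the fact that $\chi$ is trivial iff $\chi'$ is); only the term $\chi=\chi_0$ survives and the expression collapses to $q^{-d}$.

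The one place the orthogonal case genuinely differs from the symplectic one is the parity bookkeeping around $\delta(\deg\min\overline{A_0})=\lceil(\deg\min\overline{A_0}-1)/2\rceil$: one must check that the dimension of $\img(\partial\charc_{\overline{A_0}})$ produced by Theorem \ref{theorem_derivative_image_so} matches the cutoff $\delta$ in Lemma \ref{lem_hayes_character_sum_over_palindromic_polys} for both odd and even $\deg\min\overline{A_0}$, which comes down to the fact that $\field_q[x]^{\pal}_{<r}$ and the space of $r$-palindromic polynomials of degree $\le r$ differ only in whether the leading coefficient is free — exactly as in the symplectic argument. I expect this to be the only subtle point; otherwise the proof is a direct transcription of that of Lemma \ref{lem_one_step_sp2n} with no new analytic input.
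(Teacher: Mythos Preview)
Your proposal is correct and follows exactly the approach the paper takes: the paper's own proof simply states that it is ``essentially the same as that of Lemma \ref{lem_one_step_sp2n}, making adjustments for using claims for the special orthogonal group wherever necessary,'' and you have faithfully unpacked precisely those adjustments (invoking Theorem \ref{theorem_derivative_image_so} for the image and Lemma \ref{lem_hayes_character_sum_over_palindromic_polys} for the palindromic character sum). The parity bookkeeping you flag is not special to the orthogonal case---it is already present and handled identically in the symplectic argument via Lemma \ref{lem_hayes_character_sum_over_palindromic_polys}, which covers both parities of $\deg\min\overline{A_0}$.
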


\begin{proof}
The proof is essentially the same as that of Lemma \ref{lem_one_step_sp2n}, making adjustments for using claims for the special orthogonal group wherever necessary.
\end{proof}

\subsubsection{Distribution mod $\pi^k$ given mod $\pi$ behavior}
Define the discrepancy of the conditional distribution given a matrix $B\in SO_{n}(\field_q)$ such that $\charc(B)\in I(\overline{g},n-d)$ to be 

$$
\Delta_{B,k}^g=\Pr[\charc(A)\in I(g,n-d)|\overline{A}=B]-q^{-d(k-1)}.
$$
As in the previous cases, when $\deg\min B$ is large enough, the discrepancy is zero.

\begin{prop}\label{prop_deltabk_large_minpoly_so}
Let $g\in (\resring{k})[x]$ be a monic polynomial of degree $n$, $d<\frac{n}{2}$. For every matrix $B\in SO^\epsilon_{
n}(\field_q)$ with $\charc(B)\in I(\overline{g},n-d)$ and $\delta(\deg\min(B))\ge d$, we have $\Delta_{B,k}^g=0$.
\end{prop}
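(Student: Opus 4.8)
The plan is to mirror verbatim the argument used for the symplectic group in Proposition \ref{prop_deltabk_large_minpoly_sp}. Fix $B\in SO_n^\epsilon(\field_q)$ with $\charc(B)\in I(\overline g,n-d)$ and $\delta(\deg\min(B))\ge d$, and condition on $\overline A=B$. The key point is that conditioning on $\overline A=B$ fixes $\deg\min(\overline A)=\deg\min(B)$ once and for all, so the hypothesis $\delta(\deg\min(\overline A))\ge d$ is automatically satisfied at every level of the lifting tower. First I would write the conditional probability as a telescoping product over the successive reductions:
$$
\Pr[\charc(A)\in I(g,n-d)\mid \overline A=B]
=\prod_{i=2}^{k}\Pr\!\left[\charc(\rho_i(A))\in I(\rho_i(g),n-d)\;\middle|\;\charc(\rho_{i-1}(A))\in I(\rho_{i-1}(g),n-d),\ \overline A=B\right],
$$
which is valid because membership of $\charc(\rho_i(A))$ in $I(\rho_i(g),n-d)$ forces membership of $\charc(\rho_{i-1}(A))$ in $I(\rho_{i-1}(g),n-d)$, so the events are nested and the chain rule for conditional probabilities applies exactly as in the $Sp_{2n}$ case.

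The second step is to evaluate each factor. Here I would invoke Lemma \ref{lem_one_step_son}: since $\delta(\deg\min(\overline A))=\delta(\deg\min(B))\ge d$, each factor in the product equals $q^{-d}$. (One should note that Lemma \ref{lem_one_step_son} is stated as a conditional probability given the mod $\pi^{i-1}$ data together with the minimal-polynomial-degree bound, and the extra conditioning on the fixed value $\overline A=B$ is harmless because the lemma's proof already shows the conditional probability is $q^{-d}$ for every admissible choice of $A\bmod \pi^{i-1}$.) Multiplying the $k-1$ factors gives
$$
\Pr[\charc(A)\in I(g,n-d)\mid \overline A=B]=q^{-d(k-1)},
$$
so $\Delta_{B,k}^g=q^{-d(k-1)}-q^{-d(k-1)}=0$, as claimed.

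I do not expect a genuine obstacle here: the statement is a formal consequence of the one-step Lemma \ref{lem_one_step_son} and the telescoping identity, exactly parallel to Proposition \ref{prop_deltabk_large_minpoly_sp} and Proposition \ref{prop_deltabk_large_minpoly}. The only point requiring a line of care is bookkeeping with the constant $\delta$: one must observe that $\delta$ is a function of $\deg\min(B)$ alone, hence does not change as one climbs from $\resring{i-1}$ to $\resring{i}$, and that the condition $d<\tfrac n2$ (rather than $d<n$ as in the symplectic case) is exactly what is needed for Lemma \ref{lem_one_step_son}'s palindromic-character input (via Lemma \ref{lem_hayes_character_sum_over_palindromic_polys}) to be applicable at degree $n$. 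Beyond that the proof is routine and can be compressed to a few lines.
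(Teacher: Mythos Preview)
Your proposal is correct and matches the paper's own proof, which simply says the argument is the same as for Proposition~\ref{prop_deltabk_large_minpoly} (the $GL_n$ analogue) via the image computation in Theorem~\ref{theorem_derivative_image_so}; this is exactly the telescoping product combined with the one-step Lemma~\ref{lem_one_step_son} that you spell out. Your additional remarks on why the conditioning on $\overline A=B$ is compatible with the one-step lemma and on the role of $d<n/2$ are accurate and make the parallel explicit.
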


\begin{proof}
The proof is essentially the same as that of Proposition \ref{prop_deltabk_large_minpoly}, using Theorem \ref{theorem_derivative_image_so}.
\end{proof}

Now we consider how $\charc(A)$ behaves for lifts $A$ of $B$, when $\delta(B)<d$.

\begin{prop}\label{prop_deltabk_small_minpoly_son}
Let $g\in(\resring{k})[x]$ be a monic polynomial of degree $n$. Let $B\in SO_n(\field_q)$ be a matrix with $\deg\min B=l<d$. Let $\mathcal{P}_B^k=\{\charc(A):A\in SO_n(\resring{k}),\overline{A}=B\}$. Then,

\begin{enumerate}
    \item $|\mathcal{P}_B^k|=q^{(k-1)l}$. Moreover, each element of $\mathcal{P}_B^k$ has different $l$ leading coefficients modulo $\pi^k$. 
    \item $
\Pr[\charc(A)\in I(g,n-d)|\overline{A}=B]=1_{\mathcal{P}_B^k\cap I(g,n-d)\neq \emptyset}q^{-(k-1)l}
$. Equivalently, we have
$$\Delta_{B,k}^g=1_{\mathcal{P}_B^k\cap I(g,n-d)\neq \emptyset}q^{-(k-1)l}-q^{-(k-1)d}.$$
\end{enumerate}
\end{prop}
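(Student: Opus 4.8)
The plan is to mimic the proof of Proposition \ref{prop_deltabk_small_minpoly_sp} from the symplectic case, which in turn mirrors the $GL_n$ argument in Proposition \ref{prop_deltabk_small_minpoly}. We proceed by induction on $k$. The base case $k=1$ is trivial: $\mathcal{P}_B^1=\{\charc(B)\}$ is a single polynomial, the conditional probability is $1$ if $\charc(B)\in I(g,n-d)$ and $0$ otherwise, and the formula $\Delta_{B,1}^g=1_{\mathcal{P}_B^1\cap I(g,n-d)\neq\emptyset}\cdot 1-1$ holds. For the inductive step, assume the statement for all values less than $k$.

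For the inductive step I would analyze the fibers of the reduction map $\mathcal{P}_B^k\to\mathcal{P}_B^{k-1}$, $h\mapsto\rho_{k-1}(h)$. Given a matrix $A\in SO_n(\resring{k})$ with $\overline{A}=B$, write $A=A_0(1+\pi^{k-1}A_1)$ where $A_0\in H_{G,k}$ is a lift of $\rho_{k-1}(A)$ and $A_1\in\mathfrak{so}_n(\field_q)$; by Lemma \ref{lem_charpoly_step_in_p_direction} we have $\charc(A)=\charc(A_0)-\pi^{k-1}\partial\charc_{\overline{A_0}}(A_1)$, so each fiber has the form $f_0+\pi^{k-1}\img(\partial\charc_B)$ (note $\overline{A_0}=B$ since all lifts of $B$ have the same mod-$\pi$ reduction). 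By Theorem \ref{theorem_derivative_image_so}, $\img(\partial\charc_B)$ is a $\field_q$-vector space of dimension equal to $\delta(B)$... here I must be careful: the statement as written says $\deg\min B=l<d$ and claims $|\mathcal{P}_B^k|=q^{(k-1)l}$, so I should track whether the relevant dimension is $l=\deg\min B$ or $\delta(l)$; following the symplectic analog (Proposition \ref{prop_deltabk_small_minpoly_sp}) the relevant quantity should be $\delta(B)$, and the cleanest route is to invoke Theorem \ref{theorem_derivative_image_so} together with Lemma \ref{lem_hayes_character_sum_over_palindromic_polys} exactly as in the proof of Lemma \ref{lem_one_step_son}, which already establishes that each fiber element lies in a distinct interval-residue class with equidistribution constant $q^{-\dim}$. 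Combining the inductive hypothesis (applied to $\rho_{k-1}(A)$) with this fiber analysis, and using Lemma \ref{lem_one_step_son} (or rather Lemma \ref{lem_onestep_charpoly_distribution}/\ref{lem_one_step_son} with the role of $d$ played by $\delta(B)$) to see each fiber element sits in a distinct class modulo the $l$ next-to-leading coefficients, gives the count $|\mathcal{P}_B^k|=q^{(k-1)l}$ and the distinctness claim.

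For assertion (2), the probability $\Pr[\charc(A)\in I(g,n-d)\mid\overline{A}=B]$ is, since all lifts of $B$ are equiprobable (the map $\rho_k:SO_n(\Ol)\to SO_n(\resring{k})$ is surjective with uniform fibers), equal to the fraction of elements of $\mathcal{P}_B^k$ lying in $I(g,n-d)$; by assertion (1) these are all in distinct intervals of length... wait, the intervals $I(g,n-d)$ group together polynomials agreeing in their $d$ next-to-leading coefficients, and $l<d$, so the interval $I(g,n-d)$ either contains exactly one element of $\mathcal{P}_B^k$ or none — because two distinct elements of $\mathcal{P}_B^k$ differ in one of their top $l\le d$ coefficients. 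Hence the probability is $1_{\mathcal{P}_B^k\cap I(g,n-d)\neq\emptyset}\cdot q^{-(k-1)l}$, and subtracting $q^{-(k-1)d}$ gives the displayed formula for $\Delta_{B,k}^g$.

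The main obstacle I anticipate is bookkeeping the correct dimension parameter: the proof of Lemma \ref{lem_one_step_son} shows the image of $\partial\charc_{\overline{A_0}}$ is equidistributed modulo the interval relation with the character sum over palindromic polynomials contributing a factor $q^{\delta}$ (via Lemma \ref{lem_hayes_character_sum_over_palindromic_polys}), where $\delta=\delta(\deg\min\overline{A_0})$, not $\deg\min\overline{A_0}$ itself. So strictly the statement's ``$|\mathcal{P}_B^k|=q^{(k-1)l}$'' with $l=\deg\min B$ should presumably read $l=\delta(\deg\min B)$ to be consistent with the symplectic and orthogonal image computations; I would either adjust the parameter or verify that in the orthogonal setting the relevant count really is $q^{(k-1)\delta(l)}$ and that the interval $I(g,n-d)$ with $\delta(l)<d$ still isolates at most one fiber element, which it does since $\delta(l)<d$. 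Apart from this indexing subtlety, the argument is a routine transcription of the $GL_n$ and $Sp_{2n}$ proofs, so I would keep it short and refer back to Proposition \ref{prop_deltabk_small_minpoly_sp}.
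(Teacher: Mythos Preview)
Your approach is exactly the paper's: the proof is omitted there with a pointer back to the $GL_n$/$Sp_{2n}$ analogs (Propositions \ref{prop_deltabk_small_minpoly} and \ref{prop_deltabk_small_minpoly_sp}), and you have reproduced that induction-on-$k$ fiber argument faithfully. You are also right to flag the indexing issue: by Theorem \ref{theorem_derivative_image_so} the image $\img(\partial\charc_B)$ has $\field_q$-dimension $\delta(\deg\min B)$, not $\deg\min B$, so the statement should read $\delta(\deg\min B)=l<d$ (matching the symplectic Proposition \ref{prop_deltabk_small_minpoly_sp}) rather than $\deg\min B=l<d$; this is a typo in the paper, and with that correction everything goes through.
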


\begin{proof}
    The proof is omitted as it is essentially the proof of Proposition \ref{prop_deltabk_large_minpoly}.
\end{proof}

We now let $\mathcal{G}$ be a set of representatives for the lifts of $\charc(B)$ to a monic polynomial $g\in(\resring{k})[x]$ of degree $n$, modulo equality in the $d$ next-to-leading coefficients. We note that $|\mathcal{G}|=q^{(k-1)d}$.

\begin{prop}\label{prop_summing_deltabbk_over_all_g_so}
Let $B\in SO_{n}(\field_q)$. Then,

\begin{enumerate}
    \item If $\deg\min(B)\ge d$, we have $\sum_{g\in G}|\Delta_{B,k}^g|=0$.
    \item If $\deg\min(B)=l<d$, we have $\sum_{g\in\mathcal{G}}|\Delta_{B,k}^g|\le 2$.
\end{enumerate}
\end{prop}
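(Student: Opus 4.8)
The plan is to mirror verbatim the proofs of the analogous statements for $GL_n$ (Proposition \ref{prop_summing_deltabbk_over_all_g}) and $Sp_{2n}$ (Proposition \ref{prop_summing_deltabbk_over_all_g_sp}), since by now every ingredient needed for $SO_n^\epsilon$ is in place: the computation of $\img(\partial\charc_{A_0})$ in Theorem \ref{theorem_derivative_image_so}, the one-step equidistribution Lemma \ref{lem_one_step_son}, and its two consequences Propositions \ref{prop_deltabk_large_minpoly_so} and \ref{prop_deltabk_small_minpoly_son}.

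For the first assertion I would simply quote Proposition \ref{prop_deltabk_large_minpoly_so}: once $\deg\min(B)$ (hence $\delta(\deg\min(B))$) is at least $d$, each individual discrepancy $\Delta_{B,k}^g$ is already $0$, so $\sum_{g\in\mathcal{G}}|\Delta_{B,k}^g|=0$.

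For the second assertion I would start from the closed form given by Proposition \ref{prop_deltabk_small_minpoly_son}(2), namely $\Delta_{B,k}^g = 1_{\mathcal{P}_B^k\cap I(g,n-d)\neq\emptyset}\,q^{-(k-1)l} - q^{-(k-1)d}$, and apply the triangle inequality to split the sum over $g\in\mathcal{G}$ into two pieces. Since $|\mathcal{G}| = q^{(k-1)d}$, the term $\sum_{g\in\mathcal{G}} q^{-(k-1)d}$ contributes exactly $1$. The remaining piece is $q^{-(k-1)l}$ times the number of classes $g\in\mathcal{G}$ whose interval $I(g,n-d)$ meets $\mathcal{P}_B^k$; by Proposition \ref{prop_deltabk_small_minpoly_son}(1) we have $|\mathcal{P}_B^k| = q^{(k-1)l}$ and the elements of $\mathcal{P}_B^k$ have pairwise distinct $l$ next-to-leading coefficients, hence (as $l\le d$) pairwise distinct $d$ next-to-leading coefficients, so they lie in at most $q^{(k-1)l}$ distinct intervals $I(\cdot,n-d)$. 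This bounds the second piece by $q^{-(k-1)l}\cdot q^{(k-1)l} = 1$, and adding the two pieces gives the claimed bound $\le 2$.

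I expect no real obstacle: the content has already been extracted into Propositions \ref{prop_deltabk_large_minpoly_so} and \ref{prop_deltabk_small_minpoly_son}, whose proofs in turn reduce to the symplectic computations via Lemmas \ref{lem_derivative_of_ortho_sum}, \ref{lem_congruent_forms_same_image} and Theorem \ref{theorem_derivative_image_so}. The only point requiring a word of care is the monotonicity remark that agreement of two monic degree-$n$ polynomials in their top $d$ coefficients implies agreement in their top $l$ coefficients for $l\le d$ — this is exactly what transfers the ``pairwise distinct'' property of $\mathcal{P}_B^k$ from $l$-coefficient data to the intervals $I(\cdot,n-d)$.
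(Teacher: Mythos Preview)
Your approach is exactly the paper's: part (1) is quoted from Proposition \ref{prop_deltabk_large_minpoly_so}, and part (2) is the triangle inequality applied to the closed form in Proposition \ref{prop_deltabk_small_minpoly_son}.

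One caveat: your parenthetical ``$\deg\min(B)\ge d$ (hence $\delta(\deg\min(B))\ge d$)'' is not a valid implication, since $\delta(r)=\lceil (r-1)/2\rceil\approx r/2$. This mismatch is already present in the paper's statement (which writes $\deg\min(B)$ here but uses $\delta(\deg\min(B))$ in Proposition \ref{prop_deltabk_large_minpoly_so} and in the downstream Lemma \ref{lem_discrepancy_bound_interval_starting_from_f_mod_p_so}); it is a typo rather than a substantive issue, but your ``hence'' glosses over it rather than fixing it.
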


\begin{proof}
The first claim follows directly from Proposition \ref{prop_deltabk_large_minpoly_so}. The second follows from the triangle inequality using the two parts of Proposition \ref{prop_deltabk_small_minpoly_son}.
\end{proof}

Let $\overline{f}\in\field_q[x]_{=n}^\mon$ be a self-reciprocal polynomial such that $\overline{f}\in I(\overline{g},n-d)$. We define the discrepancy of starting from $f$ by 

$$
\Delta_{\overline{f},k}^g=\Pr[\charc(A)\in I(g,n-d)|\charc(\overline{A})=\overline{f}]-q^{-d(k-1)}.
$$

\begin{lem}\label{lem_discrepancy_bound_interval_starting_from_f_mod_p_so}
Let $\overline{f}\in \field_q[x]_{=n}^\mon$ be a self-reciprocal polynomial, and let $\mathcal{G}$ be a set of representatives for the possible lifts of $\overline{f}$ to a monic polynomial in $(\resring{k})[x]$ of degree $n$, modulo equality in the $d$ next-to-leading coefficients.
\begin{enumerate}
    \item If $\delta(\deg\rad\overline{f})\ge d$, then $\sum_{g\in\mathcal{G}}|\Delta_{\overline{f},k}^g
    |=0$. 
    \item If $\delta(\deg\rad\overline{f})=s<d$, then $\sum_{g\in\mathcal{G}}|\Delta_{\overline{f},k}^g|\le 2\Pr_{B\in SO_{n}(\field_q)}[\delta(\deg\min(B))<d|\charc(B)=\overline{f}]$.
\end{enumerate}
\end{lem}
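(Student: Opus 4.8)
The plan is to follow the $GL_n$ proof of Lemma~\ref{lem_discrepancy_bound_interval_starting_from_f_mod_p} and the $Sp_{2n}$ proof of Lemma~\ref{lem_discrepancy_bound_interval_starting_from_f_mod_p_sp} essentially verbatim, substituting the special orthogonal analogues proved above. First I would condition on the mod-$\pi$ reduction $\overline{A}=B$, writing
$$
\Delta_{\overline{f},k}^g=\sum_{\substack{B\in SO_n^\epsilon(\field_q)\\ \charc(B)=\overline{f}}}\Delta_{B,k}^g\cdot \Pr[\overline{A}=B \mid \charc(\overline{A})=\overline{f}],
$$
and then apply Proposition~\ref{prop_deltabk_large_minpoly_so}: every $B$ with $\delta(\deg\min(B))\ge d$ contributes $\Delta_{B,k}^g=0$, so only matrices $B$ with $\delta(\deg\min(B))<d$ survive in the sum.

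For part~(1), assume $\delta(\deg\rad\overline{f})\ge d$. For any $B$ with $\charc(B)=\overline{f}$ one has $\rad\overline{f}=\rad(\min(B))$, hence $\rad\overline{f}\mid\min(B)$ and $\deg\min(B)\ge\deg\rad\overline{f}$; since $\delta$ is non-decreasing, $\delta(\deg\min(B))\ge\delta(\deg\rad\overline{f})\ge d$. By the previous paragraph every term of the sum vanishes, so $\Delta_{\overline{f},k}^g=0$ for every $g$, and a fortiori $\sum_{g\in\mathcal{G}}|\Delta_{\overline{f},k}^g|=0$.

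For part~(2), assume $\delta(\deg\rad\overline{f})=s<d$. Summing the displayed identity over $g\in\mathcal{G}$, using the triangle inequality, and interchanging the order of summation yields
$$
\sum_{g\in\mathcal{G}}|\Delta_{\overline{f},k}^g|\le\sum_{\substack{B\in SO_n^\epsilon(\field_q)\\ \charc(B)=\overline{f},\ \delta(\deg\min(B))<d}}\Pr[\overline{A}=B \mid \charc(\overline{A})=\overline{f}]\sum_{g\in\mathcal{G}}|\Delta_{B,k}^g|.
$$
By Proposition~\ref{prop_summing_deltabbk_over_all_g_so} the inner sum is at most $2$ for each such $B$; bounding it by $2$ and recognising the resulting sum of conditional probabilities as $2\Pr_{B\in SO_n^\epsilon(\field_q)}[\delta(\deg\min(B))<d\mid\charc(B)=\overline{f}]$ completes the proof.

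All of the real content has already been established — the description of $\img(\partial\charc_{A_0})$ in Theorem~\ref{theorem_derivative_image_so}, the one-step distribution Lemma~\ref{lem_one_step_son}, and the conjugacy-class estimates underlying Propositions~\ref{prop_deltabk_large_minpoly_so}--\ref{prop_summing_deltabbk_over_all_g_so} — so there is no genuine obstacle in this lemma. The only point that needs a moment's care is the bookkeeping around the function $\delta$: one must use $\rad\overline{f}=\rad(\min(B))$ together with the monotonicity of $\delta$ to translate the hypothesis on $\deg\rad\overline{f}$ into one on $\deg\min(B)$, matching the thresholds appearing in Propositions~\ref{prop_deltabk_large_minpoly_so} and~\ref{prop_summing_deltabbk_over_all_g_so}.
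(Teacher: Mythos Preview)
Your proposal is correct and follows essentially the same approach as the paper, which simply states that the proof is the same as that of Lemma~\ref{lem_discrepancy_bound_interval_starting_from_f_mod_p} using Propositions~\ref{prop_deltabk_large_minpoly_so} and~\ref{prop_deltabk_small_minpoly_son}. You have spelled out the details more fully, including the monotonicity argument for $\delta$ that translates the radical hypothesis into the minimal-polynomial condition, which is exactly the bookkeeping the paper leaves implicit.
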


\begin{proof}
The proof is the same as that of Lemma \ref{lem_discrepancy_bound_interval_starting_from_f_mod_p}, using Propositions \ref{prop_deltabk_large_minpoly_so}, \ref{prop_deltabk_small_minpoly_son}. 
\end{proof}

\subsubsection{Discrepancy bound on distribution in intervals}

As usual we define we define for a monic polynomial $g\in(\resring{k})[x]$ of degree $n$ the discrepancy of the surrounding interval,

$$
\Delta_{g}=\Pr[\charc A\in I(g,n-d)]-q^{-kd}.
$$

\begin{lem}\label{lem_discrepancy_bound_single_interval_son}
Let $d<n$, and let $\mathcal{G}$ be a set of representatives for the monic polynomials $g\in(\resring{k})[x]$ of degree $n$, modulo equivalence in the $d$ next-to-leading coefficients. Then,

\begin{equation*}
    \sum_{g\in\mathcal{G}} |\Delta_g|=O\left(q^{d+\frac{n}{4}-\frac{n^2}{8d}}\left(1+\frac{1}{q^2-1}\right)^{n/2}\binom{n/2+2d-3}{n/2}+q^{2d+\frac{n}{2}-\frac{n^2}{4d}+o(n)}
    \right).
\end{equation*}
\end{lem}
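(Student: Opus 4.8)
The plan is to prove Lemma~\ref{lem_discrepancy_bound_single_interval_son} by following, line for line, the structure of the symplectic argument in Lemma~\ref{lem_discrepancy_bound_single_interval_sp2n}, replacing each ingredient by its special-orthogonal counterpart. First I would condition on the mod~$\pi$ reduction of the characteristic polynomial: for $g\in(\resring{k})[x]$ monic of degree $n$,
$$\Pr[\charc A\in I(g,n-d)]=\sum_{\overline f\in I(\overline g,n-d)}\Pr[\charc A\in I(g,n-d)\mid\charc\overline A=\overline f]\,\Pr[\charc\overline A=\overline f],$$
and split each conditional probability as $q^{-(k-1)d}+\Delta_{\overline f,k}^{g}$. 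Only self-reciprocal $\overline f$ contribute, since otherwise $\Pr[\charc\overline A=\overline f]=0$, so Lemma~\ref{lem_discrepancy_bound_interval_starting_from_f_mod_p_so} applies to every nonzero term. This yields $\Pr[\charc A\in I(g,n-d)]=q^{-(k-1)d}\Pr[\charc\overline A\in I(\overline g,n-d)]+\sum_{\overline f}\Delta_{\overline f,k}^{g}\Pr[\charc\overline A=\overline f]$, hence a bound on $|\Delta_g|$ in terms of the mod~$\pi$ short-interval discrepancy in $SO_n^\epsilon(\field_q)$ and of the sums $\sum_{\overline f}|\Delta_{\overline f,k}^g|\Pr[\charc\overline A=\overline f]$.

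For the mod~$\pi$ term I would invoke the short-interval equidistribution estimate of Gorodetsky--Rodgers for $O_n^\epsilon(\field_q)$ (the orthogonal analogue of \cite[Theorem~6.21]{GR21}), and pass from $O_n^\epsilon$ to $SO_n^\epsilon$ via $[O_n^\epsilon(\field_q):SO_n^\epsilon(\field_q)]=2$ exactly as in the proof of Theorem~\ref{theorem_probability_small_min_poly_son}. Since $\Phi_n$ carries palindromic polynomials of degree $n$ to polynomials of degree $\lceil\frac{n-1}{2}\rceil\le n/2$, the power saving is governed by this half-degree, giving $\Pr[\charc\overline A\in I(\overline g,n-d)]=q^{-d}+O\!\big(q^{\frac n4-\frac{n^2}{8d}}(1+\tfrac1{q^2-1})^{n/2}\binom{n/2+2d-3}{n/2}\big)$. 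Summing over a set $\mathcal G$ of interval representatives, which has size $q^{kd}$, the contribution of this term to $\sum_{g}|\Delta_g|$ is $q^{d}$ times the above error, i.e.\ the first term of the statement.

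For the second term I would apply Lemma~\ref{lem_discrepancy_bound_interval_starting_from_f_mod_p_so}: the inner sum $\sum_{g\in\mathcal G}|\Delta_{\overline f,k}^g|$ vanishes unless $\delta(\deg\rad\overline f)<d$, in which case it is at most $2\Pr_{B\in SO_n^\epsilon(\field_q)}[\delta(\deg\min B)<d\mid\charc B=\overline f]$. Since $\delta(r)<d$ forces $r<2d$, exchanging the order of summation bounds the second term by $2\sum_{\overline f:\,\deg\rad\overline f<2d}\Pr[\deg\min B<2d\cap\charc B=\overline f]$. Theorem~\ref{theorem_probability_small_min_poly_son} bounds each such probability by $q^{\frac n2-\frac{n^2}{2(n-\delta)}+o(n)}$ with $n-\delta<2d$; the dominant contribution comes from $\deg\min B$ as large as allowed (close to $2d$), so summing the $O(n)$ admissible values of $\delta$ gives $q^{\frac n2-\frac{n^2}{4d}+o(n)}$. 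Lemma~\ref{lem_small_radical_bound} (applicable because $d<n/2$ ensures $2d<n$) bounds the number of relevant $\overline f$ by $q^{2d+o(n)}$, and multiplying yields $q^{2d+\frac n2-\frac{n^2}{4d}+o(n)}$, the second term. Combining the two contributions and absorbing constants into the $O(\cdot)$ completes the proof.

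The argument is essentially bookkeeping; the only genuine external input is the finite-field orthogonal short-interval estimate of \cite{GR21}. The points requiring care are: checking that the index-$2$ passage $O_n^\epsilon\to SO_n^\epsilon$ only multiplies probabilities by a bounded constant and so does not degrade the power saving; handling the parity of $n$, which shifts the half-degree $\lceil\frac{n-1}{2}\rceil$ only by lower-order amounts absorbed into the constants and the $o(n)$; and using the hypothesis $d<n/2$ throughout so that both Theorem~\ref{theorem_probability_small_min_poly_son} and Lemma~\ref{lem_small_radical_bound} are available. I expect the elementary but fiddly translation between $\delta(B)$, $\deg\min B$, and $\deg\rad\overline f$ to be the most error-prone step, and the only real conceptual dependency to be the existence and exact shape of the orthogonal analogue of \cite[Theorem~6.21]{GR21}.
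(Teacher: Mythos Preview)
Your proposal is correct and follows essentially the same approach as the paper's own proof: condition on $\charc\overline{A}$, invoke the Gorodetsky--Rodgers orthogonal short-interval estimate (this is \cite[Theorem~7.9]{GR21}) together with the index-$2$ passage $O_n^\epsilon\to SO_n^\epsilon$, then sum over $\mathcal{G}$ using Lemma~\ref{lem_discrepancy_bound_interval_starting_from_f_mod_p_so}, Theorem~\ref{theorem_probability_small_min_poly_son}, and Lemma~\ref{lem_small_radical_bound}. Your care about the hypothesis $d<n/2$ (needed for $2d<n$ in Lemma~\ref{lem_small_radical_bound}) is well placed; the paper's statement reads $d<n$ but the argument, as you note, actually uses $d<n/2$, consistent with the surrounding section.
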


\begin{proof}
Let $g\in\mathcal{G}$. As in the proof of Lemma \ref{lem_average_discrepancy_bound_gln}, write 

\begin{equation*}
\Pr[\charc A\in I(g,n-d)]=q^{-(k-1)d}\Pr[\charc\overline{A}\in I(\overline{g},n-d)]+
\sum_{\overline{f}\in I(\overline{g},n-d)}\Delta_{\overline{f},k}^g\cdot \Pr[\charc\overline{A}=\overline{f}].
\end{equation*}
By \cite[Theorem 7.9]{GR21} we have 

$$
\Pr_{B\in O_n(\field_q)}[\charc B\in I(\overline{g},n-d)]=q^{-d}+O\left(
q^{\frac{n}{4}-\frac{n^2}{8d}}\left(1+\frac{1}{q^2-1}\right)^{n/2}\binom{n/2+2d-3}{n/2}
\right).
$$
Since $[O_n(\field_q):SO_n(\field_q)]=2$ we get the same bound for $SO_n(\field_q)$, therefore 

$$
\Pr[\charc\overline{A}\in I(\overline{g},n-d)]=q^{-d}+O\left(
q^{\frac{n}{4}-\frac{n^2}{8d}}\left(1+\frac{1}{q^2-1}\right)^{n/2}\binom{n/2+2d-3}{n/2}
\right).
$$
Hence, 

$$
|\Delta_g|<O\left(
q^{-(k-1)d+\frac{n}{4}-\frac{n^2}{8d}}\left(1+\frac{1}{q^2-1}\right)^{n/2}\binom{n/2+2d-3}{n/2}
\right)+\sum_{\overline{f}\in I(\overline{g},n-d)}\Delta_{\overline{f},k}^g\cdot \Pr[\charc\overline{A}=\overline{f}].
$$
Summing this over all $g\in\mathcal{G}$, and using Lemma \ref{lem_discrepancy_bound_interval_starting_from_f_mod_p_so}, we get that 

\begin{multline*}
\sum_{g\in\mathcal{G}}|\Delta_g|
<O\left(
q^{d+\frac{n}{4}-\frac{n^2}{8d}}\left(1+\frac{1}{q^2-1}\right)^{n/2}\binom{n/2+2d-3}{n/2}
\right)+\\
+\sum_{\substack{\overline{f}\in\field_q[x]\\
\delta(\deg\rad \overline{f})<d}}2\Pr_{B\in SO_{n}(\field_q)}[\delta(\deg\min(B))<d|\charc(B)=\overline{f}]\Pr[\charc{B}=\overline{f}].
\end{multline*}
Now by Theorem \ref{theorem_probability_small_min_poly_son},

$$\Pr_{B\in SO_{n}(\field_q)}[\delta(B)<d|\charc(B)=\overline{f}]\Pr[\charc{B}=\overline{f}]<\Pr_{B\in SO_n(\field_q)}[\deg\min(B)<2d\cap \charc(B)=\overline{f}]<q^{\frac{n}{2}-\frac{n^2}{4d}+o(n)}.$$
By Lemma \ref{lem_small_radical_bound}, the number of polynomials in $\field_{q}[x]_{=n}^\mon$ with a radical of degree $<2d$ is $q^{2d+o(n)}$, hence we get the result.   
\end{proof}

\subsection{Joint distribution of traces}

As in the $GL_n$ case, the total variation distance of $\TR_d^{SO}$ from $\UTR_d$  may be computed as the limit of the total variation distance of the $\resring{k}$ random variables 
 $\TR_d^{SO,k}$, $\UTR_d^k$. We recall that by the discussion of \cite[Theorem 7.9]{GR21} (and using a correction as in the previous section to move from $O_n(\field_q)$ to $SO_n(\field_q)$), There is a constant $c_q^{SO}$ such that for $d<c_q^{SO}\cdot n$,

$$
q^{d+\frac{n}{4}-\frac{n^2}{8d}}\left(1+\frac{1}{q^2-1}\right)^{n/2}\binom{n/2+2d-3}{n/2}=o(1).
$$

\begin{theorem}
Let $d<\frac{n}{2}$, and let $\{a_i\}_{1\le i\le d}$ be a sequence of trace data. Let $\TR_d^{SO,k}$ be the random variable attaching to a matrix $M\in SO_{n}(\resring{k})$ its trace data up to $d$, and let $\UTR_d^k$ be the uniform measure on trace sequences, as defined in Section \ref{section_trace_data_poly_intervals}. Then,

\begin{multline*}
d_{TV}(\mu_{\TR_d^{SO,k}},\mu_{\UTR_d^k})=\sum_{\{a_i\}_{1\le i\le d}\text{ trace data}}
\Bigg|\Pr_{M\in SO_{n}(\resring{k})}[M\text{ has trace data }\{a_i\}_{1\le i\le d}]-q^{-(kd-S(d,k))}
\Bigg|<\\
<O\left(q^{d+\frac{n}{4}-\frac{n^2}{8d}}\left(1+\frac{1}{q^2-1}\right)^{n/2}\binom{n/2+2d-3}{n/2}+q^{2d+\frac{n}{2}-\frac{n^2}{4d}+o(n)}
    \right).
\end{multline*}
\end{theorem}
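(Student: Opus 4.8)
The plan is to follow the proof of Theorem~\ref{theorem_mod_pk_joint_traces} verbatim, substituting the orthogonal analogues of each ingredient. First I would invoke Lemma~\ref{lem_connection_traces_charpoly}: for a fixed trace datum $\{a_i\}_{1\le i\le d}$ there is a collection of monic degree-$n$ polynomials $\{g_j\}_{j=1}^{q^{S(d,k)}}\subset(\resring{k})[x]$, pairwise inequivalent modulo $R_{1,d,\pi^k}$ (i.e.\ lying in distinct intervals of length $n-d-1$), such that
$$
\Pr_{M\in SO_n(\resring{k})}[M\text{ has trace datum }\{a_i\}]=\sum_{j=1}^{q^{S(d,k)}}\Pr_{M\in SO_n(\resring{k})}[\charc(M)\in I(g_j,n-d-1)]=q^{S(d,k)-kd}+\sum_{j=1}^{q^{S(d,k)}}\Delta_{g_j},
$$
where $\Delta_g=\Pr[\charc(M)\in I(g,n-d-1)]-q^{-kd}$ is the interval discrepancy introduced before Lemma~\ref{lem_discrepancy_bound_single_interval_son} (here one uses $n-d-1\ge n-d$, enlarging $d$ by one if needed, which only affects constants). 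Summing over all trace data and applying the triangle inequality gives
$$
d_{TV}(\mu_{\TR_d^{SO,k}},\mu_{\UTR_d^k})=\sum_{\{a_i\}}\Bigl|\Pr[M\text{ has trace datum }\{a_i\}]-q^{-(kd-S(d,k))}\Bigr|\le\sum_{g\in\mathcal{G}}|\Delta_g|,
$$
where $\mathcal{G}$ is a full set of representatives for monic degree-$n$ polynomials in $(\resring{k})[x]$ modulo equality in the $d$ next-to-leading coefficients, since each trace datum's family $\{g_j\}$ consists of distinct intervals and the families for different data are disjoint.

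Second, I would simply quote Lemma~\ref{lem_discrepancy_bound_single_interval_son}, which bounds $\sum_{g\in\mathcal{G}}|\Delta_g|$ by exactly the claimed error term
$$
O\!\left(q^{d+\frac{n}{4}-\frac{n^2}{8d}}\left(1+\tfrac{1}{q^2-1}\right)^{n/2}\binom{n/2+2d-3}{n/2}+q^{2d+\frac{n}{2}-\frac{n^2}{4d}+o(n)}\right),
$$
and this finishes the theorem. (The passage to $k\to\infty$, yielding the $p$-adic statement and the Diaconis--Shahshahani analogue, is then handled by Lemma~\ref{lem_total_variation_dist_limit} exactly as in Section~\ref{section_gln_main}.)

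In fact there is essentially no new content at this level: all the work has already been done upstream, namely (a) the computation of $\img(\partial\charc_{A_0})$ in Theorem~\ref{theorem_derivative_image_so}, giving that the image is the palindromic multiples of $\charc\overline{A_0}/\min\overline{A_0}$; (b) the one-step equidistribution Lemma~\ref{lem_one_step_son}, which rests on the orthogonality relation Lemma~\ref{lem_hayes_character_sum_over_palindromic_polys} for Hayes characters of palindromic polynomials; (c) the propagation Propositions~\ref{prop_deltabk_large_minpoly_so}, \ref{prop_deltabk_small_minpoly_son}, \ref{prop_summing_deltabbk_over_all_g_so}; (d) the finite-field input of Gorodetsky--Rodgers \cite[Theorem~7.9]{GR21} together with the index-$2$ reduction from $O_n$ to $SO_n$; and (e) the small-radical and small-minimal-polynomial counts, Lemma~\ref{lem_small_radical_bound} and Theorem~\ref{theorem_probability_small_min_poly_son}. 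So the present statement is a direct corollary of Lemmas~\ref{lem_connection_traces_charpoly} and \ref{lem_discrepancy_bound_single_interval_son}, and the only thing to be careful about is the bookkeeping converting the Hayes-class count $q^{S(d,k)}$ into the trace-datum count $q^{kd-S(d,k)}$ and checking that the $d$ versus $d+1$ shift in the interval length is harmless. The main obstacle, then, is not in this final step at all but lies in part (a)--(b): establishing that $\partial\charc_{A_0}$ has full palindromic image and that this image equidistributes in short intervals via the palindromic Hayes-character orthogonality — and that has already been carried out in Theorem~\ref{theorem_derivative_image_so} and Lemma~\ref{lem_one_step_son}.

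\begin{proof}
By Lemma~\ref{lem_connection_traces_charpoly}, for each trace datum $\mathfrak{a}=\{a_i\}_{1\le i\le d}$ there are monic polynomials $\{g_j\}_{j=1}^{q^{S(d,k)}}\subset(\resring{k})[x]$ of degree $n$, with the intervals $\{I(g_j,n-d-1)\}$ pairwise disjoint, such that
$$
\Pr_{M\in SO_n(\resring{k})}[M\text{ has trace datum }\mathfrak{a}]=\sum_{j=1}^{q^{S(d,k)}}\Pr_{M\in SO_n(\resring{k})}[\charc(M)\in I(g_j,n-d-1)].
$$
Writing $\Delta_{g}=\Pr[\charc(M)\in I(g,n-d-1)]-q^{-kd}$ and using $|\mathcal{G}_{\mathfrak{a}}|=q^{S(d,k)}$, where $\mathcal{G}_{\mathfrak a}=\{g_j\}$, we get
$$
\Pr_{M\in SO_n(\resring{k})}[M\text{ has trace datum }\mathfrak{a}]=q^{S(d,k)-kd}+\sum_{g\in\mathcal{G}_{\mathfrak a}}\Delta_g=q^{-(kd-S(d,k))}+\sum_{g\in\mathcal{G}_{\mathfrak a}}\Delta_g.
$$
Hence
$$
\Bigl|\Pr_{M\in SO_n(\resring{k})}[M\text{ has trace datum }\mathfrak{a}]-q^{-(kd-S(d,k))}\Bigr|\le\sum_{g\in\mathcal{G}_{\mathfrak a}}|\Delta_g|.
$$
Summing over all trace data $\mathfrak a$, and noting that the families $\mathcal{G}_{\mathfrak a}$ are pairwise disjoint and their union is contained in a set $\mathcal{G}$ of representatives for monic degree-$n$ polynomials in $(\resring{k})[x]$ modulo equality in the $d$ next-to-leading coefficients, we obtain
$$
d_{TV}(\mu_{\TR_d^{SO,k}},\mu_{\UTR_d^k})=\sum_{\mathfrak a}\Bigl|\Pr_{M\in SO_n(\resring{k})}[M\text{ has trace datum }\mathfrak{a}]-q^{-(kd-S(d,k))}\Bigr|\le\sum_{g\in\mathcal{G}}|\Delta_g|.
$$
By Lemma~\ref{lem_discrepancy_bound_single_interval_son} (applied with $d+1$ in place of $d$, which changes only the implicit constants),
$$
\sum_{g\in\mathcal{G}}|\Delta_g|=O\!\left(q^{d+\frac{n}{4}-\frac{n^2}{8d}}\left(1+\tfrac{1}{q^2-1}\right)^{n/2}\binom{n/2+2d-3}{n/2}+q^{2d+\frac{n}{2}-\frac{n^2}{4d}+o(n)}\right),
$$
which is the claimed bound.
\end{proof}
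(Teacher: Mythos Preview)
Your proposal is correct and matches the paper's approach exactly: the paper itself omits the proof, stating only that it is ``essentially the same as the analogous one for $GL_n(\Ol)$,'' which is precisely the reduction you carry out---invoking Lemma~\ref{lem_connection_traces_charpoly} and then Lemma~\ref{lem_discrepancy_bound_single_interval_son}. Your remark about the harmless $d$ versus $d+1$ shift in the interval length, and your enumeration of the upstream ingredients (a)--(e), are accurate.
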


\begin{proof}
The proof is omitted, as it is essentially the same as the analogous one for $GL_n(\Ol)$. Note that depending on $q$, $SO_n(\Ol)$ may reduce to either $SO_n^+(\field_q)$ or $SO_n^-(\field_q)$, which is why we treated both groups.
\end{proof}

By taking $k\to\infty$ and using Lemma \ref{lem_total_variation_dist_limit} we get as a consequence Theorem \ref{main_theorem_so}. As a corollary we get

\begin{theorem}
Fix $d>0$, and let $B_1,\ldots,B_d$ be fixed balls in $\mathcal{O}$. Let $Z_i\sim \frac{1}{|i|_p}U_i$, where $U_i$ are independent uniform random variables on $\mathcal{O}$. Then, 

$$
\lim_{n\to\infty}\Pr_{M\in SO_n(\Ol)}\left[
\left(
\tr(M^i)-1_{p|i}\sigma(\tr(M^{i/p}))\in B_i
\right)_{i=1}^d
\right]=\prod_{i=1}^d \Pr[Z_i\in B_i].
$$
\end{theorem}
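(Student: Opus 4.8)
The plan is to read off this statement as an immediate corollary of Theorem~\ref{main_theorem_so}. First I would note that, by the very definition of the trace datum $\TR^d(M)=(\tr(M^i)-1_{p|i}\sigma(\tr(M^{i/p})))_{i=1}^d$ attached to a matrix $M\in SO_n(\Ol)$, the event appearing in the statement is exactly $\{\TR_d^{SO}\in A\}$ with $A=B_1\times\cdots\times B_d\subset\Ol^d$. Each $B_i$ is a ball in $\Ol$, hence clopen, so $A$ is a Borel measurable subset of $\Ol^d$ and the quantity $\Pr_{M\in SO_n(\Ol)}[\TR_d^{SO}\in A]$ is controlled by the total variation distance $d_{TV}(\mu_{\TR_d^{SO}},\mu_{\UTR_d})$.

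Next I would invoke Theorem~\ref{main_theorem_so}. Since here $d$ is fixed, the hypothesis $d<\tfrac n2$ (and more strongly $d<c_q^{SO}\cdot n$) holds for all sufficiently large $n$, and the error term there, namely $q^{d-n^2/(8d)+n/4}(1+\tfrac1{q^2-1})^{n/2}\binom{n/2+2d-1}{n/2}+q^{2d-n^2/(4d)+n/2+o(n)}$, tends to $0$ as $n\to\infty$, because the super-exponentially small Gaussian factors $q^{-n^2/(8d)}$ and $q^{-n^2/(4d)}$ dominate the polynomial-in-$n$ binomial coefficient and the fixed powers of $q$. Hence $d_{TV}(\mu_{\TR_d^{SO}},\mu_{\UTR_d})\to 0$, and by the definition of total variation distance this gives $\bigl|\Pr[\TR_d^{SO}\in A]-\mu_{\UTR_d}(A)\bigr|\to 0$ uniformly over all Borel $A$, in particular for our product set.

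It then remains to identify the limit. By definition $\UTR_d=(u_i)_{i=1}^d$ where the $u_i\sim\tfrac1{|i|_p}U(\Ol)$ are independent and $U(\Ol)$ is the Haar-uniform random variable on $\Ol$; this is precisely the joint law of $(Z_i)_{i=1}^d$. Therefore $\mu_{\UTR_d}(B_1\times\cdots\times B_d)=\prod_{i=1}^d\Pr[Z_i\in B_i]$ by independence, and combining this with the previous paragraph yields the claimed limit.

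I do not expect any real obstacle: all the substantive work is already packaged in Theorem~\ref{main_theorem_so} (which in turn rests on Lemma~\ref{lem_total_variation_dist_limit}, Lemma~\ref{lem_connection_traces_charpoly}, Theorem~\ref{theorem_derivative_image_so}, Theorem~\ref{theorem_probability_small_min_poly_son}, and the finite-field input \cite[Theorem 7.9]{GR21}). The only small points to spell out are that the event in question is genuinely of the form $\{\TR_d^{SO}\in A\}$ for Borel $A$ — immediate from the definition of the trace datum together with the fact that balls in $\Ol$ are clopen — and that $SO_n(\Ol)$ reduces modulo $\pi$ to one of $SO_n^\pm(\field_q)$ depending on $q$ and $n$, which is exactly why Theorem~\ref{main_theorem_so} was established for both signs and so requires no further comment here.
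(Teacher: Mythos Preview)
Your proposal is correct and matches the paper's approach: the paper states this theorem immediately after establishing Theorem~\ref{main_theorem_so} with the phrase ``As a corollary we get'' and no further argument, so your spelling-out of why it follows from the total variation bound is exactly what is intended.
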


\section{Traces and characteristic polynomial for $U_n$}\label{section_un_main}

We recall that we fixed an unramified quadratic extension $\mathcal{K}/\mathcal{F}$ with a ring of integers $\mathcal{R}$ and that the unitary group $U_n(\Ol)$ is a subgroup of $GL_n(\mathcal{R})$.

\subsection{The image of $\partial\charc_{A_0}$}

As we saw in Lemma \ref{lem_charpoly_step_in_p_direction}, the map $\partial\charc_{A_0}:\mathfrak{u}_n(\field_q)\to \field_{q^2}[x]$, defined by $\partial\charc_{A_0}(A_1)=\tr(\Adj(x-A_0)A_0A_1)$,
is essential to understanding the distribution of the characteristic polynomial. We now compute its image explicitly.

\begin{lem}\label{lem_image_of_direct_sum_un}
Let $A\in U_n(\field_q)$, $B\in U_m(\field_q)$. Then 
$$\img(\partial\charc_{A\oplus B})=\mathrm{span}_{\field_q}(\charc(B)\cdot \img(\partial\charc_{A}),\charc(A)\cdot \img(\partial\charc_B)).$$
\end{lem}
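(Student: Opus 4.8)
The plan is to follow the template of Lemma~\ref{lemma_derivative_of_direct_sums} (the $GL_n$ case) and Lemma~\ref{lemma_image_of_diagonal_sum_symplectic} (the symplectic case): since both $\Adj(x-(A\oplus B))$ and $A\oplus B$ are block-diagonal, only the two diagonal blocks of a matrix $A_1\in\mathfrak{u}_{n+m}(\field_q)$ will contribute to the trace defining $\partial\charc_{A\oplus B}(A_1)$, and the off-diagonal blocks can be ignored.

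First I would write $A_1\in\mathfrak{u}_{n+m}(\field_q)$ in block form $A_1=\begin{pmatrix} X & Y\\ Z & W\end{pmatrix}$ with $X$ of size $n$ and $W$ of size $m$, and record that the defining condition $A_1^{*}=-A_1$ unravels, via $A_1^{*}=\begin{pmatrix}X^{*} & Z^{*}\\ Y^{*} & W^{*}\end{pmatrix}$, into $X^{*}=-X$, $W^{*}=-W$ and $Z=-Y^{*}$. Hence $X$ ranges over $\mathfrak{u}_n(\field_q)$ and $W$ over $\mathfrak{u}_m(\field_q)$ freely and independently, while $Y$ is arbitrary and irrelevant for what follows.

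Next I would compute, using $\Adj(M)=\det(M)M^{-1}$ and $\charc(A\oplus B)=\charc(A)\charc(B)$,
\[
\Adj(x-(A\oplus B))=\charc(A)\charc(B)\begin{pmatrix}(x-A)^{-1} & 0\\ 0 & (x-B)^{-1}\end{pmatrix}=\begin{pmatrix}\charc(B)\Adj(x-A) & 0\\ 0 & \charc(A)\Adj(x-B)\end{pmatrix},
\]
and likewise $(A\oplus B)A_1=\begin{pmatrix}AX & AY\\ BZ & BW\end{pmatrix}$. Multiplying the two, the $(1,1)$ and $(2,2)$ blocks of the product are $\charc(B)\Adj(x-A)AX$ and $\charc(A)\Adj(x-B)BW$, while the off-diagonal blocks (built from $Y,Z$) lie off the diagonal and so do not affect the trace. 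Taking traces yields
\[
\partial\charc_{A\oplus B}(A_1)=\charc(B)\,\tr\bigl(\Adj(x-A)AX\bigr)+\charc(A)\,\tr\bigl(\Adj(x-B)BW\bigr)=\charc(B)\,\partial\charc_A(X)+\charc(A)\,\partial\charc_B(W).
\]

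Finally, letting $X$ and $W$ vary over all of $\mathfrak{u}_n(\field_q)$ and $\mathfrak{u}_m(\field_q)$ shows $\img(\partial\charc_{A\oplus B})=\charc(B)\img(\partial\charc_A)+\charc(A)\img(\partial\charc_B)$, which is exactly the asserted $\field_q$-span (each summand being a $\field_q$-subspace). There is no serious obstacle; the only point deserving a line of care is the unravelling of the block Hermitian condition in the first step, to be sure the diagonal blocks really do exhaust the two Lie subalgebras $\mathfrak{u}_n(\field_q)$ and $\mathfrak{u}_m(\field_q)$ independently.
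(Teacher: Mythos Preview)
Your proof is correct and follows essentially the same approach as the paper's: write $A_1$ in block form, use the block-diagonal structure of $\Adj(x-(A\oplus B))(A\oplus B)$ to see only the diagonal blocks contribute to the trace, and read off the formula $\partial\charc_{A\oplus B}(A_1)=\charc(B)\partial\charc_A(X)+\charc(A)\partial\charc_B(W)$. If anything, your write-up is slightly more careful than the paper's in explicitly unravelling the block Hermitian condition to check that $X$ and $W$ vary freely and independently over $\mathfrak{u}_n(\field_q)$ and $\mathfrak{u}_m(\field_q)$.
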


\begin{proof}
Let $C_1\in \mathfrak{u}_{n+m}(\field_q)$, and write $C_1$ as a block matrix 
$
C_1=\left(
\begin{array}{cc}
    X & Y \\
    Z & W
\end{array}
\right)
$.
We have 

\begin{multline*}
\partial\charc_{A\oplus B}(C_1)=\tr\left(
\Adj\left(
x-\left(
\begin{array}{cc}
    A &  \\
     & B 
\end{array}
\right)
\right)\left(
\begin{array}{cc}
    A &  \\
     & B 
\end{array}
\right)\left(
\begin{array}{cc}
    X & Y \\
    Z & W
\end{array}
\right)
\right)=\\
=\tr\left(
\left(
\begin{array}{cc}
    \charc(B)\Adj(x-A)A &  \\
     & \charc(A)\Adj(x-B)B
\end{array}
\right)\left(
\begin{array}{cc}
    X & Y \\
    Z & W
\end{array}
\right)
\right)=\\
=\charc(B)\tr(\Adj(x-A)AX)+\charc(A)\tr(\Adj(x-B)BW)=\charc(B)\partial\charc_{A}(X)+\charc(A)\partial\charc_{B}(W).
\end{multline*}
\end{proof}

\begin{claim}
Let $A_0\in U_n(\field_q)$ and let $A_1\in \mathfrak{u}_n(\field_q)$. Let $g_0=\charc(A_0)$, and let $j\in \field_{q^2}$ be an element such that $\tr_{\field_{q^2}/\field_q}(j)=0$. Define $J_{A_0}(x)=J(x)=j(g_0-x^n+g_0(0))$. Then, the image of $\partial\charc_{A_0}$ is contained in $\field_q\cdot J_{A_0}(x)\oplus \field_{q^2}[x]_{<n}^{g_0(0)-\skpal}$.
\end{claim}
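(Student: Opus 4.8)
The plan is to use the fact, recalled in Section~\ref{sec_unitary_generalities}, that the characteristic polynomial of a unitary matrix is self-skew-reciprocal, and to differentiate this constraint once along a lift. First I would lift $A_0$ to $\widetilde{A_0}\in U_n(\resringr{2})$, which is possible by Lemma~\ref{lem_PRR_reductive_surjective}, and form $A=\widetilde{A_0}(I+\pi A_1)\in U_n(\resringr{2})$. By Lemma~\ref{lem_charpoly_step_in_p_direction} we have $\charc(A)=\charc(\widetilde{A_0})-\pi h$ with $h:=\partial\charc_{A_0}(A_1)\in\field_{q^2}[x]_{<n}$, and both $\charc(A)$ and $\charc(\widetilde{A_0})$ are self-skew-reciprocal over $\resringr{2}$ --- this is the same formal computation as over $\field_q$ (using $M^{-1}=M^{*}$ together with $\tau(\pi)=\pi$, which holds because $\mathcal K/\mathcal F$ is unramified). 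Comparing the $\pi$-linear parts of the two self-skew-reciprocity identities, and writing $\gamma:=g_0(0)$ (which has norm $1$) and $\overline b_\ell:=[x^\ell]g_0$, yields an explicit system of linear relations on the coefficients of $h$, namely
\[
h_{n-\ell}^{\,q}=\gamma^{q}h_\ell+h_0^{\,q}\,\overline b_\ell\qquad(0\le\ell\le n),
\]
where I also use that self-skew-reciprocity of $\charc(\widetilde A_0)$ supplies $\overline b_{n-\ell}^{\,q}=\gamma^{q}\overline b_\ell$.

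Next I would read off the structure of this system. The relation at $\ell=0$, using $h_n=0$, becomes $\gamma^{q}h_0+\gamma h_0^{\,q}=0$, i.e.\ $\tr_{\field_{q^2}/\field_q}(\gamma^{q}h_0)=0$; this is the only constraint it imposes on the free coefficient $h_0$. I then set $c:=h_0/(2j\gamma)$ and check that $c\in\field_q$: applying Frobenius and using $j^{q}=-j$ (which is exactly $\tr_{\field_{q^2}/\field_q}(j)=0$) one gets $c^{q}=-h_0^{\,q}/(2j\gamma^{q})$, and $c=c^{q}$ holds precisely because $\gamma^{q}h_0+\gamma h_0^{\,q}=0$; in particular $c$ also equals $-h_0^{\,q}/(2j\gamma^{q})$. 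Finally I would verify that $h-cJ_{A_0}$ is $\gamma$-skew-palindromic of degree $<n$ by checking the defining identities $\bigl([x^{n-\ell}](h-cJ_{A_0})\bigr)^{q}=\gamma^{q}[x^\ell](h-cJ_{A_0})$ coefficient by coefficient. For $1\le\ell\le n-1$, where $[x^\ell]J_{A_0}=j\overline b_\ell$, the identity reduces --- after substituting the system above and $\overline b_{n-\ell}^{\,q}=\gamma^{q}\overline b_\ell$ --- to the single scalar equation $h_0^{\,q}+2cj\gamma^{q}=0$, which holds by the choice of $c$; the two boundary coefficients $x^0$ and $x^n$ are handled directly, using $[x^n]h=[x^n]J_{A_0}=0$, $[x^0]J_{A_0}=2j\gamma$, and $h_0=2cj\gamma$. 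This gives $h\in\field_q\cdot J_{A_0}\oplus\field_{q^2}[x]_{<n}^{\gamma-\skpal}$; the sum is genuinely direct since $J_{A_0}$ has nonzero free coefficient $2j\gamma$ whereas every $\gamma$-skew-palindromic polynomial of degree $<n$ has vanishing free coefficient (its $\ell=0$ relation forces $0=\gamma^{q}f_0$).

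The main obstacle --- and the only point where the hypotheses $\tr_{\field_{q^2}/\field_q}(j)=0$ and $N(g_0(0))=1$ are genuinely used --- is showing that the scalar $c$ lands in $\field_q$ rather than merely in $\field_{q^2}$; this is what forces the single ``exceptional direction'' $\field_q\cdot J_{A_0}$ into the statement, and it is exactly where the Frobenius-antisymmetry of $j$ gets matched against the constraint on $h_0$ coming from self-skew-reciprocity in the free coefficient. Everything else is a careful but routine coefficient computation, the only subtlety being that the formula for $[x^\ell]J_{A_0}$ at $\ell=0$ differs from the ``bulk'' formula $j\overline b_\ell$, so the boundary coefficients must be checked by hand.
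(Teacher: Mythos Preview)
Your proof is correct and follows essentially the same approach as the paper: lift to $U_n(\resringr{2})$, use that both $\charc(A)$ and $\charc(\widetilde{A_0})$ are self-skew-reciprocal, and extract the $\pi$-linear constraint. The only notable difference is that you derive the trace-zero condition on the free coefficient $h_0$ (namely $\gamma^{q}h_0+\gamma h_0^{q}=0$) purely from the $\ell=0$ instance of your coefficient relations, whereas the paper obtains it by computing $g_1(0)/g_0(0)=\tr(A_1)$ from the explicit formula $\partial\charc_{A_0}(A_1)=\tr(\Adj(x-A_0)A_0A_1)$ and invoking $A_1\in\mathfrak{u}_n(\field_q)$; both routes give the same one-dimensional $\field_q$-line for the exceptional direction, and the remaining verification that $h-cJ_{A_0}$ is $g_0(0)$-skew-palindromic is the same computation carried out coefficientwise versus at the polynomial level.
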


\begin{proof}
To compute the symmetries of the image, we work over $\resring{2}$. Abusing notation slightly we let $A_0\in U_n(\resring{2})$ be a lift of $A_0$, and write $A=A_0+\pi A_0A_1$. By Lemma \ref{lem_charpoly_step_in_p_direction}, we get
$$\charc(A)=\charc(A_0)+\pi\partial\charc_{A_0}(A_1).$$
Let $g_0=\charc(A_0)$, $g_1=\partial\charc_{A_0}(A_1)$. We have $\widetilde{\charc(A)}=\charc(A)$, that is, 
\begin{equation}\label{eqn_tilde_explicit}
\frac{g_0^*(x)+\pi g_1^*}{\sigma(g_0(0))+\pi \sigma(g_1(0))}=g_0+\pi g_1.    
\end{equation}

We now focus on the LHS of \eqref{eqn_tilde_explicit}. Note that $\frac{1}{\sigma(g_0(0))+\pi\sigma(g_1(0))}=\frac{1}{\sigma(g_0(0))}-\pi\frac{\sigma(g_1)(0)}{\sigma(g_0)(0)^2}
$. Hence, LHS of \eqref{eqn_tilde_explicit} is 
$$
\tilde{g_0}+\pi\left(\frac{g_1^*}{\sigma(g_0(0))}-
\tilde{g_0}\frac{\sigma(g_1(0))}{\sigma(g_0(0))}
\right).
$$
Using the fact that $\frac{g_0^*}{\sigma(g_0(0))}=\tilde{g_0}=g_0$, we get that the LHS of \eqref{eqn_tilde_explicit} is actually 
$$
g_0+\pi\left(
\frac{g_1^*}{\sigma(g_0(0))}-
g_0\frac{\sigma(g_1(0))}{\sigma(g_0(0))}
\right).
$$
Substituting this back into equation \eqref{eqn_tilde_explicit} we get (again by abusing notation and moving to mod $\pi$)
$$
\left(\frac{g_1^*}{\sigma(g_0(0))}-
g_0\frac{\sigma(g_1(0))}{\sigma(g_0(0))}
\right)=g_1.
$$

Here $g_1(x)=\tr(A_1)\charc(A_0)-x\tr(\Adj(x-A_0)A_1)$, hence $g_1(0)/g_0(0)=\tr(A_1)$ and so $\sigma(g_1(0))/\sigma(g_0(0))=-\tr(A_1)$. Note that since $\tr(A_1)$ is an element of $\field_{q^2}$ with $\tr_{\field_{q^2}/\field_{q}}(\tr(A_1))=0$, we can write $\tr(A_1)=\alpha j$ for $\alpha\in\field_q$. Write $g_1=\alpha jg_0+f$.  The last equation above becomes  
$$
\frac{f^*}{\sigma(g_0(0))}=-\alpha jg_0+\frac{f^*}{\sigma(g_0(0))}+\alpha jg_0=\alpha jg_0+f.
$$
Hence, recalling that $\frac{g_0^*}{\sigma(g_0)}=g_0$, we get that
$$
\frac{(f+\frac{1}{2}\alpha jg_0)^*}{\sigma(g_0(0))}=f+\frac{1}{2}\alpha jg_0.
$$
That means that $f+\frac{1}{2}\alpha jg_0$ is $g_0(0)$-skew palindromic (w.r.t. $n$). We note that since $g_1$ has degree $<n$, and $g_0$ is monic, $f+\frac{1}{2}\alpha jg_0$ is of degree precisely $n$ and has leading coefficient $-\frac{1}{2}\alpha j$. Hence, $g_1=h_{\mathrm{temp}}+\frac{1}{2}\alpha jg_0$ where $h_{\mathrm{temp}}$ is in $\field_{q^2}[x]_{=n}^{g_0(0)-\skpal}$ has leading coefficient $-\frac{1}{2}\alpha j$. Since $h_{\mathrm{temp}}$ is $g_0(0)$-skew palindromic, this means that its free coefficient is $\frac{1}{2}\alpha jg_0(0)$. Write $h_{\mathrm{temp}}=-\frac{1}{2}\alpha jx^n+\frac{1}{2}\alpha j g_0(0)+h$, where $h$ is in $\field_{q^2}[x]^{g_0(0)-\skpal}_{<n}$. Then, we eventually get that $g_1=h+\frac{1}{2}\alpha j(g_0-x^n+g_0(0))$. Since $\alpha$ is arbitrary, we get the result. 
\end{proof}

From now on, $A_0$ (and hence $g_0$) are fixed; thus, whenever we speak about a skew palindromic polynomial, it will be $g_0(0)$-skew palindromic. Therefore we use the notations $\field_{q^2}[x]_{<n}^{\skpal}$, etc. instead of the more detailed but longer notation. Now we will compute the image of $\partial\charc_{A_0}$ inside $\field_q\cdot J_{A_0}(x)\oplus \field_{q^2}[x]_{<n}^{\skpal}$. We first do this for matrices $A_0\in U_n(\field_q)\cap GL_n(\field_q)$ and their conjugates (as for the other groups, the image of $\mathcal{L}_{A_0}$ is invariant under conjugation of $A_0$). 

\begin{claim}
    Let $A_0\in U_n(\field_q)\cap GL_n(\field_q)$ be a matrix with $\charc(A_0)=\min(A_0)$. Then, 
    $$\img(\partial\charc_{A_0})=\field_q\cdot J_{A_0}(x)\oplus \field_{q^2}[x]_{<n}^{\skpal}.$$ 
\end{claim}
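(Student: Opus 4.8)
The strategy mirrors the arguments already used for $GL_n$, $Sp_{2n}$ and $SO_n$: reduce to a ``primary'' building block (a single Jordan block, here with $\charc = \min$), and there compute the image by exhibiting enough linearly independent elements. First I would record the dimension count. By the previous claim we already know $\img(\partial\charc_{A_0}) \subseteq \field_q\cdot J_{A_0}(x)\oplus \field_{q^2}[x]_{<n}^{\skpal}$. The space $\field_{q^2}[x]_{<n}^{\skpal}$ has $\field_q$-dimension $n$ (by Claim \ref{claim_structure_of_skpal} it is $\beta$ times $\field_{q^2}[x]_{<n}^{*-\sym}$, and by Claim \ref{claim_structure_of_start_sym} the latter has $\field_q$-dimension $n$ — counting the free choice of $h\in\field_{q^2}[x]_{\le\lfloor (n-1)/2\rfloor}$ with $h(0)=0$, plus one more parameter $\alpha\in\field_q$ when $n$ is even). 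Adding the one-dimensional $\field_q\cdot J_{A_0}$ gives target dimension $n+1$. On the other hand $\dim_{\field_q}\mathfrak{u}_n(\field_q) = n^2$, and $\partial\charc_{A_0}$ is $\field_q$-linear, so the statement is equivalent to $\mathrm{rank}_{\field_q}(\partial\charc_{A_0}) = n+1$, i.e. to producing $n+1$ independent vectors in the image. Since the rank is unchanged under the base change to $\field_{q^{2l}}$ in which $\charc(A_0)$ splits (as in the proof of Theorem \ref{theorem_image_char_derivative_gl_n}), I may assume $A_0$ is conjugate over the big field to a single Jordan block $J_n(\alpha)$ with $\alpha\tau(\alpha)=1$; conjugation by $GL_n$ preserves $\mathfrak{gl}_n$ and hence does not change $\img(\partial\charc_{A_0})$ for the purposes of the rank computation, and Lemma \ref{lem_image_of_direct_sum_un} handles the reduction from a general conjugacy class to a primary one exactly as in the $GL_n$ case.

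With $A_0 = J_n(\alpha)$, I would use Lemma \ref{lem_gl_jordan_block_adj} to write $\Adj(x-A_0) = X_\alpha^n$ explicitly, and then compute $\partial\charc_{A_0}(A_1) = \tr(X_\alpha^n A_0 A_1)$ as $A_1$ ranges over $\mathfrak{u}_n(\field_q)$. Since $\mathfrak{u}_n(\field_q) = \{M : M^* = -M\}$ is an $\field_q$-form of $\mathfrak{gl}_n$ of full $\field_{q^2}$-rank $n^2$ over $\field_q$, and $X_\alpha^n A_0$ is an upper-triangular matrix whose $(i,j)$ entry is a fixed nonzero scalar multiple of $(x-\alpha)^{n-1-(j-i)}$ (a ``Toeplitz'' shape), the map $A_1\mapsto \tr(X_\alpha^n A_0 A_1)$ already surjects onto $\field_{q^2}[x]_{<n}$ when the domain is all of $\mathfrak{gl}_n$; restricting to the real form $\mathfrak{u}_n$ cuts the $\field_q$-image down to the ``real points'' of $\field_{q^2}[x]_{<n}$ relative to the skew-palindromic involution, which is precisely $\field_q\cdot J_{A_0} \oplus \field_{q^2}[x]_{<n}^{\skpal}$. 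Concretely, for each $0\le k\le n-1$ I would pick an explicit $A_1 = A_1(k,c)\in\mathfrak{u}_n(\field_q)$ supported on a single super/sub-diagonal pair so that $\partial\charc_{A_0}(A_1)$ has leading behaviour $c\,(x-\alpha)^k$ modulo lower order (in the $(x-\alpha)$-adic sense), with $c$ ranging over a set spanning the relevant $\field_q$-subspace of $\field_{q^2}$; the skew-palindromy constraint $M^*=-M$ forces $c$ into a one-dimensional $\field_q$-subspace for ``paired'' diagonals and into $\field_q\cdot j$ for the ``central'' contribution, which is exactly the source of the extra $J_{A_0}$-direction. Triangularity of $X_\alpha^n A_0$ guarantees that the vanishing orders at $\alpha$ are distinct, so these $n+1$ elements are $\field_q$-linearly independent.

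The main obstacle I expect is the bookkeeping in this last step: one must verify that the image of the real form $\mathfrak{u}_n$ under $A_1\mapsto\tr(X_\alpha^n A_0 A_1)$ is *exactly* $\field_q\cdot J_{A_0}\oplus\field_{q^2}[x]_{<n}^{\skpal}$ and not a proper subspace, which requires matching the involution on $\mathfrak{gl}_n$ coming from $M\mapsto -M^*$ with the skew-palindromic involution $f\mapsto f^*/\tau(g_0(0))$ on $\field_{q^2}[x]_{<n}$ under the pairing $\langle A_1, f\rangle$. The cleanest way to do this is probably not to chase individual entries but to argue abstractly: the pairing $(A_1,f)\mapsto$ (coefficient extraction from $\tr(\Adj(x-A_0)A_0 A_1)$) identifies $\field_{q^2}[x]_{<n}$ with a quotient of $\mathfrak{gl}_n(\field_{q^2})$ on which the adjoint of ``$M\mapsto -M^*$'' is exactly the skew-palindromic involution (this is essentially what the preceding claim proves at the level of fixed spaces over $\resring 2$), so the $\field_q$-image of the fixed space $\mathfrak{u}_n = (\mathfrak{gl}_n)^{M\mapsto -M^*}$ is the fixed space of the adjoint involution, which by the preceding claim's description is $\field_q\cdot J_{A_0}\oplus\field_{q^2}[x]_{<n}^{\skpal}$. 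Combined with the dimension count this forces equality, and then Lemma \ref{lem_image_of_direct_sum_un} together with the Chinese remainder theorem for $\field_{q^{2l}}[x]$ (as in Theorem \ref{theorem_image_char_derivative_gl_n}) propagates the result to an arbitrary $A_0$.
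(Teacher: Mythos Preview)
Your proposal has two concrete problems and misses the simple idea that makes the paper's proof work in a few lines.

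\textbf{Dimension miscount.} The target space $\field_q\cdot J_{A_0}\oplus\field_{q^2}[x]_{<n}^{\skpal}$ has $\field_q$-dimension $n$, not $n+1$. Every $f\in\field_{q^2}[x]_{<n}^{\skpal}$ satisfies $f(0)=0$ (compare the $x^n$-coefficient of $f^*$ with the $x^n$-coefficient of $f$), so the constraint $a_k=\tau(a_{n-k})$ on the remaining coefficients leaves an $\field_q$-space of dimension $n-1$; adding the $J_{A_0}$-line gives $n$. Your own parametrization via Claim~\ref{claim_structure_of_start_sym} actually yields $n-1$ if you count carefully.

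\textbf{Conjugation does not preserve $\mathfrak{u}_n$.} You write that ``conjugation by $GL_n$ preserves $\mathfrak{gl}_n$ and hence does not change $\img(\partial\charc_{A_0})$''. That sentence is true for the $GL_n$ problem but irrelevant here: the domain is $\mathfrak{u}_n(\field_q)$, which is preserved only by $U_n$-conjugation, not by $GL_n(\field_{q^{2l}})$-conjugation. So you cannot pass to a single Jordan block $J_n(\alpha)$ without also replacing the Lie algebra by $g\mathfrak{u}_n g^{-1}$, after which the explicit entry computation you sketch no longer addresses the original map. Your base-change remark does not repair this: tensoring the $\field_q$-linear map $\partial\charc_{A_0}|_{\mathfrak{u}_n}$ with $\field_{q^2}$ lands in $\field_{q^2}[x]_{<n}\otimes_{\field_q}\field_{q^2}$, a $2n$-dimensional $\field_{q^2}$-space, and is \emph{not} the same map as $\partial\charc_{A_0}^{GL}:M_n(\field_{q^2})\to\field_{q^2}[x]_{<n}$.

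\textbf{What the paper does instead.} You never use the hypothesis $A_0\in GL_n(\field_q)$ (entries in the small field), and this is exactly the leverage. Because $A_0$ has $\field_q$-entries, restricting $A_1$ to $M_n(\field_q)$ recovers the $GL_n(\field_q)$ map of Theorem~\ref{theorem_image_char_derivative_gl_n}, whose image is all of $\field_q[x]_{<n}$. Now $M_n(\field_q)=\mathrm{sym}_n(\field_q)\oplus\mathrm{asym}_n(\field_q)$; one checks $\mathrm{asym}_n(\field_q)\subset\mathfrak{u}_n$ and $j\cdot\mathrm{sym}_n(\field_q)\subset\mathfrak{u}_n$ (where $\tr_{\field_{q^2}/\field_q}(j)=0$). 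The images of these two pieces live in $\field_q[x]_{<n}$ and $j\cdot\field_q[x]_{<n}$ respectively, which meet only in $0$, so
\[
\dim_{\field_q}\img\bigl(\partial\charc_{A_0}|_{\mathfrak{u}_n}\bigr)\ \ge\ \dim_{\field_q}\partial\charc_{A_0}(\mathrm{asym}_n)+\dim_{\field_q}\partial\charc_{A_0}(\mathrm{sym}_n)\ \ge\ \dim_{\field_q}\field_q[x]_{<n}=n,
\]
and equality with the containing $n$-dimensional space follows. Your final ``abstract involution'' paragraph is groping toward an equivalent statement (the decomposition $M_n(\field_{q^2})=\mathfrak{u}_n\oplus j\mathfrak{u}_n$ would also do it), but as written it is not a proof.
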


\begin{proof}
We note that by definition, $\partial\charc_{A_0}(A_1)=\tr(\Adj(x-A_0)A_0A_1)$. We note that if we put $A_1\in M_n(\field_q)$, we get the same result for this map as for the map defined for $GL_n(\field_q)$, which was discussed in Theorem \ref{theorem_image_char_derivative_gl_n}. There, we proved that the image has dimension $n$. We also note that $M_n(\field_q)$ is the direct sum of the symmetric matrices and anti-symmetric matrices over $\field_q$, $M_n(\field_q)=\mathrm{sym}_n(\field_q)\oplus \mathrm{asym}_n(\field_q)$. Note that if $A_1\in \mathrm{asym}_n(\field_q)$ then $A_1\in \mathfrak{u}_n(\field_q)$. Also, if $0\ne j\in \field_{q^2}$ is such that $\tr_{\field_{q^2}/\field_q}(j)=0$, and $A_1'\in \mathrm{sym}_n(\field_q)$, then $A_1=jA_1'\in \mathfrak{u}_n(\field_q)$. We get that $\dim \img(\partial\charc_{A_0})\ge n$, hence the result.  
\end{proof}

Now we prove our main result for this section.

\begin{theorem}\label{theorem_image_char_derivative_un}
Let $A_0\in U_n(\field_q)$. Then, $\img(\partial\charc_{A_0})=\field_q\cdot J_{A_0}(x)\oplus \frac{\charc(A_0)}{\min(A_0)}\cdot \field_{q^2}[x]_{<\deg{\min{A_0}}}^{\skpal}$.
\end{theorem}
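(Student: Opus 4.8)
The plan is to mimic the structure of the proofs of Theorem \ref{theorem_image_char_derivative_gl_n} and Theorem \ref{theorem_image_char_derivative_sp2n}, reducing the general case to primary matrices via a join/direct-sum decomposition and then computing the image for primary building blocks using the explicit adjoint matrices of Section \ref{section_explicit_rep_so} and Section \ref{subsection_explicit_representatives}. First I would observe, exactly as in the $GL_n$ case, that $\img(\partial\charc_{A_0})$ is invariant under conjugation of $A_0$ by $GL_n(\field_{q^2})$ (since conjugating $A_1$ by $g\in GL_n(\field_{q^2})$ still lands in $\mathfrak{gl}_n(\field_{q^2})$, and the relevant trace expression transforms by conjugating $A_0$), and that the rank of the $\field_q$-linear map $\partial\charc_{A_0}$ is unchanged under extension of scalars. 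Hence it suffices to prove the claim over an extension $\field_{q^{2l}}$ in which $\charc(A_0)$ splits completely, where $A_0$ becomes conjugate to a direct sum of unitary Jordan-type blocks. Since the ambient target $\field_q\cdot J_{A_0}(x)\oplus \frac{\charc(A_0)}{\min(A_0)}\field_{q^2}[x]_{<\deg\min A_0}^{\skpal}$ already contains the image by the previous claim, it is enough to match dimensions; the target has $\field_q$-dimension $1+2\deg\min(A_0)$ (one from the $J_{A_0}$ direction, and $2\deg\min A_0$ since the skew-palindromic polynomials of degree $<\deg\min A_0$ form an $\field_q$-vector space of that dimension, being cut out inside $\field_{q^2}[x]_{<\deg\min A_0}$ by an $\field_q$-linear involution whose fixed space has half the $\field_q$-dimension, i.e. $\deg\min A_0$ over $\field_{q^2}$).

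Next I would set up the reduction to primary matrices. Using Lemma \ref{lem_image_of_direct_sum_un}, if $A_0=B\oplus C$ with $\gcd(\charc B,\charc C)=1$, then $\img(\partial\charc_{A_0})=\mathrm{span}_{\field_q}(\charc(C)\cdot\img(\partial\charc_B),\charc(B)\cdot\img(\partial\charc_C))$; applying the inductive hypothesis to $B$ and $C$, together with the identity $J_{B\oplus C}(x)=J(x)(\charc(B)\charc(C)-x^n+\charc(B)(0)\charc(C)(0))$ which can be written as an $\field_q$-combination of $\charc(C)J_B$ and $\charc(B)J_C$ modulo the skew-palindromic pieces, and the Chinese Remainder Theorem for $\field_{q^{2l}}[x]$, gives the result by comparing dimensions. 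I should be a little careful here: the two $J$-directions from $B$ and $C$ span only a one-dimensional space after adding the skew-palindromic parts (since their sum must again be a single $J$-type polynomial for $A_0$), so the dimension count is $1+2\deg\min B + 2\deg\min C = 1+2\deg\min(B\oplus C)$, matching the target; I would verify this bookkeeping carefully. Similarly, for a single primary matrix with $\charc = (x-\alpha)^e$ but $\min=(x-\alpha)^d$ with $d<e$, I would write it as a sum of unitary Jordan blocks and induct, using Lemma \ref{lem_image_of_direct_sum_un} again (now the pieces are not coprime, so the argument follows the $\charc=\min=(x-\alpha)^r$ sub-case of Theorem \ref{theorem_image_char_derivative_gl_n}, tracking the extra factor $\frac{\charc A_0}{\min A_0}$).

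It then remains to handle a primary unitary block with $\charc(A_0)=\min(A_0)$. Here I distinguish the case $\alpha\ne \tilde\alpha$ (i.e. $\alpha^{q+1}\ne$ the relevant norm, so the $\sim$-irreducible factor is $(x-\alpha)(x-\tilde\alpha)$, treated by pairing the block with its ``dual'' block, analogous to Type I in the symplectic/orthogonal cases), versus the self-conjugate case $\alpha=\tilde\alpha$ where $A_0$ is conjugate to a single Jordan block $J_e(\alpha)$ with $\alpha\in\field_q$; for the latter, the claim with $\charc A_0=\min A_0$ is exactly the second displayed claim proved just before the theorem. For the paired case I would use $\Adj(x-J_e(\beta))=X_\beta^e$ from Lemma \ref{lem_gl_jordan_block_adj} and the block structure of $\mathfrak{u}_n$ to write $\partial\charc_{A_0}(A_1)$ as a trace $\tr(A[\ldots])$ over an arbitrary matrix block $A$, getting a span of rational functions vanishing to distinct orders at $\alpha$, whence $\deg\min A_0$ skew-palindromic directions plus the $J_{A_0}$ direction. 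The main obstacle I anticipate is the careful dimension bookkeeping in the join step — precisely, checking that the various $J_{B_i}(x)$ contributions collapse to a single $\field_q$-line and that the skew-palindromic dimension count ($2\deg\min A_0$ over $\field_q$, i.e. $\deg\min A_0$ over $\field_{q^2}$) is exactly right given Claim \ref{claim_structure_of_skpal} and Claim \ref{claim_structure_of_start_sym} — together with making sure the reduction to the split case is legitimate over $\field_{q^{2l}}$, since unitarity is sensitive to the choice of extension; I would phrase the split-field argument purely in terms of the linear-algebra image over $\field_{q^{2l}}$ (where $U_n$ becomes $GL_n$-conjugate to the blocks above) rather than trying to stay inside unitary groups.
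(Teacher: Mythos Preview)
Your overall architecture (containment from the prior claim, dimension comparison, direct-sum reduction, then building blocks) matches the paper's. However, there is a concrete error and a place where the paper's argument is substantially cleaner than what you propose.

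\textbf{Dimension count.} Your claim that the target has $\field_q$-dimension $1+2\deg\min A_0$ is wrong. The involution $f\mapsto f^*/\tau(\alpha)$ does not map $\field_{q^2}[x]_{<m}$ to itself (the constant term of $f$ becomes the $x^m$-coefficient of $f^*$), so ``half the dimension'' is not the right heuristic. In fact skew-palindromic polynomials of degree $<m$ automatically have $f(0)=0$, and by Claim~\ref{claim_structure_of_start_sym} one finds $\dim_{\field_q}\field_{q^2}[x]_{<m}^{\skpal}=m-1$. So the target has $\field_q$-dimension $m=\deg\min A_0$, not $1+2m$. This matters because the entire argument is a dimension match; with the wrong count the bookkeeping in your join step (``the dimension count is $1+2\deg\min B + 2\deg\min C$'') would never close. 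Relatedly, your assertion that $\img(\partial\charc_{A_0})$ is invariant under $GL_n(\field_{q^2})$-conjugation of $A_0$ is not right as stated: $\mathfrak u_n(\field_q)$ is \emph{not} preserved by conjugation from $GL_n(\field_{q^2})$, only by $U_n(\field_q)$. What does work is genuine extension of scalars: $\mathfrak u_n(\field_q)\otimes_{\field_q}\field_{q^2}\cong\mathfrak{gl}_n(\field_{q^2})$ (via $M\otimes 1+M'\otimes j\mapsto M+jM'$), and then the extended map coincides with the $GL_n$-map, whose rank is $\deg\min A_0$ by Theorem~\ref{theorem_image_char_derivative_gl_n}. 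This is actually the clean way to get the dimension, but it is not what you wrote.

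\textbf{The building-block step.} For a primary block with $\charc=\min$, the paper does \emph{not} redo explicit adjoint computations. Instead it uses a trick: form $B_0=A_0\oplus A_0^{*}$, which (when $A_0,A_0^{*}$ share no eigenvalues) is conjugate to a matrix in $U_n(\field_q)\cap GL_n(\field_q)$ with $\charc=\min$, so the prior claim applies and gives $\dim\img(\partial\charc_{B_0})=2n$; then Lemma~\ref{lem_image_of_direct_sum_un} splits this as $\dim\img(\partial\charc_{A_0})+\dim\img(\partial\charc_{A_0^{*}})$, forcing each to be $n$. This bypasses all the Type~I/II/III case analysis you outline, as well as the delicate issue of what ``unitary'' means after base change to $\field_{q^{2l}}$. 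Your route via explicit Jordan blocks could in principle be made to work once the dimension and conjugation issues above are fixed, but it is considerably more labor and the $J_{A_0}$-bookkeeping in the join step (which you flag as a worry) never needs to be confronted in the paper's argument.
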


\begin{proof}
We first assume that $A_0$, $A_0^*$ do not share eigenvalues over the algebraic closure and that $\charc(A_0)=\min(A_0)$. In this case, let $B_0=A_0\oplus A_0^*$. This matrix is conjugate to a matrix in $U_n(\field_q)\cap GL_n(\field_q)$, and $\charc(B_0)=\min(B_0)$. Hence $\img(\partial\charc_{B_0})$ is of dimension $2n$. However by Lemma \ref{lem_image_of_direct_sum_un}, $\dim \img(\partial\charc_{B_0})=\dim \img(\partial\charc_{A_0})+\dim \img(\partial\charc_{A_0^*})$. It follows that $\dim \img(\partial\charc_{A_0})=n$, hence the result follows by comparing dimensions.

For the more general case, we proceed by induction as in the proof of Theorem \ref{theorem_image_char_derivative_gl_n}, each time removing a different primary part, and using Lemma \ref{lem_image_of_direct_sum_un}.
\end{proof}

\subsection{Distribution of the characteristic polynomial}

We follow the usual steps. In the rest of the section, $A$ will always be a matrix in $U_n(\resring{k})$ unless explicitly stated otherwise.

\subsubsection{One step from $\pi^{k-1}$ to $\pi^k$}

Let $A\in U_{n}(\resring{k})$ be written as $A=A_0+\pi^{k-1}A_0A_1$, where $A_0\in H_{U,k}$ is a representative for $A\pmod{\pi^{k-1}}$. When we fix $A_0$, the possibilities for $\charc(A)$ ranges over $\charc(A_0)+\pi^{k-1} \img(\partial\charc_{\overline{A_0}})$. Recall that from Theorem \ref{theorem_image_char_derivative_un}, 

$$
\img(\partial\charc_{A_0})=\field_q\cdot J_{A_0}(x)\oplus \frac{\charc\overline{A_0}}{\min \overline{A_0}}\cdot \field_{q^2}[x]_{<\deg{\min{A_0}}}^{\skpal}.
$$

\begin{lem}\label{lem_one_step_un}
Let $g\in (\resringr{k})[x]$ be a monic polynomial of degree $n$, and let $\delta=\delta(n)=\lceil\frac{n-1}{2}\rceil$. Then for $d<\frac{n}{2}$,

$$
\Pr[\charc A\in I(g,n-d)|\charc(\rho_{k-1}(A))\in I(\rho_{k-1}(g),n-d),\delta(\deg\min\overline{A})> d]=q^{-2d}.
$$
\end{lem}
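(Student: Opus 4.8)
The plan is to mimic the proof of Lemma~\ref{lem_one_step_sp2n} (and of Lemma~\ref{lem_one_step_son}), making the adjustments needed for the unitary setting and the skew-palindromic Hayes character sum. As in those cases, it suffices to show that for every fixed choice of $A_0 \in H_{U,k}$ with $\delta(\deg\min\overline{A_0}) > d$ such that $\charc(\rho_{k-1}(A))\in I(\rho_{k-1}(g),n-d)$, the conditional probability over the choice of lift is exactly $q^{-2d}$; averaging over such $A_0$ then yields the claim. First I would write $A = A_0 + \pi^{k-1}A_0 A_1$ with $A_1 \in \mathfrak{u}_n(\field_q)$, so that by Lemma~\ref{lem_charpoly_step_in_p_direction}, $\charc(A) = \charc(A_0) + \pi^{k-1}\partial\charc_{\overline{A_0}}(A_1)$. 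Using the hypothesis that $\charc(\rho_{k-1}(A))$ lies in the interval, write $\charc(A_0) - g = e_0(x) + \pi^{k-1}e_1(x)$ with $\deg e_0 < n - d$ and $\deg e_1 < n$; then landing in $I(g,n-d)$ modulo $\pi^k$ is equivalent to $\deg(\partial\charc_{\overline{A_0}}(A_1) - \overline{e_1}) < n - d$, i.e.\ (since both polynomials have degree $<n$) to $x^n + \partial\charc_{\overline{A_0}}(A_1)$ and $x^n + \overline{e_1}$ agreeing in their $d$ next-to-leading coefficients.

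Next I would invoke Theorem~\ref{theorem_image_char_derivative_un}, which identifies $\img(\partial\charc_{\overline{A_0}}) = \field_q \cdot J_{\overline{A_0}}(x) \oplus \frac{\charc\overline{A_0}}{\min\overline{A_0}}\cdot\field_{q^2}[x]^{\skpal}_{<\deg\min\overline{A_0}}$. Since $\partial\charc_{\overline{A_0}}$ is $\field_q$-linear, all its fibers have equal size, so the desired probability equals the fraction of the image that lies in the prescribed residue class of $R_{d,1,\pi}$; equivalently, after translating by the monic polynomial $x^{\deg\min\overline{A_0}}\frac{\charc\overline{A_0}}{\min\overline{A_0}}$ (whose high-degree coefficients I can factor out of any Hayes character), it suffices to show that the $d$ next-to-leading coefficients of $J_{\overline{A_0}}(x) \oplus \frac{\charc\overline{A_0}}{\min\overline{A_0}}\cdot b$, as $b$ ranges over skew-palindromic polynomials of degree $<\deg\min\overline{A_0}$ and over the one extra $\field_q$-direction, are equidistributed over $\field_{q^2}^d$. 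Here the relevant group of characters is the Hayes characters of $\field_{q^2}[x]$ with $d$ next-to-leading coefficients, and the key input is Lemma~\ref{lem_hayes_character_sum_over_skpalindromic_polys}: for a short interval character $\chi$ of $l \le \delta(\deg\min\overline{A_0})$ coefficients (and $d \le \delta(\deg\min\overline{A_0})$ by hypothesis, so this applies), $q^{-\delta}\sum_{f\in\field_{q^2}[x]^{\skpal}_{<m}}\chi(x^m + f) = 1_{\chi = \chi_0}$. Expanding the indicator of the residue-class condition via Hayes character orthogonality (as in the display in the proof of Lemma~\ref{lem_one_step_sp2n}) and plugging in this vanishing leaves only the trivial-character term, which contributes exactly $q^{-2d}$ (the $2d$ rather than $d$ coming from the residue field being $\field_{q^2}$, of size $q^2$).

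The one genuinely new wrinkle compared to the symplectic and orthogonal cases is the extra summand $\field_q\cdot J_{\overline{A_0}}(x)$ in the image: I need to check that adding this one-dimensional $\field_q$-direction does not disturb the equidistribution. The cleanest way is to note that $J_{\overline{A_0}}(x) = j(\charc\overline{A_0} - x^n + \charc\overline{A_0}(0))$ has its top $\delta$ next-to-leading coefficients equal to $j$ times those of $\charc\overline{A_0}$, which, modulo the translation by $x^{\deg\min\overline{A_0}}\frac{\charc\overline{A_0}}{\min\overline{A_0}}$ already performed, can be absorbed — the point being that the character sum over the full image still factors through the skew-palindromic sum times a (possibly twisted, but still nontrivial unless $\chi$ is trivial) sum over $\field_q$, which again vanishes for nontrivial $\chi$. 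I would verify this by the same Hilbert-90 reduction used in Claim~\ref{claim_structure_of_skpal}: pick $\beta$ with $g_0(0) = \tau(\beta)/\beta$, so that multiplication by $\beta$ carries skew-palindromic polynomials to $*$-symmetric ones and carries $J_{\overline{A_0}}(x)$ into the $*$-symmetric world as well, after which Claim~\ref{claim_structure_of_start_sym} gives the explicit parametrization and the Hayes orthogonality relation~\eqref{eqn_hayes_first_orthogonality_of_characters} finishes the job.

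\textbf{Main obstacle.} The expected difficulty is bookkeeping around the interaction of the extra $\field_q\cdot J_{\overline{A_0}}(x)$ direction with the skew-palindromic summand — making sure the combined character sum really does vanish for every nontrivial short-interval character of length $\le d$, and that no parity issue in $n$ (odd versus even, which affects the structure in Claim~\ref{claim_structure_of_start_sym}) breaks the argument. Everything else is a routine transcription of the symplectic/orthogonal proofs with $\field_q$ replaced by $\field_{q^2}$ and Lemma~\ref{lem_hayes_character_sum_over_palindromic_polys} replaced by Lemma~\ref{lem_hayes_character_sum_over_skpalindromic_polys}.
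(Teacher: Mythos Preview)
Your proposal is correct and follows essentially the same route as the paper: fix $A_0$, reduce to the condition $\deg(\partial\charc_{\overline{A_0}}(A_1)-\overline{e_1})<n-d$, parametrize the image via Theorem~\ref{theorem_image_char_derivative_un}, expand the interval indicator by Hayes characters, and kill the nontrivial characters using Lemma~\ref{lem_hayes_character_sum_over_skpalindromic_polys}.

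The one place you are making your life harder than necessary is the handling of the extra $\field_q\cdot J_{\overline{A_0}}(x)$ direction. The paper does not try to make the \emph{combined} sum over $(\alpha,b)$ vanish for nontrivial $\chi$, nor does it need any Hilbert~90/parity bookkeeping for $J_{\overline{A_0}}$. Instead it simply moves $\alpha J_{\overline{A_0}}(x)$ to the target side: the condition becomes
\[
\frac{\charc\overline{A_0}}{\min\overline{A_0}}(x^{\deg\min\overline{A_0}}+b)\in I\Bigl(x^{\deg\min\overline{A_0}}\tfrac{\charc\overline{A_0}}{\min\overline{A_0}}+\bigl(\overline{e_1}-\alpha J_{\overline{A_0}}(x)\bigr),\,n-d\Bigr),
\]
so that for each fixed $\alpha$ the inner sum over $b\in\field_{q^2}[x]^{\skpal}_{<\deg\min\overline{A_0}}$ already forces $\chi=\chi_0$ by Lemma~\ref{lem_hayes_character_sum_over_skpalindromic_polys}. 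Once $\chi=\chi_0$, every character value is $1$ regardless of $\alpha$, and the outer sum over $\alpha\in\field_q$ just contributes the factor $q$ that cancels the extra $q^{-1}$ in the normalization, yielding $q^{-2d}$. So the ``main obstacle'' you flag is in fact a non-issue once you place $\alpha J_{\overline{A_0}}(x)$ with $\overline{e_1}$ rather than with $b$.
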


\begin{proof}
We show that for any possible choice of $A_0$ with $\delta(\deg\min\overline{A_0})> d$, the conditional probability is $q^{-2d}$, from which the result follows.

We are given that $\charc(\rho_{k-1}(A_0))\in I(\rho_{k-1}(g),n-d)$, hence $\charc(A_0)-g=e_0(x)+\pi^k e_1(x)$, where $\deg e_0(x)<n-d$. Thus to have $\charc(A)\in I(g,n-d)$ we actually need to have 

\begin{equation}\label{eqn_condition_on_derivative_unitary}
\deg\left(
\overline{e_1}-\partial\charc_{\overline{A_0}}(A_1)
\right)<n-d.    
\end{equation}
We note that since $\charc(A_0),g$ are monic we have $\deg\overline{e_1}<n$. By Theorem \ref{theorem_image_char_derivative_un}, there are some $\alpha\in\field_q$, $b\in\field_{q^2}[x]^{\skpal}_{<\deg\min A_0}$, such that $\partial\charc_{\overline{A_0}}(A_1)=\alpha J_{A_0}(x)+\frac{\charc\overline{A_0}}{\min \overline{A_0}}b$. Hence the condition \eqref{eqn_condition_on_derivative_unitary} is equivalent to 
$$
\deg\left(
(\overline{e_1}-\alpha J_{A_0}(x))-\frac{\charc\overline{A_0}}{\min \overline{A_0}}b
\right)
<n-d. 
$$
Since both polynomials are of degree $<n$, and since $x^{\deg\min(A_0)}\frac{\charc\overline{A_0}}{\min \overline{A_0}}$ is monic of degree $n$, we see that \eqref{eqn_condition_on_derivative_unitary} is equivalent to demanding 
$$
x^{\deg\min\overline{A_0}}\frac{\charc\overline{A_0}}{\min\overline{A_0}}+\frac{\charc\overline{A_0}}{\min \overline{A_0}}b\in I\left(
x^{\deg\min\overline{A_0}}\frac{\charc\overline{A_0}}{\min\overline{A_0}}+(\overline{e_1}-\alpha J_{A_0}(x)),n-d
\right).
$$

For polynomials $r,s\in\field_q[x]$, denote by $1_{r,s}$ the event that 

$$
x^{\deg\min\overline{A_0}}\frac{\charc\overline{A_0}}{\min\overline{A_0}}+r\in I\left(
x^{\deg\min\overline{A_0}}\frac{\charc\overline{A_0}}{\min\overline{A_0}}+s,n-d
\right).
$$

We compute the probability for \eqref{eqn_condition_on_derivative_unitary} to hold, using Hayes characters. By the discussion above, it is equal to 

\begin{multline*}
\Pr_{A_1\in \mathfrak{u}_{n}(\field_q)}
\left[
1_{\frac{\charc(A_0)}{\min(A_0)}b,\overline{e_1}-\alpha J_{A_0}(x)}
\right]=
q^{-\delta(\deg\min(\overline{A_0}))-1}
\sum_{\alpha\in\field_q}\sum_{\substack{b\in\field_{q^2}[x]_{<\deg\min(\overline{A_0})}^{\skpal}}}1_{\frac{\charc(A_0)}{\min(A_0)}b,\overline{e_1}-\alpha J_{A_0}(x)}=\\
    =q^{-\delta(\deg\min\overline{A_0})-1}
    \sum_{\alpha\in\field_q}
    \sum_{\substack{ b\in\field_{q^2}[x]_{<\deg\min(\overline{A_0})}^{\skpal}}}q^{-2d}\sum_{\chi\in R_{d,1}}\chi\left(\frac{\charc\overline{A}_0}{\min\overline{A}_0}(x^{\deg\min\overline{A_0}}+b)\right)\overline{\chi}\left(
    \frac{\charc\overline{A}_0}{\min\overline{A}_0}x^{\deg\min\overline{A_0}}+\overline{e_1}-\alpha J_{A_0}(x)
    \right)=\\
    =q^{-2d-1}\sum_{\chi\in R_{d,1}\atop{\alpha\in\field_q}}\chi\left(\frac{\charc\overline{A}_0}{\min\overline{A}_0}\right)\overline{\chi}\left(
    \frac{\charc\overline{A}_0}{\min\overline{A}_0}x^{\deg\min\overline{A_0}}+\overline{e_1}-\alpha J_{A_0}(x)
    \right)
q^{-\delta(\deg\min(\overline{A_0}))}\sum_{\substack{ b\in\field_{q^2}[x]_{<\deg\min(\overline{A_0})}^{\skpal}}}\chi(x^{\deg\min\overline{A_0}}+b).
\end{multline*}
We note that by Lemma \ref{lem_hayes_character_sum_over_skpalindromic_polys}, since $d<\min\overline{A_0}$, we have
$$
\sum_{\substack{ b\in\field_{q^2}[x]_{<\deg\min\overline{A_0}}^{\skpal}}}\chi(x^{\deg\min\overline{A_0}}+b)=
q^{\delta(\deg\min(\overline{A}_0))}1_{\chi=\chi_0}.
$$
Substituting this into the previous equation we get the result.
\end{proof}

\subsubsection{Distribution mod $\pi^k$ given mod $\pi$ behavior}
Let $g\in(\resringr{k})[x]$ be a monic polynomial of degree $n$. Define the discrepancy of the conditional distribution given a matrix $B\in U_n(\field_q)$ such that $\charc(B)\in I(\overline{g},n-d)$ to be 

$$
\Delta_{B,k}^g=\Pr[\charc(A)\in I(g,n-d)|\overline{A}=B]-q^{-2d(k-1)}.
$$
As in the previous cases, when $\deg\min B$ is large enough, the discrepancy is zero.

\begin{prop}\label{prop_deltabk_large_minpoly_un}
Let $g\in (\resringr{k})[x]$ be a monic polynomial of degree $n$, $d<\frac{n}{2}$. For every matrix $B\in U_{n}(\field_q)$ with $\charc(B)\in I(\overline{g},n-d)$ and $\delta(\deg\min(B))\ge d$, we have $\Delta_{B,k}^g=0$.
\end{prop}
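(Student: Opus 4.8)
The plan is to mirror the structure of the analogous statements for $GL_n$, $Sp_{2n}$ and $SO_n$ (Propositions \ref{prop_deltabk_large_minpoly}, \ref{prop_deltabk_large_minpoly_sp}, \ref{prop_deltabk_large_minpoly_so}), reducing the statement about lifting all the way from $\pi$ to $\pi^k$ to a telescoping product of one-step conditional probabilities, each of which is controlled by Lemma \ref{lem_one_step_un}. Concretely, I would first write the conditional probability as a product over intermediate levels:
\begin{multline*}
\Pr[\charc(A)\in I(g,n-d)\mid \overline{A}=B]=\\
\prod_{i=2}^k \Pr[\charc(\rho_i(A))\in I(\rho_i(g),n-d)\mid \charc(\rho_{i-1}(A))\in I(\rho_{i-1}(g),n-d),\ \overline{A}=B].
\end{multline*}
This identity is just the chain rule for conditional probabilities, valid because the event $\charc(\rho_{i}(A))\in I(\rho_i(g),n-d)$ at level $i$ refines the event at level $i-1$, and the reduction maps $U_n(\resring{k})\to U_n(\resring{i})$ are surjective by Lemma \ref{lem_PRR_reductive_surjective}, so that conditioning on $\overline{A}=B$ and on the intermediate interval events is consistent with the uniform measure at each level.

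Next I would invoke Lemma \ref{lem_one_step_un}: since $\overline{A}=B$ with $\delta(\deg\min(B))\ge d$ — and the reductions $\rho_i(A)$ all have the same image mod $\pi$, namely $B$, so the hypothesis $\delta(\deg\min\overline{\rho_i(A)})>d$ is inherited (here I should check the inequality is stated with $\ge d$ versus $>d$; Lemma \ref{lem_one_step_un} is stated with $\delta(\deg\min\overline A)>d$, so I would simply assume $\delta(\deg\min(B))> d$ in the statement, consistent with the $d<n/2$ regime, or note that the $\ge$/$>$ discrepancy is harmless because when $\delta(\deg\min(B))=d$ the image still has the requisite dimension) — each factor in the product equals exactly $q^{-2d}$. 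Hence the whole product equals $q^{-2d(k-1)}$, and therefore
$$
\Delta_{B,k}^g=\Pr[\charc(A)\in I(g,n-d)\mid \overline{A}=B]-q^{-2d(k-1)}=0.
$$

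The only genuine subtlety — and the step I would be most careful about — is making sure the conditioning is set up so that Lemma \ref{lem_one_step_un} applies at \emph{every} intermediate level and not just the top one; in particular one needs that, conditioned on $\overline A=B$ and on the level-$(i-1)$ interval event, the matrix $\rho_i(A)$ is uniformly distributed among lifts of $\rho_{i-1}(A)$ of the prescribed form $A_0(1+\pi^{i-1}A_1)$ with $A_1\in\mathfrak{u}_n(\field_q)$, which is precisely the hypothesis under which Lemma \ref{lem_one_step_un} was proved. This is the same bookkeeping that underlies Proposition \ref{prop_deltabk_large_minpoly_sp}, so I would simply cite that proof pattern and fill in the unitary-specific references. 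Everything else is routine, so the proof will be short — essentially two displayed lines plus a sentence invoking Lemma \ref{lem_one_step_un}.
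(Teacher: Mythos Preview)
Your proposal is correct and follows exactly the paper's approach: the paper's proof simply says ``essentially the same as that of Proposition \ref{prop_deltabk_large_minpoly}, using Theorem \ref{theorem_image_char_derivative_un}'', and you have spelled out precisely that telescoping-product argument with Lemma \ref{lem_one_step_un} at each step. Your observation about the $\ge d$ versus $>d$ discrepancy between the statement and Lemma \ref{lem_one_step_un} is apt --- it is a minor inconsistency in the paper (inspecting the proof of Lemma \ref{lem_one_step_un}, the orthogonality step via Lemma \ref{lem_hayes_character_sum_over_skpalindromic_polys} only requires $d\le\delta$, so $\ge d$ suffices).
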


\begin{proof}
The proof is essentially the same as that of Proposition \ref{prop_deltabk_large_minpoly}, using Theorem \ref{theorem_image_char_derivative_un}.
\end{proof}

Now we consider how $\charc(A)$ behaves for lifts $A$ of $B$, when $\delta(B)<d$.

\begin{prop}\label{prop_deltabk_small_minpoly_un}
Let $g\in(\resringr{k})[x]$ be a monic polynomial of degree $n$. Let $B\in U_n(\field_q)$ be a matrix with $\delta(\deg\min B)=l<d$. Let $\mathcal{P}_B^k=\{\charc(A):A\in U_n(\mathcal{O}/\pi^k\mathcal{O}),\overline{A}=B\}$. Then,

\begin{enumerate}
    \item $|\mathcal{P}_B^k|=q^{2(k-1)l}$. Moreover, each element of $\mathcal{P}_B^k$ has different $l$ leading coefficients modulo $\pi^k$. 
    \item $
\Pr[\charc(A)\in I(g,n-d)|\overline{A}=B]=1_{\mathcal{P}_B^k\cap I(g,n-d)\neq \emptyset}q^{-2(k-1)l}
$. Equivalently, we have
$$\Delta_{B,k}^g=1_{\mathcal{P}_B^k\cap I(g,n-d)\neq \emptyset}q^{-2(k-1)l}-q^{-2(k-1)d}.$$
\end{enumerate}
\end{prop}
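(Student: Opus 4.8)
The plan is to prove both assertions together by induction on $k$, following verbatim the strategy of Proposition~\ref{prop_deltabk_small_minpoly} (the $GL_n$ case) and its orthogonal counterpart Proposition~\ref{prop_deltabk_small_minpoly_son}, with Theorem~\ref{theorem_image_char_derivative_un} supplying the unitary description of $\img(\partial\charc_B)$ in place of the $GL_n$/$SO_n$ ones. For the base case $k=1$ one has $\mathcal{P}_B^1=\{\charc(B)\}$, so $|\mathcal{P}_B^1|=1=q^{2\cdot 0\cdot l}$, the distinctness assertion in (1) is vacuous since $\mathcal{P}_B^1$ is a one-element set, and the conditional probability in (2) is exactly $1_{\charc(B)\in I(g,n-d)}=1_{\mathcal{P}_B^1\cap I(g,n-d)\neq\emptyset}$, which agrees with the asserted formula since $q^{-2\cdot 0\cdot l}=q^{-2\cdot 0\cdot d}=1$.

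For the inductive step, assume the proposition for all smaller values and consider the reduction map $\mathcal{P}_B^k\to\mathcal{P}_B^{k-1}$, $h\mapsto\rho_{k-1}(h)$. Writing a lift of $\rho_{k-1}(A)$ as $A=A_0(1+\pi^{k-1}A_1)$ with $A_1\in\mathfrak{u}_n(\field_q)$ and $\overline{A_0}=B$, Lemma~\ref{lem_charpoly_step_in_p_direction} shows $\charc(A)$ runs over the coset $\charc(A_0)+\pi^{k-1}\img(\partial\charc_B)$, so every nonempty fiber of the reduction map is a translate of $\img(\partial\charc_B)$. By Theorem~\ref{theorem_image_char_derivative_un}, $\img(\partial\charc_B)=\field_q\cdot J_B(x)\oplus\frac{\charc B}{\min B}\,\field_{q^2}[x]^{\skpal}_{<\deg\min B}$, and the structure description of skew-palindromic polynomials (Claim~\ref{claim_structure_of_skpal} together with Claim~\ref{claim_structure_of_start_sym}) pins down the $\field_q$-dimension of this space, so that each fiber has $q^{2l}$ elements with $l=\delta(\deg\min B)$; combined with the induction hypothesis this gives $|\mathcal{P}_B^k|=q^{2l}\cdot q^{2(k-2)l}=q^{2(k-1)l}$. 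That the elements of $\mathcal{P}_B^k$ have pairwise distinct $l$ leading coefficients modulo $\pi^k$ is checked fiberwise: within a fiber it is the injectivity extracted inside the proof of Lemma~\ref{lem_one_step_un}, run with $l$ in place of $d$ — the Hayes-character computation there, via the skew-palindromic orthogonality Lemma~\ref{lem_hayes_character_sum_over_skpalindromic_polys}, shows that the projection of $\img(\partial\charc_B)$ onto the $l$ leading coefficients surjects onto $\field_{q^2}^l$, a map between $\field_q$-spaces of dimension $2l$, hence an isomorphism; across distinct fibers it follows from the induction hypothesis applied to $\mathcal{P}_B^{k-1}$. This proves (1). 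Assertion (2) is then immediate, exactly as in Proposition~\ref{prop_deltabk_large_minpoly_un} with $d=l$: conditioning on $\overline A=B$, the $q^{2(k-1)l}$ possible values of $\charc(A)$ lie in $q^{2(k-1)l}$ pairwise distinct $(n-d)$-intervals by (1), so the conditional probability of $\charc(A)\in I(g,n-d)$ is $q^{-2(k-1)l}$ when $I(g,n-d)$ meets $\mathcal{P}_B^k$ and $0$ otherwise, and subtracting $q^{-2(k-1)d}$ gives the discrepancy formula.

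The one point that is not a mechanical transcription of the $GL_n$ and $SO_n$ arguments, and hence the main thing to get right, is the $\field_q$-dimension count for $\img(\partial\charc_B)$ and its projection to the $l$ leading coefficients. Unlike those cases the coefficients now live over $\mathcal R/\pi^k\mathcal R$ with residue field $\field_{q^2}$, so each coefficient contributes a factor $q^2$ rather than $q$ (this is the source of the $2(k-1)l$ rather than $(k-1)l$ in the exponent); moreover the image carries, besides the $\frac{\charc B}{\min B}\field_{q^2}[x]^{\skpal}_{<\deg\min B}$ piece, the extra one-dimensional $\field_q\cdot J_B(x)$ summand, and $J_B$ affects all of the leading and lower coefficients. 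One must verify that after projecting to the $l$ leading coefficients these summands jointly account for exactly $2l=2\delta(\deg\min B)$ dimensions over $\field_q$; this is precisely where Lemma~\ref{lem_hayes_character_sum_over_skpalindromic_polys} — the unitary analogue of the palindromic orthogonality Lemma~\ref{lem_hayes_character_sum_over_palindromic_polys} used for $Sp_{2n}$ and $SO_n$ — does its work. Once this count is secured, the remainder of the induction is routine.
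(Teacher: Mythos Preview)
Your proposal is correct and follows precisely the same approach as the paper, whose proof reads in its entirety ``The proof is omitted as it is essentially the proof of Proposition~\ref{prop_deltabk_large_minpoly}'' (i.e., the $GL_n$ induction template of Proposition~\ref{prop_deltabk_small_minpoly}). You carry out that template in full---induction on $k$, identifying the fibers of $\mathcal{P}_B^k\to\mathcal{P}_B^{k-1}$ with cosets of $\img(\partial\charc_B)$ via Theorem~\ref{theorem_image_char_derivative_un}, invoking Lemma~\ref{lem_one_step_un} with $d=l$ for the distinctness, and Proposition~\ref{prop_deltabk_large_minpoly_un} with $d=l$ for assertion~(2)---and you correctly isolate the unitary-specific subtlety (the $\field_q$-dimension count over the residue field $\field_{q^2}$ and the extra $J_B$ summand), which is exactly the point the paper leaves implicit.
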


\begin{proof}
    The proof is omitted as it is essentially the proof of Proposition \ref{prop_deltabk_large_minpoly}.
\end{proof}

We now let $\mathcal{G}$ be a set of representatives for the lifts of $\charc(B)$ to a monic polynomial $g\in(\resringr{k})[x]$, modulo equality in the $d$ next-to-leading coefficients. We note that $|\mathcal{G}|=q^{2(k-1)d}$.

\begin{prop}\label{prop_summing_deltabbk_over_all_g_un}
Let $B\in U_{n}(\field_q)$. Then,

\begin{enumerate}
    \item If $\delta(\deg\min(B))\ge d$, we have $\sum_{g\in \mathcal{G}}|\Delta_{B,k}^g|=0$.
    \item If $\delta(\deg\min(B))=l<d$, we have $\sum_{g\in\mathcal{G}}|\Delta_{B,k}^g|\le 2$.
\end{enumerate}
\end{prop}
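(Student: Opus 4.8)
The plan is to follow verbatim the two-step bookkeeping template already used for the symplectic and orthogonal groups in Propositions \ref{prop_summing_deltabbk_over_all_g_sp} and \ref{prop_summing_deltabbk_over_all_g_so}, now feeding in the unitary inputs established immediately above, namely Propositions \ref{prop_deltabk_large_minpoly_un} and \ref{prop_deltabk_small_minpoly_un}.

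For the first assertion I would simply invoke Proposition \ref{prop_deltabk_large_minpoly_un}: when $\delta(\deg\min(B))\ge d$ we have $\Delta_{B,k}^g=0$ for every monic $g\in(\resringr{k})[x]$ of degree $n$, so the sum over $\mathcal{G}$ is trivially zero and nothing else is required.

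For the second assertion, assume $\delta(\deg\min(B))=l<d$. By the second part of Proposition \ref{prop_deltabk_small_minpoly_un} each term splits as $\Delta_{B,k}^g=1_{\mathcal{P}_B^k\cap I(g,n-d)\ne\emptyset}\,q^{-2(k-1)l}-q^{-2(k-1)d}$, so by the triangle inequality $\sum_{g\in\mathcal{G}}|\Delta_{B,k}^g|$ is at most $\big(\sum_{g\in\mathcal{G}}1_{\mathcal{P}_B^k\cap I(g,n-d)\ne\emptyset}\,q^{-2(k-1)l}\big)+\big(\sum_{g\in\mathcal{G}}q^{-2(k-1)d}\big)$. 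The second bracket equals $|\mathcal{G}|\,q^{-2(k-1)d}=1$ since $|\mathcal{G}|=q^{2(k-1)d}$. For the first bracket I would use the first part of Proposition \ref{prop_deltabk_small_minpoly_un}: $\mathcal{P}_B^k$ has exactly $q^{2(k-1)l}$ elements, and any two distinct elements already differ in their $l$ next-to-leading coefficients mod $\pi^k$; since $l<d$, two distinct elements of $\mathcal{P}_B^k$ therefore lie in distinct length-$(n-d)$ intervals, so the number of representatives $g\in\mathcal{G}$ with $\mathcal{P}_B^k\cap I(g,n-d)\ne\emptyset$ is exactly $|\mathcal{P}_B^k|=q^{2(k-1)l}$, making the first bracket equal to $1$ as well. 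Hence $\sum_{g\in\mathcal{G}}|\Delta_{B,k}^g|\le 2$.

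I do not expect any real obstacle here: the statement is a purely combinatorial consequence of the two preceding propositions, exactly as in the $GL_n$, $Sp_{2n}$ and $SO_n$ cases. The only point needing a line of care is the counting step asserting that distinct elements of $\mathcal{P}_B^k$ fall into distinct intervals $I(g,n-d)$, which follows from the ``distinct $l$ next-to-leading coefficients'' clause of Proposition \ref{prop_deltabk_small_minpoly_un} together with $l<d$.
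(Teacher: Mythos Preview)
Your proposal is correct and follows exactly the same approach as the paper, which simply states that the first claim follows directly from Proposition~\ref{prop_deltabk_large_minpoly_un} and the second from the triangle inequality together with the two parts of Proposition~\ref{prop_deltabk_small_minpoly_un}. Your write-up merely spells out the counting that makes the triangle-inequality bound come out to $1+1=2$.
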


\begin{proof}
The first claim follows directly from Proposition \ref{prop_deltabk_large_minpoly_un}. The second follows from the triangle inequality, together with the two parts of Proposition \ref{prop_deltabk_small_minpoly_un}.
\end{proof}

Let $g\in(\resringr{k})[x]$ be a monic polynomial of degree $n$. Now we define the discrepancy of a polynomial $\overline{f}\in\field_{q^2}[x]_{=n}^\mon, \overline{f}\in I(\overline{g},n-d)$ by 

$$
\Delta_{\overline{f},k}^g=\Pr[\charc(A)\in I(g,n-d)|\charc(\overline{A})=\overline{f}]-q^{-2d(k-1)}.
$$

\begin{lem}\label{lem_discrepancy_bound_interval_starting_from_f_mod_p_un}
Let $\overline{f}\in \field_{q^2}[x]_{=n}^\mon$ be a $\sim$-symmetric polynomial, and let $\mathcal{G}$ be a set of representatives for the possible lifts of $\overline{f}$ to a monic polynomial of degree $n$ in $(\resringr{k})[x]$, modulo equality in the $d$ next-to-leading coefficients.
\begin{enumerate}
    \item If $\delta(\deg\rad\overline{f})\ge d$, then $\sum_{g\in\mathcal{G}}|\Delta_{\overline{f},k}^g
    |=0$. 
    \item If $\delta(\deg\rad\overline{f})=s<d$, then $\sum_{g\in\mathcal{G}}|\Delta_{\overline{f},k}^g|\le 2\Pr_{B\in U_{n}(\field_q)}[\deg\min(B)<d|\charc(B)=\overline{f}]$.
\end{enumerate}
\end{lem}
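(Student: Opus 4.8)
The plan is to follow the proof of Lemma~\ref{lem_discrepancy_bound_interval_starting_from_f_mod_p} essentially line by line, since every unitary-specific ingredient is already in place: the one-step uniformity (Lemma~\ref{lem_one_step_un}), the vanishing of the discrepancy for matrices with large minimal polynomial (Proposition~\ref{prop_deltabk_large_minpoly_un}), the fibre structure of the lifts (Proposition~\ref{prop_deltabk_small_minpoly_un}), and the ``$\le 2$'' bound after summing over a full system of interval-representatives (Proposition~\ref{prop_summing_deltabbk_over_all_g_un}). First I would condition on the mod-$\pi$ reduction: summing over $B\in U_n(\field_q)$ with $\charc(B)=\overline{f}$, and using that the constant $q^{-2d(k-1)}$ is common to the definitions of $\Delta^g_{\overline{f},k}$ and of each $\Delta^g_{B,k}$, the law of total probability gives
\begin{equation*}
\Delta^g_{\overline{f},k}=\sum_{\substack{B\in U_n(\field_q)\\ \charc(B)=\overline{f}}}\Delta^g_{B,k}\cdot\Pr\bigl[\,\overline{A}=B\mid\charc(\overline{A})=\overline{f}\,\bigr].
\end{equation*}

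Next I would apply Proposition~\ref{prop_deltabk_large_minpoly_un}: $\Delta^g_{B,k}=0$ whenever $\delta(\deg\min(B))\ge d$. Since $\rad\overline{f}$ divides $\min(B)$ we have $\deg\rad\overline{f}\le\deg\min(B)$, and $\delta$ is nondecreasing, so $\delta(\deg\rad\overline{f})\le\delta(\deg\min(B))$; hence if $\delta(\deg\rad\overline{f})\ge d$ then every summand above is zero, which is assertion~(1). For assertion~(2), assuming $\delta(\deg\rad\overline{f})=s<d$, I would discard the terms with $\delta(\deg\min(B))\ge d$ (which vanish), sum the displayed identity over $g\in\mathcal{G}$, apply the triangle inequality, and interchange the two sums:
\begin{equation*}
\sum_{g\in\mathcal{G}}\bigl|\Delta^g_{\overline{f},k}\bigr|\le\sum_{\substack{B:\ \charc(B)=\overline{f}\\ \delta(\deg\min(B))<d}}\Pr\bigl[\,\overline{A}=B\mid\charc(\overline{A})=\overline{f}\,\bigr]\sum_{g\in\mathcal{G}}\bigl|\Delta^g_{B,k}\bigr|.
\end{equation*}
By part~(2) of Proposition~\ref{prop_summing_deltabbk_over_all_g_un} the inner sum is at most $2$, and the outer weighted sum equals the conditional probability that $\delta(\deg\min(B))<d$ given $\charc(B)=\overline{f}$; this yields the stated bound (the exceptional event matching the one in Lemmas~\ref{lem_discrepancy_bound_interval_starting_from_f_mod_p_sp} and~\ref{lem_discrepancy_bound_interval_starting_from_f_mod_p_so}).

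I do not expect a genuine obstacle: the argument is a purely formal repackaging of the previously proved propositions, exactly as in the $GL_n$, $Sp_{2n}$ and $SO_n$ cases. The only points needing a moment's care are the monotonicity step linking $\delta(\deg\rad\overline{f})$ to $\delta(\deg\min(B))$ through the divisibility $\rad\overline{f}\mid\min(B)$, and keeping track of the factor $q^{-2d(k-1)}$ (rather than $q^{-d(k-1)}$) that enters because the base ring here is $\mathcal{R}/\pi^k\mathcal{R}$ rather than $\resring{k}$; both are already reflected in the statements of Propositions~\ref{prop_deltabk_large_minpoly_un}--\ref{prop_summing_deltabbk_over_all_g_un}, so no new computation is required.
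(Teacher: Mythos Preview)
Your proposal is correct and follows exactly the paper's approach: the paper's own proof simply cites Lemma~\ref{lem_discrepancy_bound_interval_starting_from_f_mod_p} and points to Propositions~\ref{prop_deltabk_large_minpoly_un} and~\ref{prop_deltabk_small_minpoly_un}. You are also right to flag that the event arising naturally is $\delta(\deg\min(B))<d$ (as in the $Sp_{2n}$ and $SO_n$ analogues and as actually used in Lemma~\ref{lem_discrepancy_bound_single_interval_un}), not the literal $\deg\min(B)<d$ printed in the statement.
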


\begin{proof}
The proof is the same as that of Lemma \ref{lem_discrepancy_bound_interval_starting_from_f_mod_p}, using Propositions \ref{prop_deltabk_large_minpoly_un}, \ref{prop_deltabk_small_minpoly_un}. 
\end{proof}

\subsubsection{Discrepancy bound on distribution in intervals}

As usual we define for a monic polynomial $g\in(\resringr{k})[x]$ of degree $n$ the discrepancy of the surrounding interval,

$$
\Delta_{g}=\Pr[\charc A\in I(g,n-d)]-q^{-2kd}.
$$

\begin{lem}\label{lem_discrepancy_bound_single_interval_un}
Let $d<n/2$, and let $\mathcal{G}$ be a set of representatives for the monic polynomials $g\in(\resringr{k})[x]$ of degree $n$, modulo equivalence in the $d$ next-to-leading coefficients. Then,

\begin{equation*}
    \sum_{g\in\mathcal{G}} |\Delta_g|=O\left(
q^{2d+\frac{n}{2}-\frac{n^2}{4d}}\left(1+\frac{1}{q-1}\right)^{n}\binom{n+2d-3}{n}+q^{4d-\frac{n^2}{2d}+o(n)}
\right).
\end{equation*}
\end{lem}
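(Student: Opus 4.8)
The plan is to follow the exact same template as Lemma~\ref{lem_average_discrepancy_bound_gln} and Lemma~\ref{lem_discrepancy_bound_single_interval_son}, only now feeding in the unitary-group inputs.

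\textbf{Setup of the telescoping identity.} Fix $g\in\mathcal{G}$. As in the $GL_n$ case, I would condition on the mod $\pi$ behavior of $A$: writing $\overline{A}$ for the reduction and summing over all $\sim$-symmetric polynomials $\overline{f}\in\field_{q^2}[x]_{=n}^\mon$ with $\overline{f}\in I(\overline{g},n-d)$,
\begin{equation*}
\Pr[\charc A\in I(g,n-d)]=q^{-2(k-1)d}\Pr[\charc\overline{A}\in I(\overline{g},n-d)]+\sum_{\substack{\overline{f}\in I(\overline{g},n-d)}}\Delta_{\overline{f},k}^g\cdot\Pr[\charc\overline{A}=\overline{f}].
\end{equation*}
Here the splitting into a ``main term times mod $\pi$ probability'' plus ``sum of discrepancies $\Delta_{\overline{f},k}^g$'' is exactly the content of the definition of $\Delta_{\overline{f},k}^g$, combined with the fact (Proposition~\ref{prop_deltabk_large_minpoly_un}) that the conditional probability given $\charc(\overline{A})=\overline{f}$ equals $q^{-2(k-1)d}$ whenever $\delta(\deg\min(B))\ge d$ for all relevant $B$.

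\textbf{Inserting the two finite-field inputs.} For the mod $\pi$ term I would quote the unitary equidistribution result of Gorodetsky--Rodgers for intervals --- the $U_n$ analog of \cite[Theorem 6.21, Theorem 7.9]{GR21} --- which gives
\[
\Pr[\charc\overline{A}\in I(\overline{g},n-d)]=q^{-2d}+O\!\left(q^{\frac{n}{2}-\frac{n^2}{4d}}\Bigl(1+\tfrac{1}{q-1}\Bigr)^{n}\binom{n+2d-3}{n}\right),
\]
so that after multiplying by $q^{-2(k-1)d}$ and recalling $|\mathcal{G}|=q^{2(k-1)d}$, summing over $g\in\mathcal{G}$ contributes $O\bigl(q^{2d+\frac{n}{2}-\frac{n^2}{4d}}(1+\frac1{q-1})^{n}\binom{n+2d-3}{n}\bigr)$, matching the first term in the claimed bound. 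For the second term, I sum the discrepancy sum over $g\in\mathcal{G}$, apply Lemma~\ref{lem_discrepancy_bound_interval_starting_from_f_mod_p_un} (which bounds $\sum_{g\in\mathcal{G}}|\Delta_{\overline{f},k}^g|$ by $2\Pr_{B\in U_n(\field_q)}[\deg\min(B)<d\mid\charc(B)=\overline{f}]$ when $\delta(\deg\rad\overline{f})<d$, and by $0$ otherwise), exchange the order of summation, and note $\delta(\deg\min(B))<d$ forces $\deg\min(B)<2d$. Then Claim~\ref{claim_bounding_probability_small_minpoly_un} gives $\Pr[\deg\min(B)<2d\cap\charc(B)=\overline{f}]<q^{-\frac{n^2}{2d}+o(n)}$, and Lemma~\ref{lem_small_radical_bound} bounds the number of $\overline{f}\in\field_{q^2}[x]_{=n}^\mon$ with $\deg\rad\overline{f}<2d$ by $q^{2d+o(n)}$ (the base field being $\field_{q^2}$ only changes the $o(n)$). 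Multiplying, the second contribution is $q^{4d-\frac{n^2}{2d}+o(n)}$, completing the bound.

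\textbf{Main obstacle.} The genuinely new point, relative to the $GL_n$ argument, is making sure the unitary equidistribution-in-intervals statement I need is actually available with the stated error term --- the paper cites \cite{GR21} for intervals in the symplectic and orthogonal cases (Theorems 6.21, 7.9) but I must confirm the unitary analog (presumably in the same reference, via $*$-symmetric polynomials and skew-palindromic Hayes characters, cf.\ Lemma~\ref{lem_hayes_character_sum_over_skpalindromic_polys}) yields the exponent $\frac{n}{2}-\frac{n^2}{4d}$ with the factor $(1+\frac1{q-1})^n\binom{n+2d-3}{n}$. Everything else is a routine transcription: the one-step lemma (Lemma~\ref{lem_one_step_un}), the large-minimal-polynomial vanishing (Proposition~\ref{prop_deltabk_large_minpoly_un}), and the small-radical counting all go through verbatim with $q\mapsto q$ (not $q^2$) in the exponents because the minimal polynomial degree $n$ and the interval length $n-d$ are measured over $\field_{q^2}$ but the probabilities are governed by $|U_n|$-type quantities scaling like powers of $q$.
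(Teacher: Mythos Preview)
Your approach is essentially identical to the paper's proof: same telescoping decomposition, same appeal to Lemma~\ref{lem_discrepancy_bound_interval_starting_from_f_mod_p_un}, Claim~\ref{claim_bounding_probability_small_minpoly_un}, and Lemma~\ref{lem_small_radical_bound}. The unitary interval-equidistribution input you were worried about is \cite[Theorem 5.16]{GR21}, and it gives exactly the error term $q^{\frac{n}{2}-\frac{n^2}{4d}}(1+\frac{1}{q-1})^n\binom{n+2d-3}{n}$ that you wrote down.

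One slip to fix: your parenthetical ``the base field being $\field_{q^2}$ only changes the $o(n)$'' is wrong, and your intermediate count $q^{2d+o(n)}$ does not match the final term $q^{4d-\frac{n^2}{2d}+o(n)}$ you (correctly) arrive at. Lemma~\ref{lem_small_radical_bound} applied over $\field_{q^2}$ with degree bound $2d$ gives $(q^2)^{2d+o(n)}=q^{4d+o(n)}$ polynomials with small radical; the base field enters the main term, not just the $o(n)$. Multiplying this by $q^{-\frac{n^2}{2d}+o(n)}$ then gives the stated $q^{4d-\frac{n^2}{2d}+o(n)}$, so your conclusion is right but the justification needs that correction.
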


\begin{proof}
As in the proof of Lemma \ref{lem_average_discrepancy_bound_gln}, write for $g\in\mathcal{G}$

\begin{equation*}
\Pr[\charc A\in I(g,n-d)]=q^{-2(k-1)d}\Pr[\charc\overline{A}\in I(\overline{g},n-d)]+
\sum_{\overline{f}\in I(\overline{g},n-d)}\Delta_{\overline{f},k}^g\cdot \Pr[\charc\overline{A}=\overline{f}].
\end{equation*}
By \cite[Theorem 5.16]{GR21}, 

$$
\Pr[\charc\overline{A}\in I(\overline{g},n-d)]=q^{-2d}+O\left(
q^{\frac{n}{2}-\frac{n^2}{4d}}\left(1+\frac{1}{q-1}\right)^{n}\binom{n+2d-3}{n}
\right).
$$
Hence, 

$$
|\Delta_g|<O\left(
q^{-(k-1)d+\frac{n}{2}-\frac{n^2}{4d}}\left(1+\frac{1}{q-1}\right)^{n}\binom{n+2d-3}{n}
\right)+\sum_{\overline{f}\in I(\overline{g},n-d)}\Delta_{\overline{f},k}^g\cdot \Pr[\charc\overline{A}=\overline{f}].
$$
Summing this over all $g\in\mathcal{G}$, and using Lemma \ref{lem_discrepancy_bound_interval_starting_from_f_mod_p_un}, we get that 

\begin{multline*}
\sum_{g\in\mathcal{G}}|\Delta_g|
<O\left(
q^{-(k-1)d+\frac{n}{2}-\frac{n^2}{4d}}(1+\frac{1}{q-1})^{n}\binom{n+2d-3}{n}
\right)+\\
+\sum_{\substack{\overline{f}\in\field_{q^2}[x]\\
\delta(\deg\rad \overline{f})<d}}2\Pr_{B\in U_n(\field_q)}[\delta(\deg\min(B))<d|\charc(B)=\overline{f}]\Pr[\charc B=\overline{f}].
\end{multline*}

Now if $\delta(\deg\min(B))<d$ in particular $\deg\min(B)<2d$, hence using Claim \ref{claim_bounding_probability_small_minpoly_un},

$$\Pr_{B\in U_n(\field_q)}[\delta(\deg\min(B))<d|\charc(B)=\overline{f}]\Pr[\charc B=\overline{f}]<\Pr[\deg\min(B)<2d\cap \charc(B)=\overline{f}]<q^{-\frac{n^2}{2d}+o(n)},$$
and by Lemma \ref{lem_small_radical_bound}, the number of polynomials in $\field_{q^2}[x]_{=n}^\mon$ with a radical of degree $<2d$ is $q^{4d+o(n)}$, hence we get the result.   
\end{proof}

\subsection{Joint distribution of traces}

As in the $GL_n$ case, the total variation distance of $\TR_d^{U}$ from $\UTR_d$  may be computed as the limit of the total variation distance of the $\mathcal{O}/\pi^k\mathcal{O}$ random variable 
 $\TR_d^{U,k}$, $\UTR_d^k$. We recall that by the discussion of \cite[Theorem 5.16]{GR21}, There is a constant $c_q^{U}$ such that for $d<c_q^{U}\cdot n$,

$$
q^{\frac{n}{2}-\frac{n^2}{4d}}\left(1+\frac{1}{q-1}\right)^{n}\binom{n+2d-3}{n}=o(1).
$$

\begin{theorem}
Let $d<\frac{n}{2}$, and let $\{a_i\}_{1\le i\le d}$ be a trace datum. Let $\TR_d^{U,k}$ be the random variable attaching to a matrix $M\in U_{n}(\mathcal{O}/\pi^k\mathcal{O})$ its trace datum of length $d$, and let $\UTR_d^k$ be the uniform measure on trace data, as defined in Section \ref{section_trace_data_poly_intervals}. Then,

\begin{multline*}
d_{TV}(\mu_{\TR_d^{U,k}},\mu_{\UTR_d^k})=\sum_{\{a_i\}_{1\le i\le d}\text{ trace data}}
\Bigg|
\Pr_{M\in U_{n}(\mathcal{O}/\pi^k\mathcal{O})}[M\text{ has trace datum }\{a_i\}_{1\le i\le d}]-q^{-(kd-S(d,k))}
\Bigg|<\\
<O\left(
q^{2d+\frac{n}{2}-\frac{n^2}{4d}}(1+\frac{1}{q-1})^{n}\binom{n+2d-3}{n}+q^{4d-\frac{n^2}{2d}+o(n)}
\right).
\end{multline*}
\end{theorem}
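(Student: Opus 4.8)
The plan is to follow the template of the proofs of Theorem \ref{theorem_mod_pk_joint_traces} and its symplectic counterpart, feeding in the unitary-specific ingredients already assembled in this section: the image computation Theorem \ref{theorem_image_char_derivative_un}, the one-step lifting Lemma \ref{lem_one_step_un}, and the interval discrepancy bound Lemma \ref{lem_discrepancy_bound_single_interval_un}. Concretely, I would first invoke the unitary analog of Lemma \ref{lem_connection_traces_charpoly}: Newton's identities are valid over $\resringr{k}$, so there is a finite family of monic degree-$n$ polynomials $\{g_j\}_j \subset (\resringr{k})[x]$, depending only on $d,n,k$, such that a matrix $M \in U_n(\resringr{k})$ has the prescribed trace datum $\{a_i\}_{1\le i\le d}$ if and only if $\charc(M) \in I(g_j, n-d-1)$ for some $j$, and these intervals are pairwise disjoint. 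Writing $\Delta_g$ for the discrepancy of the interval around $g$ defined just before Lemma \ref{lem_discrepancy_bound_single_interval_un}, we obtain $\Pr_{M\in U_n(\resringr{k})}[M\text{ has trace datum }\{a_i\}] = q^{-(kd-S(d,k))} + \sum_j \Delta_{g_j}$ after matching the number of intervals against the count of possible trace data.

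Next I would observe that the full set of lifts $\mathcal{G}$ obtained by letting the remaining (non-next-to-leading) coefficients vary is a set of representatives for the monic degree-$n$ polynomials in $(\resringr{k})[x]$ modulo equality in their $d$ next-to-leading coefficients. Summing the previous identity over all trace data and applying the triangle inequality bounds the total variation distance by $\sum_{g\in\mathcal{G}}|\Delta_g|$. Finally, Lemma \ref{lem_discrepancy_bound_single_interval_un} asserts precisely that this sum is $O\!\left(q^{2d+\frac n2-\frac{n^2}{4d}}\bigl(1+\tfrac1{q-1}\bigr)^n\binom{n+2d-3}{n}+q^{4d-\frac{n^2}{2d}+o(n)}\right)$, which is the claimed bound; taking $k\to\infty$ and applying Lemma \ref{lem_total_variation_dist_limit} then yields Theorem \ref{main_theorem_un}, and the Diaconis–Shahshahani analog Theorem \ref{thm_diaconis_shahshahani_analog} follows by applying this with the $g_j$'s realizing the prescribed balls.

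Since every structural input is already established, there is no real obstacle left in this statement; the only subtlety is the bookkeeping, which differs from the $GL_n$ case because $\charc(M)$ is now self-skew-reciprocal over $\field_{q^2}$ (and its lifts live over $\mathcal{R}/\pi^k$). The two places where this matters have already been dealt with: the ``base case'' mod-$\pi$ input is the Gorodetsky–Rodgers interval discrepancy bound in the unitary setting (\cite[Theorem 5.16]{GR21}) rather than their $GL_n$ result, and the conditional equidistribution in Lemma \ref{lem_one_step_un} uses the skew-palindromic Hayes orthogonality (Lemma \ref{lem_hayes_character_sum_over_skpalindromic_polys}) in place of the ordinary short-interval orthogonality. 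As in the treatment of $SO_n$, the write-up can legitimately be compressed to ``the proof is essentially identical to that of Theorem \ref{theorem_mod_pk_joint_traces}, with the unitary analogs of the auxiliary lemmas substituted throughout.''
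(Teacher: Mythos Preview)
Your proposal is correct and matches the paper's approach exactly: the paper simply omits the proof, stating ``the proof is omitted, as it is essentially the same as the analogous one for $GL_n(\Ol)$,'' which is precisely the template you outline (Lemma~\ref{lem_connection_traces_charpoly} over $\resringr{k}$, then the triangle inequality, then Lemma~\ref{lem_discrepancy_bound_single_interval_un}). Your write-up is in fact a more explicit rendering of what the paper leaves implicit.
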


\begin{proof}
The proof is omitted, as it is essentially the same as the analogous one for $GL_n(\Ol)$.
\end{proof}

By taking $k\to\infty$ and using Lemma \ref{lem_total_variation_dist_limit} we get as a consequence Theorem \ref{main_theorem_un}.

\bibliography{references}
\bibliographystyle{alpha}

\end{document}